\def\usetodonotes{true}
\newcounter{number_of_todos}
\newtheorem{theorem}{Theorem}[section]
\newtheorem{lemma}[theorem]{Lemma}
\newtheorem{prop}[theorem]{Proposition}
\newtheorem{cor}[theorem]{Corollary}
\newtheorem*{prop*}{Proposition}
\newtheorem*{lemma*}{Lemma}
\theoremstyle{remark}
\newtheorem{definition}[theorem]{Definition}
\newtheorem{remark}[theorem]{Remark}
\newtheorem{example}[theorem]{Example}
\newenvironment{statement}{
\begin{enumerate}\renewcommand{\theenumi}{\theequation}
\addtocounter{equation}{1}\item
}{\end{enumerate}}
\numberwithin{equation}{section}
\def\CC{\mathbb C}
\def\NN{\mathbb N}
\def\PP{\mathbb P}
\def\TT{\mathbb T}
\def\ZZ{\mathbb Z}
\def\RR{\mathbb R}
\def\Cc{\mathcal C}
\def\Dd{\mathcal D}
\def\Ee{\mathcal E}
\def\Ff{\mathcal F}
\def\Kk{\mathcal K}
\def\Nn{\mathcal N}
\def\Hh{\mathcal H}
\def\id{\mathrm{id}}
\def\M{\mathrm{M}}
\def\U{\mathrm{U}}
\def\L{\mathrm{L}}
\providecommand{\braces}[1]{\{#1\}}
\providecommand{\uniqueinfinitepath}[1]{[ #1]^\infty}
\providecommand{\desourceclass}[1]{\lfloor #1\rfloor}
\providecommand{\bigdesourceclass}[1]{\big\lfloor #1\big\rfloor}
\def\tr{\mathrm{Tr}}
\newcommand{\notationindex}[1]{\index{00#1}}
\begin{document}
\pagenumbering{roman}

\title{Categorising the operator algebras of groupoids and higher-rank graphs}
\date{\today}
\author{Robert Hazlewood}

\thispagestyle{empty}
\begin{center}
\textsc{
\begin{Large}
Categorising the operator algebras of groupoids and higher-rank graphs
\end{Large}}\\\vspace{100pt}
\textsc{\begin{large}Robert Hazlewood\end{large}}\\\vspace{100pt}
A thesis in fulfilment of the requirements for the degree of\\\vspace{15pt}
Doctor of Philosophy\\\vspace{120pt}

The University of New South Wales\\\vspace{20pt}
School of Mathematics and Statistics\\\vspace{20pt}
Faculty of Science\\\vspace{80pt}
May 2013
\end{center}

\cleardoublepage

\raggedbottom
\begin{center}
    \begin{Large}
    \textbf{Acknowledgements}
    \end{Large}\\
\end{center}
\vspace{50pt}

First and foremost, I would like to thank my supervisor, Astrid an Huef. I'm not sure that I ever won her over to my `desperate hand waving' method of proof, but I will always feel very grateful for her guidance. She has always been very generous with her time and continually provided me with useful feedback. I highly recommend her as a supervisor.

I have been very fortunate to have been taught by a large number of teachers and lecturers who have taken a great deal of pride in their work. This has included staff from the universities of New South Wales, Otago, Wollongong, Victoria (Canada) and Newcastle, and from Merewether High School. In particular I would like to thank my honours supervisors Iain Raeburn and Aidan Sims.

Finally I would like to thank my family, Julie, David and Leia Hazlewood, and my girlfriend, Wei Xian Lim, for their tireless support and encouragement.

\tableofcontents

\chapter*{Abstract}
This dissertation concerns the classification of groupoid and higher-rank graph $C^*$-algebras and has two main components. Firstly, 
for a groupoid it is shown that the notions of strength of convergence in the orbit space and measure-theoretic accumulation along the orbits are equivalent ways of realising multiplicity numbers associated to a sequence of induced representations of the groupoid $C^*$-algebra. Examples of directed graphs are given, showing how to determine the multiplicity numbers associated to various sequences of induced representations of the directed graph $C^*$-algebras.

The second component of this dissertation uses path groupoids to develop various characterisations of the $C^*$-algebras of higher-rank graphs. Necessary and sufficient conditions are developed for the Cuntz-Krieger $C^*$-algebras of row-finite higher-rank graphs to be liminal and to be postliminal. When Kumjian and Pask's path groupoid is principal, it is shown precisely when these $C^*$-algebras have bounded trace, are Fell, and have continuous trace. Necessary and sufficient conditions are provided for the path groupoids of row-finite higher-rank graphs without sources to have closed orbits and to have locally closed orbits. When these path groupoids are principal, necessary and sufficient conditions are provided for them to be integrable, to be Cartan and to be proper.

\mainmatter

\chapter{Introduction}\label{chapter_introduction}

This thesis is broken up into 5 chapters. After this short introductory chapter, Chapter \ref{chapter_groupoid_intro} gives an overview of groupoids and groupoid $C^*$-algebras. Chapter \ref{chapter_graph_algebra_intro} gives an overview of the Cuntz-Krieger $C^*$-algebras of directed graphs and higher-rank graphs. The key research for this thesis is presented in Chapters \ref{chapter_strength_of_convergence} and \ref{chapter_categorising_higher-rank_graphs}, with the content of Chapter \ref{chapter_strength_of_convergence} published as \cite{Hazlewood-anHuef2011} and content of Chapter \ref{chapter_categorising_higher-rank_graphs} soon to be submitted for publication. The following Sections \ref{intro_strength_of_convergence} and \ref{intro_categorising_higher-rank_graphs} introduce the research Chapters \ref{chapter_strength_of_convergence} and \ref{chapter_categorising_higher-rank_graphs}, respectively. This chapter will end with the definition of various classes of $C^*$-algebras.

\section{Introduction: Strength of convergence in the orbit space of a groupoid}\label{intro_strength_of_convergence}

Suppose $H$ is a locally-compact Hausdorff group acting freely and continuously on a  locally-compact Hausdorff space $X$, so that $(H,X)$ is a free transformation group. In \cite[pp.~95--96]{Green1977} Green gives an example of a free non-proper action of $H=\RR$ on a subset $X$ of $\RR^3$; the non-properness comes down to the existence of $z\in X$, $\braces{x_n}\subset X$, and two sequences $\braces{s_n}$ and $\braces{t_n}$ in $H$ such that
\begin{enumerate}\renewcommand{\theenumi}{\roman{enumi}}\renewcommand{\labelenumi}{(\theenumi)}
\item $s_n^{-1}\cdot x_n\rightarrow z$ and $t_n^{-1}\cdot x_n\rightarrow z$; and 
\item $t_ns_n^{-1}\rightarrow\infty$ as $n\rightarrow\infty$, in the sense that $\braces{t_ns_n^{-1}}$ has no convergent subsequence.
\end{enumerate}
In  \cite[Definition~2.2]{Archbold-Deicke2005}), and subsequently in \cite[p.~2]{Archbold-anHuef2006}, the sequence $\{x_n\}$ is said to converge $2$-times in the orbit space to $z\in X$.  Each orbit $H\cdot x$  gives an induced  representation $\mathrm{Ind}\,\epsilon_{x}$ of the associated transformation-group $C^*$-algebra $C_0(X)\rtimes H$ which is irreducible, and the $k$-times convergence of $\{x_n\}$ in the orbit space to $z\in X$ translates into  statements about  various multiplicity numbers associated to $\mathrm{Ind}\,\epsilon_z$ in the spectrum of $C_0(X)\rtimes H$, as in  \cite[Theorem~2.5]{Archbold-Deicke2005}, \cite[Theorem~1.1]{Archbold-anHuef2006} and \cite[Theorem~2.1]{Archbold-anHuef2008}.

Upper and lower multiplicity numbers associated to irreducible representations $\pi$ of a $C^*$-algebra $A$ were introduced by Archbold \cite{Archbold1994} and extended to multiplicity numbers relative to a net of irreducible representations  by Archbold and Spielberg  \cite{Archbold-Spielberg1996}. 
The upper multiplicity $M_U(\pi)$ of $\pi$, for example, counts  `the number of nets of orthogonal equivalent pure states which can converge to a common pure state associated to $\pi$' \cite[p.~26]{aklss2001}.  The definition of $k$-times convergence and \cite[Theorem~2.5]{Archbold-Deicke2005} were very much motivated by a notion of $k$-times convergence in the dual space of a nilpotent Lie group \cite{Ludwig1990} and its connection with relative multiplicity numbers (see, for example,  \cite[Theorem~2.4]{aklss2001} and  \cite[Theorem~5.8]{Archbold-Ludwig-Schlichting2007}).

Theorem~1.1 of \cite{Archbold-anHuef2006} shows that the topological property of a  sequence $\{x_n\}$ converging $k$-times in the orbit space to $z\in X$ is equivalent to (1) a measure theoretic accumulation along the orbits $G\cdot x_n$ and (2) that the lower  multiplicity of $\mathrm{Ind}\,\epsilon_z$ relative to the sequence $\{\mathrm{Ind}\,\epsilon_{x_n}\}$ is at least $k$. In Chapter \ref{chapter_strength_of_convergence} we prove that the results of \cite{Archbold-anHuef2006} generalise to principal groupoids.  In our main arguments we have tried to preserve as much as possible the structure of those in  \cite{Archbold-anHuef2006}, although the arguments presented here are often more complicated in order to cope with the partially defined product in a groupoid and the set of measures that is a Haar system compared to the fixed Haar measure  used in the transformation-group case. Our theorems have led us to a new class of examples exhibiting $k$-times convergence in groupoids that are not based on transformation groups, thus justifying our level of generality. Given a row-finite directed graph $E$, Kumjian, Pask, Raeburn and Renault in \cite{kprr1997} used the set of all infinite paths in $E$ to construct an $r$-discrete groupoid $G_E$, called a {\em path groupoid}.  We prove that $G_E$ is principal if and only if $E$ contains no cycles (Proposition~\ref{prop_principal_iff_no_cycles}). We then exhibit principal $G_E$ with Hausdorff and non-Hausdorff orbits space, respectively, both with a $k$-times converging sequence  in the orbit space. In particular, our examples can be used to find a groupoid $G_E$ whose $C^*$-algebra has non-Hausdorff spectrum and distinct upper and lower multiplicity counts among its irreducible representations.

The directed graphs demonstrating $k$-times convergence in the orbit space of the associated path groupoids were carefully chosen so that the path groupoids would be principal, integrable and not Cartan. The search for examples of directed graphs with these properties provided the initial motivation for the research that is introduced in the following section. 

\section[Intro: Categorising higher-rank graphs using the path groupoid]{Introduction: Categorising the operator algebras of higher-rank graphs using the path groupoid}\label{intro_categorising_higher-rank_graphs}
Kumjian and Pask in \cite{Kumjian-Pask2000} introduced the notion of a higher-rank graph and, for row-finite higher-rank graph  $\Lambda$ without sources, defined the Cuntz-Krieger higher-rank graph $C^*$-algebra, denoted $C^*(\Lambda)$. These $C^*$-algebras were introduced as generalisations of the directed-graph $C^*$-algebras developed by Kumjian, Pask, Raeburn and Renault in \cite{kprr1997}. The class of higher-rank graph $C^*$-algebras is signifantly larger than the already large class of $C^*$-algebras associated to directed graphs (see Raeburn's book \cite{Raeburn2005} for an overview). As with the $C^*$-algebras associated to directed graphs, the study of the $C^*$-algebras of higher-rank graphs initially used the theory of groupoids. A more recent ``bare hands'' approach to studying higher-rank graphs has been developed that avoids the use of groupoids with the aim of simplifying arguments. These groupoid models have recently been seen to be a useful source of examples with their use to illustrate groupoid results in \cite{Clark-anHuef2012} and in Chapter \ref{chapter_strength_of_convergence}.

Suppose $\Lambda$ is a row-finite $k$-graph without sources and suppose $G_\Lambda$ is Kumjian and Pask's path groupoid associated to $\Lambda$; recall that $C^*(G_\Lambda)=C^*(\Lambda)$. In this part of the thesis we explore when $C^*(\Lambda)$ is postliminal, liminal and Fell, and when it has bounded- and countinuous-trace. Clark in \cite[Theorem~6.1]{Clark2007-2} showed that a groupoid $C^*$-algebra is liminal if and only if the orbits in the groupoid are closed and the $C^*$-algebras of each of the stability subgroups are liminal. In Section \ref{sec_liminal_postliminal} we establish a necessary and sufficient condition on $\Lambda$ for the orbits in $G_\Lambda$ to be closed; we use this to show exactly when $C^*(\Lambda)$ is liminal. Similarly we provide a condition on $\Lambda$ that is necessary and sufficient for the orbits in $G_\Lambda$ to be locally closed and use another of Clark's results, \cite[Theorem~7.1]{Clark2007-2}, to show precisely when $C^*(\Lambda)$ is postliminal. 

In Section \ref{sec_bded-trace_cts-trace_Fell} we will develop necessary and sufficient conditions on $\Lambda$ for $G_\Lambda$ to be integrable, to be Cartan and to be proper. With the assumption that $G_\Lambda$ is principal we will use results by Clark and an Huef in \cite{Clark-anHuef2008}, Clark in \cite{Clark2007} and Muhly and Williams in \cite{Muhly-Williams1990} to determine exactly when $C^*(\Lambda)$ has bounded trace, is Fell, and has continuous trace. In Section \ref{sec_boundary_paths_and_desourcification} we introduce the desourcification construction that Webster developed in \cite{Webster2011} based on Farthing's \cite{Farthing2008} and in Section \ref{sec_desourcification} we use the desourcification construction to provide similar characterisations of $C^*(\Lambda)$ while permitting the $k$-graphs to have sources. Finally in Section \ref{section_directed_graphs} it will be shown how these results can be simplified for directed graphs and how they relate to the directed graph characterisations developed by Ephrem in \cite{Ephrem2004} and by Deicke, Hong and Szyma{\'n}ski in \cite{dhs2003}.

It should be noted that Farthing, Muhly and Yeend in \cite{Farthing-Muhly-Yeend2005,Yeend2007} developed groupoid models of row-finite higher-rank graphs that may have sources. Developing characterisations for one of these groupoids would have made the section using the Farthing-Webster desourcification, Section \ref{sec_desourcification}, unnecessary. This extra level of groupoid generality does, however, come at a cost to their simplicity that makes them less attractive as a source of groupoid examples. By separating the groupoid arguments from the arguments dealing with row-finite higher-rank graphs that may have sources, Webster's desourcification also enables us to simplify our arguments.

\chapter{An introduction to groupoids}\label{chapter_groupoid_intro}	
This chapter provides a brief overview of the theory of groupoids, beginning with Renault's definition of a groupoid from \cite{Renault1980}. Examples are then given before introducing important technical results and defining the Haar system of a groupoid. A  fix for a mistake in Renault's \cite{Renault1980} is given (see Remark \ref{remark_problems_Renault_results}). Section \ref{section_representations_and_groupoid_algebras} describes induced representations and shows the construction of groupoid $C^*$-algebras. Sections \ref{section_transformation-group_groupoid} and \ref{sec_path_groupoid} define the groupoids that can be constructed from transformation groups and the infinite paths of directed graphs, respectively. Finally, Section \ref{sec_proper_Cartan_integrable} defines proper, Cartan and integrable groupoids, and shows the relationship between them and the classes of $C^*$-algebras from Section \ref{sec_algebra_classes}.

\section{Groupoids and Haar systems}
\begin{definition}[{\cite[Definition~1.1]{Renault1980}}]\label{def_groupoid}\notationindex{G@$G, G^{(2)}$}
A {\em groupoid} $G$ is a set endowed with a subset $G^{(2)}$ of $G\times G$, a map $(\alpha,\beta)\mapsto \alpha\beta$ from $G^{(2)}$ to $G$ and an involution $\alpha\mapsto \alpha^{-1}$ on $G$ such that:
\begin{enumerate}\renewcommand{\theenumi}{G\arabic{enumi}}
\item\label{G1} if $(\alpha,\beta)$ and $(\beta,\gamma)$ are in $G^{(2)}$, then so are $(\alpha\beta,\gamma)$ and $(\alpha,\beta\gamma)$, and $(\alpha\beta)\gamma=\alpha(\beta\gamma)$;
\item\label{G2} $(\alpha^{-1},\alpha)\in G^{(2)}$ for every $\alpha\in G$; and
\item\label{G3} $\alpha^{-1}(\alpha\beta)=\beta$ and $(\alpha\beta)\beta^{-1}=\alpha$ for every $(\alpha,\beta)\in G^{(2)}$.
\end{enumerate}
The elements of $G^{(2)}$ are called {\em composable pairs}\index{composable pairs}, the map $G^{(2)}\rightarrow G$ is called the {\em composition map}\index{composition map}, the involution is called the {\em inverse map}\index{inverse map} and $\alpha^{-1}$ is called the {\em inverse}\index{inverse} of $\alpha\in G$. The maps $r,s:G\rightarrow G$ defined by $r(\alpha)=\alpha\alpha^{-1}$ and $s(\alpha)=\alpha^{-1}\alpha$ for every $\alpha\in G$ are repectively called the {\em range}\index{range!in a groupoid} and {\em source}\index{source!in a groupoid} map. Then $r(\alpha)$ and $s(\alpha)$ are respectively called the {\em range} and {\em source} of $\alpha$. The range and source maps have a common image in $G$ which is called the {\em unit space}\index{unit space} and is denoted by $G^{(0)}$\notationindex{G0@$G^{(0)}$}.
\end{definition}

\begin{remark}\label{remark_composable_iff_range_source}
The point to the range and source maps is that they provide a simpler way of determining when composition makes sense: it follows from Definition \ref{def_groupoid} that $(\alpha,\beta)\in G^{(2)}$ if and only if $s(\alpha)=r(\beta)$.
\end{remark}

By the definition of the range and source maps we can see that $r(\alpha)\alpha=\alpha$ and $\alpha s(\alpha)=\alpha$ for every $\alpha\in G$, and that elements of $G^{(0)}$ are characterised by the equation $\alpha=\alpha^{-1}=\alpha^2$.

\begin{example}\label{example_group_is_groupoid}
Every group $G$ with identity $e$ can be considered to be a groupoid with $G^{(2)}=G\times G$ and $G^{(0)}=\braces{e}$.
\end{example}

\begin{example}\label{example_equivalence_relation_groupoid}
There is a natural groupoid that can be constructed from any set $X$ with an equivalence relation $\sim$. Let $G=\{(x,y)\in X\times X: x\sim y\}$ and 
\[G^{(2)}=\big\{\big((x,y), (y,z)\big): x,y,z\in X\text{ with }x\sim y\sim z\big\}.\]
Define composition by $(x,y)(y,z)=(x,z)$ and involution by $(x,y)^{-1}=(y,x)$. Then $G$ is a groupoid with $r(x,y)=(x,x)$, $s(x,y)=(y,y)$ and $G^{(0)}=\{(x,x):x\in X\}$.
\end{example}

Suppose $G$ is a groupoid. The subset $\{(r(\alpha),s(\alpha)):\alpha\in G\}$ of $G^{(0)}\times G^{(0)}$ is an equivalence relation on $G^{(0)}$ that is referred to as the {\em natural equivalence relation}\index{natural equivalence relation on $G^{(0)}$} on $G^{(0)}$. The {\em orbit}\index{orbit} of $u\in G^{(0)}$ is the equivalence class of $u$ with respect to the natural equivalence relation on $G^{(0)}$ and is denoted by $[u]$. A subset $A$ of $G^{(0)}$ is {\em $G$-invariant}\index{G-invariant@$G$-invariant} if it is saturated with respect to the natural orbit equivalence relation on $G^{(0)}$. In other words, $A$ is $G$-invariant if it is a union of orbits. A groupoid $G$ is {\em principal}\index{principal groupoid} if $\alpha,\beta\in G$ are equal whenever $r(\alpha)=r(\beta)$ and $s(\alpha)=s(\beta)$.

\begin{example}\label{example_equivalence_relation_groupoid_properties}
Suppose $X=\{1,2,3,4,5\}$ and $\sim$ is the equivalence relation on $X$ with equivalence classes $\{1,2,3\}$ and $\{4,5\}$. Let $G$ be the groupoid as in Example \ref{example_equivalence_relation_groupoid}. Then $[(1,1)]=\{(1,1),(2,2),(3,3)\}$ and $[(4,4)]=\{(4,4),(5,5)\}$. The subset $\{(4,4),(5,5)\}$ of $G^{(0)}$ is $G$-invariant whereas $\{(1,1)\}$ is not. The groupoid $G$ is principal since the range and source of each $\alpha\in G$ completely determines $\alpha$: If $r(\alpha)=(x,x)$ and $s(\alpha)=(y,y)$, then $\alpha=(x,y)$.
\end{example}

A {\em groupoid homomorphism}\index{groupoid!homomorphism} from a groupoid $G$ to a groupoid $F$ is a map $\phi:G\rightarrow F$ such that for every $(\alpha,\beta)\in G^{(2)}$, we have $\big(\phi(\alpha),\phi(\beta)\big)\in F^{(2)}$ and $\phi(\alpha\beta)=\phi(\alpha)\phi(\beta)$. If $A$ is a non-empty subset of $G^{(0)}$, we define $G^A=r^{-1}(A)$, $G_A=s^{-1}(A)$ and $G|_A=r^{-1}(A)\cap s^{-1}(A)$. If $u\in G^{(0)}$ then we define $G_u=G_{\{u\}}$, $G^u=G^{\{u\}}$ and $G|_u=G|_{\{u\}}$.

\begin{prop}[{\cite[Proposition~2.8]{Muhly-book}}]\label{prop_Muhly_2.8}
Suppose $G$ is a groupoid and $A$ is a non-empty subset of $G^{(0)}$. Then $G|_A$ is a groupoid with $G|_A^{(2)}=G^{(2)}\cap (G|_A\times G|_A)$ and operations that are the restrictions of those on $G$.
\end{prop}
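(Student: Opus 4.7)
The plan is to verify directly that $G|_A$, equipped with $G|_A^{(2)} := G^{(2)}\cap(G|_A\times G|_A)$ together with the restricted composition and inverse, satisfies Definition~\ref{def_groupoid}. The only real content is to check that these restricted operations are internal to $G|_A$; axioms \eqref{G1}--\eqref{G3} will then be immediate from the fact that they hold in $G$.

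First I would check that the inverse map restricts to $G|_A$. Given $\alpha\in G|_A$, we have $r(\alpha^{-1})=\alpha^{-1}(\alpha^{-1})^{-1}=\alpha^{-1}\alpha=s(\alpha)\in A$ and symmetrically $s(\alpha^{-1})=r(\alpha)\in A$, so $\alpha^{-1}\in G|_A$. Next I would check closure of the composition. If $(\alpha,\beta)\in G|_A^{(2)}$, then by definition $\alpha,\beta\in G|_A$ and $(\alpha,\beta)\in G^{(2)}$, so $\alpha\beta$ is defined in $G$ and by the standard identities $r(\alpha\beta)=r(\alpha)\in A$ and $s(\alpha\beta)=s(\beta)\in A$; hence $\alpha\beta\in G|_A$.

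Having established that the restricted operations land inside $G|_A$, I would verify the three axioms. For \eqref{G1}, given $(\alpha,\beta),(\beta,\gamma)\in G|_A^{(2)}$, the closure of composition shows that $\alpha\beta,\beta\gamma\in G|_A$, and $(\alpha\beta,\gamma),(\alpha,\beta\gamma)\in G^{(2)}$ because $G$ satisfies \eqref{G1}; these pairs therefore lie in $G^{(2)}\cap(G|_A\times G|_A)=G|_A^{(2)}$, and associativity carries over verbatim from $G$. For \eqref{G2}, given $\alpha\in G|_A$, we have $\alpha^{-1}\in G|_A$ by the first step and $(\alpha^{-1},\alpha)\in G^{(2)}$ because $G$ is a groupoid, so $(\alpha^{-1},\alpha)\in G|_A^{(2)}$. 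Axiom \eqref{G3} is inherited immediately because the products $\alpha^{-1}(\alpha\beta)$ and $(\alpha\beta)\beta^{-1}$ are computed by the same formulas as in $G$.

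There is no real obstacle here; the only thing to watch is the bookkeeping showing that $\alpha\beta$ and $\alpha^{-1}$ stay in $G|_A$, which reduces to the identities $r(\alpha\beta)=r(\alpha)$, $s(\alpha\beta)=s(\beta)$, $r(\alpha^{-1})=s(\alpha)$ and $s(\alpha^{-1})=r(\alpha)$ noted after Definition~\ref{def_groupoid}. Once this is in place the groupoid axioms for $G|_A$ are just the restrictions of those for $G$.
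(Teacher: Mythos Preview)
Your verification is correct: once you check that the inverse and composition restrict to $G|_A$, the groupoid axioms carry over from $G$ verbatim. Note that the paper does not actually supply a proof of this proposition --- it is quoted from \cite[Proposition~2.8]{Muhly-book} --- so there is nothing to compare against; your direct check is the standard argument. One small remark: the identities $r(\alpha\beta)=r(\alpha)$ and $s(\alpha\beta)=s(\beta)$ are not literally stated after Definition~\ref{def_groupoid} in this paper (what is noted there is $r(\alpha)\alpha=\alpha$, $\alpha s(\alpha)=\alpha$, and Remark~\ref{remark_composable_iff_range_source}), but they follow easily from \eqref{G3} and the definition of $r$ and $s$, so this does not affect the argument.
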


\begin{example}
Suppose $G$ is the groupoid from Example \ref{example_equivalence_relation_groupoid_properties}. Let $A$ be the subset $\{(1,1),(2,2),(3,3),(4,4)\}$ of $G^{(0)}$ so that  $G|_A$ is a groupoid by Proposition \ref{prop_Muhly_2.8}. If $F$ is the groupoid constructed as in Example \ref{example_equivalence_relation_groupoid} with the set $\{1,2,3,4\}$ and equivalence classes $\{1,2,3\}$ and $\{4\}$, then $G|_A=F$.
\end{example}

The {\em stability subgroups}\index{stability subgroup} of a groupoid $G$ are the groups of the form $G|_u=r^{-1}(u)\cap s^{-1}(u)$ for $u\in G^{(0)}$; $G$ is principal if and only if each of the stability subgroups is trivial, that is, if $G|_u=\{u\}$ for every $u\in G^{(0)}$. Stability subgroups are also commonly known as {\em isotropy subgroups}\index{isotropy subgroup}.

\begin{definition}[{\cite[Definition~2.25]{Muhly-book}}]\label{def_topological_groupoid}
Suppose $G$ is a groupoid with a topology on the set $G$ and let $G^{(2)}$ have the subspace topology from $G\times G$ when equipped with the product topology. Then $G$ is called a {\em topological groupoid}\index{topological groupoid}\index{groupoid!topology} if the composition and inversion maps are continuous.
\end{definition}
\begin{remark}\label{remark_following_topological_groupoid_def}
It follows that the range and source maps of a topological groupoid are continuous. Then, since any continuous involution is open, the inversion map in a topological groupoid is a homeomorphism. It follows from the inverse map being open that the range map is open if and only if the source map is open. Whenever we refer to a topology on a groupoid, we assume that this is a topological groupoid unless otherwise stated.
\end{remark}
\begin{example}
A locally compact group is a locally compact groupoid in the sense of Example \ref{example_group_is_groupoid}.
\end{example}

To construct a $C^*$-algebra from a groupoid we need an analogue of the Haar measure on a group.
\needspace{6\baselineskip}
\begin{definition}[{\cite[Definition~2.28]{Muhly-book}}]\label{def_Haar_system}Suppose $G$ is a second countable, locally compact, Hausdorff groupoid. A {\em right Haar system}\index{Haar system} on $G$ is a set $\{\lambda_x:x\in G^{(0)}\}$ of non-negative Radon measures on $G$ such that
\begin{enumerate}\renewcommand{\theenumi}{H\arabic{enumi}}
\item\label{H1} $\mathrm{supp}\,\lambda_x=G_x$ $\big(=s^{-1}(\{x\})\big)$\quad for all $x\in G^{(0)}$;
\item\label{H2} for $f\in C_c(G)$, the function $x\mapsto \int f\,d\lambda_x$ on $G^{(0)}$ is in $C_c(G^{(0)})$; and
\item\label{H3} for $f\in C_c(G)$ and $\gamma\in G$, 
\[\int f(\alpha\gamma)\,d\lambda_{r(\gamma)}(\alpha)=\int f(\alpha)\,d\lambda_{s(\gamma)}(\alpha).\]
\end{enumerate}
We will refer to \eqref{H2} as the {\em continuity of the Haar system} and to \eqref{H3} as {\em Haar-system invariance}. The collection $\{\lambda^x:x\in G^{(0)}\}$ of measures where $\lambda^x(E):=\lambda_x(E^{-1})$ is a {\em left Haar system}, which is a system of measures such that $\mathrm{supp}\,\lambda^x=G^x$ and, for $f\in C_c(G)$, $x\mapsto \int f\,d\lambda^x$ is continuous and $\int f(\gamma\alpha)\,d\lambda^{s(\gamma)}(\alpha)=\int f(\alpha)\,d\lambda^{r(\gamma)}(\alpha)$. Given that we can easily convert a right Haar system $\{\lambda_x\}$ into a left Haar system $\{\lambda^x\}$ and vice versa, we will simply refer to a {\em Haar system} $\lambda$ and use subscripts to refer to elements of the right Haar system $\{\lambda_x\}$ and superscripts to refer to elements of the left Haar system $\{\lambda^x\}$.
\end{definition}

The next few lemmas establish some properties of Haar systems which we will use frequently.
\begin{lemma}
Let $G$ be a second countable, locally compact, Hausdorff groupoid with a Haar system $\lambda$. If $K\subset G$ is compact and $\gamma\in G$ with $s(\gamma)=x$ and $r(\gamma)=y$, then $\lambda_x(K\gamma)=\lambda_y(K)$ and $\lambda^x(\gamma^{-1} K)=\lambda^y(K)$.
\begin{proof}
Let $\braces{U_n}$ be a decreasing sequence of open neighbourhoods of $K$ so that if $\gamma\in G\backslash K$, then $\gamma\notin U_n$ eventually. $G$ is locally compact, so there is a compact neighbourhood of $K$, and we can assume that $\overline{U_0}$ is compact. By Urysohn's Lemma for each $n$ there exists $f_n\in C_c(G)$ with range in $[0,1]$ such that $f_n$ is identically $1$ on $K$ and $\mathrm{supp}\, f_n\subseteq U_n$. For each $n$, define $g_n:G\rightarrow \CC$ by $g_n(\alpha)=f_n(\alpha\gamma^{-1})$ for each $\alpha\in G$, so that each $g_n\in C_c(G)$ and $g_n(\alpha)\rightarrow \chi_K(\alpha\gamma^{-1})$ as $n\rightarrow\infty$ for every $\alpha\in G$. Let $h = \chi_{U_0\gamma}$, noting that $\mathrm{supp}\, h$ is compact since $U_0$ is relatively compact and the composition map is continuous. Then
\[
g_n(\alpha) = f_n(\alpha\gamma^{-1}) \le \chi_{U_n}(\alpha\gamma^{-1}) \le \chi_{U_0}(\alpha\gamma^{-1}) = \chi_{U_0\gamma}(\alpha) = h(\alpha)
\]
for every $\alpha\in G$ and $n\in\NN$.

Since $h$ and each $g_n$ have compact support with $g_n(\alpha)\le h(\alpha)$ for each $\alpha$, and since $g_n(\alpha)\rightarrow \chi_K(\alpha\gamma^{-1})$ for each $\alpha$, by the Dominated Convergence Theorem we have
\begin{align*}
\int f_n(\alpha&\gamma^{-1})\, d\lambda_x(\alpha)=\int g_n(\alpha)\, d\lambda_x(\alpha)\\
&\rightarrow\int \chi_K(\alpha\gamma^{-1})\, d\lambda_x(\alpha) = \int \chi_{K\gamma}(\alpha)\, d\lambda_x(\alpha)=\lambda_x(K\gamma).
\end{align*}
By Haar-system invariance \eqref{H3} we can see that
\[
\int f_n(\alpha\gamma^{-1})\, d\lambda_x(\alpha)=\int f_n(\alpha)\, d\lambda_y(\alpha)\rightarrow \int \chi_K(\alpha)\, d\lambda_y(\alpha)=\lambda_y(K),
\]
and it follows that $\lambda_x(K\gamma)=\lambda_y(K)$. Since $\lambda^x(E)=\lambda_x(E^{-1})$ for every measurable set $E$, we can deduce that
\[
\lambda^x(\gamma^{-1} K)=\lambda_x(K^{-1}\gamma)=\lambda_y(K^{-1})=\lambda^y(K).\qedhere
\]
\end{proof}
\end{lemma}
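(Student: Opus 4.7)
The plan is to reduce the second identity to the first, and to establish the first by approximating the characteristic function $\chi_K$ by elements of $C_c(G)$ so that the Haar-system invariance axiom \eqref{H3} becomes applicable.

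For the reduction, note that by definition $\lambda^x(\gamma^{-1}K) = \lambda_x((\gamma^{-1}K)^{-1}) = \lambda_x(K^{-1}\gamma)$. Since $K^{-1}$ is compact (inversion is a homeomorphism by Remark \ref{remark_following_topological_groupoid_def}) and $s(\gamma)=x$, $r(\gamma)=y$, the first identity applied to $K^{-1}$ gives $\lambda_x(K^{-1}\gamma) = \lambda_y(K^{-1}) = \lambda^y(K)$, as required. So everything hinges on proving $\lambda_x(K\gamma) = \lambda_y(K)$.

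For the first identity, I would use local compactness of $G$ to choose a relatively compact open set $U_0\supset K$, then a decreasing sequence of open neighbourhoods $U_n\subset U_0$ of $K$ with $\bigcap_n U_n$ equal to $K$ (or at least so that any $\alpha\notin K$ lies outside $U_n$ eventually). Urysohn's lemma then supplies $f_n\in C_c(G)$ with $0\le f_n\le 1$, $f_n\equiv 1$ on $K$ and $\mathrm{supp}\,f_n\subseteq U_n$, so $f_n\to\chi_K$ pointwise. Applying \eqref{H3} with $\gamma$ replaced by $\gamma^{-1}$ (which has range $x$ and source $y$) and using the substitution $\alpha\mapsto\alpha\gamma^{-1}$ inside the integral gives
\[
\int f_n(\alpha\gamma^{-1})\,d\lambda_x(\alpha) = \int f_n(\alpha)\,d\lambda_y(\alpha).
\]
I would then pass $n\to\infty$ on both sides by dominated convergence, which yields $\lambda_x(K\gamma)$ on the left (since $\chi_K(\alpha\gamma^{-1})=\chi_{K\gamma}(\alpha)$) and $\lambda_y(K)$ on the right.

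The main obstacle is providing a legitimate dominating function for the left-hand integrand. The right-hand integrand is bounded by $\chi_{\overline{U_0}}$, which is $\lambda_y$-integrable because $\lambda_y$ is Radon and $\overline{U_0}$ is compact. For the left-hand side one has $f_n(\alpha\gamma^{-1})\le \chi_{U_0\gamma}(\alpha)$, so the issue is to check that $U_0\gamma$ has compact closure, so that $\chi_{\overline{U_0\gamma}}$ is $\lambda_x$-integrable. This follows because $\overline{U_0}\times\{\gamma\}$ is a compact subset of $G^{(2)}$ and its image $\overline{U_0}\gamma$ under the continuous composition map is compact and contains $U_0\gamma$. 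Once this is in hand, dominated convergence closes out both sides, and the identity $\lambda_x(K\gamma)=\lambda_y(K)$ follows.
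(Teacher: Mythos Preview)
Your argument is correct and follows essentially the same route as the paper's proof: approximate $\chi_K$ by a decreasing sequence of Urysohn functions supported in a relatively compact $U_0$, invoke \eqref{H3} with $\gamma^{-1}$, and pass to the limit via dominated convergence using $\chi_{U_0\gamma}$ (whose support is compact because $\overline{U_0}\times\{\gamma\}$ maps to a compact set under composition) as the dominating function on the left. The only cosmetic difference is that you deduce the left-Haar identity first and the paper does it last.
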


\begin{lemma}\label{astrids_lim_sup}
Let $G$ be a second countable, locally compact, Hausdorff groupoid with a Haar system $\lambda$ and let $K$ be a compact subset of $G$. If $\{x_n\}\subset G^{(0)}$ is a sequence that converges to $z\in G^{(0)}$, then
\[
\underset{\scriptstyle n}{\lim\,\sup}\,\lambda_{x_n}(K)\le\lambda_z(K).
\]
\begin{proof}
Fix $\epsilon>0$. By the outer regularity of $\lambda_z$, there exists an open neighbourhood $U$ of $K$ such that
\[
\lambda_z(K)\le \lambda_z(U)<\lambda_z(K)+\epsilon/2.
\]
By Urysohn's Lemma there exists $f\in C_c(G)$ with $0\le f\le 1$ such that $f$ is identically one on $K$ and zero off $U$. In particular we have
\begin{equation}\label{measure_of_f_near_measure_of_K}
\lambda_z(K)\le\int f\,d\lambda_z<\lambda_z(K)+\epsilon/2.
\end{equation}

The continuity of the Haar system \eqref{H2} implies $\int f\,d\lambda_{x_n}\rightarrow \int f\,d\lambda_z$, so there exists $n_0$ such that $n\ge n_0$ implies
\[
\int f\,d\lambda_z-\epsilon/2 < \int f\,d\lambda_{x_n}<\int f\,d\lambda_z +\epsilon/2.
\]
By our choice of $f$ we have $\lambda_{x_n}(K)\le \int f\,d\lambda_{x_n}$, so
\[
\lambda_{x_n}(K)\le\int f\,d\lambda_{x_n}<\int f\,d\lambda_z+\epsilon/2.
\]
Combining this with \eqref{measure_of_f_near_measure_of_K} enables us to observe that for $n\ge n_0$, we have $\lambda_{x_n}(K)<\lambda_z(K)+\epsilon$, completing the proof.
\end{proof}
\end{lemma}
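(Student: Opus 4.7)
The plan is to exploit the outer regularity of $\lambda_z$ to approximate $K$ by an open neighbourhood, dominate $\chi_K$ by a suitable compactly supported continuous function via Urysohn's Lemma, and then pass to the limit using the continuity axiom \eqref{H2} of the Haar system.

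More concretely, I would first fix $\epsilon > 0$. Since $G$ is locally compact Hausdorff and $K$ is compact, I can find a relatively compact open neighbourhood $V$ of $K$. By the outer regularity of the Radon measure $\lambda_z$, there is an open set $U$ with $K \subseteq U \subseteq V$ and
\[
\lambda_z(U) < \lambda_z(K) + \epsilon.
\]
Urysohn's Lemma then provides a function $f \in C_c(G)$ with $0 \le f \le 1$, $f \equiv 1$ on $K$, and $\mathrm{supp}\, f \subseteq U \subseteq V$ (so $f$ indeed has compact support). In particular, $\chi_K \le f \le \chi_U$ pointwise.

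For each $n$, this yields
\[
\lambda_{x_n}(K) \le \int f \, d\lambda_{x_n},
\]
and the continuity of the Haar system \eqref{H2} gives $\int f \, d\lambda_{x_n} \to \int f \, d\lambda_z$. Since $\int f \, d\lambda_z \le \lambda_z(U) < \lambda_z(K) + \epsilon$, taking the $\limsup$ yields $\limsup_n \lambda_{x_n}(K) \le \lambda_z(K) + \epsilon$. As $\epsilon > 0$ was arbitrary, the result follows.

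The argument is essentially routine once one has the right setup; the only point requiring a little care is ensuring that the Urysohn function has compact support (needed to invoke \eqref{H2}), which is handled by shrinking $U$ inside the relatively compact neighbourhood $V$ of $K$ guaranteed by local compactness. Beyond that, no delicate estimate is required, and in particular one does not need to use Haar-system invariance \eqref{H3}.
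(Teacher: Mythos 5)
Your proof is correct and follows essentially the same route as the paper's: outer regularity to get an open $U\supseteq K$ with $\lambda_z(U)<\lambda_z(K)+\epsilon$, a Urysohn function $f$ with $\chi_K\le f\le\chi_U$, and the continuity axiom \eqref{H2} to pass to the limit. Your explicit step of shrinking $U$ inside a relatively compact neighbourhood to guarantee $f\in C_c(G)$ is a detail the paper leaves implicit, but it is the same argument.
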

\begin{lemma}\label{corollary_astrid_lemma}
Let $G$ be a second countable, locally compact, Hausdorff groupoid with a Haar system $\lambda$ and let $K$ be a compact subset of $G$. For every $\epsilon>0$ and $z\in G^{(0)}$ there exists a neighbourhood $U$ of $z$ in $G^{(0)}$ such that $x\in U$ implies $\lambda_x(K)<\lambda_z(K)+\epsilon$.
\begin{proof}
Fix $\epsilon>0$ and $z\in G^{(0)}$. Let $\{U_n\}$ be a decreasing neighbourhood basis for $z$ in $G^{(0)}$. If our claim is false, then each $U_n$ contains an element $x_n$ such that $\lambda_{x_n}(K)\ge\lambda_z(K)+\epsilon$. But since each $x_n\in U_n$, $x_n\rightarrow z$, and so by Lemma \ref{astrids_lim_sup} there exists $n_0$ such that $n\ge n_0$ implies $\lambda_{x_n}(K)<\lambda_z(K)+\epsilon$, a contradiction.
\end{proof}
\end{lemma}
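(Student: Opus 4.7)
The plan is to argue by contradiction, reducing directly to the sequential statement already established in Lemma \ref{astrids_lim_sup}. Since $G$ is second countable, $G^{(0)}$ inherits a second countable subspace topology and in particular is first countable, so the point $z$ admits a decreasing countable neighbourhood basis $\{U_n\}$. This is the only structural ingredient beyond the previous lemma that I need.

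Fix $\epsilon > 0$ and $z \in G^{(0)}$, and suppose no neighbourhood $U$ of $z$ has the required property. Then each $U_n$ fails, so I can pick a witness $x_n \in U_n$ with $\lambda_{x_n}(K) \ge \lambda_z(K) + \epsilon$. Because $\{U_n\}$ is a decreasing neighbourhood basis, the sequence $\{x_n\}$ converges to $z$ in $G^{(0)}$.

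Applying Lemma \ref{astrids_lim_sup} to this sequence yields $\limsup_n \lambda_{x_n}(K) \le \lambda_z(K)$, but the construction of the $x_n$ forces $\limsup_n \lambda_{x_n}(K) \ge \lambda_z(K) + \epsilon$. These two inequalities are incompatible, which gives the desired contradiction and completes the proof.

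I do not anticipate a real obstacle here: the content of the lemma is essentially a repackaging of Lemma \ref{astrids_lim_sup} from a sequential formulation into a neighbourhood formulation, and the only non-trivial step is invoking first countability at $z$, which the standing second-countability hypothesis delivers for free.
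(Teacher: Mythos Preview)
Your proof is correct and follows essentially the same approach as the paper: argue by contradiction, use a decreasing neighbourhood basis at $z$ to produce a sequence $x_n\to z$ with $\lambda_{x_n}(K)\ge\lambda_z(K)+\epsilon$, and invoke Lemma~\ref{astrids_lim_sup} to obtain the contradiction. The only differences are cosmetic---you phrase the contradiction via $\limsup$ whereas the paper extracts an explicit $n_0$, and you make explicit the appeal to first countability that the paper leaves implicit.
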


Haar systems do not always exist. A following result, Proposition \ref{Renault_I_2_8i_sub}, can be used to show that some groupoids admit a Haar system. The next lemma is a small result that is frequently used in the groupoid literature.
\begin{lemma}\label{lemma_urysohn_corollary}
Let $G$ be a second countable, locally compact, Hausdorff groupoid with a Haar system $\lambda$. Then for every $x\in G^{(0)}$ there exists $f\in C_c(G)$ such that $\int_G f\, d\lambda_x> 0$ and the range of $f$ is contained in $[0,1]$.
\begin{proof}
Let $K$ be a compact neighbourhood of $x$ in $G$ and recall that the support of a Radon measure is the complement of the union of all open sets with measure zero. Then, since $x\in \mathrm{int}\, K\cap G_x=\mathrm{int}\, K\cap \mathrm{supp}\,\lambda_x$ and $\lambda_x$ is non-negative, we must have $\lambda_x(K)\ge\lambda_x(\mathrm{int}\, K)>0$. By Urysohn's Lemma (see, for example, \cite[Proposition~1.7.5]{Pedersen1989}) there exists $f\in C_c(G)$ such that $f$ is identically $1$ on $K$ and the range of $f$ is contained in $[0,1]$. Then $\int_G f\,d\lambda_x\ge \int_G \chi_K\, d\lambda_x>0$.
\end{proof}
\end{lemma}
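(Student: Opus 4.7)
The plan is to exploit two facts: that the Haar measure $\lambda_x$ has full support on $G_x$, and that $x$ itself lies in $G_x$, so every open neighbourhood of $x$ in $G$ has positive $\lambda_x$-measure; then a standard Urysohn-type bump will do the rest.

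First I would observe that $x \in G_x$. Indeed, since $x \in G^{(0)}$ we have $x = x^{-1} = x^2$ (as noted just after Definition~\ref{def_groupoid}), and hence $s(x) = x^{-1}x = x$, i.e.\ $x \in s^{-1}(\{x\}) = G_x$. Combined with \eqref{H1}, this gives $x \in \mathrm{supp}\,\lambda_x$.

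Next, using that $G$ is locally compact and Hausdorff, I would choose a compact neighbourhood $K$ of $x$ in $G$, so that $x \in \mathrm{int}\,K$. Because the support of a Radon measure is by definition the complement of the union of all null open sets, and because $\mathrm{int}\,K$ is an open neighbourhood of the point $x \in \mathrm{supp}\,\lambda_x$, we must have $\lambda_x(\mathrm{int}\,K) > 0$, and therefore $\lambda_x(K) \geq \lambda_x(\mathrm{int}\,K) > 0$.

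Finally I would apply Urysohn's Lemma (using local compactness and Hausdorffness of $G$) to produce $f \in C_c(G)$ with $0 \le f \le 1$ that is identically $1$ on $K$. Since $f \ge \chi_K$ pointwise and $\lambda_x$ is non-negative,
\[
\int_G f\,d\lambda_x \;\ge\; \int_G \chi_K\,d\lambda_x \;=\; \lambda_x(K) \;>\; 0,
\]
which gives the required function. There is no serious obstacle here; the only subtlety worth flagging is the identification $x \in G_x$ for units $x \in G^{(0)}$, without which the appeal to the support condition \eqref{H1} would be vacuous.
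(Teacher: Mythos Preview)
Your proof is correct and is essentially the same as the paper's: both pick a compact neighbourhood $K$ of $x$, use that $x\in G_x=\mathrm{supp}\,\lambda_x$ to conclude $\lambda_x(K)>0$, and then apply Urysohn's Lemma to produce the required $[0,1]$-valued bump. Your version is slightly more explicit in justifying $x\in G_x$ via $s(x)=x$, which the paper leaves implicit.
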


\begin{prop}[{\cite[Proposition~I.2.4]{Renault1980}, \cite[Proposition~2.2.1]{Paterson1999}}]\label{prop_RenaultI1.2.4}
Suppose $G$ is a second countable, locally compact, Hausdorff groupoid with a Haar system. Then the range and source maps of $G$ are open.
\end{prop}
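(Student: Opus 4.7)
The plan is to use the left Haar system together with Urysohn's Lemma, in a manner similar to (but more refined than) Lemma \ref{lemma_urysohn_corollary}. By the Remark following Definition \ref{def_topological_groupoid}, $r$ is open if and only if $s$ is open, so it suffices to prove that $r$ is open.

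Fix an open subset $U$ of $G$ and $\gamma\in U$; I aim to produce an open neighbourhood of $r(\gamma)$ in $G^{(0)}$ that is contained in $r(U)$. Since $G$ is locally compact and Hausdorff, Urysohn's Lemma supplies $f\in C_c(G)$ with $0\le f\le 1$, $f(\gamma)=1$, and $\mathrm{supp}\, f\subseteq U$. Using the left Haar system, set
\[
W := \bigl\{x\in G^{(0)} : \textstyle\int f\,d\lambda^x > 0\bigr\}.
\]
The continuity of the Haar system \eqref{H2} (applied to $\{\lambda^x\}$) shows that $W$ is open in $G^{(0)}$.

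The key step is to verify that $r(\gamma)\in W\subseteq r(U)$. To see $r(\gamma)\in W$, note that $\gamma\in G^{r(\gamma)}=\mathrm{supp}\,\lambda^{r(\gamma)}$ by \eqref{H1}, so the open neighbourhood $\{\alpha\in G:f(\alpha)>1/2\}$ of $\gamma$ meets the support of $\lambda^{r(\gamma)}$ and hence has strictly positive $\lambda^{r(\gamma)}$-measure; therefore $\int f\,d\lambda^{r(\gamma)}\ge \tfrac12\lambda^{r(\gamma)}(\{\alpha:f(\alpha)>1/2\})>0$. For the inclusion $W\subseteq r(U)$, if $x\in W$ then $f$ cannot vanish $\lambda^x$-almost everywhere on $G^x=\mathrm{supp}\,\lambda^x$, so there exists $\alpha\in G^x$ with $f(\alpha)>0$; then $\alpha\in\mathrm{supp}\, f\subseteq U$ and $r(\alpha)=x$, so $x\in r(U)$.

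The main subtlety is arranging that $\int f\,d\lambda^{r(\gamma)}>0$ for the chosen test function $f$; this is handled by combining the support characterisation of Radon measures (every open set meeting the support of a Radon measure has positive measure) with the fact from \eqref{H1} that $\gamma$ lies in the support of $\lambda^{r(\gamma)}$.
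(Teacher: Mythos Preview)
Your proof is correct. The paper does not supply its own proof of this proposition; it simply cites Renault and Paterson, so there is nothing in the paper to compare against directly. Your argument is essentially the standard one found in those references: pick a nonnegative $f\in C_c(G)$ supported in $U$ with $f(\gamma)>0$, and use the continuity axiom \eqref{H2} to conclude that $\{x:\int f\,d\lambda^x>0\}$ is an open neighbourhood of $r(\gamma)$ contained in $r(U)$. One cosmetic remark: your phrase ``$f$ cannot vanish $\lambda^x$-almost everywhere on $G^x$'' is slightly more than you need---it suffices that $\{f>0\}\cap G^x$ has positive $\lambda^x$-measure (hence is nonempty), which follows immediately from $\int f\,d\lambda^x>0$ and $\mathrm{supp}\,\lambda^x=G^x$.
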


\begin{lemma}[{\cite[p. 361]{Ramsay1990}}]\label{lemma_composition_map_open}
If a locally compact, Hausdorff groupoid has open range and source maps, then the composition map is open.
\begin{proof}
Suppose $G$ is a locally compact, Hausdorff groupoid with open range map. Let $A$ and $B$ be open subsets of $G$. It suffices to show that $AB$ is open. If $AB$ is empty, then $AB$ is open and we are done. Suppose $AB$ is non-empty, so that there exist $\alpha\in A$ and $\beta\in B$ such that $s(\alpha)=r(\beta)$. Let $\circ:G^{(2)}\rightarrow G$ be the composition map. Since the composition map is continuous (Definition~\ref{def_topological_groupoid}), by the definition of the topology on $G^{(2)}$ there exist open neighbourhoods $H,K$ of $\alpha^{-1}, \alpha\beta$ respectively, such that $(H\times K)\cap G^{(2)}$ is contained in the open set $\circ^{-1}(B)$. Note that $HK$ is a subset of $B$.

Let $U=H^{-1}\cap A$, noting that $U$ is an open neighbourhood of $\alpha$ since the inverse map is open (Remark~\ref{remark_following_topological_groupoid_def}). Now let $V=r^{-1}\big(r(U)\big)\cap K$, noting that $V$ is an open neighbourhood of $\alpha\beta$ since the range map is open and continuous. The set $U^{-1}V$ is a subset of $B$ as $U^{-1}\subset H$, $V\subset K$ and $HK$ is a subset of $B$. Fix $\gamma\in V$. Since $r(V)\subset r(U)=s(U^{-1})$ and $U^{-1}V\subset B$, there exist $\delta\in U$ and $\epsilon\in B$ such that $\delta^{-1}\gamma = \epsilon$. Then $\gamma=\delta\epsilon$, so $V\subset UB\subset AB$. The set $V$ is open with $\alpha\beta\in V\subset AB$ so, since $\alpha\beta$ was chosen arbitrarily in $AB$, the set $AB$ is open and thus the composition map is open.
\end{proof}
\end{lemma}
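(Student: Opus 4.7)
The plan is to reduce the claim to a statement about open sets. Because the subspace topology on $G^{(2)}$ has a basis of sets of the form $(A\times B)\cap G^{(2)}$ with $A,B$ open in $G$, it is enough to show that for any open $A,B\subseteq G$, the product set $AB=\{\alpha\beta:\alpha\in A,\beta\in B,s(\alpha)=r(\beta)\}$ is open in $G$. So I fix such $A,B$ and a point $\alpha\beta\in AB$ with $\alpha\in A$, $\beta\in B$, and I aim to produce an open neighbourhood of $\alpha\beta$ contained in $AB$.

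The key idea is to use openness of the range map to fabricate, for each $\gamma$ near $\alpha\beta$, a decomposition $\gamma=\alpha'\beta'$ with $\alpha'\in A$ and $\beta'\in B$. To control the $B$-factor, first invoke continuity of composition at $(\alpha^{-1},\alpha\beta)$ (which has image $\beta\in B$): there are open neighbourhoods $H$ of $\alpha^{-1}$ and $K$ of $\alpha\beta$ with $HK\subseteq B$ whenever composition is defined. Shrinking with $A$ (and using that inversion is a homeomorphism, from Remark~\ref{remark_following_topological_groupoid_def}), set $U=H^{-1}\cap A$, an open neighbourhood of $\alpha$. Then use openness of the range map to see that $r(U)$ is open in $G^{(0)}$, so $V:=r^{-1}(r(U))\cap K$ is an open neighbourhood of $\alpha\beta$. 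The candidate decomposition, for $\gamma\in V$, is $\gamma=\delta(\delta^{-1}\gamma)$ where $\delta\in U$ is chosen with $r(\delta)=r(\gamma)$.

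The step to check is that this decomposition actually lands in $A$ and $B$: the first factor $\delta$ lies in $U\subseteq A$ by construction, while the second factor $\delta^{-1}\gamma$ lies in $U^{-1}K\subseteq HK\subseteq B$. This gives $V\subseteq AB$, proving $AB$ is open and hence that the composition map is open. The only real subtlety is ensuring the two factors are simultaneously in $A$ and $B$, and this is handled by intersecting $H^{-1}$ with $A$ up front so that $U$ plays both roles—providing a preimage under $r$ inside $A$ and lying in $H$ after inversion so that the complementary factor is forced into $B$. Openness of the source map is never explicitly needed in the argument; openness of $r$ alone (together with its continuity) suffices.
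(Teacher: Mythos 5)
Your proof is correct and follows essentially the same route as the paper's: the same continuity-of-composition step at $(\alpha^{-1},\alpha\beta)$ producing $H$ and $K$ with $HK\subseteq B$, the same sets $U=H^{-1}\cap A$ and $V=r^{-1}(r(U))\cap K$, and the same decomposition $\gamma=\delta(\delta^{-1}\gamma)$ for $\gamma\in V$. Your closing observation that openness of $r$ alone suffices also matches the paper, whose proof likewise begins by assuming only that the range map is open.
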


\begin{lemma}\label{lemma_alpha_mapsto_gamma_alpha_homeomorphism}
Suppose $G$ is a topological groupoid. Then, for every $\gamma\in G$, 
\begin{enumerate}\renewcommand{\theenumi}{\roman{enumi}}
\item\label{lemma_alpha_mapsto_gamma_alpha_homeomorphism_1} the map from $G^{s(\gamma)}$ to $G^{r(\gamma)}$ such that $\alpha\mapsto\gamma\alpha$, and
\item\label{lemma_alpha_mapsto_gamma_alpha_homeomorphism_2} the map from $G_{r(\gamma)}$ to $G_{s(\gamma)}$ such that $\alpha\mapsto\alpha\gamma$
\end{enumerate}
are homeomorphisms.
\begin{proof}
Fix $\gamma\in G$ and note that it follows immediately from the definition of a groupoid (Definition \ref{def_groupoid}) that the map $\alpha\mapsto\gamma\alpha$ is injective and surjective. To see that the map $\alpha\mapsto\gamma\alpha$ is continuous, first suppose that $\{\alpha_i\}_{i\in I}$ is a net in $G^{s(\gamma)}$ that converges to some $\alpha\in G^{s(\gamma)}$. Since $\{\gamma\}_{i\in I}$ trivially converges to $\gamma$, it follows that $\{(\gamma,\alpha_i)\}_{i\in I}$ converges to $(\gamma,\alpha)$ in $G\times G$ with the product topology. Since $s(\gamma)=r(\alpha)$, by Remark \ref{remark_composable_iff_range_source} we can see that $(\gamma,\alpha)\in G^{(2)}$. Similarly each $(\gamma,\alpha_i)$ is in $G^{(2)}$, so $\{(\gamma,\alpha_i)\}_{i\in I}$ converges to $(\gamma,\alpha)$ in $G^{(2)}$. The composition map of a groupoid is continuous, so $\gamma\alpha_i$ converges to $\gamma\alpha$ in $G$, and the map $\alpha\mapsto\gamma\alpha$ is continuous. Since the inverse of $\alpha\mapsto\gamma\alpha$ is $\alpha\mapsto\gamma^{-1}\alpha$, the same argument shows that this inverse map is continous, thus showing that $\alpha\mapsto\gamma\alpha$ is a homeomorphism. A similar argument can be used to establish the second component of this lemma.
\end{proof}
\end{lemma}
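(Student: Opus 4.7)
The plan is to prove part \eqref{lemma_alpha_mapsto_gamma_alpha_homeomorphism_1} in full and note that part \eqref{lemma_alpha_mapsto_gamma_alpha_homeomorphism_2} follows by the same strategy with right-multiplication in place of left-multiplication. Fix $\gamma\in G$ and write $L_\gamma(\alpha):=\gamma\alpha$. I need to check four things: that $L_\gamma$ really maps into $G^{r(\gamma)}$, that it is a bijection, that it is continuous, and that its inverse is continuous.

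Well-definedness and bijectivity are purely algebraic and use only the groupoid axioms. If $\alpha\in G^{s(\gamma)}$ then $r(\alpha)=s(\gamma)$, so $(\gamma,\alpha)\in G^{(2)}$ by Remark~\ref{remark_composable_iff_range_source}, and applying the groupoid axioms one sees that $r(\gamma\alpha)=r(\gamma)$, so $L_\gamma$ takes values in $G^{r(\gamma)}$. The candidate inverse is $L_{\gamma^{-1}}:G^{r(\gamma)}\to G^{s(\gamma)}$, $\beta\mapsto\gamma^{-1}\beta$, which is well-defined by the same reasoning applied to $\gamma^{-1}$. Axiom \eqref{G3} gives $\gamma^{-1}(\gamma\alpha)=\alpha$ for $\alpha\in G^{s(\gamma)}$, and applied with $\gamma$ replaced by $\gamma^{-1}$ also gives $\gamma(\gamma^{-1}\beta)=\beta$ for $\beta\in G^{r(\gamma)}$, so $L_{\gamma^{-1}}$ is a two-sided inverse of $L_\gamma$.

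For continuity, I would factor $L_\gamma$ as $\alpha\mapsto(\gamma,\alpha)\mapsto\gamma\alpha$. The first map from $G^{s(\gamma)}$ into $G\times G$ is continuous because its coordinates are a constant map and the inclusion, and its image lies in $G^{(2)}$ by the check above; composing with the continuous composition map $G^{(2)}\to G$ from Definition~\ref{def_topological_groupoid} yields a continuous map into $G$ whose image lies in $G^{r(\gamma)}$. Since $G^{s(\gamma)}$ and $G^{r(\gamma)}$ carry the subspace topologies inherited from $G$, $L_\gamma$ is continuous as a map between them. Applying the same argument with $\gamma^{-1}$ in place of $\gamma$ shows that $L_{\gamma^{-1}}$ is continuous, and hence $L_\gamma$ is a homeomorphism.

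I do not anticipate any genuine obstacle here: the lemma is essentially an unpacking of the definition of a topological groupoid, with the only subtlety being that one has to keep track of the subspace topologies on $G^{s(\gamma)}$, $G^{r(\gamma)}$, and $G^{(2)}$. Part \eqref{lemma_alpha_mapsto_gamma_alpha_homeomorphism_2} is handled identically after replacing the map $\alpha\mapsto(\gamma,\alpha)$ by $\alpha\mapsto(\alpha,\gamma)$ and using $\alpha\mapsto\alpha\gamma^{-1}$ as the inverse.
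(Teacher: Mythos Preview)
Your proof is correct and takes essentially the same approach as the paper's: both establish bijectivity from the groupoid axioms, continuity from the continuity of the composition map, and then apply the same argument to $L_{\gamma^{-1}}$ for the inverse. The only cosmetic difference is that the paper phrases the continuity step with nets while you factor $L_\gamma$ through $\alpha\mapsto(\gamma,\alpha)$ and the composition map directly.
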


A topological groupoid $G$ is {\em $r$-discrete}\index{$r$-discrete groupoid} if $G^{(0)}$ is an open subset of $G$.

\begin{lemma}[{\cite[Lemma~I.2.7(i)]{Renault1980}}]\label{lemma_Renault_I.2.7_1}
Let $G$ be an $r$-discrete groupoid. For every $u\in G^{(0)}$, the subspace topologies on $G_u$ and $G^u$ are discrete.
\begin{proof}
Fix $u\in G^{(0)}$ and $\gamma\in G^u$. Then, since $G^{(0)}$ is open, the set $G^{(0)}\cap G^{s(\gamma)}=\{s(\gamma)\}$ is an open subset of $G^{s(\gamma)}$. Since $\alpha\mapsto\gamma\alpha$ is a homeomorphism of $G^{s(\gamma)}$ onto $G^{r(\gamma)}$ (Lemma \ref{lemma_alpha_mapsto_gamma_alpha_homeomorphism}), it follows that $\{\gamma s(\gamma)\}=\{\gamma\}$ is an open subset of $G^{r(\gamma)}=G^u$. A similar argument can be used to show that $G_u$ is discrete.
\end{proof}
\end{lemma}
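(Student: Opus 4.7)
The plan is to prove the $G^u$ case first; the $G_u$ case will follow by a symmetric argument. Fix $u \in G^{(0)}$ and $\gamma \in G^u$. To show that the subspace topology on $G^u$ is discrete, it suffices to produce, for each such $\gamma$, an open subset of $G^u$ equal to $\{\gamma\}$. The strategy is to first locate an isolated point in the fiber $G^{s(\gamma)}$ using the $r$-discreteness hypothesis, and then transport it to $\gamma$ via the homeomorphism from Lemma \ref{lemma_alpha_mapsto_gamma_alpha_homeomorphism}.

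The first step is to identify $G^{(0)} \cap G^{s(\gamma)}$. Any $\alpha$ in this intersection satisfies $r(\alpha) = s(\gamma)$, and since elements of the unit space satisfy $\alpha = r(\alpha)$ (as $r(\alpha) = \alpha\alpha^{-1} = \alpha^2 = \alpha$ by the characterisation of units noted just after Definition \ref{def_groupoid}), we get $\alpha = s(\gamma)$. Thus $G^{(0)} \cap G^{s(\gamma)} = \{s(\gamma)\}$. Because $G$ is $r$-discrete, $G^{(0)}$ is open in $G$, and so this intersection is open in the subspace topology on $G^{s(\gamma)}$; in particular, $\{s(\gamma)\}$ is open in $G^{s(\gamma)}$.

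The second step is to apply part \eqref{lemma_alpha_mapsto_gamma_alpha_homeomorphism_1} of Lemma \ref{lemma_alpha_mapsto_gamma_alpha_homeomorphism} to transfer this isolated point. That lemma gives that $\alpha \mapsto \gamma\alpha$ is a homeomorphism from $G^{s(\gamma)}$ onto $G^{r(\gamma)} = G^u$, and since $\gamma s(\gamma) = \gamma$, it carries the open singleton $\{s(\gamma)\}$ to the open singleton $\{\gamma\}$ inside $G^u$. As $\gamma \in G^u$ was arbitrary, every point of $G^u$ is isolated and $G^u$ is discrete. For $G_u$, the same argument works verbatim with $s(\gamma) \in G^{(0)}$ replaced by $r(\gamma) \in G^{(0)}$ and part \eqref{lemma_alpha_mapsto_gamma_alpha_homeomorphism_2} of Lemma \ref{lemma_alpha_mapsto_gamma_alpha_homeomorphism} used in place of part \eqref{lemma_alpha_mapsto_gamma_alpha_homeomorphism_1}.

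The proof is essentially bookkeeping once the multiplication homeomorphism is in hand, so I do not expect a real obstacle. The only place needing care is the identification $G^{(0)} \cap G^{s(\gamma)} = \{s(\gamma)\}$; one must remember that for a general groupoid $G^{(0)}$ can meet a fiber in more than just the obvious unit unless one invokes the characterisation of units via $\alpha = r(\alpha)$, so I would state that identification explicitly rather than treating it as automatic.
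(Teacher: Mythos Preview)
Your proof is correct and follows essentially the same approach as the paper's: identify $G^{(0)}\cap G^{s(\gamma)}=\{s(\gamma)\}$ as an open singleton in $G^{s(\gamma)}$ using $r$-discreteness, then push it to $\{\gamma\}$ via the left-multiplication homeomorphism of Lemma~\ref{lemma_alpha_mapsto_gamma_alpha_homeomorphism}. Your version is slightly more explicit about why the intersection is a singleton, but the argument is otherwise identical.
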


\begin{remark}\label{remark_problems_Renault_results}
The proofs of two useful results, \cite[Lemma~I.2.7, Proposition~I.2.8]{Renault1980}, omit details that appear to be important. In \cite[Lemma~I.2.7]{Renault1980} it is claimed that if an $r$-discrete groupoid has a Haar system, then that Haar system is `essentially the system of counting measures'. Component (ii) of the proof of this lemma constructs a system of counting measures from the given Haar system however does not establish the invariance \eqref{H3} of this system.

In \cite[Proposition~I.2.8]{Renault1980} it is claimed that if the range map of a groupoid is a local homeomorphism, then the groupoid is $r$-discrete and admits a Haar system. The (iv)$\implies$(i) component of the proof of this proposition constructs a system of measures, however the continuity \eqref{H2} of this system is not established, which is necessary to have a Haar system.
\end{remark}

We use the next result as an alternative to \cite[Lemma~I.2.7, Proposition~I.2.8]{Renault1980}; its proof uses ideas from \cite[Lemma~I.2.7, Proposition~I.2.8]{Renault1980}.
\begin{prop}[{based on \cite[Lemma~I.2.8]{Renault1980}}]\label{Renault_I_2_8i_sub}
Suppose $G$ is a second countable, locally compact, Hausdorff groupoid. Then the following are equivalent.
\begin{enumerate}
\item\label{Renault_I_2_8i_sub_i} $G$ is $r$-discrete and admits a Haar system of counting measures;
\item\label{Renault_I_2_8i_sub_ii} the source map is a local homeomorphism;
\item\label{Renault_I_2_8i_sub_iii} the range map is a local homeomorphism; and
\item\label{Renault_I_2_8i_sub_iv} $G$ is $r$-discrete and the composition map is a local homeomorphism.
\end{enumerate}
\end{prop}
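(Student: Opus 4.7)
The plan is to establish \ref{Renault_I_2_8i_sub_ii}$\iff$\ref{Renault_I_2_8i_sub_iii}, then \ref{Renault_I_2_8i_sub_i}$\iff$\ref{Renault_I_2_8i_sub_ii}, and finally \ref{Renault_I_2_8i_sub_iii}$\iff$\ref{Renault_I_2_8i_sub_iv}. The equivalence \ref{Renault_I_2_8i_sub_ii}$\iff$\ref{Renault_I_2_8i_sub_iii} is immediate: since $r(\alpha)=s(\alpha^{-1})$ and inversion is a homeomorphism by Remark~\ref{remark_following_topological_groupoid_def}, $s$ is a local homeomorphism if and only if $r$ is. Each of \ref{Renault_I_2_8i_sub_ii} and \ref{Renault_I_2_8i_sub_iii} also forces $G^{(0)}$ to be open: given $u\in G^{(0)}$ and open $U\ni u$ on which $s|_U$ is a homeomorphism, the set $V:=U\cap s^{-1}(U)$ is an open neighbourhood of $u$ on which every $\alpha$ satisfies $\alpha,s(\alpha)\in U$ with $s(\alpha)=s(s(\alpha))$, so injectivity of $s|_U$ gives $\alpha=s(\alpha)\in G^{(0)}$, whence $V\subseteq G^{(0)}$.

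For \ref{Renault_I_2_8i_sub_i}$\implies$\ref{Renault_I_2_8i_sub_ii}, Proposition~\ref{prop_RenaultI1.2.4} yields that $s$ is open and Lemma~\ref{lemma_Renault_I.2.7_1} yields that each $G_x$ is discrete. Fixing $\alpha\in G$, I pick an open $U_0\ni\alpha$ with $U_0\cap G_{s(\alpha)}=\{\alpha\}$ and claim $s|_U$ is injective on some open $U\ni\alpha$ inside $U_0$. If not, I can choose $\beta_n,\gamma_n\to\alpha$ with $\beta_n\neq\gamma_n$ and $s(\beta_n)=s(\gamma_n)$; then $\beta_n\gamma_n^{-1}\to r(\alpha)\in G^{(0)}$, and by openness of $G^{(0)}$ this product is eventually a unit $u_n$, which forces $u_n=r(\gamma_n)$ and hence $\beta_n=u_n\gamma_n=\gamma_n$, a contradiction; then $s|_U$ is injective, continuous, and open, hence a homeomorphism onto the open set $s(U)$. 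Conversely, for \ref{Renault_I_2_8i_sub_ii}$\implies$\ref{Renault_I_2_8i_sub_i} I define $\lambda_x$ to be counting measure on the closed, discrete (Lemma~\ref{lemma_Renault_I.2.7_1}) set $G_x$, which is Radon since its compact subsets are finite; \eqref{H1} is clear, and \eqref{H3} follows from the bijection $\alpha\mapsto\alpha\gamma\colon G_{r(\gamma)}\to G_{s(\gamma)}$ of Lemma~\ref{lemma_alpha_mapsto_gamma_alpha_homeomorphism}. For \eqref{H2}, given $f\in C_c(G)$ I cover $\mathrm{supp}\,f$ by finitely many open $V_i$ on which $s$ is a homeomorphism, take a subordinate partition of unity $\{\phi_i\}$, and observe that $x\mapsto\sum_{\alpha\in G_x}(f\phi_i)(\alpha)$ equals $(f\phi_i)\circ(s|_{V_i})^{-1}$ on $s(V_i)$ (since $G_x\cap V_i$ has at most one element) extended by zero, which is continuous and compactly supported; summing over $i$ gives the claim for $f$.

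For \ref{Renault_I_2_8i_sub_iii}$\implies$\ref{Renault_I_2_8i_sub_iv}, given $(\alpha,\beta)\in G^{(2)}$ I choose open $U\ni\alpha$ with $r|_U$ a homeomorphism and any open $V\ni\beta$, and set $W:=(U\times V)\cap G^{(2)}$; composition is injective on $W$ since $\alpha'\beta'=\alpha''\beta''$ forces $r(\alpha')=r(\alpha'')$ and hence $\alpha'=\alpha''$, and then $\beta'=\beta''$; it is also open by Lemma~\ref{lemma_composition_map_open} (using that $r$ and $s$ are open), so the restriction to $W$ is a homeomorphism onto its open image. For \ref{Renault_I_2_8i_sub_iv}$\implies$\ref{Renault_I_2_8i_sub_iii}, fix $\alpha\in G$ and apply the local homeomorphism property at $(\alpha,\alpha^{-1})\in G^{(2)}$ to obtain $W=(U\times V)\cap G^{(2)}$ on which composition $c$ is a homeomorphism onto an open subset of $G$; since $G^{(0)}$ is open and $c(\alpha,\alpha^{-1})=r(\alpha)\in G^{(0)}$, the open subset $W':=W\cap c^{-1}(G^{(0)})$ consists precisely of pairs $(\beta,\beta^{-1})$ with $\beta\in U\cap V^{-1}$ (since $\alpha'\beta'\in G^{(0)}$ forces $\beta'=(\alpha')^{-1}$), and the homeomorphism $\beta\mapsto(\beta,\beta^{-1})$ composes with $c|_{W'}$ to give $r|_{U\cap V^{-1}}$ as a homeomorphism onto an open subset of $G^{(0)}$. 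The main obstacle is the verification of \eqref{H2} in \ref{Renault_I_2_8i_sub_ii}$\implies$\ref{Renault_I_2_8i_sub_i}, where the partition-of-unity reduction is essential to transfer the local homeomorphism structure of $s$ into continuity of the counting-measure integral.
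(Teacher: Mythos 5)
Your proof is correct, and while it establishes the same cycle of equivalences as the paper, two of the implications are argued by genuinely different routes. For \eqref{Renault_I_2_8i_sub_i}$\implies$\eqref{Renault_I_2_8i_sub_ii}, the paper deduces local injectivity of $s$ from the counting-measure hypothesis together with the continuity of the Haar system (arranging $\lambda_{s(\gamma)}(K)=1$ and then $\lambda_y(K)<1.1$ nearby, so that $\lambda_y(K)=1$); your sequential argument instead extracts $\beta_n\neq\gamma_n$ with $s(\beta_n)=s(\gamma_n)$, pushes $\beta_n\gamma_n^{-1}$ into the open set $G^{(0)}$ and cancels. This is more elementary, and in fact only uses $r$-discreteness plus the openness of $s$ from Proposition~\ref{prop_RenaultI1.2.4}, so it would work for any Haar system, not just counting measures. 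For \eqref{Renault_I_2_8i_sub_iv}$\implies$\eqref{Renault_I_2_8i_sub_ii}/\eqref{Renault_I_2_8i_sub_iii}, the paper proves openness of $s$ (via $s(U)=\circ\big((U^{-1}\times U)\cap G^{(2)}\big)\cap G^{(0)}$) and local injectivity separately, whereas you factor $r|_{U\cap V^{-1}}$ as the composition of the homeomorphism $\beta\mapsto(\beta,\beta^{-1})$ with the restricted composition map, getting both properties at once; your identification of $W\cap\circ^{-1}(G^{(0)})$ with $\{(\beta,\beta^{-1}):\beta\in U\cap V^{-1}\}$ is the key point and is correct. Your proofs that $G^{(0)}$ is open (via $U\cap s^{-1}(U)$ rather than $U U^{-1}\subset G^{(0)}$), of \eqref{H3} (directly from the bijection $\alpha\mapsto\alpha\gamma$ rather than via the single-chart reduction), and of the composition-map implication are minor variants of the paper's. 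The one place you are terser than the paper is the continuity of the extension by zero of $(f\phi_i)\circ(s|_{V_i})^{-1}$ in the verification of \eqref{H2}: this is where the paper inserts Lemma~\ref{lemma_used_for_Renault_I_2_8i_sub_ii_implies_i}, and your argument does need the observation that $s(\mathrm{supp}(f\phi_i))$ is a compact, hence closed, subset of the open set $s(V_i)$, so that the function vanishes on a neighbourhood of every point outside $s(V_i)$; since $\mathrm{supp}(f\phi_i)\subset\mathrm{supp}\,\phi_i\subset V_i$ is compact, this is immediate, but it is worth saying.
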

The proof of this proposition will use the following technical lemma.

\begin{lemma}\label{lemma_used_for_Renault_I_2_8i_sub_ii_implies_i}
Suppose $G$ is a locally compact, Hausdorff groupoid and that $U$ is an open set in $G$ such that $s|_U$ is a homeomorphism onto the open set $s(U)$. Then for every $\gamma\in U$ there is an open neighbourhood $V$ of $\gamma$ in $G$ such that $\overline{V}\subset U$ and $s(\overline{V})$ is closed in $G^{(0)}$.
\begin{proof}
Fix $\gamma\in U$. Since $G$ is locally compact and Hausdorff, $\gamma$ has a base of compact neighbourhoods. Then there is a compact neighbourhood $N$ of $\gamma$ in $G$ such that $N\subset U$. Let $V=\mathrm{int}\,N$, so that $V$ is an open neighbourhood of $\gamma$ in $G$ with $\overline{V}\subset U$. The set $s(\overline{V})$ is compact since $s$ is continuous. Finally, since $G$ and $G^{(0)}$ are Hausdorff, the compact sets $\overline{V}$ and $s(\overline{V})$ are closed.
\end{proof}
\end{lemma}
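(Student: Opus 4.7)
The plan is to produce $V$ as the interior of a compact neighbourhood of $\gamma$ sitting inside $U$, and then rely on continuity of $s$ together with Hausdorffness to close out the argument. Since the hypothesis is purely topological, no use of the groupoid structure (beyond continuity of the source map) appears to be needed; in particular the fact that $s|_U$ is a homeomorphism seems inessential for this statement, though it may be included simply because this lemma is applied in that setting.

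First I would fix $\gamma \in U$ and invoke local compactness: in a locally compact Hausdorff space every point has a neighbourhood base of compact sets, so since $U$ is an open neighbourhood of $\gamma$ there exists a compact neighbourhood $N$ of $\gamma$ with $N \subset U$. Setting $V := \mathrm{int}\, N$, this is an open neighbourhood of $\gamma$. Because $G$ is Hausdorff the compact set $N$ is closed, so $\overline V \subset N \subset U$, giving the first required conclusion.

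Next I would observe that $\overline V$ is a closed subset of the compact set $N$, hence compact. The source map $s$ is continuous (by the standing assumption that $G$ is a topological groupoid together with Remark \ref{remark_following_topological_groupoid_def}), so $s(\overline V)$ is a compact subset of $G^{(0)}$. Since $G^{(0)}$ inherits the Hausdorff property from $G$, compact subsets are closed, and therefore $s(\overline V)$ is closed in $G^{(0)}$, completing the proof.

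There is really no serious obstacle here: the only subtlety is making sure that the neighbourhood chosen at the local-compactness step is taken small enough so that its closure (not merely itself) lies inside $U$, which is handled by passing from the compact neighbourhood $N$ to its interior $V$ and using that $N$ is closed. Everything else is a direct application of continuity and the standard fact that compact sets in Hausdorff spaces are closed.
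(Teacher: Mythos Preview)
Your proof is correct and follows essentially the same approach as the paper's: pick a compact neighbourhood $N\subset U$, set $V=\mathrm{int}\,N$, and use continuity of $s$ plus Hausdorffness to conclude. Your observation that the homeomorphism hypothesis on $s|_U$ is not actually needed for this lemma is also correct.
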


To prove Proposition \ref{Renault_I_2_8i_sub}, it is easiest to deal with either left or right Haar systems; here we choose right Haar systems due to personal preference. In this light, condition \eqref{Renault_I_2_8i_sub_i} is most closely related to \eqref{Renault_I_2_8i_sub_ii}, so we will begin by establishing their equivalence. Conditions \eqref{Renault_I_2_8i_sub_ii} and \eqref{Renault_I_2_8i_sub_iii} are very closely related, so demonstrating their equivalence will follow. For the remainder of the proof we will show that \eqref{Renault_I_2_8i_sub_ii} and \eqref{Renault_I_2_8i_sub_iv} are equivalent.
\begin{proof}[Proof of Proposition \ref{Renault_I_2_8i_sub}]
\eqref{Renault_I_2_8i_sub_i}$\implies$\eqref{Renault_I_2_8i_sub_ii}. Suppose $G$ is $r$-discrete and $\lambda$ is a Haar system of counting measures on $G$. Since $s$ is continuous by the definition of a topological groupoid and open by Proposition \ref{prop_RenaultI1.2.4}, to show that $s$ is a local homeomorphism it suffices so show that it is locally injective. Fix $\gamma\in G$ and let $K$ be a compact neighbourhood of $\gamma$ in $G$. Since $G$ is $r$-discrete, $G_{s(\gamma)}$ is discrete by Lemma \ref{lemma_Renault_I.2.7_1}, so $K$ and $G_{s(\gamma)}$ must have finite a intersection; write $K\cap G_{s(\gamma)}=\{\gamma_1,\gamma_2,\ldots,\gamma_p\}$. For every $\gamma_i$ with $\gamma_i\ne \gamma$, there is a compact neighbourhood $K_i$ of $\gamma$ such that $\gamma_i\notin K_i$ and $K_i\subset K$. By replacing $K$ by $K_1\cap K_2\cap\ldots \cap K_p$, we can assume that $K\cap G_{s(\gamma)}=\{\gamma\}$, and so $\lambda_{s(\gamma)}(K)=1$. 

By Lemma \ref{corollary_astrid_lemma} there is an open neighbourhood $U$ of $s(\gamma)$ in $G^{(0)}$ such that $y\in U$ implies $\lambda_y(K)<\lambda_{s(\gamma)}(K)+0.1=1.1$. Let $K'$ be a compact neighbourhood of $\gamma$ with $K'\subset s^{-1}(U)\cap K$. Replace $K$ by $K'$ so that $\lambda_y(K)<1.1$ for every $y\in s(K)$. For every $y\in s(K)$ there exists $\delta\in K$ such that $s(\delta)=y$, so $\{\delta\}\subset G_y\cap K$ and $\lambda_y(K)\ge 1$. Thus $\lambda_y(K)=1$ for every $y\in s(K)$. So each $G_y\cap K$ has exactly one element and then the restriction $s|_K$ is injective. Thus $s$ is a local homeomorphism.

\eqref{Renault_I_2_8i_sub_ii}$\implies$\eqref{Renault_I_2_8i_sub_i}. Suppose \eqref{Renault_I_2_8i_sub_ii}. 
We will first show that $G$ is $r$-discrete. By \eqref{Renault_I_2_8i_sub_ii}, for any $x\in G^{(0)}$ there is an open neighbourhood $U_x$ of $x$ such that $s|_{U_x}$ is a homeomorphism. We claim that $U_xU_x^{-1}\subset G^{(0)}$ for each $x\in G^{(0)}$. Fix $y\in G^{(0)}$ and $\gamma\in U_yU_y^{-1}$. There exist $\alpha,\beta\in U_y$ with $s(\alpha)=s(\beta)$ such that $\gamma=\alpha\beta^{-1}$. Then $\alpha=\beta$ since $s|_{U_y}$ is injective, so $\gamma=\alpha\alpha^{-1}=r(\alpha)$ and so $U_yU_y^{-1}\subset G^{(0)}$. Note that $U_yU_y^{-1}$ is open since the composition and inverse maps are open (Lemma \ref{lemma_composition_map_open} and Remark \ref{remark_following_topological_groupoid_def}). Then $\cup_{x\in G^{(0)}}U_xU_x^{-1}$ is open and is all of $G^{(0)}$, so $G$ is $r$-discrete.

Since $G$ is $r$-discrete, the spaces $G_x$ for $x\in G^{(0)}$ are discrete by Lemma \ref{lemma_Renault_I.2.7_1}. For each $x\in G^{(0)}$ and every Borel set $E\subset G$, let $\lambda_x(E)$ be the possibly infinite number of elements in $G_x\cap E$. We claim that $\{\lambda_x:x\in G^{(0)}\}$ is a right Haar system for $G$. Since each $G_x$ is discrete, it is easy to see that each $\lambda_x$ is a Radon measure and it remains to show that \eqref{H1}, \eqref{H2} and \eqref{H3} are satisfied. Condition \eqref{H1} is an immediate consequence of the definition of the system $\{\lambda_x\}$.

Fix $f\in C_c(G)$. Before establishing \eqref{H2} and \eqref{H3} we use that $s$ is a local homeomorphism to make an assumption about $f$. Since $s$ is a local homeomorphism, for every $\gamma\in G$ there exists an open neighbourhood $U_\gamma$ of $\gamma$ in $G$ such that $s|_{U_\gamma}$ is a homeomorphism onto the open set $s(U_\gamma)$. By Lemma \ref{lemma_used_for_Renault_I_2_8i_sub_ii_implies_i} for each $\gamma$ there exists an open neighbourhood $V_\gamma$ of $\gamma$ in $G$ such that $\overline{V_\gamma}\subset U_\gamma$ and $s(\overline{V_\gamma})$ is closed in $G^{(0)}$. Now $\{V_\gamma:\gamma\in \mathrm{supp}\, f\}$ is an open cover of the compact space $\mathrm{supp}\, f$ so there exists a finite subset $\{\gamma_1,\gamma_2,\ldots,\gamma_p\}$ of $\mathrm{supp}\, f$ such that $\mathrm{supp}\, f\subset\cup_{i=1}^p V_{\gamma_i}$. By a partition of unity (see, for example, \cite[Proposition~1.7.12]{Pedersen1989}) there exist $f_1,f_2,\ldots,f_p\in C_c(G)$ such that $f(\gamma)=\sum_{i=1}^p f_i(\gamma)$ for every $\gamma\in G$ and $f_i(\gamma)=0$ for every $\gamma\notin V_{\gamma_i}$. To establish \eqref{H2} and \eqref{H3} we may thus assume that $f=f_j$ for some $1\le j\le p$. Let $V=V_{\gamma_j}$ and $U=U_{\gamma_j}$.

We will now establish \eqref{H2}. Suppose $\{x_i\}_{i\in I}$ is a net in $G^{(0)}$ that converges to some $x\in G^{(0)}$. For \eqref{H2} we need to show that $\int f\,d\lambda_{x_i}\rightarrow\int f\, d\lambda_x$. If $x$ is not in the open subset $s(U)$ of $G^{(0)}$, then $x_i$ is eventually not in the closed subset $s(\overline{V})$ of $G^{(0)}$ and thus not in $s(V)$. Since $f(\gamma)=0$ whenever $\gamma\notin V$, it follows that $\int f\, d\lambda_x=0=\int f\,d\lambda_{x_i}$ eventually. Now suppose $x$ is in the open set $s(U)$. Then $x_i$ is eventually in $s(U)$ so we may assume that $x_i\in s(U)$ for every $i\in I$. Let $\gamma_i=s|_{U}^{-1}(x_i)$ for $i\in I$ and let $\gamma=s|_{U}^{-1}(x)$. Since $x_i\rightarrow x$ in $s(U)$, the map $s|_{U}^{-1}$ is continuous and so $\{\gamma_i\}_{i\in I}$ converges to $\gamma$ in $U$. Furthermore, since $U$ is an open subset of $G$, $\{\gamma_i\}_{i\in I}$ converges to $\gamma$ in $G$. Now, for every $i\in I$,
\begin{align*}
\int f(\alpha)\, d\lambda_{x_i}(\alpha)&=\int f(\alpha)\chi_{\{\gamma_i\}}(\alpha)\, d\lambda_{x_i}(\alpha)\\
&=f(\gamma_i)\lambda_{x_i}(\{\gamma_i\})\\
&= f(\gamma_i)\quad(\lambda_{x_i}\text{ is the counting measure on }G_x)
\end{align*}
and similarly $\int f\,d\lambda_x=f(\gamma)$. Since $\gamma_i\rightarrow\gamma$ in $G$ and $f\in C_c(G)$,
\[
\int f\,d\lambda_{x_i}=f(\gamma_i)\rightarrow f(\gamma)=\int f\,d\lambda_x.
\]
Thus $x\mapsto \int f\,d\lambda_x$ is continuous. 

To show that the map $x\mapsto \int f\, d\lambda_x$ from \eqref{H2} has compact support, first suppose $x\in G^{(0)}$ satisfies $\int f\,d\lambda_x\ne 0$. Since $f\in C_c(G)$, the space $s(\mathrm{supp}\,f)$ is compact so it will suffice to show that $x\in s(\mathrm{supp}\, f)$. The intersection $G_x\cap U$ must be non-empty since $\mathrm{supp}\,\lambda_x=G_x$ and $f(\alpha)\ne 0$ requires $\alpha\in U$. Let $\eta=s|_{U}^{-1}(x)$. Then 
\[
\int f\,d\lambda_x=\int f(\alpha)\chi_{\{\eta\}}(\alpha)\,d\lambda_x(\alpha)=f(\eta)\lambda_x(\{\eta\})=f(\eta),\]
so $f(\eta)$ is non-zero and thus $\eta\in \mathrm{supp}(f)$. Then $s(\eta)=x\in s(\mathrm{supp}\,f)$ so the support of $x\mapsto \int f\,d\lambda_x$ is compact, establishing \eqref{H2}.

Fix $\alpha\in G$. To establish \eqref{H3} we will show that \[\int f(\beta\alpha)\,d\lambda_{r(\alpha)}(\beta)=\int f(\beta)\,d\lambda_{s(\alpha)}(\beta).\]
If $s(\alpha)\notin s(U)$, then $\beta\alpha\notin U$ for all $\beta\in G_{r(\alpha)}$ so $\int f(\beta\alpha)\,d\lambda_{r(\alpha)}(\beta)=0$ and similarly $\int f(\beta)\,d\lambda_{s(\alpha)}(\beta)=0$. Suppose $s(\alpha)\in s(U)$ and let $\delta=s|_{U}^{-1}(s(\alpha))$. The integrand of $\int f(\beta\alpha)\,d\lambda_{r(\alpha)}(\beta)$ is zero unless $\beta\alpha\in U$ and hence $\beta\alpha=\delta$. Now we have
{\allowdisplaybreaks\begin{align*}
\int f(\beta\alpha)\,d\lambda_{r(\alpha)}(\beta)&=\int f(\beta\alpha)\chi_{\{\delta\}}(\beta\alpha)\,d\lambda_{r(\alpha)}(\beta)\\
&=\int f(\delta)\chi_{\{\delta\alpha^{-1}\}}(\beta)\,d\lambda_{r(\alpha)}(\beta)\\
&=f(\delta)\lambda_{r(\alpha)}(\{\delta\alpha^{-1}\}) = f(\delta).
\end{align*}
}The integrand of $\int f(\beta)\,d\lambda_{s(\alpha)}(\beta)$ is zero unless $\beta\in U$ and hence $\beta=\delta$, so we have
{\allowdisplaybreaks
\begin{align*}
\int f(\beta)\,d\lambda_{s(\alpha)}(\beta)&=\int f(\beta)\chi_{\{\delta\}}(\beta)\,d\lambda_{s(\alpha)}(\beta)\\
&=f(\delta)\lambda_{s(\alpha)}(\{\delta\}) = f(\delta),
\end{align*}
}establishing \eqref{H3}. Thus $\{\lambda_x:x\in G^{(0)}\}$ is a right Haar system of counting measures for $G$, establishing \eqref{Renault_I_2_8i_sub_i}.

\eqref{Renault_I_2_8i_sub_ii}$\implies$\eqref{Renault_I_2_8i_sub_iii}.
Suppose \eqref{Renault_I_2_8i_sub_ii} and fix $\gamma\in G$. Since $s$ is a local homeomorphism there is an open neighbourhood $U$ of $\gamma^{-1}$ such that $s|_U$ is a homeomorphism onto the open set $s(U)$. Since the inverse map is open, $U^{-1}$ is an open neighbourhood of $\gamma$. Note that $r(U^{-1})$ is open since $r(U^{-1})=s(U)$. To show that $r|_{U^{-1}}$ is a homeomorphism onto $r(U^{-1})$, we need to show that $r|_{U^{-1}}$ is open and injective. Suppose $V\subset U^{-1}$ is open. Then $V^{-1}\subset U$ is open, so $s(V^{-1})$ is open since $s|_{U}$ is open.  It follows that $r|_{U^{-1}}$ is open since $r(V)=s(V^{-1})$. Now suppose $\alpha,\beta\in U^{-1}$ satisfy $r(\alpha)=r(\beta)$. Then $s(\alpha^{-1})=s(\beta^{-1})$. Since $\alpha^{-1},\beta^{-1}\in U$ and $s|_U$ is injective, we must have $\alpha^{-1}=\beta^{-1}$, so $\alpha=\beta$ and $r|_{U^{-1}}$ is injective, establishing \eqref{Renault_I_2_8i_sub_iii}. 

\eqref{Renault_I_2_8i_sub_iii}$\implies$\eqref{Renault_I_2_8i_sub_ii}. A proof establishing this claim is similar to the \eqref{Renault_I_2_8i_sub_ii}$\implies$\eqref{Renault_I_2_8i_sub_iii} proof.

\eqref{Renault_I_2_8i_sub_ii}$\implies$\eqref{Renault_I_2_8i_sub_iv}. Suppose \eqref{Renault_I_2_8i_sub_ii}. It was shown that $G$ is $r$-discrete in the \eqref{Renault_I_2_8i_sub_ii}$\implies$\eqref{Renault_I_2_8i_sub_i} component of this proof, so it remains to show that the composition map is a local homeomorphism. Let $\circ:G^{(2)}\rightarrow G$ be the composition map (i.e. the map such that $\circ(\alpha,\beta)=\alpha\beta$ for every $(\alpha,\beta)\in G^{(2)}$). Fix $(\alpha,\beta)\in G^{(2)}$. By \eqref{Renault_I_2_8i_sub_ii} and the previously proved claim that \eqref{Renault_I_2_8i_sub_ii}$\implies$\eqref{Renault_I_2_8i_sub_iii}, the maps $r$ and $s$ are local homeomorphisms. Thus there exist open neighbourhoods $U$ and $V$ of $\alpha$ and $\beta$, respectively, such that $r|_U$ and $s|_V$ are homeomorphisms. Then $(U\times V)\cap G^{(2)}$ is an open neighbourhood of $(\alpha,\beta)$ in $G^{(2)}$. To see that $\circ|_{(U\times V)\cap G^{(2)}}$ is injective, suppose $(\gamma_1,\delta_1),(\gamma_2,\delta_2)\in (U\times V)\cap G^{(2)}$ satisfy $\circ(\gamma_1,\delta_1)=\circ(\gamma_2,\delta_2)$. Then $\gamma_1\delta_1=\gamma_2\delta_2$, so $r(\gamma_1)=r(\gamma_2)$ and $s(\delta_1)=s(\delta_2)$. Since $\gamma_1,\gamma_2\in U$ and $\delta_1,\delta_2\in V$ with $r|_U$ and $s_V$ homeomorphisms, we must have $\gamma_1=\gamma_2$ and $\delta_1=\delta_2$. Thus $\circ|_{(U\times V)\cap G^{(2)}}$ is injective and it follows that $\circ$ is a local homeomorphism since $\circ$ is continuous (Definition \ref{def_topological_groupoid}) and open (Lemma \ref{lemma_composition_map_open}).

\eqref{Renault_I_2_8i_sub_iv}$\implies$\eqref{Renault_I_2_8i_sub_ii}. We begin by showing that $s$ is an open map. Let $U$ be an open subset of $G$. Then $U^{-1}$ is open (Remark \ref{remark_following_topological_groupoid_def}) and so $(U^{-1}\times U)\cap G^{(2)}$ is an open subset of $G^{(2)}$. Since every local homeomorphism is open and $\circ$ is a local homeomorphism, $\circ\big((U^{-1}\times U)\cap G^{(2)}\big)$ is an open subset of $G$.

We claim that $s(U)=\circ\big((U^{-1}\times U)\cap G^{(2)}\big)\cap G^{(0)}$. Fix $x\in \circ\big((U^{-1}\times U)\cap G^{(2)}\big)\cap G^{(0)}$. Then there exist $\alpha,\beta\in U$ with $r(\alpha)=r(\beta)$ and such that $x=\alpha^{-1}\beta$; note in particular that $x=s(\alpha)$. Then $\alpha x=\beta$ and $\alpha=\beta$ since $x=s(\alpha)$. We now have $x=\alpha^{-1}\alpha=s(\alpha)\in s(U)$. Now fix $y\in s(U)$. Then there exists $\eta\in U$ with $y=s(\eta)$. Now $(\eta^{-1},\eta)\in (U^{-1}\times U)\cap G^{(2)}$, so
\[
s(\eta)=\eta^{-1}\eta\in \circ\big((U^{-1}\times U)\cap G^{(2)}\big),
\]
and $s(U)=\circ\big((U^{-1}\times U)\cap G^{(2)}\big)\cap G^{(0)}$. Since $\circ\big((U^{-1}\times U)\cap G^{(2)}\big)$ is open and $G^{(0)}$ is open since $G$ is $r$-discrete, $s(U)$ is open and so $s$ is an open map.

Since $s$ is continuous (Remark \ref{remark_following_topological_groupoid_def}), to show that $s$ is a local homeomorphism it remains to show that it is locally injective. Fix $\gamma\in G$. Since $\circ$ is locally injective and $(\gamma^{-1},\gamma)\in G^{(2)}$, there is an open neighbourhood $A$ of $(\gamma^{-1},\gamma)$ in $G^{(2)}$ such that $\circ|_A$ is injective. By the definition of the topology on $G^{(2)}$ there exist open neighbourhoods $U$ and $V$ of $\gamma^{-1}$ and $\gamma$, respectively, in $G$ such that $(U\times V)\cap G^{(2)}\subset A$. Let $W=U^{-1}\cap V$, so that $W$ is an open neighbourhood of $\gamma$ in $G$ with
\[
(\gamma^{-1},\gamma)\in (W^{-1}\times W)\cap G^{(2)}\subset (U\times V)\cap G^{(2)}.
\]
Suppose $\eta,\zeta\in W$ satisfy $s(\eta)=s(\zeta)$. Then $\eta^{-1}\eta=\zeta^{-1}\zeta$ implies $(\eta^{-1},\eta)=(\zeta^{-1},\zeta)$ since  composition is injective on $(W^{-1}\times W)\cap G^{(2)}$. Then $\eta=\zeta$ and so $s$ is locally injective. Thus $s$ is a local homeomorphism.
\end{proof}

\section{Representations and the groupoid \texorpdfstring{$C^*$}{\it C*}-algebra}\label{section_representations_and_groupoid_algebras}

Suppose $G$ is a topological groupoid with Haar system $\lambda$. For $f,g\in C_c(G)$ and $\alpha\in G$, define
\[
f\ast g(\alpha):=\int f(\beta)g(\beta^{-1}\alpha)\,d\lambda^{r(\alpha)}(\beta)\notationindex{FG@$f\ast g$}
\]
and $f^*(\alpha)=\overline{f(\alpha^{-1})}$. Then $C_c(G)$ with respect to these operations and the inductive limit topology\footnote{An overview of the inductive limit topology can be found in Section D.2 of Raeburn and Williams' book \cite{Raeburn-Williams1998}} is a topological $\ast$-algebra. Since the Haar system is a key part of this structure, it is both common in the literature and indeed more precise, for this $\ast$-algebra to be written as $C_c(G,\lambda)$\notationindex{CGlambda@$C_c(G,\lambda)$}. In this thesis we usually write $C_c(G)$ when the Haar system is clear from the context.
\begin{remark}\label{remark_alternative_convolution}
By considering \eqref{H3} and the definition of a left Haar system (Definition \ref{def_Haar_system}), we can see that for $f,g\in C_c(G)$ and $\alpha\in G$,
\[
f\ast g(\alpha) = \int f(\alpha \beta^{-1})g(\beta)\,d\lambda_{s(\alpha)}(\beta).
\]
\end{remark}

\begin{definition}[{\cite[Definition~2.41]{Muhly-book}}]
A {\em representation} of $C_c(G,\lambda)$\index{representation!of $C_c(G,\lambda)$} on a Hilbert space $\Hh$ is a $\ast$-homomorphism $\pi$ from $C_c(G,\lambda)$ into $B(\Hh)$, that is continuous with respect to the inductive limit topology on $C_c(G,\lambda)$ and the weak operator topology on $B(\Hh)$, and that is nondegenerate in the sense that the span of $\{\pi(f)h:f\in C_c(G,\lambda), h\in H\}$ is a dense subset of $\Hh$.
\end{definition}

For $f\in C_c(G,\lambda)$, based on Hahn's definition of the $I$-norm in \cite[p. 38]{Hahn1978}, Renault in \cite[p. 50]{Renault1980} defined a quantity $\|f\|_I$\notationindex{0I@$\lVert\cdot\rVert_I,\lVert\cdot\rVert$} by
\[
\|f\|_I=\max\left\{\sup_{x\in G^{(0)}}\int_G\lvert f\rvert\,d\lambda^x,\sup_{x\in G^{(0)}}\int_G\lvert f\rvert\,d\lambda_x\right\}.
\]
Proposition~II.1.4 from \cite{Renault1980} shows that $\|\cdot\|_I$ is a $\ast$-algebra norm on $C_c(G,\lambda)$.
\begin{definition}[{\cite[Definition~II.1.5]{Renault1980}}]
Suppose $G$ is a second countable, locally compact, Hausdorff groupoid with Haar system $\lambda$. A representation $\pi$ of $C_c(G,\lambda)$ is {\em bounded}\index{bounded representation} if $\|\pi(f)\|\le\|f\|_I$ for every $f\in C_c(G,\lambda)$.
\end{definition}
Following the definition of a bounded representation, for each $f\in C_c(G,\lambda)$ Renault in \cite[p. 51]{Renault1980} defines a quantity
\[
\|f\|:=\sup\{\|\pi(f)\|:\pi\text{ is a bounded representation of }C_c(G,\lambda)\}.
\]
After noting that $\|\cdot\|$ is a $C^*$-semi-norm on $C_c(G,\lambda)$, Renault goes on to show that $\|\cdot\|$ is a $C^*$-norm on $C_c(G,\lambda)$.
\begin{definition}[{\cite[Definition~II.1.12]{Renault1980}}]
\notationindex{C*Glambda@$C^*(G,\lambda)$}\index{groupoid C*-algebra@groupoid $C^*$-algebra}Suppose $G$ is a second countable, locally compact, Hausdorff groupoid with Haar system $\lambda$. The {\em groupoid $C^*$-algebra} of $G$ with respect to $\lambda$, denoted $C^*(G,\lambda)$, is the completion of $C_c(G,\lambda)$ with the $C^*$-norm $\|\cdot\|$.
\end{definition}
A more recent definition of the groupoid $C^*$-algebra, as used by Muhly in \cite[Theorem~2.42]{Muhly-book}, by Paterson in \cite[p.~101]{Paterson1999} and by Anantharaman-Delaroche and Renault in \cite[p.~146]{Anantharaman-Renault2000}, defines $C^*(G,\lambda)$ to be the closure of $C_c(G,\lambda)$ with the norm such that
\[
\|f\|=\sup\{\|\pi(f)\|:\pi\text{ is a representation of }C_c(G,\lambda)\}
\]
for each $f\in C_c(G,\lambda)$. It follows from Renault's disintegration theorem \cite[Proposition~4.2]{Renault1987} (or \cite[Theorem~3.32]{Muhly-book}) that every representation of $C_c(G,\lambda)$ is bounded and that $\|\cdot\|$ is a $C^*$-norm on $C_c(G,\lambda)$, so this more recent definition is compatible with Renault's original definition.

It was shown by Muhly, Renault and Williams in \cite[Theorem~2.8]{Muhly-Renault-Williams1987} that if $\lambda$ and $\mu$ are Haar systems for a second countable, locally compact groupoid $G$, then $C^*(G,\lambda)$ is Morita equivalent to $C^*(G,\mu)$. When the Haar system is clear from the context we will usually write $C^*(G)$ instead of $C^*(G,\lambda)$. 

We now show how a Radon measure on the unit space of a groupoid $G$ with a Haar system $\lambda$ induces a representation of $C^*(G,\lambda)$. These induced representations will be used heavily in Chapter \ref{chapter_strength_of_convergence}.
The next definition is from \cite{Muhly-book}. Alternative descriptions may be found in \cite[p.~234]{Muhly-Williams1990} and \cite[pp.~81--82]{Renault1980}.
\begin{definition}[{\cite[Definition~2.45]{Muhly-book}}]\label{def_induced_representation}\index{induced representation}\index{representation!induced}
Suppose $G$ is a second countable, locally compact, Hausdorff groupoid with a Haar system $\lambda$ and let $\mu$ be a Radon measure on $G^{(0)}$.
\begin{enumerate}\renewcommand{\labelenumi}{(\roman{enumi})}
\item We write $\nu=\mu\circ\lambda=\int \lambda^x \, d\mu$ for the measure on $G$ defined for every Borel-measurable function $f:G\rightarrow\CC$ by 
\[\int_G f(\gamma)\, d\nu(\gamma)=\int_{G^{(0)}}\int_G f(\gamma)\, d\lambda^x(\gamma)\, d\mu(x).\]
We call $\nu$ the measure induced by $\mu$, and we write $\nu^{-1}$ for the image of $\nu$ under the homeomorphism $\gamma\mapsto \gamma^{-1}$.
\item For $f\in C_c(G)$, $\mathrm{Ind}\,\mu(f)$\notationindex{INDMU@$\mathrm{Ind}\, \mu$} is the operator on $L^2(G,\nu^{-1})$ defined by the formula
\begin{align*}
\big(\mathrm{Ind}\,\mu(f)\xi\big)(\gamma)&=\int_G f(\alpha)\xi(\alpha^{-1}\gamma)\, d\lambda^{r(\gamma)}(\alpha)\\
&=\int_G f(\gamma \alpha)\xi(\alpha^{-1})\, d\lambda^{s(\gamma)}(\alpha).
\end{align*}
\end{enumerate}
\end{definition}
For each $x\in G^{(0)}$, define $\L^x$\notationindex{Lx@$\L^x$} to be the representation induced by the point-mass measure $\delta_x$\notationindex{DELTAX@$\delta_x$} on $G^{(0)}$ as in \cite{Muhly-Williams1990,Clark-anHuef2008}.
\begin{remark}\label{measure_induced_epsilon_x} It follows from the definition of the induced measure that for $x\in G^{(0)}$, the measure $\nu$ induced by $\delta_x$ is equal to $\lambda^x$. In particular we have $\nu^{-1}=\lambda_x$, so $\L^x$ acts on $L^2(G,\lambda_x)$. The operator $\L^x(f)$ is then given by
\[
\big(\L^x(f)\xi\big)(\gamma)=\int_G f(\gamma \alpha^{-1})\xi(\alpha)\, d\lambda_x(\alpha)
\]
for all $\xi\in L^2(G,\lambda_x)$ and all $\gamma\in G$. There is a close relationship between the convolution on $C_c(G)$ and these induced representations: recall from Remark \ref{remark_alternative_convolution} that for $f,g\in C_c(G)$, the convolution $f\ast g\in C_c(G)$ is given by
\[
 f\ast g(\gamma)=\int_G f(\gamma\alpha^{-1})g(\alpha)\,d\lambda_{s(\gamma)}(\alpha)\quad\text{for all }\gamma\in G,
\]
so that
\[
\big(\L^x(f)g\big)(\gamma)=f\ast g(\gamma)\quad\text{for any }x\in G^{(0)}\text{ and }\gamma\in G_x.
\]
\end{remark}
We denote the norm in $L^2(G,\lambda_x)$ by $\|\cdot\|_x$\notationindex{0I2@$\lVert\cdot\rVert_x$}. Finally note that when $G$ is a second-countable locally-compact principal groupoid that admits a Haar system, each $\L^x$ is irreducible by \cite[Lemma~2.4]{Muhly-Williams1990}. The irreducible representations $\L^x$ will be a key focus of chapter \ref{chapter_strength_of_convergence}.

\section{The transformation-group groupoid}\label{section_transformation-group_groupoid}
Renault in \cite[Examples~1.2]{Renault1980} described a groupoid that can be constructed from a group acting on the right of a space. The following is a modification that describes a groupoid constructed from a left group action.
\begin{definition}[{\cite[Example~3.3]{Clark-anHuef2008}}]\label{def_transformation-group_groupoid}\index{transformation-group groupoid}
Let $(H,X)$ be a transformation group with $H$ acting on the left of the space $X$. Then $G=H\times X$ with
\[
G^{(2)}=\big\{\big((h,x),(k,y)\big)\in G\times G:y=h^{-1}\cdot x\big\}
\]
and operations $(h,x)(k,h^{-1}\cdot x)=(hk,x)$ and $(h,x)^{-1}=(h^{-1},h^{-1}\cdot x)$ is called the {\em transformation-group groupoid}. If $H$ and $X$ are topological spaces, then $G$ is considered to have the product topology.
\end{definition}
We identify the space $\{e\}\times X$ with $X$, so that the range and source maps $r,s:H\times X\rightarrow X$ are given by $r(h,x)=x$ and $s(h,x)=h^{-1}\cdot x$. A transformation group $(H,X)$ is {\em free}\index{free transformation group} if whenever $h\in H$ and $x\in X$ satisfy $h\cdot x=x$, then $h$ is the identity $e$ of $H$. 
\begin{lemma}
Suppose $(H,X)$ is a transformation group with $H$ acting on the left of $X$ and let $G$ be the transformation-group groupoid $H\times X$. Then $G$ is principal if and only if $(H,X)$ is free.
\begin{proof}
Suppose $G$ is principal and that $h\in H$ and $x\in X$ satisfy $h\cdot x=x$. Then $(h^{-1},x)$ and $(e,x)$ are elements of $G$ with $r(h^{-1},x)=x=r(e,x)$ and $s(h^{-1},x)=h\cdot x=x=s(e, x)$. Since $G$ is principal it follows that $(h^{-1},x)=(e,x)$, so $h=e$ and $(H,X)$ is free.

Conversely, suppose $(H,X)$ is free and that $(h,x),(k,y)\in G$ have equal range and source. Then $x=r(h,x)=r(k,y)=y$. Similarly $h^{-1}\cdot x=s(h,x)=s(k,y)=k^{-1}\cdot y$ so, since $x=y$, we have $kh^{-1}\cdot x=x$ and since $(H,X)$ is free we have $kh^{-1}=e$. Then $h=k$ and so $(h,x)=(k,y)$.
\end{proof}
\end{lemma}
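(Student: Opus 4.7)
The plan is to unpack both sides of the biconditional using the explicit formulas for the range and source maps of the transformation-group groupoid, namely $r(h,x)=x$ and $s(h,x)=h^{-1}\cdot x$, and then exploit the fact that freeness is exactly the statement that no non-identity element of $H$ fixes a point of $X$.

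For the forward direction, I would assume $G$ is principal and fix $h\in H$, $x\in X$ with $h\cdot x = x$; I want to show $h=e$. The natural candidates to feed into principality are the two elements $(h^{-1},x)$ and $(e,x)$ of $G$. Their ranges both equal $x$, and their sources are $h\cdot x$ and $x$, which coincide by hypothesis. Principality then forces $(h^{-1},x)=(e,x)$ in $H\times X$, giving $h^{-1}=e$ and hence $h=e$.

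For the reverse direction, I would assume $(H,X)$ is free and take two arbitrary elements $(h,x),(k,y)\in G$ with $r(h,x)=r(k,y)$ and $s(h,x)=s(k,y)$. Reading off the range coordinates immediately yields $x=y$. Reading off the source coordinates gives $h^{-1}\cdot x = k^{-1}\cdot x$, so applying $k$ on the left produces $(kh^{-1})\cdot x = x$. Freeness then forces $kh^{-1}=e$, so $h=k$, and combined with $x=y$ we get $(h,x)=(k,y)$, establishing principality.

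This proof is essentially a matter of bookkeeping, so I do not anticipate a real obstacle; the only mild subtlety is being careful that $h\cdot x = x$ is equivalent to $h^{-1}\cdot x = x$ (so that in the forward direction one can freely use whichever is convenient when matching the source of $(h^{-1},x)$ against the source of $(e,x)$). Everything else follows from the identifications provided immediately after Definition \ref{def_transformation-group_groupoid}.
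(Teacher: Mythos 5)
Your argument is correct and is essentially identical to the paper's proof: both directions use the same pair $(h^{-1},x)$, $(e,x)$ for principality implies freeness, and the same range/source bookkeeping yielding $kh^{-1}\cdot x = x$ for the converse. No issues.
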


\begin{remark}\label{remark_Haar_system_for_transformation_group}
Suppose $(H,X)$ is a locally compact, Hausdorff transformation group with $H$ acting on the left of the space $X$. If $\delta_x$ is the point-mass measure on $X$ and $\mu$ is a left Haar measure on $H$, then $\{\lambda^x:=\mu\times\delta_x:x\in X\}$ is a left Haar system for the transformation-group groupoid $H\times X$.
\end{remark}

We will now introduce the representations induced by evaluation maps on $C_0(X)$ as in \cite{Archbold-anHuef2006, anHuef2002, Williams1981, Williams1981-2}. Suppose that $H$ acts freely on $X$ and let $\nu$ be the right Haar measure such that $\nu(E)=\mu(E^{-1})$ for every Borel subset $E$ of $H$. For each $x\in X$, we define $\mathrm{Ind}\,\epsilon_x$ to be the representation $\tilde{\epsilon}_x\rtimes\lambda$ of the transformation group $C^*$-algebra $C_0(X)\rtimes H$ on the Hilbert space $L^2(H,\nu)$, where $\big(\tilde{\epsilon}_x(f)\xi\big)(h)=f(h\cdot x)\xi(h)$ and $(\lambda_k\xi)(h)=\Delta(k)^{1/2}\xi(k^{-1}h)$ for $f\in C_0(X)$, $\xi\in L^2(H,\nu)$ and $h,k\in H$ (see \cite[Lemma~4.14]{Williams1981}). It follows that
\[
\big(\mathrm{Ind}\,\epsilon_x(f)\xi\big)(h)=\int_H f(k,h\cdot x)\xi(k^{-1}h)\Delta(k)^{1/2}\,d\nu(k)
\]
for $f\in C_c(H,X)$ and $\xi\in L^2(H,\nu)$. 

\begin{remark}\label{remark_algebras_isomorphic} If $G=(H, X)$ is a second countable, free transformation group, then the representations $\L^x$ of the transformation-group groupoid $C^*$-algebra are unitarily equivalent to the representations $\mathrm{Ind}\,\epsilon_x$. Specifically, let $\nu$ be a choice of right Haar measure on $H$ and $\Delta$  the associated modular function. The map $\iota:C_c(H\times X)\to C_c(H\times X)$ defined by $\iota(f)(h,x)=f(h,x)\Delta(t)^{1/2}$ extends to an isomorphism $\iota$ of the groupoid $C^*$-algebra $C^*(H\times X)$ onto the transformation-group $C^*$-algebra $C_0(X)\rtimes H$ \cite[p.~58]{Renault1980}. Fix $x\in X$.  Then there is a unitary  $U_x:L^2(H,\nu)\to L^2(H\times X,\lambda_x)$,  characterised by $U(\xi)(h, y)=\xi(h)\delta_x(h^{-1}\cdot y)$ for $\xi\in C_c(H)$, and 
$U(\mathrm{Ind}\,\epsilon_x(\iota(f))U^*=\L^x(f)$ for $f\in C^*(H\times X)$.
\end{remark}

\section{The path groupoid of a directed graph}\label{sec_path_groupoid}
A {\em directed graph}\index{directed graph} $E=(E^0,E^1,r_E,s_E)$\notationindex{E@$E, E^0, E^1$}\notationindex{RE@$r_E$}\notationindex{SE@$s_E$} consists of two countable sets $E^0,E^1$ and functions $r_E,s_E:E^1\rightarrow E^0$. We call elements of $E^0$ and $E^1$ {\em vertices}\index{vertex!in a directed graph} and {\em edges}\index{edge!in a directed graph} respectively. For each edge $e$, we call $s_E(e)$ the {\em source}\index{source!of an edge in directed graph} of $e$ and call $r_E(e)$ the {\em range}\index{range!of an edge in a directed graph} of $e$. Where the graph is clear from the context, we usually write $s$ instead of $s_E$ and $r$ instead of $r_E$.

\begin{example}
Let $E$ be the directed graph such that $E^0=\{a,b\}$, $E^1=\{x,y\}$ and $r,s$ are given defined by: $r(x)=a$, $s(x)=b$, $r(y)=b$ and $s(y)=b$. This directed graph is fully described by the following picture.
\begin{center}\begin{tikzpicture}[>=stealth,baseline=(current bounding box.center)]
\clip (-0.3em,-1.8em) rectangle (12.6em,1.9em);
\node (a) at (0em,0em) {$\scriptstyle a$};
\node (b) at (8em,0em) {$\scriptstyle b$};
\draw[->] (b) to node [midway,above=-0.1em] {$\scriptstyle x$} (a);
\draw [<-] (b) .. controls (13em,5em) and (13em,-5em) .. (b) node [midway,right=-0.2em] {$\scriptstyle y$};
\end{tikzpicture}
\end{center}
\end{example}


A {\em finite path}\index{finite path!in a directed graph} $\alpha$ in a directed graph $E$ is a finite sequence $\alpha_1\alpha_2\cdots\alpha_k$ of edges $\alpha_i$ with $s_E(\alpha_j)=r_E(\alpha_{j+1})$ for $1\le j\le k-1$, or a vertex $v\in E^0$; we write $s_E(\alpha)=s_E(\alpha_k)$\notationindex{SE@$s_E$}, $r_E(\alpha)=r_E(\alpha_1)$\notationindex{RE@$r_E$} and $s_E(v)=r_E(v)=v$. We define the {\em length}\index{length!of a finite path} $\lvert\alpha\rvert$\notationindex{ALPHAABS@$\lvert\alpha\rvert$} of $\alpha$ to be $k$ and the length $\lvert v\rvert$ of $v$ to be zero. An {\em infinite path} $x=x_1x_2\cdots$ is defined similarly (with $|x|=\infty$), although $s_E(x)$ remains undefined. Let $E^*$ and $E^\infty$\notationindex{E*@$E^*,E^\infty$} denote the set of all finite paths and infinite paths in $E$ respectively. If $\alpha=\alpha_1\cdots\alpha_k$ and $\beta=\beta_1\cdots\beta_j$ are finite paths then, provided $s_E(\alpha)=r_E(\beta)$, let $\alpha\beta$ be the path $\alpha_1\cdots\alpha_k\beta_1\cdots\beta_j$. When $x\in E^\infty$ with $s_E(\alpha)=r_E(x)$ define $\alpha x$ similarly. We of course simplify notation and write $s$ for $s_E$ and $r$ for $r_E$. Note that this notation is somewhat strange: a path $\alpha=\alpha_1\alpha_2\alpha_3$ is drawn as
\begin{center}
\begin{tikzpicture}[>=stealth,baseline=(current bounding box.center)] 
\def\cellwidth{5.5};
\clip (-0.2em,-0.2em) rectangle (16.7em, 0.8em);
\foreach \x in {0,1,2,3} {
\node  (v\x) at (\cellwidth*\x em,0em) {};
\fill[black] (v\x) circle (0.15em);
}
\foreach \x / \y in {0/1,1/2,2/3} \draw [<-] (v\x) to node[above=-0.2em] {$\alpha_{\y}$} (v\y);
\end{tikzpicture}
\end{center}
where the arrows are reversed from what seems natural. In Kumjian, Pask, Raeburn and Renault's original paper they used the seemingly more natural approach, however the definition of a path that we use is consistent with the later development of higher-rank graphs, where edges become morphisms in a category and the new convention ensures that ``composition of morphisms is compatible with multiplication of operators in $B(\Hh)$'' \cite[p.~2]{Raeburn2005}. This new convention is used elsewhere in work on directed graphs including in Raeburn's book \cite{Raeburn2005}. 

A directed graph $E$ is {\em row-finite}\index{row finite!directed graph} if $r^{-1}(v)$ is finite for every $v\in E^0$. For any finite path $\alpha$ in a directed graph $E$, let $\alpha E^\infty$\notationindex{ALPHAEINFTY@$\alpha E^\infty$}\label{def_alphaEinfty} be the set of all paths in $E^\infty$ of the form $\alpha y$ for some $y\in E^\infty$.
\begin{prop}[{\cite[Corollary~2.2]{kprr1997}}]\label{prop_topology_on_Einfty}
Suppose $E$ is a row-finite directed graph. The sets $\alpha E^\infty$ for $\alpha\in E^*$ form a basis of compact open sets for a locally compact, $\sigma$-compact, totally disconnected, Hausdorff topology on $E^\infty $
\end{prop}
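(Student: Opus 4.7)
The plan is to check in sequence that $\mathcal{B}:=\{\alpha E^\infty : \alpha \in E^*\}$ is a basis for a topology on $E^\infty$ and that this topology has each of the asserted properties. First, $\mathcal{B}$ covers $E^\infty$ because any $x\in E^\infty$ lies in $r(x_1)E^\infty$ (viewing the vertex $r(x_1)$ as a length-$0$ finite path). For the intersection axiom, if $x \in \alpha E^\infty \cap \beta E^\infty$ then both $\alpha$ and $\beta$ are initial segments of $x$, so one extends the other; assuming $|\alpha|\le|\beta|$ this forces $\beta = \alpha\gamma$ for some $\gamma\in E^*$, and therefore $\alpha E^\infty \cap \beta E^\infty = \beta E^\infty \in \mathcal{B}$. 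Hausdorffness and total disconnectedness I would obtain together by establishing that each $\alpha E^\infty$ is clopen: given distinct $x,y\in E^\infty$, if $r(x)\ne r(y)$ then $r(x)E^\infty$ and $r(y)E^\infty$ separate them, and otherwise let $n$ be minimal with $x_n\ne y_n$ and separate with $x_1\cdots x_n E^\infty$ and $y_1\cdots y_n E^\infty$; that $\alpha E^\infty$ is closed is immediate since $E^\infty\setminus \alpha E^\infty = \bigcup\{\beta E^\infty : \beta \in E^*,\ |\beta|=|\alpha|,\ \beta\ne\alpha\}$ is a union of basis elements.

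The hard step, and the main obstacle, is showing each $\alpha E^\infty$ is compact. Because $y\mapsto \alpha y$ is a bijection from $s(\alpha)E^\infty$ onto $\alpha E^\infty$ carrying basis sets to basis sets, it suffices to prove $vE^\infty$ is compact for every $v\in E^0$. I would argue by a K\"onig's lemma style recursion. Suppose $\{U_i\}$ is an open cover of $vE^\infty$ admitting no finite subcover. Row-finiteness gives the finite decomposition $vE^\infty=\bigcup_{e\in r^{-1}(v)} eE^\infty$, so at least one summand $e_1E^\infty$ again admits no finite subcover. Inductively, given edges $e_1,\ldots,e_n$ forming a finite path with $r(e_1)=v$ such that $e_1\cdots e_n E^\infty$ has no finite subcover, the analogous finite decomposition over $r^{-1}(s(e_n))$ produces an edge $e_{n+1}$ extending the path with the same property (the set $r^{-1}(s(e_n))$ must be nonempty, else $e_1\cdots e_n E^\infty$ would be empty and trivially finitely covered). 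The resulting infinite path $x=e_1e_2\cdots$ lies in some $U_i$; since $U_i$ is open, a basis element $\beta E^\infty \subseteq U_i$ contains $x$, so $\beta$ is an initial segment of $x$. But then $\beta E^\infty$ equals either $e_1\cdots e_k E^\infty$ for some $k$ or $vE^\infty$, each of which contradicts the no-finite-subcover property established at that stage.

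Local compactness and $\sigma$-compactness are then routine consequences of the previous steps: for any $x\in E^\infty$ the compact open set $r(x)E^\infty$ is a neighbourhood of $x$, and $E^\infty=\bigcup_{v\in E^0} vE^\infty$ exhibits $E^\infty$ as a countable union of compact sets because the vertex set $E^0$ is countable by hypothesis.
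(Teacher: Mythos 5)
Your proof is correct. Note that the thesis itself does not prove this proposition --- it is quoted verbatim from \cite[Corollary~2.2]{kprr1997} --- so there is no in-text argument to compare against; your write-up in effect supplies the missing proof. All the steps check out: the basis axioms (using that two initial segments of a common infinite path are comparable), the clopen decomposition $E^\infty\setminus\alpha E^\infty=\bigcup\{\beta E^\infty:\lvert\beta\rvert=\lvert\alpha\rvert,\ \beta\ne\alpha\}$ giving Hausdorffness and total disconnectedness, the reduction of compactness of $\alpha E^\infty$ to that of $s(\alpha)E^\infty$, and the countability of $E^0$ for $\sigma$-compactness. Your K\"onig's-lemma recursion for compactness of $vE^\infty$ is where row-finiteness enters, exactly as it must, and the endgame (the basis set $\beta E^\infty\subseteq U_i$ witnessing a one-element subcover of a set built to have no finite subcover) is airtight; it is the elementary rephrasing of the route taken in \cite{kprr1997}, where $vE^\infty$ is realised as an inverse limit of the finite sets of length-$n$ paths out of $v$ (equivalently, a closed subset of a compact product of finite discrete spaces). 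The two arguments buy the same thing; yours avoids invoking Tychonoff or inverse limits at the cost of running the finite-branching argument by hand.
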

Readers already familiar with graph algebras will recall that the key results in Kumjian, Pask, Raeburn and Renault's \cite{kprr1997} relate to directed graphs without sources (that is, vertices $v$ with $r^{-1}(v)=\emptyset$), Proposition \ref{prop_topology_on_Einfty} applies to any row-finite directed graph $E$ since the set $E^\infty$ only relates to the infinite paths of $E$.

Before presenting an example that illustrates convergence in the topology on the infinite path space $E^\infty$, we will introduce some notation that will only be used in this thesis for related examples. \notationindex{VFinfty@$[v,f]^\infty$}\label{def_vfinfty}When $v$ is a vertex, $f$ is an edge, and there is exactly one infinite path with range $v$ that includes the edge $f$, then we denote this infinite path by $[v,f]^\infty$. Define $\PP:=\NN\backslash\{0\}$\notationindex{P@$\PP\ (=\NN\char`\\\braces{0})$}.

\needspace{6\baselineskip}
\begin{example}
Let $E$ be the following row-finite directed graph.
\begin{center}
\begin{tikzpicture}[>=stealth,baseline=(current bounding box.center)] 
\def\cellwidth{5.5};
\clip (-2em,-5.8em) rectangle (4*\cellwidth em + 2em,0.7em);

\node (x1y0) at (0 em,0 em) {$\scriptstyle v$};
\foreach \x in {2,3,4,5} \foreach \y in {0} \node (x\x y\y) at (\cellwidth*\x em-\cellwidth em,-3*\y em) {};
\foreach \x in {1,2,3,4,5} \node (x\x y1) at (\cellwidth*\x em-\cellwidth em,-3 em) {};
\foreach \x in {1,2,3,4,5} \node (x\x y2) at (\cellwidth*\x em-\cellwidth em,-4.5 em) {};
\foreach \x in {2,3,4,5} \foreach \y in {0} \fill[black] (x\x y\y) circle (0.15em);
\foreach \x in {1,2,3,4,5} \foreach \y in {1,2} \fill[black] (x\x y\y) circle (0.15em);

\foreach \x / \z in {1/2,2/3,3/4,4/5} \draw[black,<-] (x\x y0) to node[above=-0.2em] {$\scriptstyle x_{\x}$} (x\z y0);

\foreach \x in {1,2,3,4,5} \draw [<-] (x\x y0) to node[right=-0.2em] {$\scriptstyle f_{\x}$} (x\x y1);
\foreach \x in {1,2,3,4,5} \draw [<-] (x\x y1) -- (x\x y2);
\foreach \x in {1,2,3,4,5} {
	\node (endtail\x) at (\cellwidth*\x em-\cellwidth em, -6.0em) {};
	\draw [dotted,thick] (x\x y2) -- (endtail\x);
}

\node(endtailone) at (4*\cellwidth em + 2em,0em) {};
\draw[dotted,thick] (x5y0) -- (endtailone);
\end{tikzpicture}
\end{center}
The sequence $\{[v,f_n]^\infty\}_{n\in\PP}$ in $E^\infty$ converges to the infinite path $x=x_1x_2x_3\cdots$.
\begin{proof}
Let $U$ be a neighbourhood of $x$ in $E^\infty$. By Proposition \ref{prop_topology_on_Einfty} there exists $\alpha\in E^*$ such that $x\in \alpha E^\infty\subset U$. In order to have $x\in\alpha E^\infty$, we must have $\alpha=x_1x_2\cdots x_p$ for some $p\in\PP$. For every $n>p$ we have $[v,f_n]^\infty\in x_1x_2\cdots x_pE^\infty$, so $[v,f_n]^\infty$ is eventually in $\alpha E^\infty\subset U$, establishing the convergence.
\end{proof}
\end{example}

We say $x,y\in E^\infty$ are {\em shift equivalent}\index{shift equivalence!in a directed graph} with lag $n\in\ZZ$ (written $x\sim_n y$\notationindex{XSIMY@$x\sim_n y$}) if there exists $m\in\NN$ such that $x_i=y_{i-n}$ for all $i\ge m$.

\begin{example}
The following graph is row finite. In this graph $x\sim_1y$ and $y\sim_{-1} x$.
\begin{center}
\begin{tikzpicture}[>=stealth,baseline=(current bounding box.center)] 
\def\scale{3};
\clip (-0.3em,-1.2*\scale em) rectangle (5.6*\scale em,0.4em);
\node (x0) at (0,0) {};
\node (x1) at (\scale em,-1/3*\scale em) {};
\node (y0) at (\scale em, -\scale em) {};
\node (z0) at (2*\scale em, -2/3*\scale em) {};
\node (z1) at (3*\scale em, -2/3*\scale em) {};
\node (z2) at (4*\scale em, -2/3*\scale em) {};
\node (z3) at (5*\scale em, -2/3*\scale em) {};
\node (endtail) at (5.7*\scale em,-2/3*\scale em) {};

\draw [black,<-] (x0) to node[anchor=south] {$\scriptstyle x_1$} (x1);
\draw [black,<-] (x1) to node[anchor=south] {$\scriptstyle x_2$} (z0);
\draw [black,<-] (y0) to node[anchor=north] {$\scriptstyle y_1$} (z0);

\draw [black,<-] (z0) to node[anchor=north] {$\scriptstyle y_2$} (z1);
\draw [black,<-] (z1) to node[anchor=north] {$\scriptstyle y_3$} (z2);
\draw [black,<-] (z2) to node[anchor=north] {$\scriptstyle y_4$} (z3);

\node [anchor=south] at ($(z0)!0.5!(z1)$) {$\scriptstyle x_3$};
\node [anchor=south] at ($(z1)!0.5!(z2)$) {$\scriptstyle x_4$};
\node [anchor=south] at ($(z2)!0.5!(z3)$) {$\scriptstyle x_5$};

\fill[black] (x0) circle (0.15em);
\fill[black] (x1) circle (0.15em);
\fill[black] (y0) circle (0.15em);
\foreach \z in {0,1,2,3}  \fill[black] (z\z) circle (0.15em);

\draw [dotted,thick] (z3) -- (endtail);
\end{tikzpicture}
\end{center}
\end{example}
This definition of shift equivalence is a modification of the definition of shift equivalence in \cite[p.~509]{kprr1997} which required $x_i=y_{i+n}$ for all $i\ge m$. This change has been made because later work on higher-rank graphs by Kumjian and Pask in \cite[Definition~2.7]{Kumjian-Pask2000} involved constructing a groupoid in a way that is consistent with this modification. This doesn't matter a great deal because the groupoids that are constructed with these two different notions of shift equivalence are isomorphic.

Suppose $E$ is a row-finite directed graph. We refer to the groupoid constructed from $E$ by Kumjian, Pask, Raeburn and Renault in \cite{kprr1997} as the {\em path groupoid}\index{path groupoid} of $E$. Before describing this construction a reminder should be made to any reader going back to examine \cite{kprr1997}: the notion of shift equivalence has changed slightly and the definition of a path has changed (swapping range and source) to achieve consistency with the more recent higher-rank graphs. The path groupoid $G_E$\notationindex{GE@$G_E$} constructed from $E$ is defined as follows: let
\[
G_E:=\braces{(x,n,y)\in E^\infty\times \ZZ\times E^\infty :x\sim_n y}.
\]
For elements of 
\[
G_E^{(2)}:=\braces{\big((x,n,y),(y,m,z)\big):(x,n,y),(y,m,z)\in G_E},
\]
Kumjian, Pask, Raeburn, and Renault defined
\[
(x,n,y)\cdot (y,m,z):=(x,n+m,z),
\]
and for arbitrary $(x,n,y)\in G_E$, defined $(x,n,y)^{-1}:=(y,-n,x)$. For each $\alpha,\beta\in E^*$ with $s(\alpha)=s(\beta)$, let $Z(\alpha,\beta)$ be the set
\[
\braces{(x,n,y):x\in \alpha E^\infty, y\in \beta E^\infty, n=\lvert\alpha\rvert-\lvert\beta\rvert, x_i=y_{i-n}\text{ for } i>\lvert\alpha\rvert}.
\]
\begin{prop}[{\cite[Proposition 2.6]{kprr1997}}]
Let $E$ be a row-finite directed graph. The collection of sets
\[\braces{Z(\alpha,\beta):\alpha,\beta\in E^*, s(\alpha)=s(\beta)}\]
is a basis of compact open sets for a second countable, locally compact, Hausdorff topology on $G_E$ that makes $G_E$ $r$-discrete. The groupoid $G_E$ has a Haar system of counting measures and the map from $E^\infty$ onto $G_E^{(0)}$ defined by $x\mapsto (x,0,x)$ is a homeomorphism.
\end{prop}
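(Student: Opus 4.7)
The plan is to break the proof into four steps: (i) verify that the collection $\{Z(\alpha,\beta)\}$ is a basis, (ii) establish the topological properties (compactness of basis elements, second countability, Hausdorffness, local compactness, $r$-discreteness), (iii) produce the Haar system via Proposition~\ref{Renault_I_2_8i_sub}, and (iv) identify $G_E^{(0)}$ with $E^\infty$.

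For (i), each $(x,n,y)\in G_E$ lies in $Z(x_1\cdots x_m,\, y_1\cdots y_{m-n})$ for $m$ large enough that $m-n\ge 0$ and $x_i=y_{i-n}$ for all $i>m$, giving the cover. For the intersection property, suppose $(x,n,y)\in Z(\alpha_1,\beta_1)\cap Z(\alpha_2,\beta_2)$, say with $|\alpha_1|\le|\alpha_2|$. Since $x\in\alpha_1 E^\infty\cap\alpha_2 E^\infty$, we have $\alpha_2=\alpha_1\gamma$ for some $\gamma\in E^*$. The shared value $n=|\alpha_1|-|\beta_1|=|\alpha_2|-|\beta_2|$ forces $|\beta_2|=|\beta_1|+|\gamma|$, and the shift-equivalence condition for $Z(\alpha_1,\beta_1)$ applied at indices $|\alpha_1|<i\le|\alpha_2|$ forces $\beta_2=\beta_1\gamma$. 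One then checks directly that $Z(\alpha_1,\beta_1)\cap Z(\alpha_2,\beta_2)=Z(\alpha_2,\beta_2)$.

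For (ii), the key observation is that when $s(\alpha)=s(\beta)$ the map $\phi:Z(\alpha,\beta)\to\alpha E^\infty$ defined by $\phi(x,n,y)=x$ is a bijection; its inverse sends $x\in\alpha E^\infty$ to $(x,|\alpha|-|\beta|,\beta x_{|\alpha|+1}x_{|\alpha|+2}\cdots)$. Under this bijection the basic sets $Z(\alpha\gamma,\beta\gamma)\subseteq Z(\alpha,\beta)$ correspond precisely to the cylinders $\alpha\gamma E^\infty$, so $\phi$ is a homeomorphism and $Z(\alpha,\beta)$ inherits compactness from Proposition~\ref{prop_topology_on_Einfty}. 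The basis is countable since $E^*$ is countable, which gives second countability, and local compactness is immediate. For Hausdorffness, two triples differing in $x$ or $y$ are separated using cylinders in $E^\infty$; triples $(x,n,y)$ and $(x,n',y)$ with $n\ne n'$ are separated by basic neighborhoods with first coordinates of the same length, which forces their second coordinates to have different lengths and hence the neighborhoods to be disjoint. For $r$-discreteness, note that $Z(\alpha,\alpha)\subseteq G_E^{(0)}$: any $(x',n',y')\in Z(\alpha,\alpha)$ has $n'=0$, $x'_i=\alpha_i=y'_i$ for $i\le|\alpha|$, and $x'_i=y'_i$ for $i>|\alpha|$, so $x'=y'$. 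Since every unit lies in such a $Z(\alpha,\alpha)$, $G_E^{(0)}$ is open.

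For (iii), Proposition~\ref{Renault_I_2_8i_sub} reduces the existence of a Haar system of counting measures to showing the range map is a local homeomorphism. On $Z(\alpha,\beta)$, the range map sends $(x,n,y)$ to $(x,0,x)$; composing with the bijections from step (ii) (applied to $Z(\alpha,\beta)$ and $Z(\alpha,\alpha)$) shows that $r$ restricts to a homeomorphism of $Z(\alpha,\beta)$ onto the open set $Z(\alpha,\alpha)$. For (iv), the map $x\mapsto(x,0,x)$ is a bijection $E^\infty\to G_E^{(0)}$ that sends the cylinder $\alpha E^\infty$ to $Z(\alpha,\alpha)$, so basic open sets correspond under the map. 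I expect the main obstacle to be the careful bookkeeping in step (i), tracking how the prefix relation on the $\alpha$'s propagates, via the shift-equivalence constraint, to a matching prefix relation on the $\beta$'s.
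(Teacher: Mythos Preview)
Your outline is correct and matches what the paper does: the paper does not give a self-contained proof but cites \cite{kprr1997}, adding only the remark (in the paragraph following the statement) that the Haar-system step should appeal to Proposition~\ref{Renault_I_2_8i_sub} rather than Renault's \cite[Lemma~I.2.7, Proposition~I.2.8]{Renault1980}, exactly as you do in step~(iii). Two minor points. First, a redundancy: once you show in step~(iii) that $r$ is a local homeomorphism, Proposition~\ref{Renault_I_2_8i_sub} already yields $r$-discreteness, so the direct argument in step~(ii) is unnecessary. Second, a small omission: you never verify that the composition and inversion maps are continuous for this topology. Without that, $G_E$ is not yet a topological groupoid, so neither the phrase ``$r$-discrete'' nor the hypotheses of Proposition~\ref{Renault_I_2_8i_sub} apply; this check is routine (it reduces to the identities $Z(\alpha,\beta)^{-1}=Z(\beta,\alpha)$ and $Z(\alpha,\beta)\cdot Z(\beta,\gamma)=Z(\alpha,\gamma)$ on basic sets), but it should be stated.
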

In the proof of \cite[Proposition 2.6]{kprr1997} it is shown that $r$ is a local homeomorphism before using Renault's \cite[Lemma~2.7(ii), Proposition~2.8]{Renault1980} to show that the groupoid is $r$-discrete and admits a Haar system of counting measures. There are issues with the proofs of \cite[Lemma~2.7(ii), Proposition~2.8]{Renault1980} (see Remark \ref{remark_problems_Renault_results}), however Proposition \ref{Renault_I_2_8i_sub} can be used instead.

The point to examining the path groupoid is that properties of the groupoid are related to the combinatorial properties of the directed graph. In order to present our first result describing such a relationship, first define a {\em cycle}\index{cycle!of a directed graph} to be a finite path $\alpha$ of non-zero length with $r(\alpha)=s(\alpha)$ and $s(\alpha_i)\ne s(\alpha_j)$ for $i\ne j$. 
\begin{prop}\label{prop_principal_iff_no_cycles}
Suppose $E$ is a row-finite directed graph. The path groupoid $G_E$ is principal if and only if $E$ contains no cycles.
\begin{proof}
Let $G=G_E$. We first show that if $E$ contains no cycles, then $G$ is principal.  Suppose $G$ is not principal. Then there exists $x\in E^\infty$ such that the stability subgroup $G|_x$ is non-trivial and so there is a non-zero $n\in\ZZ$ of least absolute value with $x\sim_n x$. We may assume that $n>0$ since $x\sim_{-n}x$. As $x\sim_n x$ there exists $m\in\NN$ such that $x_i=x_{i-n}$ for all $i\ge m$ and so $x_{i-n}x_{i-n+1}\cdots x_i$ is a cycle in $E$.

Conversely, suppose $E$ contains the cycle $\alpha=\alpha_1\alpha_2\cdots \alpha_n$. Then $x:=\alpha\alpha\cdots$ is in $E^\infty $ with $x\sim_n x$, so the stability subgroup $G|_x$ is non-trivial and $G$ is not principal.
\end{proof}
\end{prop}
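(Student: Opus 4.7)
The plan is to unpack the definition of principality directly. Elements of $G_E$ are triples $(y, n, z)$ with $y \sim_n z$, and after the identification $G_E^{(0)} \cong E^\infty$ the range and source maps send such a triple to $y$ and $z$ respectively. Hence the stability subgroup at $x \in E^\infty$ is
\[
G_E|_x = \{(x, n, x) : x \sim_n x\},
\]
so $G_E$ is principal if and only if for every $x \in E^\infty$ the only $n \in \ZZ$ with $x \sim_n x$ is $n = 0$. The task therefore reduces to relating the existence of a ``self-shift-equivalence with non-zero lag'' to the existence of a cycle in $E$.

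For the direction ``$E$ has a cycle $\Rightarrow$ $G_E$ not principal,'' I would take a cycle $\alpha = \alpha_1 \cdots \alpha_n$ and form $x := \alpha\alpha\alpha\cdots \in E^\infty$, which is a well-defined infinite path because $s(\alpha) = r(\alpha)$ guarantees each concatenation is composable. Periodicity of $x$ yields $x_i = x_{i-n}$ for all $i \ge n + 1$, so $x \sim_n x$ with $n > 0$, exhibiting $(x, n, x)$ as a non-unit element of $G_E|_x$.

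For the converse, I would start with $x \in E^\infty$ whose stability subgroup is non-trivial, pick a non-zero $n$ with $x \sim_n x$ and, using the symmetry $x \sim_n x \Leftrightarrow x \sim_{-n} x$, assume $n > 0$. Fix $m$ with $x_i = x_{i-n}$ for all $i \ge m$, and consider the finite path $\beta := x_m x_{m+1} \cdots x_{m+n-1}$ of length $n$. Since $x_{m+n} = x_m$ by hypothesis, we have $s(\beta_n) = s(x_{m+n-1}) = r(x_{m+n}) = r(x_m) = r(\beta_1)$, so $\beta$ is a closed walk in $E$. To get an honest cycle in the sense of the paper's definition, I would then extract a simple sub-cycle: if $s(\beta_i) = s(\beta_j)$ for some $i < j$, the sub-path $\beta_{i+1} \cdots \beta_j$ is a strictly shorter closed walk; iterating this descent terminates (since $\beta$ has finite length) at a closed walk with pairwise distinct internal sources, which is a cycle by definition.

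The only mildly delicate step is that last extraction of a simple cycle from a closed walk, but it is a routine finite-descent argument rather than a genuine obstacle. The substance of the proof is just the direct dictionary between ``$x$ is shift-equivalent to itself with lag $n$'' and ``the tail of $x$ is periodic with period $n$,'' which produces (a closed walk and hence) a cycle in $E$.
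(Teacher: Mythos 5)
Your proof is correct and follows essentially the same route as the paper: translate principality into the statement that no infinite path is shift-equivalent to itself with non-zero lag, and convert eventual periodicity with period $n$ into a closed walk of length $n$ (and conversely build the periodic path $\alpha\alpha\cdots$ from a cycle $\alpha$). The one place you go beyond the paper is the finite-descent extraction of a simple cycle from the closed walk; the paper simply asserts that $x_{i-n}x_{i-n+1}\cdots x_i$ is a cycle, which is slightly imprecise given its definition of cycle requires pairwise distinct sources (and that walk has $n+1$ edges rather than $n$), so your extra step and your cleaner indexing $\beta = x_m\cdots x_{m+n-1}$ are genuine tightenings rather than padding.
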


For a row-finite directed graph $E$, Section \ref{section_algebras_of_directed_graphs} describes the directed-graph Cuntz-Krieger $C^*$-algebra, $C^*(E)$, of $E$. Readers already familiar with the directed-graph Cuntz-Krieger $C^*$-algebras may be interested to know that the gauge-invariant uniqueness theorem (Theorem \ref{thm_gauge_invariant_uniqueness}) tells us that $C^*(G_E)$ is isomorphic to $C^*(E)$ when $E$ has no sources (i.e. no vertices $v$ with $r^{-1}(v)=\emptyset$). Example \ref{example_directed_graph_algebras_not_isomorphic} demonstrates why this `no sources' condition is necessary.
\section{Proper, Cartan and integrable groupoids and their \texorpdfstring{$C^*$}{\it C*}-algebras}\label{sec_proper_Cartan_integrable}
\begin{definition}
A groupoid $G$ is {\em proper}\index{proper groupoid} if the map $\Phi:G\rightarrow G^{(0)}\times G^{(0)}$ defined by $\Phi(\gamma)=\big(r(\gamma),s(\gamma)\big)$ for every $\gamma\in G$ is proper. In other words, if $\Phi^{-1}(K)$ is compact for every compact subset $K$ of $G^{(0)}\times G^{(0)}$, where $G^{(0)}\times G^{(0)}$ is equipped with the product topology.
\end{definition}
\begin{lemma}[{\cite[Lemma~7.2]{Clark2007}}]
A Hausdorff groupoid $G$ is proper if and only if $G|_N$ is compact for every compact subset $N$ of $G^{(0)}$.
\end{lemma}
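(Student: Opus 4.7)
The plan is to unpack the two definitions and observe that the set $G|_N$ coincides precisely with the preimage $\Phi^{-1}(N\times N)$ under the map $\Phi(\gamma)=(r(\gamma),s(\gamma))$, since $\gamma\in G|_N$ exactly when $r(\gamma)\in N$ and $s(\gamma)\in N$. With this observation the equivalence becomes essentially a standard topological argument about proper maps, exploiting Hausdorffness to pass between ``preimage of any compact set is compact'' and ``preimage of a cofinal family of compact sets is compact''.

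For the forward implication, assume $G$ is proper and let $N\subset G^{(0)}$ be compact. Then $N\times N$ is compact in $G^{(0)}\times G^{(0)}$ by Tychonoff (or just the fact that a finite product of compacts is compact), so $\Phi^{-1}(N\times N)=G|_N$ is compact by hypothesis. This direction is routine and requires no Hausdorffness.

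For the converse, suppose $G|_N$ is compact whenever $N\subset G^{(0)}$ is compact, and let $K\subset G^{(0)}\times G^{(0)}$ be compact. The first key step is to bound $K$ inside a product $N\times N$: take $N=\pi_1(K)\cup\pi_2(K)$, where $\pi_1,\pi_2$ are the coordinate projections, so that $N$ is compact as a finite union of continuous images of the compact set $K$, and clearly $K\subset N\times N$. Hence $\Phi^{-1}(K)\subset \Phi^{-1}(N\times N)=G|_N$, which is compact by assumption. The second key step is to show $\Phi^{-1}(K)$ is actually compact rather than merely contained in a compact set: since $r$ and $s$ are continuous (Remark \ref{remark_following_topological_groupoid_def}), $\Phi$ is continuous; since $G$ is Hausdorff, so is $G^{(0)}$ and hence $G^{(0)}\times G^{(0)}$, so $K$ is closed in $G^{(0)}\times G^{(0)}$, and therefore $\Phi^{-1}(K)$ is closed in $G$, hence closed in $G|_N$. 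A closed subset of a compact (Hausdorff) space is compact, so $\Phi^{-1}(K)$ is compact and $\Phi$ is proper.

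There is no real obstacle here; the only step that needs a moment's care is the final one, where Hausdorffness of $G^{(0)}\times G^{(0)}$ is used to conclude that the compact set $K$ is closed so that $\Phi^{-1}(K)$ inherits compactness from $G|_N$ via being a closed subset.
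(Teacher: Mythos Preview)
Your proof is correct. Note, however, that the paper does not actually supply a proof of this lemma: it is quoted verbatim as \cite[Lemma~7.2]{Clark2007} and left unproved, so there is no in-paper argument to compare against. Your argument is the standard one and is exactly what one would expect Clark's proof to be: identify $G|_N=\Phi^{-1}(N\times N)$, use this for the forward direction, and for the converse trap an arbitrary compact $K\subset G^{(0)}\times G^{(0)}$ inside a box $N\times N$ with $N=\pi_1(K)\cup\pi_2(K)$, then use Hausdorffness to conclude that $\Phi^{-1}(K)$ is closed in the compact set $G|_N$.
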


\begin{definition}[{\cite[Definition~7.1]{Clark2007}}]
Suppose $G$ is a groupoid. A subset $N$ of $G^{(0)}$ is said to be {\em wandering}\index{wandering} if $G|_N=r^{-1}(N)\cap s^{-1}(N)$ is relatively compact. We say $G$ is {\em Cartan}\index{Cartan groupoid} if every $x\in G^{(0)}$ has a wandering neighbourhood.
\end{definition}
Note that it follows immediately that every proper groupoid is Cartan.
\begin{definition}[{\cite[Definition~3.1]{Clark-anHuef2008}}]
Suppose $G$ is a locally compact, Hausdorff groupoid with Haar system $\lambda$. We say $G$ is {\em integrable}\index{integrable groupoid} if for every compact subset $N$ of $G^{(0)}$, $\sup_{x\in N}\braces{\lambda_x(G^N)}<\infty$.
\end{definition}

\begin{lemma}\label{lemma_integrable_if_Cartan}
Every first countable, locally compact, Hausdorff, Cartan groupoid with a Haar system is integrable.
\begin{proof}
Let $G$ be a first countable, locally compact, Hausdorff, Cartan groupoid with a Haar system $\lambda$ and let $N$ be a compact subset of $G^{(0)}$. Since $G$ is Cartan for every $x\in N$ there is a compact neighbourhood $K_x$ of $x$ such that $G|_{K_x}$ is compact. Since each $\lambda_x$ is a Radon measure, every $\lambda_x(G|_{K_x})$ is finite. Since $G$ is locally compact and Hausdorff, $x$ has a neighbourhood base of compact sets. So by Lemma \ref{corollary_astrid_lemma}, for every $x\in N$ there exists a compact neighbourhood $W_x$ of $x$ such that $y\in W_x$ implies $\lambda_y(G|_{K_x})<\lambda_x(G|_{K_x})+1$. For every $x\in N$, let $N_x$ be the compact neighbourhood $K_x\cap W_x$ of $x$. Now $\braces{\mathrm{int}\,N_x:x\in N}$ is an open cover of the compact set $N$ so there exists $x_1,x_2,\ldots,x_p\in N$ such that $N\subset \cup_{i=1}^p N_{x_i}$.

We claim that $\lambda_x(G^N)\le\sum_{i=1}^p\lambda_{x_i}(G|_{K_{x_i}})+p$ for every $x\in N$. Fix $x\in N$ and observe that
\[
\lambda_x(G^N)\le\lambda_x\Big(\bigcup_{i=1}^p G^{N_{x_i}}\Big)\le\sum_{i=1}^p\lambda_x(G^{N_{x_i}}).
\]
It will suffice to show that $\lambda_x(G^{N_{x_i}})\le \lambda_{x_i}(G|_{K_{x_i}})+1$ for every $i$. Fix $i$ with $1\le i\le p$. Since the support of $\lambda_x$ is $s^{-1}(x)$, in order for $\lambda_x(G^{N_{x_i}})$ to be non-zero there must exist $\gamma\in G$ such that $s(\gamma)=x$ and $r(\gamma)\in N_{x_i}$. Then
\[
\lambda_x(G^{N_{x_i}})=\lambda_x(G^{N_{x_i}}\gamma^{-1}\gamma)=\lambda_{r(\gamma)}(G^{N_{x_i}}\gamma^{-1})=\lambda_{r(\gamma)}(G^{N_{x_i}})=\lambda_{r(\gamma)}(G|_{N_{x_i}}).
\]
Since $r(\gamma)\in N_{x_i}\subset W_{x_i}$ and $N_{x_i}\subset K_{x_i}$,
\[
\lambda_x(G^{N_{x_i}})=\lambda_{r(\gamma)}(G|_{N_{x_i}})\le \lambda_{r(\gamma)}(G|_{K_{x_i}}) \le \lambda_{x_i}(G|_{K_{x_i}})+1,
\]
and the result follows.
\end{proof}
\end{lemma}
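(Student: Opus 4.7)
My goal is to bound $\lambda_x(G^N)$ uniformly over $x\in N$ for an arbitrary compact $N\subset G^{(0)}$. The two tools at hand are the Cartan hypothesis (which controls the measure of $G|_V$ for a suitable neighbourhood $V$ of each unit) and the right-invariance of the Haar system (which lets us translate the mass of $\lambda_x(G^N)$ onto some other fibre $\lambda_y$ where we understand the measure better). So the plan is: cover $N$ by finitely many wandering neighbourhoods, and on each piece convert $\lambda_x(G^{V})$ to $\lambda_y(G|_V)$ for a suitably chosen $y$.

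\textbf{Step 1: local wandering neighbourhoods with uniform measure control.}
Fix $x_0\in N$. By the Cartan property together with local compactness and Hausdorffness, $x_0$ has a compact neighbourhood $K_{x_0}$ with $G|_{K_{x_0}}$ relatively compact, and by enlarging I may take $G|_{K_{x_0}}$ itself compact. Then $\lambda_{x_0}(G|_{K_{x_0}})$ is finite by the Radon property. Applying Lemma~\ref{corollary_astrid_lemma} to the compact set $G|_{K_{x_0}}$ (where first countability enters, via Lemma~\ref{astrids_lim_sup}), I shrink to a compact neighbourhood $W_{x_0}\subset K_{x_0}$ on which
\[
\lambda_y(G|_{K_{x_0}})\le\lambda_{x_0}(G|_{K_{x_0}})+1 \quad\text{for all }y\in W_{x_0}.
\]
Let $N_{x_0}=W_{x_0}$. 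The interiors of the $N_{x_0}$ cover the compact set $N$, so I extract finitely many $N_{x_1},\dots,N_{x_p}$ whose union covers $N$.

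\textbf{Step 2: translate via Haar invariance and assemble the bound.}
Fix $x\in N$. Since $N\subset\bigcup_i N_{x_i}$, $G^N\subset\bigcup_i G^{N_{x_i}}$, so
\[
\lambda_x(G^N)\le\sum_{i=1}^p\lambda_x\bigl(G^{N_{x_i}}\bigr).
\]
I claim each summand is bounded by $\lambda_{x_i}(G|_{K_{x_i}})+1$. If $\lambda_x(G^{N_{x_i}})=0$ this is trivial, so suppose not. Since $\mathrm{supp}\,\lambda_x=G_x$, there is $\gamma\in G^{N_{x_i}}$ with $s(\gamma)=x$; set $y=r(\gamma)\in N_{x_i}$. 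Now the right-translation lemma for $\lambda$ (with $K=G^{N_{x_i}}\gamma^{-1}$, which is compact, lies in $G|_{K_{x_i}}$ and satisfies $K\gamma=G^{N_{x_i}}\cap G_x$) gives
\[
\lambda_x\bigl(G^{N_{x_i}}\bigr)=\lambda_x(K\gamma)=\lambda_y(K)\le\lambda_y\bigl(G|_{K_{x_i}}\bigr)\le\lambda_{x_i}\bigl(G|_{K_{x_i}}\bigr)+1,
\]
where the last step uses $y\in N_{x_i}\subset W_{x_i}$. Summing yields $\lambda_x(G^N)\le\sum_{i=1}^p\lambda_{x_i}(G|_{K_{x_i}})+p$, a bound independent of $x\in N$.

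\textbf{Main obstacle.}
I expect the delicate step to be the translation identity $\lambda_x(G^{N_{x_i}})=\lambda_y(G|_{K_{x_i}}\text{-type set})$: one must check carefully that $G^{N_{x_i}}\cap G_x$ is exactly a right-translate by $\gamma$ of a subset of $G|_{K_{x_i}}$ so that Haar invariance can be applied and the wandering bound absorbed. Everything else (local compactness arguments, finite subcovers, and applying Lemma~\ref{corollary_astrid_lemma}) is routine once that conversion is in place.
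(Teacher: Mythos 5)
Your proof is correct and follows essentially the same route as the paper's: cover $N$ by compact wandering neighbourhoods controlled via Lemma \ref{corollary_astrid_lemma}, then use right-translation by a $\gamma\in G_x^{N_{x_i}}$ to convert $\lambda_x(G^{N_{x_i}})$ into $\lambda_{r(\gamma)}$ of a subset of $G|_{K_{x_i}}$, yielding the same uniform bound $\sum_{i=1}^p\lambda_{x_i}(G|_{K_{x_i}})+p$. Your explicit verification that $G^{N_{x_i}}\gamma^{-1}\cap G_{r(\gamma)}$ is a compact subset of $G|_{K_{x_i}}$ is in fact slightly more careful than the paper's own translation step.
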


\chapter{Strength of convergence in the orbit space of a groupoid}\label{chapter_strength_of_convergence}
In Section \ref{sec_upper_lower_multiplicities} we introduce the upper and lower multiplicity numbers that Archbold and Spielberg in \cite{Archbold1994,Archbold-Spielberg1996} associated to irreducible representations. In Section \ref{sec_AaH} we will introduce the notion of $k$-times convergence in the orbit space of a transformation group and state the theorems by Archbold and an Huef in \cite{Archbold-anHuef2006} that relate $k$-times convergence in a free transformation group to upper and lower multiplicity numbers of the transformation group $C^*$-algebra; we will provide an example based on the transformation group that Green described in \cite[pp. 95--96]{Green1977}. In Sections \ref{sec_lower_multiplicity_1} to \ref{sec_upper_multiplicity} we generalise the results in Archbold and an Huef's \cite{Archbold-anHuef2006} to results about principal groupoids. Finally in Section \ref{section_graph_algebra_examples} we will demonstrate these results with Kumjian, Pask, Raeburn and Renault's path groupoid of various directed graphs. These examples are new and interesting since they are not constructed from transformation groups. The contents of Sections \ref{sec_lower_multiplicity_1} to \ref{section_graph_algebra_examples} has been published as \cite{Hazlewood-anHuef2011}.

\section{Upper and lower multiplicities}\label{sec_upper_lower_multiplicities}
Let $A$ be a $C^*$-algebra. We  write $\theta$ for the canonical surjection from the space $P(A)$ of pure states of $A$ to the spectrum $\hat A$ of $A$.  We  frequently  identify an irreducible representation $\pi$ with its equivalence class in $\hat A$ and we write $\Hh_\pi$ for the Hilbert space on which $\pi(A)$ acts.

Let $\pi\in\hat A$ and let $\braces{\pi_\alpha}$ be a net in $\hat{A}$. \notationindex{MU@$M_\U,M_\L$}We now recall the definitions of {\em upper} and {\em lower multiplicity}\index{upper multiplicity}\index{lower multiplicity} $M_\U(\pi)$ and $M_\L(\pi)$ from \cite{Archbold1994}, and {\em relative upper} and {\em relative lower multiplicity}\index{relative upper multiplicity}\index{relative lower multiplicity} $\M_\U(\pi,\braces{\pi_\alpha})$ and  $\M_\L(\pi,\braces{\pi_\alpha})$ from \cite{Archbold-Spielberg1996}. Let $\Nn$ be the weak$^*$-neighbourhood base at zero in the dual $A^*$ of $A$ consisting of all open sets of the form
\[
N=\{\psi\in A^*:\lvert\psi(a_i)\rvert<\epsilon, 1\le i\le n\},
\]
where $\epsilon>0$ and $a_1,a_2,\ldots,a_n\in A$.
Suppose $\phi$ is a pure state of $A$ associated with $\pi$ and let $N\in\Nn$. Define
\[
V(\phi,N)=\theta\big((\phi+N)\cap P(A)\big),
\]
an open neighbourhood of $\pi$ in $\hat{A}$. For $\sigma\in \hat{A}$ let
\[
\mathrm{Vec}(\sigma,\phi,N)=\braces{\eta\in \Hh_\sigma:\lvert\eta\rvert=1,(\sigma(\cdot)\eta\, |\,\eta)\in \phi+N}.
\]
Note that $\mathrm{Vec}(\sigma,\phi,N)$ is non-empty if and only if $\sigma\in V(\phi,N)$. For any $\sigma\in V(\phi,N)$  define $d(\sigma,\phi,N)$ to be the supremum in $\PP\cup\braces{\infty}$ of the cardinalities of finite orthonormal subsets of $\mathrm{Vec}(\sigma,\phi,N)$. Write $d(\sigma,\phi,N)=0$ when $\mathrm{Vec}(\sigma,\phi,N)$ is empty.

Define
\[
\M_\U(\phi,N)=\underset{\sigma\in V(\phi,N)}{\sup} d(\sigma,\phi,N)\in\PP\cup\braces{\infty}.
\]
Note that if $N'\in \Nn$ and $N'\subset N$, then $\M_\U(\phi,N')\le\M_\U(\phi,N)$. Now define
\[
\M_\U(\phi)=\underset{N\in\Nn}{\inf}\M_\U(\phi,N)\in\PP\cup\braces{\infty}.
\]
By \cite[Lemma~2.1]{Archbold1994}, the value of $\M_\U(\phi)$ is independent of the pure state $\phi$ associated to $\pi$ and so $\M_\U(\pi):=\M_\U(\phi)$ is well-defined.  For lower multiplicity,  assume that $\braces{\pi}$ is not open in $\hat{A}$, and using \cite[Lemma~2.1]{Archbold1994} again,   define
\[
\M_\L(\pi):=\underset{N\in\Nn}\inf \Big(\underset{\sigma\rightarrow\pi, \sigma\ne\pi}{\lim\,\inf} d(\sigma,\phi,N)\Big)\in\PP\cup\braces{\infty}.
\]

Now suppose that $\{\pi_\alpha\}_{\alpha\in\Lambda}$ is a net in $\hat{A}$. For $N\in\Nn$ let
\[
\M_\U\big(\phi,N,\{\pi_\alpha\}\big)=\underset{\alpha\in\Lambda}{\lim\,\sup}\, d(\pi_\alpha,\phi,N)\in\NN\cup\braces{\infty}.
\]
Note that if $N'\in\Nn$ and $N'\subset N$ then $\M_\U\big(\phi,N',\{\pi_\alpha\}\big)\le \M_\U\big(\phi,N,\{\pi_\alpha\}\big)$. Then
\[
\M_\U\big(\pi,\{\pi_\alpha\}\big):=\underset{N\in\Nn}\inf \M_\U\big(\phi,N,\{\pi_\alpha\}\big)\in\NN\cup\braces{\infty},
\]
is well-defined because  the right-hand side  is independent of the choice of $\phi$ by an argument similar to the proof of \cite[Lemma~2.1]{Archbold1994}. Similarly define
\[
\M_\L\big(\phi,N,\{\pi_\alpha\}\big):=\underset{\alpha\in\Lambda}{\lim\,\inf}\, d(\pi_\alpha,\phi,N)\in\NN\cup\braces{\infty},
\]
and let
\[
\M_\L\big(\pi,\{\pi_\alpha\}\big)=\underset{N\in\Nn}\inf \M_\L\big(\phi,N,\{\pi_\alpha\}\big)\in\NN\cup\braces{\infty}.
\]
It follows that for any irreducible representation $\pi$ and any net $\{\pi_\alpha\}_{\alpha\in\Lambda}$ of irreducible representations,
\[
\M_\L\big(\pi,\{\pi_\alpha\}\big)\le\M_\U\big(\pi,\{\pi_\alpha\}\big)\le\M_\U(\pi)
\]
and, if $\{\pi_\alpha\}$ converges to $\pi$ with $\pi_\alpha\ne\pi$ eventually, $\M_\L(\pi)\le\M_\L\big(\pi,\{\pi_\alpha\}\big)$. Finally, if $\{\pi_\beta\}$ is a subnet of $\{\pi_\alpha\}$, then
\[
\M_\L\big(\pi,\{\pi_\alpha\}\big)\le \M_\L\big(\pi,\{\pi_\beta\}\big)\le \M_\U\big(\pi,\{\pi_\beta\}\big)\le \M_\U\big(\pi,\{\pi_\alpha\}\big).
\]

\begin{example}[{\cite[p. 121]{Archbold1994}}]
Let $A$ be the $C^*$-algebra of all continuous functions $f:[0,1]\rightarrow M_4(\CC)$ such that, for each $n\ge 1$,
\[
f\bigg(\frac{1}{n}\bigg)=\bigg(\begin{array}{cc}\sigma_n(f) & 0 \\0 & \sigma_n(f)\end{array}\bigg)\quad\text{where $\sigma_n(f)\in M_2(\CC)$}
\]
and $f(0)=\lambda(f)1_{M_4(\CC)}$ for some scalar $\lambda(f)$. We have
\[
\widehat{A}=\{\lambda\}\cup\{\sigma_n:n\ge 1\}\cup\{\epsilon_t:0<t<1,t^{-1}\notin\NN\}
\]
where $\epsilon_t(f)=f(t)$ for every $f\in A$. Archbold in \cite[p. 125]{Archbold1994} proves that $\M_\U(\lambda)=4$ and $\M_\L(\lambda)=2$. We now recall Archbold's proof.
\begin{proof}[Proof that $\M_\U(\lambda)=4$ and $\M_\L(\lambda)=2$]
We begin by showing that $\M_\U(\lambda)=4$. Let $\{e_1,e_2,e_3,e_4\}$ be the usual basis for $\CC^4$. For $0<t<1$ with $t^{-1}\notin\NN$, we define
\[
\phi_t^{(i)}=\big(\epsilon_t(\cdot)e_i\,|\,e_i\big)\quad (1\le i\le 4).
\]
Then, for each $i$, $\phi_t^{(i)}\rightarrow\lambda$ as $t\rightarrow 0$. Let $N\in\Nn$. There exists $t_0\in(0,1)$ such that
\[
\phi_t^{(i)}\in\lambda+N\quad(0<t<t_0,t^{-1}\notin\NN,1\le i\le 4).
\]
Hence $d(\epsilon_t,\lambda,N)=4$ for $0<t<t_0$ with $t^{-1}\notin\NN$ and so $\M_\U(\lambda,N)\ge 4$. Since $N$ was chosen arbitrarily, $\M_\U(\lambda)\ge 4$. The reverse inequality follows from the fact that $\dim(\pi)\le 4$ for all $\pi\in A^\wedge$.

To show that $\M_\L(\lambda)=2$, let $\{u_1,u_2\}$ be the usual orthonormal basis for $\CC^2$ and define
\[
\psi_n^{(i)}=\big(\sigma_n(\cdot)u_i\,|\,u_i\big)\quad\text{for }i=1,2.
\]
Then $\psi_n^{(i)}\rightarrow\lambda$ for each $i$. Fix $N\in\Nn$. There exists $t_i\in(0,1)$ such that if $n^{-1}<t_1$ then $\psi_n{(i)}\in\lambda+N$ for each $i$ and if $0<t<t_1$ with $t^{-1} \notin\NN$ then $\phi_t^{(i)}\in\lambda+N$ for each $i$. Let
\[
V=\{\lambda\}\cup\{\sigma_n:n^{-1}<t_1\}\cup\{\pi_t:0<t<t_1, t^{-1}\notin\NN\},
\]
an open neighbourhood of $\lambda$ contained in $V(\lambda,N)$. Then
\[
\mathrm{inf}_{\rho\in V\backslash\{\lambda\}} d(\rho,\lambda,N)=2
\]
and so $\M_\L(\lambda,N)\ge 2$. Since $N$ was fixed arbitrarily, $\M_\L(\lambda)\ge 2$. The reverse inequality can be obtained by noting that $\sigma_n\rightarrow\lambda$ and $\dim(\sigma_n)=2$ for all $n$.
\end{proof}
\end{example}

\section[Strength of convergence in a transformation group]{Strength of convergence in the orbit space of a transformation group}\label{sec_AaH}
Recall that a sequence $\{h_n\}\subset H$ tends to infinity if it admits no convergent subsequence.
\begin{definition}[{\cite[p. 92]{Archbold-anHuef2006}}]\label{def_k-times_convergence_transformation_group}
Suppose $(H,X)$ is a transformation group and let $k\in\PP$. A sequence $\{x_n\}$ in $X$ is \index{k-times convergence@$k$-times convergence!in the orbit space of a transformation group}{\em $k$-times convergent} in $X/H$ to $z\in X$ if there exist $k$ sequences $\{h_n^{(1)}\},\{h_n^{(2)}\},\ldots,\{h_n^{(k)}\}$ in $H$ such that
\begin{enumerate}
\item\label{def_k-times_convergence_transformation_group_1} $h_n^{(i)}\cdot x_n\rightarrow z$ as $n\rightarrow\infty$ for $1\le i\le k$; and
\item\label{def_k-times_convergence_transformation_group_2} if $1\le i<j\le k$ then $h_n^{(j)}(h_n^{(i)})^{-1}\rightarrow\infty$ as $n\rightarrow\infty$.
\end{enumerate}
\end{definition}
The following example describes a transformation group from Green's \cite{Green1977}. Following the description of the transformation group we will show that its orbit space exhibits $2$-times convergence.
\begin{example}\label{example_2-times_convergence_Green}\index{Green's transformation group example}
Define $\psi:\RR\rightarrow\RR^3$ by $\psi(a)=(0,a,0)$ and for each $n\in\PP$, define $\phi_n:\RR\rightarrow \RR^3$ by
\[
\phi_n(a)=\left\{\begin{tabular}{ll}
$(2^{-2n},a,0)$&$a\le n$\\
$\big(2^{-2n}-\frac{a-n}{\pi}2^{-2n-1},n\cos(a-n),n\sin(a-n)\big)$&$n<a<n+\pi$\\
$(2^{-2n-1},a-\pi-2n,0)$&$a\ge n+\pi$
\end{tabular} \right.
\]
For each positive $n$, the image of $\phi_n$ ``consists of two vertical half lines connected by a half circle" \cite[p. 96]{Green1977}. By projecting the first two components of $\RR^3$ into $\RR^2$ we can can sketch the images of $\phi_1$, $\phi_2$ and $\phi_3$ in $\RR^3$ as in Figure \ref{figure_sketch_phi012}. In this figure the dashed lines are the projections of the half circles and the image of $\psi$ is the vertical axis.
\begin{figure}
\begin{center}
\begin{tikzpicture}

\def\xscale{80}
\def\yscale{2}
\def\plotheight{3.5}
\def\verticalaxisheight{3.7}

\node (phi1-1) at (0.25*\xscale em,1*\yscale em) {};
\node (phi1-pi1) at (0.125*\xscale em,-1*\yscale em) {};
\node (phi2-2) at (0.0625*\xscale em,2*\yscale em) {};
\node (phi2-pi2) at (0.0312*\xscale em,-2*\yscale em) {};
\node (phi2-pi4) at (phi2-pi2 |- 0em,0em) {};

\node (phi2-2) at (0.0625*\xscale em,2*\yscale em) {};
\node (phi2-pi2) at (0.0312*\xscale em,-2*\yscale em) {};
\node (phi3-3) at (0.0156*\xscale em,3*\yscale em) {};
\node (phi3-pi3) at (0.0078*\xscale em,-3*\yscale em) {};

\draw[<-,thick,black] (phi1-1.center |- 0,-\plotheight*\yscale em) to (phi1-1.center);
\draw[->,thick,black] (phi1-pi1.center) to (phi1-pi1.center |- 0,\plotheight*\yscale em) node[above] {\scriptsize$\phi_1$};
\draw[thick,dashed,black] (phi1-1.center) to (phi1-pi1.center);

\draw[<-,thick,black] (phi2-2.center |- 0,-\plotheight*\yscale em) to (phi2-2.center);
\draw[->,thick,black] (phi2-pi2.center) to (phi2-pi2.center |- 0,\plotheight*\yscale em) node[above] {\scriptsize$\phi_2$};
\draw[thick,dashed,black] (phi2-2.center) to (phi2-pi2.center);

\draw[<-,thick,black] (phi3-3.center |- 0,-\plotheight*\yscale em) to (phi3-3.center);
\draw[->,thick,black] (phi3-pi3.center) to (phi3-pi3.center |- 0,\plotheight*\yscale em) node[above] {\scriptsize$\phi_3$};
\draw[thick,dashed,black] (phi3-3.center) to (phi3-pi3.center);

\fill[black] (phi1-1) circle (0.15em);
\node [above] at (phi1-1) {\scriptsize$\phi_1(1)$};
\fill[black] (phi1-pi1) circle (0.15em);
\node [below] at (phi1-pi1) {\scriptsize$\phi_1(1+\pi)$};

\node [right] at (phi2-2) {\scriptsize$\phi_2(2)$};
\fill[black] (phi2-2) circle (0.15em);
\fill[black] (phi2-pi2) circle (0.15em);
\node [right, rotate=-60] at (phi2-pi2) {\scriptsize$\phi_2(2+\pi)$};
\fill[black] (phi2-pi4) circle (0.15em);
\node [right, rotate=60] at (phi2-pi4) {\scriptsize$\phi_2(4+\pi)$};

\node (phi1-0) at (phi1-1|-0,0) {};
\fill[black] (phi1-0) circle (0.15em);
\node [above right] at (phi1-0) {\scriptsize$\phi_1(0)$};

\node (phi2-0) at (phi2-2|-0,0) {};
\fill[black] (phi2-0) circle (0.15em);
\node [above right] at (phi2-0) {\scriptsize$\phi_2(0)$};

\node (phi1-pi2) at (phi1-pi1|-0,0) {};
\fill[black] (phi1-pi2) circle (0.15em);
\node [above right] at (phi1-pi2) {\scriptsize$\phi_1(2+\pi)$};

\draw[->] (-2pt,0) -- (0.31*\xscale em,0);
\draw[<->] (0,-\verticalaxisheight*\yscale em) -- (0,\verticalaxisheight*\yscale em); 

\foreach \x in {0.1,0.2,0.3} \draw[shift={(\x*\xscale em,0)}] (0pt,2pt) -- (0pt,-2pt) node[below=-0.3em] {\scriptsize$\x$};
  
\node[left] at (-2pt,0) {\scriptsize$0$};
\foreach \y in {-3,-2,-1,1,2,3} \draw[shift={(0,\y*\yscale em)}] (2pt,0pt) -- (-2pt,0pt) node[left] {\scriptsize$\y$};
\end{tikzpicture}\end{center}
\vspace{-1em}
\addtocounter{theorem}{1}
\caption{A sketch of a projection of the images of $\phi_1$, $\phi_2$ and $\phi_3$ into $\RR^2$.}
\label{figure_sketch_phi012}
\end{figure}
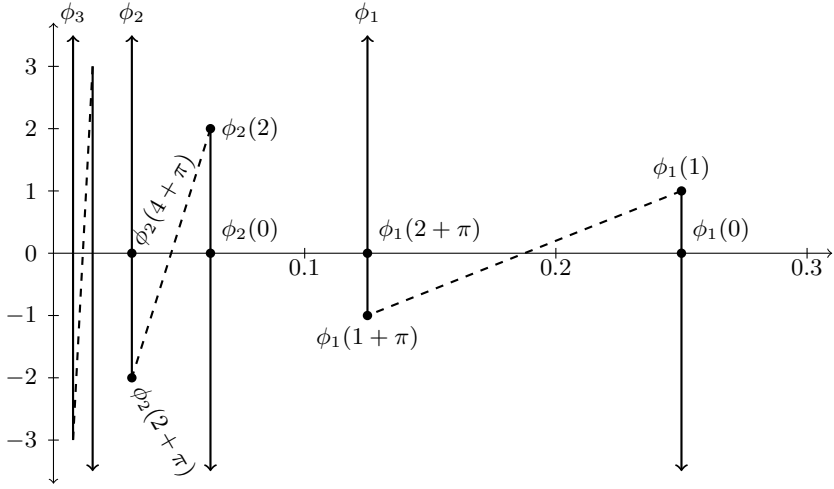

Let $X=\{\phi_n(a):n\in\NN, a\in\RR\}$ and define an action of $\RR$ on $X$ by $b\cdot\phi_n(a)=\phi_n(a+b)$, so that $(\RR,X)$ is a transformation group. Endow $X$ with the usual subspace topology of $\RR^3$ so that the action of $\RR$ on $X$ is continuous and $(\RR,X)$ is a second countable, locally compact, Hausdorff transformation group. The action is free since $b\cdot\phi_n(a)=\phi_n(a)$ only when $b=0$. Representatives of the orbits in $(\RR,X)$ are given by $z:=\psi(0)=(0,0,0)$ and $x_n:=\phi_n(0)=(2^{-2n},0,0)$ for $n\in\PP$. We claim that $\{x_n\}$ converges $2$-times in $X/\RR$ to $z$. To see this claim, let $b_n^{(1)}=0$ and $b_n^{(2)}=2n+\pi$ for each $n\in\PP$. The sequences $\{b_n^{(1)}\}$ and $\{b_n^{(2)}\}$ can be used as the sequences $\{h_n^{(1)}\}$ and $\{h_n^{(2)}\}$ in Definition \ref{def_k-times_convergence_transformation_group} to establish $2$-times convergence: note that
\begin{align*}
b_n^{(1)}\cdot x_n&=b_n^{(1)}\cdot\phi_n(0)=\phi_n(b_n^{(1)})=\phi_n(0)=(2^{-2n},0,0)\quad\text{and}\\
b_n^{(2)}\cdot x_n&=b_n^{(2)}\cdot\phi_n(0)=\phi_n(b_n^{(2)})=\phi_n(2n+\pi)=(2^{-2n-1},0,0),
\end{align*}
so that both $\{b_n^{(1)}\cdot x_n\}$ and $\{b_n^{(2)}\cdot x_n\}$ converge to $z=(0,0,0)$ in $X$ as $n\rightarrow\infty$. We also have $b_n^{(2)}(b_n^{(1)})^{-1}=(2n+\pi)-(0)=2n+\pi$, which tends to infinity as $n\rightarrow\infty$. It follows that $\{x_n\}$ converges $2$-times in $X/\RR$ to $z$.

In order for $\{x_n\}$ to converge $3$-times in $X/\RR$ to $z$, there would have to be a third sequence $\{b_n^{(3)}\}$ as in Definition \ref{def_k-times_convergence_transformation_group}. In order to have $b_n^{(3)}\cdot x_n\rightarrow z=(0,0,0)$ in this transformation group, we would eventually require $b_n^{(3)}$ to be close to $b_n^{(1)}$ or $b_n^{(2)}$, so the second condition Definition \ref{def_k-times_convergence_transformation_group} cannot be satisfied for the sequence $\{b_n^{(3)}\}$. Thus $\{x_n\}$ does not converge $3$-times in $X/\RR$ to $z$.
\end{example}
The next theorem is the first of the two main theorems in Archbold and an Huef's \cite{Archbold-anHuef2006}. After stating this theorem we will apply it to the transformation group from the previous example. A subset $S$ of a topological space $X$ is {\em locally closed}\index{locally closed subset} if there exist an open set $U$ of $X$ and a closed set $V$ of $X$ such that $S=U\cap V$; this is equivalent to $S$ being open in the closure of $S$ with the subspace topology by, for example, \cite[Lemma~1.25]{Williams2007}.
\needspace{6\baselineskip}
\begin{theorem}[{\cite[Theorem~1.1]{Archbold-anHuef2006}}]\label{AaH_thm1.1}
Suppose $(H,X)$ is a free second-countable transformation group. Let $k$ be a positive integer, let $z\in X$ and let $\{x_n\}$ be a sequence in $X$. Assume that $H\cdot z$ is locally closed in $X$. Then the following are equivalent:
\begin{enumerate}
\item\label{AaH_thm1.1_1} the sequence $\{x_n\}$ converges $k$-times in $X/H$ to $z$;
\item\label{AaH_thm1.1_2} $\M_\L(\mathrm{Ind}\,\epsilon_z,\{\mathrm{Ind}\,\epsilon_{x_n}\})\ge k$;
\item for every open neighbourhood $V$ of $z$ such that $\{h\in H: h\cdot z\in V\}$ is relatively compact we have
\[
\underset{n}{\lim\,\inf}\,\nu\big(\{h\in H:h\cdot x_n\in V\}\big)\ge k\nu\big(\{h\in H:h\cdot z\in V\}\big);
\]
\item there exists a real number $R>k-1$ such that for every open neighbourhood $V$ of $z$ with $\{h\in H:h\cdot z\in V\}$ relatively compact we have
\[
\underset{n}{\lim\,\inf}\,\nu\big(\{h\in H:h\cdot x_n\in V\}\big)\ge R\nu\big(\{h\in H:h\cdot z\in V\}\big);\quad\text{and}
\]
\item there exists a basic decreasing sequence of compact neighbourhoods $\{W_m\}$ of $z$ such that, for each $m\ge 1$,
\[
\underset{n}{\lim\,\inf}\,\nu\big(\{h\in H:h\cdot x_n\in W_m\}\big)> (k-1)\nu\big(\{h\in H:h\cdot z\in W_m\}\big).
\]
\end{enumerate}
\end{theorem}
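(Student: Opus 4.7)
My plan is to prove the equivalences via the cycle (1) $\Rightarrow$ (3) $\Rightarrow$ (4) $\Rightarrow$ (5) $\Rightarrow$ (2) $\Rightarrow$ (1). The implications (3) $\Rightarrow$ (4) (take $R=k$) and (4) $\Rightarrow$ (5) (apply (4) to each member of a basic decreasing sequence of compact neighborhoods of $z$, converting to a strict inequality by shrinking from $R$ to $k-1$ using $R>k-1$) are essentially formal. For (1) $\Rightarrow$ (3), given an open neighborhood $V$ of $z$ with $K:=\{h\in H:h\cdot z\in V\}$ relatively compact and the $k$ sequences $\{h_n^{(i)}\}$ realizing $k$-times convergence, I would fix a compact subset $L\subseteq K$ and use joint continuity of the action on the compact set $L\cdot z\subseteq V$ to conclude that $Lh_n^{(i)}\subseteq\{h:h\cdot x_n\in V\}$ for all large $n$. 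Condition (ii), $h_n^{(j)}(h_n^{(i)})^{-1}\to\infty$, forces the translates $Lh_n^{(i)}$ to be eventually pairwise disjoint, and right-invariance of $\nu$ gives $\nu(\{h:h\cdot x_n\in V\})\ge k\,\nu(L)$; inner regularity of $\nu$ on $K$ then yields (3).

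For (5) $\Rightarrow$ (2), I would fix a pure state $\phi$ associated with $\mathrm{Ind}\,\epsilon_z$ and a weak$^*$ neighborhood $N\in\Nn$, and work in the Hilbert space $L^2(H,\nu)$ on which each $\mathrm{Ind}\,\epsilon_{x_n}$ acts. Using the strict inequality $\liminf_n \nu(\{h:h\cdot x_n\in W_m\})>(k-1)\nu(\{h:h\cdot z\in W_m\})$, a standard measure-theoretic packing argument produces, for each large $n$, points $h_n^{(1)},\dots,h_n^{(k)}\in H$ so that the right translates $\{h:h\cdot z\in W_m\}\,h_n^{(i)}$ are pairwise disjoint subsets of $\{h:h\cdot x_n\in W_m\}$. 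Suitably normalized characteristic functions $\xi_n^{(i)}$ supported on these translates give orthonormal vectors in $L^2(H,\nu)$, and choosing $m$ sufficiently large (depending on $N$) forces their vector states under $\mathrm{Ind}\,\epsilon_{x_n}$ into $\phi+N$. This gives $d(\mathrm{Ind}\,\epsilon_{x_n},\phi,N)\ge k$ eventually, hence $\M_\L(\mathrm{Ind}\,\epsilon_z,\{\mathrm{Ind}\,\epsilon_{x_n}\})\ge k$.

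The hard part will be (2) $\Rightarrow$ (1). From the hypothesis, for each $N\in\Nn$ one obtains, eventually in $n$, orthonormal vectors $\eta_n^{(1)},\dots,\eta_n^{(k)}$ in $L^2(H,\nu)$ whose vector states under $\mathrm{Ind}\,\epsilon_{x_n}$ lie in $\phi+N$. My approach is to approximate each $\eta_n^{(i)}$ by the normalized characteristic function of a small set centered on some $h_n^{(i)}\in H$, then argue that the vector-state condition forces $h_n^{(i)}\cdot x_n$ close to $z$ (so $h_n^{(i)}\cdot x_n\to z$ as $N$ shrinks) and that asymptotic orthogonality of the $\eta_n^{(i)}$ forces $h_n^{(j)}(h_n^{(i)})^{-1}$ out of every compact subset of $H$. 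The local closedness of $H\cdot z$ is essential here: because the orbit is open in its closure and the action is free, $z$ admits a base of neighborhoods for which $\{h:h\cdot z\in V\}$ stays relatively compact, which is what rules out the extracted support points drifting to a different orbit in $\overline{H\cdot z}$. A diagonal argument over a countable sequence of neighborhoods $N_\ell\in\Nn$ then yields sequences $\{h_n^{(i)}\}$ satisfying both clauses of Definition \ref{def_k-times_convergence_transformation_group}.
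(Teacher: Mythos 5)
Your cycle differs from the one this thesis follows for its groupoid generalisation (Theorem \ref{circle_thm}, whose proof specialises to the present statement via Remark \ref{remark_algebras_isomorphic}): there the order is (1)$\Rightarrow$(2)$\Rightarrow$(3)$\Rightarrow$(4)$\Rightarrow$(5)$\Rightarrow$(1). Your steps (1)$\Rightarrow$(3), (3)$\Rightarrow$(4) and (4)$\Rightarrow$(5) are sound (the last is Lemma \ref{AaH_lemma5.1} in the groupoid setting), and your (5)$\Rightarrow$(2) is essentially the composite of the paper's (5)$\Rightarrow$(1) (the packing argument of Proposition \ref{AaH_prop4_1_1}) with (1)$\Rightarrow$(2) (Proposition \ref{AaH_thm_1.1_1_implies_2}). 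Two technical corrections there: the right translates of $\{h:h\cdot z\in W_m\}$ are generally \emph{not} subsets of $\{h:h\cdot x_n\in W_m\}$, so the packing must be carried out inside the $x_n$-set with disjointness measured against a compact exhaustion of $H$ as in \eqref{eqn_choosing_gammas}; and the characteristic functions should be replaced by translates of a fixed continuous bump, since convergence of the resulting vector states is obtained from continuity of the Haar system, whereas measures of compact sets are only upper semi-continuous (Lemma \ref{astrids_lim_sup}).

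The genuine gap is (2)$\Rightarrow$(1). Your plan --- approximate each $\eta_n^{(i)}$ by a normalised characteristic function ``centred on some $h_n^{(i)}$'' and read off the two clauses of $k$-times convergence --- has no justification: a unit vector in $L^2(H,\nu)$ whose vector state lies in $\phi+N$ need not be concentrated near any single group element (a balanced superposition of two widely separated translates of the profile also has vector state in $\phi+N$ once $N$ is fixed), so there is no well-defined support point to extract, and the asserted conclusions $h_n^{(i)}\cdot x_n\to z$ and $h_n^{(j)}(h_n^{(i)})^{-1}\to\infty$ do not follow. The route the paper takes out of hypothesis (2) is entirely different: one proves (2)$\Rightarrow$(3) by contraposition via Theorem \ref{M_thm}, building $D\in C_c(G)$ for which $\L^z(D^*\ast D)$ is a rank-one projection while $\tr\big(\L^{x_n}(D^*\ast D)\big)$ is bounded by the measure ratio, and then invoking the Archbold--Spielberg lower semi-continuity inequality $\liminf_n\tr\big(\pi_n(a)\big)\ge \M_\L(\pi,\{\pi_n\})\,\tr\big(\pi(a)\big)$. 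This converts the multiplicity hypothesis into the measure-theoretic statement (3) without ever attempting to localise the orthonormal vectors, and the cycle is then closed by the packing argument (5)$\Rightarrow$(1). Without this idea, or an equivalent substitute, your cycle does not close.
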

In the next example we will use the equivalence of \eqref{AaH_thm1.1_1} and \eqref{AaH_thm1.1_2} to determine the lower multiplicity number of the representation induced by an evaluation map $\epsilon_z$.
\begin{example}\label{example_Green_lower_multiplicity}
In Example \ref{example_2-times_convergence_Green} we showed that in the free transformation group $X/\RR$, the sequence $\{x_n\}$ converges $2$-times, but not $3$-times, in $X/\RR$ to $z$. Since $\RR\cdot z=\RR\times\{0\}\times\{0\}$ is a closed subset of $X$, we can apply Theorem \ref{AaH_thm1.1} to see that $\M_\L(\mathrm{Ind}\,\epsilon_z,\{\mathrm{Ind}\,\epsilon_{x_n}\})=2$.
\end{example}

The other main theorem from Archbold and an Huef's \cite{Archbold-anHuef2006} follows and is the upper multiplicity analogue of Theorem \ref{AaH_thm1.1}.
\needspace{6\baselineskip}
\begin{theorem}[{\cite[Theorem~5.3]{Archbold-anHuef2006}}]\label{AaH_thm5.3}
Suppose $(H,X)$ is a free second-countable transformation group. Let $k$ be a positive integer, let $z\in X$ and let $\{x_n\}\subset X$ be a sequence such that $\{H\cdot x_n\}$ converges to $H\cdot z$ in $X/H$. Assume that $H\cdot z$ is locally closed in $X$. Then the following are equivalent:
\begin{enumerate}
\item there exists a subsequence $\{x_{n_i}\}$ of $\{x_n\}$ which converges $k$-times in $X/H$ to $z$;
\item $\M_\U(\mathrm{Ind}\,\epsilon_z,\{\mathrm{Ind}\,\epsilon_{x_n}\})\ge k$;
\item for every open neighbourhood $V$ of $z$ such that $\{h\in H: h\cdot z\in V\}$ is relatively compact we have
\[
\underset{n}{\lim\,\sup}\,\nu\big(\{h\in H:h\cdot x_n\in V\}\big)\ge k\nu\big(\{h\in H:h\cdot z\in V\}\big);
\]
\item there exists a real number $R>k-1$ such that for every open neighbourhood $V$ of $z$ with $\{h\in H:h\cdot z\in V\}$ relatively compact we have
\[
\underset{n}{\lim\,\sup}\,\nu\big(\{h\in H:h\cdot x_n\in V\}\big)\ge R\nu\big(\{h\in H:h\cdot z\in V\}\big);\quad\text{and}
\]
\item there exists a basic decreasing sequence of compact neighbourhoods $\{W_m\}$ of $z$ such that, for each $m\ge 1$,
\[
\underset{n}{\lim\,\sup}\,\nu\big(\{h\in H:h\cdot x_n\in W_m\}\big)> (k-1)\nu\big(\{h\in H:h\cdot z\in W_m\}\big).
\]
\end{enumerate}
\end{theorem}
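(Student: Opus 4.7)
The plan is to reduce Theorem~\ref{AaH_thm5.3} to Theorem~\ref{AaH_thm1.1} via subsequence extraction, exploiting the general principle that upper multiplicity relative to a sequence detects along subsequences what lower multiplicity detects along the whole sequence. The two main tools will be the monotonicity inequalities $\M_\L(\pi,\{\pi_\alpha\}) \le \M_\L(\pi,\{\pi_\beta\}) \le \M_\U(\pi,\{\pi_\beta\}) \le \M_\U(\pi,\{\pi_\alpha\})$ for subnets $\{\pi_\beta\}$ of $\{\pi_\alpha\}$, recalled in Section~\ref{sec_upper_lower_multiplicities}, and the separability of $C_0(X)\rtimes H$ (since $(H,X)$ is second countable), which provides a decreasing countable subbase $\{N_m\}_{m\in\PP}$ for the neighbourhood base $\Nn$ at zero in $(C_0(X)\rtimes H)^*$.

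For \eqref{AaH_thm1.1_1}$\Leftrightarrow$\eqref{AaH_thm1.1_2}, the forward direction is essentially free: if $\{x_{n_i}\}$ converges $k$-times in $X/H$ to $z$, then Theorem~\ref{AaH_thm1.1} applied to the subsequence gives $\M_\L(\mathrm{Ind}\,\epsilon_z,\{\mathrm{Ind}\,\epsilon_{x_{n_i}}\})\ge k$, and monotonicity yields $\M_\U(\mathrm{Ind}\,\epsilon_z,\{\mathrm{Ind}\,\epsilon_{x_n}\})\ge\M_\L(\mathrm{Ind}\,\epsilon_z,\{\mathrm{Ind}\,\epsilon_{x_{n_i}}\})\ge k$. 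For the converse, given $\M_\U(\mathrm{Ind}\,\epsilon_z,\{\mathrm{Ind}\,\epsilon_{x_n}\})\ge k$, for each $m\in\PP$ we have $\limsup_n d(\mathrm{Ind}\,\epsilon_{x_n},\phi,N_m)\ge k$, so infinitely many $n$ satisfy $d(\mathrm{Ind}\,\epsilon_{x_n},\phi,N_m)\ge k$. A standard Cantor diagonal argument over $\{N_m\}$ then extracts a subsequence $\{x_{n_i}\}$ with $d(\mathrm{Ind}\,\epsilon_{x_{n_i}},\phi,N_m)\ge k$ eventually in $i$ for every $m$, whence $\M_\L(\mathrm{Ind}\,\epsilon_z,\{\mathrm{Ind}\,\epsilon_{x_{n_i}}\})\ge k$, and Theorem~\ref{AaH_thm1.1} applied to $\{x_{n_i}\}$ delivers the required $k$-times convergent subsequence.

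The measure-theoretic equivalences \eqref{AaH_thm1.1_1}$\Leftrightarrow$(iii)$\Leftrightarrow$(iv)$\Leftrightarrow$(v) follow the same template. If some subsequence $\{x_{n_i}\}$ converges $k$-times, Theorem~\ref{AaH_thm1.1} supplies the $\liminf$ bound along $\{x_{n_i}\}$, which is dominated by the $\limsup$ along $\{x_n\}$, yielding (iii) and (iv). Condition (v) follows analogously, choosing the basic decreasing sequence $\{W_m\}$ from $z$'s compact-neighbourhood base. In the reverse direction, given (v), fix a basic decreasing sequence $\{W_m\}$ witnessing the strict $\limsup$ inequality; a diagonal extraction over $m$ yields a subsequence $\{x_{n_i}\}$ along which $\liminf_i \nu(\{h: h\cdot x_{n_i}\in W_m\}) > (k-1)\nu(\{h: h\cdot z\in W_m\})$ for every $m$, so the $\liminf$ version of (v) for $\{x_{n_i}\}$ holds and Theorem~\ref{AaH_thm1.1} furnishes $k$-times convergence of $\{x_{n_i}\}$. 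The implications (iii)$\Rightarrow$(iv) and (v)$\Rightarrow$(iii) need no new idea; one mimics the arguments in the analogous parts of Theorem~\ref{AaH_thm1.1}, replacing $\liminf$ with $\limsup$ throughout.

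The main obstacle is not any single deep step but rather controlling the diagonal extractions when the quantifier ranges over all open neighbourhoods of $z$, as in (iii) and (iv). The hypothesis that $H\cdot z$ is locally closed (together with second countability of $X$) is what lets us reduce to a countable basic decreasing sequence of compact neighbourhoods of $z$, so that a single diagonal subsequence realises the required bound simultaneously for every neighbourhood in the base; once that reduction is in hand, the extraction is purely formal. A minor but necessary care point is that in selecting the subsequence realising $\liminf\ge k\nu(\{h:h\cdot z\in V\})$ from the $\limsup$ hypothesis, the bound must be extracted uniformly in $V$ ranging over the countable base, but monotonicity of $V\mapsto\{h:h\cdot z\in V\}$ together with Fatou-style comparisons makes this routine.
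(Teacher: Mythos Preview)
Your reduction-to-subsequences strategy is correct for (1)$\Leftrightarrow$(2) and matches the paper's approach (the paper does not re-prove Theorem~\ref{AaH_thm5.3} itself but proves the groupoid generalisation, Theorem~\ref{2nd_circle_thm}, whose proof specialises). The diagonal over a countable base $\{N_m\}$ is exactly what \cite[Lemma~A.1]{Archbold-anHuef2006} provides, and it works because $d(\sigma,\phi,N)$ is monotone in $N$: for $N'\subset N$ one has $d(\sigma,\phi,N')\le d(\sigma,\phi,N)$, so the index sets $\{n:d(\mathrm{Ind}\,\epsilon_{x_n},\phi,N_m)\ge k\}$ are nested decreasing and a single diagonal subsequence is good for all $m$.

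The gap is in your (v)$\Rightarrow$(1). The analogous monotonicity fails for the measure condition: both $\nu(\{h:h\cdot x_n\in W_m\})$ and $(k-1)\nu(\{h:h\cdot z\in W_m\})$ decrease in $m$, but their difference need not, so the index sets $\{n:\nu(\{h:h\cdot x_n\in W_m\})>(k-1)\nu(\{h:h\cdot z\in W_m\})\}$ are not nested. Extracting a subsequence good for $W_1$ can kill the $\limsup$ hypothesis for $W_2$, and no diagonal produces a subsequence along which the $\liminf$ version of (v) holds for every $m$ simultaneously. The paper does \emph{not} reduce (5)$\Rightarrow$(1) to the $\liminf$ theorem; instead (after localising so that $[z]$ is the unique limit) it invokes Proposition~\ref{AaH_prop4_1_2}, which constructs both the subsequence $\{x_{i_m}\}$ and the $k$ witnessing sequences $\gamma_{i_m}^{(1)},\ldots,\gamma_{i_m}^{(k)}$ in one pass: for each $m$ it picks a single index $i_m>i_{m-1}$ where the strict bound for $W_m$ holds (only for that one $m$), and then uses an exhausting compact family $\{K_m\}$ together with the measure estimate of Lemma~\ref{the_unbroken_lemma} to peel off $k$ elements of $G_{x_{i_m}}^{W_m}$ lying in pairwise $K_m$-separated cosets. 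That direct construction, parallel to Proposition~\ref{AaH_prop4_1_1} but choosing one index per level rather than demanding eventual validity, is the missing ingredient.
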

\begin{example}
In Example \ref{example_2-times_convergence_Green} we showed that in the free transformation group $X/\RR$, the sequence $\{x_n\}$ converges $2$-times, but not $3$-times, in $X/\RR$ to $z$. By taking a subsequence $\{x_{n_i}\}$ of $\{x_n\}$ and considering the arguments in Example \ref{example_2-times_convergence_Green}, we can see that $\{x_{n_i}\}$ converges $2$-times, but not $3$-times, in $X/\RR$ to $z$. In Example \ref{example_Green_lower_multiplicity} we noted that $\RR\cdot z$ is closed so in order to be able to apply Theorem \ref{AaH_thm5.3} it remains to show that $\RR\cdot x_n\rightarrow \RR\cdot z$ in $X/\RR$. But $x_n\rightarrow z$ in $X$ and the quotient map $X\rightarrow X/\RR$ is continuous, so $\RR\cdot x_n\rightarrow \RR\cdot z$. We can now apply Theorem \ref{AaH_thm5.3} to see that $\M_\U(\mathrm{Ind}\,\epsilon_z,\{\mathrm{Ind}\,\epsilon_{x_n}\})=2$.
\end{example}

While Green's transformation group from Example \ref{example_2-times_convergence_Green} `folds' vertical lines with an arc of a helix, Rieffel in \cite[Example~1.18]{Rieffel2004} modified this to allow any fixed number of helix-arc folds for each orbit. This can be used to construct transformation groups with an irreducible representation that has any desired relative upper and lower multiplicity numbers. In particular, Archbold and an Huef in \cite[Example~6.4]{Archbold-anHuef2006} use this to describe a transformation group and an irreducible representation with relative upper multiplicity 3 and lower multiplicity 2. Rather than repeating this example, after generalising Theorems \ref{AaH_thm1.1} and \ref{AaH_thm5.3} to principal groupoid results, in Example \ref{ML2_MU3_example} we use the path groupoid of a directed graph to describe an irreducible representation with relative upper multiplicity 3 and lower multiplicity 2.
\section{Lower multiplicity and \texorpdfstring{$k$}{\it k}-times convergence I}\label{sec_lower_multiplicity_1}
The key goal of this chapter is to describe the relationship between multiplicities of induced representations and strength of convergence in the orbit space. We start  this section by recalling the definition of $k$-times convergence in a groupoid from \cite{Clark-anHuef2008}. We then show that if a sequence converges $k$-times in the orbit space of a principal groupoid, then the lower multiplicity of the associated sequence of representations is at least $k$; the converse will be shown in Section \ref{sec_lower_multiplicity_2}.

Recall that a sequence $\{\gamma_n\}\subset G$ tends to infinity if it admits no convergent subsequence.
\begin{definition}\label{def_k-times_convergence}
Suppose $G$ is a topological groupoid and let $k\in\PP$. A sequence $\{x_n\}$ in $G^{(0)}$ is {\em $k$-times convergent}\index{k-times convergence@$k$-times convergence!in the orbit space of a groupoid} in $G^{(0)}/G$ to $z\in G^{(0)}$ if there exist $k$ sequences $\{\gamma_n^{(1)}\},\{\gamma_n^{(2)}\},\ldots,\{\gamma_n^{(k)}\}\subset G$ such that
\begin{enumerate}\renewcommand{\theenumi}{\roman{enumi}}\renewcommand{\labelenumi}{(\theenumi)}
\item\label{def_k-times_convergence_1} $s(\gamma_n^{(i)})=x_n$ for all $n$ and $1\le i\le k$;
\item\label{def_k-times_convergence_2} $r(\gamma_n^{(i)})\rightarrow z$ as $n\rightarrow\infty$ for $1\le i\le k$; and
\item\label{def_k-times_convergence_3} if $1\le i<j\le k$ then $\gamma_n^{(j)}(\gamma_n^{(i)})^{-1}\rightarrow\infty$ as $n\rightarrow\infty$.
\end{enumerate}
\end{definition}

For each $x\in G^{(0)}$, let $\L^x$ be the induced representation $\mathrm{Ind}\, \delta_x$ as in Section \ref{section_representations_and_groupoid_algebras}. The proof of the following proposition is based on \cite[Theorem~2.3]{Archbold-Deicke2005} and a part of \cite[Theorem~1.1]{Archbold-anHuef2006}.
\begin{prop}\label{AaH_thm_1.1_1_implies_2}
Suppose $G$ is a second countable, locally compact, Hausdorff, principal groupoid with Haar system $\lambda$. Let $z\in G^{(0)}$ and suppose that $\{x_n\}$ is a sequence in $G^{(0)}$ that converges $k$-times to $z$ in $G^{(0)}/G$. Then $\M_\L(\L^z,\{\L^{x_n}\})\ge k$.
\begin{proof}
We will use a contradiction argument. Suppose that $\M_\L(\L^z,\{\L^{x_n}\})=r<k$. 
Fix a real-valued $g\in C_c(G)$ so that $\|g\|_z>0$. Define $\eta\in L^2(G,\lambda_z)$ by $\eta(\alpha)=\|g\|_z^{-1}g(\alpha)$ for all $\alpha\in G$. Then
\[
\|\eta\|_z^2=\|g\|_z^{-2}\int g(\alpha)^2\, d\lambda_z(\alpha)=\|g\|_z^{-2}\|g\|_z^2=1,
\]
so $\eta$ is a unit vector in $L^2(G,\lambda_z)$ and the GNS construction of $\phi:=(\L^z(\cdot)\eta\,|\,\eta)$ is unitarily equivalent to $\L^z$. By the definition of lower multiplicity we now have
\[
\M_\L(\L^z,\{\L^{x_n}\})=\inf_{N\in\Nn} \M_\L(\phi,N,\{\L^{x_n}\})=r,
\]
so there exists $N\in\Nn$ such that
\[
\M_\L(\phi,N,\{\L^{x_n}\})=\underset{\scriptstyle n}{\lim\,\inf}\, d(\L^{x_n},\phi,N)=r,
\]
and consequently there exists a subsequence $\{y_m\}$ of $\{x_n\}$ such that
\begin{equation}\label{observation_to_contradict}
d(\L^{y_m},\phi,N)=r\quad\text{for all }m.
\end{equation}
 Since any subsequence of a sequence that is $k$-times convergent is also $k$-times convergent, we know that $\{y_m\}$ converges $k$-times to $z$ in $G^{(0)}/G$.

We will now use the $k$-times convergence of $\{y_m\}$ to construct $k$ sequences of unit vectors with sufficient properties to establish our contradiction. By the $k$-times convergence of $\{y_m\}$ there exist $k$ sequences
\[
\{\gamma_m^{(1)}\},\{\gamma_m^{(2)}\},\ldots,\{\gamma_m^{(k)}\}\subset G
\]
satisfying
{\allowdisplaybreaks\begin{enumerate}\renewcommand{\labelenumi}{(\roman{enumi})}
\item $s(\gamma_m^{(i)})=y_m$ for all $m$ and $1\le i\le k$;
\item $r(\gamma_m^{(i)})\rightarrow z$ as $m\rightarrow\infty$ for $1\le i\le k$; and
\item if $1\le i<j\le k$ then $\gamma_m^{(j)}(\gamma_m^{(i)})^{-1}\rightarrow\infty$ as $m\rightarrow\infty$.
\end{enumerate}}
\noindent For each $1\le i\le k$ and $m\ge 1$, define $\eta_m^{(i)}$ by
\[
\eta_m^{(i)}(\alpha)=\|g\|_{r(\gamma_m^{(i)})}^{-1}g\big(\alpha(\gamma_m^{(i)})^{-1}\big)\quad\text{for all }\alpha\in G.
\]
It follows from Haar-system invariance that
\begin{align*}
\|\eta_m^{(i)}\|_{y_m}^2&=\|g\|_{r(\gamma_m^{(i)})}^{-2}\int g\big(\alpha(\gamma_m^{(i)})^{-1}\big)^2\,d\lambda_{y_m}(\alpha)\\
&=\|g\|_{r(\gamma_m^{(i)})}^{-2}\int g(\alpha)^2\,d\lambda_{r(\gamma_m^{(i)})}(\alpha)\\
&=\|g\|_{r(\gamma_m^{(i)})}^{-2}\|g\|_{r(\gamma_m^{(i)})}^2=1,
\end{align*}
so the $\eta_m^{(i)}$ are unit vectors in $L^2(G,\lambda_{y_m})$. 

Now suppose that $1\le i<j\le k$. Then
\begin{equation}\label{working_the_etas}
(\eta_m^{(i)}\,|\,\eta_m^{(j)})_{y_m}= \|g\|_{r(\gamma_m^{(i)})}^{-1}\|g\|_{r(\gamma_m^{(j)})}^{-1}\int g\big(\alpha(\gamma_m^{(i)})^{-1}\big)g\big(\alpha(\gamma_m^{(j)})^{-1}\big)\, d\lambda_{y_m}(\alpha)\\
\end{equation}
Since $\gamma_m^{(i)}(\gamma_m^{(j)})^{-1}\rightarrow\infty$, the sequence $\{\gamma_m^{(i)}(\gamma_m^{(j)})^{-1}\}$ is eventually not in the compact set $(\mathrm{supp}\, g)^{-1}(\mathrm{supp}\, g)$, and so there exists $m_0$ such that if $m\ge m_0$, then
\[
(\mathrm{supp}\, g)\gamma_m^{(i)}\cap(\mathrm{supp}\, g)\gamma_m^{(j)}=\emptyset.
\]
(To see this claim, note that if $(\mathrm{supp}\, g)\gamma_m^{(i)}\cap (\mathrm{supp}\, g)\gamma_m^{(j)}\ne\emptyset$ then there exist $\alpha,\beta\in \mathrm{supp}\, g$ such that $\alpha\gamma_m^{(i)}=\beta\gamma_m^{(j)}$, and so $\gamma_m^{(i)}(\gamma_m^{(j)})^{-1}=\alpha^{-1}\beta\in (\mathrm{supp}\, g)^{-1}(\mathrm{supp}\, g)$.) For the integrand of \eqref{working_the_etas} to be non-zero, both $\alpha(\gamma_m^{(i)})^{-1}$ and $\alpha(\gamma_m^{(j)})^{-1}$ must be in $\mathrm{supp}\, g$, so $\alpha$ must be in $(\mathrm{supp}\, g)\gamma_m^{(i)}\cap(\mathrm{supp}\, g)\gamma_m^{(j)}$. But this is not possible if $m\ge m_0$. Thus, for any distinct $i,j$, we will eventually have $\eta_m^{(i)}\perp\eta_m^{(j)}$.

For the last main component of this proof we will establish that
\[
\big(\L^{y_m}(\cdot)\eta_m^{(i)}\,\big|\,\eta_m^{(i)}\big)\rightarrow \big(\L^z(\cdot)\eta\,\big|\,\eta\big)=\phi\quad\text{as }m\rightarrow\infty
\]
in the dual of $C^*(G)$ with the weak$^*$ topology for each $i$. Fix $f\in C_c(G)$. We have
{\allowdisplaybreaks\begin{align}
\big(\L^{z}(f)\eta\,\big|\,\eta\big)&=
\int_G\big(\L^z(f)\eta\big)(\alpha)\eta(\alpha)\, d\lambda_z(\alpha)\notag\\
&=\int_G\int_G f(\alpha\beta^{-1})\eta(\beta)\eta(\alpha)\,d\lambda_z(\beta)\,d\lambda_z(\alpha)\notag\\
\label{dealing_with_etas} &=\|g\|_z^{-2}\int_G\int_G f(\alpha\beta^{-1})g(\beta)g(\alpha)\,d\lambda_z(\beta)\,d\lambda_z(\alpha)
\end{align}
}Now fix $1\le i\le k$. By the invariance of the Haar system we have
{\allowdisplaybreaks\begin{align}
\big(\L^{y_m}&(f)\eta_m^{(i)}\,\big|\,\eta_m^{(i)}\big)=\int_G\int_G f(\alpha\beta^{-1})\eta_m^{(i)}(\beta)\eta_m^{(i)}(\alpha)\,d\lambda_{y_m}(\beta)\,d\lambda_{y_m}(\alpha)\notag\\
&=\|g\|_{r(\gamma_m^{(i)})}^{-2}\int_G\int_G f(\alpha\beta^{-1})g\big(\alpha(\gamma_m^{(i)})^{-1}\big)g\big(\beta(\gamma_m^{(i)})^{-1}\big)\, d\lambda_{y_m}(\beta)\,d\lambda_{y_m}(\alpha)\notag\\
&=\|g\|_{r(\gamma_m^{(i)})}^{-2}\int_G\int_G f(\alpha\beta^{-1})g(\alpha)g(\beta)\, d\lambda_{r(\gamma_m^{(i)})}(\beta)\, d\lambda_{r(\gamma_m^{(i)})}(\alpha)\notag\\
&=\|g\|_{r(\gamma_m^{(i)})}^{-2}\int_G f\ast g(\alpha) g(\alpha)\, d\lambda_{r(\gamma_m^{(i)})}(\alpha).\label{still_working_etas}
\end{align}
}We know that $r(\gamma_m^{(i)})\rightarrow z$ as $m\rightarrow\infty$ so, by the continuity of the Haar system, $\|g\|_{r(\gamma_m^{(i)})}\rightarrow \|g\|_z$ as $m\rightarrow\infty$. Since $f\ast g\in C_c(G)$ we can apply the continuity of the Haar system with \eqref{dealing_with_etas} and \eqref{still_working_etas} to see that
\begin{align*}
\big(\L^{y_m}(f)\eta_m^{(i)}\,\big|\,\eta_m^{(i)}\big)&=\|g\|_{r(\gamma_m^{(i)})}^{-2}\int_G f\ast g(\alpha) g(\alpha)\, d\lambda_{r(\gamma_m^{(i)})}(\alpha)\\
&\rightarrow \|g\|_{z}^{-2}\int_G f\ast g(\alpha) g(\alpha)\, d\lambda_{z}(\alpha)=\big(\L^{z}(f)\eta\,\big|\,\eta\big)
\end{align*}
as $m\rightarrow\infty$.

We have thus shown that, for each $i$,
\[
\big(\L^{y_m}(\cdot)\eta_m^{(i)}\,\big|\,\eta_m^{(i)}\big)\rightarrow \big(\L^z(\cdot)\eta\,\big|\,\eta\big)=\phi
\]
in the dual of $C^*(G)$ equipped with the weak$^*$ topology. Thus there exists $m_1$ such that for any $m\ge m_1$ and any $1\le i\le k$, the pure state $\big(\L^{y_m}(\cdot)\eta_m^{(i)}\,\big|\,\eta_m^{(i)}\big)$ is in $\phi+N$. We have now established that every $\eta_m^{(i)}$ with $m\ge\max\{m_0,m_1\}$ is in $\mathrm{Vec}(\L^{y_m},\phi,N)$ with $\eta_m^{(i)}\perp\eta_m^{(j)}$ for $i\ne j$, so $d(\L^{y_m},\phi,N)\ge k$ for all $m\ge\max\{m_0,m_1\}$, contradicting our choice of $\{y_m\}$ that in \eqref{observation_to_contradict} had $d(\L^{y_m},\phi,N)=r<k$ for all $m$.
\end{proof}
\end{prop}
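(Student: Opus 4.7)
The plan is to show, for each basic weak$^*$-neighbourhood $N$ of a suitably chosen pure state $\phi$ associated with $\L^z$, that $d(\L^{x_n},\phi,N)\ge k$ for all sufficiently large $n$; running this through the definition of relative lower multiplicity then gives $\M_\L(\L^z,\{\L^{x_n}\})\ge k$. I would set this up as a proof by contradiction, assuming $\M_\L(\L^z,\{\L^{x_n}\})=r<k$, extracting from the definition an $N\in\Nn$ and a subsequence $\{y_m\}$ of $\{x_n\}$ with $d(\L^{y_m},\phi,N)\le r$ for all $m$, and then manufacturing $k$ orthonormal vectors in $\mathrm{Vec}(\L^{y_m},\phi,N)$ for large $m$ to derive a contradiction. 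Since subsequences of $k$-times convergent sequences remain $k$-times convergent, I inherit witnesses $\{\gamma_m^{(1)}\},\dots,\{\gamma_m^{(k)}\}\subset G$ satisfying (i)--(iii) of Definition \ref{def_k-times_convergence}.

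For the pure state $\phi$, I would use Lemma \ref{lemma_urysohn_corollary} to choose a real-valued $g\in C_c(G)$ with $\|g\|_z>0$, set $\eta:=\|g\|_z^{-1}g\in L^2(G,\lambda_z)$, and take $\phi=(\L^z(\cdot)\eta\,|\,\eta)$; Remark \ref{measure_induced_epsilon_x} ensures the GNS construction is (unitarily equivalent to) $\L^z$, so this $\phi$ is allowed. Then, mimicking the transformation-group construction, I would right-translate $g$ along the witnessing groupoid elements: for each $i$ and $m$, define
\[
\eta_m^{(i)}(\alpha):=\|g\|_{r(\gamma_m^{(i)})}^{-1}\,g\big(\alpha(\gamma_m^{(i)})^{-1}\big),
\]
which is supported in $G_{y_m}$ since $s(\gamma_m^{(i)})=y_m$, and is a unit vector in $L^2(G,\lambda_{y_m})$ by a direct application of Haar invariance \eqref{H3}.

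The two main things to verify are eventual orthogonality and weak$^*$ convergence of the associated vector states to $\phi$. Orthogonality is the main technical obstacle: for $i\ne j$ the inner product $(\eta_m^{(i)}\,|\,\eta_m^{(j)})_{y_m}$ has integrand vanishing off $(\mathrm{supp}\,g)\gamma_m^{(i)}\cap(\mathrm{supp}\,g)\gamma_m^{(j)}$, and I would observe that any nonempty intersection would force $\gamma_m^{(i)}(\gamma_m^{(j)})^{-1}$ into the fixed compact set $(\mathrm{supp}\,g)^{-1}(\mathrm{supp}\,g)$, contradicting condition (iii) of Definition \ref{def_k-times_convergence} once $m$ is large; hence $\eta_m^{(i)}\perp\eta_m^{(j)}$ eventually. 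For weak$^*$ convergence, I would fix $f\in C_c(G)$, expand $\big(\L^{y_m}(f)\eta_m^{(i)}\,\big|\,\eta_m^{(i)}\big)$ using the convolution formula of Remark \ref{remark_alternative_convolution}, apply Haar invariance to substitute away the $\gamma_m^{(i)}$ translations, and recognise the result as $\|g\|_{r(\gamma_m^{(i)})}^{-2}\int (f\ast g)(\alpha)\,g(\alpha)\,d\lambda_{r(\gamma_m^{(i)})}(\alpha)$; since $r(\gamma_m^{(i)})\to z$, continuity of the Haar system \eqref{H2} together with $f\ast g,g\in C_c(G)$ gives convergence to $\|g\|_z^{-2}\int (f\ast g)\,g\,d\lambda_z=\phi(f)$. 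Combining these, a single threshold $m_0$ makes all $k$ vectors $\eta_m^{(1)},\dots,\eta_m^{(k)}$ simultaneously lie in $\mathrm{Vec}(\L^{y_m},\phi,N)$ and be pairwise orthogonal, so $d(\L^{y_m},\phi,N)\ge k>r$, contradicting the choice of $\{y_m\}$.
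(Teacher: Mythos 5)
Your proposal is correct and follows essentially the same route as the paper's own proof: the same contradiction setup extracting $N$ and a subsequence $\{y_m\}$, the same right-translated unit vectors $\eta_m^{(i)}$, the same support-disjointness argument for eventual orthogonality, and the same convolution/Haar-continuity computation for weak$^*$ convergence of the vector states to $\phi$. No gaps.
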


\section{Measure ratios and \texorpdfstring{$k$}{\it k}-times convergence}\label{sec_measure_ratios_k-times_convergence}
In this section we show that lower bounds on measure ratios along orbits give strength of convergence in the orbit space. We begin with a lemma before generalising \cite[Proposition~4.1]{Archbold-anHuef2006}.  

\begin{lemma}\label{the_unbroken_lemma}
Suppose $G$ is a second countable, locally compact, Hausdorff groupoid with Haar system $\lambda$. Let $W$ be a compact neighbourhood of $z\in G^{(0)}$ and let $K$ be a compact subset of $G$. Let $\{x_n\}$ be a sequence in $G^{(0)}$ such that $[x_n]\rightarrow [z]$ uniquely in $G^{(0)}/G$. Then for every $\delta>0$ there exists $n_0$ such that, for every $n\ge n_0$ and every $\gamma\in G_{x_n}^W$,
\[
\lambda_{x_n}(K\gamma\cap G^W)<\lambda_z(G^W)+\delta.
\]
\begin{proof}
Suppose not. Then, by passing to a subsequence if necessary, there exists $\delta>0$ such that for each $n$ there exists $\gamma_n\in G_{x_n}^W$ with
\begin{equation}\label{assumption_in_unbroken_lemma}
\lambda_{x_n}(K\gamma_n\cap G^W)\ge \lambda_z(G^W)+\delta.
\end{equation}
Since each $r(\gamma_n)$ is in the compact set $W$, we can pass to a subsequence so that $r(\gamma_n)\rightarrow y$ for some $y\in G^{(0)}$. This implies $[r(\gamma_n)]\rightarrow [y]$, but $[r(\gamma_n)]=[s(\gamma_n)]=[x_n]$ and $[x_n]\rightarrow [z]$ uniquely, so $[y]=[z]$. Choose $\psi\in G$ with $s(\psi)=z$ and $r(\psi)=y$. By Haar-system invariance
\[
\lambda_{x_n}(K\gamma_n\cap G^W)=\lambda_{r(\gamma_n)}(K\cap G^W),
\]
so by applying Lemma \ref{astrids_lim_sup} with the compact space $K\cap G^W$ and $\{r(\gamma_n)\}$ converging to $y$,
\begin{align*}
\underset{\scriptstyle n}{\lim\,\sup}\,\lambda_{x_n}(K\gamma_n\cap G^W)
&=\underset{\scriptstyle n}{\lim\,\sup}\,\lambda_{r(\gamma_n)}(K\cap G^W)\\
&\le\lambda_y(K\cap G^W)\quad\text{(by Lemma \ref{astrids_lim_sup})}\\
&=\lambda_z(K\psi\cap G^W)\quad\text{(Haar-system invariance)}\\
&\le\lambda_z(G^W).
\end{align*}
This contradicts our assertion \eqref{assumption_in_unbroken_lemma}.
\end{proof}
\end{lemma}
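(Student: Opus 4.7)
The plan is to argue by contradiction in essentially the standard way for $k$-times convergence lemmas: assume the failure gives an offending subsequence and a sequence $\gamma_n\in G_{x_n}^W$ witnessing it, then exploit that $W$ is compact to pass to a further subsequence along which $r(\gamma_n)$ converges, and finally push the measure estimate back to $z$ via Haar-system invariance to produce the contradiction.

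First I would negate the conclusion and pass to a subsequence of $\{x_n\}$: there exists $\delta>0$ and for each $n$ some $\gamma_n\in G_{x_n}^W$ with $\lambda_{x_n}(K\gamma_n\cap G^W)\ge\lambda_z(G^W)+\delta$. The key algebraic observation is that $K\gamma_n\cap G^W=(K\cap G^W)\gamma_n$, since for any $\kappa\in K$ the range $r(\kappa\gamma_n)=r(\kappa)$ depends only on $\kappa$. The set $K\cap G^W$ is compact because $G^W=r^{-1}(W)$ is closed (the range map is continuous and $G$ is Hausdorff) and $K$ is compact. Applying Haar-system invariance to the translate $(K\cap G^W)\gamma_n\subset G_{x_n}$ then gives
\[
\lambda_{x_n}\bigl(K\gamma_n\cap G^W\bigr)=\lambda_{r(\gamma_n)}\bigl(K\cap G^W\bigr).
\]

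Next I would use the compactness of $W$ and unique orbit convergence to locate a limit point. Since $r(\gamma_n)\in W$ for every $n$ and $W$ is compact, pass to a further subsequence so that $r(\gamma_n)\to y$ for some $y\in W$. Because $\gamma_n\in G$ connects $x_n=s(\gamma_n)$ to $r(\gamma_n)$, we have $[r(\gamma_n)]=[x_n]$, and $[x_n]\to[z]$ uniquely forces $[y]=[z]$. Pick $\psi\in G$ with $s(\psi)=z$ and $r(\psi)=y$.

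Finally I would close the loop: Lemma \ref{astrids_lim_sup} applied to the compact set $K\cap G^W$ and the convergent sequence $r(\gamma_n)\to y$ yields $\limsup_n\lambda_{r(\gamma_n)}(K\cap G^W)\le\lambda_y(K\cap G^W)$, and a second application of Haar-system invariance along $\psi$ gives $\lambda_y(K\cap G^W)=\lambda_z\bigl((K\cap G^W)\psi\bigr)\le\lambda_z(G^W)$, where the final inequality is because right multiplication by $\psi$ preserves the range and hence maps $G^W$ into itself. Combining these with the assumed lower bound produces $\lambda_z(G^W)+\delta\le\lambda_z(G^W)$, the desired contradiction.

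The only real subtlety is the unique-orbit-convergence step: one must be careful that the passage to a subsequence in which $r(\gamma_n)$ converges does not destroy the hypothesis, but since unique convergence of $[x_n]$ to $[z]$ is inherited by every subsequence and $[r(\gamma_n)]=[x_n]$, the conclusion $[y]=[z]$ is immediate. Everything else is routine bookkeeping with Haar invariance and Lemma \ref{astrids_lim_sup}.
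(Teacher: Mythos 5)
Your proof is correct and follows essentially the same route as the paper: contradiction, compactness of $W$ to extract a convergent subsequence of $\{r(\gamma_n)\}$, unique orbit convergence to identify $[y]=[z]$, then Haar-system invariance together with Lemma \ref{astrids_lim_sup} to bound the $\limsup$ by $\lambda_z(G^W)$. The only cosmetic difference is that you make explicit the identity $K\gamma_n\cap G^W=(K\cap G^W)\gamma_n$, which the paper uses implicitly in its invariance step.
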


\begin{prop}\label{AaH_prop4_1_1}
Suppose $G$ is a second countable, locally compact, Hausdorff groupoid with Haar system $\lambda$. Let $k\in\PP$ and $z\in G^{(0)}$ with $[z]$ locally closed in $G^{(0)}$. Assume that $\{x_n\}$ is a sequence in $G^{(0)}$ such that $[x_n]\rightarrow [z]$ uniquely in $G^{(0)}/G$. Suppose $\{W_m\}$ is a basic decreasing sequence of compact neighbourhoods of $z$ such that each $m$ satisfies 
\[
\underset{\scriptstyle n}{\lim\,\inf}\,\lambda_{x_n}(G^{W_m})>(k-1)\lambda_z(G^{W_m}).
\]
Then $\{x_n\}$ converges $k$-times in $G^{(0)}/G$ to $z$.
\begin{proof}
Let $\{K_m\}$ be an increasing sequence of compact subsets of $G$ such that $G=\bigcup_{m\ge 1}\mathrm{Int}\,K_m$. By the regularity of $\lambda_z$, for each $m\ge 1$ there exist $\delta_m>0$ and an open neighbourhood $U_m$ of $G_z^{W_m}$ such that
\begin{equation}\label{AaH4.1}
\underset{\scriptstyle n}{\lim\,\inf}\,\lambda_{x_n}(G^{W_m})>(k-1)\lambda_z(U_m)+\delta_m.
\end{equation}
We will construct, by induction, a strictly increasing sequence of positive integers $\{n_m\}$ such that, for all $n\ge n_m$,
\begin{align}
&\lambda_{x_n}(K_m\alpha\cap G^{W_m})<\lambda_z(U_m)+\delta_m/k\quad\text{for all }\alpha\in G_{x_n}^{W_m},\quad\text{and}\label{AaH4.2}\\
&\lambda_{x_n}(G^{W_m})>(k-1)\lambda_z(U_m)+\delta_m.\label{AaH4.3}
\end{align}
By applying Lemma \ref{the_unbroken_lemma} with $\delta=\lambda_z(U_1)-\lambda_z(G^{W_1})+\delta_1/k$ there exists $n_1$ such that $n\ge n_1$ implies
\[\lambda_{x_n}(K_1\alpha\cap G^{W_1})<\lambda_z(U_1)+\delta_1/k \quad\text{for all }\alpha\in G_{x_n}^{W_1},\] establishing \eqref{AaH4.2} for $m=1$. If necessary we can increase $n_1$ to ensure \eqref{AaH4.3} holds for $m=1$ by considering \eqref{AaH4.1}. Assuming that we have constructed $n_1<n_2<\cdots<n_{m-1}$, we apply Lemma \ref{the_unbroken_lemma} with $\delta=\lambda_z(U_m)-\lambda_z(G^{W_m})+\delta_m/k$ to obtain $n_m>n_{m-1}$ such that \eqref{AaH4.2} holds, and again, if necessary, increase $n_m$ to obtain \eqref{AaH4.3}.

If $n_1>1$ then, for each $1\le n<n_1$ and $1\le i\le k$, let $\gamma_n^{(i)}=x_n$. For each $n\ge n_1$ there is a unique $m$ such that $n_m\le n<n_{m+1}$. For every such $n$ and $m$ choose $\gamma_n^{(1)}\in G_{x_n}^{W_m}$ (which is always non-empty by \eqref{AaH4.3}). Using \eqref{AaH4.2} and \eqref{AaH4.3} we have
\begin{align*}
\lambda_{x_n}(G^{W_m}\backslash K_m\gamma_n^{(1)})&=\lambda_{x_n}(G^{W_m})-\lambda_{x_n}(G^{W_m}\cap K_m\gamma_n^{(1)})\\
&>\big((k-1)\lambda_z(U_m)+\delta_m\big)-\big(\lambda_z(U_m)+\delta_m/k\big)\\
&=(k-2)\lambda_z(U_m)+\frac{(k-1)}k\delta_m.
\end{align*}
So for each $n\ge n_1$ and its associated $m$ we can choose $\gamma_n^{(2)}\in G_{x_n}^{W_m}\backslash K_m\gamma_n^{(1)}$. We now have
{\allowdisplaybreaks\begin{align*}
\lambda_{x_n}&\big(G^{W_m}\backslash (K_m\gamma_n^{(1)}\cup K_m\gamma_n^{(2)})\big)\\
&=\lambda_{x_n}(G^{W_m}\backslash K_m\gamma_n^{(1)})-\lambda_{x_n}\big((G^{W_m}\backslash K_m\gamma_n^{(1)})\cap K_m\gamma_n^{(2)}\big)\\
&\ge\lambda_{x_n}(G^{W_m}\backslash K_m\gamma_n^{(1)})-\lambda_{x_n}(G^{W_m}\cap K_m\gamma_n^{(2)})\\
&>\Big((k-2)\lambda_z(U_m)+\frac{(k-1)}k\delta_m\Big)-\Big(\lambda_z(U_m)+\delta_m/k\Big)\\
&=(k-3)\lambda_z(U_m)+\frac{(k-2)}k\delta_m,
\end{align*}}enabling us to choose $\gamma_n^{(3)}\in G_{x_n}^{W_m}\backslash (K_m\gamma_n^{(1)}\cup K_m\gamma_n^{(2)})$. By continuing this process, for each $j=3,\ldots,k$ and each $n\ge n_1$ we have
\[
\lambda_{x_n}\Bigg(G^{W_m}\backslash \bigg(\bigcup_{i=1}^{j-1}K_m\gamma_n^{(i)}\bigg)\Bigg)>(k-j)\lambda_z(U_m)+\frac{(k-j-1)\delta_m}k,
\]
enabling us to choose
\begin{equation}\label{eqn_choosing_gammas}
\gamma_n^{(j)}\in G^{W_m}_{x_n}\backslash \bigg(\bigcup_{i=1}^{j-1}K_m\gamma_n^{(i)}\bigg).
\end{equation}
Note that for $n_m\le n<n_{m+1}$ we have $\gamma_n^{(j)}\notin K_m\gamma_n^{(i)}$ for $1\le i<j\le k$.

We will now establish that $x_n$ converges $k$-times to $z$ in $G^{(0)}/G$ by considering the $\gamma_n^{(i)}$. Note that $s(\gamma_n^{(i)})=x_n$ for all $n$ and $i$ by our choice of the $\gamma_n^{(i)}$. To see that $r(\gamma_n^{(i)})\rightarrow z$ as $n\rightarrow\infty$ for $1\le i\le k$, first fix $i$ and let $V$ be an open neighbourhood of $z$. Since $W_m\rightarrow \{z\}$ there exists $m_0$ such that $m\ge m_0$ implies $W_m\subset V$. For each $n\ge n_{m_0}$ there exists a $m\ge m_0$ such that $n_m\le n <n_{m+1}$, and so $r(\gamma_n^{(i)})\in W_m\subset V$.

Finally we claim that $\gamma_n^{(j)}(\gamma_n^{(i)})^{-1}\rightarrow\infty$ as $n\rightarrow\infty$ for $1\le i<j\le k$. Fix $i<j$ and let $K$ be a compact subset of $G$. There exists $m_0$ such that $K\subset K_m$ for all $m\ge m_0$. Then for $n\ge n_{m_0}$ there exists $m\ge m_0$ such that $n_m\le n<n_{m+1}$. By \eqref{eqn_choosing_gammas} we know
\begin{align*}
\gamma_n^{(j)}&\in G_{x_n}^{W_m}\backslash (K_m\gamma_n^{(i)})\\
&=\big(G_{x_n}^{W_m}(\gamma_n^{(i)})^{-1}\gamma_n^{(i)}\big)\backslash (K_m\gamma_n^{(i)})\\
&=\big((G_{x_n}^{W_m}(\gamma_n^{(i)})^{-1})\backslash K_m\big)\gamma_n^{(i)},
\end{align*}
and so $\gamma_n^{(j)}(\gamma_n^{(i)})^{-1}\in \big(G_{x_n}^{W_m}(\gamma_n^{(i)})^{-1}\big)\backslash K_m\subset G\backslash K_m\subset G\backslash K$, enabling us to conclude that $\{x_n\}$ converges $k$-times in $G^{(0)}/G$ to $z$.
\end{proof}
\end{prop}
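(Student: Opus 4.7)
The plan is to exploit the slack in the measure inequalities $\liminf_n \lambda_{x_n}(G^{W_m}) > (k-1)\lambda_z(G^{W_m})$ to iteratively pick $\gamma_n^{(1)}, \ldots, \gamma_n^{(k)} \in G_{x_n}^{W_m}$ whose pairwise ratios escape every compact set. First I would exhaust $G$ by an increasing sequence $\{K_m\}$ of compact subsets with $G = \bigcup_m \mathrm{Int}\,K_m$, and use outer regularity of $\lambda_z$ to choose open neighbourhoods $U_m \supset G_z^{W_m}$ and slack $\delta_m > 0$ so that $\liminf_n \lambda_{x_n}(G^{W_m}) > (k-1)\lambda_z(U_m) + \delta_m$ still holds. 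This regularization is purely cosmetic but is needed because the key uniform estimate in the next step produces the open set $U_m$ rather than the compact set $G^{W_m}$.

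The central analytic ingredient is Lemma~\ref{the_unbroken_lemma}, which (using the unique orbit convergence $[x_n] \to [z]$) controls $\lambda_{x_n}(K_m\gamma \cap G^{W_m})$ uniformly in $\gamma \in G_{x_n}^{W_m}$. Applied with tolerance $\delta_m/k$, this yields, together with the inequality above, an increasing sequence $n_1 < n_2 < \cdots$ such that for $n \ge n_m$ both
\[
\lambda_{x_n}(K_m\alpha \cap G^{W_m}) < \lambda_z(U_m) + \delta_m/k \quad \text{for all } \alpha \in G_{x_n}^{W_m}
\]
and $\lambda_{x_n}(G^{W_m}) > (k-1)\lambda_z(U_m) + \delta_m$ hold. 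I regard the construction of this $n_m$ as the real heart of the argument and the main technical obstacle: Lemma~\ref{the_unbroken_lemma} must be quoted with a bit of care because the tolerance has to be calibrated to $\delta_m/k$ so that the inductive subtraction below stays positive, and the uniqueness of the orbit limit is essential here.

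Having fixed $n_m$, for each $n$ with $n_m \le n < n_{m+1}$ I would inductively pick $\gamma_n^{(j)} \in G_{x_n}^{W_m} \setminus \bigcup_{i<j} K_m \gamma_n^{(i)}$. Each step is legal because subtracting the $(j-1)$ forbidden sets from $G^{W_m}$ leaves $\lambda_{x_n}$-measure at least $(k-j)\lambda_z(U_m) + (k-j+1)\delta_m/k > 0$, by iterating the two estimates above. For $n < n_1$ one can set $\gamma_n^{(i)} = x_n$ as a harmless padding.

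Finally I would verify the three clauses of Definition~\ref{def_k-times_convergence}. Clause~(i), $s(\gamma_n^{(i)}) = x_n$, is immediate from $\gamma_n^{(i)} \in G_{x_n}^{W_m}$. For clause~(ii), fixing $i$ and an open $V \ni z$, the $\{W_m\}$ are basic so $W_m \subset V$ eventually, giving $r(\gamma_n^{(i)}) \in W_m \subset V$ for $n \ge n_{m}$. For clause~(iii), fix $i < j$ and a compact $K \subset G$; choose $m_0$ so $K \subset K_{m_0}$, and for $n \ge n_{m_0}$ the construction guarantees $\gamma_n^{(j)} \notin K_m \gamma_n^{(i)}$, hence $\gamma_n^{(j)}(\gamma_n^{(i)})^{-1} \notin K_m \supset K$, so this sequence tends to infinity. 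The upshot is that the geometric idea is standard (fill disjoint translates using a measure excess), and the technical content lies entirely in obtaining a single uniform bound on $\lambda_{x_n}(K_m\alpha \cap G^{W_m})$ valid for every $\alpha \in G_{x_n}^{W_m}$.
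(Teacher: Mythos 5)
Your proposal is correct and follows essentially the same route as the paper: exhaust $G$ by compacts $K_m$, enlarge $G_z^{W_m}$ to open sets $U_m$ with slack $\delta_m$ via regularity, use Lemma~\ref{the_unbroken_lemma} (calibrated to tolerance $\delta_m/k$) to get the uniform bound on $\lambda_{x_n}(K_m\alpha\cap G^{W_m})$, then greedily extract $k$ elements of $G_{x_n}^{W_m}$ avoiding the translates $K_m\gamma_n^{(i)}$. Your running lower bound $(k-j)\lambda_z(U_m)+(k-j+1)\delta_m/k$ after removing $j-1$ translates is in fact the correct one (the paper's displayed general formula has a harmless index slip), so the construction goes through exactly as in the paper.
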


In Proposition~\ref{part_of_AaH4.2_generalisation} we prove a generalisation of a part of \cite[Proposition~4.2]{Archbold-anHuef2006}.

\begin{prop}\label{part_of_AaH4.2_generalisation}
Suppose $G$ is a second countable, locally compact, Hausdorff, principal groupoid with Haar system $\lambda$. Suppose that $z\in G^{(0)}$ with $[z]$ locally closed in $G^{(0)}$ and suppose $\{x_n\}$ is a sequence in $G^{(0)}$. Assume that for every open neighbourhood $V$ of $z$ in $G^{(0)}$ such that $G_z^V$ is relatively compact, $\lambda_{x_n}(G^V)\rightarrow \infty$ as $n\rightarrow\infty$. Then, for every $k\ge 1$, the sequence $\{x_n\}$ converges $k$-times in $G^{(0)}/G$ to $z$.
\begin{proof}
Let $\{ K_m\}$ be an increasing sequence of compact subsets of $G$ such that $G=\cup_{m\ge 1}\,\mathrm{Int}\, K_m$. By Lemma \ref{corollary_astrid_lemma}, for each $K_m$ there exists an open neighbourhood $V_m$ of $z$ such that $x\in V_m$ implies $\lambda_x(K_m)<\lambda_z(K_m)+1$. Since $[z]$ is locally closed, by Lemma~4.1(1) in \cite{Clark-anHuef2010-preprint} we can crop $V_1$ if necessary to ensure that $G_z^{V_1}$ is relatively compact. By further cropping each $V_m$ we may assume that $\{V_m\}$ is a decreasing neighbourhood basis of $z$. By our hypothesis, for each $m$ there exists $n_m$ such that 
\begin{equation}\label{lower_bound}
n\ge n_m\quad\text{implies}\quad\lambda_{x_n}(G^{V_m})>k\big(\lambda_z(K_m)+1\big).
\end{equation}
Note that for any $\gamma\in G_{x_n}^{V_m}$ with $n\ge n_m$, we have $r(\gamma)\in V_m$, and so $\lambda_{r(\gamma)}(K_m)<\lambda_z(K_m)+1$. By Haar-system invariance we know that $\lambda_{r(\gamma)}(K_m)=\lambda_{x_n}(K_m\gamma)$, which shows us that
\begin{equation}\label{upper_bound}
\lambda_{x_n}(K_m\gamma)<\lambda_z(K_m)+1.
\end{equation}
If necessary we can increase the elements of $\{n_m\}$ so that it is a strictly increasing sequence.

We now proceed as in the proof of Proposition \ref{AaH_prop4_1_1}. For all $n<n_1$ and $1\le i\le k$ let $\gamma_n^{(i)}=x_n$. For each $n\ge n_1$ there exists a unique number $m(n)$ such that $n_{m(n)}\le n<n_{m(n)+1}$. For the remainder of this proof we will write $m$ instead of $m(n)$ because the specific $n$ will be clear from the context. For each $n\ge n_1$ choose $\gamma_n^{(1)}\in G_{x_n}^{V_{m}}$. Then by \eqref{lower_bound} and \eqref{upper_bound} we have
\begin{align*}
\lambda_{x_n}(G^{V_m}\backslash K_m\gamma_n^{(1)})&=\lambda_{x_n}(G^{V_m})-\lambda_{x_n}(G^{V_m}\cap K_m\gamma_n^{(1)})\\
&\ge \lambda_{x_n}(G^{V_m})-\lambda_{x_n}(K_m\gamma_n^{(1)})\\
&>k\big(\lambda_z(K_m)+1\big) - \big(\lambda_z(K_m)+1\big)\\
&=(k-1)\big(\lambda_z(K_m)+1\big).
\end{align*}
We can thus choose $\gamma_n^{(2)}\in G_{x_x}^{V_m}\backslash K_m\gamma_n^{(1)}$ for each $n\ge n_1$. This now gives us
\begin{align*}
\lambda_{x_n}&(G^{V_m}\backslash (K_m\gamma_n^{(1)}\cup K_m\gamma_n^{(2)}))\\
&=\lambda_{x_n}(G^{V_m}\backslash K_m\gamma_n^{(1)})-\lambda_{x_n}\big( (G^{V_m}\backslash K_m\gamma_n^{(1)})\cap K_m\gamma_n^{(1)}\big)\\
&\ge \lambda_{x_n}(G^{V_m}\backslash K_m\gamma_n^{(1)})-\lambda_{x_n}(K_m\gamma_n^{(2)})\\
&>(k-1)\big(\lambda_z(K_m)+1\big) - \big(\lambda_z(K_m)+1\big)\\
&=(k-2)\big(\lambda_z(K_m)+1\big).
\end{align*}
Continuing in this manner we can choose 
\[
\gamma_n^{(j)}\in G^{V_m}_{x_n}\backslash \bigg(\bigcup_{i=1}^{j-1}K_m\gamma_n^{(i)}\bigg)
\]
for every $n\ge n_1$ and $j=3,\ldots,k$. The tail of the proof of Proposition \ref{AaH_prop4_1_1} establishes our desired result.
\end{proof}
\end{prop}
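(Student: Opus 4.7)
The plan is to imitate the structure of the proof of Proposition \ref{AaH_prop4_1_1}, but use the much stronger hypothesis $\lambda_{x_n}(G^V)\to\infty$ (rather than a lower bound involving $(k-1)\lambda_z(G^{W_m})$) to produce the $k$ auxiliary sequences $\{\gamma_n^{(i)}\}$ for \emph{every} $k\ge 1$ simultaneously, with room to spare. The principality of $G$ is not really used to construct the sequences; it is built into our setup so that the $\gamma_n^{(i)}$ we produce are the ones that witness $k$-times convergence.

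First I would fix an increasing exhaustion $\{K_m\}$ of $G$ by compact sets with $G=\bigcup_m\mathrm{Int}\,K_m$. For each $m$, Lemma \ref{corollary_astrid_lemma} yields an open neighbourhood $V_m$ of $z$ on which the function $x\mapsto\lambda_x(K_m)$ is bounded by $\lambda_z(K_m)+1$. Using that $[z]$ is locally closed, I would shrink $V_1$ so that $G_z^{V_1}$ is relatively compact (this is the role of local closedness), and then shrink each $V_m$ further so that $\{V_m\}$ is a decreasing neighbourhood basis of $z$ with each $G_z^{V_m}$ relatively compact. The hypothesis then gives an increasing sequence $\{n_m\}$ such that for $n\ge n_m$,
\[
\lambda_{x_n}(G^{V_m})>k\big(\lambda_z(K_m)+1\big).
\]
Haar-system invariance converts the bound on $V_m$ into $\lambda_{x_n}(K_m\gamma)=\lambda_{r(\gamma)}(K_m)<\lambda_z(K_m)+1$ whenever $\gamma\in G_{x_n}^{V_m}$.

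Next, for each $n\ge n_1$ I would let $m=m(n)$ be the unique index with $n_m\le n<n_{m+1}$ and build $\gamma_n^{(1)},\ldots,\gamma_n^{(k)}\in G_{x_n}^{V_m}$ inductively so that $\gamma_n^{(j)}\notin\bigcup_{i<j}K_m\gamma_n^{(i)}$. The induction is driven by the estimate
\[
\lambda_{x_n}\Big(G^{V_m}\setminus\bigcup_{i=1}^{j-1}K_m\gamma_n^{(i)}\Big)>k(\lambda_z(K_m)+1)-(j-1)(\lambda_z(K_m)+1)=(k-j+1)(\lambda_z(K_m)+1),
\]
which remains strictly positive for $j\le k$; this forces the complement to be non-empty and lets us pick $\gamma_n^{(j)}$. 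For $n<n_1$ I would just set $\gamma_n^{(i)}:=x_n$ so the sequences are defined everywhere.

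Finally I would verify the three clauses of Definition \ref{def_k-times_convergence}: by construction $s(\gamma_n^{(i)})=x_n$; since $r(\gamma_n^{(i)})\in V_{m(n)}$ and the $V_m$ form a neighbourhood basis at $z$, we have $r(\gamma_n^{(i)})\to z$; and for fixed $i<j$ and any compact $K\subset G$, eventually $K\subset K_m$ for all $m$ past some threshold, and then $\gamma_n^{(j)}(\gamma_n^{(i)})^{-1}\notin K_m\supset K$, giving $\gamma_n^{(j)}(\gamma_n^{(i)})^{-1}\to\infty$. The only real obstacle is the bookkeeping in the inductive choice — in particular, making sure the measure bounds survive the iteration — but since the hypothesis gives us arbitrarily large measure, the iteration goes through for every $k$ at once, which is why we only had to verify the estimate once.
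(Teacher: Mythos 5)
Your proposal is correct and follows essentially the same route as the paper's proof: the same exhaustion $\{K_m\}$, the same use of Lemma \ref{corollary_astrid_lemma} to control $\lambda_x(K_m)$ on shrinking neighbourhoods $V_m$, the same threshold $\lambda_{x_n}(G^{V_m})>k(\lambda_z(K_m)+1)$ driving the inductive selection of $\gamma_n^{(1)},\ldots,\gamma_n^{(k)}$, and the same final verification (which the paper delegates to the tail of Proposition \ref{AaH_prop4_1_1}). No substantive differences to report.
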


\section{Measure ratios and bounds on lower multiplicity}\label{sec_measure_ratios_bounds_lower_multiplicity}
In this section we show that upper bounds on measure ratios\index{measure ratio} along orbits give upper bounds on  multiplicities. 

\begin{lemma}\label{based_on_Ramsay}
Let $G$ be a second countable, locally compact, Hausdorff groupoid. Suppose $z\in G^{(0)}$ and $[z]$ is locally closed. Then the restriction of $r$ to $G_z/(G|_{\{z\}})$ is a homeomorphism onto $[z]$. If in addition $G$ is principal, then the restriction of $r$ to $G_z$ is a homeomorphism onto $[z]$.
\begin{proof}
We consider the transitive groupoid $G|_{[z]}$. Since $[z]$ is locally closed, $G|_{[z]}$ is a second countable, locally compact, Hausdorff groupoid. Thus $G|_{[z]}$ is Polish by, for example, \cite[Lemma~6.5]{Williams2007}. Now \cite[Theorem~2.1]{Ramsay1990} applies to give the result.
\end{proof}
\end{lemma}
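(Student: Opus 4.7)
The plan is to reduce to the transitive case by restricting the groupoid to the single orbit $[z]$, and then invoke Ramsay's theorem on transitive Polish groupoids, exactly as the lemma statement's citation to \cite[Theorem~2.1]{Ramsay1990} suggests.

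First I would consider the restricted groupoid $G|_{[z]} = r^{-1}([z]) \cap s^{-1}([z])$, which by Proposition \ref{prop_Muhly_2.8} is again a groupoid, and whose unit space is $[z]$ itself. Observe that $G|_{[z]}$ is transitive: given $u,v \in [z]$, by definition of $[z]$ there are $\alpha,\beta \in G$ with $r(\alpha)=u$, $s(\alpha)=z$, $r(\beta)=v$, $s(\beta)=z$, so $\beta\alpha^{-1} \in G|_{[z]}$ connects $u$ to $v$. Moreover $(G|_{[z]})_z = G_z$ and the stability subgroup at $z$ is still $G|_{\{z\}}$, so proving the homeomorphism statements for $G|_{[z]}$ gives them for $G$.

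Next I would verify that $G|_{[z]}$ satisfies the hypotheses of Ramsay's theorem, namely that it is a Polish groupoid. Since $[z]$ is locally closed in $G^{(0)}$, it is open in its closure $\overline{[z]} \subseteq G^{(0)}$, and $G^{(0)}$ is second countable, locally compact, Hausdorff. A locally closed subset of such a space is itself second countable, locally compact, and Hausdorff, hence Polish (by a standard result; this is where I would invoke \cite[Lemma~6.5]{Williams2007}). Then $G|_{[z]} = r^{-1}([z]) \cap s^{-1}([z])$ is a locally closed subset of $G$ by continuity of $r$ and $s$, so by the same reasoning $G|_{[z]}$ is also Polish. Ramsay's \cite[Theorem~2.1]{Ramsay1990} then applies to the transitive Polish groupoid $G|_{[z]}$ and yields that the quotient map $G_z / G|_{\{z\}} \to [z]$ induced by $r$ is a homeomorphism onto $[z]$.

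Finally, under the additional assumption that $G$ is principal, every stability subgroup is trivial, in particular $G|_{\{z\}} = \{z\}$, so $G_z / G|_{\{z\}} = G_z$ canonically, and the first part immediately upgrades to the statement that $r|_{G_z} : G_z \to [z]$ is a homeomorphism. The main obstacle is really just the clean verification that local closedness of $[z]$ propagates upward to give $G|_{[z]}$ a Polish structure, so that Ramsay's theorem is actually applicable; once that is in hand the lemma follows immediately.
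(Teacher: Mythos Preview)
Your proposal is correct and follows essentially the same approach as the paper's proof: restrict to the transitive groupoid $G|_{[z]}$, use local closedness of $[z]$ to deduce that $G|_{[z]}$ is second countable, locally compact and Hausdorff (hence Polish), and then invoke Ramsay's Theorem~2.1. You supply more of the routine verifications (transitivity, the identification $(G|_{[z]})_z=G_z$, and the principal case) than the paper does, but the argument is the same.
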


Theorem~\ref{M2_thm} is based on \cite[Theorem~3.1]{Archbold-anHuef2006}; it is only an intermediary result which will be used to prove a sharper bound in Theorem \ref{M_thm}.
\needspace{6\baselineskip}
\begin{theorem}\label{M2_thm}
Suppose $G$ is a second countable, locally compact, Hausdorff principal groupoid with Haar system $\lambda$. Let $M\in\RR$ with $M\ge 1$, suppose $z\in G^{(0)}$ such that $[z]$ is locally closed and let $\{x_n\}$ be a sequence in $G^{(0)}$. Suppose there exists an open neighbourhood $V$ of $z$ in $G^{(0)}$ such that $G_z^V$ is relatively compact and 
\[
\lambda_{x_n}(G^V)\le M\lambda_z(G^V)
\]
frequently (in the sense that there is a subsequence
$\{x_{n_i}\}$ of $\{x_n\}$ with $\lambda_{x_{n_i}}(G^V)\leq
M\lambda_z(G^V)$ for all $i$). Then $\M_\L(\L^z,\{\L^{x_n}\})\le\lfloor M^2\rfloor$.
\begin{proof}
Fix $\epsilon>0$ such that $M^2(1+\epsilon)^2<\lfloor M^2\rfloor +1$. We will build a function $D\in C_c(G)$ such that $\L^z(D^\ast\ast D)$ is a rank-one projection and
\[
\tr\big(\L^{x_n}(D^\ast\ast D)\big)<M^2(1+\epsilon)^2<\lfloor M^2\rfloor +1
\]
frequently. By the generalised lower semi-continuity result of \cite[Theorem~4.3]{Archbold-Spielberg1996} we will have
\begin{align*}
\liminf\,\tr\big(\L^{x_n}(D^\ast\ast D)\big)&\ge \M_\L(\L^z,\{\L^{x_n}\})\, \tr \big(\L^z(D^*\ast D)\big)\\
&=\M_\L(\L^z,\{\L^{x_n}\}),
\end{align*}
and the result will follow.

For the next few paragraphs we will be working with $G_z$ equipped with the subspace topology. Note that $\lambda_z$ can be thought of as a Radon measure on $G_z$ with $\lambda_z(S\cap G_z)=\lambda_z(S)$ for any $\lambda_z$-measurable subset $S$ of $G$. Fix $\delta>0$ such that
\[
\delta<\frac{\epsilon\lambda_z(G^V)}{1+\epsilon}<\lambda_z(G^V).
\]
Since $G_z$ is a second countable, locally compact, Hausdorff space, the Radon measure $\lambda_z$ is regular (see, for example, \cite[Proposition~6.3.6]{Pedersen1989}). It follows that there exists a $G_z$-compact subset $W$ of $G_z^V$ such that
\[
0<\lambda_z(G_z^V)-\delta<\lambda_z(W).
\]
Since $W$ is $G_z$-compact there exists a $G_z$-compact neighbourhood $W_1$ of $W$ that is contained in $G_z^V$ and there exists a continuous function $g:G_z\rightarrow [0,1]$ that is identically one on $W$ and zero off the interior of $W_1$. We have
\[
\lambda_z(G^V)-\delta=\lambda_z(G_z^V)-\delta<\lambda_z(W)\le\int_{G_z} g(t)^2\, d\lambda_z(t)=\|g\|_z^2,
\]
and hence
\begin{equation}\label{AaH3.1}
\frac{\lambda_z(G^V)}{\|g\|_z^2}<1+\frac{\delta}{\|g\|_z^2}<1+\frac{\delta}{\lambda_z(G^V)-\delta}<1+\epsilon.
\end{equation}

By Lemma \ref{based_on_Ramsay} the restriction $\tilde{r}$ of $r$ to $G_z$ is a homeomorphism onto $[z]$. So there exists a continuous function $g_1:\tilde{r}(W_1)\rightarrow [0,1]$ such that $g_1\big(\tilde{r}(\gamma)\big)=g(\gamma)$ for all $\gamma\in W_1$. Thus $\tilde{r}(W_1)$ is $[z]$-compact, which implies that $\tilde{r}(W_1)$ is $G^{(0)}$-compact. Since we know that $G^{(0)}$ is second countable and Hausdorff, Tietze's Extension Theorem can be applied to extend $g_1$ to a continuous map $g_2:G^{(0)}\rightarrow [0,1]$. Because $\tilde{r}(W_1)$ is a compact subset of the open set $V$, there exist a compact neighbourhood $P$ of $\tilde{r}(W_1)$ contained in $V$ and a continuous function $h:G^{(0)}\rightarrow [0,1]$ that is identically one on $\tilde{r}(W_1)$ and zero off the interior of $P$. Note that $h$ has compact support that is contained in $P$.

We set $f(x)=h(x)g_2(x)$. Then $f\in C_c(G^{(0)})$ with $0\le f\le 1$ and 
\begin{equation}\label{supp_f_contained_in_V}
\mathrm{supp}\,f\subset \mathrm{supp}\,h\subset P\subset V.
\end{equation}
Note that
{\allowdisplaybreaks
\begin{align}
\|f\circ r\|_z^2&=\int_{G_z} f\big(\tilde{r}(\gamma)\big)^2\,d\lambda_z(\gamma)\notag\\
&=\int_{G_z} h\big(\tilde{r}(\gamma)\big)^2g_2\big(\tilde{r}(\gamma)\big)^2\, d\lambda_z(\gamma)\notag\\
&\ge\int_{W_1} h\big(\tilde{r}(\gamma)\big)^2 g(\gamma)^2\, d\lambda_z(\gamma)\notag\\
&=\int_{W_1}g(\gamma)^2\,d\lambda_z(\gamma)\notag\\
&=\|g\|_z^2\label{AaH3.2}
\end{align}}since $\mathrm{supp}\, g\subset W_1$ and $h$ is identically one on $\tilde{r}(W_1)$. We now define $F\in C_c(G^{(0)})$ by
\begin{equation}\label{definition_F}
F(x)=\frac{f(x)}{\|f\circ r\|_z}.
\end{equation}
Then $\|F\circ r\|_z=1$ and
\begin{equation}\label{unmotivated_ref}
F\circ r(\gamma)\ne 0\implies h\big(r(\gamma)\big)\ne 0 \implies r(\gamma)\in V \implies \gamma\in G^V.
\end{equation}

Let $N=\mathrm{supp}\, F$ so that $N=\mathrm{supp}\, f\subset V$ by \eqref{supp_f_contained_in_V} and \eqref{definition_F}. Since $G_z^V$ is relatively compact by our hypothesis, the set $\overline{G_z^N}$ is compact. Let $b\in C_c(G)$ be a function that is identically one on $(\overline{G_z^N})(\overline{G_z^N})^{-1}$ and has range contained in $[0,1]$. We can assume that $b$ is self-adjoint by considering $\frac12(b+b^*)$ if necessary. Define $D\in C_c(G)$ by
\[
D(\gamma):=F\big(r(\gamma)\big)F\big(s(\gamma)\big)b(\gamma).
\]
For $\xi\in L^2(G,\lambda_u)$ and $\gamma\in G$ we have
\begin{align*}
\big(\L^{u}(D)\xi\big)(\gamma)&=\int_GD(\gamma\alpha^{-1})\xi(\alpha)\,d\lambda_u(\alpha)\\
&=\int_G F\big(r(\gamma)\big)F\big(s(\alpha^{-1})\big) b(\gamma\alpha^{-1})\xi(\alpha)\, d\lambda_u(\alpha)\\
&=F\big(r(\gamma)\big) \int_G F\big(r(\alpha)\big) b(\gamma\alpha^{-1})\xi(\alpha)\, d\lambda_u(\alpha).
\end{align*}

In the case where $u=z$, if $\alpha,\gamma\in \mathrm{supp}\, F\circ r\cap s^{-1}(z)$, then $r(\alpha),r(\gamma)\in \mathrm{supp}\, F=N$ and $\gamma,\alpha\in G_z^N$. This implies $b(\gamma\alpha^{-1})=1$, so
\begin{align*}
\big(\L^z(D)\xi\big)(\gamma)&=F\big(r(\gamma)\big)\int_G F\big(r(\alpha)\big) \xi(\alpha)\, d\lambda_z(\alpha)\\
&=(\xi\,|\,F\circ r)_z F\circ r(\gamma),
\end{align*}
and $L^z(D)$ is a rank-one projection.

By the hypothesis on $V$ there exists a subsequence $\{x_{n_i}\}$ of $\{x_n\}$ such that
\[
\lambda_{x_{n_i}}(G^V)\le M\lambda_z(G^V)
\]
for all $i\ge 1$. If we define $E:=\{\gamma\in G:F\big(r(\gamma)\big)\ne 0\}$ then $E$ is open with
\begin{equation}\label{measure_of_E_finite}
\lambda_{x_{n_i}}(E)\le\lambda_{x_{n_i}}(G^V)\le M\lambda_z(G^V)
\end{equation}
by \eqref{unmotivated_ref} and
\begin{equation}\label{AaH3.3}
\int_G\big(F\circ r(\gamma)\big)^2\, d\lambda_{x_{n_i}}(\gamma)\le \frac{\lambda_{x_{n_i}}(E)}{\|f\circ r\|_z^2}
\le \frac{M\lambda_z(G^V)}{\|g\|_z^2}.
\end{equation}
by \eqref{AaH3.2}. Consider the continuous function
\[
T(\alpha,\beta):=F\big(r(\alpha)\big) F\big(r(\beta)\big) b(\alpha\beta^{-1}).
\]
Note that
\begin{align*}
\int_G &T(\alpha,\beta)^2\, d(\lambda_{x_{n_i}}\times\lambda_{x_{n_i}})(\alpha,\beta)\\
&=\int_G F\big(r(\alpha)\big)^2F\big(r(\beta)\big)^2b(\alpha\beta^{-1})^2\, d(\lambda_{x_{n_i}}\times\lambda_{x_{n_i}})(\alpha,\beta)\\
&\le\|F\|_\infty^4\int_G \chi_{E\times E}(\alpha,\beta)\,d(\lambda_{x_{n_i}}\times\lambda_{x_{n_i}})(\alpha,\beta)\\
&=\|F\|_\infty^4\lambda_{x_{n_i}}(E)^2,
\end{align*}
which is finite by \eqref{measure_of_E_finite}. Thus
\[T\in L^2(G\times G,\lambda_{x_{n_i}}\times \lambda_{x_{n_i}}),\]
and since $T$ is conjugate symmetric, \cite[Proposition~3.4.16]{Pedersen1989} implies that $\L^{x_{n_i}}(D)$ is the self-adjoint Hilbert-Schmidt operator on $L^2(G,\lambda_{x_{n_i}})$ with kernel $T$. It follows that $\L^{x_{n_i}}(D^*\ast D)$ is a trace-class operator, and since we equip the Hilbert-Schmidt operators with the trace norm, we have
\[
\tr\,\L^{x_{n_i}}(D^*\ast D)=\|T\|_{L^2(\lambda_{x_{n_i}}\times\lambda_{x_{n_i}})}^2.
\]
Applying Fubini's Theorem to $T$ now gives
{\allowdisplaybreaks\begin{align}
\tr\,\L^{x_{n_i}}&(D^*\ast D)\notag\\
&=\int_G\int_G F\big(r(\alpha)\big)^2 F\big(r(\beta)\big)^2b(\alpha\beta^{-1})^2\, d\lambda_{x_{n_i}}(\alpha)\, d\lambda_{x_{n_i}}(\beta)\notag\\
&\le\bigg(\int_G F\big(r(\alpha)\big)^2\, d\lambda_{x_{n_i}}(\alpha)\bigg)^2\notag\\
&\le\frac{M^2\lambda_z(G^V)^2}{\|g\|_z^4}\text{\quad (using \eqref{AaH3.3})}\notag\\
&<M^2(1+\epsilon)^2\text{\quad (using \eqref{AaH3.1})}\label{Aah3.4}.
\end{align}
}Now
\begin{align*}
\M_\L(\L^z,\{\L^{x_n}\})&\le \underset{\scriptstyle n}{\lim\,\inf}\, \tr\big(\L^{x_n}(D^*\ast D)\big)\\
&\le M^2(1+\epsilon)^2\\
&<\lfloor M^2\rfloor+1,
\end{align*}
and hence $\M_\L(\L^z,\{\L^{x_n}\})\le\lfloor M^2\rfloor$, completing the proof.
\end{proof}
\end{theorem}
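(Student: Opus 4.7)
The plan is to adapt Archbold--an Huef's strategy from \cite[Theorem~3.1]{Archbold-anHuef2006}, which in the transformation-group case builds a test element $D$ whose image under the representation $\mathrm{Ind}\,\epsilon_z$ is a rank-one projection and whose image under $\mathrm{Ind}\,\epsilon_{x_n}$ has controllable Hilbert--Schmidt norm. The mechanism that converts these two facts into a bound on $\M_\L$ is the lower-semicontinuity estimate of Archbold--Spielberg \cite[Theorem~4.3]{Archbold-Spielberg1996}: if $\L^z(D^*\ast D)$ is a rank-one projection and $\tr(\L^{x_n}(D^*\ast D)) < \lfloor M^2\rfloor + 1$ frequently, then $\M_\L(\L^z,\{\L^{x_n}\})\le\lfloor M^2\rfloor$. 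So the entire proof reduces to constructing such a $D$.

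First I would fix $\epsilon>0$ small enough that $M^2(1+\epsilon)^2<\lfloor M^2\rfloor+1$, and then build a function $F\in C_c(G^{(0)})$ supported inside $V$ such that $\|F\circ r\|_z=1$ and the ratio $\lambda_z(G^V)/\|F\circ r\|_z^2$ is smaller than $1+\epsilon$. The natural way is to start with a compact subset $W$ of $G_z^V$ (working in $G_z$ with the subspace topology) whose $\lambda_z$-measure is within $\delta$ of $\lambda_z(G_z^V)=\lambda_z(G^V)$, pick a bump $g$ on $G_z$ that is $1$ on $W$ and supported in a slightly larger $G_z$-compact $W_1\subseteq G_z^V$, and push it down to $G^{(0)}$. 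Pushing down requires $\tilde r := r|_{G_z}$ to be a homeomorphism onto $[z]$, which is precisely the content of Lemma \ref{based_on_Ramsay} and uses the hypothesis that $[z]$ is locally closed and that $G$ is principal. This gives a continuous $g_1$ on the compact set $\tilde r(W_1)\subseteq V$, which Tietze extends to $g_2\in C(G^{(0)})$, and multiplying by a Urysohn cut-off $h$ supported in $V$ and equal to $1$ on $\tilde r(W_1)$ produces $f = hg_2$. Normalising gives $F = f/\|f\circ r\|_z$, which by construction has $\|F\circ r\|_z=1$, $\mathrm{supp}\,F\subseteq V$, and the estimate $\lambda_z(G^V)/\|g\|_z^2<1+\epsilon$.

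Next I would upgrade $F$ to an element of $C_c(G)$ by defining $D(\gamma)=F(r(\gamma))F(s(\gamma))b(\gamma)$, where $b\in C_c(G)$ is a self-adjoint cut-off that is identically one on $(\overline{G_z^N})(\overline{G_z^N})^{-1}$ (with $N=\mathrm{supp}\,F\subseteq V$, whose preimage in $G_z$ is relatively compact by hypothesis on $V$). The role of $b$ is to make the composition map behave trivially on the relevant paths so that $\L^z(D)$ genuinely collapses to the rank-one projection $\xi\mapsto(\xi\,|\,F\circ r)_z\,F\circ r$, by direct computation from the formula $(\L^z(D)\xi)(\gamma)=\int D(\gamma\alpha^{-1})\xi(\alpha)\,d\lambda_z(\alpha)$ together with the identity $b(\gamma\alpha^{-1})=1$ whenever $\alpha,\gamma\in\mathrm{supp}(F\circ r)\cap G_z$. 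That computation then shows $\L^z(D^*\ast D)=\L^z(D)$ has trace $1$.

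Finally I would bound $\tr(\L^{x_n}(D^*\ast D))$ along the subsequence where $\lambda_{x_n}(G^V)\le M\lambda_z(G^V)$. The kernel $T(\alpha,\beta)=F(r(\alpha))F(r(\beta))b(\alpha\beta^{-1})$ lies in $L^2(G\times G,\lambda_{x_n}\times\lambda_{x_n})$ because the indicator bound $|T|\le\|F\|_\infty^2\chi_{E\times E}$ with $E=\{\gamma:F(r(\gamma))\ne 0\}\subseteq G^V$ gives finite measure via $\lambda_{x_n}(E)\le M\lambda_z(G^V)$. By \cite[Proposition~3.4.16]{Pedersen1989} $\L^{x_n}(D)$ is Hilbert--Schmidt with kernel $T$, so $\tr\L^{x_n}(D^*\ast D)=\|T\|_{L^2}^2$. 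Since $0\le b\le 1$, Fubini reduces this to $\big(\int F(r(\alpha))^2\,d\lambda_{x_n}(\alpha)\big)^2$, which is bounded by $(M\lambda_z(G^V)/\|g\|_z^2)^2<M^2(1+\epsilon)^2$. Combined with the Archbold--Spielberg estimate this gives $\M_\L(\L^z,\{\L^{x_n}\})\le M^2(1+\epsilon)^2<\lfloor M^2\rfloor+1$, hence the conclusion. The main obstacle I expect is the construction of $F$: one must carefully use local closedness of $[z]$ to transfer a compactly supported bump on $G_z$ to one on $G^{(0)}$ without losing compactness of supports or the tight ratio estimate; everything else is a direct kernel computation.
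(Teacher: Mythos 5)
Your proposal follows the paper's own proof essentially step for step: the same choice of $\epsilon$, the same construction of $F$ via a compact inner approximation $W\subseteq G_z^V$, the bump $g$ pushed down through the homeomorphism $\tilde r=r|_{G_z}$ of Lemma \ref{based_on_Ramsay} and Tietze-extended, the same test element $D(\gamma)=F(r(\gamma))F(s(\gamma))b(\gamma)$, the same rank-one and Hilbert--Schmidt kernel computations, and the same appeal to the Archbold--Spielberg lower semi-continuity estimate. The argument is correct and takes the same route as the paper.
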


The following proposition is an immediate consequence of Theorem \ref{M2_thm} and Proposition \ref{part_of_AaH4.2_generalisation}. This result will be strengthened later in Corollary \ref{AaH_cor5.6}, where we will show that these three items are in fact equivalent.
\begin{prop}\label{AaH_prop_4.2}
Suppose $G$ is a second countable, locally compact, Hausdorff, principal groupoid with Haar system $\lambda$. Let $z\in G^{(0)}$ and let $\{x_n\}$ be a sequence in $G^{(0)}$. Assume that $[z]$ is locally closed in $G^{(0)}$. Consider the following properties.
\begin{enumerate}\renewcommand{\labelenumi}{(\arabic{enumi})}
\item\label{AaH_prop_4_2_1} $\M_\L(\L^z,\{\L^{x_n}\})=\infty$.
\item\label{AaH_prop_4_2_2} For every open neighbourhood $V$ of $z$ such that $G_z^V$ is relatively compact, $\lambda_{x_n}(G^V)\rightarrow\infty$ as $n\rightarrow\infty$.
\item\label{AaH_prop_4_2_3} For each $k\ge 1$, the sequence $\{x_n\}$ converges $k$-times in $G^{(0)}/G$ to $z$.
\end{enumerate}
Then {\rm\eqref{AaH_prop_4_2_1}} implies {\rm\eqref{AaH_prop_4_2_2}} and {\rm\eqref{AaH_prop_4_2_2}} implies {\rm\eqref{AaH_prop_4_2_3}}.
\end{prop}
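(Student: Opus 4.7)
The plan is to observe that this proposition is essentially a repackaging of two earlier results. The implication \eqref{AaH_prop_4_2_2}$\implies$\eqref{AaH_prop_4_2_3} is precisely Proposition \ref{part_of_AaH4.2_generalisation}, so there is nothing new to prove there. The substantive content lies in establishing \eqref{AaH_prop_4_2_1}$\implies$\eqref{AaH_prop_4_2_2}, and I would argue by contrapositive using Theorem \ref{M2_thm}.

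Concretely, I would assume \eqref{AaH_prop_4_2_2} fails and deduce that $\M_\L(\L^z,\{\L^{x_n}\})$ is finite. If \eqref{AaH_prop_4_2_2} fails, then there is an open neighbourhood $V$ of $z$ with $G_z^V$ relatively compact such that $\lambda_{x_n}(G^V)$ does not tend to infinity; by passing to a subsequence $\{x_{n_i}\}$ we may assume that $\lambda_{x_{n_i}}(G^V)\le C$ for some constant $C$ and all $i$. To apply Theorem \ref{M2_thm}, I need to exhibit $M\ge 1$ with $\lambda_{x_{n_i}}(G^V)\le M\lambda_z(G^V)$ for all $i$.

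The small technical point is to verify that $\lambda_z(G^V)>0$, so that a suitable $M$ exists. This follows because $z$, regarded as a unit, satisfies $r(z)=z\in V$ and $s(z)=z$, so $z\in G^V\cap G_z$; since $G^V$ is open in $G$ and $z\in G_z=\mathrm{supp}\,\lambda_z$, the defining property of support gives $\lambda_z(G^V)>0$. Taking $M=\max\{1,C/\lambda_z(G^V)\}$ then places us in the hypothesis of Theorem \ref{M2_thm}, which yields $\M_\L(\L^z,\{\L^{x_n}\})\le\lfloor M^2\rfloor<\infty$, contradicting \eqref{AaH_prop_4_2_1}.

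Since both implications reduce to invocations of earlier results, there is no real obstacle; the only thing worth flagging in the write-up is the brief verification that $\lambda_z(G^V)>0$ so that a finite multiplicative constant $M$ can be formed from the subsequential bound $C$.
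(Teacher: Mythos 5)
Your proposal is correct and matches the paper's approach: the paper states this proposition as an immediate consequence of Theorem \ref{M2_thm} and Proposition \ref{part_of_AaH4.2_generalisation}, which is exactly the reduction you carry out. Your extra verification that $\lambda_z(G^V)>0$ (so that the subsequential bound $C$ can be converted into a multiplicative constant $M$) is a sound and worthwhile detail that the paper leaves implicit.
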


Our next goal is to sharpen the $\lfloor M^2\rfloor$ bound in Theorem \ref{M2_thm}. This strengthened theorem appears later on as Theorem \ref{M_thm}. We will first establish several results to assist in strengthening this bound.

\begin{lemma}\label{lemma_orbits_equal}
Suppose $G$ is a second countable groupoid and $x,y\in G^{(0)}$. If $\overline{[x]}=\overline{[y]}$ and $[x]$ is locally closed, then $[x]=[y]$.
\begin{proof}
We have $x\in \overline{[y]}$, so there exists $\{\gamma_n\}\subset G$ such that $s(\gamma_n)=y$ and $r(\gamma_n)\rightarrow x$. Since $[x]$ is locally closed, there exists an open subset $U$ of $G$ such that $[x]=U\cap\overline{[x]}$. Then $r(\gamma_n)$ is eventually in $U$, so eventually $r(\gamma_n)\in U\cap\overline{[y]}=U\cap\overline{[x]}=[x]$. Thus there exists $\gamma\in G$ with $s(\gamma)=y$ and $r(\gamma)\in [x]$, as required.
\end{proof}
\end{lemma}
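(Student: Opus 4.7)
The plan is to exploit local closedness of $[x]$ to pin down exactly one representative of $[y]$ inside $[x]$, using sequential convergence in the orbit space (which is available because second countable implies first countable).

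First I would fix an open subset $U$ of $G^{(0)}$ such that $[x] = U \cap \overline{[x]}$, which exists by the definition of locally closed. Note in particular that $x \in U$ since $x \in [x]$. Next I would use the hypothesis $\overline{[x]} = \overline{[y]}$ to observe that $x \in \overline{[y]}$; since $G$ is second countable, the orbit space is first countable, so I can extract a sequence $\{\gamma_n\} \subset G$ with $s(\gamma_n) = y$ for all $n$ and $r(\gamma_n) \to x$ in $G^{(0)}$.

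Now the key step: since $U$ is open and contains $x$, we have $r(\gamma_n) \in U$ eventually. At the same time $r(\gamma_n) \in [y] \subseteq \overline{[y]} = \overline{[x]}$ for every $n$. Combining, eventually $r(\gamma_n) \in U \cap \overline{[x]} = [x]$. Pick any such $n$; then $\gamma_n$ witnesses $y = s(\gamma_n) \sim r(\gamma_n) \in [x]$, whence $y \in [x]$ and therefore $[y] = [x]$.

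The main obstacle, if one exists, is simply the reminder that second countability is what lets us argue with sequences rather than nets; once the sequence $\{\gamma_n\}$ converging ``from inside $[y]$ to $x$'' is in hand, the argument is immediate because local closedness guarantees that any such sequence's ranges eventually lie in $[x]$ itself rather than just in $\overline{[x]}$.
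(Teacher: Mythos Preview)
Your proof is correct and follows essentially the same approach as the paper's: pick the open set $U$ witnessing local closedness, pull a sequence from $[y]$ whose ranges converge to $x$, and observe those ranges eventually land in $U\cap\overline{[x]}=[x]$. Your write-up is in fact slightly more careful than the paper's, since you explicitly note why second countability justifies the use of sequences and correctly take $U$ open in $G^{(0)}$ rather than in $G$.
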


The following is a generalisation of \cite[Lemma~3.3]{Archbold-anHuef2006}.
\begin{lemma}\label{AaH_Lemma3.3}
Suppose $G$ is a second countable, locally compact, Hausdorff groupoid with Haar system $\lambda$. Fix $\epsilon>0$, $z\in G^{(0)}$ and let $V$ be an open neighbourhood of $z\in G^{(0)}$ such that $\lambda_z(G^V)<\infty$. Then there exists an open relatively-compact neighbourhood $V_1$ of $z$ such that $\overline{V_1}\subset V$ and
\[
\lambda_z(G^V)-\epsilon<\lambda_z(G^{V_1})\le\lambda_z(G^{\overline{V_1}})\le\lambda_z(G^V)<\lambda_z(G^{V_1})+\epsilon.
\]
\begin{proof}
We use $G_z$ equipped with the subspace topology to find a compact subset $\lambda_z$-estimate of $V$. This estimate is then used to obtain the required open set $V_1$. Since $G_z^V$ is $G_z$-open, by the regularity of $\lambda_z$ there exists a compact subset $W$ of $G_z^V$ such that $\lambda_z(W)>\lambda_z(G_z^V)-\epsilon$. Then $r(W)$ is compact and contained in $V$, so there exists an open relatively-compact neighbourhood $V_1$ of $r(W)$ such that $\overline{V_1}\subset V$. Then
\begin{align*}
\lambda_z(G^V)-\epsilon<\lambda_z(W)&\le \lambda_z(G^{V_1}) \le \lambda_z(G^{\overline{V_1}})\le \lambda_z(G^V)\\&<\lambda_z(W)+\epsilon\le\lambda_z(G^{V_1})+\epsilon,
\end{align*}
as required.
\end{proof}
\end{lemma}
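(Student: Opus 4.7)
The plan is to approximate $G^V \cap G_z$ from inside by a compact set using regularity of $\lambda_z$, then inflate the image of this compact set under $r$ to an open relatively-compact neighbourhood of $z$ whose closure still sits inside $V$.

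First I would work in the fibre $G_z = s^{-1}(z)$ with the subspace topology, which is a second countable, locally compact, Hausdorff space on which $\lambda_z$ restricts to a Radon measure (with $\lambda_z(S) = \lambda_z(S \cap G_z)$ for Borel $S \subseteq G$ because $\mathrm{supp}\,\lambda_z = G_z$). The set $G_z^V = G^V \cap G_z$ is an open subset of $G_z$ with finite measure, so by inner regularity there exists a $G_z$-compact $W \subseteq G_z^V$ with
\[
\lambda_z(W) > \lambda_z(G_z^V) - \epsilon.
\]
Since $z \in G^{(0)}$ satisfies $s(z) = r(z) = z \in V$, we may (after enlarging $W$ to $W \cup \{z\}$, which remains compact) assume $z \in W$.

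Next I would apply the continuity of $r$ and local compactness of $G^{(0)}$: the image $r(W)$ is a compact subset of the open set $V$ containing $z$, so standard locally compact Hausdorff topology produces an open relatively-compact set $V_1$ with $r(W) \subseteq V_1$ and $\overline{V_1} \subseteq V$. In particular $z \in V_1$, so $V_1$ is the desired neighbourhood of $z$.

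Finally, the claimed chain of inequalities is immediate. Monotonicity of $\lambda_z$ applied to $V_1 \subseteq \overline{V_1} \subseteq V$ gives the middle two inequalities $\lambda_z(G^{V_1}) \leq \lambda_z(G^{\overline{V_1}}) \leq \lambda_z(G^V)$. For the outer inequalities, note $W \subseteq G_z^{V_1} \subseteq G^{V_1}$ (since $r(W) \subseteq V_1$ and each element of $W$ has source $z$), so
\[
\lambda_z(G^{V_1}) \geq \lambda_z(W) > \lambda_z(G_z^V) - \epsilon = \lambda_z(G^V) - \epsilon,
\]
which rearranges to both $\lambda_z(G^V) - \epsilon < \lambda_z(G^{V_1})$ and $\lambda_z(G^V) < \lambda_z(G^{V_1}) + \epsilon$. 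The only subtle step is ensuring $V_1$ is simultaneously a neighbourhood of $z$ and of the (possibly unrelated) set $r(W)$, which is handled by arranging $z \in W$ at the start; after that, the argument is a direct transcription of the classical inner-regularity construction into the groupoid setting.
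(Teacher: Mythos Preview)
Your proof is correct and follows essentially the same approach as the paper: inner-regularity on $G_z^V$ to get a compact $W$, then push forward via $r$ and thicken to an open relatively-compact $V_1$ with $\overline{V_1}\subset V$. Your explicit step of adjoining $z$ to $W$ to guarantee $z\in V_1$ is a nice touch that the paper's version leaves implicit (there $V_1$ is chosen as a neighbourhood of $r(W)$, and one should really take it around $r(W)\cup\{z\}$).
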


The following lemma is equivalent to the claim in \cite[Proposition~3.6]{Clark2007} that $[x]\mapsto [L^x]$ from $G^{(0)}/G$ to the spectrum of $C^*(G)$ is open.
\begin{lemma}\label{lemma_ind_reps_converge_imply_orbits_converge}
Suppose $G$ is a second countable, locally compact, Hausdorff, principal groupoid with Haar system $\lambda$. If $\{x_n\}$ is a sequence in $G^{(0)}$ with $\L^{x_n}\rightarrow \L^{z}$, then $[x_n]\rightarrow [z]$.
\begin{proof}
We prove the contrapositive. Suppose $[x_n]\nrightarrow [z]$. Then there exists an open neighbourhood $U_0$ of $[z]$ in $G^{(0)}/G$ such that $[x_n]$ is frequently not in $U_0$. Let $q:G^{(0)}\rightarrow G^{(0)}/G$ be the quotient map $x\mapsto [x]$. Then $U_1:=q^{-1}(U_0)$ is an open invariant neighbourhood of $z$ and $x_n\notin U_1$ frequently. Note that $C^*(G|_{U_1})$ is isomorphic to a closed two-sided ideal $I$ of $C^*(G)$ (see \cite[Lemma~2.10]{Muhly-Renault-Williams1996}). 

We now claim that $I\subset\ker\,\L^{x_n}$ whenever $x_n\notin U_1$. Suppose $x_n\notin U_1$ and recall from Remark \ref{measure_induced_epsilon_x} that $\L^{x_n}$ acts on $L^2(G,\lambda_{x_n})$. Fix $f\in C_c(G)$ such that $f(\gamma)=0$ whenever $\gamma\notin G|_{U_1}$ and fix $\xi\in L^2(G,\lambda_{x_n})$. Then by Remark \ref{measure_induced_epsilon_x} we have
\[
\|\L^{x_n}(f)\xi\|_{x_n}^2=\int_G\bigg(\int_G f(\gamma\alpha^{-1})\xi(\alpha)\, d\lambda_{x_n}(\alpha)\bigg)^2\, d\lambda_{x_n}(\gamma).
\]
When evaluating the inner integrand, we have $s(\alpha)=s(\gamma)=x_n$, so $\gamma\alpha^{-1}\in G|_{[x_n]}$. Since $U_1$ is invariant with $x_n\notin U_1$, it follows that $\gamma\alpha^{-1}\notin G|_{U_1}$, and so $f(\gamma\alpha^{-1})=0$. Thus $\|\L^{x_n}(f)\xi\|_{x_n}=0$ and since $\xi$ was fixed arbitrarily, $\L^{x_n}(f)=0$. This implies that $I\subset \ker\,\L^{x_n}$.

We now conclude by observing that since $I\subset\ker\,\L^{x_n}$ frequently, $\L^{x_n}\notin \hat{I}$ frequently. But $\hat{I}$ is an open neighbourhood of $\L^z$, so $\L^{x_n}\nrightarrow \L^z$.
\end{proof}
\end{lemma}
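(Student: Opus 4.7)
The natural approach is to argue by contrapositive: assume $[x_n]\nrightarrow[z]$ in $G^{(0)}/G$ and deduce $\L^{x_n}\nrightarrow\L^z$ in the spectrum of $C^*(G)$. The failure of convergence yields an open neighbourhood $U_0$ of $[z]$ in the quotient $G^{(0)}/G$ that $[x_n]$ misses for infinitely many $n$. Pulling back through the quotient map, I get an open, $G$-invariant neighbourhood $U_1$ of $z$ in $G^{(0)}$ with $x_n\notin U_1$ frequently. The plan is then to build an open neighbourhood of $\L^z$ in $\widehat{C^*(G)}$ that $\{\L^{x_n}\}$ frequently avoids, contradicting convergence.

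The key tool is that if $U_1\subset G^{(0)}$ is open and $G$-invariant, then the restriction groupoid $G|_{U_1}$ is open in $G$ and $C^*(G|_{U_1})$ embeds as a closed two-sided ideal $I\triangleleft C^*(G)$ (this is a standard fact; one may cite \cite[Lemma~2.10]{Muhly-Renault-Williams1996}). Then $\hat I$ is an open subset of $\widehat{C^*(G)}$, and since $z\in U_1$ the representation $\L^z$ does not kill $I$ (pick $f\in C_c(G|_{U_1})$ supported near $z$; by Lemma~\ref{lemma_urysohn_corollary}-type arguments one can ensure $\L^z(f)\neq 0$). Hence $\L^z\in\hat I$.

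The crux of the argument, and the main technical step, is to show that whenever $x_n\notin U_1$, the representation $\L^{x_n}$ annihilates $I$. The reason is essentially that $\L^{x_n}$ is supported on the orbit $[x_n]$: explicitly, using the formula from Remark~\ref{measure_induced_epsilon_x},
\[
\bigl(\L^{x_n}(f)\xi\bigr)(\gamma)=\int_G f(\gamma\alpha^{-1})\xi(\alpha)\,d\lambda_{x_n}(\alpha),
\]
and in the integrand both $\gamma,\alpha\in G_{x_n}$, so $\gamma\alpha^{-1}\in G|_{[x_n]}$. If $f\in C_c(G|_{U_1})$ and $x_n\notin U_1$, then by $G$-invariance of $U_1$ the orbit $[x_n]$ is disjoint from $U_1$, so $\gamma\alpha^{-1}\notin G|_{U_1}$ and $f(\gamma\alpha^{-1})=0$. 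Thus $\L^{x_n}(f)=0$ on the dense subalgebra $C_c(G|_{U_1})$ of $I$, and continuity gives $\L^{x_n}(I)=0$.

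Combining these, $\L^{x_n}\notin\hat I$ for infinitely many $n$, while $\hat I$ is an open neighbourhood of $\L^z$; this contradicts $\L^{x_n}\to\L^z$, completing the proof. The main obstacle is the ideal identification $I\cong C^*(G|_{U_1})$—this requires an appeal to the external literature on restriction of groupoid $C^*$-algebras to invariant open subsets—but once accepted, the computation showing that $\L^{x_n}$ kills $I$ for $x_n\notin U_1$ is straightforward from the explicit form of induced representations.
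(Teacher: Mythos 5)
Your proposal is correct and follows essentially the same route as the paper's own proof: the contrapositive, the pullback to an open invariant neighbourhood $U_1$ missed frequently by $x_n$, the identification of $C^*(G|_{U_1})$ with an ideal $I$ via \cite[Lemma~2.10]{Muhly-Renault-Williams1996}, the computation showing $\L^{x_n}$ annihilates $I$ when $x_n\notin U_1$, and the conclusion via the open set $\hat I$. Your extra remark justifying $\L^z\in\hat I$ (by exhibiting $f\in C_c(G|_{U_1})$ with $\L^z(f)\ne 0$) is a small point the paper leaves implicit, and is a welcome addition.
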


We may now proceed to strengthening the $\lfloor M^2\rfloor$ bound in Theorem \ref{M2_thm}. This theorem is a generalisation of \cite[Theorem~3.5]{Archbold-anHuef2006}.
\begin{theorem}\label{M_thm}
Suppose $G$ is a second countable, locally compact, Hausdorff, principal groupoid with Haar system $\lambda$. Let $M\in\RR$ with $M\ge 1$, suppose $z\in G^{(0)}$ such that $[z]$ is locally closed and let $\{x_n\}$ be a sequence in $G^{(0)}$. Suppose there exists an open neighbourhood $V$ of $z$ in $G^{(0)}$ such that $G_z^V$ is relatively compact and 
\[
\lambda_{x_n}(G^V)\le M\lambda_z(G^V)
\]
frequently. Then $\M_\L(\L^z,\{\L^{x_n}\})\le\lfloor M\rfloor$.
\begin{proof}
If $\L^{x_n}$ does not converge to $\L^z$, then $\M_\L(\L^z,\{\L^{x_n}\})=0<\lfloor M \rfloor$. So we assume from now on that $\L^{x_n}\rightarrow \L^z$. Lemma \ref{lemma_ind_reps_converge_imply_orbits_converge} now shows that $[x_n]\rightarrow [z]$. Next we claim that we may assume, without loss of generality, that $[z]$ is the unique limit of $\{[x_n]\}$ in $G^{(0)}/G$. To see this, note that $\M_\L(\L^z,\{\L^{x_n}\})\le\lfloor M^2\rfloor<\infty$ by Theorem \ref{M2_thm}. Hence, by \cite[Proposition~3.4]{Archbold-anHuef2006}, $\{\L^z\}$ is open in the set of limits of $\{\L^{x_n}\}$. So there exists an open neighbourhood $U_2$ of $\L^z$ in $C^*(G)^\wedge$ such that $\L^z$ is the unique limit of $\{\L^{x_n}\}$ in $U_2$. By \cite[Proposition~2.5]{Muhly-Williams1990} there is a continuous function $\L:G^{(0)}/G\rightarrow C^*(G)^\wedge$ such that $[x]\mapsto\L^x$ for all $x\in G^{(0)}$. Define $p:G^{(0)}\rightarrow G^{(0)}/G$ by $p(x)=[x]$ for all $x\in G^{(0)}$. Then $p$ is continuous, and $Y:=(\L\circ p)^{-1}(U_2)$ is an open $G$-saturated neighbourhood of $z$ in $G^{(0)}$. Note that $x_n\in Y$ eventually.

Now suppose that, for some $y\in Y$, $[x_n]\rightarrow [y]$ in $Y/G$ and hence in $G^{(0)}/G$. Then $\L^{x_n}\rightarrow \L^y$ by \cite[Proposition~2.5]{Muhly-Williams1990}, and $\L^y\in U_2$ since $y\in(\L\circ p)^{-1}(U_2)$. But $\{\L^{x_n}\}$ has the unique limit $\L^z$ in $U_2$, so $\L^z=\L^y$ and hence $\overline{[z]}=\overline{[y]}$. Since $[z]$ is locally closed, Lemma \ref{lemma_orbits_equal} shows that $[z]=[y]$ in $G^{(0)}$ and hence in $Y$.

We know $Y$ is an open saturated subset of $G^{(0)}$, so $C^*(G|_Y)$ is isomorphic to a closed two-sided ideal $J$ of $C^*(G)$. We can apply \cite[Proposition~5.3]{Archbold-Somerset-Spielberg1997} with the $C^*$-subalgebra $J$ to see that $\M_\L(\L^z,\{\L^{x_n}\})$ is the same whether we compute it in the ideal $J$ or in $C^*(G)$. Since $Y$ is $G$-invariant, $G_z^V=G_z^{V\cap Y}$ and eventually $G_{x_n}^V=G_{x_n}^{V\cap Y}$. We may thus consider $G|_Y$ instead of $G$ and therefore assume that $[z]$ is the unique limit of $[x_n]$ in $G^{(0)}/G$ as claimed.

As in \cite{Archbold-anHuef2006}, the idea for the rest of the proof is the same as in Theorem \ref{M2_thm}, although more precise estimates are used. Fix $\epsilon>0$ such that $M(1+\epsilon)^2<\lfloor M\rfloor +1$ and choose $\kappa>0$ such that
\begin{equation}\label{choice_of_kappa}
\kappa<\frac{\epsilon\lambda_z(G^V)}{1+\epsilon}<\lambda_z(G^V).
\end{equation}
By Lemma \ref{AaH_Lemma3.3} there exists an open relatively compact neighbourhood $V_1$ of $z$ such that $\overline{V_1}\subset V$ and
\[
0<\lambda_z(G^V)-\kappa<\lambda_z(G^{V_1})\le\lambda_z(G^{\overline{V_1}})\le\lambda_z(G^V)<\lambda_z(G^{V_1})+\kappa.
\]
Choose a subsequence $\{x_{n_i}\}$ of $\{x_n\}$ such that
\[
\lambda_{x_{n_i}}(G^V)\le M\lambda_z(G^V)
\]
for all $i\ge 1$. Then
\begin{align}
\lambda_{x_{n_i}}(G^{V_1})&\le\lambda_{x_{n_i}}(G^V)\notag\\
&\le M\lambda_z(G^V)\notag\\
&<M\big(\lambda_z(G^{V_1})+\kappa\big)\notag\\
&<M\lambda_z(G^{V_1})+M\epsilon\big(\lambda_z(G^V)-\kappa\big)\quad\text{(by \eqref{choice_of_kappa})}\notag\\
&<M\lambda_z(G^{V_1})+M\epsilon\lambda_z(G^{V_1})\notag\\
&=M(1+\epsilon)\lambda_z(G^{V_1})\label{AaH3.8}
\end{align}
for all $i$. Since
\[
\frac{\lambda_z(G^{V_1})\big(\lambda_z(G^{V_1})+\kappa+1/j\big)}{\big(\lambda_z(G^{V_1})-1/j\big)^2}
\rightarrow
1+\frac{\kappa}{\lambda_z(G^{V_1})}
<1+\epsilon
\]
as $j\rightarrow\infty$, there exists $\delta>0$ such that $\delta<\lambda_z(G^{V_1})$ and
\begin{equation}\label{AaH3.9}
\frac{\lambda_z(G^{V_1})\big(\lambda_z(G^{\overline{V_1}})+\delta\big)}{\big(\lambda_z(G^{V_1})-\delta\big)^2}
<
\frac{\lambda_z(G^{V_1})\big(\lambda_z(G^{V_1})+\kappa+\delta\big)}{\big(\lambda_z(G^{V_1})-\delta\big)^2}<1+\epsilon.
\end{equation}

We will now construct a function $F\in C_c(G^{(0)})$ with support inside $V_1$. Since $\lambda_z$ is inner regular on open sets and $G_z^{V_1}$ is $G_z$-open, there exists a $G_z$-compact subset $W$ of $G_z^{V_1}$ such that
\[
0<\lambda_z(G_z^{V_1})-\delta<\lambda_z(W).
\]
Since $W$ is $G_z$-compact there exists a $G_z$-compact neighbourhood $W_1$ of $W$ that is contained in $G_z^{V_1}$ and there exists a continuous function $g:G_z\rightarrow [0,1]$ that is identically one on $W$ and zero off the interior of $W_1$. We have
\begin{equation}\label{AaH3.10}
\lambda_z(G^{V_1})-\delta<\lambda_z(W)
\le\int_{G_z} g(t)^2\, d\lambda_z(t)
=\|g\|_z^2,
\end{equation}
By Lemma \ref{based_on_Ramsay} the restriction $\tilde{r}$ of $r$ to $G_z$ is a homeomorphism onto $[z]$. So there exists a continuous function $g_1:\tilde{r}(W_1)\rightarrow [0,1]$ such that $g_1\big(\tilde{r}(\gamma)\big)=g(\gamma)$ for all $\gamma\in W_1$. Thus $\tilde{r}(W_1)$ is $[z]$-compact, which implies that $\tilde{r}(W_1)$ is $G^{(0)}$-compact. Since we know that $G^{(0)}$ is second countable and Hausdorff, Tietze's Extension Theorem can be applied to show that $g_1$ can be extended to a continuous map $g_2:G^{(0)}\rightarrow [0,1]$. Because $\tilde{r}(W_1)$ is a compact subset of the open set $V_1$, there exist a compact neighbourhood $P$ of $\tilde{r}(W_1)$ contained in $V_1$ and a continuous function $h:G^{(0)}\rightarrow [0,1]$ that is identically one on $\tilde{r}(W_1)$ and zero off the interior of $P$. Note that $h$ has compact support that is contained in $P$.

We set $f(x)=h(x)g_2(x)$. Then $f\in C_c(G^{(0)})$ with $0\le f\le 1$ and 
\begin{equation}\label{supp_f_contained_in_V_2}
\mathrm{supp} f\subset \mathrm{supp} h\subset P\subset V_1.
\end{equation}
Note that
\begin{align}
\|f\circ r\|_z^2&=\int_{G_z} f\big(\tilde{r}(\gamma)\big)^2\,d\lambda_z(\gamma)\notag\\
&=\int_{G_z} h\big(\tilde{r}(\gamma)\big)^2g_2\big(\tilde{r}(\gamma)\big)^2\, d\lambda_z(\gamma)\notag\\
&\ge\int_{W_1} h\big(\tilde{r}(\gamma)\big)^2 g(\gamma)^2\, d\lambda_z(\gamma)\notag\\
&=\int_{W_1}g(\gamma)^2\,d\lambda_z(\gamma)\notag\\
&=\|g\|_z^2\label{AaH3.11}
\end{align}
since $\mathrm{supp}\, g\subset W_1$ and $h$ is identically one on $\tilde{r}(W_1)$. We now define $F\in C_c(G^{(0)})$ by
\begin{equation}\label{definition_F_2}
F(x)=\frac{f(x)}{\|f\circ r\|_z}.
\end{equation}
Then $\|F\circ r\|_z=1$ and
\begin{equation}\label{unmotivated_ref_2}
F\circ r(\gamma)\ne 0\implies h\big(r(\gamma)\big)\ne 0 \implies r(\gamma)\in V_1 \implies \gamma\in G^{V_1}.
\end{equation}

Let $N=\mathrm{supp}\, F$. Suppose $K$ is an open relatively compact symmetric neighbourhood of $(\overline{G_z^N})(\overline{G_z^N})^{-1}$ in $G$ and choose $b\in C_c(G)$ such that $b$ is identically one on $(\overline{G_z^N})(\overline{G_z^N})^{-1}$ and identically zero off $K$. As in Theorem \ref{M2_thm} we may assume that $b$ is self-adjoint by considering $\frac12(b+b^*)$. Define $D\in C_c(G)$ by $D(\gamma):=F\big(r(\gamma)\big)F\big(s(\gamma)\big)b(\gamma)$. By the same argument as in Theorem \ref{M2_thm}, $\L^z(D)$, and hence $\L^z(D^*\ast D)$, is the rank one projection determined by the unit vector $F\circ r\in L^2(G,\lambda_z)$. From \eqref{Aah3.4} we have
\begin{align*}
\tr\big(\L^{x_{n_i}}&(D^*\ast D)\big)\\
&=\int_G F\big(r(\beta)\big)^2\bigg(\int_G F\big(r(\alpha)\big)^2 b(\alpha\beta^{-1})^2\, d\lambda_{x_{n_i}}(\alpha)\bigg)\, d\lambda_{x_{n_i}}(\beta).
\end{align*}
Since $b$ is identically zero off $K$, the inner integrand is zero unless $\alpha\beta^{-1}\in K$. Combining this with \eqref{supp_f_contained_in_V_2} and the fact that $\mathrm{supp}\,\lambda_{x_{n_i}}\subset G_{x_{n_i}}$ enables us to see that this inner integrand is zero unless $\alpha\in G_{x_{n_i}}^{V_1}\cap K\beta$. Thus
\begin{align*}
\tr\big(\L^{x_{n_i}}&(D^*\ast D)\big)\\
&\le\int_{\beta\in G_{x_{n_i}}^{V_1}} F\big(r(\beta)\big)^2\bigg(\int_{\alpha\in G_{x_{n_i}}^{V_1}\cap K\beta} F\big(r(\alpha)\big)^2\, d\lambda_{x_{n_i}}(\alpha)\bigg)\, d\lambda_{x_{n_i}}(\beta).\\
&\le\frac{1}{\|f\circ r\|_z^4}\int_{\beta\in G_{x_{n_i}}^{V_1}} 1\bigg(\int_{\alpha\in G_{x_{n_i}}^{V_1}\cap K\beta} 1\, d\lambda_{x_{n_i}}(\alpha)\bigg)\, d\lambda_{x_{n_i}}(\beta).
\end{align*}

Since $\overline{V_1}$ and $\overline{K}$ are compact, by Lemma \ref{the_unbroken_lemma} there exists $i_0$ such that for every $i\ge i_0$ and any $\beta\in G_{x_{n_i}}^{\overline{V_1}}$,
\[
\lambda_{x_{n_i}}(K\beta\cap G^{\overline{V_1}})<\lambda_z(G^{\overline{V_1}})+\delta.
\]
So, provided $i\ge i_0$,
{\allowdisplaybreaks\begin{align*}
\tr\big(\L^{x_{n_i}}(D^*\ast D)\big)&\le \frac{1}{\|f\circ r\|_z^4}\int_{\beta\in G_{x_{n_i}}^{V_1}}\lambda_{x_{n_i}}(K\beta\cap G_{x_{n_i}}^{V_1})\, d\lambda_{x_{n_i}}(\beta)\\
&\le\frac{1}{\|f\circ r\|_z^4}\int_{\beta\in G_{x_{n_i}}^{V_1}}\big(\lambda_z(G_z^{\overline{V_1}})+\delta\big)\, d\lambda_{x_{n_i}}(\beta)\\
&<\frac{\big(\lambda_z(G^{\overline{V_1}})+\delta\big)\lambda_{x_{n_i}}(G^{V_1})}{\|f\circ r\|_z^4}\\
&<\frac{M(1+\epsilon)\big(\lambda_z(G^{\overline{V_1}})+\delta\big)\lambda_z(G^{V_1})}{\|g\|_z^4}\quad\text{(by \eqref{AaH3.8} and \eqref{AaH3.11})}\\
&<\frac{M(1+\epsilon)\big(\lambda_z(G^{\overline{V_1}})+\delta\big)\lambda_z(G^{V_1})}{(\lambda_z(G^{V_1})-\delta)^2}\quad\text{(by \eqref{AaH3.10})}\\
&<M(1+\epsilon)^2\quad\text{(by \eqref{AaH3.9})}.
\end{align*}}

We can now make our conclusion as in \cite[Theorem~3.5]{Archbold-anHuef2006}: by generalised lower semi-continuity \cite[Theorem~4.3]{Archbold-Spielberg1996},
{\allowdisplaybreaks\begin{align*}
\underset{\scriptstyle n}{\lim\,\inf}\,\tr\big(\L^{x_n}(D^\ast\ast D)\big)&\ge \M_\L(\L^z,\{\L^{x_n}\})\, \tr \big(\L^z(D^*\ast D)\big)\\
&=\M_\L(\L^z,\{\L^{x_n}\}).
\end{align*}
}We now have
{\allowdisplaybreaks\begin{align*}
\M_\L(\L^z,\{\L^{x_n}\})&\le \underset{\scriptstyle n}{\lim\,\inf}\, \tr\big(\L^{x_n}(D^*\ast D)\big)\\
&\le M(1+\epsilon)^2\\
&<\lfloor M\rfloor+1,
\end{align*}
}and so $\M_\L(\L^z,\{\L^{x_n}\})\le\lfloor M\rfloor$, as required.
\end{proof}
\end{theorem}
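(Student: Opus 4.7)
The plan is to follow the strategy of Theorem~\ref{M2_thm} but with sharper measure estimates, trading the $M^2$ that arises from estimating a \emph{double} integral twice by the hypothesis for an $M$ coming from using the hypothesis only once and invoking Lemma~\ref{the_unbroken_lemma} for the other factor.

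First I would dispose of trivialities: if $\L^{x_n}\not\to\L^z$, then $\M_\L(\L^z,\{\L^{x_n}\})=0\le\lfloor M\rfloor$, so assume $\L^{x_n}\to\L^z$, which by Lemma~\ref{lemma_ind_reps_converge_imply_orbits_converge} forces $[x_n]\to[z]$. Next, Theorem~\ref{M2_thm} already gives $\M_\L(\L^z,\{\L^{x_n}\})\le\lfloor M^2\rfloor<\infty$, so by \cite[Proposition~3.4]{Archbold-anHuef2006} the singleton $\{\L^z\}$ is open in the set of limits of $\{\L^{x_n}\}$. Using the continuity of $[x]\mapsto\L^x$ from \cite[Proposition~2.5]{Muhly-Williams1990}, I can pull this open set back to a saturated open neighbourhood $Y$ of $z$ in $G^{(0)}$ containing $x_n$ eventually, and then, by Lemma~\ref{lemma_orbits_equal} together with local closedness of $[z]$, restrict to $G|_Y$ to assume without loss of generality that $[z]$ is the \emph{unique} limit of $\{[x_n]\}$ in $G^{(0)}/G$; the value of $\M_\L$ is unaffected by \cite[Proposition~5.3]{Archbold-Somerset-Spielberg1997}.

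The heart of the argument is the construction of a test function. Given a small $\epsilon>0$ with $M(1+\epsilon)^2<\lfloor M\rfloor+1$, I apply Lemma~\ref{AaH_Lemma3.3} to extract an open relatively-compact neighbourhood $V_1$ of $z$ with $\overline{V_1}\subset V$ whose measure $\lambda_z(G^{V_1})$ is arbitrarily close to $\lambda_z(G^V)$ and bounded below by $\delta>0$. I then mimic the normalisation device of Theorem~\ref{M2_thm}: use regularity of $\lambda_z$ on $G_z^{V_1}$ to produce a continuous $g$ on $G_z$ with $\lVert g\rVert_z^2$ close to $\lambda_z(G^{V_1})$, transfer $g$ to a function $f\in C_c(G^{(0)})$ supported in $V_1$ via the homeomorphism $\tilde r\colon G_z\to[z]$ of Lemma~\ref{based_on_Ramsay} and a Tietze extension, and set $F=f/\lVert f\circ r\rVert_z$ so that $F\circ r$ is a unit vector in $L^2(G,\lambda_z)$. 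Finally $D(\gamma):=F(r(\gamma))F(s(\gamma))b(\gamma)$ with $b$ a self-adjoint cutoff equal to $1$ on $(\overline{G_z^N})(\overline{G_z^N})^{-1}$ (where $N=\mathrm{supp}\,F$) and supported in an open relatively compact symmetric $K$; then $\L^z(D^*\ast D)$ is the rank-one projection onto $F\circ r$.

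The key obstacle—and the reason for the sharper bound—lies in estimating $\tr\bigl(\L^{x_{n_i}}(D^*\ast D)\bigr)$ along a frequent subsequence where $\lambda_{x_{n_i}}(G^V)\le M\lambda_z(G^V)$. Expressing this trace as a double integral in $\alpha,\beta$ and exploiting the support conditions $\mathrm{supp}\,F\subset V_1$ and $b\equiv 0$ off $K$, the inner integrand is supported in $G_{x_{n_i}}^{V_1}\cap K\beta$. Unlike in Theorem~\ref{M2_thm}, where one crudely bounds both integrals by $\lambda_{x_{n_i}}(G^V)$, here I invoke Lemma~\ref{the_unbroken_lemma} with the compact sets $\overline{V_1}$ and $\overline{K}$ to see that for $i$ large,
\[
\lambda_{x_{n_i}}\bigl(K\beta\cap G^{\overline{V_1}}\bigr)<\lambda_z(G^{\overline{V_1}})+\delta
\]
uniformly in $\beta\in G_{x_{n_i}}^{\overline{V_1}}$. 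The outer integral contributes a factor bounded by $\lambda_{x_{n_i}}(G^{V_1})\le M(1+\epsilon)\lambda_z(G^{V_1})$ (using the hypothesis together with the closeness of $\lambda_z(G^{V_1})$ to $\lambda_z(G^V)$), while the inner factor now contributes only $\lambda_z(G^{\overline{V_1}})+\delta$, very close to $\lambda_z(G^{V_1})$. Combined with $\lVert g\rVert_z^2>\lambda_z(G^{V_1})-\delta$ and the chosen parameters, the trace is dominated by $M(1+\epsilon)^2<\lfloor M\rfloor+1$. Generalised lower semi-continuity \cite[Theorem~4.3]{Archbold-Spielberg1996}, applied to the rank-one projection $\L^z(D^*\ast D)$ of trace one, yields $\M_\L(\L^z,\{\L^{x_n}\})\le\liminf_n\tr(\L^{x_n}(D^*\ast D))<\lfloor M\rfloor+1$, giving the desired bound $\lfloor M\rfloor$. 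The delicate calibration of $\epsilon$, $\kappa$, $\delta$ and the nested neighbourhoods $V_1\subset\overline{V_1}\subset V$ is the principal technical hurdle.
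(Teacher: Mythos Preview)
Your proposal is correct and follows essentially the same approach as the paper's proof: the reduction to the unique-limit setting via Theorem~\ref{M2_thm} and \cite[Proposition~3.4]{Archbold-anHuef2006}, the calibrated choice of $V_1$ via Lemma~\ref{AaH_Lemma3.3}, the construction of $F$ and $D$ exactly as in Theorem~\ref{M2_thm}, and---crucially---the replacement of one factor of $M$ by a near-unit factor using Lemma~\ref{the_unbroken_lemma} on the inner integral, all match the paper. The only cosmetic difference is that the paper spells out the auxiliary constant $\kappa$ and the inequality~\eqref{AaH3.9} explicitly, whereas you summarise these as ``closeness'' conditions; the underlying estimates are the same.
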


\section{Lower multiplicity and \texorpdfstring{$k$}{\it k}-times convergence II}\label{sec_lower_multiplicity_2}
We proved in Proposition \ref{AaH_thm_1.1_1_implies_2} that if a sequence converges $k$-times in the orbit space of a principal groupoid, then the lower multiplicity of the associated sequence of representations is at least $k$. In this section we will prove the converse. The first result in this section generalises \cite[Lemma~5.1]{Archbold-anHuef2006}; with the exception of notation changes, the proof is the same as the proof in \cite{Archbold-anHuef2006}.
\begin{lemma}\label{AaH_lemma5.1}
Suppose $G$ is a second countable, locally compact, Hausdorff, principal groupoid. Let $k\in\PP$, $z\in G^{(0)}$, and $\{x_n\}$ be a sequence in $G^{(0)}$. Assume that $[z]$ is locally closed in $G^{(0)}$ and that there exists $R>k-1$ such that for every open neighbourhood $U$ of $z$ with $G_z^U$ relatively compact we have
\[
\underset{\scriptstyle n}{\lim\,\inf}\,\lambda_{x_n}(G^U)\ge R\lambda_z(G^U).
\]
Given an open neighbourhood $V$ of $z$ such that $G_z^V$ is relatively compact, there exists a compact neighborhood $N$ of $z$ with $N\subset V$ such that
\[
\underset{\scriptstyle n}{\lim\,\inf}\,\lambda_{x_n}(G^N)>(k-1)\lambda_z(G^N).
\]
\begin{proof}
Apply Lemma \ref{AaH_Lemma3.3} to $V$ with $0<\epsilon<\frac{R-k+1}R \lambda_z(G^V)$ to get an open relatively-compact neighbourhood $V_1$ of $z$ with $\overline{V_1}\subset V$ and
\[
\lambda_z(G^V)-\epsilon<\lambda_z(G^{V_1})\le\lambda_z(G^{\overline{V_1}})\le\lambda_z(G^V)<\lambda_z(G^{V_1})+\epsilon.
\]
Since $G_z^{V_1}$ is relatively compact we have
\begin{align*}
\underset{\scriptstyle n}{\lim\,\inf}\,\lambda_{x_n}(G^{\overline{V_1}})&\ge\underset{\scriptstyle n}{\lim\,\inf}\,\lambda_{x_n}(G^{V_1})\\
&\ge R\lambda_z(G^{V_1})&\text{(by hypothesis)}\\
&>R\big(\lambda_z(G^V)-\epsilon\big)\\
&>(k-1)\lambda_z(G^V)&\text{(by our choice of }\epsilon\text{)}\\
&\ge (k-1)\lambda_z(G^{\overline{V_1}}).
\end{align*}
So we may take $N=\overline{V_1}$.
\end{proof}
\end{lemma}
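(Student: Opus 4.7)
The plan is to produce $N$ as the closure of a slightly shrunk open relatively compact neighbourhood $V_1$ of $z$. Concretely, I will apply Lemma \ref{AaH_Lemma3.3} to $V$ with a small $\epsilon > 0$ to obtain an open relatively compact $V_1$ with $\overline{V_1} \subset V$ whose $\lambda_z$-measure of the fibre is within $\epsilon$ of that of $V$; then I will set $N := \overline{V_1}$, which is compact and a neighbourhood of $z$ since $V_1$ is an open neighbourhood of $z$.

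The value of $\epsilon$ must be chosen so that the slack $R - (k-1) > 0$ in the hypothesis absorbs the error introduced by shrinking $V$ to $V_1$. Solving $R(\lambda_z(G^V) - \epsilon) > (k-1)\lambda_z(G^V)$ gives the condition $\epsilon < \frac{R-k+1}{R}\lambda_z(G^V)$, which is positive since $R > k-1$ and $\lambda_z(G^V) > 0$ (the latter because $z \in G_z^V$ and $\lambda_z$ has full support on $G_z$). I will pick $\epsilon$ strictly between $0$ and this bound.

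Once $V_1$ is obtained, the hypothesis applies directly to $V_1$ (which is open and has $G_z^{V_1}$ relatively compact, being contained in $G_z^V$), yielding $\liminf_n \lambda_{x_n}(G^{V_1}) \ge R\lambda_z(G^{V_1})$. The conclusion then follows from the chain
\[
\underset{n}{\lim\,\inf}\,\lambda_{x_n}(G^N) \ge \underset{n}{\lim\,\inf}\,\lambda_{x_n}(G^{V_1}) \ge R\lambda_z(G^{V_1}) > R\big(\lambda_z(G^V)-\epsilon\big) > (k-1)\lambda_z(G^V) \ge (k-1)\lambda_z(G^N),
\]
where the first inequality uses $V_1 \subset \overline{V_1} = N$ (so $G^{V_1} \subset G^N$), the third uses the conclusion of Lemma \ref{AaH_Lemma3.3}, the fourth uses our choice of $\epsilon$, and the last uses $N = \overline{V_1} \subset V$.

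This proof is essentially mechanical once Lemma \ref{AaH_Lemma3.3} is in hand; there is no real obstacle beyond bookkeeping the inequalities and verifying that the chosen $\epsilon$ range is non-empty. The only subtle point worth double-checking is that $\lambda_z(G^V) > 0$ so that the $\epsilon$-interval is non-degenerate, which holds because $z$ lies in $V$ and the support of $\lambda_z$ is $G_z$, guaranteeing that the open set $G_z^V$ has strictly positive measure.
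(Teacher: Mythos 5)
Your proposal is correct and follows essentially the same route as the paper: apply Lemma \ref{AaH_Lemma3.3} to $V$ with $0<\epsilon<\frac{R-k+1}{R}\lambda_z(G^V)$, take $N=\overline{V_1}$, and run the same chain of inequalities. Your explicit check that $\lambda_z(G^V)>0$ (so the $\epsilon$-interval is non-degenerate) is a detail the paper leaves implicit, but otherwise the arguments coincide.
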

\begin{remark}\label{AaH_lemma5.1_variant}
The preceding lemma also holds when $\lim\,\inf$ is replaced by $\lim\,\sup$.  No modification of the proof is needed beyond replacing the two occurrences of $\lim\,\inf$ with $\lim\,\sup$.
\end{remark}

We may now proceed to our main theorem.
\needspace{6\baselineskip}
\begin{theorem}\label{circle_thm}
Suppose $G$ is a second countable, locally compact, Hausdorff principal groupoid that admits a Haar system $\lambda$. Let $k$ be a positive integer, let $z\in G^{(0)}$ and let $\{x_n\}$ be a sequence in $G^{(0)}$. Assume that $[z]$ is locally closed in $G^{(0)}$. Then the following are equivalent:
\begin{enumerate}\renewcommand{\labelenumi}{(\arabic{enumi})}
\item\label{circle_thm_1} the sequence $\{x_n\}$ converges $k$-times in $G^{(0)}/G$ to $z$;
\item\label{circle_thm_2} $\M_\L(\L^z,\{\L^{x_n}\})\ge k$;
\item\label{circle_thm_3} for every open neighbourhood $V$ of $z$ in $G^{(0)}$ such that $G_z^V$ is relatively compact we have
\[
\underset{\scriptstyle n}{\lim\,\inf}\,\lambda_{x_n}(G^V)\ge k\lambda_z(G^V);
\]
\item\label{circle_thm_4} there exists a real number $R>k-1$ such that for every open neighbourhood $V$ of $z$ in $G^{(0)}$ with $G_z^V$ relatively compact we have
\[
\underset{\scriptstyle n}{\lim\,\inf}\,\lambda_{x_n}(G^V)\ge R\lambda_z(G^V);\quad\text{and}
\]
\item\label{circle_thm_5} there exists a basic decreasing sequence of compact neighbourhoods $\{W_m\}$ of $z$ in $G^{(0)}$ such that, for each $m\ge 1$,
\[
\underset{\scriptstyle n}{\lim\,\inf}\,\lambda_{x_n}(G^{W_m})>(k-1)\lambda_z(G^{W_m}).
\]
\end{enumerate}
\begin{proof}
We know that \eqref{circle_thm_1} implies \eqref{circle_thm_2} by Proposition \ref{AaH_thm_1.1_1_implies_2}.

Suppose \eqref{circle_thm_2}. If $\M_\L(\L^z,\{\L^{x_n}\})\ge k$, then $\M_\L(\L^z,\{\L^{x_n}\})>\lfloor k-\epsilon\rfloor$ for all $\epsilon>0$. By Theorem \ref{M_thm}, for every $G^{(0)}$-open neighborhood $V$ of $z$ such that $G_z^V$ is relatively compact, $\lambda_{x_n}(G^V)>(k-\epsilon)\lambda_z(G^V)$ eventually, and hence \eqref{circle_thm_3} holds.

It is immediately true that \eqref{circle_thm_3} implies \eqref{circle_thm_4}.

Suppose \eqref{circle_thm_4}. We will construct the sequence $\{W_m\}$ of compact neighbourhoods inductively. Let $\{V_j\}$ be a basic decreasing sequence of open neighborhoods of $z$ such that $G_z^{V_1}$ is relatively compact (such neighborhoods exist by \cite[Lemma~4.1(1)]{Clark-anHuef2010-preprint}). By Lemma \ref{AaH_lemma5.1} there exists a compact neighbourhood $W_1$ of $z$ such that $W_1\subset V_1$ and $\lambda_{x_n}(G^{W_1})>(k-1)\lambda_z(G^{W_1})$. Now assume there are compact neighbourhoods $W_1,W_2,\ldots,W_m$ of $z$ with $W_1\supset W_2\supset\cdots\supset W_m$ such that
\begin{equation}\label{circle_thm_4_imply_5_eqn}
W_i\subset V_i\quad\text{and}\quad\lambda_{x_n}(G^{W_i})>(k-1)\lambda_z(G^{W_i})
\end{equation}
for all $1\le i\le m$. Apply Lemma \ref{AaH_lemma5.1} to $(\mathrm{Int}\, m)\cap V_{m+1}$ to obtain a compact neighbourhood $W_{m+1}$ of $z$ such that $W_{m+1}\subset(\mathrm{Int}\, W_m)\cap V_{m+1}$ and \eqref{circle_thm_4_imply_5_eqn} holds for $i=m+1$, establishing \eqref{circle_thm_5}.

Suppose \eqref{circle_thm_5}. We begin by showing that $[x_n]\rightarrow [z]$ in $G^{(0)}/G$. Let $q:G^{(0)}\rightarrow G^{(0)}/G$ be the quotient map. Let $U$ be a neighbourhood of $[z]$ in $G^{(0)}/G$ and $V=q^{-1}(U)$. There exists $m$ such that $W_m\subset V$. Since $\lim\,\inf_n\,\lambda_{x_n}(G^{W_m})>0$ there exists $n_0$ such that $G_{x_n}^{W_m}\ne\emptyset$ for all $n\ge n_0$. Thus, for $n\ge n_0$, $[x_n]=q(x_n)\in q(W_m)\subset q(V)=U$, so $[x_n]$ is eventually in every neighbourhood of $[z]$ in $G^{(0)}/G$.

Now suppose that $\M_\L(\L^z,\{\L^{x_n}\})<\infty$. Then, as in the proof of Theorem \ref{M_thm}, we may localise to an open invariant neighbourhood $Y$ of $z$ such that $[z]$ is the unique limit in $Y/G$ of $[x_n]$. Eventually $W_m\subset Y$, and so the sequence $\{x_n\}$ converges $k$-times in $Y/(G|_Y)=Y/G$ to $z$ by Proposition \ref{AaH_prop4_1_1} applied to the groupoid $G|_Y$. This implies that the sequence $\{x_n\}$ converges $k$-times in $G^{(0)}/G$.

Finally, if $\M_\L(\L^z,\{\L^{x_n}\})=\infty$, then $\{x_n\}$ converges $k$-times in $G^{(0)}/G$ to $z$ by Proposition \ref{AaH_prop_4.2}, establishing \eqref{2nd_circle_thm_1} and completing the proof.
\end{proof}
\end{theorem}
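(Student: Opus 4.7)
The plan is to establish the equivalences cyclically: (1) $\Rightarrow$ (2) $\Rightarrow$ (3) $\Rightarrow$ (4) $\Rightarrow$ (5) $\Rightarrow$ (1). The implication (1) $\Rightarrow$ (2) is exactly Proposition \ref{AaH_thm_1.1_1_implies_2}, and (3) $\Rightarrow$ (4) is immediate with $R=k$. For (2) $\Rightarrow$ (3) I would argue by contrapositive using Theorem \ref{M_thm}: if $\liminf_n\lambda_{x_n}(G^V) < k\lambda_z(G^V)$ for some open neighbourhood $V$ of $z$ with $G_z^V$ relatively compact, then there exists $\epsilon>0$ with $k-\epsilon\geq 1$ and $\lambda_{x_n}(G^V)\leq (k-\epsilon)\lambda_z(G^V)$ frequently, so Theorem \ref{M_thm} gives $\M_\L(\L^z,\{\L^{x_n}\})\leq \lfloor k-\epsilon\rfloor\leq k-1$, contradicting (2).

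For (4) $\Rightarrow$ (5) I would build $\{W_m\}$ by induction. Because $[z]$ is locally closed, by \cite[Lemma~4.1(1)]{Clark-anHuef2010-preprint} there is a basic decreasing sequence $\{V_j\}$ of open neighbourhoods of $z$ with $G_z^{V_1}$ relatively compact. Applying Lemma \ref{AaH_lemma5.1} to $\mathrm{Int}\, W_{m-1}\cap V_m$ at each stage delivers a compact neighbourhood $W_m$ of $z$ satisfying $W_m\subset \mathrm{Int}\, W_{m-1}\cap V_m$ together with the strict measure inequality required by (5), and the resulting $\{W_m\}$ is automatically a basic decreasing sequence.

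The main obstacle is (5) $\Rightarrow$ (1). First I would extract orbit-space convergence: given any neighbourhood $U$ of $[z]$ in $G^{(0)}/G$ with preimage $V$ under the quotient map, pick $m$ so that $W_m\subset V$; the strict inequality in (5) forces $G_{x_n}^{W_m}\neq\emptyset$ eventually, so $[x_n]\in U$ eventually. Next I would dichotomise on lower multiplicity. If $\M_\L(\L^z,\{\L^{x_n}\})=\infty$, Proposition \ref{AaH_prop_4.2} already yields $k$-times convergence for every $k$. If $\M_\L(\L^z,\{\L^{x_n}\})<\infty$ I would apply \cite[Proposition~3.4]{Archbold-anHuef2006} to obtain an open neighbourhood of $\L^z$ in $C^*(G)^{\wedge}$ in which $\L^z$ is the unique limit of $\{\L^{x_n}\}$, and pull this back through the continuous map $[x]\mapsto \L^x$ of \cite[Proposition~2.5]{Muhly-Williams1990} to a $G$-saturated open neighbourhood $Y$ of $z$ in which $[z]$ is the unique orbit-space limit of $\{[x_n]\}$.

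Finally I would transfer the setup to $G|_Y$: since $Y$ is saturated we have $G_z^{W_m}=(G|_Y)_z^{W_m}$ and $G_{x_n}^{W_m}=(G|_Y)_{x_n}^{W_m}$ for $W_m$ eventually inside $Y$, so hypothesis (5) persists for $G|_Y$; moreover by \cite[Proposition~5.3]{Archbold-Somerset-Spielberg1997} the lower multiplicity can be computed inside the ideal $C^*(G|_Y)\cong J\trianglelefteq C^*(G)$. With $[z]$ the unique orbit-space limit of $\{[x_n]\}$ in $Y/G$, Proposition \ref{AaH_prop4_1_1} applied to $G|_Y$ delivers $k$-times convergence in $Y/G=G^{(0)}/G$, completing the cycle. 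The delicate step throughout is the localisation to the uniqueness neighbourhood $Y$, since without it Proposition \ref{AaH_prop4_1_1} cannot be invoked; this is exactly why the finite/infinite multiplicity split is needed.
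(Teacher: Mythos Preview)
Your proposal is correct and follows essentially the same cyclic route as the paper's proof: (1)$\Rightarrow$(2) via Proposition~\ref{AaH_thm_1.1_1_implies_2}, (2)$\Rightarrow$(3) by contraposition of Theorem~\ref{M_thm}, (4)$\Rightarrow$(5) by inductively applying Lemma~\ref{AaH_lemma5.1} inside a basic decreasing sequence of open neighbourhoods obtained from \cite[Lemma~4.1(1)]{Clark-anHuef2010-preprint}, and (5)$\Rightarrow$(1) by first deducing orbit-space convergence, then splitting on whether $\M_\L(\L^z,\{\L^{x_n}\})$ is finite (localise to a saturated open $Y$ where $[z]$ is the unique limit and apply Proposition~\ref{AaH_prop4_1_1}) or infinite (apply Proposition~\ref{AaH_prop_4.2}). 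Your explicit unpacking of the localisation step (citing \cite[Proposition~3.4]{Archbold-anHuef2006}, \cite[Proposition~2.5]{Muhly-Williams1990}, and \cite[Proposition~5.3]{Archbold-Somerset-Spielberg1997}) is precisely what the paper abbreviates as ``as in the proof of Theorem~\ref{M_thm}''.
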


\begin{cor}
Suppose that $G$ is a second countable, locally compact, Hausdorff, principal groupoid such that all the orbits are locally closed. Let $k\in\PP$ and let $z\in G^{(0)}$ such that $[z]$ is not open in $G^{(0)}$. Then the following are equivalent:
\begin{enumerate}\renewcommand{\labelenumi}{(\arabic{enumi})}
\item\label{AaH_cor5.5_1} whenever $\braces{x_n}$ is a sequence in $G^{(0)}$ which converges to $z$ with $[x_n]\ne [z]$ eventually, then $\braces{x_n}$ is $k$-times convergent in $G^{(0)}/G$ to $z$;
\item\label{AaH_cor5.5_2} $\M_\L(\L^z)\ge k$.
\end{enumerate}
\begin{proof}
Assume \eqref{AaH_cor5.5_1}. We must first establish that $\braces{\L^z}$ is not open in $C^*(G)^\wedge$. If this is not the case, then $\braces{\L^z}$ is open and we can apply \cite[Proposition~3.6]{Clark2007} to see that $\braces{[z]}$ is open in $G^{(0)}/G$, and so $[z]$ is open in $G^{(0)}$, contradicting our assumption. Since $\braces{\L^z}$ is not open in $C^*(G)^\wedge$, we can apply \cite[Lemma~A.2]{Archbold-anHuef2006} to see that there exists a sequence $\braces{\pi_i}$ of irreducible representations of $C^*(G)$ such that each $\pi_i$ is not unitarily equivalent to $\L^z$, $\pi_i\rightarrow \L^z$ in $C^*(G)^\wedge$, and
\begin{equation}\label{AaH_5.3}
\M_\L(\L^z)=\M_\L(\L^z,\braces{\pi_i})=\M_\U(\L^z,\braces{\pi_i}).
\end{equation}
Since the orbits are locally closed, the map $G^{(0)}/G\rightarrow C^*(G)^\wedge$ such that $[x]\mapsto \L^x$ is a homeomorphism by \cite[Proposition~5.1]{Clark2007}\footnote{Proposition~5.1 in \cite{Clark2007} states that if a principal groupoid has locally closed orbits, then the map from $G^{(0)}/G$ to $C^*(G)^\wedge$ where $[x]\mapsto \L^x$ is a `homeomorphism from $G^{(0)}/G$ into $C^*(G)^\wedge$'. The proof of \cite[Proposition~5.1]{Clark2007} explicitly shows that this map is a surjection.}. It follows that the mapping $G^{(0)}\rightarrow C^*(G)^\wedge$ such that $x\mapsto \L^x$ is an open surjection, so by \cite[Proposition~1.15]{Williams2007} there is a sequence $\braces{x_n}$ in $G^{(0)}$ such that $x_n \rightarrow z$ and $\braces{\L^{x_n}}$ is unitarily equivalent to a subsequence of $\braces{\pi_i}$.
By \eqref{AaH_5.3}, 
\[
\M_\L(\L^z)=\M_\U(\L^z,\braces{\pi_i})\ge \M_\U(\L^z,\braces{\L^{x_n}})\ge \M_\L(\L^z,\braces{\L^{x_n}}).
\]
We know by \eqref{AaH_cor5.5_1} that $\braces{x_n}$ converges $k$-times to $z$ in $G^{(0)}/G$, so it follows from Theorem \ref{circle_thm} that $\M_\L(\L^z)\ge\M_\L(\L^z,\braces{\L^{x_n}})\ge k$.

Assume \eqref{AaH_cor5.5_2}. If $\braces{x_n}$ is a sequence in $G^{(0)}$ which converges to $z$ such that $[x_n]\ne [z]$ eventually, then
\[
\M_\L(\L^z,\braces{\L^{x_n}})\ge\M_\L(\L^z)\ge k.
\]
By Theorem \ref{circle_thm}, $\braces{x_n}$ is $k$-times convergent to $z$ in $G^{(0)}/G$.
\end{proof}
\end{cor}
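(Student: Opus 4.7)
The proof will proceed by establishing each implication using Theorem \ref{circle_thm} as a bridge between the topological $k$-times convergence statement and multiplicity statements, together with the machinery that lets us identify $G^{(0)}/G$ with the spectrum of $C^*(G)$.

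For \eqref{AaH_cor5.5_1}$\implies$\eqref{AaH_cor5.5_2}, my plan is as follows. First I would show that $\{\L^z\}$ is not open in $C^*(G)^\wedge$: by Clark's \cite[Proposition~3.6]{Clark2007}, openness of $\{\L^z\}$ would force $\{[z]\}$ to be open in $G^{(0)}/G$, hence $[z]$ to be open in $G^{(0)}$, contradicting our standing hypothesis. With $\{\L^z\}$ non-isolated, I would invoke \cite[Lemma~A.2]{Archbold-anHuef2006} to produce a sequence $\{\pi_i\}$ of irreducible representations, none unitarily equivalent to $\L^z$, with $\pi_i\to\L^z$ and $\M_\L(\L^z)=\M_\L(\L^z,\{\pi_i\})=\M_\U(\L^z,\{\pi_i\})$. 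The next step is to pull this spectrum-level sequence back to the unit space: since all orbits are locally closed, Clark's \cite[Proposition~5.1]{Clark2007} provides the homeomorphism $[x]\mapsto \L^x$ from $G^{(0)}/G$ onto $C^*(G)^\wedge$, so the composition with the quotient map $G^{(0)}\to G^{(0)}/G$ is an open continuous surjection. Applying the sequence-lifting principle \cite[Proposition~1.15]{Williams2007}, I obtain $\{x_n\}\subset G^{(0)}$ with $x_n\to z$ such that $\{\L^{x_n}\}$ is equivalent to a subsequence of $\{\pi_i\}$. Since no $\pi_i$ is equivalent to $\L^z$, we have $[x_n]\ne[z]$ eventually, so hypothesis \eqref{AaH_cor5.5_1} gives that $\{x_n\}$ is $k$-times convergent in $G^{(0)}/G$ to $z$; Theorem \ref{circle_thm} then yields $\M_\L(\L^z,\{\L^{x_n}\})\ge k$, and passing up through the subsequence inequality $\M_\L(\L^z)\ge\M_\U(\L^z,\{\pi_i\})\ge\M_\U(\L^z,\{\L^{x_n}\})\ge\M_\L(\L^z,\{\L^{x_n}\})$ delivers the conclusion.

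For the converse \eqref{AaH_cor5.5_2}$\implies$\eqref{AaH_cor5.5_1}, the argument is much shorter. Given $\{x_n\}$ converging to $z$ with $[x_n]\ne[z]$ eventually, continuity of $x\mapsto\L^x$ gives $\L^{x_n}\to\L^z$, and the injectivity of $[x]\mapsto\L^x$ (from locally closed orbits) gives $\L^{x_n}\not\cong\L^z$ eventually. The standard inequality $\M_\L(\pi,\{\pi_\alpha\})\ge\M_\L(\pi)$ for nets of irreducibles converging to $\pi$ with $\pi_\alpha\ne\pi$ eventually (from Section \ref{sec_upper_lower_multiplicities}) then yields $\M_\L(\L^z,\{\L^{x_n}\})\ge\M_\L(\L^z)\ge k$, and Theorem \ref{circle_thm} converts this into the desired $k$-times convergence of $\{x_n\}$ in $G^{(0)}/G$ to $z$.

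The principal obstacle is the first implication. The delicate point is the sequence-lifting step: we need to produce a sequence in $G^{(0)}$ whose images in the spectrum are essentially the given $\{\pi_i\}$, and this requires the open surjection property that rests on the full strength of the locally closed orbits hypothesis. Once this is in place the multiplicity inequalities chain together naturally, and Theorem \ref{circle_thm} does the heavy lifting in both directions; no new measure-theoretic or Haar-system estimates are needed beyond what has already been developed.
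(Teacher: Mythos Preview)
Your proposal is correct and follows essentially the same route as the paper's proof: the same use of \cite[Proposition~3.6]{Clark2007}, \cite[Lemma~A.2]{Archbold-anHuef2006}, \cite[Proposition~5.1]{Clark2007}, the lifting lemma \cite[Proposition~1.15]{Williams2007}, and Theorem~\ref{circle_thm} in both directions. You even supply a detail the paper leaves implicit, namely that $[x_n]\ne[z]$ eventually (needed to invoke hypothesis \eqref{AaH_cor5.5_1}), which follows from $\pi_i\not\cong\L^z$ and the injectivity of $[x]\mapsto\L^x$.
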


The next corollary improves Proposition \ref{AaH_prop_4.2} and is an immediate consequence of Proposition \ref{AaH_prop_4.2} and Theorem \ref{circle_thm}.
\begin{cor}\label{AaH_cor5.6}
Suppose that $G$ is a second countable, locally compact, Hausdorff, principal groupoid with Haar system $\lambda$. Let $z\in G^{(0)}$ and let $\braces{x_n}$ be a sequence in $G^{(0)}$. Assume that $[z]$ is locally closed. Then the following are equivalent:
\begin{enumerate}\renewcommand{\labelenumi}{(\arabic{enumi})}
\item\label{AaH_cor5.6_1} $\M_\L(\L^z,\{\L^{x_n}\})=\infty$.
\item\label{AaH_cor5.6_2} For every open neighbourhood $V$ of $z$ such that $G_z^V$ is relatively compact, $\lambda_{x_n}(G^V)\rightarrow\infty$ as $n\rightarrow\infty$.
\item\label{AaH_cor5.6_3} For each $k\ge 1$, the sequence $\{x_n\}$ converges $k$-times in $G^{(0)}/G$ to $z$.
\end{enumerate}
\end{cor}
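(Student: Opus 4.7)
The plan is simply to close an implication cycle. Proposition \ref{AaH_prop_4.2} already supplies the implications \eqref{AaH_cor5.6_1}$\implies$\eqref{AaH_cor5.6_2} and \eqref{AaH_cor5.6_2}$\implies$\eqref{AaH_cor5.6_3}, so the only task is to establish \eqref{AaH_cor5.6_3}$\implies$\eqref{AaH_cor5.6_1}, and this will follow from Theorem \ref{circle_thm}.

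More precisely, assume \eqref{AaH_cor5.6_3}, so that for every $k\in\PP$ the sequence $\{x_n\}$ converges $k$-times in $G^{(0)}/G$ to $z$. Fix $k\in\PP$. Since $[z]$ is locally closed by hypothesis, we are entitled to apply Theorem \ref{circle_thm} to the present $G$, $z$ and $\{x_n\}$, with this choice of $k$. The implication \eqref{circle_thm_1}$\implies$\eqref{circle_thm_2} in that theorem turns the $k$-times convergence into the lower-multiplicity bound $\M_\L(\L^z,\{\L^{x_n}\})\ge k$. As $k\in\PP$ was arbitrary, $\M_\L(\L^z,\{\L^{x_n}\})=\infty$, which is \eqref{AaH_cor5.6_1}.

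There is no real obstacle to worry about; the work has been done in Proposition \ref{AaH_prop_4.2} (which already gives two of the three arrows) and in the \eqref{circle_thm_1}$\implies$\eqref{circle_thm_2} half of Theorem \ref{circle_thm} (which rests on Proposition \ref{AaH_thm_1.1_1_implies_2}). The only thing to be careful about is simply that Theorem \ref{circle_thm} requires $[z]$ to be locally closed, and this is precisely the standing hypothesis of the corollary, so its invocation is legitimate.
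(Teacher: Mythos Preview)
Your proof is correct and matches the paper's approach exactly: the paper states that the corollary is ``an immediate consequence of Proposition~\ref{AaH_prop_4.2} and Theorem~\ref{circle_thm}'', and your argument spells out precisely this, using Proposition~\ref{AaH_prop_4.2} for \eqref{AaH_cor5.6_1}$\implies$\eqref{AaH_cor5.6_2}$\implies$\eqref{AaH_cor5.6_3} and the \eqref{circle_thm_1}$\implies$\eqref{circle_thm_2} direction of Theorem~\ref{circle_thm} (applied for every $k$) to close the cycle.
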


\section{Upper multiplicity and \texorpdfstring{$k$}{\it k}-times convergence}\label{sec_upper_multiplicity}
The results in this section  are corollaries of Theorems~\ref{M_thm} and~\ref{circle_thm}: they relate $k$-times convergence, measure ratios and upper multiplicity numbers, generalising all the upper-multiplicity results of \cite{Archbold-anHuef2006}. We begin with the upper-multiplicity analogue of Theorem~\ref{M_thm}.
\begin{theorem}\label{thm_AaH3.6}
Suppose that $G$ is a second countable, locally compact, Hausdorff, principal groupoid with Haar system $\lambda$. Let $M\in \RR$ with $M\ge 1$, let $z\in G^{(0)}$ and let $\braces{x_n}$ be a sequence in $G^{(0)}$. Assume that $[z]$ is locally closed. Suppose that there exists an open neighbourhood $V$ of $z$ in $G^{(0)}$ such that $G_z^V$ is relatively compact and
\[
\lambda_{x_n}(G^V)\le M\lambda_z(G^V)<\infty
\]
eventually. Then $\M_\U(\L^z,\braces{\L^{x_n}})\le \lfloor M\rfloor$.
\begin{proof}
Since $G$ is second countable, $C^*(G)$ is separable. By \cite[Lemma~A.1]{Archbold-anHuef2006} there exists a sequence $\braces{\L^{x_{n_i}}}$ such that
\[
\M_\U(\L^z,\braces{\L^{x_n}})=\M_\U(\L^z,\braces{\L^{x_{n_i}}})=\M_\L(\L^z,\braces{\L^{x_{n_i}}}).
\]
By Theorem \ref{M_thm}, $\M_\L(\L^z,\braces{\L^{x_{n_i}}})\le \lfloor M\rfloor$, so $\M_\U(\L^z,\braces{\L^{x_n}})\le\lfloor M\rfloor$.
\end{proof}
\end{theorem}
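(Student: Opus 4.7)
The plan is to reduce this to the lower-multiplicity bound of Theorem \ref{M_thm} by extracting a subsequence along which relative upper and lower multiplicity coincide. Since $G$ is second countable, the groupoid $C^*$-algebra $C^*(G,\lambda)$ is separable, so a standard argument (\cite[Lemma~A.1]{Archbold-anHuef2006}) produces a subsequence $\braces{\L^{x_{n_i}}}$ of $\braces{\L^{x_n}}$ with the property
\[
\M_\U(\L^z,\braces{\L^{x_n}})=\M_\U(\L^z,\braces{\L^{x_{n_i}}})=\M_\L(\L^z,\braces{\L^{x_{n_i}}}).
\]

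Next, I observe that because the estimate $\lambda_{x_n}(G^V)\le M\lambda_z(G^V)$ holds eventually along the original sequence, it certainly holds for all sufficiently large $i$ along any subsequence, and in particular it holds frequently along $\braces{x_{n_i}}$. The hypotheses of Theorem \ref{M_thm} are therefore satisfied with $\braces{x_{n_i}}$ in place of $\braces{x_n}$, and that theorem yields $\M_\L(\L^z,\braces{\L^{x_{n_i}}})\le \lfloor M\rfloor$. Combining this inequality with the equalities above gives the desired bound $\M_\U(\L^z,\braces{\L^{x_n}})\le \lfloor M\rfloor$.

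The real work has already been done inside Theorem \ref{M_thm}, where the key obstacle is engineering a function $D\in C_c(G)$ whose image under $\L^z$ is a rank-one projection while controlling the Hilbert--Schmidt kernel of $\L^{x_{n_i}}(D)$ tightly enough to convert the measure-ratio hypothesis into the trace bound that feeds into the generalised lower semi-continuity of Archbold--Spielberg. In the present statement there is no further geometric work: the result is essentially a packaging step that uses separability of $C^*(G,\lambda)$ to upgrade a ``frequently'' lower-multiplicity bound into an ``eventually'' upper-multiplicity bound.
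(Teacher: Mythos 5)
Your proposal is correct and follows exactly the paper's argument: extract a subsequence via \cite[Lemma~A.1]{Archbold-anHuef2006} along which relative upper and lower multiplicity agree, note that the measure-ratio bound holds frequently along that subsequence, and apply Theorem \ref{M_thm}. Your explicit remark that ``eventually'' along the full sequence gives ``frequently'' along the subsequence is a small but welcome clarification of a step the paper leaves implicit.
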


\needspace{6\baselineskip}
\begin{cor}\label{AaH_cor3.7}
Suppose that $G$ is a second countable, locally compact, Hausdorff, principal groupoid with Haar system $\lambda$ such that all the orbits are locally closed. Let $M\in \RR$ with $M\ge 1$ and let $z\in G^{(0)}$. If for every sequence $\braces{x_n}$ in $G^{(0)}$ which converges to $z$ there exists an open neighbourhood $V$ of $z$ in $G^{(0)}$ such that $G_z^V$ is relatively compact and 
\[
\lambda_{x_n}(G^V)\le M\lambda_z(G^V)<\infty
\]
frequently, then $\M_\U(\L^z)\le\lfloor M\rfloor$.
\begin{proof}
Since $G$ is second countable, $C^*(G)$ is separable, and so we can apply \cite[Lemma~1.2]{Archbold-Kaniuth1999} to see that there exists a sequence $\braces{\pi_n}$ in $C^*(G)^\wedge$ that converges to $\L^z$ such that
\[
\M_\L(\L^z,\braces{\pi_n})=\M_\U(\L^z,\braces{\pi_n})=\M_\U(\L^z).
\]
Since the orbits are locally closed, the map $G^{(0)}/G\rightarrow C^*(G)^\wedge$ such that $[x]\mapsto \L^x$ is a homeomorphism by \cite[Proposition~5.1]{Clark2007}. In particular, the mapping $G^{(0)}\rightarrow C^*(G)^\wedge$ such that $x\mapsto \L^x$ is an open surjection, so by \cite[Proposition~1.15]{Williams2007} there exists a sequence $\braces{x_i}$ in $G^{(0)}$ converging to $z$ such that $\braces{[\L^{x_i}]}$ is a subsequence of $\braces{[\pi_n]}$. By Theorem \ref{M_thm}, $\M_\L(\L^z,\braces{\L^{x_n}})\le\lfloor M\rfloor$. Since 
\begin{align*}
\M_\U(\L^z)&=\M_\L(\L^z,\braces{\pi_n})\le\M_\L(\L^z,\braces{\L^{x_i}})\le\M_\U(\L^z,\braces{\L^{x_i}})\\
&\le\M_\U(\L^z,\braces{\pi_n})=\M_\U(\L^z),
\end{align*}
we obtain $\M_\U(\L^z)\le\lfloor M\rfloor$, as required.
\end{proof}
\end{cor}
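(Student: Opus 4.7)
The plan is to mirror the structure of Theorem \ref{thm_AaH3.6} and reduce Corollary \ref{AaH_cor3.7} to Theorem \ref{M_thm} via a ``sandwich" argument that converts the absolute upper multiplicity $\M_\U(\L^z)$ into a relative lower multiplicity along a cleverly chosen sequence. The key observation is that the hypothesis controls measure ratios along \emph{every} sequence converging to $z$, so I only need to produce one convenient sequence $\{x_i\}\subset G^{(0)}$ that simultaneously lifts a realising sequence of representations.

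First, since $G$ is second countable, $C^*(G)$ is separable, so I would apply a standard result (such as \cite[Lemma~1.2]{Archbold-Kaniuth1999}) to obtain a sequence $\{\pi_n\}$ in $C^*(G)^\wedge$ converging to $\L^z$ along which the absolute upper multiplicity is realised, that is, $\M_\L(\L^z,\{\pi_n\})=\M_\U(\L^z,\{\pi_n\})=\M_\U(\L^z)$. This turns the problem into bounding a \emph{relative} lower multiplicity from above, which is precisely what Theorem \ref{M_thm} accomplishes.

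Second, I would lift $\{\pi_n\}$ to a sequence in $G^{(0)}$. Because the orbits are locally closed, Clark's \cite[Proposition~5.1]{Clark2007} gives a homeomorphism $[x]\mapsto\L^x$ from $G^{(0)}/G$ onto $C^*(G)^\wedge$, so the composition $x\mapsto\L^x$ from $G^{(0)}$ to $C^*(G)^\wedge$ is a continuous open surjection. By \cite[Proposition~1.15]{Williams2007}, open continuous surjections lift convergent sequences modulo passing to a subsequence, so there exists $\{x_i\}\subset G^{(0)}$ converging to $z$ with $\{\L^{x_i}\}$ (unitarily equivalent to) a subsequence of $\{\pi_n\}$.

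Now I would invoke the corollary's hypothesis on this specific sequence $\{x_i\}$ to produce an open neighbourhood $V$ of $z$ with $G_z^V$ relatively compact and $\lambda_{x_i}(G^V)\le M\lambda_z(G^V)<\infty$ frequently. Theorem \ref{M_thm} then yields $\M_\L(\L^z,\{\L^{x_i}\})\le\lfloor M\rfloor$. Combining with the standard monotonicity of multiplicities under passage to subsequences,
\[
\M_\U(\L^z)=\M_\L(\L^z,\{\pi_n\})\le\M_\L(\L^z,\{\L^{x_i}\})\le\lfloor M\rfloor,
\]
which is the desired bound. The main obstacle is ensuring that the lifting step is legitimate: without the local-closedness assumption, the map $[x]\mapsto\L^x$ need not be a homeomorphism, and one could not guarantee that the abstract realising sequence $\{\pi_n\}$ has any subsequence of the form $\{\L^{x_i}\}$ with $x_i\to z$. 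With local closedness in hand, however, every step is essentially forced.
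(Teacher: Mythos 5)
Your proposal is correct and follows essentially the same route as the paper's proof: realise $\M_\U(\L^z)$ as a relative lower multiplicity along a sequence $\{\pi_n\}$ via \cite[Lemma~1.2]{Archbold-Kaniuth1999}, lift to a sequence $\{x_i\}\to z$ in $G^{(0)}$ using \cite[Proposition~5.1]{Clark2007} and \cite[Proposition~1.15]{Williams2007}, apply the hypothesis to that sequence, and conclude with Theorem \ref{M_thm} and subsequence monotonicity. Your explicit remark that the hypothesis must be invoked for the \emph{lifted} sequence $\{x_i\}$ is in fact slightly cleaner than the paper's phrasing, which nominally applies Theorem \ref{M_thm} to $\{x_n\}$.
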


In Proposition~\ref{AaH_prop4_1_1} we generalised the first part of \cite[Proposition~4.1]{Archbold-anHuef2006}. We will now generalise the second part. The argument we use is similar to that used in Proposition \ref{AaH_prop4_1_1}.
\begin{prop}\label{AaH_prop4_1_2}
Let $G$ be a second countable, locally compact, Hausdorff, principal groupoid with Haar system $\lambda$. Let $k\in\PP$ and $z\in G^{(0)}$ with $[z]$ locally closed in $G^{(0)}$. Assume that $\{x_n\}$ is a sequence in $G^{(0)}$ such that $[x_n]\rightarrow [z]$ uniquely in $G^{(0)}/G$. Suppose $\{W_m\}$ is a basic decreasing sequence of compact neighbourhoods of $z$ such that each $m$ satisfies
\[
\underset{\scriptstyle n}{\lim\,\sup}\,\lambda_{x_n}(G^{W_m})>(k-1)\lambda_z(G^{W_m}).
\]
Then there exists a subsequence of $\{x_n\}$ which converges $k$-times in $G^{(0)}/G$ to $z$. 

\begin{proof}
Let $\{K_m\}$ be an increasing sequence of compact subsets of $G$ such that $G=\bigcup_{m\ge 1}\mathrm{Int}\,K_m$. By the regularity of $\lambda_z$, for each $m\ge 1$ there exist $\delta_m>0$ and an open neighbourhood $U_m$ of $G_z^{W_m}$ such that
\begin{equation}\label{AaH4.4}
\underset{\scriptstyle n}{\lim\,\sup}\,\lambda_{x_n}(G^{W_m})>(k-1)\lambda_z(U_m) + \delta_m.
\end{equation}
We will construct, by induction, a strictly increasing sequence of positive integers $\{i_m\}$ such that, for all $m$,
\begin{align}
&\lambda_{x_{i_m}}(K_m\alpha\cap G^{W_m})<\lambda_z(U_m)+\delta_m/k\quad\text{for all }\alpha\in G_{x_{i_m}}^{W_m},\quad\text{and}\label{AaH4.5}\\
&\lambda_{x_{i_m}}(G^{W_m})>(k-1)\lambda_z(U_m)+\delta_m.\label{AaH4.6}
\end{align}

By Lemma \ref{the_unbroken_lemma} with $\delta=\lambda_z(U_1)-\lambda_z(G^{W_1})+\delta_1/k$, there exists $n_1$ such that $n\ge n_1$ implies
\[
\lambda_{x_n}(K_1\alpha\cap G^{W_1})<\lambda_z(U_1)+\delta_1/k\quad\text{for all }\alpha\in G_{x_n}^{W_m}.
\]
By considering \eqref{AaH4.4} with $m=1$ we can choose $i_1\ge n_1$ such that
\[
\lambda_{x_{i_1}}(G^{W_1})>(k-1)\lambda_z(U_1)+\delta_1.
\]
Assuming that $i_1<i_2<\cdots<i_{m-1}$ have been chosen, we can apply Lemma \ref{the_unbroken_lemma} with $\delta=\lambda_z(U_m)-\lambda_z(G^{W_m})+\delta_m/k$ to obtain $n_m>i_{m-1}$ such that
\[
n\ge n_m\quad\text{implies}\quad\lambda_{x_n}(K_m\alpha\cap G^{W_m})<\lambda_z(U_m)+\delta_m/k\quad\text{for all }\alpha\in G_{x_n}^{W_m},
\]
and then by \eqref{AaH4.4} we can choose $i_m\ge n_m$ such that
\[
\lambda_{x_{i_m}}(G^{W_m})>(k-1)\lambda_z(U_m)+\delta_m.
\]

For each $m\in\PP$ choose $\gamma_{i_m}^{(1)}\in G_{x_{i_m}}^{W_m}$ (which is non-empty by \eqref{AaH4.6}). By \eqref{AaH4.5} and \eqref{AaH4.6} we have
\begin{align*}
\lambda_{x_{i_m}}(G^{W_m}\backslash K_m\gamma_{i_m}^{(1)})&=\lambda_{x_{i_m}}(G^{W_m})-\lambda_{x_{i_m}}(G^{W_m}\cap K_m\gamma_{i_m}^{(1)})\\
&>(k-1)\lambda_z(U_m)+\delta_m-\big(\lambda_z(U_m)+\delta_m/k\big)\\
&=(k-2)\lambda_z(U_m)+\frac{k-1}k\delta_m.
\end{align*}
So we can choose $\gamma_{i_m}^{(2)}\in G_{x_{i_m}}^{W_m}\backslash K_m\gamma_{i_m}^{(1)}$. This implies, as in the proof of Proposition \ref{AaH_prop4_1_1}, that
\[
\lambda_{x_{i_m}}\big(G^{W_m}\backslash (K_m\gamma_{i_m}^{(1)}\cup K_m\gamma_{i_m}^{(2)})\big)>(k-3)\lambda_z(U_m)+\frac{(k-2)}k\delta_m,
\]
enabling us to choose $\gamma_{i_m}^{(3)}\in G_{x_{i_m}}^{W_m}\backslash (K_m\gamma_{i_m}^{(1)}\cap K_m\gamma_{i_m}^{(2)})$. Continuing in this way for  $j=3,\ldots,k$, for each $i_m$ we choose
\begin{equation}\label{eqn_choosing_gammas-2}
\gamma_{i_m}^{(j)}\in G_{x_{i_m}}^{W_m}\backslash\bigg(\bigcup_{l=1}^{j-1}K_m\gamma_{i_m}^{(l)}\bigg).
\end{equation}
Note that $\gamma_{i_m}^{(j)}\notin K_m\gamma_{i_m}^{(l)}$ for $1\le l<j\le k$.

We claim that $r(\gamma_{i_m}^{(l)})\rightarrow z$ as $m\rightarrow\infty$ for $1\le l\le k$. To see this, fix $l$ and let $V$ be an open neighbourhood of $z$. Since $\{W_m\}$ is a decreasing neighbourhood basis for $z$ there exists $m_0$ such that $m\ge m_0$ implies $W_m\subset V$, and so $r(\gamma_{i_m}^{(l)})\in W_m\subset V$.

Finally we claim that $\gamma_{i_m}^{(j)}(\gamma_{i_m}^{(l)})^{-1}\rightarrow\infty$ as $m\rightarrow\infty$ for $1\le l<j\le k$. Fix $l<j$ and let $K$ be a compact subset of $G$. There exists $m_0$ such that $K\subset K_m$ for all $m\ge m_0$. By \eqref{eqn_choosing_gammas-2} we know
\begin{align*}
\gamma_{i_m}^{(j)}&\in G_{x_{i_m}}^{W_m}\backslash (K_m\gamma_{i_m}^{(l)})\\
&=\big(G_{x_{i_m}}^{W_m}(\gamma_{i_m}^{(l)})^{-1}\gamma_{i_m}^{(l)}\big)\backslash (K_m\gamma_{i_m}^{(l)})\\
&=\big((G_{x_{i_m}}^{W_m}(\gamma_{i_m}^{(l)})^{-1})\backslash K_m\big)\gamma_{i_m}^{(l)}.
\end{align*}
So provided $m\ge m_0$, $\gamma_{i_m}^{(j)}(\gamma_{i_m}^{(l)})^{-1}\in \big(G_{x_{i_m}}^{W_m}(\gamma_{i_m}^{(l)})^{-1}\big)\backslash K_m\subset G\backslash K_m\subset G\backslash K$, enabling us to conclude that $\{x_{i_m}\}$ converges $k$-times in $G^{(0)}/G$ to $z$.
\end{proof}
\end{prop}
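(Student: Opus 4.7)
The plan is to mirror the argument for Proposition \ref{AaH_prop4_1_1}, keeping exactly the same construction of witnesses $\gamma_{i_m}^{(j)}$ in $G_{x_{i_m}}^{W_m}$, but replacing the ``eventually'' steps by ``infinitely often'' steps so as to extract a subsequence rather than work with the whole sequence. Concretely, first fix an increasing compact exhaustion $\{K_m\}$ of $G$ with $G=\bigcup_{m\ge 1}\mathrm{Int}\,K_m$, and for each $m$ use outer regularity of $\lambda_z$ on the relatively compact set $G_z^{W_m}$ (which is relatively compact since $[z]$ is locally closed and $[x_n]\to[z]$ uniquely) to pick $\delta_m>0$ and an open neighbourhood $U_m$ of $G_z^{W_m}$ with
\[
\underset{\scriptstyle n}{\lim\,\sup}\,\lambda_{x_n}(G^{W_m})>(k-1)\lambda_z(U_m)+\delta_m.
\]

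Next, for each $m$ apply Lemma \ref{the_unbroken_lemma} with $\delta=\lambda_z(U_m)-\lambda_z(G^{W_m})+\delta_m/k$ to the compact sets $W_m$ and $K_m$ to obtain $n_m$ such that for every $n\ge n_m$ and every $\alpha\in G_{x_n}^{W_m}$,
\[
\lambda_{x_n}(K_m\alpha\cap G^{W_m})<\lambda_z(U_m)+\delta_m/k.
\]
Here is the step that differs from Proposition \ref{AaH_prop4_1_1}: because we only have $\lim\sup$ in the hypothesis, we cannot demand that $\lambda_{x_n}(G^{W_m})>(k-1)\lambda_z(U_m)+\delta_m$ holds for all $n\ge n_m$. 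Instead we inductively choose a strictly increasing sequence of indices $i_1<i_2<\cdots$ with $i_m\ge \max(n_m,i_{m-1}+1)$ such that
\[
\lambda_{x_{i_m}}(G^{W_m})>(k-1)\lambda_z(U_m)+\delta_m,
\]
which is possible because the $\lim\sup$ is strictly above $(k-1)\lambda_z(U_m)+\delta_m$ and so the set of $n$ satisfying the above is infinite.

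With these choices, both the upper bound and the lower bound are available \emph{at} each index $i_m$, and the inductive selection of $\gamma_{i_m}^{(1)},\gamma_{i_m}^{(2)},\dots,\gamma_{i_m}^{(k)}\in G_{x_{i_m}}^{W_m}$ with $\gamma_{i_m}^{(j)}\notin \bigcup_{l<j}K_m\gamma_{i_m}^{(l)}$ proceeds by exactly the same telescoping measure estimate as in Proposition \ref{AaH_prop4_1_1}. Verifying the three axioms of $k$-times convergence for the subsequence $\{x_{i_m}\}$ is then routine: $s(\gamma_{i_m}^{(j)})=x_{i_m}$ by choice, $r(\gamma_{i_m}^{(j)})\in W_m\to z$ since $\{W_m\}$ is a basic decreasing neighbourhood system at $z$, and for $l<j$ the relation $\gamma_{i_m}^{(j)}\notin K_m\gamma_{i_m}^{(l)}$ yields $\gamma_{i_m}^{(j)}(\gamma_{i_m}^{(l)})^{-1}\notin K_m$, so this product eventually escapes every compact subset of $G$. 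The main point to handle carefully is the bookkeeping in the inductive extraction of $\{i_m\}$: each $i_m$ must simultaneously be large enough that the Haar-measure comparison from Lemma \ref{the_unbroken_lemma} applies, strictly larger than $i_{m-1}$, and be a point where the $\lim\sup$ lower bound is actually achieved, but the strictness of the $\lim\sup$ hypothesis exactly guarantees this is possible, so no real obstacle arises beyond being attentive to the distinction between ``for all large $n$'' and ``at the chosen index $i_m$''.
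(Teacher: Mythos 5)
Your proposal is correct and takes essentially the same approach as the paper's own proof: the same regularity step producing $U_m$ and $\delta_m$, the same application of Lemma \ref{the_unbroken_lemma} with $\delta=\lambda_z(U_m)-\lambda_z(G^{W_m})+\delta_m/k$, the same inductive extraction of indices $i_m$ at which the $\lim\sup$ lower bound is realised, and the same telescoping selection of the $\gamma_{i_m}^{(j)}$. The only cosmetic difference is your parenthetical justification for applying outer regularity, which the paper also leaves implicit.
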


\needspace{6\baselineskip}
\begin{theorem}\label{2nd_circle_thm}
Suppose that $G$ is a second countable, locally compact, Hausdorff, principal groupoid with Haar system $\lambda$. Let $k\in\PP$, let $z\in G^{(0)}$, and let $\braces{x_n}$ be a sequence in $G^{(0)}$ such that $[x_n]$ converges to $[z]$ in $G^{(0)}/G$. Assume that $[z]$ is locally closed. Then the following are equivalent:
\begin{enumerate}\renewcommand{\labelenumi}{(\arabic{enumi})}
\item\label{2nd_circle_thm_1} there exists a subsequence $\braces{x_{n_i}}$ of $\braces{x_n}$ which converges $k$-times in $G^{(0)}/G$ to $z$;
\item\label{2nd_circle_thm_2} $\M_\U(\L^z,\braces{\L^{x_n}})\ge k$;
\item\label{2nd_circle_thm_3} for every open neighbourhood $V$ of $z$ such that $G_z^V$ is relatively compact we have
\[
\underset{\scriptstyle n}{\lim\,\sup}\,\lambda_{x_n}(G^V)\ge k\lambda_z(G^V);
\]
\item\label{2nd_circle_thm_4}
there exists a real number $R>k-1$ such that for every open neighbourhood $V$ of $z$ in $G^{(0)}$ with $G_z^V$ relatively compact we have
\[
\underset{\scriptstyle n}{\lim\,\sup}\,\lambda_{x_n}(G^V)\ge R\lambda_z(G^V);\quad\text{and}
\]
\item\label{2nd_circle_thm_5} there exists a basic decreasing sequence of compact neighbourhoods $\{W_m\}$ of $z$ in $G^{(0)}$ such that, for each $m\ge 1$,
\[
\underset{\scriptstyle n}{\lim\,\sup}\,\lambda_{x_n}(G^{W_m})>(k-1)\lambda_z(G^{W_m}).
\]
\end{enumerate}
\begin{proof}
If \eqref{2nd_circle_thm_1} holds then $\M_\L(\L^z,\braces{\L^{x_{n_i}}})\ge k$ by Theorem \ref{circle_thm}, and so
\[
\M_\U(\L^z,\braces{\L^{x_n}}\ge\M_\U(\L^z,\braces{\L^{x_{n_i}}})\ge\M_\L(\L^z,\braces{\L^{x_{n_i}}})\ge k.
\]

If \eqref{2nd_circle_thm_2} holds then by \cite[Lemma~A.1]{Archbold-anHuef2006} there is a subsequence $\braces{x_{n_r}}$ such that $\M_\L(\L^z,\braces{\L^{x_{n_r}}})=\M_\U(\L^z,\braces{\L^{x_n}})$ so that $\M_\L(\L^z,\braces{\L^{x_{n_r}}})\ge k$. Let $V$ be any open neighbourhood of $z$ in $G^{(0)}$ such that $G_z^V$ is relatively compact. Then
\[
\underset{\scriptstyle n}{\lim\,\sup}\,\lambda_{x_n}(G^V)\ge\underset{\scriptstyle r}{\lim\,\sup}\,\lambda_{x_{n_r}}(G^V)\ge \underset{\scriptstyle r}{\lim\,\inf}\,\lambda_{x_{n_r}}(G^V)\ge k\lambda_z(G^V),
\]
using Theorem \ref{circle_thm} for the last step.

That \eqref{2nd_circle_thm_3} implies \eqref{2nd_circle_thm_4} is immediate.

That \eqref{2nd_circle_thm_4} implies \eqref{2nd_circle_thm_5} follows by making references to Remark \ref{AaH_lemma5.1_variant} rather than Lemma \ref{AaH_lemma5.1} in the \eqref{circle_thm_4} implies \eqref{circle_thm_5} component of the proof of Theorem \ref{circle_thm}.

Assume \eqref{2nd_circle_thm_5}. First suppose that $\M_\L(\L^z,\braces{\L^{x_n}})<\infty$. Since $[x_n]\rightarrow[z]$, we can use an argument found at the beginning of the proof of Theorem \ref{M_thm} to obtain an open $G$-invariant neighborhood $Y$ of $z$ in $G^{(0)}$ so that if we define $H:=G|_Y$, there exists a subsequence $\braces{x_{n_i}}$ of $\braces{x_n}$ such that $[x_{n_i}]\rightarrow [z]$ uniquely in $H^{(0)}/H$. Proposition \ref{AaH_prop4_1_2} now shows us that there exists a subsequence $\braces{x_{n_{i_j}}}$ of $\braces{x_{n_i}}$ that converges $k$-times in $H^{(0)}/H$ to $z$. It follows that $\braces{x_{n_{i_j}}}$ converges $k$-times in $G^{(0)}/G$ to $z$.

When $\M_\L(\L^z,\braces{\L^{x_n}})=\infty$, $\braces{x_n}$ converges $k$-times in $G^{(0)}/G$ to $z$  by Corollary \ref{AaH_cor5.6}, establishing \eqref{2nd_circle_thm_1}.
\end{proof}
\end{theorem}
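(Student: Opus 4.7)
My plan is to mirror the structure of the proof of Theorem \ref{circle_thm}, replacing lower multiplicity and $\lim\,\inf$ with upper multiplicity and $\lim\,\sup$ wherever possible, and invoking the already-established $\lim\,\inf$ theorem on cleverly chosen subsequences. The five implications form a cycle
\eqref{2nd_circle_thm_1} $\Rightarrow$ \eqref{2nd_circle_thm_2} $\Rightarrow$ \eqref{2nd_circle_thm_3} $\Rightarrow$ \eqref{2nd_circle_thm_4} $\Rightarrow$ \eqref{2nd_circle_thm_5} $\Rightarrow$ \eqref{2nd_circle_thm_1},
three of which are essentially bookkeeping.

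For \eqref{2nd_circle_thm_1}$\Rightarrow$\eqref{2nd_circle_thm_2}, I would apply Theorem \ref{circle_thm} to the $k$-times convergent subsequence $\{x_{n_i}\}$ to get $\M_\L(\L^z,\{\L^{x_{n_i}}\})\ge k$, and then use the monotonicity inequalities recorded in Section \ref{sec_upper_lower_multiplicities}, namely $\M_\U(\L^z,\{\L^{x_n}\})\ge \M_\U(\L^z,\{\L^{x_{n_i}}\})\ge \M_\L(\L^z,\{\L^{x_{n_i}}\})$. For \eqref{2nd_circle_thm_2}$\Rightarrow$\eqref{2nd_circle_thm_3}, I would invoke \cite[Lemma~A.1]{Archbold-anHuef2006} (as used in Theorem \ref{thm_AaH3.6}) to extract a subsequence $\{x_{n_r}\}$ with $\M_\L(\L^z,\{\L^{x_{n_r}}\}) = \M_\U(\L^z,\{\L^{x_n}\})\ge k$; the measure inequality of Theorem \ref{circle_thm}\eqref{circle_thm_3} applied to this subsequence then gives the desired $\lim\,\sup$ estimate since $\lim\,\sup_n \ge \lim\,\sup_r \ge \lim\,\inf_r \ge k\lambda_z(G^V)$. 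The implication \eqref{2nd_circle_thm_3}$\Rightarrow$\eqref{2nd_circle_thm_4} is immediate by taking $R=k$, and \eqref{2nd_circle_thm_4}$\Rightarrow$\eqref{2nd_circle_thm_5} is proved by the same inductive construction of a basic sequence $\{W_m\}$ used in the \eqref{circle_thm_4}$\Rightarrow$\eqref{circle_thm_5} step of Theorem \ref{circle_thm}, but substituting the $\lim\,\sup$ version of Lemma \ref{AaH_lemma5.1} noted in Remark \ref{AaH_lemma5.1_variant}.

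The main obstacle is the closing implication \eqref{2nd_circle_thm_5}$\Rightarrow$\eqref{2nd_circle_thm_1}, because Proposition \ref{AaH_prop4_1_2} requires \emph{unique} convergence $[x_n]\to[z]$ in $G^{(0)}/G$, which we are not given. To get around this I would split into cases on whether $\M_\L(\L^z,\{\L^{x_n}\})$ is finite. If it is, I would apply the localisation trick from the proof of Theorem \ref{M_thm}: use \cite[Proposition~3.4]{Archbold-anHuef2006} to find an open neighbourhood $U_2$ of $\L^z$ in $C^*(G)^\wedge$ in which $\L^z$ is the unique limit of $\{\L^{x_n}\}$, pull back via the continuous map $[x]\mapsto\L^x$ of \cite[Proposition~2.5]{Muhly-Williams1990} to obtain an open $G$-invariant neighbourhood $Y$ of $z$ in which some subsequence $\{x_{n_i}\}$ satisfies $[x_{n_i}]\to [z]$ uniquely in $Y/G$, and then apply Proposition \ref{AaH_prop4_1_2} to the restricted groupoid $G|_Y$ (where the $\lim\,\sup$ hypothesis transfers since $W_m\subset Y$ eventually and $G^{W_m}$ is the same in $G$ and in $G|_Y$ for such $m$). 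The resulting subsequence converges $k$-times to $z$ in $Y/(G|_Y)$, hence in $G^{(0)}/G$.

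If instead $\M_\L(\L^z,\{\L^{x_n}\})=\infty$, then Corollary \ref{AaH_cor5.6} applies directly: the whole sequence $\{x_n\}$ is $k$-times convergent to $z$ for every $k$, so \eqref{2nd_circle_thm_1} holds trivially with $\{x_{n_i}\}=\{x_n\}$. Assembling these pieces completes the cycle.
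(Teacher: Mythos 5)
Your proposal is correct and follows essentially the same route as the paper: the cycle \eqref{2nd_circle_thm_1}$\Rightarrow\cdots\Rightarrow$\eqref{2nd_circle_thm_5}$\Rightarrow$\eqref{2nd_circle_thm_1}, with \cite[Lemma~A.1]{Archbold-anHuef2006} for \eqref{2nd_circle_thm_2}$\Rightarrow$\eqref{2nd_circle_thm_3}, Remark \ref{AaH_lemma5.1_variant} for \eqref{2nd_circle_thm_4}$\Rightarrow$\eqref{2nd_circle_thm_5}, and the split on finiteness of $\M_\L(\L^z,\braces{\L^{x_n}})$ (localising via the Theorem \ref{M_thm} argument and Proposition \ref{AaH_prop4_1_2} in the finite case, Corollary \ref{AaH_cor5.6} otherwise) for the closing implication. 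No gaps.
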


\begin{cor}\label{AaH_cor5.4}
Suppose that $G$ is a second countable, locally compact, Hausdorff, principal groupoid such that all the orbits are locally closed. Let $k\in\PP$ and let $z\in G^{(0)}$. Then the following are equivalent:
\begin{enumerate}\renewcommand{\labelenumi}{(\arabic{enumi})}
\item\label{AaH_cor5.4_1} there exists a sequence $\braces{x_n}$ in $G^{(0)}$ which is $k$-times convergent in $G^{(0)}/G$ to $z$;
\item\label{AaH_cor5.4_2} $\M_\U(\L^z)\ge k$.
\end{enumerate}
\begin{proof}
Assume \eqref{AaH_cor5.4_1}. By the definitions of upper and lower multiplicity, 
\[
\M_\U(\L^z)\ge\M_\U(\L^z,\braces{\L^{x_n}})\ge\M_\L(\L^z,\braces{\L^{x_n}}).
\]
By Theorem \ref{circle_thm} we know that $\M_\L(\L^z,\braces{\L^{x_n}})\ge k$, establishing \eqref{AaH_cor5.4_2}.

Assume \eqref{AaH_cor5.4_2}. By \cite[Lemma~1.2]{Archbold-Kaniuth1999} there exists a sequence $\braces{\pi_n}$ converging to $\L^z$ such that $\M_\L(\L^z,\braces{\pi_n})=\M_\U(\L^z,\braces{\pi_n})=\M_\U(\L^z)$. Since the orbits are locally closed, by \cite[Proposition~5.1]{Clark2007} the mapping $G^{(0)}\rightarrow C^*(G)^\wedge:x\mapsto\L^x$ is a surjection. So there is a sequence $\braces{\L^{x_n}}$ in $C^*(G)^\wedge$ such that $\L^{x_n}$ is unitarily equivalent to $\pi_n$ for each $n$. Then
\[
\M_\L(\L^z,\braces{\L^{x_n}})\ge\M_\L(\L^z,\braces{\pi_n})=\M_\U(\L^z)\ge k,
\]
and it follows from Theorem \ref{circle_thm} that $\braces{x_n}$ is $k$-times convergent in $G^{(0)}/G$ to $z$.
\end{proof}
\end{cor}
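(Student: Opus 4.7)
The plan is to prove each direction by combining Theorem \ref{circle_thm} (which ties $k$-times convergence to lower multiplicity relative to a sequence) with either the trivial inequality $\M_\L\le\M_\U$ or a selection theorem that realises $\M_\U(\L^z)$ as a lower multiplicity relative to some sequence.

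For \eqref{AaH_cor5.4_1}$\implies$\eqref{AaH_cor5.4_2}, I would start with a sequence $\braces{x_n}$ converging $k$-times to $z$ in $G^{(0)}/G$. Theorem \ref{circle_thm} immediately gives $\M_\L(\L^z,\braces{\L^{x_n}})\ge k$. Combining with the standard chain $\M_\U(\L^z)\ge\M_\U(\L^z,\braces{\L^{x_n}})\ge\M_\L(\L^z,\braces{\L^{x_n}})$ recorded in Section \ref{sec_upper_lower_multiplicities} yields $\M_\U(\L^z)\ge k$. This direction is essentially automatic once Theorem \ref{circle_thm} is in hand.

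For \eqref{AaH_cor5.4_2}$\implies$\eqref{AaH_cor5.4_1}, the idea is to convert the abstract bound $\M_\U(\L^z)\ge k$ into a bound relative to a concrete sequence of induced representations, and then pull that sequence back to $G^{(0)}$. The first step is to invoke \cite[Lemma~1.2]{Archbold-Kaniuth1999}: since $C^*(G)$ is separable (because $G$ is second countable), there is a sequence $\braces{\pi_n}$ in $C^*(G)^\wedge$ with $\pi_n\to\L^z$ such that $\M_\L(\L^z,\braces{\pi_n})=\M_\U(\L^z,\braces{\pi_n})=\M_\U(\L^z)$. The second step uses the hypothesis that all orbits are locally closed: by \cite[Proposition~5.1]{Clark2007}, the map $G^{(0)}/G\to C^*(G)^\wedge$, $[x]\mapsto\L^x$, is a homeomorphism (in particular a bijection onto the spectrum), so each $\pi_n$ is unitarily equivalent to some $\L^{x_n}$ with $x_n\in G^{(0)}$. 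Since the representatives $x_n$ are chosen only up to orbit, this step is the main technical subtlety: I need to confirm that unitary equivalence of $\pi_n$ with $\L^{x_n}$ does not alter the relative multiplicity numbers, so that $\M_\L(\L^z,\braces{\L^{x_n}})=\M_\L(\L^z,\braces{\pi_n})=\M_\U(\L^z)\ge k$.

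Given this, Theorem \ref{circle_thm} (the implication \eqref{circle_thm_2}$\implies$\eqref{circle_thm_1}) applied to the sequence $\braces{x_n}$ shows that $\braces{x_n}$ is $k$-times convergent in $G^{(0)}/G$ to $z$, completing the proof. The local-closedness of $[z]$ required by Theorem \ref{circle_thm} is supplied by the blanket hypothesis. The only place where real care is needed is ensuring the selection of representatives $x_n$ is consistent (which uses the surjectivity noted in the footnote to \cite[Proposition~5.1]{Clark2007}); no further estimates are required because all the analytic work has already been done in Theorem \ref{circle_thm}.
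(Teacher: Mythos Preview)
Your argument is essentially identical to the paper's: both directions use Theorem \ref{circle_thm}, with \eqref{AaH_cor5.4_2}$\implies$\eqref{AaH_cor5.4_1} passing through \cite[Lemma~1.2]{Archbold-Kaniuth1999} and the surjectivity from \cite[Proposition~5.1]{Clark2007}. Your added remarks on separability of $C^*(G)$ and on why unitary equivalence preserves the relative multiplicities are correct clarifications of steps the paper takes implicitly.
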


\begin{cor}\label{AaH_cor5.7}
Suppose that $G$ is a second countable, locally compact, Hausdorff, principal groupoid with Haar system $\lambda$. Let $z\in G^{(0)}$ and let $\braces{x_n}\subset G^{(0)}$ be a sequence converging to $z$. Assume that $[z]$ is locally closed. Then the following are equivalent:
\begin{enumerate}\renewcommand{\labelenumi}{(\arabic{enumi})}
\item\label{AaH_cor5.7_1} there exists an open neighbourhood $V$ of $z$ such that $G_z^V$ is relatively compact and
\[
\underset{\scriptstyle n}{\lim\,\sup}\,\lambda_{x_n}(G^V)<\infty;
\]
\item\label{AaH_cor5.7_2} $\M_\U(\L^z,\braces{\L^{x_n}})<\infty$.
\end{enumerate}
\begin{proof}
Suppose that \eqref{AaH_cor5.7_1} holds. Since $C^*(G)$ is separable, it follows from \cite[Lemma~A.1]{Archbold-anHuef2006} that there exists a subsequence $\braces{x_{n_j}}$ of $\braces{x_n}$ such that
\[
\M_\L(\L^z,\braces{\L^{x_{n_j}}})=\M_\U(\L^z,\braces{\L^{x_{n_j}}})=\M_\U(\L^z,\braces{\L^{x_n}}).
\]
By \eqref{AaH_cor5.7_1} and Corollary \ref{AaH_cor5.6}, $\M_\L(\L^z,\braces{\L^{x_n}})<\infty$. Hence $\M_\U(\L^z,\braces{\L^{x_n}})<\infty$, as required.

Suppose that \eqref{AaH_cor5.7_1} fails. Let $\braces{V_i}$ be a basic decreasing sequence of open neighbourhoods of $z$ such that $G_z^{V_1}$ is relatively compact (such neighborhoods exist by \cite[Lemma~4.1(1)]{Clark-anHuef2010-preprint}). Then 
\[
\underset{\scriptstyle n}{\lim\,\sup}\,\lambda_{x_n}(G^{V_i})=\infty\quad\text{for each }i
\]
and we may choose a subsequence $\braces{x_{n_i}}$ of $\braces{x_n}$ such that $\lambda_{x_{n_i}}(G^{V_i})\rightarrow\infty$ as $i\rightarrow\infty$.

Let $V$ be any open neighbourhood of $z$ such that $G_z^V$ is relatively compact. There exists $i_0$ such that $V_i\subset V$ for all $i\ge i_0$. Then, for $i\ge i_0$,
\[
\lambda_{x_{n_i}}(G^{V_i})\le\lambda_{x_{n_i}}(G^V).
\]
Thus $\lambda_{x_{n_i}}(G^V)\rightarrow\infty$ as $i\rightarrow\infty$. By Corollary \ref{AaH_cor5.6}, $\M_\L(\L^z,\braces{\L^{x_n}})=\infty$. Hence $\M_\U(\L^z,\braces{\L^{x_n}})=\infty$, that is \eqref{AaH_cor5.7_2} fails.
\end{proof}
\end{cor}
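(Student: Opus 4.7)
The plan is to prove the two implications separately, using the companion Theorem \ref{thm_AaH3.6} for one direction and Corollary \ref{AaH_cor5.6} (the lower-multiplicity version) for the other, noting carefully the direction of monotonicity of $\M_\U$ under passing to a subsequence.

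For \eqref{AaH_cor5.7_1}$\implies$\eqref{AaH_cor5.7_2}, I will apply Theorem \ref{thm_AaH3.6} directly. Given the hypothesised $V$, put $L=\limsup_n \lambda_{x_n}(G^V)<\infty$. Because $G_z^V$ is relatively compact and $\lambda_z$ is a Radon measure with $\mathrm{supp}\,\lambda_z=G_z$, the quantity $\lambda_z(G^V)=\lambda_z(G_z^V)$ is finite; and because $G_z^V$ is a non-empty open subset of $G_z$ (it contains the unit $z$, since $r(z)=z\in V$), it is strictly positive. Choose any $M\ge 1$ with $M>L/\lambda_z(G^V)$. Then eventually $\lambda_{x_n}(G^V)\le M\lambda_z(G^V)<\infty$, and Theorem \ref{thm_AaH3.6} yields $\M_\U(\L^z,\{\L^{x_n}\})\le \lfloor M\rfloor<\infty$.

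For the converse I will argue the contrapositive. Assume \eqref{AaH_cor5.7_1} fails, so that for \emph{every} open neighbourhood $V$ of $z$ with $G_z^V$ relatively compact we have $\limsup_n \lambda_{x_n}(G^V)=\infty$. Using \cite[Lemma~4.1(1)]{Clark-anHuef2010-preprint}, fix a basic decreasing sequence $\{V_i\}$ of open neighbourhoods of $z$ with $G_z^{V_1}$ relatively compact. A diagonal extraction gives a subsequence $\{x_{n_i}\}$ with $\lambda_{x_{n_i}}(G^{V_i})\to\infty$. For any admissible $V$ there exists $i_0$ with $V_i\subset V$ for all $i\ge i_0$, so $\lambda_{x_{n_i}}(G^V)\ge\lambda_{x_{n_i}}(G^{V_i})\to\infty$. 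Corollary \ref{AaH_cor5.6} applied to $\{x_{n_i}\}$ then forces $\M_\L(\L^z,\{\L^{x_{n_i}}\})=\infty$, hence $\M_\U(\L^z,\{\L^{x_{n_i}}\})=\infty$ since $\M_\L\le\M_\U$, and finally $\M_\U(\L^z,\{\L^{x_n}\})\ge \M_\U(\L^z,\{\L^{x_{n_i}}\})=\infty$ by monotonicity of upper multiplicity along supersequences. Thus \eqref{AaH_cor5.7_2} fails.

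The only delicate point is keeping track of the direction of the inequalities for relative multiplicities under passing to a subsequence: $\M_\U$ of a subsequence is bounded \emph{above} by $\M_\U$ of the original, not below, so one cannot directly read off $\M_\U(\L^z,\{\L^{x_n}\})=\infty$ from a subsequence. Routing the argument through $\M_\L$ of the subsequence (where Corollary \ref{AaH_cor5.6} does apply and where the inequality $\M_\L\le\M_\U$ then upgrades the conclusion on that subsequence to $\M_\U$) bridges this gap cleanly.
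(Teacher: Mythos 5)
Your proof is correct, and the forward implication takes a genuinely different route from the paper's. For \eqref{AaH_cor5.7_1}$\implies$\eqref{AaH_cor5.7_2} the paper invokes \cite[Lemma~A.1]{Archbold-anHuef2006} to extract a subsequence along which the relative lower and upper multiplicities coincide with $\M_\U(\L^z,\braces{\L^{x_n}})$, and then feeds the finiteness of the lower multiplicity (obtained from Corollary \ref{AaH_cor5.6}) back through that identity. You instead apply Theorem \ref{thm_AaH3.6} directly: since $G_z^V$ is relatively compact, is open in $G_z=\mathrm{supp}\,\lambda_z$, and contains the unit $z$, the quantity $\lambda_z(G^V)$ is finite and strictly positive, so any $M\ge 1$ exceeding $\limsup_n\lambda_{x_n}(G^V)/\lambda_z(G^V)$ gives $\lambda_{x_n}(G^V)\le M\lambda_z(G^V)<\infty$ eventually, whence $\M_\U(\L^z,\braces{\L^{x_n}})\le\lfloor M\rfloor<\infty$. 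This is more direct and even yields a quantitative bound, at the cost of the (easy but necessary) positivity observation for $\lambda_z(G^V)$; the paper's route avoids any arithmetic with the measures but leans on the subsequence-extraction lemma. Your backward direction is the same construction as the paper's, and your bookkeeping there is in fact the more careful of the two: Corollary \ref{AaH_cor5.6} only delivers $\M_\L(\L^z,\braces{\L^{x_{n_i}}})=\infty$ for the subsequence (the paper's text asserts it for the full sequence), and one must then upgrade to $\M_\U$ of the subsequence and use the monotonicity $\M_\U(\L^z,\braces{\L^{x_{n_i}}})\le\M_\U(\L^z,\braces{\L^{x_n}})$ exactly as you do.
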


\needspace{6\baselineskip}
\begin{cor}\label{AaH_cor5.8}
Suppose $G$ is a second countable, locally compact, Hausdorff, principal groupoid with Haar system $\lambda$ such that all the orbits are locally closed. Let $z\in G^{(0)}$. Then the following are equivalent:
\begin{enumerate}\renewcommand{\labelenumi}{(\arabic{enumi})}
\item\label{AaH_cor5.8_1} $\M_\U(\L^z)<\infty$;
\item\label{AaH_cor5.8_2} there exists an open neighbourhood $V$ of $z$ such that $G_z^V$ is relatively compact and
\[
\sup_{x\in V}\lambda_x(G^V)<\infty.
\]
\end{enumerate}
\begin{proof}
If \eqref{AaH_cor5.8_2} holds then \eqref{AaH_cor5.8_1} holds by Corollary \ref{AaH_cor3.7}.

Let $\braces{V_i}$ be a basic decreasing sequence of open neighbourhoods of $z$ such that $G_z^{V_1}$ is relatively compact. If \eqref{AaH_cor5.8_2} fails then $\sup_{x\in V_i}\braces{\lambda_x(G^{V_i})}=\infty$ for each $i$ and we may choose a sequence $\braces{x_i}$ such that $x_i\in V_i$ for all $i$ and $\lambda_{x_i}(G^{V_i})\rightarrow\infty$. Since $\braces{V_i}$ is a basic decreasing sequence, $x_i\rightarrow z$.

Let $V$ be an open neighbourhood of $z$ such that $G_z^V$ is relatively compact. There exists $i_0$ such that $V_i\subset V$ for all $i\ge i_0$. Then, for $i\ge i_0$,
\[
\lambda_{x_i}(G^{V_i})\le \lambda_{x_i}(G^V).
\]
Thus $\lambda_{x_i}(G^V)\rightarrow\infty$. By Corollary \ref{AaH_cor5.7}, $\M_\U(\L^z,\braces{\L^{x_i}})=\infty$. Hence $\M_\U(\L^z)=\infty$, and so \eqref{AaH_cor5.8_1} fails.
\end{proof}
\end{cor}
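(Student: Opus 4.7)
The plan is to deduce each direction directly from the relative-multiplicity results already established in Corollaries \ref{AaH_cor3.7} and \ref{AaH_cor5.7}; no new machinery is needed, since the hardest analytic work (measure-ratio estimates and the passage through Hilbert--Schmidt kernels in Theorem \ref{M_thm}) has already been absorbed into those corollaries. The key move in each direction is to choose the right neighbourhood or the right sequence in $G^{(0)}$ so that one of the prior results applies.

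For \eqref{AaH_cor5.8_2}$\implies$\eqref{AaH_cor5.8_1}, suppose such a $V$ exists and set $M_0=\sup_{x\in V}\lambda_x(G^V)$. Note that $\lambda_z(G^V)>0$ because $z\in G^V$ and $\mathrm{supp}\,\lambda_z=G_z$, so $M:=\max\{1,M_0/\lambda_z(G^V)\}$ is a finite real number $\ge 1$. For any sequence $\{x_n\}$ in $G^{(0)}$ that converges to $z$, we have $x_n\in V$ eventually, hence $\lambda_{x_n}(G^V)\le M_0\le M\lambda_z(G^V)<\infty$ frequently. Corollary \ref{AaH_cor3.7} then gives $\M_\U(\L^z)\le\lfloor M\rfloor<\infty$.

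For the converse, I would argue by contraposition and run the same construction used in the proof of Corollary \ref{AaH_cor5.7}. Assume \eqref{AaH_cor5.8_2} fails. Using \cite[Lemma~4.1(1)]{Clark-anHuef2010-preprint}, choose a basic decreasing sequence $\{V_i\}$ of open neighbourhoods of $z$ with $G_z^{V_1}$ relatively compact. By hypothesis $\sup_{x\in V_i}\lambda_x(G^{V_i})=\infty$ for each $i$, so I can select $x_i\in V_i$ with $\lambda_{x_i}(G^{V_i})\ge i$. Since $\{V_i\}$ is a neighbourhood basis at $z$, $x_i\to z$. For any open neighbourhood $V$ of $z$ with $G_z^V$ relatively compact, $V_i\subset V$ for $i$ sufficiently large, whence $\lambda_{x_i}(G^V)\ge\lambda_{x_i}(G^{V_i})\to\infty$. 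Corollary \ref{AaH_cor5.7} then yields $\M_\U(\L^z,\{\L^{x_i}\})=\infty$, and since $\M_\U(\L^z)\ge \M_\U(\L^z,\{\L^{x_i}\})$, we conclude that \eqref{AaH_cor5.8_1} fails.

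There is essentially no new obstacle here; the only points requiring care are verifying that $\lambda_z(G^V)$ is strictly positive so that Corollary \ref{AaH_cor3.7} is applicable (and in particular that the constant $M$ used there can be taken $\ge 1$), and confirming that the sequence $\{x_i\}$ produced above genuinely converges to $z$ in $G^{(0)}$ so that Corollary \ref{AaH_cor5.7} applies. Both follow immediately from the choice of $\{V_i\}$ as a basic decreasing neighbourhood system at $z$.
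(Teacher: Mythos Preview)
Your proof is correct and follows essentially the same route as the paper: applying Corollary~\ref{AaH_cor3.7} for \eqref{AaH_cor5.8_2}$\Rightarrow$\eqref{AaH_cor5.8_1}, and then for the contrapositive constructing a sequence $x_i\to z$ along a basic decreasing neighbourhood system and invoking Corollary~\ref{AaH_cor5.7}. You have spelled out a couple of details the paper leaves implicit (verifying $\lambda_z(G^V)>0$ and choosing $M\ge 1$), but the argument is otherwise the same.
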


\section{Path groupoid examples}\label{section_graph_algebra_examples}
In this section we will show how Theorems \ref{circle_thm} and \ref{2nd_circle_thm} can be applied to the path groupoids of directed graphs to reveal the multiplicity structure of their $C^*$-algebras. Recall the definition of a directed graph and the path groupoid from Section \ref{sec_path_groupoid}. \notationindex{VFinfty@$[v,f]^\infty$}Recall from page \pageref{def_vfinfty} that when $v$ is a vertex, $f$ is an edge, and there is exactly one infinite path with range $v$ that includes the edge $f$, then this infinite path is denoted by $[v,f]^\infty$. \notationindex{VFstar@$[v,f]^*$}When there is exactly one finite path $\alpha$ with $r(\alpha)=v$ and $\alpha_{|\alpha|}=f$, we denote $\alpha$ by $[v,f]^*$.

\begin{example}[$2$-times convergence in a path groupoid]\label{2-times_convergence_example}
Let $E$ be the graph
\begin{center}
\begin{tikzpicture}[>=stealth,baseline=(current bounding box.center)] 
\def\cellwidth{5.5};
\clip (-5em,-5.6em) rectangle (3*\cellwidth em + 4.5em,0.3em);

\foreach \x in {1,2,3,4} \foreach \y in {0} \node (x\x y\y) at (\cellwidth*\x em-\cellwidth em,-3*\y em) {$\scriptstyle v_{\x}$};
\foreach \x in {1,2,3,4} \foreach \y in {1} \node (x\x y\y) at (\cellwidth*\x em-\cellwidth em,-3*\y em) {};
\foreach \x in {1,2,3,4} \foreach \y in {2} \node (x\x y\y) at (\cellwidth*\x em-\cellwidth em,-1.5em-1.5*\y em) {};
\foreach \x in {1,2,3,4} \foreach \y in {1,2} \fill[black] (x\x y\y) circle (0.15em);
\foreach \x in {1,2,3,4} \draw [<-, bend left] (x\x y0) to node[anchor=west] {$\scriptstyle f_{\x}^{(2)}$} (x\x y1);
\foreach \x in {1,2,3,4} \draw [<-, bend right] (x\x y0) to node[anchor=east] {$\scriptstyle f_{\x}^{(1)}$} (x\x y1);

\foreach \x / \z in {1/2,2/3,3/4} \draw[black,<-] (x\x y0) to node[anchor=south] {} (x\z y0);

\foreach \x in {1,2,3,4} \draw [<-] (x\x y1) -- (x\x y2);
\foreach \x in {1,2,3,4} {
	\node (endtail\x) at (\cellwidth*\x em-\cellwidth em, -6em) {};
	\draw [dotted,thick] (x\x y2) -- (endtail\x);
}

\node(endtailone) at (3*\cellwidth em + 2.5em,0em) {};
\draw[dotted,thick] (x4y0) -- (endtailone);
\end{tikzpicture}
\end{center}
and let $G$ be the path groupoid. For each $n\ge 1$ define $x^{(n)}:=[v_1, f_n^{(1)}]^\infty$ and let $z$ be the infinite path with range $v_1$ that passes through each $v_n$. Then $\{x^{(n)}\}$ converges $2$-times in $G^{(0)}/G$ to $z$.

\begin{proof}
We will describe two sequences in $G$ as in Definition \ref{def_k-times_convergence}. For each $n\ge 1$ define $\gamma_n^{(1)}:=(x^{(n)},0,x^{(n)})$ and $\gamma_n^{(2)}:=\big([v_1, f_n^{(2)}]^\infty,0,x^{(n)}\big)$. It follows immediately that $s(\gamma_n^{(1)})=x^{(n)}=s(\gamma_n^{(2)})$ for all $n$ and that both $r(\gamma_n^{(1)})$ and $r(\gamma_n^{(2)})$ converge to $z$ as $n\rightarrow\infty$. It remains to show that $\gamma_n^{(2)}(\gamma_n^{(1)})^{-1}\rightarrow\infty$ as $n\rightarrow\infty$.

Let $K$ be a compact subset of $G$. Our goal is to show that $\gamma_n^{(2)}(\gamma_n^{(1)})^{-1}=\gamma_n^{(2)}$ is eventually not in $K$. Since sets of the form $Z(\alpha,\beta)$ for $\alpha,\beta\in E^*$ with $s(\alpha)=s(\beta)$ form a basis for the topology on the path groupoid, for each $\gamma\in K$ there exist $\alpha^{(\gamma)},\beta^{(\gamma)}\in E^*$ with $s(\alpha^{(\gamma)})=s(\beta^{(\gamma)})$ so that $Z(\alpha^{(\gamma)},\beta^{(\gamma)})$ is an open neighbourhood of $\gamma$ in $G$. Thus $\cup_{\gamma\in K}Z(\alpha^{(\gamma)},\beta^{(\gamma)})$ is an open cover of the compact set $K$, and so admits a finite subcover $\cup_{i=1}^I Z(\alpha^{(i)},\beta^{(i)})$. 

We now claim that for any fixed $n\in\PP$, if there exists $i$ with $1\le i\le I$ such that $\gamma_n^{(2)}\in Z(\alpha^{(i)},\beta^{(i)})$, then $\big|[v_1,f_n^{(2)}]^*\big|\le \lvert\alpha^{(i)}\rvert$. Temporarily fix $n\in\PP$ and suppose there exists $i$ with $1\le i\le I$ such that $\gamma_n^{(2)}\in Z(\alpha^{(i)},\beta^{(i)})$. Suppose the converse: that $\lvert\alpha^{(i)}\rvert<\big|[v_1,f_n^{(2)}]^*\big|$. Since $\gamma_n^{(2)}\in Z(\alpha^{(i)},\beta^{(i)})$, it follows that $r(\gamma_n^{(2)})=[v_1, f_n^{(2)}]^\infty\in \alpha^{(i)}E^\infty$, and so $\alpha_p^{(i)}=[v_1, f_n^{(2)}]^\infty_p$ for every $1\le p\le|\alpha^{(i)}|$. By examining the graph we can see that $s\big([v_1, f_n^{(2)}]^\infty_p\big)=v_{p+1}$ for all $1\le p<\big|[v_1,f_n^{(2)}]^*\big|$. Since we also know that $|\alpha^{(i)}|<\big|[v_1,f_n^{(2)}]^*\big|$, we can deduce that $s(\alpha^{(i)})=v_j$ for some $j$. Furthermore since $s(\alpha^{(i)})=s(\beta^{(i)})$, $s(\beta^{(i)})=v_j$. There is only one path with source $v_j$ and range $v_1$, so $\alpha^{(i)}=\beta^{(i)}$.
Note that when $k=|\alpha^{(i)}|-|\beta^{(i)}|$, the set $Z(\alpha^{(i)},\beta^{(i)})$ is by definition equal to
\[
\{(x,k,y):x\in \alpha^{(i)}E^\infty,y\in \beta^{(i)}E^\infty, x_p=y_{p-k} \text{ for }p>|\alpha^{(i)}|\},
\]
so since $\gamma_n^{(2)}\in Z(\alpha^{(i)},\beta^{(i)})$ and $\alpha^{(i)}=\beta^{(i)}$, we can see that $s(\gamma_n^{(2)})_p=r(\gamma_n^{(2)})_p$ for all $p>|\alpha^{(i)}|$. We know $s(\gamma_n^{(2)})=[v_1, f_n^{(1)}]^\infty$ and $r(\gamma_n^{(2)})=[v_1, f_n^{(2)}]^\infty$, so $[v_1, f_n^{(2)}]^\infty_p=[v_1, f_n^{(1)}]^\infty_p$ for all $p>|\alpha^{(i)}|$. In particular, since we assumed that $\big|[v_1,f_n^{(2)}]^*\big|>|\alpha^{(i)}|$, we have 
\[
[v_1, f_n^{(2)}]^\infty_{\big|[v_1,f_n^{(2)}]^*\big|}=[v_1, f_n^{(1)}]^\infty_{\big|[v_1,f_n^{(2)}]^*\big|},\]
so that $f_n^{(2)}=f_n^{(1)}$. But $f_n^{(1)}$ and $f_n^{(2)}$ are distinct, so we have found a contradiction, and we must have $\big|[v_1,f_n^{(2)}]^*\big|\le |\alpha^{(i)}|$.

Our next goal is to show that each $Z(\alpha^{(i)},\beta^{(i)})$ contains at most one $\gamma_n^{(2)}$. Fix $n,m\in\PP$ and suppose that both $\gamma_n^{(2)}$ and $\gamma_m^{(2)}$ are in $Z(\alpha^{(i)},\beta^{(i)})$ for some $i$. We will show that $n=m$. Since $\gamma_n^{(2)}\in Z(\alpha^{(i)},\beta^{(i)})$, $r(\gamma_n^{(2)})=[v_1, f_n^{(2)}]^\infty\in \alpha^{(i)}E^\infty$. Thus there exists $x\in E^\infty $ such that $[v_1,f_n^{(2)}]^*x\in \alpha^{(i)}E^\infty$ and, since $\big|[v_1,f_n^{(2)}]^*\big|\le \lvert\alpha^{(i)}\rvert$, we can crop $x$ to form a finite $\epsilon\in E^*$ such that $[v_1,f_n^{(2)}]^*\epsilon=\alpha^{(i)}$. Similarly there exists $\delta\in E^*$ such that $[v_1,f_m^{(2)}]^*\delta=\alpha^{(i)}$. Then
\[
[v_1,f_n^{(2)}]^*\epsilon=\alpha^{(i)}=[v_1,f_m^{(2)}]^*\delta,
\]
which we can see by looking at the graph is only possible if $n=m$. We have thus shown that if $\gamma_n^{(2)}$ and $\gamma_m^{(2)}$ are in $Z(\alpha^{(i)},\beta^{(i)})$, then $\gamma_n^{(2)}=\gamma_m^{(2)}$.

Let $S=\{n\in\PP:\gamma_n^{(2)}\in K\}$. Since $K\subset \cup_{i=1}^IZ(\alpha^{(i)},\beta^{(i)})$ and since $\gamma_n^{(2)},\gamma_m^{(2)}\in Z(\alpha^{(i)},\beta^{(i)})$ implies $n=m$, $S$ can contain at most $I$ elements. Then $S$ has a maximal element $n_0$ and $\gamma_n^{(2)}\notin K$ provided $n>n_0$. Thus $\gamma_n\rightarrow\infty$ as $n\rightarrow\infty$, and we have shown that $x^{(n)}$ converges $2$-times to $z$ in $G^{(0)}/G$.
\end{proof}
\end{example}

\needspace{4\baselineskip}
\begin{example}[$k$-times convergence in a path groupoid]\label{k-times_convergence_example}
For any fixed positive integer $k$, let $E$ be the graph
\begin{center}
\begin{tikzpicture}[>=stealth,baseline=(current bounding box.center)] 
\def\cellwidth{5.5};
\clip (-5em,-5.6em) rectangle (3*\cellwidth em + 4.5em,0.3em);

\foreach \x in {1,2,3,4} \foreach \y in {0} \node (x\x y\y) at (\cellwidth*\x em-\cellwidth em,-3*\y em) {$\scriptstyle v_{\x}$};
\foreach \x in {1,2,3,4} \foreach \y in {1} \node (x\x y\y) at (\cellwidth*\x em-\cellwidth em,-3*\y em) {};
\foreach \x in {1,2,3,4} \foreach \y in {2} \node (x\x y\y) at (\cellwidth*\x em-\cellwidth em,-1.5em-1.5*\y em) {};
\foreach \x in {1,2,3,4} \foreach \y in {1,2} \fill[black] (x\x y\y) circle (0.15em);
\foreach \x in {1,2,3,4} \draw [<-, bend right=40] (x\x y0) to node[anchor=west] {} (x\x y1);
\foreach \x in {1,2,3,4} \draw [<-, bend right=25] (x\x y0) to node[anchor=west] {} (x\x y1);
\foreach \x in {1,2,3,4} \draw [<-, bend left] (x\x y0) to node[anchor=west,rotate=-25] {$\scriptstyle f_{\x}^{(1)},\ldots,f_{\x}^{(k)}$} (x\x y1);
\foreach \x in {1,2,3,4} \draw [dotted,thick] ($(x\x y0)!{1/(2*cos(10))}!-10:(x\x y1)$) -- ($(x\x y0)!{1/(2*cos(14))}!14:(x\x y1)$);

\foreach \x / \z in {1/2,2/3,3/4} \draw[black,<-] (x\x y0) to node[anchor=south] {} (x\z y0);

\foreach \x in {1,2,3,4} \draw [<-] (x\x y1) -- (x\x y2);
\foreach \x in {1,2,3,4} {
	\node (endtail\x) at (\cellwidth*\x em-\cellwidth em, -6em) {};
	\draw [dotted,thick] (x\x y2) -- (endtail\x);
}

\node(endtailone) at (3*\cellwidth em + 2.5em,0em) {};
\draw[dotted,thick] (x4y0) -- (endtailone);
\end{tikzpicture}
\end{center}
and let $G$ be the path groupoid. For each $n\ge 1$ define $x^{(n)}:=[v_1, f_n^{(1)}]^\infty$ and let $z$ be the infinite path that passes through each $v_n$. Then the sequence $\{x^{(n)}\}$ converges $k$-times in $G^{(0)}/G$ to $z$.
\begin{proof}
After defining $\gamma_n^{(i)}:=\big([v_1, f_n^{(i)}]^\infty,0,x^{(n)}\big)$ for each $1\le i\le k$, an argument similar to that in Example \ref{2-times_convergence_example} establishes the $k$-times convergence.
\end{proof}
\end{example}

\begin{example}\label{ML2_MU3_example}[Lower multiplicity 2 and upper multiplicity 3]
Consider the graph $E$ described by
\begin{center}
\begin{tikzpicture}[>=stealth,baseline=(current bounding box.center)] 
\def\cellwidth{5.5};
\clip (-2.5em,-7.1em) rectangle (4*\cellwidth em + 2.5em,0.3em);

\foreach \x in {1,2,3,4,5} \foreach \y in {0} \node (x\x y\y) at (\cellwidth*\x em-\cellwidth em,-3*\y em) {$\scriptstyle v_{\x}$};
\foreach \x in {1,2,3,4,5} \foreach \y in {1} \node (x\x y\y) at (\cellwidth*\x em-\cellwidth em,-3*\y em) {$\scriptstyle w_{\x}$};
\foreach \x in {1,2,3,4,5} \foreach \y in {2,3} \node (x\x y\y) at (\cellwidth*\x em-\cellwidth em,-1.5em-1.5*\y em) {};
\foreach \x in {1,2,3,4,5} \foreach \y in {2,3} \fill[black] (x\x y\y) circle (0.15em);

\foreach \x in {1,3,5} \draw [<-, bend left] (x\x y0) to node[anchor=west] {} (x\x y1);
\foreach \x in {1,3,5} \draw [<-, bend right] (x\x y0) to node[anchor=east] {$\scriptstyle f_{\x}^{(1)}$} (x\x y1);
\foreach \x in {2,4} \draw [<-, bend left=40] (x\x y0) to node[anchor=west] {} (x\x y1);
\foreach \x in {2,4} \draw [<-, bend right=40] (x\x y0) to node[anchor=east] {$\scriptstyle f_{\x}^{(1)}$} (x\x y1);
\foreach \x in {2,4} \draw [<-] (x\x y0) to node {} (x\x y1);

\foreach \x / \z in {1/2,2/3,3/4,4/5} \draw[black,<-] (x\x y0) to node[anchor=south] {} (x\z y0);

\foreach \x in {1,2,3,4,5} \draw [<-] (x\x y1) -- (x\x y2);
\foreach \x in {1,2,3,4,5} \draw [<-] (x\x y2) -- (x\x y3);
\foreach \x in {1,2,3,4,5} {
	\node (endtail\x) at (\cellwidth*\x em-\cellwidth em, -7.5em) {};
	\draw [dotted,thick] (x\x y3) -- (endtail\x);
}

\node(endtailone) at (4*\cellwidth em + 2.5em,0em) {};
\draw[dotted,thick] (x5y0) -- (endtailone);
\end{tikzpicture}
\end{center}
where for each odd $n\ge 1$ there are exactly two paths $f_n^{(1)},f_n^{(2)}$ with source $w_n$ and range $v_n$, and for each even $n\ge 2$ there are exactly three paths $f_n^{(1)},f_n^{(2)},f_n^{(3)}$ with source $w_n$ and range $v_n$. Let $G$ be the path groupoid, define $x^{(n)}:=[v_1, f_n^{(1)}]^\infty$ for every $n\ge 1$, and let $z$ be the infinite path that meets every vertex $v_n$ (so $z$ has range $v_1$). Then
\[
\M_\L(\L^z,\braces{\L^{x^{(n)}}})=2\quad\text{and}\quad\M_\U(\L^z,\braces{\L^{x^{(n)}}})=3.
\]
\begin{proof}
We know that $\braces{x^{(n)}}$ converges $2$-times to $z$ in $G^{(0)}/G$ by the argument in Example \ref{2-times_convergence_example}, so we can apply Theorem \ref{circle_thm} to see that $\M_\L(\L^z,\braces{\L^{x^{(n)}}})\ge 2$. We can see that the subsequence $\braces{x^{(2n)}}$ of $\braces{x^{(n)}}$ converges $3$-times to $z$ in $G^{(0)}/G$ by Example \ref{k-times_convergence_example}. Theorem \ref{2nd_circle_thm} now tells us that $\M_\U(\L^z,\braces{\L^{x^{(n)}}})\ge 3$.

Now suppose $\M_\L(\L^z,\braces{\L^{x^{(n)}}})\ge 3$. Then by Theorem \ref{circle_thm}, $\braces{x^{(n)}}$ converges $3$-times to $z$ in $G^{(0)}/G$, so there must exist three sequences $\braces{\gamma_n^{(1)}}$,$\braces{\gamma_n^{(2)}}$, and $\braces{\gamma_n^{(3)}}$ as in the definition of $k$-times convergence (Definition \ref{def_k-times_convergence}). For each odd $n$, there are only two elements in $G$ with source $x^{(n)}$, so there must exist $1\le i<j\le 3$ such that $\gamma_n^{(i)}=\gamma_n^{(j)}$ frequently. Then $\gamma_n^{(j)}(\gamma_n^{(i)})^{-1}=r(\gamma_n^{(i)})$ frequently and, since $r(\gamma_n^{(i)})\rightarrow z$, $\braces{\gamma_n^{(j)}(\gamma_n^{(i)})^{-1}}$ admits a convergent subsequence. Thus $\gamma_n^{(j)}(\gamma_n^{(i)})^{-1}\nrightarrow\infty$, contradicting the definition of $k$-times convergence.

If $\M_\U(\L^z,\braces{\L^{x^{(n)}}})\ge 4$, then by Theorem \ref{2nd_circle_thm} there is a subsequence of $\braces{x^{(n)}}$ that converges $4$-times to $z$ in $G^{(0)}/G$. A similar argument to that in the preceding paragraph shows that this is not possible since there are at most 3 edges between any $v_n$ and $w_n$. It follows that $\M_\L(\L^z,\braces{\L^{x^{(n)}}})=2$ and $\M_\U(\L^z,\braces{\L^{x^{(n)}}})=3$.
\end{proof}
\end{example}

\begin{lemma}\label{upper_and_lower_multiplicities_lemma}
In Example \ref{2-times_convergence_example},
\[
\M_\L(\L^z,\braces{\L^{x^{(n)}}})=\M_\U(\L^z,\braces{\L^{x^{(n)}}})=2;
\]
and in Example \ref{k-times_convergence_example},
\[
\M_\L(\L^z,\braces{\L^{x^{(n)}}})=\M_\U(\L^z,\braces{\L^{x^{(n)}}})=k.
\]
\begin{proof}
The same argument as that found in Example \ref{ML2_MU3_example} can be used to demonstrate this lemma. The explicit proof was given for Example \ref{ML2_MU3_example} since it covers the case where the upper and lower multiplicities are distinct.
\end{proof}
\end{lemma}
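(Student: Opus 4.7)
The plan is to apply the argument used to prove Example \ref{ML2_MU3_example} separately to each of the two examples. In both cases the lower-multiplicity bound comes essentially for free from Theorem \ref{circle_thm}: in Example \ref{2-times_convergence_example} we have already shown that $\{x^{(n)}\}$ converges $2$-times to $z$ in $G^{(0)}/G$, so Theorem \ref{circle_thm} gives $\M_\L(\L^z,\{\L^{x^{(n)}}\})\ge 2$; likewise Example \ref{k-times_convergence_example} combined with Theorem \ref{circle_thm} yields $\M_\L(\L^z,\{\L^{x^{(n)}}\})\ge k$. Since $\M_\L\le\M_\U$, the work is to prove matching upper bounds for $\M_\U$.

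For the upper bound in Example \ref{2-times_convergence_example}, I would argue by contradiction. Suppose $\M_\U(\L^z,\{\L^{x^{(n)}}\})\ge 3$. Theorem \ref{2nd_circle_thm} would then produce a subsequence of $\{x^{(n)}\}$ converging $3$-times to $z$, together with three sequences $\{\gamma^{(1)}_n\},\{\gamma^{(2)}_n\},\{\gamma^{(3)}_n\}$ in $G$ satisfying the conditions in Definition \ref{def_k-times_convergence}. The key combinatorial observation is that for each index $n$, the only elements of $G$ with source $x^{(n)}$ whose range can approach $z$ in $E^\infty$ are the two elements $(\,[v_1,f_n^{(1)}]^\infty,\,0,\,x^{(n)})$ and $(\,[v_1,f_n^{(2)}]^\infty,\,0,\,x^{(n)})$: any such range must start at $v_1$ (because the sets $\alpha E^\infty$ form a neighbourhood basis of $z$), must be shift-equivalent to $x^{(n)}$, and must therefore travel along the backbone $v_1\to v_2\to\dots\to v_n$, take one of the two edges $f_n^{(i)}$ to $w_n$, and then enter the unique tail descending from $w_n$. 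By the pigeonhole principle, for each $n$ at least two of $\gamma_n^{(1)},\gamma_n^{(2)},\gamma_n^{(3)}$ agree frequently; since $E$ contains no cycles, $G$ is principal by Proposition \ref{prop_principal_iff_no_cycles}, so on that frequent set we obtain indices $i<j$ with $\gamma_n^{(j)}(\gamma_n^{(i)})^{-1}=r(\gamma_n^{(i)})$, which converges to $z\in G^{(0)}$ and thus admits a convergent subsequence, contradicting $\gamma_n^{(j)}(\gamma_n^{(i)})^{-1}\to\infty$.

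The same template handles Example \ref{k-times_convergence_example}: here the graph has exactly $k$ parallel edges $f_n^{(1)},\ldots,f_n^{(k)}$ between $w_n$ and $v_n$, so for each $n$ there are exactly $k$ elements of $G$ with source $x^{(n)}$ whose range can approach $z$. Assuming $\M_\U(\L^z,\{\L^{x^{(n)}}\})\ge k+1$ yields $k+1$ sequences to which the pigeonhole argument applies verbatim, producing the same contradiction with the definition of $k$-times convergence. Combining the upper and lower bounds gives the stated equalities.

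The main obstacle, as in Example \ref{ML2_MU3_example}, is the geometric identification of all elements $\gamma\in G$ with fixed source $x^{(n)}$ and range converging to $z$. The key point that needs verification is the non-existence of any shift-equivalent range outside the list above: one checks that shifting the tail from $w_n$ does not introduce new paths starting at $v_1$ because the backbone carries only one edge between consecutive $v_j$'s, forcing the lag to be $0$. Once this local structural lemma is in hand, the rest of the proof is a direct appeal to Theorems \ref{circle_thm} and \ref{2nd_circle_thm} together with the pigeonhole step and the principality of $G$.
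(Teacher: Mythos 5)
Your proof is correct and is essentially the paper's own argument: the paper simply refers back to Example \ref{ML2_MU3_example}, whose method is exactly your combination of Theorems \ref{circle_thm} and \ref{2nd_circle_thm} with the pigeonhole observation that, for ranges converging to $z$, an element of $G$ with source $x^{(n)}$ must be one of the finitely many $([v_1,f_n^{(i)}]^\infty,0,x^{(n)})$, so two of the $\gamma_n^{(\cdot)}$ coincide frequently and their quotient is $r(\gamma_n^{(i)})\rightarrow z$, contradicting divergence to infinity. The only (harmless) streamlining is that you bound $\M_\U$ from above and invoke $\M_\L\le\M_\U$, where the cited example bounds the two multiplicities separately because there they differ.
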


In the next example we will add some structure to the graph from Example \ref{2-times_convergence_example} to create a path groupoid $G$ with non-Hausdorff orbit space that continues to exhibit $2$-times convergence.  
\begin{example}\label{example_with_non-Hausdorff_orbit_space}
Let $E$ be the directed graph
\begin{center}
\begin{tikzpicture}[>=stealth,baseline=(current bounding box.center)] 
\def\cellwidth{5.5};
\clip (-4em,-5.6em) rectangle (3*\cellwidth em + 3em,4.3em);

\foreach \x in {1,2,3,4} \foreach \y in {0,1} \node (x\x y\y) at (\cellwidth*\x em-\cellwidth em,-3*\y em) {};
\foreach \x in {1,2,3,4} \foreach \y in {2} \node (x\x y\y) at (\cellwidth*\x em-\cellwidth em,-1.5em-1.5*\y em) {};
\foreach \x in {1,2,3,4} \foreach \y in {0,1,2} \fill[black] (x\x y\y) circle (0.15em);

\foreach \x in {1,2,3,4} {
	\node (x\x z1) at (\cellwidth*\x em-1.25*\cellwidth em,4em) {$\scriptstyle v_\x$};
	\node (x\x z2) at (\cellwidth*\x em-1.5*\cellwidth em,2em) {$\scriptstyle w_\x$};
}

\foreach \x in {1,2,3,4} \draw [<-, bend left] (x\x y0) to node[anchor=west] {$\scriptstyle f_{\x}^{(2)}$} (x\x y1);
\foreach \x in {1,2,3,4} \draw [<-, bend right] (x\x y0) to node[anchor=east] {$\scriptstyle f_{\x}^{(1)}$} (x\x y1);

\foreach \x / \z in {1/2,2/3,3/4} \draw[black,<-] (x\x z1) to node[anchor=south] {} (x\z z1);
\foreach \x / \z in {1/2,2/3,3/4} \draw[black,<-] (x\x z2) to node[anchor=south] {} (x\z z2);

\foreach \x in {1,2,3,4} {
	\draw[white,ultra thick,-] (x\x z1) -- (x\x y0);
	\draw[<-] (x\x z1) -- (x\x y0);
	\draw[<-] (x\x z2) -- (x\x y0);
}

\foreach \x in {1,2,3,4} \draw [<-] (x\x y1) -- (x\x y2);
\foreach \x in {1,2,3,4} {
	\node (endtail\x) at (\cellwidth*\x em-\cellwidth em, -6em) {};
	\draw [dotted,thick] (x\x y2) -- (endtail\x);
}

\node (endtailone) at ($(x4z1) + 0.5*(\cellwidth em, 0 em)$) {};
\node (endtailtwo) at ($(x4z2) + 0.5*(\cellwidth em, 0 em)$) {};
\draw[dotted,thick] (x4z1) -- (endtailone);
\draw[dotted,thick] (x4z2) -- (endtailtwo);
\end{tikzpicture}
\end{center}
and let $G$ be the path groupoid. For every $n\ge 1$ let $x^{(n)}$ be the infinite path $[v_1, f_n^{(1)}]^\infty$. Let $x$ be the infinite path with range $v_1$ that passes through each $v_n$ and let $y$ be the infinite path with range $w_1$ that passes through each $w_n$. Then the orbit space $G^{(0)}/G$ is not Hausdorff and $\braces{x^{(n)}}$ converges $2$-times in $G^{(0)}/G$ to both $x$ and $y$.

\begin{proof}
To see that $\braces{x^{(n)}}$ converges $2$-times to $x$ in $G^{(0)}/G$, consider the sequences $\braces{([v_1, f_n^{(2)}]^\infty,0,x^{(n)})}$ and $\braces{(x^{(n)},0,x^{(n)})}$ and follow the argument as in Example \ref{2-times_convergence_example}. To see that $\braces{x^{(n)}}$ converges $2$-times to $y$ in $G^{(0)}/G$, consider the sequences $\braces{([w_1,f_n^{(1)}]^\infty,0,x^{(n)})}$ and $\braces{([w_1,f_n^{(2)}]^\infty,0,x^{(n)})}$. While it is tempting to think that this example exhibits $4$-times convergence (or even $3$-times convergence), this is not the case (see Example~\ref{ML2_MU3_example} for an argument demonstrating this). We know $x^{(n)}$ converges $2$-times to $x$ in $G^{(0)}/G$, so $[x^{(n)}]\rightarrow [x]$ in $G^{(0)}/G$, and similarly $[x^{(n)}]\rightarrow [y]$ in $G^{(0)}/G$. It follows that $G^{(0)}/G$ is not Hausdorff since $[x]\ne [y]$.
\end{proof}
\end{example}

In all of the examples above, the orbits in  $G^{(0)}$ are closed and hence $C^*(G)^\wedge$ and $G^{(0)}/G$ are homeomorphic by \cite[Proposition~5.1]{Clark2007}.  By combining the features of the graphs in Examples~\ref{ML2_MU3_example} and~\ref{example_with_non-Hausdorff_orbit_space} we obtain a principal groupoid whose $C^*$-algebra has non-Hausdorff spectrum and  distinct upper and lower multiplicities among its irreducible representations.

\chapter{Classes of \texorpdfstring{$C^*$}{\it C*}-algebras}\label{sec_algebra_classes}
This chapter introduces various classes of $C^*$-algebras, beginning with postliminal and liminal $C^*$-algebras in Section \ref{section_liminal_and_postliminal_overview} and proceeding onto bounded trace, Fell and continuous trace $C^*$-algebras in Section \ref{bounded-trace_fell_cts-trace_overview}. We will also describe theorems that characterise groupoids with $C^*$-algebras in these classes.

\section{Liminal and postliminal \texorpdfstring{$C^*$}{\it C*}-algebras}\label{section_liminal_and_postliminal_overview}
\begin{definition}
A $C^*$-algebra is {\em liminal}\index{liminal} (or {\em CCR}\index{CCR}) if the image of each irreducible representation $\pi$ is exactly $\Kk(\Hh_\pi)$. A $C^*$-algebra is {\em postliminal}\index{postliminal} (or {\em GCR}\index{GCR}) if the image of each irreducible representation $\pi$ contains $\Kk(\Hh_\pi)$.
\end{definition}

Pedersen in \cite[p. 191]{Pedersen1979} points out that $C^*$-algebras are either extremely well behaved (Type I) or totally misbehaved. Sakai in \cite[Theorem~2]{Sakai1966} showed that these well-behaved Type I algebras are precisely the postliminal $C^*$-algebras. 

A locally compact group $H$ is said to be {\em liminal} (or respectively {\em postliminal}) if $C^*(H)$ is liminal (or respectively postliminal) \cite[13.9.4]{Dixmier1977}. The two following theorems are by Clark.
\begin{theorem}[{\cite[Theorem~7.1]{Clark2007-2}}]\label{thm_groupoid_algebra_postliminal}
Suppose $G$ is second countable, locally compact, Hausdorff groupoid with a Haar system $\lambda$ in which all of the stability subgroups are amenable. Then $C^*(G,\lambda)$ is postliminal if and only if $G^{(0)}/G$ is $T_0$ and all of the stability subgroups are postliminal.
\end{theorem}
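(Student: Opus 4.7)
The plan is to reduce the statement to a topological question about the spectrum of $C^*(G)$ via Glimm's theorem: since $G$ is second countable the algebra $C^*(G,\lambda)$ is separable, and a separable $C^*$-algebra is postliminal if and only if its spectrum (equivalently, its primitive ideal space) is $T_0$. So the task becomes to match the $T_0$-property of $\widehat{C^*(G,\lambda)}$ with the two conditions on the groupoid side. The amenability hypothesis enters through an Effros--Hahn-type result: when every stability subgroup $G|_u$ is amenable, every irreducible representation of $C^*(G,\lambda)$ is unitarily equivalent to one of the form $\mathrm{Ind}_u^G\sigma$ for some $u\in G^{(0)}$ and some $\sigma\in\widehat{C^*(G|_u)}$, and two such induced representations are equivalent precisely when the base points lie in a common orbit and the stability representations are intertwined by the conjugation action. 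This gives a description of $\widehat{C^*(G,\lambda)}$ as a ``bundle'' of stability-group duals over $G^{(0)}/G$.

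For the forward implication I would assume $C^*(G,\lambda)$ is postliminal and peel off the two conclusions separately. To see that each stability $C^*$-algebra is postliminal, I would use the Muhly--Renault--Williams correspondence between invariant sets and ideals: for each $u\in G^{(0)}$ the restriction $G|_{\overline{[u]}}$ has a $C^*$-algebra arising as a subquotient of $C^*(G,\lambda)$, and that algebra is Morita equivalent to $C^*(G|_u)$ because the orbit $[u]$ sits as a transitive invariant subset. Since the class of postliminal $C^*$-algebras is closed under passing to ideals, quotients and Morita-equivalent algebras, $C^*(G|_u)$ is postliminal. For the $T_0$-property of $G^{(0)}/G$ I would use the continuous map $[u]\mapsto\ker(\mathrm{Ind}_u^G\sigma_0)$ (for any fixed choice $\sigma_0\in\widehat{C^*(G|_u)}$) into $\mathrm{Prim}\,C^*(G,\lambda)$; the Effros--Hahn description shows this map is injective on orbits, and postliminality gives that $\mathrm{Prim}\,C^*(G,\lambda)$ is actually $T_0$ in a way that pulls back through an open map, forcing $G^{(0)}/G$ to be $T_0$.

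For the converse I would assume $G^{(0)}/G$ is $T_0$ and every $C^*(G|_u)$ is postliminal, and argue directly that $\widehat{C^*(G,\lambda)}$ is $T_0$ before invoking Glimm. Given two inequivalent irreducible representations $\pi_1,\pi_2$ of $C^*(G,\lambda)$, write $\pi_i=\mathrm{Ind}_{u_i}^G\sigma_i$ using the Effros--Hahn parametrisation. If $[u_1]\ne[u_2]$, pick an invariant open set $U\subset G^{(0)}$ separating the orbits using the $T_0$-property of $G^{(0)}/G$; the associated ideal $C^*(G|_U)$ then lies in the kernel of exactly one of the two induced representations, which separates $\pi_1$ from $\pi_2$ in $\widehat{C^*(G,\lambda)}$. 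If $[u_1]=[u_2]$, reduce to a single stability group: under conjugation in $G$ we may take $u_1=u_2=u$, and then $\sigma_1,\sigma_2$ are inequivalent in $\widehat{C^*(G|_u)}$, which is $T_0$ by postliminality of $C^*(G|_u)$ together with Glimm's theorem again; the separation of $\sigma_1$ and $\sigma_2$ lifts through induction to a separation of $\pi_1$ and $\pi_2$. In either case $\widehat{C^*(G,\lambda)}$ is $T_0$, so $C^*(G,\lambda)$ is postliminal.

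The main obstacle is making the Effros--Hahn description rigorous and topologically faithful in both directions: one needs more than a set-theoretic bijection between $\widehat{C^*(G,\lambda)}$ and pairs $([u],[\sigma])$, one needs enough continuity of induction to transport $T_0$-separation in the parameter space to $T_0$-separation in the spectrum, and continuity of the kernel map to pull $T_0$-separation back the other way. Amenability of the stability subgroups is what guarantees both the surjectivity of induction onto $\widehat{C^*(G,\lambda)}$ (no irreducibles get missed) and the absence of exotic weak-containment phenomena that would otherwise obstruct these topological comparisons; the bookkeeping for the conjugation action on stability groups and the compatibility of the Morita equivalences on orbits is where I would expect the bulk of the technical work to lie.
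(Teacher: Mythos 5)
First, a point of order: the thesis does not prove this statement at all --- it is quoted as Clark's Theorem~7.1 of \cite{Clark2007-2} and used as a black box --- so there is no internal proof to compare yours against. Judged on its own terms, your outline has the right global shape (reduce postliminality to the $T_0$ property of the spectrum via Glimm's theorem for separable $C^*$-algebras, then match $\widehat{C^*(G,\lambda)}$ against the orbit space and the duals of the stability groups), but the engine you propose to run it on is not available. A complete Effros--Hahn parametrisation of $\widehat{C^*(G,\lambda)}$ --- every irreducible representation induced from an isotropy group, with equivalence governed by orbits and the conjugation action, topologically faithfully --- is the generalised Effros--Hahn conjecture for groupoids, a substantial theorem in its own right that is strictly harder than the statement you are trying to prove and that Clark's argument does not invoke. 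Her route goes through Renault's disintegration theorem (an irreducible, or more generally factor, representation of $C_c(G,\lambda)$ disintegrates over an ergodic quasi-invariant measure on $G^{(0)}$) together with Ramsay's Mackey--Glimm dichotomy (Theorem~\ref{theorem_ramsay_thm2.1} in this thesis), which splits the analysis according to whether orbits are locally closed. Deferring all the hard work to the full Mackey machine, where only the measure-theoretic disintegration is actually needed, is the central gap.

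Beyond that, each direction as you sketch it has a soft spot. Note first that $\mathrm{Prim}\,A$ is \emph{always} $T_0$; the content of Glimm's theorem is that $\widehat{A}$ is $T_0$, equivalently that $\widehat{A}\rightarrow\mathrm{Prim}\,A$ is injective. Consequently, for ``postliminal $\Rightarrow$ $G^{(0)}/G$ is $T_0$'' your phrase about postliminality making $\mathrm{Prim}\,C^*(G,\lambda)$ $T_0$ ``in a way that pulls back through an open map'' does not identify an argument: injectivity of $[u]\mapsto\ker(\mathrm{Ind}\,\sigma_0)$ on orbits is not enough, and what one actually does is argue the contrapositive --- a non-$T_0$ orbit space yields, via the dichotomy, a non-transitive ergodic quasi-invariant measure and hence a factor representation that is not type~I (or two inequivalent irreducibles with equal kernels). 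For the converse, the Morita equivalence you invoke is between $C^*(G|_{[u]})$ and $C^*(G|_u)$ for the \emph{orbit} $[u]$, not its closure, and it is only usable once $[u]$ is locally closed, so that $G|_{[u]}$ is a locally compact transitive groupoid arising as a genuine subquotient; local closedness is precisely what the $T_0$ hypothesis buys via Corollary~\ref{cor_ramsay_orbits_locally_closed}. Finally, ``the separation of $\sigma_1$ and $\sigma_2$ lifts through induction'' presupposes that induction from a stability group is injective on kernels and continuous enough to transport $T_0$-separation --- again Effros--Hahn-type content rather than a consequence of amenability alone. In short: the skeleton is plausible, but every load-bearing joint rests on a theorem you have not proved and which, in the generality required, subsumes the result at hand.
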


\begin{theorem}[{\cite[Theorem~6.1]{Clark2007-2}}]\label{thm_groupoid_algebra_liminal}
Suppose $G$ is second countable, locally compact, Hausdorff groupoid with a Haar system $\lambda$ in which all of the stability subgroups are amenable. Then $C^*(G,\lambda)$ is liminal if and only if $G^{(0)}/G$ is $T_1$ and all of the stability subgroups are liminal.
\end{theorem}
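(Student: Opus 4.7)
The plan is to derive Theorem~\ref{thm_groupoid_algebra_liminal} from the postliminality result Theorem~\ref{thm_groupoid_algebra_postliminal} together with the standard fact that a $C^*$-algebra $A$ is liminal if and only if $A$ is postliminal and $\hat A$ is $T_1$ (the forward direction is immediate from simplicity of the compact operators, and the reverse follows because postliminality gives $\Kk(\Hh_\pi)\subseteq\pi(A)$ as an ideal, which must equal $\pi(A)$ when $\{\pi\}$ is closed and so $\ker\pi$ is maximal). Combined with ``$T_1$ implies $T_0$'' and ``liminal implies postliminal'', Theorem~\ref{thm_groupoid_algebra_postliminal} disposes of the postliminal/$T_0$ halves of both implications for free. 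What remains, for $(\Leftarrow)$, is to upgrade $T_0$ of $G^{(0)}/G$ to $T_1$ and liminality of each $C^*(G|_u)$ to the $T_1$ property of $\widehat{C^*(G,\lambda)}$; and for $(\Rightarrow)$, to extract from $T_1$ of $\widehat{C^*(G,\lambda)}$ both the $T_1$ property of $G^{(0)}/G$ and the liminality of each $C^*(G|_u)$.

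The key intermediate tool is a parameterisation of $\widehat{C^*(G,\lambda)}$ by orbit and stability data. Under the standing amenability hypothesis on the stability groups, Renault's disintegration theorem together with the Mackey-style induction machinery gives that every irreducible representation of $C^*(G,\lambda)$ is unitarily equivalent to one induced from an irreducible representation $\rho$ of some $C^*(G|_u)$, and the unitary equivalence class of the induced representation depends only on the orbit $[u]$ and on the class of $\rho$ modulo the natural conjugation action of $G$ along the orbit. This yields a continuous open surjection from the bundled space of pairs $\big([u],[\rho]\big)$ onto $\widehat{C^*(G,\lambda)}$, and both implications will be obtained by transferring the $T_1$ property across this map.

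For $(\Leftarrow)$, the $T_1$ hypothesis on $G^{(0)}/G$ makes orbits closed, and liminality of each $C^*(G|_u)$ makes $\widehat{C^*(G|_u)}$ a $T_1$ space; pushing these closedness properties forward through the parameterisation gives that each $\big[\mathrm{Ind}\,\rho\big]$ is closed in $\widehat{C^*(G,\lambda)}$. For $(\Rightarrow)$, I will pull the $T_1$ condition back through the parameterisation: the factorisation of $x\mapsto[\L^x]$ through $G^{(0)}/G$ transfers $T_1$ from the spectrum to the orbit space, and restricting an irreducible representation to the ideal/subquotient corresponding to a single orbit and its stability algebra exhibits each $\widehat{C^*(G|_u)}$ as an appropriate sub-quotient of $\widehat{C^*(G,\lambda)}$, transferring $T_1$ there; combining this with postliminality of the stability algebras (obtained from Theorem~\ref{thm_groupoid_algebra_postliminal}) and the liminal $=$ postliminal $+$ $T_1$ equivalence finishes the job.

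The main obstacle will be making the orbit/stability parameterisation rigorous enough to transfer $T_1$ in both directions. Continuity of $\big([u],[\rho]\big)\mapsto[\mathrm{Ind}\,\rho]$ is classical, but openness, and the fact that non-conjugate stability representations over a single orbit yield distinct points of the spectrum, depend essentially on amenability of the stability groups together with the Effros-Hahn-type description of the ideal structure of $C^*(G,\lambda)$. Without amenability, the parameterisation can fail and the theorem statement would need revision; with amenability in hand, both implications reduce to routine point-set arguments on the spectrum.
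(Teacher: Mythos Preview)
The paper does not supply its own proof of Theorem~\ref{thm_groupoid_algebra_liminal}: the result is quoted verbatim from Clark~\cite[Theorem~6.1]{Clark2007-2} and used as a black box, with no argument given in the thesis. So there is no ``paper's own proof'' to compare your proposal against.

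That said, your outline is a plausible route to the result, and indeed Clark's original proof in \cite{Clark2007-2} is organised along broadly similar lines: the heart of the matter is the Effros--Hahn-type description of the irreducible representations of $C^*(G,\lambda)$ as induced from irreducibles of stability subgroups, and the identification of the spectrum with a quotient of the bundle $\{(u,\rho):u\in G^{(0)},\,\rho\in C^*(G|_u)^\wedge\}$. Your reduction ``liminal $=$ postliminal $+$ $T_1$ spectrum'' is correct and convenient, since it lets you invoke Theorem~\ref{thm_groupoid_algebra_postliminal} for half of each implication. The genuine work, which you correctly flag as the obstacle, is establishing that the map $\big([u],[\rho]\big)\mapsto[\mathrm{Ind}\,\rho]$ is a bijection onto the spectrum and is open as well as continuous; this is where amenability of the stability groups enters, and it is not a triviality. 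Your sketch of how to transfer $T_1$ in each direction is sound once that parameterisation is in hand, but as written the proposal is an outline that defers the hard analysis to ``Renault's disintegration theorem together with the Mackey-style induction machinery'' without pinning down exactly which statements are needed. To make it a proof you would need to cite or prove the precise bijectivity and openness results (these are essentially what Clark establishes in \cite{Clark2007-2}).
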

\begin{remark}\label{remark_T1_iff_orbits_closed}
Recall that a topological space $X$ is $T_1$ if and only if for each $x\in X$, the singleton set $\{x\}$ is closed. It follows that if $G$ is a groupoid, then the orbit space $G^{(0)}/G$ is $T_1$ if and only if the orbits in $G^{(0)}$ are closed subsets of $G^{(0)}$. 
\end{remark}

Remark \ref{remark_T1_iff_orbits_closed} will be used with Theorem \ref{thm_groupoid_algebra_liminal} for the results in Chapter \ref{chapter_categorising_higher-rank_graphs} characterising the higher-rank graphs with liminal $C^*$-algebras. The rest of this section demonstrates the analogue of Remark \ref{remark_T1_iff_orbits_closed} for use with Theorem \ref{thm_groupoid_algebra_postliminal}. This is that the orbit space $G^{(0)}/G$ is $T_0$ if and only if the orbits in $G^{(0)}$ are locally closed.

A topological space is {\em $\sigma$-compact}\index{sigma-compact@$\sigma$-compact} if it is a union of compact sets. The following result is well known.
\begin{lemma}\label{lemma_second_countable_locally_compact_is_sigma_compact}
Every second countable, locally compact space is $\sigma$-compact.
\begin{proof}
Let $X$ be a second countable, locally compact topological space. Since $X$ is second countable there is a countable base $\{U_i\}$ for the topology on $X$. As $X$ is locally compact, each $x\in X$ has a compact neighbourhood $K_x$. Since $\{U_i\}$ is a base for the topology on $X$, for each $x\in X$ there exists $i_x\in\NN$ such that $x\in U_{i_x}\subset K_x$. Let $\{U_{i_j}\}$ be the subsequence of $\{U_i\}$ obtained by removing the elements that are not in $\{U_{i_x}: x\in X\}$. Since each $x$ has $U_{i_x}\subset K_x$, for each $j$ there exists a compact set $K_j$ with $U_{i_j}\subset K_j$. Then $X=\cup_{j} K_j$, so $X$ is $\sigma$-compact.
\end{proof}
\end{lemma}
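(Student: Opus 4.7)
The plan is to exploit the countable base to extract a countable cover of $X$ by sets each contained in a compact set, and then to take those containing compact sets as the desired countable family whose union is $X$.

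First I would fix a countable base $\{U_i\}_{i\in\NN}$ for the topology on $X$, using second countability. Then for each $x\in X$, local compactness yields a compact neighbourhood $K_x$ of $x$; since $x$ is in the interior of $K_x$ and $\{U_i\}$ is a base, there exists $i_x\in\NN$ with $x\in U_{i_x}\subset K_x$. The key observation is then that the collection $\{U_{i_x}:x\in X\}$ is automatically at most countable, being a subfamily of the countable base $\{U_i\}$, even though the index set $X$ may be uncountable.

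Reindex this subfamily as $\{U_{i_j}\}_{j\in J}$ for a countable $J\subset\NN$. For each such $j$, pick any $x\in X$ with $i_j=i_x$ and set $K_j:=K_x$; then $U_{i_j}\subset K_j$ and $K_j$ is compact. Since every $x\in X$ lies in some $U_{i_x}$, the family $\{U_{i_j}\}$ covers $X$, and hence so does $\{K_j\}$. Thus $X=\bigcup_{j\in J}K_j$ is a countable union of compact sets, i.e.\ $X$ is $\sigma$-compact.

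There is no substantive obstacle to this argument; the only point that requires a moment's thought is ensuring that, although there may be uncountably many points $x$, the collection of basis elements $U_{i_x}$ that arise is countable because it is a subfamily of a countable collection. Everything else is a direct unpacking of the definitions of local compactness, second countability, and $\sigma$-compactness.
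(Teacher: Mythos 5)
Your proof is correct and follows essentially the same route as the paper's: fix a countable base, use local compactness to sandwich each point between a basis element and a compact neighbourhood, note that the resulting subfamily of basis elements is countable, and cover $X$ by the associated compact sets. Your explicit remark that countability of $\{U_{i_x}\}$ comes from its being a subfamily of the countable base is exactly the point the paper's proof relies on implicitly.
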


A completely metrisable, second countable, topological space is called a {\em Polish space}\index{Polish space} \cite[p.~175]{Williams2007}. As with Lemma \ref{lemma_second_countable_locally_compact_is_sigma_compact}, the following result is well known.
\begin{lemma}[{\cite[Lemma~6.5]{Williams2007}}]\label{lemma_second_countable_locally_compact_hausdorff_is_polish}
Every second countable, locally compact, Hausdorff space is Polish. 
\end{lemma}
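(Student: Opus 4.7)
The plan is to pass to the one-point compactification to reduce to the compact case, and then recover the Polish property for $X$ itself via Alexandrov's theorem on $G_\delta$ subsets of Polish spaces.

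First I would form the one-point compactification $X^+=X\cup\{\infty\}$, whose open sets are the open subsets of $X$ together with the complements (in $X^+$) of compact subsets of $X$. It is standard that $X^+$ is compact and Hausdorff, the Hausdorff property using local compactness to separate $\infty$ from any point of $X$. The key technical step is verifying that $X^+$ is also second countable. Using Lemma \ref{lemma_second_countable_locally_compact_is_sigma_compact}, I can write $X=\bigcup_n K_n$ with each $K_n$ compact and $K_n\subset \mathrm{int}\,K_{n+1}$; then a countable basis of $X$ together with the sets $X^+\setminus K_n$, which form a neighbourhood base at $\infty$ (since every compact $K\subset X$ sits inside some $K_n$), constitutes a countable basis for $X^+$.

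Next I would apply the Urysohn metrization theorem. Since $X^+$ is compact Hausdorff it is normal and in particular regular, and combined with second countability this gives that $X^+$ is metrizable. Any metric inducing the topology on a compact space is automatically complete, so $X^+$ is a compact, separable, completely metrisable space; in particular $X^+$ is Polish.

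Finally I would observe that $X=X^+\setminus\{\infty\}$ is open in $X^+$, and in a metric space every open set is a $G_\delta$ (one can write it as $\bigcap_n\{y:d(y,X^c)>1/n\}$ for any compatible metric $d$). Alexandrov's theorem asserts that a $G_\delta$ subset of a Polish space admits a complete metric inducing its subspace topology, and is therefore Polish in its own right; applied to $X\subset X^+$ this gives the result. The main obstacle I anticipate is the verification that $X^+$ is second countable, since this is where $\sigma$-compactness is genuinely needed, to produce a countable cofinal family of compact sets and hence a countable neighbourhood base at $\infty$; the metrization step and the invocation of Alexandrov's theorem are routine once second countability of $X^+$ is in hand.
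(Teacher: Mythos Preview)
Your argument is correct and follows a standard route to this classical fact. The paper does not supply its own proof of this lemma; it simply cites \cite[Lemma~6.5]{Williams2007}, so there is nothing in the paper to compare against beyond the bare reference.

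One small point worth tightening: Lemma~\ref{lemma_second_countable_locally_compact_is_sigma_compact} as stated in the paper only yields $X=\bigcup_n K_n$ with each $K_n$ compact, not the stronger exhaustion property $K_n\subset\mathrm{int}\,K_{n+1}$ that you invoke. You need that stronger form to conclude that every compact $K\subset X$ lies inside some $K_n$ (and hence that the sets $X^+\setminus K_n$ form a neighbourhood base at $\infty$). The upgrade is routine---cover each $K_n$ by finitely many relatively compact open sets using local compactness, take the closure of their union, and iterate---but it is an extra step you should make explicit rather than attribute to the cited lemma.
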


An {\em $F_\sigma$} space is a topological space that is a countable union of closed sets. The next result is by Clark.
\begin{lemma}[{\cite[p.~258]{Clark2007}}]\label{lemma_clark}
If $G$ is a $\sigma$-compact, Hausdorff groupoid then the relation on $G^{(0)}$ induced by $G$ is an $F_\sigma$ subset of $G^{(0)}\times G^{(0)}$.
\end{lemma}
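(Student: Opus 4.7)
The plan is to exploit $\sigma$-compactness to write the relation as a countable union of compact, and therefore closed, subsets of the Hausdorff product space $G^{(0)}\times G^{(0)}$.

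First I would note that the relation induced by $G$ on $G^{(0)}$ is, by definition, the set $R=\{(r(\gamma),s(\gamma)):\gamma\in G\}$, which is precisely the image of the continuous map $\Phi:G\rightarrow G^{(0)}\times G^{(0)}$ defined by $\Phi(\gamma)=(r(\gamma),s(\gamma))$; continuity of $\Phi$ follows from continuity of the range and source maps (Remark \ref{remark_following_topological_groupoid_def}).

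Next, since $G$ is $\sigma$-compact, I would write $G=\bigcup_{n\in\NN}K_n$ with each $K_n$ compact, so that
\[
R=\Phi(G)=\bigcup_{n\in\NN}\Phi(K_n).
\]
Each $\Phi(K_n)$ is compact as the continuous image of a compact set. Because $G$ is Hausdorff, its subspace $G^{(0)}$ is Hausdorff, and hence so is the product $G^{(0)}\times G^{(0)}$; consequently each compact set $\Phi(K_n)$ is closed in $G^{(0)}\times G^{(0)}$. Thus $R$ is a countable union of closed sets, i.e. $F_\sigma$.

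There is no serious obstacle here: the argument is essentially a one-liner once one observes that $R$ is the image of $G$ under the continuous map $(r,s)$ and that compact subsets of a Hausdorff space are closed. The only thing to double-check is that $G^{(0)}$ inherits the Hausdorff property from $G$, which is immediate from the fact that $G^{(0)}\subseteq G$ with the subspace topology.
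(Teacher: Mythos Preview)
Your argument is correct and is the natural one: write the relation as $\Phi(G)$ for the continuous map $\Phi=(r,s)$, decompose $G=\bigcup_n K_n$ by $\sigma$-compactness, and observe that each $\Phi(K_n)$ is compact and hence closed in the Hausdorff space $G^{(0)}\times G^{(0)}$. The paper does not supply its own proof of this lemma --- it simply cites \cite[p.~258]{Clark2007} --- so there is nothing to compare against beyond noting that your proof is exactly the standard justification one would expect.
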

The following theorem is a cut-down version of Ramsay's \cite[Theorem~2.1]{Ramsay1990}.
\begin{theorem}[{\cite[Theorem~2.1]{Ramsay1990}}]\label{theorem_ramsay_thm2.1}
Let $G$ be a Polish groupoid and suppose that the relation on $G^{(0)}$ induced by $G$ is an $F_\sigma$ subset of $G^{(0)}\times G^{(0)}$. The following are equivalent:
\begin{enumerate}
\item $G^{(0)}/G$ is $T_0$;
\item each orbit in $G^{(0)}$ is a locally closed subset of $G^{(0)}$;
\item $G^{(0)}/G$ is almost Hausdorff; and
\item $G^{(0)}/G$ is a standard Borel space.
\end{enumerate}
\end{theorem}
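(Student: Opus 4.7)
The plan is to establish the equivalence cyclically, $(2)\Rightarrow(3)\Rightarrow(4)\Rightarrow(1)\Rightarrow(2)$. The first three implications are descriptive-set-theoretic and follow standard arguments, while the return implication $(1)\Rightarrow(2)$ carries the real content of the theorem and is the Polish-groupoid analogue of Effros's theorem on Polish group actions.

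For $(2)\Rightarrow(3)$, I would proceed by transfinite recursion on the countable ordinals; since each orbit is open in its closure, at each stage $\alpha$ there is a $G$-invariant open subset of $G^{(0)}$ whose image in the quotient adds a Hausdorff stratum above the previous stages, and second countability of $G^{(0)}$ forces the construction to terminate at a countable ordinal, producing the almost-Hausdorff filtration. For $(3)\Rightarrow(4)$, the filtration yields a countable family of Borel subsets of $G^{(0)}/G$ that separates points, and since $G^{(0)}/G$ is an analytic Borel space (being the quotient of the Polish space $G^{(0)}$ by a Borel equivalence relation), Mackey's theorem promotes a countably separated analytic Borel space to a standard Borel space. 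For $(4)\Rightarrow(1)$, a standard Borel structure has singletons as Borel atoms, and when combined with the fact that the quotient topology on $G^{(0)}/G$ is induced from a Polish space via a continuous surjection with Borel graph, one deduces that distinct orbits can be separated by open subsets of $G^{(0)}/G$, giving $T_0$.

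The main obstacle, and the step where I expect to spend the bulk of the effort, is $(1)\Rightarrow(2)$. Fix $z\in G^{(0)}$ and let $F=\overline{[z]}$, a $G$-invariant Polish subspace. The orbit $[z]$ is the continuous image of the Polish space $G_z$ under the range map, so it is an analytic subset of $F$; and using the hypothesis that the orbit equivalence relation is $F_\sigma$ in $G^{(0)}\times G^{(0)}$, I would write $[z]=\bigcup_n r(K_n\cap G_z)$ with each $K_n$ closed in $G$. The argument is by contradiction: if $[z]$ fails to be open in $F$, then $F\setminus[z]$ is dense in $F$, and Baire category applied inside the closed saturations appearing in the $F_\sigma$ decomposition of the complement produces a single orbit $[y]\subset F\setminus[z]$ that is itself dense in $F$. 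But then $[z]$ and $[y]$ cannot be separated by $G$-invariant open subsets of $G^{(0)}$, contradicting the $T_0$ hypothesis on $G^{(0)}/G$. The delicate point, and the reason the $F_\sigma$ hypothesis on the equivalence relation is essential, is the extraction of this single dense orbit from $F\setminus[z]$: the $F_\sigma$ structure is exactly what lets one run Baire category inside the countably many closed saturations, just as in Effros's classical argument for Polish group actions.
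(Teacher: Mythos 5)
You should first be aware that the paper offers no proof of this statement: it is quoted (in cut-down form) from \cite[Theorem~2.1]{Ramsay1990}, so there is no in-paper argument to compare against and I can only assess your sketch on its own terms. Your overall architecture is reasonable, and your plan for $(1)\Rightarrow(2)$ is essentially sound once the mechanism is stated correctly: if $[z]$ is not open in $F=\overline{[z]}$ then it has empty interior in $F$ (because the saturation of an open set is open --- this needs the range map, hence the quotient map, to be open, an ingredient you should flag --- so an orbit with interior in its closure is automatically open there); the $F_\sigma$ hypothesis then writes $[z]$ as a countable union of closed sets each with empty interior, hence $[z]$ is meager in the Polish space $F$; meanwhile the set of points of $F$ with dense orbit is a countable intersection of open saturations each containing the dense set $[z]$, hence a dense $G_\delta$; any point of it outside $[z]$ has the same orbit closure as $z$, contradicting $T_0$. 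Note that your description of this step is garbled: $F\setminus[z]$ is a $G_\delta$, not an $F_\sigma$, and the sets you run Baire category over are open saturations, not ``closed saturations''.

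The genuine gaps are elsewhere in your cycle. For $(3)\Rightarrow(4)$ you invoke the principle that a countably separated analytic Borel space is standard; this is false --- an analytic non-Borel subset of $[0,1]$ with its relative Borel structure is analytic and countably separated but not standard, by Lusin--Souslin. The correct route from almost Hausdorff to standard is to use the transfinite stratification to construct a Borel cross-section for the quotient map, identifying $G^{(0)}/G$ with a Borel subset of $G^{(0)}$. More seriously, your $(4)\Rightarrow(1)$ is an assertion rather than an argument: a standard quotient separates orbits by invariant \emph{Borel} sets, whereas $T_0$ demands separation by invariant \emph{open} sets, and upgrading Borel separation to open separation is precisely the Effros-type Baire-category theorem (the same argument as $(1)\Rightarrow(2)$, but run from the weaker hypothesis using sets with the Baire property). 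As written, the hardest implication of the theorem is buried in the step you treat as routine. A clean repair is to prove $(2)\Rightarrow(1)$ directly (if $[y]\ne[z]$ then either $y\notin\overline{[z]}$ or, by local closedness, $\overline{[y]}\subseteq\overline{[z]}\setminus[z]$, and in either case the complement of an orbit closure separates), prove $(1)\Rightarrow(2)$ and $(4)\Rightarrow(2)$ by the Baire argument, and keep $(2)\Rightarrow(3)\Rightarrow(4)$ with the cross-section in place of the appeal to Mackey.
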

The following corollary is, from the perspective of this thesis, the key immediate consequence of Lemmas \ref{lemma_second_countable_locally_compact_is_sigma_compact}, \ref{lemma_second_countable_locally_compact_hausdorff_is_polish} and \ref{lemma_clark} and of Theorem \ref{theorem_ramsay_thm2.1}. It will be used with Theorem \ref{thm_groupoid_algebra_postliminal}.
\begin{cor}\label{cor_ramsay_orbits_locally_closed}
Suppose $G$ is a second countable, locally compact, Hausdorff groupoid. The space $G^{(0)}/G$ is $T_0$ if and only if the orbits in $G^{(0)}$ are locally closed.
\end{cor}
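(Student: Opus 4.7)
The plan is to deduce this corollary by verifying the hypotheses of Ramsay's Theorem~\ref{theorem_ramsay_thm2.1} and then invoking the equivalence of its items (1) and (2). Since the claim has already been set up as a straightforward consequence of the preceding lemmas and theorem, the proof should essentially be a verification exercise rather than any new argument.

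First I would use Lemma~\ref{lemma_second_countable_locally_compact_hausdorff_is_polish}: because $G$ is assumed second countable, locally compact and Hausdorff, $G$ itself is a Polish space, so $G$ is a Polish groupoid in the sense needed by Theorem~\ref{theorem_ramsay_thm2.1}. Next I would observe via Lemma~\ref{lemma_second_countable_locally_compact_is_sigma_compact} that the same hypotheses force $G$ to be $\sigma$-compact, and then apply Lemma~\ref{lemma_clark} (which requires only $\sigma$-compactness and Hausdorffness) to conclude that the equivalence relation induced on $G^{(0)}$ by $G$ is an $F_\sigma$ subset of $G^{(0)}\times G^{(0)}$.

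With both hypotheses of Theorem~\ref{theorem_ramsay_thm2.1} now in hand, I would directly invoke the equivalence of its items (1) and (2): $G^{(0)}/G$ is $T_0$ if and only if every orbit of $G$ in $G^{(0)}$ is locally closed. This yields the corollary.

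There is essentially no obstacle here, as all the heavy lifting is already contained in Ramsay's theorem; the only thing to be careful about is making sure the ambient space appearing in Lemmas~\ref{lemma_second_countable_locally_compact_is_sigma_compact} and~\ref{lemma_second_countable_locally_compact_hausdorff_is_polish} is $G$ itself (which it is, by the definition of a topological groupoid) rather than $G^{(0)}$, so that Lemma~\ref{lemma_clark} can be applied.
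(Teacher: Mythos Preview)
Your proposal is correct and matches the paper's approach exactly: the paper does not give an explicit proof but states that the corollary is ``the key immediate consequence of Lemmas~\ref{lemma_second_countable_locally_compact_is_sigma_compact}, \ref{lemma_second_countable_locally_compact_hausdorff_is_polish} and~\ref{lemma_clark} and of Theorem~\ref{theorem_ramsay_thm2.1},'' which is precisely the chain of verifications you wrote out.
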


\section{Bounded-trace, Fell and continuous trace \texorpdfstring{$C^*$}{\it C*}-algebras}\label{bounded-trace_fell_cts-trace_overview}
Recall that an element $a\in A$ is {\em positive} if $a=b^*b$ for some $b\in A$ and that $A^+$ is the set of all positive elements in $A$. It follows that if $a\in A^+$ and $\pi$ is a representation (which recall is a $\ast$-homomorphism) of $A$ on a Hilbert space $\Hh_\pi$, then $\pi(a)$ is a positive element of $B(\Hh)$. The trace of $\pi(a)$ is then given by
\[
\mathrm{Tr}\big(\pi(a)\big)=\sum_i\big(\pi(a)\xi_i\, |\, \xi_i\big)\in [0,\infty]
\]
for any orthonormal basis $\{\xi_i\}$ of $\Hh_\pi$. If $\pi$ and $\sigma$ are unitarily equivalent representations of $A$, then $\mathrm{Tr}\big(\pi(a)\big)=\mathrm{Tr}\big(\sigma(a)\big)$, so for each $a\in A^+$ there is a map $\hat{a}:\widehat{A}\rightarrow [0,\infty]$ such that $\hat{a}([\pi])=\mathrm{Tr}\big(\pi(a)\big)$ for every irreducible representation $\pi$ of $A$.


\begin{definition}[{\cite[pp.~10--11]{Milicic1973}}]\index{bounded trace}
Suppose $A$ is a $C^*$-algebra. An element $a\in A^+$ has {\em bounded trace} if $\hat{a}$ is bounded. We say that $A$ has {\em bounded trace} if the span of all bounded-trace elements is a dense subset of $A$.
\end{definition}
Archbold, Somerset and Spielberg in \cite[Theorem~2.6]{Archbold-Somerset-Spielberg1997} show that the bounded trace $C^*$-algebras are precisely the {\em uniformly liminal} $C^*$-algebras from \cite[2.1]{Perdrizet1971} and \cite[4.7.11]{Dixmier1969}. Mili\v{c}i\'c in \cite[Theorem~1]{Milicic1973} showed that every bounded-trace $C^*$-algebra is liminal.

\begin{definition}[{\cite[pp.~104--106]{Dixmier1977}}]\index{continuous trace}
Suppose $A$ is a $C^*$-algebra. An element $a\in A^+$ has {\em continuous trace} if $\hat{a}$ is bounded and continuous. We say that $A$ has {\em continuous trace} if the span of all continuous-trace elements is a dense subset of $A$.
\end{definition}
Note that it follows immediately from these definitions that every continuous-trace $C^*$-algebra has bounded trace. 
\begin{prop}[{\cite[Proposition~4.5.4]{Dixmier1977}}]\label{Dixmier4_5_4}
A $C^*$-algebra $A$ has continuous trace if and only if $\widehat{A}$ is Hausdorff and for every $\pi\in\widehat{A}$ there exist $a\in A^+$ and a neighbourhood $V$ of $\pi$ such that $\sigma(a)$ is a rank-one projection for every $\sigma\in V$.
\end{prop}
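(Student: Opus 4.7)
The plan is to prove each direction separately, leaning on the ideal structure of continuous-trace elements and on the Dauns-Hofmann identification of $C_b(\widehat{A})$ with the center of the multiplier algebra $M(A)$.

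For the forward direction, suppose $A$ has continuous trace and let $m(A)$ denote the (two-sided) ideal spanned by its continuous-trace elements. First I would establish that $\widehat{A}$ is Hausdorff: given $\pi_1 \neq \pi_2$ in $\widehat{A}$, density of $m(A)$ together with the fact that distinct irreducible representations have distinct kernels produces $a \in m(A)^+$ with $\hat{a}(\pi_1) \neq \hat{a}(\pi_2)$, and disjoint open neighbourhoods are extracted from the continuity of $\hat{a}$. For the Fell-type condition at a fixed $\pi$, I would choose $a \in m(A)^+$ with $\hat{a}(\pi)>0$; since $\pi(a)$ is a positive trace-class operator, functional calculus provides a continuous function $g$ on $\mathrm{spec}(a)$ such that $g(\pi(a))$ is a minimal projection. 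Setting $b := g(a)$, the element $\pi(b)$ is a rank-one projection; continuity of the trace of $b^2 - b$ (which vanishes at $\pi$) combined with the Hausdorff property of $\widehat{A}$ forces $\sigma(b)$ to be a rank-one projection on a neighbourhood of $\pi$.

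For the reverse direction, assume $\widehat{A}$ is Hausdorff and the Fell condition holds everywhere. I would show that Fell's condition generates a dense supply of continuous-trace elements. Given $\pi_0$ with witness $(a, V)$, Dauns-Hofmann produces a compactly-supported continuous $f : \widehat{A} \to [0,1]$ with $f(\pi_0) = 1$ and $\mathrm{supp}(f) \subset V$, realised by a central multiplier $z_f \in ZM(A)$. For any $x \in A$, the element $b := (z_f a) x (a z_f)$ belongs to $A$, and for $\sigma \in V$ one computes
\[
\mathrm{Tr}(\sigma(b)) = f(\sigma)^2\,(\sigma(x)\xi_\sigma \mid \xi_\sigma),
\]
where $\xi_\sigma$ is any unit vector in the range of the rank-one projection $\sigma(a)$; outside $\mathrm{supp}(f)$ the expression vanishes because $z_f$ acts as zero on those fibres. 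Boundedness of $\hat{b}$ is then immediate from $\|x\|$, and continuity follows from the Hausdorffness of $\widehat{A}$ together with the fact that the scalar $\mathrm{Tr}(\sigma(b))$ does not depend on the choice of $\xi_\sigma$. Running $x$ through $A$ and $\pi_0$ through $\widehat{A}$, I would then show the linear span of such $b$ is dense, so that $A$ has continuous trace.

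The main obstacle is the continuity of $\hat{b}$ in the reverse direction: the Fell condition controls $\sigma(a)$ only as a rank-one projection and gives no \emph{a priori} norm-continuous section $\sigma \mapsto \xi_\sigma$, so one cannot naively propagate pointwise computations. The resolution is to observe that the quantity $\mathrm{Tr}(\sigma(a x a))$ is invariant under the phase choice of $\xi_\sigma$ and can be recovered as $\mathrm{Tr}(\sigma(a)\sigma(x)\sigma(a))$, so norm-continuity of $\sigma \mapsto \sigma(a)$ on $V$ (a consequence of $a \in A$ and Hausdorffness, via upper-semicontinuity of norms forcing genuine continuity when ranks are locally constant) yields continuity of the trace without producing a global selection of vectors. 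The Dauns-Hofmann cutoff $z_f$ is what then glues these local continuous-trace elements into a dense subset of $A$.
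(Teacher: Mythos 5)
The paper offers no proof of this proposition --- it is quoted directly from Dixmier \cite[Proposition~4.5.4]{Dixmier1977} --- so your proposal can only be measured against the standard argument. Your reverse direction follows Dixmier's route (Dauns--Hofmann cutoffs, the compression $axa$, a density argument), but your forward direction has two steps that fail as written. First, ``functional calculus provides a continuous function $g$ on $\mathrm{spec}(a)$ such that $g(\pi(a))$ is a minimal projection'' is false in general: $g(\pi(a))$ commutes with $\pi(a)$ and its range is a sum of eigenspaces, so if every nonzero eigenvalue of the positive trace-class operator $\pi(a)$ has multiplicity $\ge 2$ (e.g.\ $\pi(a)=\mathrm{diag}(1,1,0,0,\dots)$) no functional calculus yields a rank-one projection. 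One must first compress: use Kadison transitivity to produce $y\in A^+$ with $\pi(aya)$ of rank one, and only then apply functional calculus to $aya$. Second, continuity of $\sigma\mapsto\mathrm{Tr}\,\sigma(b-b^2)$ together with its vanishing at $\pi$ only makes this quantity \emph{small} on a neighbourhood, so $\sigma(b)$ is an approximate projection there, not an exact rank-one projection as you assert (and Hausdorffness of $\widehat{A}$ contributes nothing to this step). The standard repair is to shrink $V$ until $\mathrm{Tr}\,\sigma(b)<3/2$ and $\mathrm{Tr}\,\sigma(b-b^2)<1/16$, deduce that $\mathrm{spec}\,\sigma(b)\subset[0,\tfrac14)\cup(\tfrac34,1]$ with exactly one eigenvalue in the upper band, and replace $b$ by $h(b)$ for a continuous $h$ vanishing on $[0,\tfrac14]$ and equal to $1$ on $[\tfrac34,1]$.

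In the reverse direction the genuine issue is, as you identify, continuity of $\sigma\mapsto\mathrm{Tr}\,\sigma(axa)$ on $V$, but your proposed resolution does not work: ``norm-continuity of $\sigma\mapsto\sigma(a)$'' is not meaningful since the operators act on different Hilbert spaces, and the functions $\sigma\mapsto\|\sigma(c)\|$ are \emph{lower}, not upper, semicontinuous in general, so ``upper-semicontinuity of norms'' is not available as a premise. The correct mechanism is: when $\widehat{A}$ is Hausdorff, the sets $\{\sigma:\|\sigma(c)\|\ge\epsilon\}$ are quasi-compact and hence closed, so $\sigma\mapsto\|\sigma(c)\|$ is continuous for every $c\in A$. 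Since $\sigma(a)$ is a rank-one projection on $V$, one has $\sigma(axa)=\big(\mathrm{Tr}\,\sigma(axa)\big)\sigma(a)$, whence $\big|\mathrm{Tr}\,\sigma(axa)-\lambda\big|=\|\sigma(axa-\lambda a)\|$ with $\lambda=\mathrm{Tr}\,\pi(axa)$; continuity of the norm function applied to $c=axa-\lambda a$, which vanishes at $\pi$, then gives the continuity you need. Finally, the closing density claim should not be left implicit: one checks that if the closure of the span of continuous-trace elements were a proper ideal, some $\pi\in\widehat{A}$ would annihilate it, contradicting $\mathrm{Tr}\,\pi(b)=1$ for the element $b$ constructed at $\pi$.
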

Pedersen in \cite[Theorem~6.2.11]{Pedersen1979} shows that postliminal (well behaved) $C^*$-algebras can be thought of as being built out of continuous-trace $C^*$-algebras. Proposition \ref{Dixmier4_5_4} shows that there is a natural generalisation of continuous-trace $C^*$-algebras:
\begin{definition}[{\cite[p.~92]{Archbold-Somerset1993}}]\index{Fell algebra}
A $C^*$-algebra $A$ is {\em Fell} if for every $\pi\in\widehat{A}$ there exist $a\in A^+$ and a neighbourhood $V$ of $\pi$ such that $\sigma(a)$ is a rank-one projection for every $\sigma\in V$.
\end{definition}
In \cite[Theorem~3.3]{anHuef-Kumjian-Sims2011} it is shown by an Huef, Kumjian and Sims that the Fell $C^*$-algebras are precisely the Type $\mathrm{I}_0$ $C^*$-algebras from \cite[p.~191]{Pedersen1979}. In Section \ref{sec_upper_lower_multiplicities} we will introduce the upper and lower multiplicity numbers of the irreducible representations of a $C^*$-algebra. Archbold in \cite[Theorem~4.6]{Archbold1994} showed that a $C^*$-algebra is Fell if and only if the upper multiplicity number of every irreducible representation is 1. In \cite[Theorem~2.6]{Archbold-Somerset-Spielberg1997}, Archbold, Somerset and Spielberg showed that a $C^*$-algebra has bounded trace if and only if the upper multiplicity number of every irreducible representation is finite, so it follows that every Fell $C^*$-algebra has bounded trace.

The following theorems are by Clark, an Huef, Muhly and Williams.
\begin{theorem}[{\cite[Theorem~4.4]{Clark-anHuef2008}}]\label{thm_groupoid_algebra_bounded_trace}
Suppose $G$ is a second countable, locally compact, Hausdorff, principal groupoid with a Haar system $\lambda$. Then $G$ is integrable if and only if $C^*(G,\lambda)$ has bounded trace.
\end{theorem}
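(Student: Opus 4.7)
The plan is to route both directions through the Archbold--Somerset--Spielberg characterisation ($C^*(G,\lambda)$ has bounded trace iff $\M_\U(\pi)<\infty$ for every $\pi\in\widehat{C^*(G,\lambda)}$) combined with Corollary~\ref{AaH_cor5.8}, which translates the finiteness of $\M_\U(\L^z)$ into the local measure-boundedness condition that some open neighbourhood $V$ of $z$ has $G_z^V$ relatively compact and $\sup_{y\in V}\lambda_y(G^V)<\infty$. Recall that each $\L^z$ is irreducible by \cite[Lemma~2.4]{Muhly-Williams1990}, and when the orbits of $G$ are locally closed \cite[Proposition~5.1]{Clark2007} ensures $\{\L^z:z\in G^{(0)}\}=\widehat{C^*(G,\lambda)}$, so it suffices to control $\M_\U(\L^z)$ over $z\in G^{(0)}$.

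For the $(\Leftarrow)$ direction, assume $C^*(G,\lambda)$ has bounded trace. Since bounded-trace $C^*$-algebras are liminal (Mili\v{c}i\'c's theorem), Theorem~\ref{thm_groupoid_algebra_liminal} together with Remark~\ref{remark_T1_iff_orbits_closed} implies the orbits of $G$ are closed, in particular locally closed. Archbold--Somerset--Spielberg then yields $\M_\U(\L^z)<\infty$ for every $z$, and Corollary~\ref{AaH_cor5.8} produces, for each $z$, an open neighbourhood $V_z\ni z$ with $G_z^{V_z}$ relatively compact and $\sup_{y\in V_z}\lambda_y(G^{V_z})<\infty$. Given a compact $N\subset G^{(0)}$, extract a finite subcover $V_{z_1},\ldots,V_{z_p}$ of $N$; then for every $x\in N$ we have $G^N\subset\bigcup_{i=1}^p G^{V_{z_i}}$, so $\lambda_x(G^N)\le\sum_i\lambda_x(G^{V_{z_i}})$. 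A Haar-invariance computation mirroring the argument in the proof of Lemma~\ref{lemma_integrable_if_Cartan} shows that whenever $\lambda_x(G^{V_{z_i}})>0$ one may pick $\gamma\in G$ with $s(\gamma)=x$ and $r(\gamma)\in V_{z_i}$, giving $\lambda_x(G^{V_{z_i}})=\lambda_{r(\gamma)}(G^{V_{z_i}})\le\sup_{y\in V_{z_i}}\lambda_y(G^{V_{z_i}})$. Summing over $i$ produces $\sup_{x\in N}\lambda_x(G^N)<\infty$.

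For the $(\Rightarrow)$ direction, assume $G$ is integrable. I would first establish that the orbits are locally closed by a contrapositive argument: if some $[z]$ fails to be locally closed, one can use the failure of the $T_0$ property of $G^{(0)}/G$ guaranteed by Corollary~\ref{cor_ramsay_orbits_locally_closed} to build a sequence $\{x_n\}$ with $[x_n]\to[z]$ non-uniquely, to which Corollary~\ref{AaH_cor5.6} applies and forces $\lambda_{x_n}(G^V)\to\infty$ for neighbourhoods $V$ of $z$, contradicting integrability on any compact neighbourhood of $z$. With orbits locally closed, $\widehat{C^*(G,\lambda)}=\{\L^z:z\in G^{(0)}\}$ by \cite[Proposition~5.1]{Clark2007}. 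For each $z$, take a compact neighbourhood $N$ of $z$: integrability gives $\sup_{y\in N}\lambda_y(G^N)<\infty$, and using \cite[Lemma~4.1(1)]{Clark-anHuef2010-preprint} we may shrink to an open $V\subset N$ with $G_z^V$ relatively compact while preserving the measure bound. Corollary~\ref{AaH_cor5.8} then gives $\M_\U(\L^z)<\infty$, and Archbold--Somerset--Spielberg delivers bounded trace.

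The main obstacle is the first step of the $(\Rightarrow)$ direction: extracting local-closedness of orbits from integrability. Turning the abstract failure of the $T_0$ property on $G^{(0)}/G$ into a concrete sequence witnessing unbounded measures of the form required by Corollary~\ref{AaH_cor5.6} requires careful choice of approximating arrows, and the partial nature of the composition in a groupoid makes the argument considerably more subtle than in the transformation-group setting where global translations are always available. A fallback, should that route prove awkward, is a direct Hilbert--Schmidt computation: for $f\in C_c(G)$ with support $K$, Haar invariance gives $\tr(\L^z(f^*\ast f))\le\|f\|_\infty^2(\sup_{y\in s(K)}\lambda_y(G^{s(K)}))^2$, independently of $z$, so integrability makes every such $f^*\ast f$ a bounded-trace element on the induced-representation part of the spectrum; one would then extend this bound to general $\pi\in\widehat{C^*(G,\lambda)}$ via a disintegration/quasi-orbit argument.
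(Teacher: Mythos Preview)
The paper does not prove this theorem: it is quoted verbatim from \cite[Theorem~4.4]{Clark-anHuef2008} and used as a black box in Section~\ref{bounded-trace_fell_cts-trace_overview} and in Theorem~\ref{thm_integrable_bounded-trace}. There is therefore no proof in the paper to compare against; the original argument in \cite{Clark-anHuef2008} predates the multiplicity machinery of Chapter~\ref{chapter_strength_of_convergence} and proceeds by direct trace estimates rather than via Corollary~\ref{AaH_cor5.8}.

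That said, your proposed route has a genuine circularity in the $(\Rightarrow)$ direction. You attempt to deduce ``integrable $\Rightarrow$ orbits locally closed'' by assuming some $[z]$ is not locally closed and then invoking Corollary~\ref{AaH_cor5.6} to force $\lambda_{x_n}(G^V)\to\infty$. But Corollary~\ref{AaH_cor5.6} has as a standing hypothesis that $[z]$ \emph{is} locally closed, so it cannot be applied at precisely the point where you need it. The same defect infects your use of Corollary~\ref{AaH_cor5.8}, which also assumes all orbits are locally closed. In short, the multiplicity results of Chapter~\ref{chapter_strength_of_convergence} presuppose the orbit regularity you are trying to establish.

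Your fallback, the direct Hilbert--Schmidt bound $\tr\big(\L^z(f^*\ast f)\big)\le\|f\|_\infty^2\big(\sup_{y\in s(K)}\lambda_y(G^{s(K)})\big)^2$ for $f\in C_c(G)$ with $K=\operatorname{supp} f$, is in fact the right idea and is essentially how \cite{Clark-anHuef2008} proceeds. It bypasses any need for orbit regularity for the forward direction, since the bound is uniform over all $z$ and the $\L^z$ form a faithful family. The remaining issue you flag, extending the bound from the $\L^z$ to arbitrary $\pi\in\widehat{C^*(G,\lambda)}$, is handled not by disintegration but by the observation that bounded trace is detected on a faithful family of representations: if $a\ge 0$ and $\sup_z\tr(\L^z(a))<\infty$ then $a$ lies in the bounded-trace ideal. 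For the converse direction your argument via Theorem~\ref{thm_groupoid_algebra_liminal} (trivial, hence amenable and liminal, isotropy) and Corollary~\ref{AaH_cor5.8} is sound once orbit closure is in hand.
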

\begin{theorem}[{\cite[Theorem~7.9]{Clark2007}}]\label{thm_groupoid_algebra_Fell}
Suppose $G$ is a second countable, locally compact, Hausdorff principal groupoid with a Haar system $\lambda$. Then $G$ is Cartan if and only if $C^*(G,\lambda)$ is Fell.
\end{theorem}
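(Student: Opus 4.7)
The plan is to reduce the theorem to a statement about 2-times convergence via Archbold's characterisation of Fell algebras, namely that $C^*(G,\lambda)$ is Fell if and only if $\M_\U(\pi)=1$ for every irreducible representation $\pi$. Both directions will first require orbits to be locally closed, so the irreducible representations of $C^*(G,\lambda)$ can be parametrised as $\{\L^x : x\in G^{(0)}\}$ and Corollary~\ref{AaH_cor5.4} becomes applicable. To establish this in either direction: Cartan implies integrable by Lemma~\ref{lemma_integrable_if_Cartan}, hence bounded trace by Theorem~\ref{thm_groupoid_algebra_bounded_trace}, hence postliminal; and Fell implies bounded trace (noted after the definition of Fell), hence postliminal. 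Since $G$ is principal its stability subgroups are trivial, so Theorem~\ref{thm_groupoid_algebra_postliminal} gives that $G^{(0)}/G$ is $T_0$, and Corollary~\ref{cor_ramsay_orbits_locally_closed} yields that every orbit is locally closed. Clark's \cite[Proposition~5.1]{Clark2007} then makes $[x]\mapsto \L^x$ a homeomorphism $G^{(0)}/G\to C^*(G)^\wedge$, so every irreducible representation is some $\L^z$.

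For the forward direction, suppose $G$ is Cartan and, for contradiction, that $\M_\U(\L^z)\ge 2$ for some $z\in G^{(0)}$. By Corollary~\ref{AaH_cor5.4} there is a sequence $\{x_n\}\subset G^{(0)}$ converging $2$-times to $z$, witnessed by $\{\gamma_n^{(1)}\},\{\gamma_n^{(2)}\}\subset G$. Choose a wandering neighbourhood $N$ of $z$; since $r(\gamma_n^{(i)})\to z$, both ranges lie in $N$ eventually. Setting $\delta_n:=\gamma_n^{(2)}(\gamma_n^{(1)})^{-1}$, one computes $r(\delta_n)=r(\gamma_n^{(2)})\in N$ and $s(\delta_n)=r(\gamma_n^{(1)})\in N$ eventually, so $\delta_n\in G|_N$ eventually. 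Because $G|_N$ is relatively compact, $\{\delta_n\}$ admits a convergent subsequence, contradicting $\delta_n\to\infty$. Hence $\M_\U(\L^z)=1$ for every $z$ and $C^*(G,\lambda)$ is Fell.

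For the reverse direction, suppose $C^*(G,\lambda)$ is Fell and that some $z\in G^{(0)}$ has no wandering neighbourhood. Fix a decreasing basis $\{N_m\}$ of compact neighbourhoods of $z$; by assumption each $G|_{N_m}$ fails to be relatively compact. Since $G$ is second countable, locally compact, and Hausdorff, it is $\sigma$-compact and we can exhaust it by compact sets $K_m$ with $K_m\subset\mathrm{int}\,K_{m+1}$; any compact subset of $G$ lies in some $K_{m_0}$. For each $m$, choose $\eta_m\in G|_{N_m}\setminus K_m$, which is possible because $G|_{N_m}\not\subset K_m$. Then $r(\eta_m),s(\eta_m)\in N_m$ forces $r(\eta_m)\to z$ and $s(\eta_m)\to z$, while $\eta_m\notin K_m$ for all $m$ forces $\eta_m\to\infty$. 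Set $x_m:=s(\eta_m)$, $\gamma_m^{(1)}:=x_m\in G^{(0)}\subset G$, and $\gamma_m^{(2)}:=\eta_m$. Then $s(\gamma_m^{(i)})=x_m$, $r(\gamma_m^{(1)})=x_m\to z$, $r(\gamma_m^{(2)})=r(\eta_m)\to z$, and $\gamma_m^{(2)}(\gamma_m^{(1)})^{-1}=\eta_m\to\infty$, so $\{x_m\}$ converges $2$-times in $G^{(0)}/G$ to $z$. Corollary~\ref{AaH_cor5.4} then gives $\M_\U(\L^z)\ge 2$, contradicting Fellness.

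The main obstacle is the diagonal construction in the reverse direction: turning the non-existence of wandering neighbourhoods into a single sequence simultaneously witnessing $r,s\to z$ and $\eta_m\to\infty$. This relies crucially on second countability, which provides both a countable decreasing neighbourhood basis at $z$ and a compact exhaustion of $G$ whose interiors cover every compact subset; without these, one could not assemble the $\eta_m$ into a $2$-times convergent witness.
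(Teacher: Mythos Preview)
The paper does not prove this theorem: it is stated with a citation to \cite[Theorem~7.9]{Clark2007} and used as an input. So there is no paper proof to compare against, only Clark's original. Your argument is correct and self-contained within the thesis, and it is genuinely different from Clark's route: you reduce both directions to the equivalence between Cartan-ness and the non-existence of $2$-times convergent sequences, then invoke Corollary~\ref{AaH_cor5.4} and Archbold's $\M_\U(\pi)=1$ criterion. Clark's proof predates the groupoid $k$-times convergence machinery and works instead via local structure of the spectrum; your approach has the pleasant feature of showing that the thesis's own Chapter~\ref{chapter_strength_of_convergence} results suffice to recover the Cartan/Fell equivalence. The preliminary step establishing locally closed orbits in both directions (via bounded trace, postliminality, and Corollary~\ref{cor_ramsay_orbits_locally_closed}) is exactly what is needed to make Corollary~\ref{AaH_cor5.4} available, and the diagonal construction in the reverse direction is clean. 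One small remark: the forward-direction argument that a wandering neighbourhood obstructs $2$-times convergence is essentially \cite[Lemma~5.1]{Clark-anHuef2012}, which the thesis cites elsewhere, so you have in effect re-proved that lemma en route.
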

\begin{theorem}[{\cite[Theorem~2.3]{Muhly-Williams1990}}]\label{thm_groupoid_algebra_continuous_trace}
Suppose $G$ is a second countable, locally compact, Hausdorff principal groupoid with a Haar system $\lambda$. Then $G$ is proper if and only if $C^*(G,\lambda)$ has continuous trace.
\end{theorem}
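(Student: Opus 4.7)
The plan is to deduce this characterisation by combining Proposition \ref{Dixmier4_5_4} with Theorem \ref{thm_groupoid_algebra_Fell}, using the fact that continuous trace factors as ``Fell plus Hausdorff spectrum'' and that properness factors as ``Cartan plus Hausdorff orbit space''. The bulk of the work is in translating between the Hausdorff-spectrum and Hausdorff-orbit-space conditions, and in upgrading Cartan to proper under the extra Hausdorff hypothesis.

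For the forward direction, assume $G$ is proper. Then for every compact neighbourhood $N$ of a point $x\in G^{(0)}$, $G|_N$ is compact (hence relatively compact), so $G$ is Cartan and Theorem \ref{thm_groupoid_algebra_Fell} gives that $C^*(G,\lambda)$ is Fell. Properness of the map $\Phi\colon G\to G^{(0)}\times G^{(0)}$ implies its image (the orbit equivalence relation) is closed, so $G^{(0)}/G$ is Hausdorff; in particular orbits are closed and hence locally closed, so by Clark's \cite[Proposition~5.1]{Clark2007} the assignment $[x]\mapsto [\L^x]$ is a homeomorphism from $G^{(0)}/G$ onto $\widehat{C^*(G,\lambda)}$, making the spectrum Hausdorff. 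Proposition \ref{Dixmier4_5_4} then yields that $C^*(G,\lambda)$ has continuous trace, since the Fell condition is exactly the rank-one projection condition in that proposition.

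For the converse, suppose $C^*(G,\lambda)$ has continuous trace. Proposition \ref{Dixmier4_5_4} immediately gives that $\widehat{C^*(G,\lambda)}$ is Hausdorff and that $C^*(G,\lambda)$ is Fell, so by Theorem \ref{thm_groupoid_algebra_Fell} the groupoid $G$ is Cartan. Hausdorffness of the spectrum forces the spectrum to be $T_1$, so the orbit space is $T_1$; combined with local closedness of the orbits (which follows from the Fell/Cartan hypothesis plus Corollary \ref{cor_ramsay_orbits_locally_closed} applied via Clark's homeomorphism), the homeomorphism $[x]\mapsto[\L^x]$ transports Hausdorffness back to $G^{(0)}/G$. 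It remains to promote Cartan plus Hausdorff orbit space to properness. For any compact $K\subset G^{(0)}$, the Cartan property lets us cover $K$ by finitely many compact wandering neighbourhoods and then use Hausdorffness of the orbit space to show that the equivalence relation is closed in $G^{(0)}\times G^{(0)}$; for a principal groupoid this forces $\Phi$ to be a homeomorphism onto the (closed) equivalence relation, so $\Phi^{-1}(K\times K)=G|_K$ is closed in $G$ and, by the wandering cover, relatively compact, hence compact.

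The main obstacle is the last step: extracting properness of $\Phi$ (equivalently, compactness of $G|_K$ for arbitrary compact $K\subset G^{(0)}$) from the Cartan and Hausdorff-orbit-space hypotheses. Cartan alone gives wandering neighbourhoods $N_x$ with $G|_{N_x}$ relatively compact, but a finite cover $K\subset N_{x_1}\cup\dots\cup N_{x_p}$ produces the bound $G|_K\subset\bigcup_{i,j} G^{N_{x_j}}_{N_{x_i}}$, whose ``cross'' terms with $i\ne j$ need not be relatively compact a priori. Controlling these cross terms requires showing that whenever a net $\gamma_n\in G$ has $r(\gamma_n)\in N_{x_j}$ and $s(\gamma_n)\in N_{x_i}$, the orbits $[r(\gamma_n)]=[s(\gamma_n)]$ converge in $G^{(0)}/G$; Hausdorffness of the orbit space then pins down the limiting orbit, and Cartan gives relative compactness along that orbit. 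Making this argument precise — essentially showing that $\Phi$ is a closed map under Cartan plus Hausdorff orbit space for principal groupoids — is the technical core that the rest of the proof is built on.
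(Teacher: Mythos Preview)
The paper does not prove this theorem at all: it is simply quoted from \cite[Theorem~2.3]{Muhly-Williams1990} as background, with no argument given. So there is nothing in the paper to compare your approach against.

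That said, two issues are worth flagging about your proposal as a free-standing proof. First, there is a risk of circularity: you are deducing the 1990 Muhly--Williams theorem from Clark's 2007 Fell characterisation (Theorem~\ref{thm_groupoid_algebra_Fell}) together with Clark's homeomorphism $G^{(0)}/G\cong C^*(G)^\wedge$ from \cite{Clark2007}. Whether this is legitimate depends on whether Clark's proofs themselves invoke the Muhly--Williams continuous-trace theorem; you should check that before presenting this as an independent argument.

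Second, the step you yourself flag as the ``main obstacle'' --- upgrading Cartan plus Hausdorff orbit space to proper --- is genuinely incomplete in your sketch. You correctly observe that a finite wandering cover $K\subset N_{x_1}\cup\dots\cup N_{x_p}$ only controls the diagonal pieces $G|_{N_{x_i}}$, not the cross terms $G^{N_{x_j}}\cap G_{N_{x_i}}$, and your proposed fix (``Hausdorffness of the orbit space pins down the limiting orbit, and Cartan gives relative compactness along that orbit'') is not an argument but a hope. The claim that $\Phi$ is a homeomorphism onto the (closed) orbit relation does not follow from injectivity and continuity alone; you need $\Phi$ to be closed, and establishing that is essentially equivalent to what you are trying to prove. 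This implication (Cartan plus Hausdorff orbit space implies proper, for principal groupoids with Haar system) is true, but it requires a real argument --- for transformation groups it is a theorem of Palais type --- and you have not supplied one.
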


\chapter{An introduction to graph algebras}\label{chapter_graph_algebra_intro}
In this chapter we will provide a brief overview of the theory of graph algebras, concentrating on the components of the theory relevant to the results in the following chapters. For a broader and more in depth introduction to graph algebras, see Raeburn's book \cite{Raeburn2005}.  Section \ref{section_algebras_of_directed_graphs} provides a brief overview of the theory of the $C^*$-algebras associated to directed graphs. This includes the definition of the Cuntz-Krieger $C^*$-algebra of a row-finite directed graph, the gauge-invariant uniqueness theorem, the Cuntz-Krieger uniqueness theorem and the desourcification of a row-finite directed graph. Section \ref{sec_higher-rank_graphs} provides an overview of the theory of the $C^*$-algebras associated to higher-rank graphs. This includes the definition of a higher-rank graph, the path groupoid of a row-finite higher-rank graph, and the Cuntz-Krieger $C^*$-algebra of a row-finite higher-rank graph without sources. The Cuntz-Krieger $C^*$-algebras of row-finite higher-rank graphs that may have sources are more tricky and will be covered in Section \ref{sec_higher-rank_graphs_2}. The desourcification of a higher-rank graph is an important concept in this thesis, however it is not necessary for readers only interested groupoids, so the definition of the desourcification of a higher-rank graph will instead be introduced in Section \ref{sec_boundary_paths_and_desourcification} of the following chapter before its use in Section \ref{sec_desourcification}.

\section{The Cuntz-Krieger \texorpdfstring{$C^*$}{\it C*}-algebras of directed graphs}\label{section_algebras_of_directed_graphs}
Recall from Section \ref{sec_path_groupoid} the definition of a directed graph and the definition and composition of the associated finite and infinite paths. Also recall that a directed graph is row-finite if $r^{-1}(v)$ is finite for every vertex $v$. For $n\in\NN$, let $E^n$ denote the set of all paths of length $n$. For any $A\subset E^*$ and $X\subset E^*\cup E^\infty$, define 
\[AX:=\braces{\alpha x:\alpha\in A, x\in X, s(\alpha)=r(x)}.\]
If in addition $\alpha\in E^*$, define $\alpha X:=\braces{\alpha}X$, noting that this is consistent with the definition of $\alpha E^\infty$ on page \pageref{def_alphaEinfty}. The following definition from Raeburn's book \cite{Raeburn2005} is based on a definition in Kumjian, Pask and Raeburn's \cite{Kumjian-Pask-Raeburn1998}.
\needspace{7\baselineskip}
\begin{definition}\label{def_directed_graph_algebra}
Let $E$ be a row-finite directed graph. A {\em Cuntz-Krieger $E$-family $\braces{S,P}$} consists of a set $\braces{P_v:v\in E^0}$ of mutually orthogonal projections and a set $\braces{S_f:f\in E^1}$ of partial isometries satisfying
\begin{enumerate}\renewcommand{\theenumi}{\roman{enumi}}
\item\label{def_directed_graph_algebra_1} $S_f^*S_f=P_{s(f)}$ for all $f\in E^1$; and
\item\label{def_directed_graph_algebra_2} $P_v=\sum_{f\in vE^1}S_fS_f^*$ for every $v\in E^1$ that is not a source.
\end{enumerate}
The {\em Cuntz-Krieger directed-graph $C^*$-algebra}, denoted $C^*(E)$, is defined to be the universal $C^*$-algebra generated by a Cuntz-Krieger $E$-family $\braces{s,p}$.
\end{definition}
The existence of such a universal $C^*$-algebra is guaranteed by \cite[Proposition~1.21]{Raeburn2005}, which tells us that if $\{T,Q\}$ is a Cuntz-Krieger $E$-family in a $C^*$-algebra $B$, then there is a $\ast$-homomorphism $\pi_{T,Q}:C^*(E)\rightarrow B$ such that $\pi_{T,Q}(s_f)=T_f$ for every $f\in E^1$ and $\pi_{T,Q}(p_v)=Q_v$ for every $v\in E^0$. We will now state the two standard uniqueness theorems for $C^*(E)$ before applying the first of these to the $C^*$-algebras of the path groupoids from Section \ref{sec_path_groupoid}. The first of these uniqueness theorems is Bates, Pask, Raeburn and Szyma{\'n}ski's adaptation \cite[Theorem~2.1]{Bates-Pask-Raeburn-Szymanski2000} to directed graphs of an Huef and Raeburn's \cite[Theorem~2.3]{anHuef-Raeburn1997}.
\index{gauge-invariant uniqueness theorem}
\begin{theorem}[The gauge-invariant uniqueness theorem]\label{thm_gauge_invariant_uniqueness}
Let $E$ be a row-finite directed graph, and suppose that $\{T,Q\}$ is a Cuntz-Krieger $E$-family in a $C^*$-algebra $B$ with each $Q_v\ne 0$. If there is a continuous action $\beta:\TT\rightarrow\mathrm{Aut}\, B$ such that $\beta_z(T_f)=zT_f$ for every $f\in E^1$ and $\beta_z(Q_v)=Q_v$ for every $v\in E^0$, then $\pi_{T,Q}$ is an isomorphism of $C^*(E)$ onto a $C^*$-subalgebra of $B$.
\end{theorem}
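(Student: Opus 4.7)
The plan is to prove this by combining the universal property of $C^*(E)$ with an averaging argument over the gauge action. By the universal property noted after Definition~\ref{def_directed_graph_algebra}, the generating Cuntz-Krieger family $\{s,p\}$ in $C^*(E)$ yields, for each $z\in\TT$, a $\ast$-homomorphism $\gamma_z:C^*(E)\to C^*(E)$ sending $s_f\mapsto zs_f$ and $p_v\mapsto p_v$; the family $\{\bar z s_f, p_v\}$ produces the inverse, so each $\gamma_z$ is an automorphism. Routine estimates on polynomials in $\{s_f,s_f^*,p_v\}$ give continuity of $z\mapsto\gamma_z(a)$, so $\gamma$ is a continuous action of $\TT$ on $C^*(E)$, the \emph{gauge action}. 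By construction, $\pi_{T,Q}$ intertwines $\gamma$ and $\beta$.

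Next I would introduce the faithful conditional expectations $\Phi_\gamma(a)=\int_\TT \gamma_z(a)\,dz$ onto $C^*(E)^\gamma$ and $\Phi_\beta(b)=\int_\TT \beta_z(b)\,dz$ onto $B^\beta$, and observe $\pi_{T,Q}\circ\Phi_\gamma=\Phi_\beta\circ\pi_{T,Q}$. Since $\Phi_\gamma$ is faithful on positive elements, if we can show $\pi_{T,Q}$ is injective on the fixed-point algebra $C^*(E)^\gamma$, then for any $a\in C^*(E)^+$ with $\pi_{T,Q}(a)=0$ we get $\Phi_\gamma(a)\in C^*(E)^\gamma$ with $\pi_{T,Q}(\Phi_\gamma(a))=\Phi_\beta(\pi_{T,Q}(a))=0$, forcing $\Phi_\gamma(a)=0$ and hence $a=0$; this gives injectivity of $\pi_{T,Q}$ on all of $C^*(E)$.

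The main obstacle is therefore proving injectivity on the fixed-point algebra, and this is where the hypothesis $Q_v\ne 0$ is used. The standard strategy is to identify $C^*(E)^\gamma$ as the closure of the $\ast$-algebra spanned by elements of the form $s_\alpha s_\beta^*$ with $\alpha,\beta\in E^*$ and $|\alpha|=|\beta|$ (these are the fixed monomials under $\gamma$), and then show that for each $n$ the subspace $F_n:=\overline{\mathrm{span}}\{s_\alpha s_\beta^*: |\alpha|=|\beta|=n\}$ is a direct sum of matrix algebras indexed by vertices $v$ (with the $v$-block having a matrix unit system $\{s_\alpha s_\beta^*:s(\alpha)=s(\beta)=v, |\alpha|=|\beta|=n\}$ when $vE^n$ is finite). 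I would check that $C^*(E)^\gamma=\overline{\bigcup_n F_n}$, noting that the Cuntz-Krieger relation at non-sources allows one to rewrite $s_\alpha s_\beta^*$ with $|\alpha|=n$ as a finite sum of terms with length $n+1$, so the $F_n$ form an increasing chain.

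Once $C^*(E)^\gamma$ is expressed as the closure of an increasing union of finite direct sums of matrix algebras, injectivity of $\pi_{T,Q}$ on $C^*(E)^\gamma$ reduces to showing that each matrix block maps injectively. Each such block is simple, so its image is zero or the block itself; the image of the rank-one projection $s_\alpha s_\alpha^*$ inside it is $T_\alpha T_\alpha^*$, and since $T_\alpha^*T_\alpha=Q_{s(\alpha)}\ne 0$ the image is nonzero. Thus $\pi_{T,Q}$ is nonzero, hence injective, on every finite-dimensional subalgebra $F_n$; passing to the closure and combining with the averaging argument above yields injectivity of $\pi_{T,Q}$ on all of $C^*(E)$, which is exactly the conclusion that $\pi_{T,Q}$ is an isomorphism onto its image.
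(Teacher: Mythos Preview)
The paper does not prove this theorem at all: it is stated as a citation of Bates--Pask--Raeburn--Szyma\'nski \cite[Theorem~2.1]{Bates-Pask-Raeburn-Szymanski2000} (itself an adaptation of an Huef--Raeburn \cite[Theorem~2.3]{anHuef-Raeburn1997}) and used as a black box. Your proposal is precisely the standard argument from those references---construct the gauge action $\gamma$ on $C^*(E)$, intertwine it with $\beta$ via $\pi_{T,Q}$, average to obtain faithful conditional expectations onto the fixed-point algebras, and reduce injectivity to the AF core where the hypothesis $Q_v\ne 0$ kills each simple matrix summand---so your approach is correct and is exactly the one the cited literature follows.

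One small technical point: your claim that the $F_n$ (spans of $s_\alpha s_\beta^*$ with $|\alpha|=|\beta|=n$) form an increasing chain relies on rewriting $s_\alpha s_\beta^*$ via the Cuntz--Krieger relation at $s(\alpha)$, but if $s(\alpha)$ is a source that relation is unavailable and $s_\alpha s_\beta^*$ need not lie in $F_{n+1}$. The usual fix (as in \cite{Raeburn2005}) is to take $\mathcal F_n=\overline{\mathrm{span}}\{s_\alpha s_\beta^*:|\alpha|=|\beta|\le n\}$, which is manifestly increasing and still decomposes as a finite direct sum of full matrix algebras; the rest of your argument then goes through unchanged.
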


The next theorem is Bates, Pask, Raeburn and Szyma{\'n}ski's refinement \cite[Theorem~3.1]{Bates-Pask-Raeburn-Szymanski2000} of Kumjian, Pask and Raeburn's \cite[Theorem~3.7]{Kumjian-Pask-Raeburn1998} version of the Cuntz-Krieger uniqueness theorem. An {\em entry}\index{entry!to a cycle} to a cycle $\alpha$ is an edge $f$ with $r(f)=r(\alpha_i)$ and $f\ne\alpha_i$ for some $i\le \lvert\alpha\rvert$.
\index{Cuntz-Krieger uniqueness theorem}
\begin{theorem}[The Cuntz-Krieger uniqueness theorem]
Suppose $E$ is a row-finite directed graph in which every cycle has an entry, and $\{T,Q\}$ is a Cuntz-Krieger $E$-family in a $C^*$-algebra $B$ such that $Q_v\ne 0$ for every $v\in E^0$. Then $\pi_{T,Q}$ is an isomorphism of $C^*(E)$ onto a $C^*$-subalgebra of $B$. 
\end{theorem}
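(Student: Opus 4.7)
The plan is to follow the standard two-step strategy used for uniqueness theorems in graph $C^*$-algebras: first reduce the question of injectivity of $\pi_{T,Q}$ to injectivity on the gauge-fixed-point subalgebra (the AF core), then verify injectivity on the core by a direct argument using that each $Q_v$ is nonzero. Throughout I write $\{s,p\}$ for the universal Cuntz-Krieger $E$-family generating $C^*(E)$ and $\mathcal{F}=C^*(E)^\gamma$ for the fixed-point subalgebra under the canonical gauge action $\gamma:\TT\to\mathrm{Aut}\,C^*(E)$.

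First I would establish that the map $\Phi:C^*(E)\to\mathcal{F}$ defined by $\Phi(a)=\int_\TT \gamma_z(a)\,dz$ is a faithful conditional expectation onto $\mathcal{F}$, and that $\mathcal{F}$ is AF: concretely, $\mathcal{F}=\overline{\bigcup_n \mathcal{F}_n}$ where $\mathcal{F}_n$ is the finite-dimensional algebra spanned by the matrix units $s_\mu s_\nu^*$ with $|\mu|=|\nu|=n$ and $s(\mu)=s(\nu)$, organised by terminal vertex into a direct sum of matrix algebras. Injectivity of $\pi_{T,Q}$ on each $\mathcal{F}_n$ then reduces to showing that the image of each minimal central projection is nonzero, and this follows from the hypothesis $Q_v\neq 0$ together with the Cuntz-Krieger relations \eqref{def_directed_graph_algebra_1}--\eqref{def_directed_graph_algebra_2}, which force $\pi_{T,Q}(s_\mu s_\mu^*)=T_\mu T_\mu^*$ to be a nonzero projection whenever $Q_{s(\mu)}\neq 0$. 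By a standard direct-limit argument this gives injectivity of $\pi_{T,Q}$ on the entire core $\mathcal{F}$.

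Second I would carry out the reduction to the core. Given $a\in C^*(E)$ with $\pi_{T,Q}(a)=0$, one wants to conclude $a=0$. Using faithfulness of $\Phi$, it suffices to prove that $\pi_{T,Q}(\Phi(a^*a))=0$ whenever $\pi_{T,Q}(a^*a)=0$. The standard way to do this is to produce, for any finite linear combination $b=\sum c_{\mu,\nu}s_\mu s_\nu^*$ with $|\mu|,|\nu|$ bounded by some $N$, a projection $q\in C^*(E)$ constructed from path projections $s_\alpha s_\alpha^*$ such that $qbq=q\Phi(b)q$ and $\|\pi_{T,Q}(q\Phi(b)q)\|=\|\pi_{T,Q}(\Phi(b))\|$. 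The projection $q$ is obtained by summing $s_\alpha s_\alpha^*$ over a carefully chosen family of paths $\alpha$ of length at least $N$; to ensure that for every cycle $\mu\neq s(\mu)$ appearing in $b$ there is such an $\alpha$ with $\pi_{T,Q}(s_\alpha^* s_\mu s_\alpha)=0$ (so the off-diagonal terms of $qbq$ vanish), one follows the cycle to an entry and then extends past it. This is exactly the step where the hypothesis that every cycle has an entry is indispensable.

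The main obstacle is constructing these compressing projections $q$ precisely; the combinatorics of tracking paths, cycles and their entries must be set up carefully so that the "off-diagonal" terms $c_{\mu,\nu}s_\mu s_\nu^*$ with $|\mu|\neq |\nu|$ are killed after compression while the "diagonal" terms $c_{\mu,\nu}s_\mu s_\nu^*$ with $|\mu|=|\nu|$ are preserved up to norm. Once that technical lemma (essentially an $E$-graph analogue of Szymański's or Bates--Pask--Raeburn--Szymański's compression argument) is in place, the conclusion follows: $\pi_{T,Q}(a^*a)=0$ implies $\pi_{T,Q}(\Phi(a^*a))=0$, injectivity on $\mathcal{F}$ gives $\Phi(a^*a)=0$, faithfulness of $\Phi$ gives $a=0$, and hence $\pi_{T,Q}$ is isometric onto its image.
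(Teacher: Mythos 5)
Your outline is correct and is essentially the canonical argument: the thesis does not prove this theorem but quotes it from Bates--Pask--Raeburn--Szyma\'nski \cite[Theorem~3.1]{Bates-Pask-Raeburn-Szymanski2000}, whose proof is exactly the two-step strategy you describe (isometry on the AF core from $Q_v\ne 0$, then compression by sums of path projections $T_\alpha T_\alpha^*$ to kill the off-diagonal terms $s_\mu s_\nu^*$ with $|\mu|\ne|\nu|$, the entry-to-every-cycle hypothesis being what lets one choose the extending paths $\alpha$ so that these terms vanish). The only points you gloss over are minor and standard: the careful description of the nested finite-dimensional algebras $\mathcal{F}_n$ when $E$ has sources, and the limiting argument needed to pass from the norm inequality $\|\pi_{T,Q}(b)\|\ge\|\Phi(b)\|$ on finite linear combinations to all of $C^*(E)$.
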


A {\em source}\index{source!in a directed graph} in a directed graph $E$ is a vertex $v\in E^0$ with $r^{-1}(v)=\emptyset$. Recall the definition of the path groupoid $G_E$ from Section \ref{sec_path_groupoid}.
\begin{prop}\label{prop_algebras_isomorphic}
Suppose $E$ is a row-finite directed graph without sources. Then $C^*(E)$ is isomorphic to $C^*(G_E)$.
\begin{proof}
By \cite[Proposition~4.1]{kprr1997} there is a Cuntz-Krieger $E$-family $\{T,Q\}$ such that $C^*(G_E)$ is generated by $\{T,Q\}$ and for each $f\in E^1$, the partial isometry $T_f$ is the characteristic function $1_{Z(f,s(f))}$ of $Z\big(f,s(f)\big)$. In the proof of \cite[Proposition~4.1]{kprr1997} it is shown that $T_fT_f^*=1_{Z(f,f)}$ for every $f\in E^1$, so each $Q_v$ is non-zero by Definition \ref{def_directed_graph_algebra}\eqref{def_directed_graph_algebra_2} since $E$ has no sources. Since $C^*(G_E)$ is generated by $\{T,Q\}$, the map $\pi_{T,Q}$ maps $C^*(E)$ onto $C^*(G_E)$. By \cite[Corollary~4.8]{kprr1997} there is a continuous action $\beta:\TT\rightarrow\mathrm{Aut}\, C^*(G_E)$ such that $\beta_z(T_e)=zT_e$ for every $e\in E^1$ and $\beta_z(Q_v)=Q_v$ for every $v\in E^0$. Then $C^*(E)$ is isomorphic to $C^*(G_E)$ by Theorem \ref{thm_gauge_invariant_uniqueness}.
\end{proof}
\end{prop}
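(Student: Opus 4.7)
The plan is to invoke the gauge-invariant uniqueness theorem, so the main task is to produce a Cuntz-Krieger $E$-family inside $C^*(G_E)$ that both generates $C^*(G_E)$ and admits an appropriate gauge action of $\TT$.

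First I would produce the candidate generators. For each edge $f\in E^1$, the set $Z(f,s(f))$ is a compact open bisection in $G_E$ (a basic set from the path groupoid topology), and similarly $Z(v,v)$ is compact open for each vertex $v\in E^0$. Their characteristic functions therefore lie in $C_c(G_E)\subset C^*(G_E)$. Set $T_f:=1_{Z(f,s(f))}$ and $Q_v:=1_{Z(v,v)}$. A direct convolution computation (or citation of \cite[Proposition~4.1]{kprr1997}) shows that each $Q_v$ is a projection, the $Q_v$ are mutually orthogonal, each $T_f$ is a partial isometry with $T_f^*T_f=Q_{s(f)}$ and $T_fT_f^*=1_{Z(f,f)}$, and the row-finite, no-sources hypothesis gives $Q_v=\sum_{f\in vE^1}T_fT_f^*$. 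Hence $\{T,Q\}$ is a Cuntz-Krieger $E$-family.

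Next I would observe that each $Q_v$ is non-zero: $Z(v,v)$ is a non-empty open subset of $G_E$ (non-empty because $E$ has no sources, so there is an infinite path through $v$), and since $G_E$ is an $r$-discrete groupoid with counting-measure Haar system, $1_{Z(v,v)}$ is a non-zero element of $C^*(G_E)$. By the universal property of $C^*(E)$ (\cite[Proposition~1.21]{Raeburn2005}), there is a $*$-homomorphism $\pi_{T,Q}:C^*(E)\to C^*(G_E)$ sending the canonical generators of $C^*(E)$ to $T_f$ and $Q_v$. Its image contains all $T_f$ and $Q_v$; words in these generators and their adjoints yield the characteristic functions $1_{Z(\alpha,\beta)}$ for all $\alpha,\beta\in E^*$ with $s(\alpha)=s(\beta)$, which span a dense $*$-subalgebra of $C_c(G_E)$ in the inductive-limit topology, hence $\pi_{T,Q}$ is surjective.

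The final ingredient is a gauge action. I would invoke \cite[Corollary~4.8]{kprr1997}: for each $z\in\TT$ there is an automorphism $\beta_z$ of $C^*(G_E)$ arising from the cocycle $(x,n,y)\mapsto z^n$ on $G_E$, and the map $z\mapsto\beta_z$ is strongly continuous. On generators one checks $\beta_z(T_f)=zT_f$ (since $Z(f,s(f))$ sits in the $n=1$ level of $G_E$) and $\beta_z(Q_v)=Q_v$ (since $Z(v,v)\subset G_E^{(0)}$). With surjectivity of $\pi_{T,Q}$, non-vanishing of the $Q_v$, and the equivariance established, the gauge-invariant uniqueness theorem (Theorem~\ref{thm_gauge_invariant_uniqueness}) forces $\pi_{T,Q}$ to be an isomorphism. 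The only step requiring any care is the verification that the family $\{T,Q\}$ really generates $C^*(G_E)$ (as opposed to just a $C^*$-subalgebra), but this reduces to recognising that products of the form $T_\alpha T_\beta^*$ equal $1_{Z(\alpha,\beta)}$ and that these span a dense subspace of $C_c(G_E)$ — both standard consequences of the bisection structure of the sets $Z(\alpha,\beta)$.
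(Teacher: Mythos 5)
Your proposal is correct and follows essentially the same route as the paper: the Cuntz-Krieger family $T_f=1_{Z(f,s(f))}$, $Q_v=1_{Z(v,v)}$ from \cite[Proposition~4.1]{kprr1997}, non-vanishing of the $Q_v$ via the no-sources hypothesis, surjectivity of $\pi_{T,Q}$, the gauge action of \cite[Corollary~4.8]{kprr1997}, and the gauge-invariant uniqueness theorem. The only cosmetic difference is that you verify $Q_v\ne 0$ directly from the non-emptiness of $Z(v,v)$, whereas the paper deduces it from $Q_v=\sum_{f\in vE^1}T_fT_f^*$ with each $T_fT_f^*=1_{Z(f,f)}\ne 0$; both arguments use the no-sources hypothesis and both are fine.
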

The next example shows that $C^*(E)$ may not be isomorphic to $C^*(G_E)$ when $E$ has a source.
\begin{example}\label{example_directed_graph_algebras_not_isomorphic}
Suppose $E$ is the following directed graph.
\begin{center}
\begin{tikzpicture}[>=stealth,baseline=(current bounding box.center)]
\clip (-6.2em,-1.1em) rectangle (1.2em,1.2em);
\node (u) at (-3em, 0em) {};
\node (v) at (1em,0em) {};
\node [anchor=center] at (v) {$v$};
\fill[black] (u) circle (0.15em);
\draw [<-, black] (u) to node[above=-0.2em] {$f$} (v);
\draw [<-,black] (u) .. controls (-7em,3em) and (-7em,-3em) .. (u);
\end{tikzpicture}
\end{center}
Then $C^*(G_E)$ is liminal while $C^*(E)$ is not. (This is proved in Example \ref{example_directed_graph_algebras_not_isomorphic_proof} on page \pageref{example_directed_graph_algebras_not_isomorphic_proof}.)
\end{example}
It should be noted that in \cite{Paterson2002}, Paterson shows how to construct a groupoid from an arbitrary directed graph such that the groupoid $C^*$-algebra is isomorphic to the directed graph $C^*$-algebra. While these groupoids benefit from this additional generality, here we focus on path groupoids due to their relative simplicity.

The key point to the directed-graph $C^*$-algebras is that many properties of the $C^*$-algebra are reflected in the directed graph. Recall that Lemma \ref{prop_principal_iff_no_cycles} says that if $E$ is a row-finite directed graph, then the path groupoid $G_E$ is principal if and only if $E$ has no cycles. Kumjian, Pask and Raeburn showed in \cite{Kumjian-Pask-Raeburn1998} showed that this `no cycles' condition is equivalent to $C^*(E)$ being approximately finite-dimensional:
\begin{theorem}[{\cite[Theorem~2.4]{Kumjian-Pask-Raeburn1998}}]
Suppose $E$ is a row-finite directed graph. Then $C^*(E)$ is approximately finite-dimensional (AF) if and only if $E$ has no cycles.
\end{theorem}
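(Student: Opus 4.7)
My proof plan proceeds by the two directions separately.

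For $(\Leftarrow)$: supposing $E$ has no cycles, I would realise $C^*(E)$ as an inductive limit of finite-dimensional subalgebras. Fix an increasing exhaustion $V_1\subset V_2\subset\cdots$ of $E^0$ by finite sets, and let $E_n$ be the induced subgraph on $V_n$ (keeping edges with both $r,s\in V_n$). Each $E_n$ is a finite acyclic directed graph, hence has a finite-dimensional $C^*$-algebra; this standard fact follows by induction on the length of the longest path, peeling off sink vertices whose corners contribute matrix blocks. The generators of $C^*(E_n)$, interpreted as the corresponding elements of $C^*(E)$, satisfy the Cuntz-Krieger $E_n$-relations because these are logically weaker than the $E$-relations (a vertex that becomes a source in $E_n$ simply loses its CK(ii) constraint). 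The universal property thus gives a $*$-homomorphism $C^*(E_n)\to C^*(E)$, injective by the gauge-invariant uniqueness theorem (Theorem~\ref{thm_gauge_invariant_uniqueness}) applied to $E_n$: the gauge $\TT$-action on $C^*(E)$ restricts appropriately, and all image vertex projections are nonzero since they equal the corresponding $p_v\in C^*(E)$. The images form an increasing chain whose union is dense, since any spanning element $s_\mu s_\nu^*$ of $C^*(E)$ lies in some $C^*(E_n)$ once $V_n$ contains all traversed vertices.

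For $(\Rightarrow)$: if $E$ contains a cycle $\alpha=\alpha_1\cdots\alpha_n$ with $v=r(\alpha)=s(\alpha)$, the corner $B:=p_vC^*(E)p_v$ is a hereditary $C^*$-subalgebra of $C^*(E)$, and hereditary subalgebras of AF algebras are AF; hence it suffices to show $B$ is not AF. Since $s_\alpha^*s_\alpha=p_v$, the element $s_\alpha$ is at least an isometry in the unital algebra $B$. If $\alpha$ has an entry (an edge $f\ne\alpha_i$ with $r(f)=r(\alpha_i)$), I would show $s_\alpha$ is actually a proper isometry. Define $q_j:=s_{\alpha_j}\cdots s_{\alpha_n}s_{\alpha_n}^*\cdots s_{\alpha_j}^*$, so $q_1=s_\alpha s_\alpha^*$ and $q_{j-1}=s_{\alpha_{j-1}}q_js_{\alpha_{j-1}}^*$. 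The entry seeds strict positivity via $p_{r(\alpha_i)}-s_{\alpha_i}s_{\alpha_i}^*\ge s_fs_f^*>0$, so $p_{r(\alpha_i)}-q_i>0$; the identity
\[p_{r(\alpha_{j-1})}-q_{j-1}\ \ge\ s_{\alpha_{j-1}}s_{\alpha_{j-1}}^*-q_{j-1}\ =\ s_{\alpha_{j-1}}(p_{r(\alpha_j)}-q_j)s_{\alpha_{j-1}}^*\]
then propagates strict positivity backwards along the cycle (conjugation by $s_{\alpha_{j-1}}$ preserves nonzeroness of projections dominated by $p_{s(\alpha_{j-1})}=p_{r(\alpha_j)}$). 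So $p_v-s_\alpha s_\alpha^*>0$, making $s_\alpha$ a proper isometry in the unital $B$, which is therefore not stably finite and hence not AF.

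If $\alpha$ has no entry, then $p_{r(\alpha_i)}=s_{\alpha_i}s_{\alpha_i}^*$ for each $i$ (the CK(ii) sum degenerates to a single term), which iterates to $s_\alpha s_\alpha^*=p_v$, so $s_\alpha$ is a unitary in $B$. I would then identify $B$ explicitly: any spanning element $s_\mu s_\nu^*\in B$ satisfies $r(\mu)=r(\nu)=v$, and the no-entry condition forces each edge of $\mu$ and $\nu$ to be the corresponding cycle edge, so $\mu$ and $\nu$ are prefixes of $\alpha\alpha\cdots$; combined with $s(\mu)=s(\nu)$ and repeated application of $s_{\alpha_i}s_{\alpha_i}^*=p_{r(\alpha_i)}$, every such element reduces to a polynomial in $s_\alpha$ and $s_\alpha^*$. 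Hence $B=C^*(s_\alpha,p_v)$, and the spectral identification $\sigma(s_\alpha)=\TT$ (the gauge relation $\gamma_z(s_\alpha)=z^ns_\alpha$ forces $\TT$-rotational invariance of the nonempty closed $\sigma(s_\alpha)\subset\TT$) gives $B\cong C(\TT)$. But every commutative AF algebra is isomorphic to $C_0(X)$ for a totally disconnected space $X$, whereas $\TT$ is connected, so $C(\TT)$ is not AF. The main obstacle in the whole argument is precisely this last case: one must identify $B$ exactly as $C(\TT)$, because merely containing $C(\TT)$ as a subalgebra would not preclude AF-ness of the ambient algebra.
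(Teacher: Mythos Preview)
The paper does not give its own proof of this theorem; it is quoted from \cite{Kumjian-Pask-Raeburn1998} as background, with no argument supplied. So there is nothing in the paper to compare against, and I assess your proposal on its own.

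Your $(\Rightarrow)$ direction is correct: the corner $p_vC^*(E)p_v$ is hereditary, hereditary subalgebras of AF algebras are AF, and your two-case analysis (a proper isometry when the cycle has an entry; an explicit identification with $C(\TT)$ when it does not) correctly rules out AF-ness in each case.

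The $(\Leftarrow)$ direction has a genuine gap. You assert that the Cuntz--Krieger $E_n$-relations are ``logically weaker'' than the $E$-relations, so that the universal property yields a map $C^*(E_n)\to C^*(E)$. This fails for an arbitrary exhaustion $\{V_n\}$. If $v\in V_n$ is \emph{not} a source in $E_n$ but has $vE_n^1\subsetneq vE^1$ (some edge $e\in vE^1$ has $s(e)\notin V_n$), then the $E$-relation $p_v=\sum_{f\in vE^1}s_fs_f^*$ and the $E_n$-relation $p_v=\sum_{f\in vE_n^1}s_fs_f^*$ are \emph{incompatible} equalities, not one weaker than the other; your remark about vertices that become sources in $E_n$ covers only the case $vE_n^1=\emptyset$. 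The repair is either to choose the subgraphs more carefully (for instance, only include an edge $e$ in $E_n^1$ when every $f\in r(e)E^1$ has $s(f)\in V_n$, so that non-source vertices of $E_n$ automatically satisfy $vE_n^1=vE^1$), or, more directly, to drop the abstract $C^*(E_n)$ altogether and work inside $C^*(E)$: the span of $\{s_\mu s_\nu^*: s(\mu)=s(\nu),\ \text{each edge of }\mu,\nu\text{ lies in a fixed finite }G\subset E^1\}$ is closed under multiplication and adjoints, is finite-dimensional when $E$ has no cycles (only finitely many paths use edges from $G$), and the union over all finite $G$ is dense.
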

Before providing another example of how a property of a directed-graph Cuntz-Krieger $C^*$-algebra is reflected in the directed graph, we will introduce some standard definitions. For a directed graph $E$, let $E^{\le\infty}$\notationindex{Eleinfty@$E^{\le\infty}$} be the union of $E^\infty$ with the set of all $x\in E^*$ where $s(x)$ is a source. For $x\in E^*\cup E^\infty$ define $x(0):=r(x)$ and for $p\in\PP$ with $p\le |x|$, define \notationindex{XP@$x(p)$}$x(p):=s(x_p)$. A directed graph $E$ is {\em cofinal}\index{cofinal directed graph} if for every $x\in E^{\le\infty}$ and $v\in E^0$ there exists $\alpha\in vE^*$ such that  $s(\alpha)$ is in the path $x$ (or, in other words, $s(\alpha)=x(p)$ for some $p\le \lvert x\rvert$).
\needspace{4\baselineskip}
\begin{example}[{\cite[Example~4.1]{Raeburn2005}}]
The directed graph
\begin{center}
\begin{tikzpicture}[>=stealth,baseline=(current bounding box.center)]
\clip (-6.2em,-1.1em) rectangle (1.2em,1.2em);
\node (u) at (-3em, 0em) {};
\node (v) at (1em,0em) {};
\fill[black] (u) circle (0.15em);
\fill[black] (v) circle (0.15em);
\draw [<-, black] (v) to node[above=-0.2em] {} (u);
\draw [<-,black] (u) .. controls (-7em,3em) and (-7em,-3em) .. (u);
\end{tikzpicture}
\end{center}
is cofinal whereas the directed graph in Example \ref{example_directed_graph_algebras_not_isomorphic} is not: if $g$ is the cycle from Example \ref{example_directed_graph_algebras_not_isomorphic} then for the path $ggg\cdots$ in $E^{\le\infty}$, there is no path $\alpha$ with $r(\alpha)=v$ and $s(\alpha)$ in the path $ggg\cdots$.
\end{example}
Recall that a $C^*$-algebra $A$ is {\em simple}\index{simple $C^*$-algebra} if it has no closed ideals other than $0$ and $A$. The following result by Raeburn generalises Bates, Pask, Raeburn and Szyma{\'n}ski's \cite[Proposition~5.1]{Bates-Pask-Raeburn-Szymanski2000}, which required the directed graph to have no sources.
\begin{theorem}[{\cite[Theorem~4.14]{Raeburn2005}}]
Suppose $E$ is a row-finite directed graph. Then $C^*(E)$ is simple if and only if every cycle in $E$ has an entry and $E$ is cofinal.
\end{theorem}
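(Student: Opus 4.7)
The plan is to prove each direction separately: for ($\Rightarrow$) I argue the contrapositive by exhibiting explicit proper nonzero ideals when either condition fails, and for ($\Leftarrow$) I show directly that any nonzero closed two-sided ideal of $C^*(E)$ equals $C^*(E)$.

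For ($\Rightarrow$), first suppose $E$ is not cofinal, witnessed by $v_0\in E^0$ and $x\in E^{\le\infty}$ such that no $\alpha\in v_0E^*$ has $s(\alpha)$ a vertex of $x$. Set $H:=\{w\in E^0:\text{no }\alpha\in wE^*\text{ has }s(\alpha)\text{ on }x\}$; a direct check shows $H$ is hereditary (extending a non-hitting path by an edge still does not hit $x$) and saturated (if every edge with range $w$ has source in $H$, then $w\in H$), contains $v_0$, and omits every vertex of $x$. The closed two-sided ideal generated by $\{p_w:w\in H\}$ is therefore proper and nonzero, contradicting simplicity. Next suppose $\alpha=\alpha_1\cdots\alpha_n$ is a cycle with no entry, so for each $i$ the only edge with range $r(\alpha_i)$ is $\alpha_i$, forcing $s_{\alpha_i}s_{\alpha_i}^*=p_{r(\alpha_i)}$; combined with $s_{\alpha_i}^*s_{\alpha_i}=p_{s(\alpha_i)}$ this makes $s_\alpha$ a unitary in the corner $p_vC^*(E)p_v$, where $v=r(\alpha)=s(\alpha)$. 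Using the universal property I construct two Cuntz-Krieger $E$-families in finite-dimensional algebras whose associated homomorphisms disagree on $s_\alpha$ (by assigning distinct scalar eigenvalues to the image of $s_\alpha$), so the corner has a proper nonzero ideal. Since $C^*(E)$ simple would make its corner simple via Morita equivalence, $C^*(E)$ is not simple.

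For ($\Leftarrow$), let $I$ be a nonzero closed two-sided ideal of $C^*(E)$ with quotient map $q:C^*(E)\to C^*(E)/I$. If every $q(p_v)$ were nonzero, then since every cycle has an entry the Cuntz-Krieger uniqueness theorem forces $q$ to be isometric, contradicting $I\ne 0$; hence $H:=\{v\in E^0:p_v\in I\}$ is nonempty. The relation $p_{s(f)}=s_f^*p_{r(f)}s_f$ shows $H$ is hereditary, and $p_v=\sum_{r(f)=v}s_fs_f^*$ shows $H$ is saturated. Suppose for contradiction some $v_0\notin H$; by saturation $v_0$ is either a source or has an edge $f_1$ with $r(f_1)=v_0$ and $s(f_1)\notin H$, and iterating produces $x\in E^{\le\infty}$ with $r(x)=v_0$ and every vertex of $x$ outside $H$. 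Picking any $w\in H$, cofinality gives $\alpha\in wE^*$ with $s(\alpha)$ a vertex of $x$; but then $p_{s(\alpha)}=s_\alpha^*p_ws_\alpha\in I$ forces $s(\alpha)\in H$, a contradiction. Hence $H=E^0$, so $p_v\in I$ for all $v$; and since $s_f=p_{r(f)}s_f\in I$ whenever $p_{r(f)}\in I$, $I$ contains every generator, giving $I=C^*(E)$.

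The main obstacle I anticipate is the cycle-without-entry case of the forward direction: constructing a concrete Cuntz-Krieger $E$-family that distinguishes points on $s_\alpha$ requires care in extending the partial family defined on the cycle to a full Cuntz-Krieger $E$-family on $E$ (to legally invoke the universal property), and then verifying that the resulting non-simplicity of the corner pulls back to a proper nonzero ideal of $C^*(E)$. The remaining steps are largely organisational, combining the hereditary/saturated framework with the uniqueness theorems already at hand.
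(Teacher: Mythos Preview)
The paper does not contain a proof of this theorem; it is stated with attribution to \cite[Theorem~4.14]{Raeburn2005} and used as a known result. So there is no proof in the paper to compare against.

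Evaluating your proposal on its own: the backward direction is correct and is the standard argument. For the forward direction, the non-cofinality case is fine. In the cycle-without-entry case your intended mechanism is right but the wording ``two Cuntz-Krieger $E$-families \ldots\ disagree on $s_\alpha$'' obscures the actual logical step: merely having two homomorphisms that disagree does not by itself give a proper ideal. What you need is that one representation $\pi_1$ sends $s_\alpha$ to $\lambda_1\pi_1(p_v)$, so $s_\alpha-\lambda_1 p_v\in\ker\pi_1$, and the second representation $\pi_2$ with eigenvalue $\lambda_2\ne\lambda_1$ witnesses $s_\alpha-\lambda_1 p_v\ne 0$; hence $\ker\pi_1$ is a nonzero proper ideal. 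You should make that explicit. The genuine technical issue, which you already flag, is extending a family on the cycle vertices to a Cuntz-Krieger $E$-family on all of $E$; since the cycle has no entry its vertex set is hereditary, so one can quotient by the complementary ideal and work in the subgraph, or alternatively use the gauge action to see directly that the spectrum of $s_\alpha$ in $p_vC^*(E)p_v$ is the whole circle (so $C^*(p_v,s_\alpha)\cong C(\TT)$), which avoids building finite-dimensional families altogether and is closer to the argument in Raeburn's book.
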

Recall from Proposition \ref{prop_algebras_isomorphic} that the path groupoids can be used as a model for studying row-finite directed graphs without sources. A natural question to ask is: if these groupoids are used to establish a relationship between the row-finite directed graphs without sources and their associated $C^*$-algebras, can this relationship tell us anything about the $C^*$-algebras of directed graphs with sources? We will now describe an answer to this question provided by Bates, Pask, Raeburn and Szyma{\'n}ski's \cite[Lemma~1.2]{Bates-Pask-Raeburn-Szymanski2000}. 
\begin{definition}
\index{00Etilde@$\widetilde{E}$}Suppose $E$ is a row-finite directed graph. The directed graph obtained by adding a head
\begin{center}
\begin{tikzpicture}[>=stealth,baseline=(current bounding box.center)] 
\def\cellwidth{5.5};
\clip (-2em,-0.3em) rectangle (4*\cellwidth em + 2em,0.3em);

\node (x1y0) at (0 em,0 em) {$v$};
\foreach \x in {2,3,4,5} \foreach \y in {0} \node (x\x y\y) at (\cellwidth*\x em-\cellwidth em,-3*\y em) {};
\foreach \x in {2,3,4,5} \foreach \y in {0} \fill[black] (x\x y\y) circle (0.15em);

\foreach \x / \z in {1/2,2/3,3/4,4/5} \draw[black,<-] (x\x y0) to (x\z y0);

\node(endtailone) at (4*\cellwidth em + 2em,0em) {};
\draw[dotted,thick] (x5y0) -- (endtailone);
\end{tikzpicture}
\end{center}
to every source $v$ in $E$ is called the {\em desourcification}\index{desourcification!of a directed graph} of $E$ and is denoted by $\widetilde{E}$.
\end{definition}
\begin{prop}[{Corollary of \cite[Lemma~1.2]{Bates-Pask-Raeburn-Szymanski2000}}]
Suppose $E$ is a row-finite directed graph. Then the desourcification $\widetilde{E}$ of $E$ is a row-finite directed graph without sources and $C^*(\widetilde{E})$ is Morita equivalent to $C^*(E)$.
\end{prop}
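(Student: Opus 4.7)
The plan is to verify the two claims separately. For the graph-theoretic claim, I would simply unpack the construction of $\widetilde{E}$: for every source $v\in E^0$, we adjoin a new set of vertices $\{v_1,v_2,\ldots\}$ and new edges $v_{i+1}\to v_i$ and $v_1\to v$. Each new vertex $v_i$ has an incoming edge (from $v_{i+1}$), and the previous source $v$ now has the single incoming edge from $v_1$, so $\widetilde{E}$ has no sources; each new vertex $v_i$ has precisely one incoming edge, and each vertex $v\in E^0$ has at most one new incoming edge added to its original finite pre-image, so row-finiteness is preserved.

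For the Morita equivalence, I would construct the usual full-corner realisation of $C^*(E)$ inside $C^*(\widetilde{E})$. Let $\{S,P\}$ be the canonical Cuntz-Krieger $\widetilde{E}$-family generating $C^*(\widetilde{E})$. The subfamily $\{S_f, P_v : f\in E^1, v\in E^0\}$ is a Cuntz-Krieger $E$-family (the relation (ii) of Definition~\ref{def_directed_graph_algebra} at a vertex $w\in E^0$ involves only edges in $E^1$, since no new edge in $\widetilde{E}^1\setminus E^1$ has range in $E^0$ except the single edge attached at each original source, but that vertex was a source in $E$ and so (ii) is vacuous for it in $E$). The universal property of $C^*(E)$ therefore gives a $*$-homomorphism $\phi:C^*(E)\to C^*(\widetilde{E})$. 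Each $\phi(p_v)=P_v$ is nonzero (all canonical projections in $C^*(\widetilde{E})$ are nonzero), and the gauge action on $C^*(\widetilde{E})$ restricts to a gauge action on $\phi(C^*(E))$ of the right form; applying the gauge-invariant uniqueness Theorem~\ref{thm_gauge_invariant_uniqueness} shows that $\phi$ is injective.

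Next I would identify the image of $\phi$ with the corner $pC^*(\widetilde{E})p$, where $p$ is the strictly convergent sum $\sum_{v\in E^0}P_v$ in the multiplier algebra $\M(C^*(\widetilde{E}))$. Indeed, a standard spanning argument shows that $C^*(\widetilde{E})$ is the closed span of products $S_\mu S_\nu^*$ for paths $\mu,\nu\in\widetilde{E}^*$ with $s(\mu)=s(\nu)$; such a product lies in $pC^*(\widetilde{E})p$ precisely when $r(\mu),r(\nu)\in E^0$, and then $\mu,\nu$ are forced to be paths in $E$ (the new edges always have range outside $E^0$ except possibly the last edge into the original source, but no edge has source in $E^0$ and range in the head), so $S_\mu S_\nu^*\in\phi(C^*(E))$. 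Hence $\phi(C^*(E))=pC^*(\widetilde{E})p$.

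Finally I would show the corner is full, i.e.\ that the ideal $I$ in $C^*(\widetilde{E})$ generated by $\{P_v:v\in E^0\}$ is all of $C^*(\widetilde{E})$. Given any new vertex $v_i$ on a head attached to the original source $v$, the path $v_iv_{i-1}\cdots v_1 v$ in $\widetilde{E}$ gives $S_{v_i\cdots v}^*S_{v_i\cdots v}=P_v\in I$, so $S_{v_i\cdots v}\in I$ and hence $P_{v_i}=S_{v_i\cdots v}S_{v_i\cdots v}^*\cdot(\text{something})\in I$ — more directly, since $P_{v_i}=S_{v_i\cdots v}P_vS_{v_i\cdots v}^*$, we see $P_{v_i}\in I$. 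Thus every canonical vertex projection of $C^*(\widetilde{E})$ lies in $I$, and since these generate $C^*(\widetilde{E})$ we conclude $I=C^*(\widetilde{E})$. Therefore $pC^*(\widetilde{E})p$ is a full corner and $C^*(E)\cong pC^*(\widetilde{E})p$ is Morita equivalent to $C^*(\widetilde{E})$.

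The only mildly subtle point is the multiplier-algebra technicality when $E^0$ is infinite (so the sum $\sum_{v\in E^0}P_v$ does not converge in norm); this is handled by working with strict convergence in $\M(C^*(\widetilde{E}))$ and the standard fact that a full projection $p$ in the multiplier algebra yields a Morita equivalence between $pAp$ and $A$. Everything else is routine bookkeeping using the Cuntz-Krieger relations and the gauge-invariant uniqueness theorem.
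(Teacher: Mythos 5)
Your overall strategy is the standard one: the paper gives no proof of its own here, deferring entirely to Lemma~1.2 of Bates--Pask--Raeburn--Szyma\'nski, and that lemma is proved by essentially the full-corner argument you describe (realise $C^*(E)$ as $pC^*(\widetilde{E})p$ with $p=\sum_{v\in E^0}P_v$ converging strictly in the multiplier algebra, then check fullness). The graph-theoretic verification, the check that $\{S_f,P_v:f\in E^1,v\in E^0\}$ is a Cuntz--Krieger $E$-family, the injectivity via the gauge-invariant uniqueness theorem, and the fullness computation are all correct.

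One claim in your identification of the corner is inaccurate as stated: a path $\mu\in\widetilde{E}^*$ with $r(\mu)\in E^0$ is \emph{not} forced to be a path in $E$. It can leave $E$ by running down a head: if $v$ is a source of $E$ with attached head edges $e_1,e_2,\ldots$ (so $r(e_1)=v$ and $s(e_j)=r(e_{j+1})$), then $\mu=\mu'e_1\cdots e_i$ with $\mu'\in E^*$, $s(\mu')=v$, has range in $E^0$ but source $s(e_i)\notin E^0$. The conclusion you want still holds, but for a different reason. Since every head vertex receives exactly one edge and emits exactly one edge, the condition $s(\mu)=s(\nu)$ forces $\mu$ and $\nu$ to end in the \emph{same} segment $e_1\cdots e_i$ of the same head, say $\mu=\mu'e_1\cdots e_i$ and $\nu=\nu'e_1\cdots e_i$ with $\mu',\nu'\in E^*$; and relation (ii) applied down the head gives $S_{e_1\cdots e_i}S_{e_1\cdots e_i}^*=P_v$, whence
\[
S_\mu S_\nu^*=S_{\mu'}S_{e_1\cdots e_i}S_{e_1\cdots e_i}^*S_{\nu'}^*=S_{\mu'}P_vS_{\nu'}^*=S_{\mu'}S_{\nu'}^*\in\phi\big(C^*(E)\big).
\]
With that repair (and noting that a path of $\widetilde{E}$ with both range and source in $E^0$ can never enter a head, hence genuinely is a path in $E$), the spanning argument closes and the proof is complete.
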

The Morita equivalence that the previous proposition establishes is particularly useful because of the following theorem that combines results from Zettl in \cite{Zettl1982} and an Huef, Raeburn and Williams in \cite{anHuef-Raeburn-Williams2007}.
\begin{theorem}[Results from \cite{Zettl1982,anHuef-Raeburn-Williams2007}]\label{thm_Morita_equivalence_results}
Suppose $A$ and $B$ are Morita equivalent $C^*$-algebras. Then
{\allowdisplaybreaks\begin{enumerate}\renewcommand{\theenumi}{\roman{enumi}}
\item $A$ has continuous trace if and only if $B$ has continuous trace;
\item $A$ is Fell if and only if $B$ is Fell;
\item $A$ has bounded trace if and only if $B$ has bounded trace;
\item $A$ is liminal if and only if $B$ is liminal; and
\item $A$ is postliminal if and only if $B$ is postliminal.
\end{enumerate}}
\end{theorem}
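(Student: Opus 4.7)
The plan is simply to assemble this result by citing the appropriate Morita-invariance theorems from the literature, since none of these equivalences is novel to the thesis. The underlying theme is that Morita equivalence between $A$ and $B$ induces a homeomorphism between the spectra $\widehat{A}$ and $\widehat{B}$ (and a lattice isomorphism between their ideals), so properties that can be characterised spectrally or ideal-theoretically transfer across the equivalence.

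For (i), continuous trace is a classical Morita invariant; this is exactly the content of Zettl's \cite{Zettl1982}. For (ii) and (iii), the Morita invariance of the Fell property and of bounded trace are established by an Huef, Raeburn and Williams in \cite{anHuef-Raeburn-Williams2007}, so I would invoke those theorems directly. (These are, in fact, the reason \cite{anHuef-Raeburn-Williams2007} is cited here alongside \cite{Zettl1982}.)

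For (iv) and (v), Morita invariance of liminality and postliminality can be obtained by noting that Morita equivalent $C^*$-algebras have equivalent representation theories: the Rieffel correspondence gives a bijection between equivalence classes of irreducible representations of $A$ and $B$ under which the property of being a compact operator, and the property of containing the compacts in its image, is preserved up to the natural isomorphism of commutants. Alternatively, since being postliminal (Type I) is equivalent to the spectrum being $T_0$ together with a condition on the primitive ideals (or to $A$ having all quotients with non-zero socle, etc.), and both conditions are preserved by the Rieffel correspondence, the invariance is immediate. The liminal case is similar but uses the characterisation in terms of every irreducible representation having image in the compacts. There should be no substantial obstacle in the proof — the only delicate point is ensuring that one uses a formulation of Morita equivalence (Rieffel's strong Morita equivalence for $C^*$-algebras) for which these invariance results are stated in the cited sources, but this is already the standard convention throughout the thesis.
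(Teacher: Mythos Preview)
Your proposal is correct and matches the paper's approach: the theorem is stated without proof and simply attributed to \cite{Zettl1982} and \cite{anHuef-Raeburn-Williams2007}. One small refinement: the paper later cites \cite[Proposition~2]{anHuef-Raeburn-Williams2007} for the liminal and postliminal cases as well, so your direct Rieffel-correspondence sketch for (iv) and (v), while perfectly valid, is unnecessary---all five parts are covered by the cited references.
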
 


\section{Higher-rank graphs and their \texorpdfstring{$C^*$}{\it C*}-algebras and groupoids}\label{sec_higher-rank_graphs}
In this section we will define the notion of a higher-rank graph and the associated path groupoid before showing the relationship between higher-rank graphs and directed graphs. This section will conclude with the definition of the Cuntz-Krieger $C^*$-algebra of a row-finite higher-rank graph without sources.

A {\em countable category}\index{category} $\Cc=(\Cc^0,\Cc^*,r_\Cc,s_\Cc,\circ,\id)$\notationindex{C@$\Cc,\Cc^0,\Cc^*$} consists of two countable sets $\Cc^0$ (objects\index{object!in a category}) and $\Cc^*$ (morphisms\index{morphism!in a category}), two functions $r_\Cc,s_\Cc:\Cc^*\rightarrow \Cc^0$ (range and source maps)\index{range!in a category}\index{source!in a category}\notationindex{RC@$r_\Cc$}\notationindex{SC@$s_\Cc$}, a partially defined product (composition)\index{composition!in a category} $(f,g)\mapsto f\circ g$ from 
\[\braces{(f,g)\in \Cc^*\times\Cc^*:s_\Cc(f)=r_\Cc(g)}\]
to $\Cc^*$, and an injective map $\id:\Cc^0\rightarrow \Cc^*$, such that the following are satisfied for $f,g,h\in\Cc^*$ and $v\in\Cc^0$:
\begin{itemize}
 \item $r_\Cc(f\circ g)=r_\Cc(f)$ and $s_\Cc(f\circ g)=s_\Cc(g)$;
\item $(f\circ g)\circ h=f\circ (g\circ h)$ when $s_\Cc(f)=r_\Cc(g)$ and $s_\Cc(g)=r_\Cc(h)$;
\item $r_\Cc\big(\id(v)\big)=v=s_\Cc\big(\id(v)\big)$; and
\item $\id(v) \circ f=f$, $g\circ\id(v)=g$ when $r_\Cc(f)=v$ and $s_\Cc(g)=v$.
\end{itemize}
Given two countable categories $\Cc$ and $\Dd$, a {\em functor}\index{functor} $F:\Cc\rightarrow\Dd$ is a pair of maps $F^0:\Cc^0\rightarrow \Dd^0$ and $F^*:\Cc^*\rightarrow\Dd^*$ that respect the range and source maps and composition\footnote{In other words, for every $g,h\in\Cc$ we have $F^0\big(r_\Cc(g)\big)=r_\Cc\big(F^*(g)\big)$, $F^0\big(s_\Cc(f)\big)=s_\Cc\big(F^*(g)\big)$ and $F^*(g\circ h)=F^*(g)\circ F^*(h)$.}, and satisfy $\id\big(F^0(v)\big)=F^0\big(\id(v)\big)$ for every $v\in\Cc^0$. When the category is clear from the context, we write $r$ for $r_\Cc$, $s$ for $s_\Cc$ and $fg$ for $f\circ g$.

\begin{example}
Fix $k\in\PP$. We can construct a countable category from $\NN^k$: define $\Nn_k^0:=\braces{v}$ and $\Nn_k^*:=\NN^k$. For each $m,n\in\NN^k$ define $r(n):=v$, $s(n):=v$ and $m\circ n:=m+n$. Let $\id(v):=0$. Then $(\Nn_k^0,\Nn_k^*,r,s,\circ,\id)$ is a countable category. With the exception of the morphisms this category has a very basic structure, so the category will be denoted by $\NN^k$.
\end{example}

\begin{definition}[{\cite[Definitions~1.1]{Kumjian-Pask2000}}]\label{def_degree_map}
Suppose $k\in\PP$. A {\em $k$-graph} (or {\em higher-rank graph})\index{higher-rank graph}\index{$k$-graph} is a countable category $\Lambda=(\Lambda^0,\Lambda^*,r,s,\id)$\index{$\Lambda$} endowed with a functor $d:\Lambda\rightarrow\NN^k$\index{$d:\Lambda\rightarrow\NN^k$} called the {\em degree map}\label{old_def_degree_map}\index{degree map}, which satisfies the {\em factorisation property}\index{factorisation property}: for every $\alpha\in\Lambda^*$ and $m,n\in\NN^k$ with $d(\alpha)=m+n$, there exist unique $\mu,\nu\in\Lambda^*$ such that $\alpha=\mu\nu$, $d(\mu)=m$ and $d(\nu)=n$.
\end{definition}
\begin{example}\label{example_LambdaE}
For a directed graph $E$ there is a natural category $\Lambda_E$\notationindex{LambdaE@$\Lambda_E$} that can be constructed from $E$ with $E^0$ as the objects and $E^*$ as the morphisms. The map $d:E^*\rightarrow\NN$ such that $d(\alpha)=|\alpha|$ for every $\alpha\in E^*$ satisfies the factorisation property, so $\Lambda_E$ equipped with $d$ is a $1$-graph.
\end{example}
\notationindex{MleN@$m\le n$ ($m,n\in\NN^k$)}Recall that for $k\in\PP$ and $m,n\in\NN^k$, we write $m\le n$ if $m_i\le n_i$ for $i=1,2,\ldots,k$.
\begin{example}\label{example_Omega_k_m}\notationindex{Omegakm@$\Omega_{k,m}$}
Fix $k\in\PP$ and $m\in(\NN\cup\braces\infty)^k$. Define $\Omega_{k,m}$ to be the category $(\mathrm{Obj}(\Omega_{k,m}),\mathrm{Mor}(\Omega_{k,m}),r,s,\id,\circ)$ such that
\begin{itemize}
\item $\mathrm{Obj}(\Omega_{k,m})=\braces{p\in\NN^k:p\le m}$;
\item $\mathrm{Mor}(\Omega_{k,m})=\braces{(p,q)\in \Omega_{k,m}^0\times \Omega_{k,m}^0:p\le q}$;
\item $r(p,q)=p$ and $s(p,q)=q$ for every $(p,q)\in\mathrm{Obj}(\Omega_{k,m})$;
\item $\id(p)=(p,p)$ for every $p\in\mathrm{Obj}(\Omega_{k,m})$; and
\item $(p,q)\circ(q,t)=(p,t)$ for every $(p,q),(q,t)\in\mathrm{Mor}(\Omega_{k,m})$.
\end{itemize}
We equip the category $\Omega_{k,m}$ with the degree map $d:\mathrm{Mor}(\Omega_{k,m})\rightarrow\NN^k$ where $d(p,q)=q-p$ in order to make $\Omega_{k,m}$ a $k$-graph. If $m=(\infty)^k$, we simplify notation by writing $\Omega_k$ for $\Omega_{k,m}$. 
\end{example}
Suppose $\Lambda$ is a $k$-graph. For any $m\in\NN^k$, we define $\Lambda^m:=\braces{\alpha\in\mathrm{Mor}(\Lambda):d(\alpha)=m}$\index{$\Lambda^m$}. Recall that the map $\id:\mathrm{Obj}(\Lambda)\rightarrow \mathrm{Mor}(\Lambda)$ is injective. By the factorisation property $\id$ is a bijection onto $d^{-1}(0)=\Lambda^0$. We implicitly identify an object $v\in\mathrm{Obj}(\Lambda)$ with the associated morphism $\id(v)\in \Lambda^0$, so we end up considering $\Lambda^0$ to be $\mathrm{Obj}(\Lambda)$. We simplify notation further by writing $\Lambda$ for $\mathrm{Mor}(\Lambda)$.


A {\em $k$-coloured graph}\index{coloured graph}\index{k-coloured graph@$k$-coloured graph} $E=(E^0,E^1,r,s,c)$ is a directed graph $(E^0,E^1,r,s)$ with a {\em colour map}\index{colour map} $c:E^1\rightarrow\{c_1,c_2,\ldots,c_k\}$\index{$c:E^1\rightarrow\{c_1,c_2,\ldots,c_k\}$} for some distinct $c_1,c_2,\ldots,c_k$. We denote the usual basis for $\NN^k$ by $\braces{e_i}$\notationindex{EzI@$e_i$}. Suppose $\Lambda$ is a $k$-graph. The {\em skeleton}\index{skeleton} of $\Lambda$ is the $k$-coloured graph $E=(E^0,E^1,r,s,c)$ where $E^0=\Lambda^0$, $E^1=\cup_{i=1}^k \Lambda^{e_i}$, range and source are inherited from $\Lambda$, and for each $i$ ($1\le i\le k$) and $f\in\Lambda^{e_i}$, $c(f):=c_i$.
\begin{example}
The skeletons of the $k$-graphs $\Omega_{k,m}$ are relatively easy to picture. For example, the skeleton of the $2$-graph $\Omega_{2,(3,2)}$ can be illustrated by the following picture.
\begin{center}
\begin{tikzpicture}[>=stealth,baseline=(current bounding box.center)] 
\def\cellwidth{5.5};

\clip (-0.9em,-0.4em) rectangle (3*\cellwidth em + 0.9em,6.4em);

\foreach \x in {0,1,2,3} \foreach \y in {0,1,2} \node (x\x y\y) at (\cellwidth*\x em,3*\y em) {$\scriptstyle (\x,\y)$};
\foreach \x / \z in {0/1,1/2,2/3} \foreach \y in {0,1,2} \draw[black,<-] (x\x y\y) to (x\z y\y);
\foreach \x in {0,1,2,3} \foreach \y / \z in {0/1,1/2} \draw[dashed,black,<-] (x\x y\y) to (x\x y\z);

\end{tikzpicture}
\end{center}
\end{example}

A {\em $k$-graph morphism}\index{$k$-graph morphism} is a degree-preserving functor. A consequence of the factorisation property is that if $\Lambda$ is a $k$-graph and $\alpha\in\Lambda^m$ for some $m\in\NN^k$, then there exists a unique $k$-graph morphism $x_\alpha:\Omega_{k,m}\rightarrow\Lambda$ such that $x_\alpha(0,m)=\alpha$. It follows that for any $m\in\NN^k$ we can identify $\Lambda^m$ with
\[
 \braces{x:\Omega_{k,m}\rightarrow\Lambda:x\text{ is a $k$-graph morphism}}.
\]
When $\alpha\in\Lambda$ and $p,q\in\NN^k$ with $p\le q\le d(\alpha)$, we write $\alpha(p,q)$ for $x_\alpha(p,q)$\index{$\alpha(p,q)$ where $\alpha\in\Lambda$} and $\alpha(p)$\index{$\alpha(p)$ where $\alpha\in\Lambda$} for $x_\alpha(p)$.

Kumjian and Pask in \cite[p. 7]{Kumjian-Pask2000} defined the {\em infinite path space}\index{infinite path space} of a $k$-graph $\Lambda$ to be the set
\[
\Lambda^\infty=\braces{x:\Omega_k\rightarrow\Lambda: x\text{ is a $k$-graph morphism}}.\index{$\Lambda^\infty$}
\]
For each $x\in\Lambda^\infty$ define $r(x):=x(0)$. For each $p\in\NN^k$, define $\sigma^p:\Lambda^\infty\rightarrow\Lambda^\infty$\index{$\sigma^p(x)$} by $\sigma^p(x)(m,n)=x(m+p,n+p)$\label{def_shift_map} for $x\in\Lambda^\infty$ and $(m,n)\in\Omega_k$.

In \cite{Kumjian-Pask2000}, Kumjian and Pask have the standing assumption that each of the higher-rank graphs are row-finite and have no sources (these concepts are defined immediately following Proposition \ref{prop_finite_infinite_composition}). The point here is that these assumptions are not used in the proof of the following result from \cite{Kumjian-Pask2000} and so they are left out here.\begin{prop}[{\cite[Proposition~2.3]{Kumjian-Pask2000}}]\label{prop_finite_infinite_composition}
Suppose $\Lambda$ is a $k$-graph. For any $\alpha\in\Lambda$ and $x\in\Lambda^\infty$ with $r(x)=s(\alpha)$, there is a unique $y\in\Lambda^\infty$ such that $x=\sigma^{d(\alpha)}(y)$ and $\alpha=y\big(0,d(\alpha)\big)$.
\end{prop}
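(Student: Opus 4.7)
The plan is to define the required morphism $y\colon \Omega_k\to\Lambda$ by first specifying $y(0,n)$ for each $n\in\NN^k$, and then using the factorisation property to extend to all of $\Omega_k$. Write $p:=d(\alpha)$. For each $n\in\NN^k$, let $q_n:=n\vee p$ denote the componentwise maximum; since $s(\alpha)=r(x)=x(0)=r\big(x(0,q_n-p)\big)$, the composition
\[
\beta_n := \alpha\cdot x(0,q_n-p)
\]
is well defined in $\Lambda$ with degree $q_n\ge n$. I would then apply the factorisation property to write $\beta_n$ uniquely as $\mu_n\cdot\nu_n$ with $d(\mu_n)=n$, and set $y(0,n):=\mu_n$. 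For general $(m,n)\in\Omega_k$, define $y(m,n)$ as the unique degree-$(n-m)$ factor appearing in the factorisation $y(0,n)=y(0,m)\cdot y(m,n)$.

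The first task is to check that these definitions cohere to give a $k$-graph morphism. The key observation is that for $m\le n$, the identity $x(0,q_n-p)=x(0,q_m-p)\cdot x(q_m-p,q_n-p)$ forces $\beta_n=\beta_m\cdot x(q_m-p,q_n-p)$, so uniqueness of the degree-$m$ factorisation of $\beta_n$ identifies $\mu_m$ as the degree-$m$ prefix of $\beta_n$ (and hence of $\mu_n$). Compatibility $y(m,n)\cdot y(n,l)=y(m,l)$ whenever $m\le n\le l$ then follows by a second application of uniqueness, this time applied to $y(0,l)$.

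Verifying the two stated identities is then straightforward. When $n=p$ we have $q_p=p$, so $\beta_p=\alpha\cdot x(0,0)=\alpha$ and hence $y(0,p)=\alpha$. For $\sigma^p(y)=x$, note that $q_{n+p}=n+p$ for every $n\in\NN^k$, so $\beta_{n+p}=\alpha\cdot x(0,n)$ and $y(0,n+p)=\alpha\cdot x(0,n)$. Combining this with $y(0,n+p)=y(0,p)\cdot y(p,n+p)=\alpha\cdot y(p,n+p)$ and cancelling via uniqueness of factorisations gives $y(p,n+p)=x(0,n)$, i.e.\ $\sigma^p(y)(0,n)=x(0,n)$, which determines $\sigma^p(y)=x$ on all of $\Omega_k$ by the factorisation property. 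Uniqueness of $y$ is then essentially automatic: any candidate $y'$ must satisfy $y'(0,n+p)=\alpha\cdot x(0,n)$ for all $n\in\NN^k$, and these values force its remaining values by further factorisation.

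The main technical obstacle I anticipate is the consistency check in the second paragraph — ensuring that the independently chosen factorisations of the various $\beta_n$ assemble into a single well-defined functor on $\Omega_k$. This rests entirely on careful bookkeeping with the uniqueness clause of the factorisation property, but once it is done, the remaining verifications reduce to rewriting $y(0,n)$ in different ways and cancelling.
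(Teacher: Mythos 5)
Your proposal is correct; the thesis gives no proof of this proposition (it is quoted directly from \cite[Proposition~2.3]{Kumjian-Pask2000}), and your construction --- defining $y(0,n)$ as the unique degree-$n$ prefix of $\alpha\cdot x\big(0,(n\vee d(\alpha))-d(\alpha)\big)$, using the uniqueness clause of the factorisation property for the coherence checks, the two identities, and the uniqueness of $y$ --- is essentially the standard argument used there. No gaps: the consistency check you flag is exactly the point where the work lies, and your reduction of it to uniqueness of factorisations is the right one.
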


For any $\alpha$, $x$ and $y$ as in Proposition \ref{prop_finite_infinite_composition}, we write $\alpha x$\notationindex{ALPHAX@$\alpha x$} for $y$. For $\alpha\in\Lambda$ and $S\subset\Lambda\cup\Lambda^\infty$, define
\[\alpha S:=\braces{\alpha x\in\Lambda\cup\Lambda^\infty:x\in S, r(x)=s(\alpha)}\index{$\alpha S$}.\]

We say $\Lambda$ is {\em row-finite}\index{row-finite!higher-rank graph} if for each $v\in\Lambda^0$ and $m\in\NN^k$, the set $v\Lambda^m$ is finite; a {\em source}\index{source!in a higher-rank graph} in $\Lambda$ is an element $v\in\Lambda^0$ such that $v\Lambda^{e_i}=\emptyset$ for some $i$ ($1\le i\le k$). It follows that $\Lambda$ has no sources if and only if $v\Lambda^m\ne\emptyset$ for all $v\in\Lambda^0$ and $m\in\NN^k$. Note that these notions of `row-finite' and `source' are natural generalisations of `row-finite' and `source' in the context of directed graphs; for a directed graph $E$, the $1$-graph $\Lambda_E$ has no sources if and only if $E$ has no sources and $\Lambda_E$ is row-finite if and only if $E$ is row-finite.

After assuming that $\Lambda$ is row-finite, Kumjian and Pask endowed $\Lambda^\infty$ with the topology with basis $\braces{\alpha\Lambda^\infty:\alpha\in\Lambda}$ before showing in \cite[Lemma~2.6]{Kumjian-Pask2000} that $\alpha\Lambda^\infty$ is compact for every $\alpha\in\Lambda$.

We now describe the higher-rank graph analogue of the notion of shift equivalence in a directed graph from \cite{kprr1997}. Suppose $\Lambda$ is a $k$-graph. For each $n\in\ZZ^k$ define a relation $\sim_n$\index{$\sim_n$} on $\Lambda^\infty$ by $x\sim_n y$ if there exists $p\in\NN^k$ such that $\sigma^p(x)=\sigma^{p-n}(y)$. 
\begin{lemma}
Suppose $\Lambda$ is a $k$-graph. The relations $\sim_n$ form an equivalence relation on $\Lambda^\infty$ in that $x\sim_0 x$, $x\sim_n y$ implies $y\sim_{-n} x$, and $x\sim_n y,y\sim_m z$ implies $x\sim_{n+m} z$.
\begin{proof}
Suppose $x,y,z\in\Lambda^\infty$ and $n,m\in\NN^k$ satisfy $x\sim_n y$ and $y\sim_m z$. The first claim that $x\sim_0 x$ is trivial. To see $y\sim_{-n} x$, first note that since $x\sim_n y$, there exists $p\in\NN^k$ such that $\sigma^p(x)=\sigma^{p-n}(y)$. Then $\sigma^{p-n}(y)=\sigma^p(x)=\sigma^{(p-n)+n}(x)$, so $y\sim_{-n} x$. To see that $x\sim_{n+m} z$, first note that since $y\sim_m z$, there exists $q\in\NN^k$ such that $\sigma^q(y)=\sigma^{q-m}(z)$. Increase $p$ and $q$ if necessary so that $p-n\ge q$. Then $\sigma^p(x)=\sigma^{p-n}(y)=\sigma^{(p-n)-m}(z)$, so $x\sim_{n+m} y$.
\end{proof}
\end{lemma}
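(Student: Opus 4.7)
The plan is to verify each of the three conditions directly from the definition of $\sim_n$, which says $x\sim_n y$ exactly when there exists $p\in\NN^k$ with $\sigma^p(x)=\sigma^{p-n}(y)$ (in particular, the witness $p$ must satisfy $p-n\in\NN^k$ for the right-hand side to make sense). Reflexivity is immediate by taking $p=0$, giving $\sigma^0(x)=x=\sigma^{0-0}(x)$.

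For symmetry, I would start from a witness $p\in\NN^k$ for $x\sim_n y$, and rewrite the equation $\sigma^p(x)=\sigma^{p-n}(y)$ with $q:=p-n$ in the place of the witness for $y\sim_{-n} x$; then $q\in\NN^k$ since $p-n$ was already required to live in $\NN^k$, and $\sigma^q(y)=\sigma^p(x)=\sigma^{q-(-n)}(x)$, as required.

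Transitivity is the only step requiring a small maneuver: starting from witnesses $p$ (for $x\sim_n y$) and $q$ (for $y\sim_m z$), I want a single exponent that simultaneously shifts $x$, $y$ and $z$ into agreement. The observation I would use is that if $\sigma^p(x)=\sigma^{p-n}(y)$ holds, then applying $\sigma^{p'-p}$ to both sides for any $p'\ge p$ shows that $p'$ is also a valid witness for $x\sim_n y$; so I may enlarge $p$ freely. The plan is therefore to enlarge $p$ until $p-n\ge q$, and then compute
\[
\sigma^p(x)=\sigma^{p-n}(y)=\sigma^{(p-n)-q}\big(\sigma^q(y)\big)=\sigma^{(p-n)-q}\big(\sigma^{q-m}(z)\big)=\sigma^{p-(n+m)}(z),
\]
which is precisely the witness needed for $x\sim_{n+m} z$.

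The only subtle point will be being careful with the fact that $n,m\in\ZZ^k$ are not required to be non-negative, so one must check at each step that the exponents appearing in $\sigma^{(\cdot)}$ lie in $\NN^k$; this is automatic in reflexivity and follows for symmetry from the hypothesis, and in transitivity is exactly what the enlargement $p-n\ge q$ arranges.
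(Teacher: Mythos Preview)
Your proposal is correct and follows essentially the same approach as the paper's proof: both verify reflexivity trivially, symmetry by relabelling the witness, and transitivity by enlarging the witness $p$ until $p-n\ge q$ and then composing shifts. Your version is in fact slightly more careful than the paper's, since you explicitly justify why the witness may be enlarged and check that all shift exponents remain in $\NN^k$.
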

If $x\sim_n y$ for some $x,y\in\Lambda^\infty$ and $n\in\ZZ^k$, we say $x$ is {\em shift equivalent}\index{shift equivalence!in a higher-rank graph} to $y$ with {\em lag}\index{lag (shift equivalence)} $n$. We write $x\sim y$ when the lag is not important so that $\sim$ is an equivalence relation. For every $x\in\Lambda^\infty$ let $[x]:=\braces{y\in\Lambda^\infty:x\sim y}$\index{$[x]\subset\Lambda^\infty$}. Unfortunately\label{shift_equivalence_different} this notion of shift equivalence is slightly different to Kumjian, Pask, Raeburn and Renault's notion in \cite{kprr1997} of shift equivalent infinite paths in directed graphs: shift equivalence with lag $n$ in the $1$-graph case is the same as shift equivalence with lag $-n$ in their case. The change has been made to simplify the description of the groupoid in \cite{Kumjian-Pask2000}.

Kumjian and Pask in \cite{Kumjian-Pask2000} defined the following groupoid given a row-finite $k$-graph $\Lambda$. Let
\[
G_{\Lambda}=\braces{(x,n,y)\in\Lambda^\infty\times\ZZ\times\Lambda^\infty:x\sim_n y}\index{$G_{\Lambda}$}.
\]
Define range and source maps $r,s:G_{\Lambda}\rightarrow\Lambda^\infty$ by $r(x,n,y)=(x,0,x)$, $s(x,n,y)=(y,0,y)$. For $(x,n,y),(y,l,z)\in G_{\Lambda}$ define $(x,n,y)(y,l,z)=(x,n+l,z)$ and set $(x,n,y)^{-1}=(y,-n,x)$; the set $G_\Lambda$ with these operations is a groupoid called the {\em path groupoid}\index{path groupoid!of a higher-rank graph} of $\Lambda$.

\begin{remark}\label{remark_path_groupoid_1}
For any $k$-graph $\Lambda$ there is a natural isomorphism between $\Lambda^\infty$ and $G_\Lambda^{(0)}$ given by $x\mapsto (x,0,x)$. Recall the orbit equivalence relation on the unit space of a groupoid: for $a,b\in G_\Lambda^{(0)}$ we write $a \approx b$ if there exists $\gamma\in G_\Lambda$ such that $r(\gamma)=a$ and $s(\gamma)=b$. It follows immediately from the definition of the path groupoid that for $x,y\in\Lambda^\infty$ we have $x\sim y$ in $\Lambda^\infty$ if and only if $(x,0,x)\approx(y,0,y)$ in $G_\Lambda^{(0)}$. These relations are the same under the isomorphism between $\Lambda^\infty$ and $G_\Lambda^{(0)}$, so both equivalence relations will simply be denoted by $\sim$. Since the equivalence class of $x\in\Lambda^\infty$ under $\sim$ is denoted by $[x]$ we will denote the equivalence class of $(x,0,x)\in G_\Lambda^{(0)}$ under $\sim$ by $[(x,0,x)]$\index{$[(x,0,x)]\subset G_\Lambda^{(0)}$}. 

Proposition \ref{prop_path_groupoid} describes a topology on $G_\Lambda$ and describes a homeomorphism between $G_\Lambda^{(0)}$ and $\Lambda^\infty$. We will frequently abuse notation by implicitly using this homeomorphism. In particular the sets $\alpha\Lambda^\infty$ will often be considered to be subsets of $G_\Lambda^{(0)}$. There is only one issue when implicitly using this homeomorphism: the range and source of $x\in\Lambda^\infty$ are elements of $\Lambda^0$ whereas the range and source of $(x,0,x)\in G_\Lambda^{(0)}$ is $(x,0,x)\in G_\Lambda^{(0)}$. We will thus be careful and use appropriate subscripts with the range and source maps when the context allows for confusion.
\end{remark}

Let $\Lambda$ be a $k$-graph. For $(\alpha,\beta)\in\Lambda\ast_s\Lambda$ define
\[
Z(\alpha,\beta)=\braces{(\alpha x, d(\alpha)-d(\beta),\beta x)\in G_\Lambda:x\in s(\alpha)\Lambda^\infty}.\index{$Z(\alpha,\beta)$}
\]
\begin{prop}[{extension on \cite[Proposition~2.8]{Kumjian-Pask2000}}]\label{prop_path_groupoid}
Let $\Lambda$ be a row-finite $k$-graph. The sets $\braces{Z(\mu,\nu): (\mu,\nu)\in\Lambda\ast_s\Lambda}$ form a basis for a locally compact Hausdorff topology on $G_\Lambda$. With this topology $G_\Lambda$ is a second countable, $r$-discrete groupoid in which each $Z(\mu,\nu)$ is compact and open. The map from $\Lambda^\infty$ onto $G_\Lambda^{(0)}$ given by $x\mapsto (x,0,x)$ is a homeomorphism and the counting measures form a Haar system for $G_\Lambda$.
\end{prop}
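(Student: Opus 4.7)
The plan is to take Kumjian and Pask's \cite[Proposition~2.8]{Kumjian-Pask2000} as the starting point: it already establishes that $\{Z(\mu,\nu):(\mu,\nu)\in\Lambda\ast_s\Lambda\}$ is a basis for a locally compact Hausdorff topology on $G_\Lambda$, that each basis element is compact and open, and that $x\mapsto(x,0,x)$ is a homeomorphism of $\Lambda^\infty$ onto $G_\Lambda^{(0)}$. What is genuinely new here and needs proof is second countability, $r$-discreteness, and the fact that the counting measures form a Haar system.

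First I would verify second countability. Because $\Lambda$ is a countable category, the fibred product $\Lambda\ast_s\Lambda$ is countable, so the basis $\{Z(\mu,\nu)\}$ itself is countable. Next I would establish $r$-discreteness by showing that $G_\Lambda^{(0)}$ is open in $G_\Lambda$. For each $v\in\Lambda^0$ the basic set $Z(v,v)$ consists exactly of triples $(x,0,x)$ with $r_\Lambda(x)=v$, and so is contained in $G_\Lambda^{(0)}$; conversely every $(x,0,x)\in G_\Lambda^{(0)}$ lies in $Z(r_\Lambda(x),r_\Lambda(x))$. Hence $G_\Lambda^{(0)}=\bigcup_{v\in\Lambda^0}Z(v,v)$ is a union of basic open sets.

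The remaining work is to produce the Haar system. Rather than hand-constructing $\{\lambda_x\}$ and verifying \eqref{H1}--\eqref{H3} directly, I would appeal to Proposition~\ref{Renault_I_2_8i_sub}: it suffices to show that $r:G_\Lambda\to G_\Lambda^{(0)}$ is a local homeomorphism. Fix $(\mu,\nu)\in\Lambda\ast_s\Lambda$ and consider the restriction $r|_{Z(\mu,\nu)}$. By definition this sends $(\mu x,d(\mu)-d(\nu),\nu x)$ to $(\mu x,0,\mu x)$, so it is a bijection from $Z(\mu,\nu)$ onto $Z(\mu,\mu)\subseteq G_\Lambda^{(0)}$. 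Using the homeomorphism $\Lambda^\infty\cong G_\Lambda^{(0)}$, both sets are naturally identified with $\mu\Lambda^\infty$ via $x\mapsto \mu x$; continuity of $r|_{Z(\mu,\nu)}$ is part of the topological-groupoid structure, and its inverse $(\mu x,0,\mu x)\mapsto(\mu x,d(\mu)-d(\nu),\nu x)$ is continuous because it factors as the continuous map $\mu x\mapsto x$ on $\mu\Lambda^\infty$ followed by the continuous map $x\mapsto(\mu x,d(\mu)-d(\nu),\nu x)$ into the compact open $Z(\mu,\nu)$. Thus $r|_{Z(\mu,\nu)}$ is a homeomorphism onto the open set $Z(\mu,\mu)$, and since every point of $G_\Lambda$ lies in some $Z(\mu,\nu)$, the range map is a local homeomorphism. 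Proposition~\ref{Renault_I_2_8i_sub} then delivers both the $r$-discreteness (redundantly) and a Haar system of counting measures.

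The main obstacle is nothing conceptually deep, but rather the bookkeeping in the last step: one must be careful when moving between the two descriptions of points in $G_\Lambda^{(0)}$ (as elements of $\Lambda^\infty$ and as triples $(x,0,x)$) to confirm that $r|_{Z(\mu,\nu)}$ really is a topological homeomorphism onto $Z(\mu,\mu)$ under the inherited subspace topology from $G_\Lambda^{(0)}$. Once this is in place, Proposition~\ref{Renault_I_2_8i_sub} does all the remaining work.
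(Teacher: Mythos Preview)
Your proposal is correct and follows essentially the same route as the paper: show that $r|_{Z(\mu,\nu)}$ is a homeomorphism onto an open set (the paper cites the observation in \cite[p.~511]{kprr1997} that $r|_{Z(\alpha,\beta)}$ is a homeomorphism onto $\alpha\Lambda^\infty$), hence $r$ is a local homeomorphism, and then invoke Proposition~\ref{Renault_I_2_8i_sub} to obtain the Haar system of counting measures. One small point: the paper notes that second countability and $r$-discreteness are already contained in \cite[Proposition~2.8]{Kumjian-Pask2000}, so the \emph{only} genuinely new item is the Haar system; your direct verifications of the other two are correct but unnecessary.
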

The only part of this result not in \cite[Proposition~2.8]{Kumjian-Pask2000} is that the counting measures form a Haar system for $G_\Lambda$. While the proof that the counting measures form a Haar system should be the same as in \cite[Proposition~2.6]{kprr1997}, the proof in \cite{kprr1997} noted that the range and source maps are local homeomorphisms before using \cite[Proposition~I.2.7, Proposition~I.2.8]{Renault1980} to deduce that the counting measures form a Haar system. There is, however, a problem with \cite[Proposition~I.2.7, Proposition~I.2.8]{Renault1980} (see Remark \ref{remark_problems_Renault_results}). To get around this, we can use Proposition \ref{Renault_I_2_8i_sub} with the observation in \cite[p. 511]{kprr1997} that for each $\alpha,\beta\in\Lambda$ with $s(\alpha)=s(\beta)$, the maps $r|_{Z(\alpha,\beta)}$ and $s|_{Z(\alpha,\beta)}$ are homeomorphisms onto $\alpha\Lambda^\infty$ and $\beta\Lambda^\infty$, respectively.

The following is an observation by Kumjian and Pask.
\begin{lemma}[{\cite[p.~8]{Kumjian-Pask2000}}]\label{path_groupoids_isomorphic}
Suppose $E$ is a row-finite directed graph without sources. Then the path groupoid associated to $E$ is isomorphic to the path groupoid associated to $\Lambda_E$.
\end{lemma}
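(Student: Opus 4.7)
The plan is to build an explicit isomorphism of topological groupoids $\Phi: G_E \to G_{\Lambda_E}$ directly from a natural bijection between the underlying infinite-path spaces. First, I would define $\psi: E^\infty \to \Lambda_E^\infty$ by sending $y = y_1 y_2 y_3 \cdots \in E^\infty$ to the unique $k$-graph morphism $\psi(y): \Omega_1 \to \Lambda_E$ determined on generating morphisms by $\psi(y)(i-1,i) = y_i$ for each $i \ge 1$ and extended by composition. Because $\Lambda_E$ has the finite paths of $E$ as its morphisms with degree equal to length, this rule really does define a degree-preserving functor, and reading off the successive edges recovers a two-sided inverse, so $\psi$ is a bijection.

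Next, I would check that $\psi$ intertwines the two notions of shift equivalence. In the directed-graph setting, $x \sim_n y$ (with the convention in Section~\ref{sec_path_groupoid}) means $x_i = y_{i-n}$ for all $i$ sufficiently large. In the $1$-graph setting, $\psi(x) \sim_n \psi(y)$ means $\sigma^p \psi(x) = \sigma^{p-n} \psi(y)$ for some $p \in \NN$ with $p \ge \max(n,0)$, which by the factorisation property is equivalent to $\psi(x)(p+i-1,p+i) = \psi(y)(p-n+i-1,p-n+i)$ for all $i \ge 1$, i.e.\ $x_{p+i} = y_{p-n+i}$ for all $i \ge 1$. Setting $j=p+i$ this is precisely the directed-graph condition; here the convention change flagged on page~\pageref{shift_equivalence_different} is exactly what is needed to make the sign of $n$ match.

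I then define $\Phi: G_E \to G_{\Lambda_E}$ by $\Phi(x,n,y) = (\psi(x), n, \psi(y))$. By the previous step $\Phi$ is a bijection onto $G_{\Lambda_E}$, and the formulas for composition and inversion in the two groupoids are identical in the middle coordinate and act on the first and third coordinates only through $\psi$, so $\Phi$ is a groupoid homomorphism. To see that $\Phi$ is a homeomorphism it suffices to check that the basic compact open sets correspond. Given $(\alpha,\beta) \in E^* \ast_s E^* = \Lambda_E \ast_s \Lambda_E$, a triple $(\alpha z, |\alpha|-|\beta|, \beta z)$ with $z \in s(\alpha)E^\infty$ is mapped by $\Phi$ to $(\psi(\alpha z), d(\alpha)-d(\beta), \psi(\beta z))$, and $\psi$ intertwines the prefix operations so $\psi(\alpha z) = \alpha\, \psi(z)$ and similarly for $\beta$. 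Hence $\Phi\big(Z_E(\alpha,\beta)\big) = Z_{\Lambda_E}(\alpha,\beta)$, and since these sets form bases of the respective topologies, $\Phi$ and $\Phi^{-1}$ are continuous.

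The main obstacle is the second step: the careful bookkeeping needed to match the shift-equivalence conventions, because the $1$-graph definition uses the shift maps $\sigma^p$ on morphisms $\Omega_k \to \Lambda_E$ while the directed-graph definition uses edgewise coincidence. This is purely combinatorial and is engineered to work by the sign change discussed in the paragraph preceding the lemma, so no deeper argument is required; the rest of the proof is essentially formal once the dictionary is in place.
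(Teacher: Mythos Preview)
Your argument is correct and is the natural way to establish this isomorphism: you set up the canonical bijection between $E^\infty$ and $\Lambda_E^\infty$, verify that the two shift-equivalence conventions match under it (using precisely the sign change the paper flags on page~\pageref{shift_equivalence_different}), and then check that the basic sets $Z(\alpha,\beta)$ correspond.

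However, the paper does not give a proof of this lemma at all: it simply records the statement as an observation of Kumjian and Pask \cite[p.~8]{Kumjian-Pask2000} and moves on. So there is no proof in the paper to compare your approach against. What you have written is exactly the routine verification one would expect to underlie that observation, and it goes through without difficulty.
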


\begin{definition}[{\cite[Definitions 1.5]{Kumjian-Pask2000}}]\label{def_row-finite_no_sources_algebra}
Let $\Lambda$ be a row-finite $k$-graph without sources. A {\em Cuntz-Krieger $\Lambda$-family} is a collection $\braces{t_\alpha:\alpha\in\Lambda}$ of partial isometries satisfying:
\begin{enumerate}\renewcommand{\theenumi}{CK\arabic{enumi}}
\item$\braces{t_v:v\in\Lambda^0}$ is a family of mutually orthogonal projections;
\item $t_{\mu\nu}=t_\mu t_\nu$ for all $\mu,\nu\in\Lambda$ such that $s(\mu)=r(\nu)$;
\item $t_\mu^*t_\mu=t_{s(\mu)}$ for all $\mu\in\Lambda$; and
\item\label{no_sources_CK4} for all $v\in\Lambda^0$ and $n\in\NN^k$ we have $t_v=\sum_{\alpha\in v\Lambda^n}t_\alpha t_\alpha^*$.
\end{enumerate}
The {\em Cuntz-Krieger higher-rank graph $C^*$-algebra}, denoted $C^*(\Lambda)$, is defined to be the universal $C^*$-algebra generated by a Cuntz-Krieger $\Lambda$-family $\braces{s_\alpha:\alpha\in\Lambda}$.
\end{definition}
Kumjian and Pask showed that the Cuntz-Krieger $C^*$-algebras of row-finite higher-rank graphs without sources are a generalisation of the Cuntz-Krieger $C^*$-algebras of row-finite directed graphs without sources\footnote{This result is in turn generalised to remove the `no sources' condition in Lemma \ref{lemma_RSY_LambdaE}.}:
\begin{lemma}[{\cite[Examples~1.7(i)]{Kumjian-Pask2000}}]\label{lemma_KP_LambdaE}
Suppose $E$ is a row-finite directed graph without sources and let $\Lambda_E$ be the $1$-graph from Example \ref{example_LambdaE}. Then $C^*(E)$ is isomorphic to $C^*(\Lambda_E)$.
\end{lemma}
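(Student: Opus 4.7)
The plan is to construct mutually inverse $\ast$-homomorphisms between $C^*(E)$ and $C^*(\Lambda_E)$ by using the universal property of each in turn.

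First I would build a Cuntz-Krieger $\Lambda_E$-family inside $C^*(E)$. Let $\{S_f, P_v\}$ be the universal Cuntz-Krieger $E$-family generating $C^*(E)$. For $v \in \Lambda_E^0 = E^0$ set $T_v := P_v$, and for $\alpha = \alpha_1\alpha_2\cdots\alpha_n \in \Lambda_E$ with $d(\alpha) = n \geq 1$ set $T_\alpha := S_{\alpha_1}S_{\alpha_2}\cdots S_{\alpha_n}$. Using the two conditions on $\{S_f, P_v\}$ in Definition~\ref{def_directed_graph_algebra}, the axioms (CK1)--(CK3) of Definition~\ref{def_row-finite_no_sources_algebra} are immediate: (CK1) is the orthogonality of the $P_v$; (CK2) follows directly from the definition of $T_\alpha$ as a product of the $S_{\alpha_i}$; and (CK3) follows from $S_f^*S_f = P_{s(f)}$ together with $P_{s(\alpha_i)} = S_{\alpha_{i+1}}S_{\alpha_{i+1}}^*P_{r(\alpha_{i+1})}$-style absorption, collapsing $T_\alpha^* T_\alpha$ telescopically to $P_{s(\alpha)}$.

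The main obstacle will be verifying (CK4), namely $T_v = \sum_{\alpha \in v\Lambda_E^n} T_\alpha T_\alpha^*$ for all $v \in \Lambda_E^0$ and $n \in \NN$. For $n = 0$ this is trivial, and for $n = 1$ it is precisely the second Cuntz-Krieger relation from Definition~\ref{def_directed_graph_algebra}, which applies to every $v$ since $E$ has no sources. I would then proceed by induction on $n$: assuming the identity for $n$, write
\[
T_v = \sum_{\alpha \in v\Lambda_E^n} T_\alpha T_\alpha^* = \sum_{\alpha \in v\Lambda_E^n} T_\alpha P_{s(\alpha)} T_\alpha^* = \sum_{\alpha \in v\Lambda_E^n} T_\alpha\Bigl(\sum_{f \in s(\alpha)E^1} S_f S_f^*\Bigr)T_\alpha^*,
\]
so that after re-indexing the double sum over $\{\alpha f : \alpha \in v\Lambda_E^n, f \in s(\alpha)E^1\} = v\Lambda_E^{n+1}$ and using (CK2), we obtain the required identity for $n+1$. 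Since $E$ has no sources each $s(\alpha)E^1$ is non-empty, so the inductive step applies. The universal property of $C^*(\Lambda_E)$ then yields a $\ast$-homomorphism $\phi: C^*(\Lambda_E) \to C^*(E)$ with $\phi(s_\alpha) = T_\alpha$.

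Conversely, I would produce a Cuntz-Krieger $E$-family in $C^*(\Lambda_E)$ by defining $Q_v := s_v$ for $v \in E^0$ and $R_f := s_f$ for $f \in E^1$. Axioms (CK1), (CK3) of the $\Lambda_E$-family immediately give that the $Q_v$ are mutually orthogonal projections and $R_f^*R_f = Q_{s(f)}$, and axiom (CK4) applied with $n = e_1$ (identified with $1 \in \NN$) gives $Q_v = \sum_{f \in vE^1} R_f R_f^*$. The universal property of $C^*(E)$ then yields $\psi: C^*(E) \to C^*(\Lambda_E)$ with $\psi(S_f) = s_f$ and $\psi(P_v) = s_v$. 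To finish, I would check that $\phi$ and $\psi$ are mutually inverse on generators: by construction $\psi\circ\phi(s_v) = s_v$, while $\psi\circ\phi(s_\alpha) = s_{\alpha_1}\cdots s_{\alpha_n} = s_\alpha$ by (CK2), so $\psi\circ\phi = \mathrm{id}$ on the generating set and hence on $C^*(\Lambda_E)$; symmetrically $\phi\circ\psi = \mathrm{id}$ on the generators of $C^*(E)$. Thus $\phi$ is the desired isomorphism.
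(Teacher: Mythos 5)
Your proof is correct and is essentially the standard argument: the thesis does not actually prove this lemma but cites it to Kumjian and Pask's Examples~1.7(i), where the point is precisely that the relations (CK1)--(CK4) for the $1$-graph $\Lambda_E$ collapse to the two Cuntz--Krieger relations for $E$ (with (CK4) for general $n$ obtained by iterating the edge case), exactly as you verify before invoking the two universal properties. The only cosmetic slip is the displayed identity in your (CK3) step: the absorption you actually need is $P_{r(f)}S_f=S_f$, which holds because the range projections $S_gS_g^*$ for $g\in r(f)E^1$ are mutually orthogonal subprojections summing to $P_{r(f)}$; this does not affect the argument.
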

Kumjian and Pask also showed that the path groupoid $C^*$-algebras are faithful models for the Cuntz-Krieger $C^*$-algebras of row-finite higher-rank graphs without sources:
\begin{lemma}[{\cite[Corollary~3.5]{Kumjian-Pask2000}}]\label{lemma_higher_rank_graph_algebras_isomorphic}
Suppose $\Lambda$ is a row-finite $k$-graph without sources. Then $C^*(\Lambda)$ is isomorphic to $C^*(G_\Lambda)$.
\end{lemma}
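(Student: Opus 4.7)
The plan is to mimic the directed-graph argument (Proposition~\ref{prop_algebras_isomorphic}) together with the higher-rank analogue of the gauge-invariant uniqueness theorem due to Kumjian and Pask. Concretely, I would first produce a Cuntz--Krieger $\Lambda$-family sitting inside $C^*(G_\Lambda)$ by setting $T_\alpha := 1_{Z(\alpha,\,s(\alpha))}\in C_c(G_\Lambda)$ for each $\alpha\in\Lambda$. By Proposition~\ref{prop_path_groupoid}, each $Z(\alpha,s(\alpha))$ is compact-open, and since $G_\Lambda$ is $r$-discrete with a Haar system of counting measures, a direct computation of $T_\alpha^* * T_\alpha$ and $T_\alpha * T_\beta$ via the convolution formula (together with the observation that $r|_{Z(\alpha,s(\alpha))}$ is a homeomorphism onto $\alpha\Lambda^\infty$) shows that the $T_v$ are mutually orthogonal projections, that $T_{\mu\nu}=T_\mu T_\nu$ when $s(\mu)=r(\nu)$, and that $T_\mu^*T_\mu=T_{s(\mu)}$. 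For (CK4), row-finiteness ensures $v\Lambda^n$ is finite, and the absence of sources gives $v\Lambda^n\ne\emptyset$; the disjoint union $v\Lambda^\infty=\bigsqcup_{\alpha\in v\Lambda^n}\alpha\Lambda^\infty$ (a consequence of the factorisation property applied to $y\in v\Lambda^\infty$ at degree $n$) then yields $T_v=\sum_{\alpha\in v\Lambda^n}T_\alpha T_\alpha^*$.

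Having a Cuntz--Krieger $\Lambda$-family $\{T_\alpha\}$ in $C^*(G_\Lambda)$, the universal property of $C^*(\Lambda)$ (Definition~\ref{def_row-finite_no_sources_algebra}) delivers a $\ast$-homomorphism $\pi:C^*(\Lambda)\to C^*(G_\Lambda)$ with $\pi(s_\alpha)=T_\alpha$. For surjectivity, I would compute $T_\alpha T_\beta^*=1_{Z(\alpha,\beta)}$ whenever $s(\alpha)=s(\beta)$; since the sets $Z(\alpha,\beta)$ form a basis of compact-open sets for the topology on $G_\Lambda$ (Proposition~\ref{prop_path_groupoid}), their indicator functions span a $\ast$-subalgebra of $C_c(G_\Lambda)$ which is dense in the inductive-limit topology, and hence dense in $C^*(G_\Lambda)$.

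For injectivity I would invoke the gauge-invariant uniqueness theorem for $k$-graph $C^*$-algebras (Kumjian--Pask, Theorem~3.4 of \cite{Kumjian-Pask2000}). First, each $T_v=1_{Z(v,v)}$ is nonzero because $v\Lambda^\infty$ is nonempty (no sources), so all vertex projections survive. Second, define a candidate gauge action $\beta:\TT^k\to\operatorname{Aut}C^*(G_\Lambda)$ by specifying $\beta_z(f)(x,n,y)=z^n f(x,n,y)$ for $f\in C_c(G_\Lambda)$, $z\in\TT^k$ and $(x,n,y)\in G_\Lambda$; continuity in the inductive-limit topology and the $\ast$-homomorphism properties are routine, and each $\beta_z$ extends to $C^*(G_\Lambda)$ because it is implemented by a unitary equivalence on the regular representation. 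On generators this gives $\beta_z(T_\alpha)=z^{d(\alpha)}T_\alpha$ and $\beta_z(T_v)=T_v$, which matches the defining gauge action on $C^*(\Lambda)$, so $\pi$ is $\TT^k$-equivariant. The gauge-invariant uniqueness theorem then forces $\pi$ to be injective.

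The main obstacle I anticipate is the verification that the formula for $\beta_z$ actually defines an automorphism on all of $C^*(G_\Lambda)$: one must check that $z\mapsto \beta_z(f)$ is continuous for the $C^*$-norm, which in the groupoid setting typically requires exhibiting $\beta_z$ as a push-forward along the cocycle $(x,n,y)\mapsto n$ and verifying boundedness on the universal norm. A secondary nuisance is pinning down the correct $\NN^k$ versus $\ZZ$ bookkeeping in the definition of $G_\Lambda$ (the excerpt writes $n\in\ZZ$, but the cocycle and gauge action only make sense with $n\in\ZZ^k$); once that is settled, the rest of the argument is a direct transcription of the directed-graph template.
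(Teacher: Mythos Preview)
Your proposal is correct and is precisely the strategy used in the cited reference \cite[Corollary~3.5]{Kumjian-Pask2000}; note that the present paper does not prove this lemma at all but simply quotes it from Kumjian--Pask, so there is no in-paper proof to compare against. Your sketch faithfully transports the directed-graph argument (Proposition~\ref{prop_algebras_isomorphic}) to the $k$-graph setting, and the two concerns you flag --- extending $\beta_z$ to an automorphism of $C^*(G_\Lambda)$ via the $\ZZ^k$-valued cocycle $(x,n,y)\mapsto n$, and the evident typo $\ZZ$ for $\ZZ^k$ in the definition of $G_\Lambda$ --- are exactly the points Kumjian and Pask address (the former in \cite[Proposition~3.2 and Corollary~3.5]{Kumjian-Pask2000}).
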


\section{Row-finite higher-rank graphs that may have sources}\label{sec_higher-rank_graphs_2}
In this section we will recall Raeburn, Sims and Yeend's construction in \cite{rsy2004} of the Cuntz-Krieger $C^*$-algebra of a row-finite higher-rank graph that may have sources. This  is unnecessary for readers who are exclusively interested in groupoid results, however results for this more general class of higher-rank graph algebra will be proved in Section \ref{sec_desourcification}.

For $k\in\PP$ and $m,n \in \NN^k$, we denote by $m \vee n$\notationindex{MvN@$m\vee n, m\wedge n$} the coordinate-wise maximum of $m$ and $n$. In other words, if $l=m\vee n$, then $l\in\NN^k$ with $l_i=\max\{m_i,n_i\}$ for each $1\le i\le k$. Similarly the coordinate-wise minimum of $m$ and $n$ is denoted by $m\wedge n$.

Suppose $\Lambda$ is a $k$-graph. For $\mu,\nu\in\Lambda$, define
\[
\Lambda^{\text{min}}(\mu,\nu)=\braces{(\alpha,\beta)\in\Lambda\times\Lambda:\mu\alpha=\nu\beta, d(\mu\alpha)=d(\mu)\vee d(\nu)}.
\]
The elements of $\Lambda^{\text{min}}(\mu,\nu)$ are called the {\em minimal extensions} of $(\mu,\nu)$. Suppose $v\in\Lambda^0$. A subset $D\subset v\Lambda$ of $\Lambda$ is {\em exhaustive} if for every $\mu\in v\Lambda$ there exists $\nu\in D$ such that $\Lambda^{\text{min}}(\mu,\nu)\ne\emptyset$. The set of all {\em finite exhaustive subsets} of $\Lambda$ are denoted by $\Ff\Ee(\Lambda)$, and we let $v\Ff\Ee(\Lambda)$ denote the set $\braces{D\in\Ff\Ee(\Lambda):D\subset v\Lambda}$. 

The $C^*$-algebra of a row-finite higher-rank graph without sources was defined in Definition \ref{def_row-finite_no_sources_algebra}. When $\Lambda$ has sources, condition \eqref{no_sources_CK4} from Definition \ref{def_row-finite_no_sources_algebra} causes problems since $v\Lambda^n$ may be non-empty for some values of $n$ and empty for others. The following defines the Cuntz-Krieger $C^*$-algebras of row-finite higher-rank graphs that may have sources. Conditions \eqref{CK3} and \eqref{CK4} change substantially between these definitions so that the new relations are ``the right relations for generating tractable Cuntz-Krieger algebras for which a homomorphism is injective on the core if and only if it is nonzero at each vertex projection'' \cite[Remark 2.6]{rsy2004}.
\begin{definition}[{\cite[Definition~2.5]{rsy2004}}\footnote{This is presented in \cite{rsy2004} for the more general `finitely aligned' class of higher-rank graphs.}]\label{def_finitely-aligned_algebra}
Let $\Lambda$ be a row-finite $k$-graph. A {\em Cuntz-Krieger $\Lambda$-family} is a collection $\braces{t_\alpha:\alpha\in\Lambda}$ of partial isometries satisfying:
\begin{enumerate}\renewcommand{\theenumi}{CK\arabic{enumi}}
\item$\braces{t_v:v\in\Lambda^0}$ is a family of mutually orthogonal projections;
\item $t_{\mu\nu}=t_\mu t_\nu$ for all $\mu,\nu\in\Lambda$ such that $s(\mu)=r(\nu)$;
\item\label{CK3} $t_\mu^*t_\nu=\sum_{(\alpha,\beta)\in\Lambda^{\mathrm{min}}(\mu,\nu)}t_\alpha t_\beta^*$ for all $\mu,\nu\in\Lambda$; and
\item\label{CK4} $\prod_{\mu\in D}(t_v-t_\mu t_\mu^*)=0$ for all $v\in\Lambda^0$ and $D\in v\Ff\Ee(\Lambda)$.
\end{enumerate}
The {\em Cuntz-Krieger higher-rank graph $C^*$-algebra}, denoted $C^*(\Lambda)$, is defined to be the universal $C^*$-algebra generated by a Cuntz-Krieger $\Lambda$-family $\braces{s_\alpha:\alpha\in\Lambda}$.
\end{definition}
Raeburn Sims and Yeend in \cite[Proposition~B.1]{rsy2004} show that if $\Lambda$ is a row-finite $k$-graph without sources then the Cuntz-Krieger relations from Definitions \ref{def_row-finite_no_sources_algebra} and \ref{def_finitely-aligned_algebra} coincide\footnote{They actually showed that the Cuntz-Krieger relations from Definition \ref{def_finitely-aligned_algebra} coincide with those relations from \cite[Definition~3.3]{rsy2003}, however this in turn coincides with the relations in Definition \ref{def_row-finite_no_sources_algebra} when $\Lambda$ has no sources.}, so that the definitions of $C^*(\Lambda)$ in Definitions \ref{def_row-finite_no_sources_algebra} and \ref{def_finitely-aligned_algebra} are consistent. The next lemma generalises Lemma \ref{lemma_KP_LambdaE}.
\begin{lemma}[{\cite[Proposition~B.1]{rsy2004}}]\label{lemma_RSY_LambdaE}
Suppose $E$ is a row-finite directed graph and let $\Lambda_E$ be the $1$-graph from Example \ref{example_LambdaE}. Then $C^*(E)$ is isomorphic to $C^*(\Lambda_E)$.
\end{lemma}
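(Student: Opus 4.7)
The plan is to establish the isomorphism by exhibiting a bijective correspondence between Cuntz-Krieger $E$-families and Cuntz-Krieger $\Lambda_E$-families in any $C^*$-algebra $B$, and then invoking the universal properties of $C^*(E)$ and $C^*(\Lambda_E)$ to obtain mutually inverse $\ast$-homomorphisms. Suppose $\braces{t_\alpha:\alpha\in\Lambda_E}$ is a Cuntz-Krieger $\Lambda_E$-family in $B$. Set $S_f:=t_f$ for $f\in E^1$ and $P_v:=t_v$ for $v\in E^0$. Condition (i) of Definition \ref{def_directed_graph_algebra} is immediate from (CK3) since $\Lambda_E^{\min}(f,f)=\braces{(s(f),s(f))}$. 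For (ii), fix a non-source $v\in E^0$; I would first show that $vE^1$ is an exhaustive subset of $v\Lambda_E$ (any $\mu\in v\Lambda_E$ of positive length begins with an edge $\mu_1\in vE^1$, so $\Lambda_E^{\min}(\mu,\mu_1)\neq\emptyset$), and then use (CK3) applied to distinct edges $f,g\in vE^1$ (for which $\Lambda_E^{\min}(f,g)=\emptyset$) to conclude that $\braces{t_ft_f^*:f\in vE^1}$ are mutually orthogonal subprojections of $t_v$. Then (CK4) applied to the set $vE^1$ yields $\prod_{f\in vE^1}(t_v-t_ft_f^*)=t_v-\sum_{f\in vE^1}t_ft_f^*=0$, giving condition (ii).

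For the reverse direction, given a Cuntz-Krieger $E$-family $\braces{S_f,P_v}$ in $B$, define $t_v:=P_v$ and $t_\alpha:=S_{\alpha_1}S_{\alpha_2}\cdots S_{\alpha_{|\alpha|}}$ for $\alpha\in\Lambda_E$ of positive length. Conditions (CK1) and (CK2) follow routinely once one establishes $S_f=P_{r(f)}S_f=S_fP_{s(f)}$ from the relations of Definition \ref{def_directed_graph_algebra} together with the mutual orthogonality of the $P_v$. For (CK3), the case analysis reduces to three subcases: $\mu$ and $\nu$ share no common prefix (so $t_\mu^\ast t_\nu=0$, matching $\Lambda^{\min}(\mu,\nu)=\emptyset$); one is a prefix of the other (giving a single minimal extension pair and matching the direct computation); or $\mu=\nu$ (giving $\Lambda^{\min}(\mu,\mu)=\braces{(s(\mu),s(\mu))}$ and $t_\mu^\ast t_\mu=P_{s(\mu)}$ by induction on $|\mu|$). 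The key identity $S_f^\ast S_g=0$ for distinct $f,g$ with a common range, needed in the first subcase, is deduced from (ii) together with the fact that $\braces{S_hS_h^\ast:h\in r(f)E^1}$ is a family of mutually orthogonal projections summing to $P_{r(f)}$.

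The main obstacle is verifying (CK4) for the constructed family, since it must hold for \emph{all} finite exhaustive $D\subset v\Lambda_E$, not merely for $D=vE^1$. For each $n\in\NN$ and $v\in\Lambda_E^0$ define
\[
F_n(v):=\braces{\mu\in v\Lambda_E:|\mu|\le n\text{ and }\big(|\mu|=n\text{ or }s(\mu)\text{ is a source}\big)}.
\]
By induction on $n$, using (ii) to expand $t_\mu P_{s(\mu)}t_\mu^\ast$ whenever $|\mu|=n$ and $s(\mu)$ is not a source, I would prove $P_v=\sum_{\mu\in F_n(v)}t_\mu t_\mu^\ast$, with the summands mutually orthogonal (distinct elements of $F_n(v)$ must disagree at some edge of a common range vertex). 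Next I would show the family $\braces{t_\mu t_\mu^\ast:\mu\in v\Lambda_E}$ is commuting by checking that $t_\mu^\ast t_\nu$ computes to either $t_\alpha$ or $t_\beta^\ast$ whenever one of $\mu,\nu$ is a prefix of the other, and vanishes otherwise. Taking $n:=\max\braces{|\nu|:\nu\in D}$ and appealing to exhaustiveness of $D$, every $\mu\in F_n(v)$ has some $\nu_\mu\in D$ as a prefix (since $|\mu|\ge|\nu_\mu|$ forces $\nu_\mu$ to be the prefix in any minimal common extension, while source-terminated shorter $\mu$ must coincide with some $\nu\in D$). The factor $(t_v-t_{\nu_\mu}t_{\nu_\mu}^\ast)$ then annihilates $t_\mu t_\mu^\ast$ on the right, and commutativity allows this factor to be moved to the right end of the product, so each summand in $\prod_{\nu\in D}(t_v-t_\nu t_\nu^\ast)\cdot P_v=\sum_{\mu\in F_n(v)}\prod_{\nu\in D}(t_v-t_\nu t_\nu^\ast)\, t_\mu t_\mu^\ast$ vanishes.

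Having established both correspondences, the universal properties of $C^*(E)$ and $C^*(\Lambda_E)$ yield $\ast$-homomorphisms $\phi:C^*(\Lambda_E)\to C^*(E)$ and $\psi:C^*(E)\to C^*(\Lambda_E)$ that are mutually inverse on the generators, hence mutually inverse on the whole algebras, giving the required isomorphism $C^*(E)\cong C^*(\Lambda_E)$.
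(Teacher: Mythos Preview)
The paper does not give its own proof of this lemma; it is simply attributed to \cite[Proposition~B.1]{rsy2004} and stated without argument. Your proposal supplies a direct proof, and the strategy---showing that Cuntz--Krieger $E$-families and Cuntz--Krieger $\Lambda_E$-families in an arbitrary $C^*$-algebra are in bijective correspondence, then invoking both universal properties---is the natural one and is essentially how the cited reference proceeds.

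Your argument is correct. A couple of minor points worth tightening: when you verify that $vE^1$ is exhaustive in $v\Lambda_E$ you only mention paths $\mu$ of positive length, but the case $\mu=v$ also needs a word (it is immediate since $v$ is not a source, so $vE^1\ne\emptyset$ and $\Lambda_E^{\min}(v,f)\ne\emptyset$ for any $f\in vE^1$). In the (CK4) verification, your claim that ``source-terminated shorter $\mu$ must coincide with some $\nu\in D$'' is slightly stronger than what you need or what actually holds; the correct statement is that some $\nu\in D$ is a \emph{prefix} of $\mu$ (since $s(\mu)$ being a source forces $\alpha=s(\mu)$ in any minimal common extension $\mu\alpha=\nu\beta$), and this already suffices to give $(t_v-t_\nu t_\nu^*)\,t_\mu t_\mu^*=0$. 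With these small clarifications the proof goes through.
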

The following example shows that it is possible for $C^*(G_\Lambda)$ to be substantially different to $C^*(\Lambda)$ when $\Lambda$ has sources.
\begin{example}\label{example_groupoid_model_requires_no_sources}
Suppose $\Lambda$ is the $1$-graph with the directed graph from Example \ref{example_directed_graph_algebras_not_isomorphic} as the skeleton. Then $C^*(G_\Lambda)$ is liminal while $C^*(\Lambda)$ is not.
\begin{proof}
In Example \ref{example_directed_graph_algebras_not_isomorphic_proof} it is shown that if $E$ is the directed graph from Example \ref{example_directed_graph_algebras_not_isomorphic}, then $C^*(G_E)$ is liminal while $C^*(E)$ is not. The result follows since $G_E\cong G_\Lambda$ by Lemma \ref{path_groupoids_isomorphic} and $C^*(E)\cong C^*(\Lambda)$ by Lemma \ref{lemma_RSY_LambdaE}.
\end{proof}
\end{example}
Raeburn, Sims and Yeend in \cite{rsy2004} describe versions of the gauge-invariant uniqueness theorem and the Cuntz-Krieger uniqueness theorem for row-finite higher-rank graphs that may have sources.

\chapter{Categorising higher-rank graphs using the path groupoid}\label{chapter_categorising_higher-rank_graphs}

\section{Liminal and postliminal higher-rank graph algebras}\label{sec_liminal_postliminal}
This section establishes characterisations of the row-finite higher-rank graphs without sources that have liminal and postliminal $C^*$-algebras. In Section \ref{sec_desourcification} we will will relax the hypothesis to permit higher-rank graphs with sources.

A characterisation of the directed graphs with liminal $C^*$-algebras has been developed by Ephrem in \cite[Theorem~5.5]{Ephrem2004} and characterisations of the directed graphs with postliminal $C^*$-algebras have been developed by Ephrem in \cite[Theorem~7.3]{Ephrem2004} and by Deicke, Hong and Szyma\'{n}ski in \cite[Theorem~2.1]{dhs2003}. To achieve \cite[Theorem~5.5]{Ephrem2004}, Ephrem began by developing a characterisation of the row-finite directed graphs without sources that have liminal $C^*$-algebras and then used the Drinen-Tomforde desingularisation\index{Drinen-Tomforde desingularisation} (see \cite{Drinen-Tomforde2005}) to generalise this result to the arbitrary directed graph case. To characterise the higher-rank graphs with liminal $C^*$-algebras we follow a similar approach: Theorem \ref{k-graph_liminal_thm1} characterises the row-finite $k$-graphs without sources and with liminal $C^*$-algebras and Theorem \ref{k-graph_liminal_thm2} generalises this characterisation to row-finite $k$-graphs that may have sources. The generalisation process uses the Farthing-Webster higher-rank graph desourcification that was developed by Webster in \cite{Webster2011} based on Farthing's \cite{Farthing2008} (see Section \ref{sec_boundary_paths_and_desourcification}). At the time of writing no full desingularisation process exists for higher-rank graphs so our theorems will only apply to row-finite higher-rank graphs.

Ephrem's characterisation \cite[Theorem~7.3]{Ephrem2004} of the directed graphs with postliminal $C^*$-algebras followed a similar approach to \cite[Theorem~5.5]{Ephrem2004} by first developing a characterisation for the row-finite directed graphs without sources and then using the Drinen-Tomforde desingularisation to generalise the result to remove the `row-finite' requirement and permit sources. Again we follow a similar approach: Theorem \ref{k-graph_postliminal_thm1} characterises the row-finite $k$-graphs without sources and with liminal $C^*$-algebras and Theorem \ref{k-graph_postliminal_thm2} uses the Farthing-Webster generalisation to generalise this characterisation to row-finite $k$-graphs that may have sources.

In Section \ref{section_directed_graphs} we will show how, given a row-finite directed graph $E$, we may apply Theorems \ref{k-graph_liminal_thm1}, \ref{k-graph_postliminal_thm1}, \ref{k-graph_liminal_thm2} and \ref{k-graph_postliminal_thm2} to the $1$-graph $\Lambda_E$ to obtain characterisations of the row-finite directed graphs with liminal and postliminal $C^*$-algebras. We will also show how these characterisations coincide with Ephrem's from \cite{Ephrem2004}.

\begin{definition}\label{def_frequently_divertable_infinite_paths}\index{frequently divertable!infinite paths in a $k$-graph}
Suppose $\Lambda$ is a $k$-graph and $x,y\in\Lambda^\infty$. We say {\em $x$ is frequently divertable to $[y]$} if for every $n\in\NN^k$ there exists $z\in x(n)\Lambda^\infty$ such that $z$ is shift equivalent to $y$.
\end{definition}

Suppose $\Lambda$ is a $1$-graph, $v\in\Lambda^0$ and $f\in\Lambda^{e_1}$. If there is exactly one path $x$ in $\Lambda^\infty$ with $x(0)=v$ and $x(n,n+1)=f$ for some $n\in\NN$, we denote $x$ by $\uniqueinfinitepath{v,f}$\index{$\uniqueinfinitepath{v,f}\in\Lambda^\infty$}.
\begin{example}\label{example_frequently_divertable}
Suppose $\Lambda$ is the $1$-graph with skeleton described by the following picture.
\begin{center}
\begin{tikzpicture}[>=stealth,baseline=(current bounding box.center)] 
\def\cellwidth{5.5};
\clip (-0.7em,-3.2em) rectangle (4*\cellwidth em + 3.5em,0.2em);
\node(endtailone) at (4*\cellwidth em + 3.4em,0em) {};
\node(startendtail) at (4*\cellwidth em + 1.5em,0em) {};
\draw [dotted, thick] (startendtail) -- (endtailone);

\node(endtail2) at (4*\cellwidth em + 3.4em,-3em) {};
\node(startendtail2) at (4*\cellwidth em + 1.5em,-3em) {};
\draw [dotted, thick] (startendtail2) -- (endtail2);

\clip (-0.7em,-3.2em) rectangle (4*\cellwidth em + 1.5em,0.2em);

\foreach \x in {0,1,2,3,4,5,6} \foreach \y in {0} \node (x\x y\y) at (\cellwidth*\x em,-3*\y em) {$\scriptstyle v_\x$};
\foreach \x in {0,1,2,3,4,5,6} \foreach \y in {1} \node (x\x y\y) at (\cellwidth*\x em,-3*\y em) {$\scriptstyle u_\x$};

\foreach \x / \z in {0/1,1/2,2/3,3/4,4/5} \draw[black,<-] (x\x y0) to (x\z y0);
\foreach \x / \z in {0/1,1/2,2/3,3/4,4/5} \draw[black,<-] (x\x y1) to (x\z y1);

\foreach \x / \z in {0,1,2,3,4} \draw[black,<-] (x\x y0) to node[anchor=west] {$\scriptstyle f_\x$} (x\x y1) ;

\end{tikzpicture}
\end{center}
Let $x,y$ be the paths in $\Lambda^\infty$ such that $x(n)=v_n$ and $y(n)=u_n$ for all $n\in\NN$. Then $x$ is frequently divertable to $[y]$ but $y$ is not frequently divertable to $[x]$.

To see that $x$ is frequently divertable to $[y]$, fix $n\in\NN$ and note that $\uniqueinfinitepath{v_n,f_n}\in x(n)\Lambda^\infty\cap [y]$. To see that $y$ is not frequently divertable to $[x]$, observe that the only path in $y(0)\Lambda^\infty=u_0\Lambda^\infty$ is $y$, which is not shift equivalent to $x$.
\end{example}

\begin{lemma}\label{k-graph_closure_lemma}
Suppose $\Lambda$ is a row-finite $k$-graph and that $x,y\in \Lambda^\infty$. Then $x$ is frequently divertable to $[y]$ if and only if $x\in \overline{[y]}$.
\begin{proof}
Suppose $x\in\overline{[y]}$ and fix $n\in\NN^k$. Then every open neighbourhood $U$ of $x$ intersects $[y]$. Since $x(0,n)\Lambda^\infty$ is an open neighbourhood of $x$, it follows that $x(0,n)\Lambda^\infty$ intersects $[y]$. Suppose $z\in x(0,n)\Lambda^\infty\cap [y]$. Then $\sigma^n(z)\in x(n)\Lambda^\infty$ and $\sigma^n(z)$ is shift equivalent to $y$. Since $n$ was fixed arbitrarily, it follows that $x$ is frequently divertable to $[y]$.

Now suppose that $x$ is frequently divertable to $[y]$. Let $U$ be an open neighbourhood of $x$ in $\Lambda^\infty$. Since the sets $\alpha\Lambda^\infty$ form a basis of open sets for $\Lambda^\infty$, there exists $m\in\NN^k$ such that $x(0,m)\Lambda^\infty\subset U$. Since $x$ is frequently divertable to $[y]$, there exists $z\in x(m)\Lambda^\infty$ with $z$ shift equivalent to $y$. Thus
\[
 x(0,m)z\in x(0,m)\Lambda^\infty\subset U,
\]
and as $z\in [y]$, we know that $x(0,m)z\in [y]$, so $[y]$ intersects $U$. It follows that $x\in\overline{[y]}$ since $U$ is an arbitrarily fixed neighbourhood of $x$.
\end{proof}
\end{lemma}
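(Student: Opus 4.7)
The plan is to prove both implications directly, making essential use of the fact that the cylinder sets $\alpha\Lambda^\infty$ form a basis for the topology on $\Lambda^\infty$, together with the factorisation property relating the shift map and path composition.

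For the implication $(\Leftarrow)$, I would start by assuming $x \in \overline{[y]}$ and fix an arbitrary $n \in \NN^k$. The set $x(0,n)\Lambda^\infty$ is a basic open neighbourhood of $x$, so by hypothesis it must meet $[y]$; pick any $z' \in x(0,n)\Lambda^\infty \cap [y]$. By Proposition~\ref{prop_finite_infinite_composition} we can write $z' = x(0,n)\sigma^n(z')$ uniquely, and $\sigma^n(z') \in s(x(0,n))\Lambda^\infty = x(n)\Lambda^\infty$. Since $\sigma^n(z') \sim_{-n} z' \sim y$, transitivity of shift equivalence gives $\sigma^n(z') \sim y$, which produces the required $z \in x(n)\Lambda^\infty \cap [y]$.

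For the implication $(\Rightarrow)$, assume $x$ is frequently divertable to $[y]$ and let $U$ be any open neighbourhood of $x$. Because $\{\alpha\Lambda^\infty : \alpha \in \Lambda\}$ is a basis, there exists $\alpha \in \Lambda$ with $x \in \alpha\Lambda^\infty \subset U$; necessarily $\alpha = x(0,d(\alpha))$, so setting $m := d(\alpha)$ gives $x(0,m)\Lambda^\infty \subset U$. Apply the frequent divertability hypothesis at this $m$ to obtain $z \in x(m)\Lambda^\infty$ with $z \sim y$. Then $x(0,m)z$ is a well-defined element of $x(0,m)\Lambda^\infty \subset U$, and since $\sigma^m(x(0,m)z) = z$, we have $x(0,m)z \sim z \sim y$, so $x(0,m)z \in U \cap [y]$. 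Hence every open neighbourhood of $x$ meets $[y]$, giving $x \in \overline{[y]}$.

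I do not expect any serious obstacle: the argument is essentially an unpacking of definitions, and the only subtle point is keeping track of shift equivalence under the operations of prepending a finite path and applying $\sigma^n$. The row-finiteness hypothesis is not actually needed in either direction of the argument itself, but is kept implicit so that $\Lambda^\infty$ carries its standard topology with the stated basis.
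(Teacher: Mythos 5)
Your proposal is correct and follows essentially the same argument as the paper: both directions unpack the definition of frequent divertability against the cylinder-set basis, using $\sigma^n$ on an element of $x(0,n)\Lambda^\infty\cap[y]$ in one direction and prepending $x(0,m)$ to a diverted path in the other. The extra care you take in citing Proposition~\ref{prop_finite_infinite_composition} and tracking shift equivalence explicitly is sound but does not change the route.
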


\begin{theorem}\label{k-graph_liminal_thm1}
Suppose $\Lambda$ is a row-finite $k$-graph without sources and let $G$ be the associated path groupoid. Then the following are equivalent:
\begin{enumerate}
\item\label{k-graph_liminal_thm1_1} every orbit in $G^{(0)}$ is closed;
\item\label{k-graph_liminal_thm1_2} for every $x\in\Lambda^\infty$, every path in $\Lambda^\infty$ that is frequently divertable to $[x]$ is shift equivalent to $x$; and
\item\label{k-graph_liminal_thm1_3} $C^*(\Lambda)\cong C^*(G)$ is liminal.
\end{enumerate}
\begin{proof}
\eqref{k-graph_liminal_thm1_1} $\implies$ \eqref{k-graph_liminal_thm1_2}. Suppose \eqref{k-graph_liminal_thm1_1}, fix $x\in\Lambda^\infty$ and suppose $y$ is frequently divertable to $[x]$. Then $y\in\overline{[x]}$ by Lemma~\ref{k-graph_closure_lemma}. Since $[x]$ is closed, $[x]=\overline{[x]}$, so $y\in [x]$, establishing \eqref{k-graph_liminal_thm1_2}.

\eqref{k-graph_liminal_thm1_2} $\implies$ \eqref{k-graph_liminal_thm1_1}. Fix $x\in\Lambda^\infty$ and choose $y\in\overline{[x]}$. Then $y$ is frequently divertable to $[x]$ by Lemma~\ref{k-graph_closure_lemma} and it follows from \eqref{k-graph_liminal_thm1_2} that $y\in [x]$. Thus $[x]=\overline{[x]}$ and \eqref{k-graph_liminal_thm1_1} follows by considering the homeomorphism between $\Lambda^\infty$ and $G^{(0)}$ from Proposition \ref{prop_path_groupoid}.

\eqref{k-graph_liminal_thm1_1} $\iff$ \eqref{k-graph_liminal_thm1_3}. Note that $C^*(G)$ is isomorphic to $C^*(\Lambda)$ by Lemma \ref{lemma_higher_rank_graph_algebras_isomorphic}. Since the stability subgroups $G|_x$ are abelian, they are amenable and $C^*(G|_x)\cong C_0(\widehat{G|_x})$ is liminal. We can now apply Theorem \ref{thm_groupoid_algebra_liminal} and Remark \ref{thm_groupoid_algebra_liminal} which says that $C^*(G)$ is liminal if and only if every orbit in $G^{(0)}$ is closed.
\end{proof}
\end{theorem}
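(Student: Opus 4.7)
The plan is to prove the three equivalences by splitting into \eqref{k-graph_liminal_thm1_1}$\iff$\eqref{k-graph_liminal_thm1_2} (a purely combinatorial translation) and \eqref{k-graph_liminal_thm1_1}$\iff$\eqref{k-graph_liminal_thm1_3} (an application of Clark's theorem for liminal groupoid $C^*$-algebras). The heavy lifting has already been done in Lemma~\ref{k-graph_closure_lemma} and Theorem~\ref{thm_groupoid_algebra_liminal}, so the proof will mostly be a matter of assembling these pieces and verifying the hypotheses.

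For \eqref{k-graph_liminal_thm1_1}$\iff$\eqref{k-graph_liminal_thm1_2}, I would use the homeomorphism between $\Lambda^\infty$ and $G^{(0)}$ from Proposition~\ref{prop_path_groupoid} (which respects the equivalence relations as noted in Remark~\ref{remark_path_groupoid_1}), so that closedness of $[x]$ in $G^{(0)}$ is equivalent to closedness of $[x]$ in $\Lambda^\infty$. Then Lemma~\ref{k-graph_closure_lemma} translates closedness directly into the frequent-divertability condition: assuming \eqref{k-graph_liminal_thm1_1}, any $y$ frequently divertable to $[x]$ lies in $\overline{[x]}=[x]$; conversely, if \eqref{k-graph_liminal_thm1_2} holds then any $y\in\overline{[x]}$ is frequently divertable to $[x]$ by the lemma and hence lies in $[x]$, so $[x]=\overline{[x]}$.

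For \eqref{k-graph_liminal_thm1_1}$\iff$\eqref{k-graph_liminal_thm1_3}, I would first observe that $C^*(\Lambda)\cong C^*(G)$ by Lemma~\ref{lemma_higher_rank_graph_algebras_isomorphic}, and that $G$ is a second countable, locally compact, Hausdorff groupoid with a Haar system by Proposition~\ref{prop_path_groupoid}. To apply Theorem~\ref{thm_groupoid_algebra_liminal}, the stability subgroups must be amenable with liminal $C^*$-algebras. Each stability subgroup $G|_x$ is a subgroup of $\ZZ^k$ (via the shift-equivalence lag) and is therefore a discrete abelian, hence amenable, group; consequently $C^*(G|_x)\cong C_0(\widehat{G|_x})$ is commutative and thus liminal. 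Theorem~\ref{thm_groupoid_algebra_liminal} then gives that $C^*(G)$ is liminal if and only if $G^{(0)}/G$ is $T_1$, which by Remark~\ref{remark_T1_iff_orbits_closed} is equivalent to every orbit being closed, i.e.\ to \eqref{k-graph_liminal_thm1_1}.

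The main (minor) obstacle is being careful about the stability subgroup argument: I need to verify that for $x\in\Lambda^\infty$ the group $G|_x = \{(x,n,x): x\sim_n x\}$ is genuinely a subgroup of $\ZZ^k$ under the map $(x,n,x)\mapsto n$, which is immediate from the definition of composition in $G_\Lambda$ and the fact that the lag is well-defined. Everything else is a direct citation of previously established results.
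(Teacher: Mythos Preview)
Your proposal is correct and follows essentially the same route as the paper's proof: both use Lemma~\ref{k-graph_closure_lemma} to translate between closedness of orbits and the frequent-divertability condition, and both invoke Clark's Theorem~\ref{thm_groupoid_algebra_liminal} together with the observation that the stability subgroups are abelian (hence amenable with liminal $C^*$-algebras) for the equivalence with liminality. Your write-up is in fact slightly more explicit than the paper's in verifying the hypotheses of Theorem~\ref{thm_groupoid_algebra_liminal} and in identifying $G|_x$ as a subgroup of $\ZZ^k$.
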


\begin{example}\label{example_not_liminal}
Let $\Lambda$ be the $1$-graph from Example \ref{example_frequently_divertable}. Let $x,y$ be the associated paths in $\Lambda^\infty$, noting that $x$ is not shift equivalent to $y$. It was shown in Example \ref{example_frequently_divertable} that $x$ is frequently divertable to $[y]$, so condition \eqref{k-graph_liminal_thm1_2} from Theorem \ref{k-graph_liminal_thm1} does not hold and $C^*(\Lambda)$ is not liminal.
\end{example}

\begin{lemma}\label{lemma_equivalence_class_frequently_divertable}
Suppose $\Lambda$ is a $k$-graph and $x\in\Lambda^\infty$. Suppose there exists $n\in\NN^k$ such that every element of $x(n)\Lambda^\infty$ that is frequently divertable to $[x]$ is shift equivalent to $x$. Then for every $y\in [x]$, there exists $m\in\NN^k$ such that every element of $y(m)\Lambda^\infty$ that is frequently divertable to $[x]$ is shift equivalent to $x$.
\begin{proof}
Fix $y\in [x]$. Then $x\sim_a y$ for some $a\in\ZZ^k$, so there exists $l\in\NN^k$ such that $\sigma^l(x)=\sigma^{l-a}(y)$. We may increase $n$ if necessary to ensure that $n\ge l$. We now have
\[
 x(n)=x\big((n-l)+l\big)=y\big((n-l)+(l-a)\big)=y(n-a)
\]
and the result follows since $x(n)\Lambda^\infty=y(n-a)\Lambda^\infty$.
\end{proof}
\end{lemma}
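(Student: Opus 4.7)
My plan is to exploit the alignment provided by shift equivalence to transfer the hypothesis from $x$ to $y$. The first step is an auxiliary propagation observation: the hypothesis at $n$ implies the analogous hypothesis at every $n^*\in\NN^k$ with $n^*\ge n$. Indeed, given $z\in x(n^*)\Lambda^\infty$ frequently divertable to $[x]$, the prepended path $w:=x(n,n^*)\cdot z\in x(n)\Lambda^\infty$ is also frequently divertable to $[x]$, so the hypothesis forces $w\sim x$, and since $z=\sigma^{n^*-n}(w)$, chasing the witness $\sigma^L(w)=\sigma^{L-b}(x)$ through $\sigma^{n^*-n}$ (after enlarging $L$ so $L\ge n^*-n$) produces $\sigma^{L-(n^*-n)}(z)=\sigma^{L-b}(x)$, giving $z\sim x$.

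For the propagation of frequent divertability of $w$, fix $q\in\NN^k$. If $q\ge n^*-n$, then $w(q)=z(q-(n^*-n))$ and a witness for $z$ at level $q-(n^*-n)$ serves directly. Otherwise set $q':=q\vee(n^*-n)$; the previous case produces a witness $w_{q'}\in w(q')\Lambda^\infty$ with $w_{q'}\sim x$, and then $w(q,q')\cdot w_{q'}\in w(q)\Lambda^\infty$ is shift equivalent to $w_{q'}$ (the relation $u\sim_p \sigma^p(u)$ is routine) and hence to $x$.

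With the propagation in hand, I would use $y\in[x]$ to find $a\in\ZZ^k$ and $l\in\NN^k$ with $l-a\in\NN^k$ satisfying $\sigma^l(x)=\sigma^{l-a}(y)$. Replacing $l$ with $l\vee n$ preserves this identity (applying $\sigma^{(l\vee n)-l}$ to both sides) and ensures $l\ge n$. Setting $m:=l-a\in\NN^k$, evaluating $\sigma^l(x)=\sigma^{l-a}(y)$ at the object $0$ of $\Omega_k$ gives $x(l)=y(m)$ as vertices in $\Lambda^0$, whence $x(l)\Lambda^\infty=y(m)\Lambda^\infty$ as subsets of $\Lambda^\infty$. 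The conclusion is then immediate: any $z\in y(m)\Lambda^\infty$ frequently divertable to $[x]$ lies in $x(l)\Lambda^\infty$ with the same property, so by the propagation step applied with $n^*=l$, we obtain $z\sim x$.

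The main obstacle is the propagation step, which demands care on two fronts: verifying that frequent divertability survives prepending a finite path (handled by the case split above), and that shift equivalence of $w$ to $x$ transfers to $z=\sigma^{n^*-n}(w)$. Once that is in place, the remainder is elementary index bookkeeping in $\NN^k$ together with reading off the vertex equality $x(l)=y(m)$ from the chosen shift-equivalence identity.
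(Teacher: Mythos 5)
Your proof is correct and follows essentially the same route as the paper: both use a witness $\sigma^l(x)=\sigma^{l-a}(y)$ to identify $x(l)\Lambda^\infty$ with $y(l-a)\Lambda^\infty$ and then invoke the hypothesis at a level $\ge n$. The only difference is that you explicitly prove the upward propagation of the hypothesis (that it holds at every $n^*\ge n$), which the paper leaves implicit in the phrase ``we may increase $n$ if necessary''; your case analysis showing that frequent divertability survives prepending the finite segment $x(n,n^*)$ is exactly the justification that step requires.
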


\begin{theorem}\label{k-graph_postliminal_thm1}
Suppose $\Lambda$ is a row-finite $k$-graph without sources and let $G$ be the associated path groupoid. Then the following are equivalent:
\begin{enumerate}
\item\label{k-graph_postliminal_thm1_1} every orbit in $G^{(0)}$ is locally closed;
\item\label{k-graph_postliminal_thm1_2} for every path $x\in\Lambda^\infty$ there exists $n\in\NN^k$ such that every path in $x(n)\Lambda^\infty$ that is frequently divertable to $[x]$ is shift equivalent to $x$; and
\item\label{k-graph_postliminal_thm1_3} $C^*(\Lambda)\cong C^*(G)$ is postliminal.
\end{enumerate}
\begin{proof}
\eqref{k-graph_postliminal_thm1_1} $\implies$ \eqref{k-graph_postliminal_thm1_2}. Suppose \eqref{k-graph_postliminal_thm1_1} and fix $x\in\Lambda^\infty$. By \eqref{k-graph_postliminal_thm1_1} there is an open $U\subset\Lambda^\infty$ such that $[x]= U\cap\overline{[x]}$. Then $x\in U$ so by the topology on $\Lambda^\infty$ there exists $n\in\NN^k$ such that $x(0,n)\Lambda^\infty\subset U$. Suppose $y$ is a path in $x(n)\Lambda^\infty$ that is frequently divertable to $[x]$. Then $x(0,n)y\in\overline{[x]}$ by Lemma \ref{k-graph_closure_lemma} so $x(0,n)y\in U \cap \overline{[x]}=[x]$ and it follows that $y\in [x]$, establishing \eqref{k-graph_postliminal_thm1_2}.

\eqref{k-graph_postliminal_thm1_2} $\implies$ \eqref{k-graph_postliminal_thm1_1}. Fix $x\in\Lambda^\infty$ and suppose there exists $n\in\NN^k$ as in \eqref{k-graph_postliminal_thm1_2}. It follows from Lemma \ref{lemma_equivalence_class_frequently_divertable} that for every $y\in [x]$, there exists $m^{(y)}\in\NN^k$ such that every path in $y(m^{(y)})\Lambda^\infty$ that is frequently divertable to $[x]$ is shift equivalent to $x$. Let $V=\cup_{y\in [x]}y(0,m^{(y)})\Lambda^\infty$ and note that $[x]\subset V$ and that every path in $V$ that is frequently divertable to $[x]$ is shift equivalent to $x$. We can now use Lemma \ref{k-graph_closure_lemma} to see that $V\cap\overline{[x]}=[x]$. Finally, note that $V$ is open since each $y(0,m^{(y)})\Lambda^\infty$ is, so $[x]$ is locally closed.

\eqref{k-graph_postliminal_thm1_1} $\iff$ \eqref{k-graph_postliminal_thm1_3}. The \eqref{k-graph_postliminal_thm1_1} $\iff$ \eqref{k-graph_postliminal_thm1_3} argument from Theorem \ref{k-graph_liminal_thm1} works by using Theorem \ref{thm_groupoid_algebra_postliminal} and Corollary \ref{cor_ramsay_orbits_locally_closed} in place of Theorem \ref{thm_groupoid_algebra_liminal} and Remark \ref{remark_T1_iff_orbits_closed}.
\end{proof}
\end{theorem}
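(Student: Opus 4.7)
The overall plan is to mirror the structure used for Theorem~\ref{k-graph_liminal_thm1}, but replace the global ``closed orbit'' condition with a localised version obtained by restricting to a basic open set of the form $x(0,n)\Lambda^\infty$. The equivalence \eqref{k-graph_postliminal_thm1_1}$\iff$\eqref{k-graph_postliminal_thm1_3} will be essentially routine: the stability subgroups $G|_x$ are abelian (each is a subgroup of $\ZZ^k$), hence amenable with postliminal group $C^*$-algebras, so Theorem~\ref{thm_groupoid_algebra_postliminal} reduces the question to whether $G^{(0)}/G$ is $T_0$. Corollary~\ref{cor_ramsay_orbits_locally_closed} then translates this into all orbits being locally closed, and the isomorphism $C^*(\Lambda)\cong C^*(G)$ is supplied by Lemma~\ref{lemma_higher_rank_graph_algebras_isomorphic}.

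For \eqref{k-graph_postliminal_thm1_1}$\implies$\eqref{k-graph_postliminal_thm1_2}, fix $x\in\Lambda^\infty$. Local closedness of $[x]$ supplies an open $U$ with $[x]=U\cap\overline{[x]}$, and since the sets $\alpha\Lambda^\infty$ form a basis for $\Lambda^\infty$, I can choose $n\in\NN^k$ with $x(0,n)\Lambda^\infty\subseteq U$. Given $y\in x(n)\Lambda^\infty$ frequently divertable to $[x]$, the concatenation $x(0,n)y$ is well-defined and inherits frequent divertability to $[x]$ from $y$; by Lemma~\ref{k-graph_closure_lemma} it lies in $\overline{[x]}$. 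Since it also lies in $x(0,n)\Lambda^\infty\subseteq U$, we conclude $x(0,n)y\in[x]$, and so $y=\sigma^n(x(0,n)y)\sim x$.

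For \eqref{k-graph_postliminal_thm1_2}$\implies$\eqref{k-graph_postliminal_thm1_1}, fix $x$ and the $n$ from \eqref{k-graph_postliminal_thm1_2}. Apply Lemma~\ref{lemma_equivalence_class_frequently_divertable} to obtain, for each $y\in[x]$, some $m^{(y)}\in\NN^k$ such that every path in $y(m^{(y)})\Lambda^\infty$ frequently divertable to $[x]$ is shift equivalent to $x$. The set $V:=\bigcup_{y\in[x]}y(0,m^{(y)})\Lambda^\infty$ is open and contains $[x]$; the plan is to show $V\cap\overline{[x]}=[x]$. For any $z$ in the intersection, pick $y\in[x]$ with $z\in y(0,m^{(y)})\Lambda^\infty$; then $\sigma^{m^{(y)}}(z)\in y(m^{(y)})\Lambda^\infty$, and using Lemma~\ref{k-graph_closure_lemma} applied to $z$ plus a short check that shifts preserve frequent divertability, $\sigma^{m^{(y)}}(z)$ is frequently divertable to $[x]$; hence $\sigma^{m^{(y)}}(z)\sim x$, forcing $z\sim x$.

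The main obstacle is verifying the two small ``transfer'' facts that glue the argument together: (i) if $y$ is frequently divertable to $[x]$ then so is the prefixed path $x(0,n)y$ whenever $r(y)=x(n)$, and (ii) if $z$ is frequently divertable to $[x]$ then so is $\sigma^m(z)$ for every $m\le d$-coordinates of $z$. Both reduce to careful coordinate-wise manipulation in Definition~\ref{def_frequently_divertable_infinite_paths} together with tracking of lags under the shift, and in the higher-rank setting this bookkeeping with vectors $n,m\in\NN^k$ is the only place where the argument departs in any nontrivial way from the $1$-graph intuition.
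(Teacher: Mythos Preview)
Your proposal is correct and follows essentially the same route as the paper's proof: the same choice of $n$ via a basic cylinder inside the open $U$ for \eqref{k-graph_postliminal_thm1_1}$\implies$\eqref{k-graph_postliminal_thm1_2}, the same union $V=\bigcup_{y\in[x]}y(0,m^{(y)})\Lambda^\infty$ built from Lemma~\ref{lemma_equivalence_class_frequently_divertable} for \eqref{k-graph_postliminal_thm1_2}$\implies$\eqref{k-graph_postliminal_thm1_1}, and the same appeal to Theorem~\ref{thm_groupoid_algebra_postliminal} with Corollary~\ref{cor_ramsay_orbits_locally_closed} for \eqref{k-graph_postliminal_thm1_1}$\iff$\eqref{k-graph_postliminal_thm1_3}. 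Your two ``transfer'' facts are indeed the only things swept under the rug in the paper's write-up, and both are as routine as you suspect (for (i) use that $(x(0,n)y)(p\vee n)=y((p\vee n)-n)$; (ii) is immediate from $(\sigma^m z)(p)=z(p+m)$).
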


\begin{example}\label{example_postliminal_not_liminal}
Let $\Lambda$ be the $1$-graph from Example \ref{example_frequently_divertable}. Then $C^*(\Lambda)$ is postliminal.
\begin{proof}
Fix $a\in \Lambda^\infty$ and let $x,y$ be the paths in $\Lambda^\infty$ from Example \ref{example_frequently_divertable}. We aim to establish condition \eqref{k-graph_postliminal_thm1_2} from Theorem \ref{k-graph_postliminal_thm1}. That is, for each $a\in\Lambda^\infty$ we need to find $n\in\NN$ such that every path in $a(n)\Lambda^\infty$ that is frequently divertable to $[a]$ is shift equivalent to $a$. We will consider three cases. The first case is when $a=\sigma^m(y)$ for some $m\in\NN$. The only path in $a(0)\Lambda^\infty=u_m \Lambda^\infty$ is $a$, so $a(0) \Lambda^\infty\backslash [a]=\emptyset$, and $n=0$ will do.

The second case is when $a=\sigma^m(x)$ for some $m\in\NN$. Then 
\[a(0)\Lambda^\infty\backslash[a]=v_m \Lambda^\infty\backslash [a]=\braces{\uniqueinfinitepath{v_m,f_{m+l}}:l\in\NN}\]
and none of these paths are frequently divertable to $[a]$, so $n=0$ will do.

The final case is when $a=\uniqueinfinitepath{v_m,f_{m+l}}$ for some $m,l\in\NN$. Then $a(l,l+1)=f_{m+l}$ and by observing the graph it can be seen that $b:=\sigma^{l+1}(a)$ is the only path in $a(l+1)\Lambda^\infty$. Since $b$ is shift equivalent to $a$, $a(l+1)\Lambda^\infty\backslash [a]=\emptyset$, so $n=l+1$ will do.

It follows from our consideration of these three cases that condition \eqref{k-graph_postliminal_thm1_2} from Theorem \ref{k-graph_postliminal_thm1} holds and so $C^*(\Lambda)$ is postliminal.
\end{proof}
\end{example}

\begin{example}
Suppose $\Lambda$ is the $1$-graph with skeleton described by the following picture. Then $C^*(\Lambda)$ is not postliminal.
\begin{center}
\begin{tikzpicture}[>=stealth,baseline=(current bounding box.center)] 
\def\cellwidth{5.5};
\clip (-0.7em,-3.2em) rectangle (4*\cellwidth em + 3.5em,0.2em);
\node(endtailone) at (4*\cellwidth em + 3.4em,0em) {};
\node(startendtail) at (4*\cellwidth em + 1.5em,0em) {};
\draw [dotted, thick] (startendtail) -- (endtailone);

\node(endtail2) at (4*\cellwidth em + 3.4em,-3em) {};
\node(startendtail2) at (4*\cellwidth em + 1.5em,-3em) {};
\draw[dotted, thick] (startendtail2) -- (endtail2);

\clip (-0.7em,-3.2em) rectangle (4*\cellwidth em + 1.5em,0.2em);

\foreach \x in {0,1,2,3,4,5} \foreach \y in {0} \node (x\x y\y) at (\cellwidth*\x em,-3*\y em) {$\scriptstyle v_\x$};
\foreach \x in {0,1,2,3,4,5} \foreach \y in {1} \node (x\x y\y) at (\cellwidth*\x em,-3*\y em) {$\scriptstyle u_\x$};

\foreach \x / \z in {0/1,1/2,2/3,3/4,4/5} \draw[black,<-] (x\x y0) to (x\z y0);
\foreach \x / \z in {0/1,1/2,2/3,3/4,4/5} \draw[black,<-] (x\x y1) to (x\z y1);

\foreach \x / \z in {0,2,4} \draw[black,<-] (x\x y0) to (x\x y1);
\foreach \x / \z in {1,3} \draw[black,->] (x\x y0) to (x\x y1);

\end{tikzpicture}
\end{center}
\begin{proof}
Let $x$ and $y$ be the paths in $\Lambda^\infty$ such that $x(n)=v_n$ and $y(n)=u_n$ for all $n\in\NN$. We claim that condition \eqref{k-graph_postliminal_thm1_2} from Theorem \ref{k-graph_postliminal_thm1} does not hold. To see this, note that for every $n\in\NN$, there are paths in $x(n)\Lambda^\infty=v_n\Lambda^\infty$ that are shift equivalent to $y$. Each of these paths in $x(n)\Lambda^\infty\cap [y]$ are frequently divertable to $[x]$ but not shift equivalent to $x$. Thus condition \eqref{k-graph_postliminal_thm1_2} from Theorem \ref{k-graph_postliminal_thm1} does not hold and so $C^*(\Lambda)$ is not postliminal.
\end{proof}
\end{example}

\section{Bounded-trace, Fell, and continuous-trace graph algebras}\label{sec_bded-trace_cts-trace_Fell}

This section establishes characterisations of row-finite higher-rank graphs without sources that have integrable, Cartan and proper path groupoids. Assuming that the path groupoids are principal enables us to use the theorems in Section \ref{bounded-trace_fell_cts-trace_overview} by Clark, an Huef, Muhly and Williams to provide necessary and sufficient conditions for the $C^*$-algebras of these higher rank graphs to have bounded trace, to be Fell and to have continuous trace. Not much generality is lost when we assume that path groupoids are principal since every integrable path groupoid is principal (see Lemma \ref{lemma_integrable_implies_principal}). A later remark, Remark \ref{remark_directed_graphs_and_cycles}, discusses what the requirement of having a principal path groupoid means in the directed graph context.

\begin{lemma}\label{lemma_principal_iff_no_semi-periodic_paths}
Suppose $\Lambda$ is a row-finite $k$-graph without sources. The path groupoid associated to $\Lambda$ is principal if and only if $n=0$ whenever a path in $\Lambda^\infty$ is shift equivalent to itself with lag $n$.

\begin{proof}
The stability subgroups in $G$ are $S_{(x,0,x)}=\braces{(x,n,x): x\sim_n x}$. Since a groupoid is principal if and only if all the stability subgroups are trivial, the result follows.
\end{proof}
\end{lemma}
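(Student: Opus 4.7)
The plan is to unpack the two definitions in play and observe that they match up directly. Recall from Section~2.1 that a groupoid $G$ is principal precisely when every stability subgroup $G|_u = r^{-1}(u) \cap s^{-1}(u)$ is trivial, i.e.\ equals $\{u\}$ for each $u \in G^{(0)}$. So the task reduces to computing the stability subgroups of $G_\Lambda$ at each unit and translating the triviality condition into a statement about shift equivalence.

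First I would identify the units and the stability subgroups explicitly. By Proposition~\ref{prop_path_groupoid} the map $x \mapsto (x,0,x)$ is a homeomorphism from $\Lambda^\infty$ onto $G_\Lambda^{(0)}$, so it suffices to compute $G_\Lambda|_{(x,0,x)}$ for each $x \in \Lambda^\infty$. By the definition of the range and source maps of $G_\Lambda$, an element $\gamma = (y,n,z)$ lies in $G_\Lambda|_{(x,0,x)}$ iff $r(\gamma) = (y,0,y) = (x,0,x)$ and $s(\gamma) = (z,0,z) = (x,0,x)$, i.e.\ $y = z = x$. Since elements of $G_\Lambda$ are exactly those triples $(y,n,z)$ for which $y \sim_n z$, this gives
\[
G_\Lambda|_{(x,0,x)} = \{(x,n,x) : n \in \ZZ^k,\ x \sim_n x\}.
\]

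Next I would read off the characterisation. The stability subgroup $G_\Lambda|_{(x,0,x)}$ is trivial iff its only element is the unit $(x,0,x)$ itself, which by the displayed formula happens iff the only $n \in \ZZ^k$ with $x \sim_n x$ is $n=0$. Combining with the fact that $G_\Lambda$ is principal iff every such stability subgroup is trivial gives: $G_\Lambda$ is principal iff, for every $x \in \Lambda^\infty$ and every $n \in \ZZ^k$, $x \sim_n x$ forces $n=0$. This is precisely the stated condition, completing the argument. There is no substantial obstacle here—the lemma is essentially a definition-chase—so the proof will be very short; the only thing to be careful about is explicitly invoking the homeomorphism $\Lambda^\infty \cong G_\Lambda^{(0)}$ so that the condition is phrased in terms of infinite paths rather than units of the groupoid.
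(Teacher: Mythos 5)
Your proof is correct and follows exactly the same route as the paper: compute the stability subgroup at each unit as $\{(x,n,x): x\sim_n x\}$ and use the fact that a groupoid is principal iff all stability subgroups are trivial. You have merely written out the details that the paper's two-line proof leaves implicit.
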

It follows from this lemma that $G_\Lambda$ is principal if and only if $\Lambda$ has no periodic paths (as defined in \cite{Kumjian-Pask2000}). We avoid this notion of `periodic paths' because it can be misleading when extended to boundary paths as would be required in Section \ref{sec_desourcification}.

\begin{example}\label{example_2_graph_not_principal_no_cycles}
Let $\Lambda$ be the unique $2$-graph with the following skeleton.
\begin{center}
\begin{tikzpicture}[>=stealth,baseline=(current bounding box.center)] 
\def\cellwidth{5.5};
\clip (-0.5em,-1em) rectangle (4*\cellwidth em + 3.5em,1em);
\node(endtailone) at (4*\cellwidth em + 3.4em,0em) {};
\node(startendtail) at (4*\cellwidth em + 1.5em,0em) {};
\draw[dotted, thick] (startendtail) -- (endtailone);

\clip (-0.5em,-1em) rectangle (4*\cellwidth em + 1.5em,1em);

\foreach \x in {1,2,3,4,5,6} \foreach \y in {0} \node (x\x y\y) at (\cellwidth*\x em-\cellwidth em,-3*\y em) {};
\foreach \x in {2,3,4,5,6} \foreach \y in {0} \fill[black] (x\x y\y) circle (0.15em);

\node at (x1y0) {$\scriptstyle v$};

\foreach \x / \z in {1/2,2/3,3/4,4/5,5/6} \draw[black,<-, bend left] (x\x y0) to node[anchor=south] {} (x\z y0);
\foreach \x / \z in {1/2,2/3,3/4,4/5,5/6} \draw[black,dashed,<-, bend right] (x\x y0) to node[anchor=south] {} (x\z y0);

\end{tikzpicture}
\end{center}
The path groupoid associated to $\Lambda$ is not principal: suppose the morphisms in $\Lambda$ corresponding to the solid edges have degree $(1,0)$ and the morphisms corresponding to the dashed edges have degree $(0,1)$. Let $x$ be the unique path in $\Lambda^\infty$ with range $v$. Then $\sigma^{(1,0)}(x)=\sigma^{(0,1)}(x)$, so $x$ is shift equivalent to itself with lag $(1,-1)$. Lemma \ref{lemma_principal_iff_no_semi-periodic_paths} now applies to show that the path groupoid associated to $\Lambda$ is not principal.
\end{example}

\begin{remark}\label{remark_no_cycles_principal}
A {\em cycle} in a $k$-graph is a path $\alpha\in\Lambda$ with $d(\alpha)\ne 0$ and, for distinct $m,n\le d(\alpha)$, we have $\alpha(m)=\alpha(n)$ if and only if $m,n\in\{0,d(\alpha)\}$. A corollary of Lemma \ref{lemma_principal_iff_no_semi-periodic_paths} is that the path groupoid of a $1$-graph is principal if and only if the $1$-graph has no cycles. Example \ref{example_2_graph_not_principal_no_cycles} demonstrates that the same is not true for $2$-graphs since no path in that $2$-graph has both non-zero degree and equal range and source.
\end{remark}

\begin{lemma}\label{lemma_integrable_implies_principal}
Suppose $\Lambda$ is a row-finite $k$-graph without sources. If $G_\Lambda$ is integrable, then $G_\Lambda$ is principal.
\begin{proof}
Suppose $G=G_\Lambda$ is integrable and that $x\in\Lambda^\infty$ satisfies $x\sim_n x$ for some $n\in\NN^k$. Then there exists $p\in\NN^k$ such that $\sigma^p(x)=\sigma^{p-n}(x)$. Fix $i\in\NN$ and let $q=p\vee (p+n)\vee\cdots\vee (p+in)$. Then $q-jn\ge p$ for every $j\le i$ so $\sigma^q(x)=\sigma^{q-n}(x)=\cdots=\sigma^{q-in}(x)$ and $x\sim_{in} x$. We now have $\{(x,jn,x):j\in\NN\}\subset G_x^{r(x)\Lambda^\infty}$, but this set must be finite since $\lambda_x$ is the counting measure on $G_x$ and $G$ integrable implies $\lambda_x(G^{r(x)\Lambda^\infty})<\infty$. Thus $n=0$ and so $G$ is principal by Lemma \ref{lemma_principal_iff_no_semi-periodic_paths}.
\end{proof}
\end{lemma}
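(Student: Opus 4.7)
The plan is to argue the contrapositive via Lemma \ref{lemma_principal_iff_no_semi-periodic_paths}: I will show that if $G_\Lambda$ fails to be principal, then $G_\Lambda$ cannot be integrable. So I would start by assuming there is some $x\in\Lambda^\infty$ and some $n\in\ZZ^k\setminus\{0\}$ with $x\sim_n x$, and aim to contradict integrability.

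The key idea is to exploit self-shift-equivalence to manufacture infinitely many distinct stability elements in $G_x$. Unpacking $x\sim_n x$ gives $p\in\NN^k$ with $\sigma^p(x)=\sigma^{p-n}(x)$. I would then observe that this relation iterates: for any $i\in\NN$, by choosing $q$ large enough (concretely $q=p\vee(p+n)\vee\cdots\vee(p+in)$, so that $q-jn\ge p$ for all $0\le j\le i$), one gets $\sigma^q(x)=\sigma^{q-jn}(x)$ for every such $j$, i.e.\ $x\sim_{jn} x$. Hence $(x,jn,x)\in G_\Lambda$ for every $j\in\NN$, and these are pairwise distinct since $n\ne 0$.

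All of these elements have source $x$ and range $x\in r(x)\Lambda^\infty$, so they lie in $G_x^{r(x)\Lambda^\infty}$. Since $\lambda$ is the Haar system of counting measures on $G_\Lambda$ (Proposition \ref{prop_path_groupoid}), this forces $\lambda_x\big(G^{r(x)\Lambda^\infty}\big)=\infty$. But $r(x)\Lambda^\infty$ is a compact neighbourhood of $r(x)$ in $G_\Lambda^{(0)}\cong\Lambda^\infty$ (using compactness of $\alpha\Lambda^\infty$ from \cite[Lemma~2.6]{Kumjian-Pask2000}), so integrability of $G_\Lambda$ demands $\sup_{y\in r(x)\Lambda^\infty}\lambda_y\big(G^{r(x)\Lambda^\infty}\big)<\infty$, a contradiction. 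I then apply Lemma \ref{lemma_principal_iff_no_semi-periodic_paths} to conclude that $G_\Lambda$ is principal.

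I expect the only mildly subtle point to be the iteration step producing $x\sim_{jn} x$ for all $j$ (rather than just $j=1$); once that is in hand, the rest is just bookkeeping with the counting measure and the definition of integrability. There is no real obstacle beyond choosing $q$ coordinate-wise large enough to make all the shifts simultaneously valid.
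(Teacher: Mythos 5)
Your proposal is correct and is essentially the paper's own argument, just phrased contrapositively: the paper assumes integrability and shows any self-shift-equivalence lag must be zero, while you assume a nonzero lag and contradict integrability, but both use the identical iteration $q=p\vee(p+n)\vee\cdots\vee(p+in)$ to produce the infinite family $\{(x,jn,x):j\in\NN\}\subset G_x^{r(x)\Lambda^\infty}$ and then invoke the counting-measure Haar system against the compactness of $r(x)\Lambda^\infty$. Your explicit remarks that $n\in\ZZ^k\setminus\{0\}$ and that the elements are pairwise distinct are slightly more careful than the paper's write-up, but the proof is the same.
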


\begin{lemma}\label{lemma_description_lambda_x_GV}
Suppose $\Lambda$ is a row-finite $k$-graph without sources. If $G_\Lambda$ is principal then for every $x\in\Lambda^\infty$ and $V\subset\Lambda^\infty$, the quantity $\lambda_x(G^V)$ is the number of paths in $V$ that are shift equivalent to $x$.
\begin{proof}
Suppose $G=G_\Lambda$ is principal and note that $\lambda_x(G^V)$ is the number of elements in $\{(y,n)\in V\times\ZZ^k: y\sim_n x\}$. If $y\sim_m x$ and $y\sim_n x$ for some $m,n\in\ZZ^k$, then $(y,m,x)$ and $(y,n,x)$ are elements of the principal groupoid $G$ with equal range and source, so $m=n$. Then $\lambda_x(G^V)$ is the number of elements in $\{y\in V: y\sim x\}$, as required.
\end{proof}
\end{lemma}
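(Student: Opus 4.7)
The plan is to unpack $\lambda_x(G^V)$ directly from the definitions, using the fact (Proposition \ref{prop_path_groupoid}) that the Haar system on $G_\Lambda$ is the system of counting measures, so that $\lambda_x$ is just counting the elements of $G_x$. The set $G^V\cap G_x$ consists of those $\gamma\in G_\Lambda$ with $s(\gamma)=x$ and $r(\gamma)\in V$, and by the description of $G_\Lambda$ these are exactly the triples $(y,n,x)$ with $y\in V$, $n\in\ZZ^k$, and $y\sim_n x$. Thus $\lambda_x(G^V)$ equals the cardinality of the set $\{(y,n)\in V\times\ZZ^k:y\sim_n x\}$.

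First I would make the above identification explicit (one or two lines using the homeomorphism $\Lambda^\infty\cong G_\Lambda^{(0)}$ from Remark \ref{remark_path_groupoid_1}). Then the remaining task is to show that the map $(y,n)\mapsto y$ from this set to $\{y\in V:y\sim x\}$ is a bijection. Surjectivity is immediate from the definition of shift equivalence. For injectivity, suppose $(y,m,x),(y,n,x)\in G_\Lambda$; then both elements have range $(y,0,y)$ and source $(x,0,x)$, so since $G_\Lambda$ is principal they must be equal, forcing $m=n$. This is the only step where the principal hypothesis enters, and it is the main (though very small) obstacle.

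Having established the bijection, the conclusion is immediate: $\lambda_x(G^V)$ equals the number of $y\in V$ with $y\sim x$. No measure-theoretic subtleties arise because the Haar system is the counting measure, so all quantities are literally cardinalities of explicitly described sets. I do not anticipate any difficulties beyond the brief invocation of the principal hypothesis.
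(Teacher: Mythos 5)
Your proposal is correct and matches the paper's own proof essentially step for step: both identify $\lambda_x(G^V)$ with the cardinality of $\{(y,n)\in V\times\ZZ^k:y\sim_n x\}$ via the counting-measure Haar system, and both use principality to show that the lag $n$ is determined by $y$, so the count reduces to $\#\{y\in V:y\sim x\}$. No differences worth noting.
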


\begin{theorem}\label{thm_integrable_bounded-trace}
Suppose $\Lambda$ is a row-finite $k$-graph without sources. The following are equivalent:
\begin{enumerate}
 \item\label{k-thm_integrable1} $G_\Lambda$ is integrable;
\item\label{k-thm_integrable2} $G_\Lambda$ is principal, and for every $v\in\Lambda^0$ there exists $M\in\NN$ such that for any $x\in\Lambda^\infty$ there are at most $M$ paths in $\Lambda^\infty$ with range $v$ that are shift equivalent to $x$; and
 \item\label{k-thm_integrable3} $G_\Lambda$ is principal, and $C^*(\Lambda)\cong C^*(G_\Lambda)$ has bounded trace.
\end{enumerate}

\begin{proof}
Let $G=G_\Lambda$. \eqref{k-thm_integrable1} $\implies$ \eqref{k-thm_integrable2}. Suppose \eqref{k-thm_integrable1} and note that $G$ is principal by Lemma \ref{lemma_integrable_implies_principal}. Fix $v\in\Lambda^0$. Since $G$ is integrable, $\sup_{x\in v\Lambda^\infty}\lambda_x(G^{v\Lambda^\infty})<\infty$, so there exists $M\in\NN$ such that $\lambda_x(G^{v\Lambda^\infty})\le M$ for every $x\in v\Lambda^\infty$. Lemma \ref{lemma_description_lambda_x_GV} tells us that the number of elements in $v\Lambda^\infty$ that are shift equivalent to $x$ is equal to $\lambda_x(G^{v\Lambda^\infty})$. We just showed this is less than or equal to $M$ when $x\in v\Lambda^\infty$. When $x\notin v\Lambda^\infty$ the number of elements in $v\Lambda^\infty$ that are shift equivalent to $x$ is still less than or equal to $M$ since shift equivalence is an equivalence relation: if $x$ is shift equivalent to $y\in v\Lambda^\infty$, every $z\in v\Lambda^\infty$ that is shift equivalent to $x$ must also be shift equivalent to $y$, and we have already shown that there are at most $M$ of these paths, establishing \eqref{k-thm_integrable2}.

\eqref{k-thm_integrable2} $\implies$ \eqref{k-thm_integrable1}. Suppose \eqref{k-thm_integrable2} and let $N$ be a compact subset of $G^{(0)}$. Since there is a natural homeomorphism between $G^{(0)}$ and $\Lambda^\infty$ given by $(x,0,x)\mapsto x$, we can also consider $N$ to be a compact subset of $\Lambda^\infty$. We will affix subscripts in an attempt to minimise potential confusion between the range map of $\Lambda^\infty$ and the range map of $G$.

For every $x\in N$ the cylinder set $r_\Lambda(x)\Lambda^\infty$ is an open neighbourhood of $x$ in $\Lambda^\infty$. Then $N\subset r_\Lambda(N)\Lambda^\infty=\cup_{v\in r_\Lambda(N)}v\Lambda^\infty$ so, since $N$ is compact and each $v\Lambda^\infty$ is open, there exists a finite subset $V=\braces{v_1,\ldots,v_n}$ of $r_\Lambda(N)$ so that $N\subset V\Lambda^\infty$. For every $v\in V$ let $M_v$ be the integer $M$ as in \eqref{k-thm_integrable2}. Fix $x\in N$ and note that Lemma \ref{lemma_description_lambda_x_GV} tells us that $\lambda_x(G^{v_i\Lambda^\infty})$ is the number of elements in $v_i\Lambda^\infty$ that are shift equivalent to $x$. It follows that $\lambda_x(G^{v_i\Lambda^\infty})\le M_{v_i}$ for each $1\le i\le n$. Now
\[\lambda_x(G^N)\le \lambda_x(G^{V\Lambda^\infty})=\lambda_x\Big(\bigcup_{i=1}^n G^{v_i\Lambda^\infty}\Big)\le\sum_{i=1}^n\lambda_x (G^{v_i\Lambda^\infty})\le \sum_{i=1}^n M_{v_i}.\]
We can see that $G$ is integrable since $\sum_{i=1}^n M_{v_i}$ is independent of our choice of $x\in N$.

\eqref{k-thm_integrable1} $\iff$ \eqref{k-thm_integrable3}. Lemma \ref{lemma_integrable_implies_principal} says that every integrable path groupoid must be principal. When $G$ is principal, Theorem \ref{thm_groupoid_algebra_bounded_trace} says that $C^*(G)$ has bounded trace if and only if $G$ is integrable. The result follows since $C^*(G)$ is isomorphic to $C^*(\Lambda)$ by Lemma \ref{lemma_higher_rank_graph_algebras_isomorphic}.
\end{proof}
\end{theorem}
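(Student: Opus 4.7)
The plan is to prove the equivalences by closing the cycle $(1) \Rightarrow (2) \Rightarrow (1)$ and separately establishing $(1) \Leftrightarrow (3)$. The final $(1) \Leftrightarrow (3)$ step is nearly immediate: Lemma~\ref{lemma_integrable_implies_principal} forces $G_\Lambda$ to be principal whenever it is integrable, and Theorem~\ref{thm_groupoid_algebra_bounded_trace} (combined with the isomorphism from Lemma~\ref{lemma_higher_rank_graph_algebras_isomorphic}) then identifies integrability of a principal $G_\Lambda$ with bounded trace of $C^*(\Lambda) \cong C^*(G_\Lambda)$.

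For $(1) \Rightarrow (2)$, I would first invoke Lemma~\ref{lemma_integrable_implies_principal} to obtain principality, then fix $v \in \Lambda^0$ and exploit that $v\Lambda^\infty$ is a compact subset of $G_\Lambda^{(0)}$ (using the identification from Proposition~\ref{prop_path_groupoid} together with \cite[Lemma~2.6]{Kumjian-Pask2000}). Integrability yields $M := \sup_{x \in v\Lambda^\infty} \lambda_x(G_\Lambda^{v\Lambda^\infty}) < \infty$, and Lemma~\ref{lemma_description_lambda_x_GV} reinterprets $\lambda_x(G_\Lambda^{v\Lambda^\infty})$ as the cardinality of $\{y \in v\Lambda^\infty : y \sim x\}$. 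The one subtlety is extending the bound to arbitrary $x \in \Lambda^\infty$ (not only those with $r_\Lambda(x) = v$); here I would use that shift equivalence is an equivalence relation, so if $x \sim y$ for some $y \in v\Lambda^\infty$ then $\{z \in v\Lambda^\infty : z \sim x\} = \{z \in v\Lambda^\infty : z \sim y\}$, which has at most $M$ elements, while if no such $y$ exists the set is empty.

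For $(2) \Rightarrow (1)$, I would take a compact $N \subset G_\Lambda^{(0)} \cong \Lambda^\infty$ and cover it by the open cylinder sets $r_\Lambda(x)\Lambda^\infty$ indexed by $x \in N$. Compactness of $N$ yields a finite subcover indexed by vertices $v_1, \ldots, v_n$, and then for each $x \in N$,
\[
\lambda_x(G_\Lambda^N) \le \sum_{i=1}^n \lambda_x(G_\Lambda^{v_i \Lambda^\infty}) \le \sum_{i=1}^n M_{v_i},
\]
where $M_{v_i}$ is the bound guaranteed by (2) at the vertex $v_i$ (applied via Lemma~\ref{lemma_description_lambda_x_GV}). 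Since this bound is independent of $x$, integrability follows.

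The main obstacle is really just the mild bookkeeping in $(1) \Rightarrow (2)$: one must be careful that the integrability hypothesis a priori only bounds $\lambda_x(G_\Lambda^{v\Lambda^\infty})$ for $x$ in the compact set $v\Lambda^\infty$, whereas condition (2) demands a bound as $x$ ranges over all of $\Lambda^\infty$. This is handled cleanly by the equivalence-relation trick above. Everything else amounts to translating between the combinatorial language of shift equivalence (via Lemma~\ref{lemma_description_lambda_x_GV}) and the measure-theoretic language of the Haar system, and quoting the existing groupoid results.
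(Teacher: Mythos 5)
Your proposal is correct and follows essentially the same route as the paper: Lemma \ref{lemma_integrable_implies_principal} for principality, integrability on the compact set $v\Lambda^\infty$ plus Lemma \ref{lemma_description_lambda_x_GV} and the equivalence-relation observation for \eqref{k-thm_integrable1}$\implies$\eqref{k-thm_integrable2}, the finite-subcover estimate for the converse, and the cited groupoid results for \eqref{k-thm_integrable1}$\iff$\eqref{k-thm_integrable3}. The only difference is that you explicitly record the compactness of $v\Lambda^\infty$ needed to invoke integrability, which the paper leaves implicit.
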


The directed graph in the following example occured earlier in Example \ref{2-times_convergence_example} and was chosen so that the path groupoid would be principal, integrable and not Cartan. The search for examples of directed graphs with these properties provided the initial motivation for Chapter \ref{chapter_categorising_higher-rank_graphs}.
\needspace{6\baselineskip}
\begin{example}\label{example_integrable_not_cartan}
Suppose $\Lambda$ is the $1$-graph with the following skeleton.
\begin{center}
\begin{tikzpicture}[>=stealth,baseline=(current bounding box.center)] 
\def\cellwidth{5.5};
\clip (-5em,-5.6em) rectangle (3*\cellwidth em + 4.3em,0.3em);

\foreach \x in {0,1,2,3} \foreach \y in {0} \node (x\x y\y) at (\cellwidth*\x em,-3*\y em) {$\scriptstyle v_{\x}$};
\foreach \x in {0,1,2,3} \foreach \y in {1} \node (x\x y\y) at (\cellwidth*\x em,-3*\y em) {};
\foreach \x in {0,1,2,3} \foreach \y in {2} \node (x\x y\y) at (\cellwidth*\x em,-1.5em-1.5*\y em) {};
\foreach \x in {0,1,2,3} \foreach \y in {1,2} \fill[black] (x\x y\y) circle (0.15em);

\foreach \x in {0,1,2,3} \draw [<-, bend left] (x\x y0) to node[anchor=west] {$\scriptstyle g_{\x}$} (x\x y1);
\foreach \x in {0,1,2,3} \draw [<-, bend right] (x\x y0) to node[anchor=east] {$\scriptstyle f_{\x}$} (x\x y1);
\foreach \x / \z in {0/1,1/2,2/3} \draw[black,<-] (x\x y0) to node[anchor=south] {} (x\z y0);

\foreach \x in {0,1,2,3} \draw [<-] (x\x y1) -- (x\x y2);
\foreach \x in {0,1,2,3} {
	\node (endtail\x) at (\cellwidth*\x em, -6em) {};
	\draw  [dotted, thick] (x\x y2) -- (endtail\x);
}

\node(endtailone) at (3*\cellwidth em + 2.3em,0em) {};
\draw [dotted, thick] (x3y0) -- (endtailone);
\end{tikzpicture}
\end{center}
Recall that this directed graph previously appeared in Example \ref{2-times_convergence_example}, where it was shown that the path groupoid associated to this directed graph exhibits $2$-times convergence. Here we claim that the path groupoid associated to $\Lambda$ is principal and integrable and that $C^*(\Lambda)$ has bounded trace. First observe that since $\Lambda$ has no cycles, the path groupoid is principal by Remark \ref{remark_no_cycles_principal}. To see that the rest of condition \eqref{k-thm_integrable2} from Theorem \ref{thm_integrable_bounded-trace} holds, note that for any $v\in\Lambda^0$, there are at most $2$ paths in $v\Lambda^\infty$ that are shift equivalent to each other. Theorem \ref{thm_integrable_bounded-trace} now shows that the path groupoid is integrable and $C^*(\Lambda)$ has bounded trace.
\end{example}

We define the {\em monolithic extension} of $(\alpha,\beta)\in\Lambda\ast_s\Lambda$ by $x\in s(\alpha)(\Lambda\cup\Lambda^\infty)$ to be the pair $(\alpha x,\beta x)$ in $(\Lambda\ast_s\Lambda)\cup(\Lambda^\infty\times\Lambda^\infty)$. For $S\subset\Lambda$, define 
\[S\Lambda^\infty:=\bigcup_{\alpha\in S}\alpha\Lambda^\infty=\braces{\alpha x\in\Lambda^\infty: \alpha\in S, x\in s(\alpha)\Lambda^\infty},\] and note that $S\Lambda^\infty$ is compact whenever $S$ is finite.

\begin{lemma}\label{lemma_compactness}
Suppose $\Lambda$ is a row-finite $k$-graph without sources and suppose that the path groupoid $G$ is principal. Let $V$ be a subset of $\Lambda^0$. The following are equivalent:
\begin{enumerate}
\item\label{lemma_compactness_1} $G|_{V\Lambda^\infty}$ is compact;
\item\label{lemma_compactness_2} there exists a finite $F\subset\Lambda$ such that, for every $x,y\in V\Lambda^\infty$ with $x\sim y$, the pair $(x,y)$ is a monolithic extension of a pair in $F\times F$; and
\item\label{lemma_compactness_3} there exists a finite $F\subset\Lambda$ such that, for every $(\alpha,\beta)\in\Lambda\ast_s\Lambda$ with $r(\alpha),r(\beta)\in V$ and every $\gamma\in s(\alpha)\Lambda$ such that $d(\gamma)=\vee_{\eta\in F}d(\eta)$, the pair $(\alpha\gamma,\beta\gamma)$ is a monolithic extension of a pair in $F\times F$.
\end{enumerate}
\begin{proof}
\eqref{lemma_compactness_1}$\implies$\eqref{lemma_compactness_2}. Suppose \eqref{lemma_compactness_1}. For each $\phi\in G|_{V\Lambda^\infty}$ there exists $(\eta^{(\phi)},\zeta^{(\phi)})\in\Lambda\ast_s\Lambda$ such that $\phi\in Z(\eta^{(\phi)},\zeta^{(\phi)})$. Then $\braces{Z(\eta^{(\phi)},\zeta^{(\phi)}):\phi\in G|_{V\Lambda^\infty}}$ is an open cover of the compact set $G|_{V\Lambda^\infty}$ and so admits a finite subcover 
\[\braces{Z(\eta^{(1)},\zeta^{(1)}),Z(\eta^{(2)},\zeta^{(2)}),\ldots,Z(\eta^{(p)},\zeta^{(p)})}.\]
Define $F:=\braces{\eta^{(1)},\zeta^{(1)},\eta^{(2)},\zeta^{(2)},\ldots,\eta^{(p)},\zeta^{(p)}}$ and fix $x,y\in V\Lambda^\infty$ such that $x\sim_n y$ for some $n\in\ZZ^k$. Then $(x,n,y)\in G|_{V\Lambda^\infty}$ so there exists $q$ such that $(x,n,y)\in Z(\eta^{(q)},\zeta^{(q)})$. It follows that $(x,y)$ is a monolithic extension of $(\eta^{(q)},\zeta^{(q)})\in F\times F$, establishing \eqref{lemma_compactness_2}.

\eqref{lemma_compactness_2}$\implies$\eqref{lemma_compactness_3}. Suppose $F$ is a finite subset of $\Lambda$ as in \eqref{lemma_compactness_2}. Fix $(\alpha,\beta)\in\Lambda\ast_s\Lambda$ with $r(\alpha),r(\beta)\in V$ and fix $\gamma\in s(\alpha)\Lambda$ such that $d(\gamma)=\vee_{\eta\in F}d(\eta)$. Suppose $z\in s(\gamma)\Lambda^{\infty}$ so that $\alpha\gamma z\sim_{d(\alpha)-d(\beta)}\beta\gamma z$. By \eqref{lemma_compactness_2} there exist $\eta,\zeta \in F$ such that $(\alpha\gamma z,\beta\gamma z)$ is a monolithic extension of $(\eta,\zeta)$, so there exists $y\in s(\eta)\Lambda^\infty$ such that $(\alpha\gamma z,\beta\gamma z)=(\eta y,\zeta y)$. Note that $d(\eta),d(\zeta)\le d(\gamma)$, so we can let $\phi=y\big(0,d(\alpha\gamma)-d(\eta)\big)$ and $\psi=y\big(0,d(\beta\gamma)-d(\zeta)\big)$ and observe that $(\alpha\gamma,\beta\gamma)=(\eta\phi,\zeta\psi)$.

We claim that $\phi=\psi$. To see this, note that since $(\alpha\gamma z,\beta\gamma z)=(\eta y,\zeta y)$, we have $\alpha\gamma z\sim_{d(\eta)-d(\zeta)} \beta\gamma z$. We earlier noted that $\alpha\gamma z\sim_{d(\alpha)-d(\beta)} \beta\gamma z$, so both $(\alpha\gamma z, d(\eta)-d(\zeta),\beta\gamma z)$ and $(\alpha\gamma z, d(\alpha)-d(\beta),\beta\gamma z)$ are in $G$. But $G$ is principal so these elements must be equal since they have equal range and source. Then $d(\eta)-d(\zeta)=d(\alpha)-d(\beta)$, which implies 
$d(\alpha)+d(\gamma)-d(\eta)=d(\beta)+d(\gamma)-d(\zeta)$, so $d(\alpha\gamma)-d(\eta)=d(\beta\gamma)-d(\zeta)$. It now follows that $\phi=\psi$ by their definition. Then $(\alpha\gamma,\beta\gamma)=(\eta\phi,\zeta\phi)$, so $(\alpha\gamma,\beta\gamma)$ is a monolithic extension of $(\eta,\zeta)\in F\times F$, establishing \eqref{lemma_compactness_3}.

\eqref{lemma_compactness_3}$\implies$\eqref{lemma_compactness_1}. Suppose $F$ is a finite subset of $\Lambda$ as in \eqref{lemma_compactness_3} and fix $(x,n,y)\in G|_{V\Lambda^\infty}$. Then there exists $m\in\NN^k$ such that $\sigma^m(x)=\sigma^{m-n}(y)$. Let $\alpha=x(0,m)$, $\beta=y(0,m-n)$, $\gamma=x\big(m,m+\vee_{\eta\in F}d(\eta)\big)$ and $z=\sigma^{d(\alpha)+d(\gamma)}(x)$, so that $(x,y)=(\alpha\gamma z,\beta\gamma z)$. By \eqref{lemma_compactness_3} the pair $(\alpha\gamma,\beta\gamma)$ is a monolithic extension of some $(\mu,\nu)\in F\times F$ so there exists $\delta\in\Lambda$ such that $(\alpha\gamma,\beta\gamma)=(\mu\delta,\nu\delta)$. Then
\[
(x,n,y)\in Z(\alpha\gamma,\beta\gamma)=Z(\mu\delta,\nu\delta)\subset Z(\mu,\nu)\subset \bigcup_{(\eta,\zeta)\in F\ast_s F} Z(\eta,\zeta),
\]
so that $G|_{V\Lambda^\infty}\subset \cup_{(\eta,\zeta)\in F\ast_s F} Z(\eta,\zeta)$. Since each $Z(\eta,\zeta)$ is compact and $F$ is finite, the set $\cup_{(\eta,\zeta)\in F\ast_s F}Z(\eta,\zeta)$ is compact. Since $V$ is finite, the set $V\Lambda^\infty$ is closed, so $G|_{V\Lambda^\infty}=r^{-1}(V\Lambda^\infty)\cap s^{-1}(V\Lambda^\infty)$ is a closed subset of the compact set $\cup_{(\eta,\zeta)\in F\ast_s F}Z(\eta,\zeta)$, establishing \eqref{lemma_compactness_1}.
\end{proof}
\end{lemma}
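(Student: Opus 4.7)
The natural strategy is a cyclic chain of implications $(1)\Rightarrow(2)\Rightarrow(3)\Rightarrow(1)$. Both conditions (2) and (3) describe a finite ``template'' $F\subset\Lambda$ from which every shift equivalence among paths in $V\Lambda^\infty$ can be reconstructed, so the compact open basic sets $Z(\eta,\zeta)$ (for $(\eta,\zeta)\in F\ast_s F$) are the natural bridge between the topological statement (1) and the combinatorial statements (2) and (3).

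For $(1)\Rightarrow(2)$ I would cover the compact set $G|_{V\Lambda^\infty}$ by its basic compact open neighbourhoods $Z(\eta,\zeta)$, extract a finite subcover, and let $F$ be the (finite) collection of all first and second coordinates occurring in this subcover. Any shift-equivalent pair $(x,y)$ in $V\Lambda^\infty$ corresponds to some $(x,n,y)\in G|_{V\Lambda^\infty}$; it sits inside some $Z(\eta,\zeta)$ in the subcover, and then by definition $(x,y)$ is the monolithic extension of $(\eta,\zeta)$ by some $w\in s(\eta)\Lambda^\infty$.

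For $(2)\Rightarrow(3)$, given $(\alpha,\beta)\in\Lambda\ast_s\Lambda$ with $r(\alpha),r(\beta)\in V$ and $\gamma\in s(\alpha)\Lambda$ with $d(\gamma)=\vee_{\eta\in F}d(\eta)$, I would pick any $z\in s(\gamma)\Lambda^\infty$, so that $\alpha\gamma z$ and $\beta\gamma z$ are shift-equivalent paths in $V\Lambda^\infty$. Condition (2) produces $(\eta,\zeta)\in F\ast_s F$ and $y$ with $\alpha\gamma z=\eta y$ and $\beta\gamma z=\zeta y$. Because $d(\eta),d(\zeta)\le d(\gamma)\le d(\alpha\gamma),d(\beta\gamma)$, the factorisation property lets me factor $\alpha\gamma=\eta\phi$ and $\beta\gamma=\zeta\psi$ where $\phi,\psi$ are finite initial segments of $y$. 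The crux—and the step I expect to be the main obstacle—is showing $\phi=\psi$; this is precisely where the principal hypothesis bites. The two factorisations exhibit $\alpha\gamma z$ and $\beta\gamma z$ as shift-equivalent with lag $d(\eta)-d(\zeta)$, while by construction they are also shift-equivalent with lag $d(\alpha)-d(\beta)$. Principality forbids two distinct groupoid elements with the same range and source, forcing these lags to agree; then $d(\phi)=d(\psi)$ and the factorisation property forces $\phi=\psi$, yielding $(\alpha\gamma,\beta\gamma)=(\eta\phi,\zeta\phi)$.

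For $(3)\Rightarrow(1)$, I would take an arbitrary $(x,n,y)\in G|_{V\Lambda^\infty}$, choose $m$ large enough to witness the shift equivalence, and write $(x,y)=(\alpha\gamma z,\beta\gamma z)$ with $\alpha=x(0,m)$, $\beta=y(0,m-n)$ and $\gamma=x\bigl(m,m+\vee_{\eta\in F}d(\eta)\bigr)$. Condition (3) gives $(\alpha\gamma,\beta\gamma)=(\mu\delta,\nu\delta)$ for some $(\mu,\nu)\in F\times F$, placing $(x,n,y)$ inside the compact open set $Z(\mu\delta,\nu\delta)\subset Z(\mu,\nu)$. Thus
\[
G|_{V\Lambda^\infty}\subset\bigcup_{(\mu,\nu)\in F\ast_s F}Z(\mu,\nu),
\]
and the right-hand side is a finite (hence compact) union. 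Since $\Lambda^0$ carries the discrete topology, $V\Lambda^\infty$ is clopen in $\Lambda^\infty$ and therefore $G|_{V\Lambda^\infty}=r^{-1}(V\Lambda^\infty)\cap s^{-1}(V\Lambda^\infty)$ is closed in $G_\Lambda$; as a closed subset of a compact set it is compact, completing the cycle.
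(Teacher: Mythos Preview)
Your proposal is correct and follows essentially the same cyclic argument $(1)\Rightarrow(2)\Rightarrow(3)\Rightarrow(1)$ as the paper, including the key use of principality to force $d(\eta)-d(\zeta)=d(\alpha)-d(\beta)$ in the middle implication. One small improvement: your justification that $V\Lambda^\infty$ is clopen (via the discrete topology on $\Lambda^0$) works for arbitrary $V$, whereas the paper's proof invokes ``since $V$ is finite'' at this step even though finiteness of $V$ is not part of the hypothesis.
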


\begin{remark}\label{homeomorphism_remark}
It was observed in \cite[Remarks 2.5]{Kumjian-Pask2000} that for a fixed $\alpha\in\Lambda$, the map given by $\alpha x\mapsto x$ for $x\in r^{-1}\big(s(\alpha)\big)$ induces a homeomorphism between $\alpha\Lambda^\infty$ and $s(\alpha)\Lambda^\infty$. By similar reasoning it can be seen that the map from $G|_{s(\alpha)\Lambda^\infty}$ to $G|_{\alpha\Lambda^\infty}$ where $(x,n,y)\mapsto (\alpha x,n,\alpha y)$ is a homeomorphism.
\end{remark}

\begin{theorem}\label{thm_Cartan_Fell}
Suppose $\Lambda$ is a row-finite $k$-graph without sources. The following are equivalent:
\begin{enumerate}\renewcommand{\labelenumi}{(\arabic{enumi})}
\item\label{thm_Cartan_Fell_1} $G_\Lambda$ is Cartan;
\item\label{thm_Cartan_Fell_2} $G_\Lambda$ is principal, and for every $z\in\Lambda^\infty$ there exist $p\in\NN^k$ and a finite $F\subset\Lambda$ such that for every $x,y\in z(p)\Lambda^\infty$ with $x\sim y$, the pair $(x,y)$ is a monolithic extension of a pair in $F\times F$;
\needspace{4\baselineskip}\item\label{thm_Cartan_Fell_3} $G_\Lambda$ is principal, and for every $z\in\Lambda^\infty$ there exist $p\in\NN^k$ and a finite $F\subset\Lambda$ such that for every $(\alpha,\beta)\in \Lambda\ast_s\Lambda$ with $r(\alpha)=r(\beta)=z(p)$ and every $\gamma\in s(\alpha)\Lambda$ with $d(\gamma)=\vee_{\eta\in F}d(\eta)$, the pair $(\alpha\gamma,\beta\gamma)$ is a monolithic extension of a pair in $F\times F$; and
\item\label{thm_Cartan_Fell_4} $G_\Lambda$ is principal, and $C^*(\Lambda)\cong C^*(G_\Lambda)$ is Fell.
\end{enumerate}
\begin{proof}[Proof of Theorem \ref{thm_Cartan_Fell}]
Let $G=G_\Lambda$. \eqref{thm_Cartan_Fell_1} $\implies$ \eqref{thm_Cartan_Fell_2}. Suppose \eqref{thm_Cartan_Fell_1}. Then $G$ is integrable and thus principal by \cite[Proposition~3.11]{Clark-anHuef2008}, \cite[Lemma~5.1]{Clark-anHuef2012} and Lemma \ref{lemma_integrable_implies_principal}. Since $G$ is Cartan, for every $z\in G^{(0)}$ there exists a neighbourhood $N$ of $z$ in $G^{(0)}$ such that $G|_N$ is relatively compact. By considering $z$ to be in $\Lambda^\infty$ and $N\subset\Lambda^\infty$, the sets $z(0,n)\Lambda^\infty$ for $n\in\NN^k$ form a neighbourhood basis for $z$ in $\Lambda^\infty$, so there exists $p\in\NN^k$ such that $z\in z(0,p)\Lambda^\infty\subset N$. Since $G|_{z(0,p)\Lambda^\infty}$ is a closed subset of the compact set $\overline{G|_N}$, it follows that $G|_{z(0,p)\Lambda^\infty}$ is compact, and so $G|_{z(p)\Lambda^\infty}$ is compact by Remark \ref{homeomorphism_remark}. We now apply Lemma \ref{lemma_compactness} with $V=\braces{z(p)}$ to establish \eqref{thm_Cartan_Fell_2}.

\eqref{thm_Cartan_Fell_2} $\implies$ \eqref{thm_Cartan_Fell_3}. Fix $z\in\Lambda^\infty$ and let $p\in\NN^k$ and $F\subset\Lambda$ be as in \eqref{thm_Cartan_Fell_2}. We can now apply Lemma \ref{lemma_compactness} with $V=\braces{z(p)}$ to establish \eqref{thm_Cartan_Fell_3}.

\eqref{thm_Cartan_Fell_3} $\implies$ \eqref{thm_Cartan_Fell_1}. Fix $z\in\Lambda^\infty$ and let $p$ be the element of $\NN^k$ as in the statement of \eqref{thm_Cartan_Fell_3}. By Lemma \ref{lemma_compactness} we can see that $G|_{z(p)\Lambda^\infty}$ is compact, so $G|_{z(0,p)\Lambda^\infty}$ is compact by Remark \ref{homeomorphism_remark}, and $z(0,p)\Lambda^\infty$ is a wandering neighbourhood of $x$ in $G^{(0)}$, establishing \eqref{thm_Cartan_Fell_1}.

\eqref{thm_Cartan_Fell_1} $\iff$ \eqref{thm_Cartan_Fell_4}. If $G$ is Cartan then $G$ is integrable and thus principal by \cite[Proposition~3.11]{Clark-anHuef2008}, \cite[Lemma~5.1]{Clark-anHuef2012} and Lemma \ref{lemma_integrable_implies_principal}. When $G$ is principal, Theorem \ref{thm_groupoid_algebra_Fell} says that $C^*(G)$ is Fell if and only if $G$ is Cartan. The result follows since $C^*(G)$ is isomorphic to $C^*(\Lambda)$ by Lemma \ref{lemma_higher_rank_graph_algebras_isomorphic}.
\end{proof}
\end{theorem}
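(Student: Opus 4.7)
The plan is to reduce everything to Lemma \ref{lemma_compactness} applied to singleton sets $V=\{z(p)\}$, together with the homeomorphism recorded in Remark \ref{homeomorphism_remark}. First I would dispatch the equivalence of \eqref{thm_Cartan_Fell_1} with \eqref{thm_Cartan_Fell_4}: if $G_\Lambda$ is Cartan it is integrable by \cite[Proposition~3.11]{Clark-anHuef2008} (or Lemma \ref{lemma_integrable_if_Cartan}) and hence principal by Lemma \ref{lemma_integrable_implies_principal}, so Theorem \ref{thm_groupoid_algebra_Fell} applies and gives that $C^*(G_\Lambda)$ is Fell; for the reverse direction, principality plus Fell forces Cartan by the same theorem. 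The isomorphism $C^*(\Lambda)\cong C^*(G_\Lambda)$ is Lemma \ref{lemma_higher_rank_graph_algebras_isomorphic}.

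For \eqref{thm_Cartan_Fell_1}$\implies$\eqref{thm_Cartan_Fell_2}, principality again follows as above. Given $z\in\Lambda^\infty$, Cartan provides a wandering neighbourhood $N$ of $z$, and since the cylinder sets $z(0,n)\Lambda^\infty$ form a neighbourhood basis in $\Lambda^\infty\cong G_\Lambda^{(0)}$, I can choose $p\in\NN^k$ so that $z(0,p)\Lambda^\infty\subseteq N$. Then $G_\Lambda|_{z(0,p)\Lambda^\infty}$ is a closed (in fact relatively compact) subset of $\overline{G_\Lambda|_N}$, hence compact. Remark \ref{homeomorphism_remark} transports this compactness across the homeomorphism to $G_\Lambda|_{z(p)\Lambda^\infty}$, and then Lemma \ref{lemma_compactness} with $V=\{z(p)\}$ produces the required finite set $F\subset\Lambda$.

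The equivalence \eqref{thm_Cartan_Fell_2}$\iff$\eqref{thm_Cartan_Fell_3} is immediate from Lemma \ref{lemma_compactness}\eqref{lemma_compactness_2}$\iff$\eqref{lemma_compactness_3} applied to each singleton $V=\{z(p)\}$. For \eqref{thm_Cartan_Fell_3}$\implies$\eqref{thm_Cartan_Fell_1}, given $z\in\Lambda^\infty$ with the associated $p$ and $F$, Lemma \ref{lemma_compactness} yields compactness of $G_\Lambda|_{z(p)\Lambda^\infty}$, and Remark \ref{homeomorphism_remark} pushes this back to compactness of $G_\Lambda|_{z(0,p)\Lambda^\infty}$; so $z(0,p)\Lambda^\infty$ is a wandering neighbourhood of $z$ and $G_\Lambda$ is Cartan. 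No serious obstacle is expected since all the real work has been packaged into Lemma \ref{lemma_compactness}, Remark \ref{homeomorphism_remark}, and Theorem \ref{thm_groupoid_algebra_Fell}; the only thing to be careful about is remembering to invoke the Cartan-implies-integrable-implies-principal chain in the two directions that begin from \eqref{thm_Cartan_Fell_1}, so that one has a principal groupoid available when applying Lemma \ref{lemma_compactness} and Theorem \ref{thm_groupoid_algebra_Fell}.
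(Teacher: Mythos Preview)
Your proposal is correct and follows essentially the same route as the paper's proof: the Cartan $\Rightarrow$ integrable $\Rightarrow$ principal chain for the directions starting from \eqref{thm_Cartan_Fell_1}, Lemma \ref{lemma_compactness} applied with $V=\{z(p)\}$ to pass between compactness of $G_\Lambda|_{z(p)\Lambda^\infty}$ and conditions \eqref{thm_Cartan_Fell_2},\eqref{thm_Cartan_Fell_3}, Remark \ref{homeomorphism_remark} to move between $z(p)\Lambda^\infty$ and $z(0,p)\Lambda^\infty$, and Theorem \ref{thm_groupoid_algebra_Fell} plus Lemma \ref{lemma_higher_rank_graph_algebras_isomorphic} for the link to Fell. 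The only cosmetic difference is that you phrase \eqref{thm_Cartan_Fell_2}$\iff$\eqref{thm_Cartan_Fell_3} as a direct equivalence via Lemma \ref{lemma_compactness}, whereas the paper only records the forward implication (the backward one being subsumed by the cycle through \eqref{thm_Cartan_Fell_1}).
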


\begin{example}\label{example_integrable_not_Cartan}
Let $\Lambda$ be the $1$-graph from Example \ref{example_integrable_not_cartan}. In Example \ref{2-times_convergence_example} it is shown that $G_\Lambda$ exhibits a sequence with $2$-times convergence; if a groupoid exhibits such a sequence, then it is not Cartan by \cite[Lemma~5.1]{Clark-anHuef2012}. Since $G_\Lambda$ is principal (Example \ref{example_integrable_not_cartan}), Theorem \ref{thm_Cartan_Fell} can be used to deduce that $C^*(\Lambda)$ is not Fell. A later result, Theorem \ref{thm_directed_graph_Cartan_Fell}, is a simplification of Theorem \ref{thm_Cartan_Fell} for the case of directed graphs that can be used to make the same claims about $G_\Lambda$ and $C^*(\Lambda)$ with a quick observation of the skeleton.
\end{example}

In the next example we will show how, for the $1$-graph from Example \ref{example_integrable_not_cartan}, conditions \eqref{thm_Cartan_Fell_2} and \eqref{thm_Cartan_Fell_3} from Theorem \ref{thm_Cartan_Fell} can be used to deduce that $C^*(\Lambda)$ is not a Fell algebra. First note that since we are dealing with a $1$-graph, we simplify notation by referring to $\NN$ instead of $\NN^1$; then $e_1$ will just be $1$. \notationindex{VFstar@$[v,f]^*$}As in the directed graph case, if for $v\in \Lambda^0$ and $f\in\Lambda^{1}$ there is a unique $\alpha\in v\Lambda$ with $\alpha\big(d(\alpha)-1,d(\alpha)\big)=f$, we denote $\alpha$ by $[v,f]^*$.
\begin{example}\label{example_integrable_not_Cartan_2}
Suppose that $\Lambda$ is the $1$-graph from Example \ref{example_integrable_not_cartan}. Then $G_\Lambda$ is not Cartan and $C^*(\Lambda)$ is not Fell.
\begin{proof}
We proceed by contradiction. Suppose $G_\Lambda$ is Cartan and let $z$ be the path in $\Lambda^\infty$ such that $z(q)=v_q$ for every $q\in\NN$. Then there exist $p\in\NN$ and a finite $F\subset\Lambda$ as in condition \eqref{thm_Cartan_Fell_3} of Theorem \ref{thm_Cartan_Fell}. For each $q\ge p$, let $\alpha^{(q)}=[v_p,f_q]^*$ and $\beta^{(q)}=[v_p,g_q]^*$. Then $s(\alpha^{(q)})=s(f_q)=s(g_q)=s(\beta^{(q)})$, so $(\alpha^{(q)},\beta^{(q)})\in\Lambda\ast_s\Lambda$. Note that $d(\alpha^{(q)})=d(\beta^{(q)})$. Let $\gamma^{(q)}$ be the path in $s(\alpha^{(q)})\Lambda$ with $d(\gamma^{(q)})=\vee_{\eta\in F}d(\eta)$.

Fix $q\ge p$. Then $(\alpha^{(q)}\gamma^{(q)},\beta^{(q)}\gamma^{(q)})$ is a monolithic extension of a pair $(\eta,\zeta)\in F\times F$ so there exists $\mu\in\Lambda$ such that $\alpha^{(q)}\gamma^{(q)}=\eta\mu$ and $\beta^{(q)}\gamma^{(q)}=\zeta\mu$. Now
\[
(\eta\mu)\big(d(\alpha^{(q)})-1,d(\alpha^{(q)})\big)=\alpha^{(q)}\big(d(\alpha^{(q)})-1,d(\alpha^{(q)})=f_q
\]
and
\begin{align*}
(\zeta\mu)\big(d(\alpha^{(q)})-1,d(\alpha^{(q)})\big)&=(\zeta\mu)\big(d(\beta^{(q)})-1,d(\beta^{(q)})\big)\\
&=\beta^{(q)}\big(d(\beta^{(q)})-1,d(\beta^{(q)})=g_q,
\end{align*}
so we must have $d(\eta)\ge d(\alpha)$ and $d(\zeta)\ge d(\beta)$. Then $(\eta,\zeta)$ must be a monolithic extension of $(\alpha^{(q)},\beta^{(a)})$. Since $q$ was fixed arbitrarily, $F$ must contain a monolithic extension of $(\alpha^{(q)},\beta^{(q)})$ for every $q\ge p$. This contradicts our assumption that $F$ is finite, so it follows that $G_\Lambda$ is not Cartan. Recall from Example \ref{example_integrable_not_cartan} that $G_\Lambda$ is principal. Since in addition $G_\Lambda$ is not Cartan, we can use Theorem \ref{thm_Cartan_Fell} to see that $C^*(\Lambda)$ is not Fell.
\end{proof}
\end{example}

\begin{theorem}\label{thm_proper_cts-trace}
Suppose $\Lambda$ is a row-finite $k$-graph without sources. The following are equivalent:
\begin{enumerate}\renewcommand{\labelenumi}{(\arabic{enumi})}
\item\label{thm_proper_cts-trace_1} $G_\Lambda$ is proper;
\item\label{thm_proper_cts-trace_2} $G_\Lambda$ is principal, and for every finite subset $V$ of $\Lambda^0$, there exists a finite $F\subset\Lambda$ such that, for every $x,y\in V\Lambda^\infty$ with $x\sim_n y$, the pair $(x,y)$ is a monolithic extension of a pair in $F\times F$;
\item\label{thm_proper_cts-trace_3} $G_\Lambda$ is principal, and for every finite subset $V$ of $\Lambda^0$, there exists a finite $F\subset\Lambda$ such that, for every $(\alpha,\beta)\in\Lambda\ast_s\Lambda$ with $r(\alpha),r(\beta)\in V$, and every $\gamma\in s(\alpha)\Lambda$ with $d(\gamma)=\vee_{\eta\in F} d(\eta)$,
the pair $(\alpha\gamma,\beta\gamma)$ is a monolithic extension of a pair in $F\times F$; and
\item\label{thm_proper_cts-trace_4} $G_\Lambda$ is principal, and $C^*(\Lambda)\cong C^*(G_\Lambda)$ has continuous trace.
\end{enumerate}
\begin{proof}
Let $G=G_\Lambda$. \eqref{thm_proper_cts-trace_1} $\implies$ \eqref{thm_proper_cts-trace_2}.
Suppose \eqref{thm_proper_cts-trace_1}. It follows immediately from the definitions of proper and Cartan that $G$ is Cartan. Then $G$ is integrable and thus principal by \cite[Proposition~3.11]{Clark-anHuef2008}, \cite[Lemma~5.1]{Clark-anHuef2012} and Lemma \ref{lemma_integrable_implies_principal}. Let $V$ be a finite subset of $\Lambda^0$. Then $V\Lambda^\infty$ is compact since $V$ is finite. Since $G$ is proper, it follows that $G|_{V\Lambda^\infty}$ is compact. We can now apply Lemma \ref{lemma_compactness} to establish \eqref{thm_proper_cts-trace_2}.

\eqref{thm_proper_cts-trace_2} $\implies$ \eqref{thm_proper_cts-trace_3} follows immediately from Lemma \ref{lemma_compactness}.

\eqref{thm_proper_cts-trace_3} $\implies$  \eqref{thm_proper_cts-trace_1}. Suppose \eqref{thm_proper_cts-trace_3} and let $K$ be a compact subset of $\Lambda^\infty$. For every $z\in K$, the set $r(z)\Lambda^\infty$ is an open neighbourhood of $z$ in $\Lambda^\infty$. Thus $\braces{r(z)\Lambda^\infty:z\in K}$ is an open cover of the compact set $K$, so there exists a finite set $V\subset r(K)$ such that $K\subset V\Lambda^\infty$. By \eqref{thm_proper_cts-trace_3} and Lemma \ref{lemma_compactness} we can see that $G|_{V\Lambda^\infty}$ is compact. Since $K$ is compact and $G$ is Hausdorff, $K$ is a closed subset of $V\Lambda^\infty$. Thus $G|_{K}=s_G^{-1}(K)\cap r_G^{-1}(K)$ is a closed subset of the compact set $G|_{V\Lambda^\infty}$, so $G|_K$ is compact, establishing \eqref{thm_proper_cts-trace_1}.

\eqref{thm_proper_cts-trace_1} $\iff$ \eqref{thm_proper_cts-trace_4}. If $G$ is proper then $G$ is principal (see \eqref{thm_proper_cts-trace_1} $\implies$ \eqref{thm_proper_cts-trace_2}). When $G$ is principal, Theorem \ref{thm_groupoid_algebra_continuous_trace} tells us that $C^*(G)$ has continuous trace if and only if $G$ is proper. The result follows since $C^*(G)$ is isomorphic to $C^*(\Lambda)$ by \cite[Corollary~3.5]{Kumjian-Pask2000}.
\end{proof}
\end{theorem}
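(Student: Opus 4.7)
The plan is to mirror the strategy used for Theorem \ref{thm_Cartan_Fell}, replacing the neighbourhood $z(0,p)\Lambda^\infty$ of a single point with a global compactness argument. As with Theorem \ref{thm_Cartan_Fell}, the equivalence of the local combinatorial statements (2) and (3) will fall out immediately from Lemma \ref{lemma_compactness}, so the real work is in going between the topological condition (1) on $G_\Lambda$ and the condition on all finite $V\subset\Lambda^0$ in (2).

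For \eqref{thm_proper_cts-trace_1} $\implies$ \eqref{thm_proper_cts-trace_2}, I would first note that properness implies Cartan (every point has a relatively compact neighbourhood, namely the trivial one), hence integrable, and hence principal by Lemma \ref{lemma_integrable_implies_principal} together with the chain \cite[Proposition~3.11]{Clark-anHuef2008} and \cite[Lemma~5.1]{Clark-anHuef2012}, exactly as in Theorem \ref{thm_Cartan_Fell}. Then for a finite $V\subset\Lambda^0$, the set $V\Lambda^\infty=\bigcup_{v\in V}v\Lambda^\infty$ is a finite union of compact cylinder sets and hence is compact in $\Lambda^\infty$; properness then gives that $G|_{V\Lambda^\infty}$ is compact, and Lemma \ref{lemma_compactness} supplies the required finite set $F\subset\Lambda$. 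The step \eqref{thm_proper_cts-trace_2} $\iff$ \eqref{thm_proper_cts-trace_3} is a direct application of Lemma \ref{lemma_compactness} with the same $V$ appearing on both sides.

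For \eqref{thm_proper_cts-trace_3} $\implies$ \eqref{thm_proper_cts-trace_1}, given a compact subset $K$ of $G_\Lambda^{(0)}\cong\Lambda^\infty$, I would cover $K$ by the open sets $r(z)\Lambda^\infty$ for $z\in K$, extract a finite subcover indexed by a finite $V\subset r(K)\subset\Lambda^0$, and then invoke Lemma \ref{lemma_compactness} together with \eqref{thm_proper_cts-trace_3} to conclude that $G_\Lambda|_{V\Lambda^\infty}$ is compact. Since $V\Lambda^\infty$ is closed and $K\subset V\Lambda^\infty$, the set $G_\Lambda|_K = s^{-1}(K)\cap r^{-1}(K)$ is a closed subset of the compact set $G_\Lambda|_{V\Lambda^\infty}$ and hence compact, which gives properness.

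Finally, for \eqref{thm_proper_cts-trace_1} $\iff$ \eqref{thm_proper_cts-trace_4}, properness implies principal by the argument above, and once principality is in hand, Theorem \ref{thm_groupoid_algebra_continuous_trace} of Muhly and Williams gives that $C^*(G_\Lambda)$ has continuous trace precisely when $G_\Lambda$ is proper; Lemma \ref{lemma_higher_rank_graph_algebras_isomorphic} identifies $C^*(G_\Lambda)$ with $C^*(\Lambda)$. The main obstacle I anticipate is only bookkeeping: ensuring that the finite set $V$ extracted from the cover of $K$ genuinely controls $K$ under the source and range maps simultaneously, and checking that the $V\Lambda^\infty$ produced by \eqref{thm_proper_cts-trace_3} is closed so that the restriction $G_\Lambda|_K$ sits as a closed subgroupoid of a known compact set --- all of which is straightforward, since $V$ is finite.
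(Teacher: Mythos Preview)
Your proposal is correct and follows essentially the same approach as the paper's proof: the same chain proper $\Rightarrow$ Cartan $\Rightarrow$ integrable $\Rightarrow$ principal, the same use of Lemma~\ref{lemma_compactness} to pass between compactness of $G|_{V\Lambda^\infty}$ and the combinatorial conditions, the same finite-subcover argument for \eqref{thm_proper_cts-trace_3} $\implies$ \eqref{thm_proper_cts-trace_1}, and the same invocation of Muhly--Williams for \eqref{thm_proper_cts-trace_1} $\iff$ \eqref{thm_proper_cts-trace_4}. The only cosmetic difference is that the paper uses ``$K$ is compact and $G$ is Hausdorff'' to conclude $K$ is closed, whereas you phrase it via $V\Lambda^\infty$ being closed; both give the needed closedness of $G|_K$ inside $G|_{V\Lambda^\infty}$.
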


\section{Boundary paths and the Farthing-Webster desourcification}\label{sec_boundary_paths_and_desourcification}
This section begins with Farthing, Muhly and Yeend's definition of a boundary path in a higher-rank graph. Given a locally-convex higher-rank graph $\Lambda$, Farthing in \cite{Farthing2008} showed how to construct a locally-convex higher-rank graph with no sources such that the Cuntz-Krieger $C^*$-algebra is Morita equivalent to $C^*(\Lambda)$. For any row-finite higher-rank graph $\Lambda$, Webster in \cite{Webster2011} modified Farthing's construction to produce a row-finite higher-rank graph $\widetilde\Lambda$ with no sources and with $C^*(\widetilde\Lambda)$ Morita equivalent to $C^*(\Lambda)$. We call the higher-rank graph $\widetilde\Lambda$ the {\em Farthing-Webster desourcification} of $\Lambda$ and we will describe its construction in this section. In Section \ref{sec_desourcification} we will generalise results from Sections \ref{sec_liminal_postliminal} and \ref{sec_bded-trace_cts-trace_Fell} using the Farthing-Webster desourcification and that the liminal, postliminal, Fell, bounded- and continuous- trace properties of $C^*$-algebras are preserved under Morita equivalence (Theorem \ref{thm_Morita_equivalence_results}).

For a $k$-graph $\Lambda$, define
\[
W_\Lambda:=\braces{x:\Omega_{k,m}\rightarrow\Lambda: m\in (\NN\cup\braces{\infty})^k, x \text{ is a }k\text{-graph morphism}}.
\]
For every map $x:\Omega_{k,m}\rightarrow\Lambda$ in $W_\Lambda$, define $d(x):=m$. This map $d:W_\Lambda\rightarrow(\NN\cup\{\infty\})^k$ is called the degree map on $W_\Lambda$ since it is a natural generalisation of the degree map $d:\Lambda\rightarrow \NN^{\infty}$. Recall that on page \pageref{def_shift_map} we defined a shift map $\sigma^p:\Lambda^\infty\rightarrow \Lambda^\infty$ for every $p\in\NN^k$. There is a natural extension: for each $p\in\NN^k$ and $x\in W_\Lambda$ with $d(x)\ge p$, define $\sigma^p(x)\in W_\Lambda$ by $\sigma^p(x)(m,n)=x(m+p,n+p)$ for every $(m,n)\in \Omega_{k,d(x)-p}$. 
\begin{definition}[{\cite[Definition~5.10]{Farthing-Muhly-Yeend2005}}]
An element $x\in W_\Lambda$ is a {\em boundary path} if for all $n\in\NN^k$ with $n\le d(x)$ and for all $D\in x(n)\Ff\Ee(\Lambda)$ there exists $m\in\NN^k$ such that $x(n,n+m)\in D$. The set of all boundary paths is denoted by $\partial\Lambda$ and $v\partial\Lambda$ is defined to be $\braces{x\in\partial\Lambda:r(x)=v}$.
\end{definition}
In \cite[Examples~5.11]{Farthing-Muhly-Yeend2005}, Farthing, Muhly and Yeend point out that if $\Lambda$ is a row-finite $k$-graph with no sources, then $\partial\Lambda=\Lambda^\infty$\label{observation_partial_Lambda_equals_Lambda_infty}. We can roughly think of the boundary paths as the paths that are as close as possible to being infinite. Examples \ref{example_boundary_paths_in_Omega_2_32}, \ref{omega_2_infty_2} and \ref{example_annoying_k-graph} describe the boundary paths in various $2$-graphs.

\begin{example}\label{example_boundary_paths_in_Omega_2_32}
Consider the $2$-graph $\Omega_{2,(3,2)}$ from Example \ref{example_Omega_k_m}. Then
\[
\partial\Omega_{2,(3,2)}=\braces{\alpha\in\Omega_{2,(3,2)}:s(\alpha)=(3,2)}.
\]
\begin{proof}
Fix $x\in\partial\Omega_{2,(3,2)}$ and let $\gamma$ be the path in $\Omega_{2,(3,2)}$ with range $r(x)$ and source $(3,2)$. Then $\braces{\gamma}\in x(0)\Ff\Ee(\Omega_{2,(3,2)})$, so since $x$ is a boundary path there exists $m\in\NN^k$ such that $x(0,m)=\gamma$. Since $(3,2)$ does not receive any edges, we must have $d(x)=m$. Then $s(x)=x(m)=s(\gamma)=(3,2)$.

Fix $\alpha\in\Omega_{2,(3,2)}$ with $s(\alpha)=(3,2)$. Fix $n\le d(\alpha)$ and $D\in\alpha(n)\Ff\Ee(\Omega_{2,(3,2)})$. Since $D$ is exhaustive there exists $\beta\in D$ such that $\Lambda^{\min}\big(\sigma^n(\alpha),\beta\big)$ is non-empty. Let $(\mu,\nu)\in\Lambda^{\min}\big(\sigma^n(\alpha),\beta\big)$, so that $\sigma^n(\alpha)\mu=\beta\nu$. We must have $\mu=s(\alpha)$ since $(3,2)$ receives no edges, so $\sigma^n(\alpha)=\beta\nu$. Then $\alpha\big(n,n+d(\beta)\big)=\beta\in D$, and it follows that $\alpha\in\partial\Omega_{2,(3,2)}$.
\end{proof}
\end{example}

\begin{example}\label{omega_2_infty_2}
The $2$-graph $\Omega_{2,(\infty,2)}$ has skeleton
\begin{center}
\begin{tikzpicture}[>=stealth,baseline=(current bounding box.center)] 
\def\cellwidth{5.5};
\clip (-0.9em,-0.4em) rectangle (3*\cellwidth em + 4.1em,6.4em);

\foreach \x in {0,1,2,3} \foreach \y in {0,1,2} \node (x\x y\y) at (\cellwidth*\x em,3*\y em) {$\scriptstyle (\x,\y)$};
\foreach \y in {0,1,2} \node (x4y\y) at (\cellwidth*3 em+2.5em,3*\y em) {};
\foreach \y in {0,1,2} \node (x5y\y) at (\cellwidth*3 em+4.5em,3*\y em) {};

\foreach \x / \z in {0/1,1/2,2/3} \foreach \y in {0,1,2} \draw[black,<-] (x\x y\y) to (x\z y\y);
\foreach \x in {0,1,2,3} \foreach \y / \z in {0/1,1/2} \draw[dashed,black,<-] (x\x y\y) to (x\x y\z);
\foreach \y in {0,1,2} \draw[black,<-] (x3y\y) to (x4y\y);
\foreach \y in {0,1,2} \draw[black,dotted, thick] (x4y\y) to (x5y\y);
\end{tikzpicture}
\end{center}
and $\partial\Omega_{2,(\infty,2)}$ is the set of all $2$-graph morphisms $x^{(p)}:\Omega_{2,(\infty,2-p_2)}\rightarrow\Omega_{2,(\infty,2)}$ for $p\in\NN^2$ with $p_2\le 2$ such that $x^{(p)}(m,n)=(m+p,n+p)$ for all $m\le n\in\NN^k$ with $n_2\le 2-p_2$.
\begin{proof}
Let $S=\braces{x^{(p)}:p\in \NN^2, p_2\le 2}$ and fix $x^{(p)}\in S$. Fix $n\in\NN^2$ with $n\le d(x^{(p)})$ (i.e. with $n_2\le 2-p_2$), fix $D\in x^{(p)}(n)\Ff\Ee(\Omega_{2,(\infty,2)})$ and fix $\alpha\in D$. By examining the skeleton we can see for any $v\in\Omega_{2,(\infty,2)}^0$ and any $l\in\NN^2$ there is at most one path in $v\Omega_{2,(\infty,2)}$ with degree $l$. It follows that $\alpha=x^{(p)}\big(n,n+d(\alpha)\big)$, so $x^{(p)}\in\partial\Omega_{2,(\infty,2)}$.

Now fix $y\in\partial\Omega_{2,(\infty,2)}$ and let $p=y(0)\in\NN^2$. Fix $m,n\in\NN^2$ with $m\le n$ and $n_2\le 2-p_2$. Let $\beta=(m+p,n+p)\in\Omega_{2,(\infty,2)}$. Then $\braces{\beta}\in y(m)\Ff\Ee(\Omega_{2,(\infty,2)})$, so $y\big(m,m+d(\beta)\big)=y(m,n)=\beta=(m+p,n+p)$, and it follows that $y=x^{(p)}\in S$.
\end{proof}
\end{example}

Suppose $\Lambda$ is a row-finite $k$-graph that may have sources. We will now describe the Farthing-Webster desourcification \index{Farthing-Webster desourcification} $\widetilde\Lambda$ of $\Lambda$. Recall from the first paragraph of this section that $\widetilde\Lambda$ was first described by Webster in \cite{Webster2011} based on Farthing's \cite{Farthing2008}. The key property is that $C^*(\widetilde\Lambda)$ is Morita equivalent to $C^*(\Lambda)$. Define a relation $\thickapprox$ on 
\[V_\Lambda:=\braces{(x;m):x\in\partial\Lambda,m\in\NN^k}\]
by: $(x;m)\thickapprox (y;p)$ if and only if
\begin{enumerate}\renewcommand{\theenumi}{V\arabic{enumi}}
\item\label{V1} $x\big(m\wedge d(x)\big) =y\big( p\wedge d(y)\big)$; and
\item\label{V2} $m-m\wedge d(x)=p-p\wedge d(y)$.
\end{enumerate}
Define a relation $\thicksim$ on $P_\Lambda:=\big\{\big(x;(m,n)\big):x\in\partial\Lambda,m\le n\in\NN^k\big\}$ by: $\big(x;(m,n)\big)\thicksim \big(y;(p,q)\big)$ if and only if
\begin{enumerate}\renewcommand{\theenumi}{P\arabic{enumi}}
\item\label{P1} $x\big(m\wedge d(x),n\wedge d(x)\big)=y\big(p\wedge d(y),q\wedge d(y)\big)$;
\item\label{P2} $m-m\wedge d(x)=p-p\wedge d(y)$; and
\item\label{P3} $n-m=q-p$.
\end{enumerate}
Note that $\thickapprox$ and $\thicksim$ are equivalence relations. Let $\widetilde{V_\Lambda}:=V_{\Lambda}/\thickapprox$ and $\widetilde{P_\Lambda}:=P_{\Lambda}/\thicksim$. The class in $\widetilde{V_\Lambda}$ of $(x;m)\in V_\Lambda$ is denoted by $\desourceclass{x;m}$ and the class in $\widetilde{P_\Lambda}$ of $\big(x;(m,n)\big)\in V_\Lambda$ is denoted by $\desourceclass{x;(m,n)}$. Define $\tilde{r},\tilde{s}:\widetilde{P_\Lambda}\rightarrow \widetilde{V_\Lambda}$ by $\tilde{r}\big(\desourceclass{x;(m,n)}\big)=\desourceclass{x;m}$ and $\tilde{s}\big(\desourceclass{x;(m,n)}\big)=\desourceclass{x;n}$. The following proposition will be used to define the composition of elements of $\widetilde{P_\Lambda}$.

\begin{prop}[{\cite[Proposition~4.6]{Webster2011}}]\label{prop_webster_3.6}
Suppose that $\Lambda$ is a row-finite $k$-graph and let $\desourceclass{x;(m,n)}$ and $\desourceclass{y;(p,q)}$ be elements of $\widetilde{P_\Lambda}$ satisfying $\desourceclass{x;n}=\desourceclass{y;p}$. Let $z:=x\big(0,n\wedge d(x)\big)\sigma^{p\wedge d(y)}y$. Then
\begin{enumerate}
\item $z\in\partial\Lambda$;
\item $m\wedge d(x)=m\wedge d(z)$ and $n\wedge d(x)=n\wedge d(z)$; and
\item $x\big(m\wedge d(x),n\wedge d(x)\big)=z\big(m\wedge d(z),n\wedge d(z)\big)$ and $y\big(p\wedge d(y),q\wedge d(y)\big)=z\big(n\wedge d(z),(n+q-p)\wedge d(z)\big)$.
\end{enumerate}
\end{prop}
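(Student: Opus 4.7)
Set $N := n \wedge d(x)$ and $P := p \wedge d(y)$ for brevity. The plan is to begin by verifying that $z$ is a well-defined $k$-graph morphism in $W_\Lambda$: the finite morphism $x(0, N)$ has source $x(N)$, the boundary path $\sigma^P y$ has range $y(P)$, and condition (V1) of $\desourceclass{x;n} = \desourceclass{y;p}$ gives $x(N) = y(P)$, so the concatenation is a valid $k$-graph morphism of degree $d(z) = N + d(y) - P \in (\NN \cup \{\infty\})^k$.

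For parts (2) and (3), I would work coordinate-wise using (V2), which says $n - N = p - P$. Since $d(z) \ge N$, $n \wedge d(z) = N = n \wedge d(x)$. For $m \wedge d(z) = m \wedge d(x)$ I would split on whether $m_i \le d(x)_i$: if so, both minima equal $m_i$; otherwise $n_i \ge m_i > d(x)_i$, so (V2) forces $p_i > P_i$, hence $P_i = d(y)_i$ and $d(z)_i = d(x)_i$, so both minima equal $d(x)_i$. For (3), the concatenation formula gives $z(a,b) = x(a,b)$ for $b \le N$ and $z(N + a, N + b) = y(P + a, P + b)$ for $b \le d(y) - P$; the first equality of (3) follows by taking $a = m \wedge d(x)$ and $b = N$, while the second follows once one verifies $(n + q - p) \wedge d(z) = N + (q \wedge d(y)) - P$ by the analogous coordinate-wise analysis.

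The main obstacle is part (1), showing $z \in \partial\Lambda$. Fix $l \le d(z)$ in $\NN^k$ and $D \in z(l)\Ff\Ee(\Lambda)$; I must find $t \in \NN^k$ with $z(l, l + t) \in D$. The genuine $k \ge 2$ difficulty is that when $l$ and $N$ are incomparable the vertex $z(l)$ is neither $x(l)$ nor a vertex along $y$, but rather the intermediate vertex produced by factorising $z(l \wedge N, l \vee N)$. The strategy is to transport the problem into the ``$y$-region'': set $l_y := l - l \wedge N \le d(y) - P$, so $z(l \vee N) = z(N + l_y) = y(P + l_y)$, and write $\rho := z(l, l \vee N)$, a morphism of degree $N - l \wedge N$ from $z(l)$ to $y(P + l_y)$.

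Define
\[
D' := \{\eta : (\eta, \nu) \in \Lambda^{\mathrm{min}}(\rho, \alpha) \text{ for some } \alpha \in D\} \subseteq y(P+l_y)\Lambda,
\]
which is finite. To show $D'$ is exhaustive at $y(P+l_y)$, I would take $\mu \in y(P+l_y)\Lambda$ and apply exhaustiveness of $D$ to $\rho\mu$ to produce $\alpha \in D$ and $(\xi, \zeta) \in \Lambda^{\mathrm{min}}(\rho\mu, \alpha)$; a short argument using uniqueness of factorisations and left-cancellation in $\Lambda$ then extracts $(\eta, \nu) \in \Lambda^{\mathrm{min}}(\rho, \alpha)$ together with $\psi$ satisfying $\mu\xi = \eta\psi$, proving $\Lambda^{\mathrm{min}}(\mu, \eta) \ne \emptyset$. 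Applying the boundary-path property of $y$ at $y(P+l_y)$ to $D'$ yields $t' \in \NN^k$ with $y(P+l_y, P+l_y + t') = \eta_0 \in D'$; writing $\rho\eta_0 = \alpha_0 \nu_0$ for the corresponding $\alpha_0 \in D$ and noting that $\rho\eta_0 = z(l, l \vee N + t')$ (with $l \vee N + t' \le d(z)$), a final factorisation in $\Lambda$ gives $z(l, l + d(\alpha_0)) = \alpha_0 \in D$, so $t := d(\alpha_0)$ works.
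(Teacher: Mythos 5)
The thesis does not prove this proposition at all --- it is quoted verbatim from Webster's paper (cited as Proposition~4.6 there), so there is no in-text proof to compare yours against. Judged on its own terms, your argument is correct and complete. The well-definedness of $z$, the coordinate-wise verification of (2) via (V2) (including the key observation that $n_i>d(x)_i$ forces $P_i=d(y)_i$ and hence $d(z)_i=d(x)_i$), and the reduction of (3) to the identity $(n+q-p)\wedge d(z)=N+(q\wedge d(y))-P$ all check out. The substantial part is (1), and your strategy of pulling an exhaustive set $D\in z(l)\Ff\Ee(\Lambda)$ back along $\rho=z(l,l\vee N)$ to the finite set $D'=\{\eta:(\eta,\nu)\in\Lambda^{\mathrm{min}}(\rho,\alpha)\text{ for some }\alpha\in D\}$, proving $D'$ exhaustive at $y(P+l_y)$ by factorising a common extension of $\rho\mu$ and $\alpha$, and then invoking the boundary-path property of $y$ itself, is sound; the final step $z(l,l+d(\alpha_0))=\alpha_0$ follows from $\rho\eta_0=\alpha_0\nu_0$ and uniqueness of factorisations, exactly as you say. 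One small imprecision: you justify $n\wedge d(z)=N$ solely by ``$d(z)\ge N$'', but that only gives $n\wedge d(z)\ge N$; the reverse inequality needs the same case split on whether $n_i\le d(x)_i$ that you carry out for $m$ (your $m$-argument with $m=n$ supplies it), since a priori $d(z)_i$ could lie strictly between $N_i$ and $n_i$. This is cosmetic rather than a gap.
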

For $\desourceclass{x;(m,n)},\desourceclass{y;(p,q)}\in \widetilde{P_\Lambda}$ with $\tilde{s}\big(\desourceclass{x;(m,n)}\big)=\tilde{r}\big(\desourceclass{y;(p,q)}\big)$ and for $z\in\partial\Lambda$ as in Proposition \ref{prop_webster_3.6}, the equation
\[
\desourceclass{x;(m,n)}\circ\desourceclass{y;(p,q)}=\desourceclass{z;(m,n+q-p)}
\]
determines a well defined composition $\circ$. Define $\mathrm{id}:\widetilde{V_\Lambda}\rightarrow \widetilde{P_\Lambda}$ by $\mathrm{id}\big(\desourceclass{x;m}\big)=\desourceclass{x;(m,m)}$. Then $(\widetilde{P_\Lambda},\widetilde{V_\Lambda},\tilde{r},\tilde{s},\circ,\mathrm{id})$ is a category by \cite[Lemma~2.19]{Farthing2008}. Define a map $d:\widetilde{P_\Lambda}\rightarrow \NN^k$ by $d\big(\desourceclass{x;(m,n)}\big)=n-m$. Then $\widetilde{\Lambda}:=(\widetilde{P_\Lambda},\widetilde{V_\Lambda},\tilde{r},\tilde{s},\circ,\mathrm{id},d)$ is a $k$-graph without sources by \cite[Theorem~2.22]{Farthing2008}. We call $\widetilde\Lambda$ the {\em Farthing-Webster desourcification} (or just the desourcification) of $\Lambda$ and usually simplify notation by writing $r$ for $\tilde{r}$, $s$ for $\tilde{s}$ and omitting the $\circ$ and $\mathrm{id}$ notation. \label{describing_morita_equivalence_results}Webster in \cite[Theorem~6.3]{Webster2011} shows that if $\Lambda$ is row-finite, then $\widetilde\Lambda$ is row-finite and $C^*(\widetilde{\Lambda})$ is Morita equivalent to $C^*(\Lambda)$. The Morita equivalence of $C^*$-algebras will be particularly useful to us since Zettl in \cite{Zettl1982} and an Huef, Raeburn and Williams in \cite{anHuef-Raeburn-Williams2007} show that the liminal, postliminal, continous- and bounded-trace, and Fell properties are preserved by Morita equivalence.

\begin{prop}[{\cite[Proposition~4.13]{Webster2011}}]
Suppose that $\Lambda$ is a row-finite $k$-graph, and that $\alpha\in\Lambda$. Then $s(\alpha)\partial\Lambda\ne\emptyset$. If $x,y\in s(\alpha)\partial\Lambda$, then $\alpha x,\alpha y\in\partial\Lambda$ and $\desourceclass{\alpha x;(0,d(\alpha))}=\desourceclass{\alpha y;(0,d(\alpha))}$. There is an injective $k$-graph morphism $\iota:\Lambda\rightarrow\widetilde\Lambda$ such that $\iota(\alpha)=\bigdesourceclass{\alpha x;\big(0,d(\alpha)\big)}$ for any $\alpha\in\Lambda$ and $x\in s(\alpha)\partial\Lambda$.
\end{prop}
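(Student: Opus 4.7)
The proof has four parts: existence of boundary paths from every vertex, that $\alpha x$ is a boundary path, the equivalence of the relevant classes, and the injective morphism property of $\iota$. First I would address the existence claim $s(\alpha)\partial\Lambda \ne\emptyset$. Since $\Lambda$ is countable, the set of pairs $(n,D)$ with $n\in\NN^k$ and $D$ a finite exhaustive subset at some reachable vertex can be enumerated, and I would build a boundary path $z\in s(\alpha)\partial\Lambda$ by induction. Starting from the trivial morphism $\Omega_{k,0}\to\Lambda$ mapping $0\mapsto s(\alpha)$, at each stage I would extend the current finite stub $z_j:\Omega_{k,m_j}\to\Lambda$ to address the next pending constraint: if the constraint is of the form ``$D\in z_j(n)\Ff\Ee(\Lambda)$ with $n\le m_j$'', then since $D$ is exhaustive and $\Lambda$ is row-finite, one can extend so that some element of $D$ is a prefix of the portion beyond $z_j(n)$. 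The direct limit, constructed by the usual back-and-forth enumeration to handle constraints arising at vertices only encountered later, yields the desired boundary path.

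For the second claim that $\alpha x\in\partial\Lambda$ whenever $x\in s(\alpha)\partial\Lambda$, I would fix $n\le d(\alpha x)=d(\alpha)+d(x)$ and $D\in (\alpha x)(n)\Ff\Ee(\Lambda)$, then produce $m$ with $(\alpha x)(n,n+m)\in D$. If $n\ge d(\alpha)$ componentwise, the claim is immediate from the boundary-path property of $x$ at position $n-d(\alpha)$. For general $n$, I would use the factorization property to pull $D$ forward along the segment $(\alpha x)(n,n\vee d(\alpha))$, producing a finite exhaustive subset at $(\alpha x)(n\vee d(\alpha))=x\big((n\vee d(\alpha))-d(\alpha)\big)$, and then invoke the boundary property of $x$ there. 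For the third claim I would simply verify P1, P2, P3 for the pair $\big(\alpha x;(0,d(\alpha))\big)$ and $\big(\alpha y;(0,d(\alpha))\big)$: since $d(\alpha x),d(\alpha y)\ge d(\alpha)$ componentwise (using $n+\infty=\infty$), $0\wedge d(\alpha x)=0$ and $d(\alpha)\wedge d(\alpha x)=d(\alpha)$, so P1 becomes $(\alpha x)(0,d(\alpha))=\alpha=(\alpha y)(0,d(\alpha))$ while P2 and P3 reduce to $0-0=0-0$ and $d(\alpha)-0=d(\alpha)-0$.

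For the final part, I would define $\iota(\alpha):=\bigdesourceclass{\alpha x;(0,d(\alpha))}$ for any choice of $x\in s(\alpha)\partial\Lambda$; well-definedness and existence of such $x$ are ensured by the previous steps. To check $\iota$ is a $k$-graph morphism, I would verify that the degree map is preserved since $d\big(\iota(\alpha)\big)=d(\alpha)-0$; that $\tilde r\big(\iota(\alpha)\big)=\desourceclass{\alpha x;0}=\desourceclass{r(\alpha)y;0}=\iota\big(r(\alpha)\big)$ for any $y\in r(\alpha)\partial\Lambda$, using conditions V1, V2; and for composition $\iota(\alpha\beta)=\iota(\alpha)\iota(\beta)$, I would apply Proposition \ref{prop_webster_3.6} with the natural candidate $z=\alpha\beta x$ for $x\in s(\beta)\partial\Lambda$, confirming the formula yields $\bigdesourceclass{\alpha\beta x;(0,d(\alpha)+d(\beta))}=\iota(\alpha\beta)$. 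For injectivity, if $\iota(\alpha)=\iota(\beta)$ then degrees force $d(\alpha)=d(\beta)$, and applying P1 yields $\alpha=(\alpha x)(0,d(\alpha))=(\beta y)(0,d(\alpha))=\beta$. The main obstacle is clearly the existence claim $s(\alpha)\partial\Lambda\ne\emptyset$, which demands the careful inductive construction above; once that is in hand, the remaining verifications are essentially bookkeeping against the definitions of $\sim$, $\thickapprox$ and the composition on $\widetilde\Lambda$.
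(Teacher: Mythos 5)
Note first that the thesis does not prove this proposition at all: it is quoted verbatim as \cite[Proposition~4.13]{Webster2011}, so there is no in-paper proof to compare against. Judged on its own terms, your reconstruction is sound and follows the same lines as the argument in the literature (Webster, building on Farthing--Muhly--Yeend and Raeburn--Sims--Yeend): an inductive construction of a boundary path out of $s(\alpha)$ using countability and row-finiteness, the pull-back of finite exhaustive sets to show $\alpha x\in\partial\Lambda$, a direct check of \eqref{P1}--\eqref{P3} for the equality of classes, and bookkeeping against \eqref{V1}, \eqref{V2} and Proposition \ref{prop_webster_3.6} for the morphism and injectivity claims. Parts three and four are complete as written; the degree, range/source, composition (via the choice $w=\beta x$) and injectivity computations are all correct.

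Two points in the first half are doing more work than your sketch acknowledges. First, the step ``pull $D$ forward along $(\alpha x)(n,n\vee d(\alpha))$, producing a finite exhaustive subset at $x\big((n\vee d(\alpha))-d(\alpha)\big)$'' is exactly the assertion that for $\mu\in v\Lambda$ and $D\in v\Ff\Ee(\Lambda)$ the set of minimal extensions $\mathrm{Ext}(\mu;D)$ is again finite and exhaustive at $s(\mu)$; this is a genuine lemma (finiteness uses row-finiteness, exhaustivity uses an associativity property of minimal common extensions) and should be stated and proved or cited, not folded into one clause. One must also check that recovering an initial segment lying in $\mathrm{Ext}(\mu;D)$ really produces an initial segment of $\sigma^n(\alpha x)$ lying in $D$ itself. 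Second, the existence argument needs a little more care than ``the usual back-and-forth'': at each finite stage new vertices appear, each carrying countably many finite exhaustive sets, so you need a diagonal enumeration that interleaves constraints discovered later, and you need to verify that the direct limit is a well-defined element of $W_\Lambda$ whose degree may be finite in some coordinates (a boundary path need not be infinite). Neither issue is a wrong turn --- both are standard and fixable --- but as written they are the only places where the proof is an outline rather than an argument.
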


Define $\kappa:\partial\Lambda\rightarrow\iota(\Lambda^0)\widetilde{\Lambda}^\infty$ by $\kappa(x)(m,n)=\desourceclass{x;(m,n)}$ for every $x\in\partial\Lambda$ and $m,n\in\NN^k$ with $m\le n$. Since $\widetilde\Lambda$ has no sources, we know $\partial\widetilde\Lambda=\widetilde\Lambda^\infty$, so $\kappa$ maps the boundary paths in $\Lambda$ into the boundary paths $\widetilde\Lambda$. The following results show that every path in $\partial\widetilde\Lambda$ is shift equivalent to a path in $\kappa(\partial\Lambda)$ and that this map preserves notions of `shift equivalence' and `frequently divertable'.

\begin{lemma}\label{lemma_kappa_bijection}
Suppose $\Lambda$ is a row-finite $k$-graph. Then $\kappa:\partial\Lambda\rightarrow\iota(\Lambda^0)\widetilde{\Lambda}^\infty$ is a bijection.
\begin{proof}
Suppose $\kappa(x)=\kappa(y)$ for some $x,y\in\partial\Lambda$ and fix $n\in\NN^k$. Then $\kappa(x)(0,n)=\kappa(y)(0,n)$, and so $\desourceclass{x;(0,n)}=\desourceclass{y;(0,n)}$. By \eqref{P1} we have $x\big(0,n\wedge d(x)\big)=y\big(0,n\wedge d(y)\big)$ and it follows that $x=y$ since $n$ was chosen arbitrarily.

Now fix $a\in \iota(\Lambda^0)\widetilde{\Lambda}^\infty$. In \cite[p.~170]{Webster2011}, Webster describes a map $\pi:\iota(\Lambda^0)\widetilde{\Lambda}^\infty\rightarrow\iota(\partial\Lambda)$. Since $\pi$ maps into $\iota(\partial\Lambda)$, there exists $x\in\partial\Lambda$ such that $\pi(a)=\iota(x)$. Now \cite[Lemma~5.3]{Webster2011} shows that $a(0,n)=\desourceclass{x;(0,n)}$ for every $n\in\NN^k$, so $a=\kappa(x)$, establishing the surjectivity of $\kappa$.
\end{proof}
\end{lemma}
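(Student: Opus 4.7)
The plan is to prove injectivity by unpacking the definition of $\thicksim$ directly, and to establish surjectivity by invoking the machinery already developed in Webster's \cite{Webster2011}. For injectivity, suppose $\kappa(x)=\kappa(y)$ for some $x,y\in\partial\Lambda$. Then for every $m\le n$ in $\NN^k$, we have $\desourceclass{x;(m,n)}=\desourceclass{y;(m,n)}$; in particular, taking $m=0$ and letting $n$ vary, \eqref{P1} yields $x\big(0,n\wedge d(x)\big)=y\big(0,n\wedge d(y)\big)$. Since a boundary path is determined as a $k$-graph morphism by its values on all initial segments, and since letting $n$ grow coordinatewise exhausts the domain of both $x$ and $y$, we can read off first that $d(x)=d(y)$ (using \eqref{V2}-style behaviour of the defining relations when coordinates exceed both degrees) and then that $x=y$.

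For surjectivity, fix $a\in\iota(\Lambda^0)\widetilde{\Lambda}^\infty$. The goal is to produce a single boundary path $x\in\partial\Lambda$ so that $\desourceclass{x;(0,n)}=a(0,n)$ for every $n\in\NN^k$. The natural approach is to try to piece together the equivalence-class data $\{a(0,n)\}_{n\in\NN^k}$ into a coherent $k$-graph morphism on some $\Omega_{k,m}$, but this is delicate because $a$ can have components that run past sources in $\Lambda$; the desourcification precisely encodes such situations via the $V$-relation. Rather than reconstructing $x$ by hand, the plan is to invoke Webster's retraction $\pi\colon\iota(\Lambda^0)\widetilde{\Lambda}^\infty\to\iota(\partial\Lambda)$ (\cite[p.~170]{Webster2011}), which produces $\iota(x)=\pi(a)$ for some $x\in\partial\Lambda$, and then apply \cite[Lemma~5.3]{Webster2011}, which gives $a(0,n)=\desourceclass{x;(0,n)}$ for every $n\in\NN^k$. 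This is exactly the assertion $\kappa(x)=a$.

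The hard part of the argument is entirely in establishing surjectivity, and in particular in handling the boundary-path bookkeeping that ensures the coordinates of $a$ that "exceed" $d(x)$ in some direction are accounted for consistently. This is precisely what $\pi$ was designed to do: it produces the correct boundary-path representative for each infinite path in the desourcification that starts at a vertex of $\iota(\Lambda^0)$, and \cite[Lemma~5.3]{Webster2011} certifies that this representative really does recover $a$ coordinate by coordinate. Once those two results are invoked, both halves of the lemma are short.
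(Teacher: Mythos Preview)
Your proposal is correct and follows essentially the same approach as the paper: injectivity via \eqref{P1} applied to $\desourceclass{x;(0,n)}=\desourceclass{y;(0,n)}$ as $n$ ranges over $\NN^k$, and surjectivity by invoking Webster's retraction $\pi$ together with \cite[Lemma~5.3]{Webster2011}. The paper's argument is terser on the injectivity side (it does not explicitly isolate $d(x)=d(y)$), but the substance is the same.
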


\begin{lemma}\label{lemma_boundary_path_associated_to_infinite_path}
Suppose $\Lambda$ is a row-finite $k$-graph. If $a\in\widetilde{\Lambda}^\infty$ then there exists $x\in\partial\Lambda$ and $m\in\NN^k$ such that $a=\sigma^m\big(\kappa(x)\big)$.
\begin{proof}
Fix $a\in\widetilde{\Lambda}^\infty$. By the construction of $\widetilde{\Lambda}$ there exists $y\in\partial\Lambda$ and $m\in\NN^k$ such that $r(a)=\desourceclass{y;m}$. Then $\desourceclass{y;(0,m)}$ is a path in $\widetilde{\Lambda}$ with source $r(a)$ and range in $\iota(\Lambda^0)$, so $\desourceclass{y;(0,m)}a\in\iota(\Lambda^0)\widetilde{\Lambda}^\infty$. Since $\kappa$ is a bijection onto $\iota(\Lambda^0)\widetilde\Lambda^\infty$ (Lemma \ref{lemma_kappa_bijection}) there exists $x\in\partial\Lambda$ such that $\kappa(x)=\desourceclass{y;(0,m)}a$. We conclude that $\sigma^m\big(\kappa(x)\big)=a$ since $d\big(\desourceclass{y,(0,m)}\big)=m$.
\end{proof}
\end{lemma}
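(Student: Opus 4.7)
The plan is to manufacture a candidate $x\in\partial\Lambda$ by prepending a suitable finite path in $\widetilde{\Lambda}$ to $a$ so that the resulting infinite path has range in $\iota(\Lambda^0)$, and then invoke the surjectivity half of Lemma~\ref{lemma_kappa_bijection}. The shift by $m$ recovers $a$ by the factorisation property of $\widetilde{\Lambda}$.

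First I would unpack the range of $a$. Since $r(a)\in\widetilde{\Lambda}^0=\widetilde{V_\Lambda}$, by construction of $\widetilde{V_\Lambda}$ there exist $y\in\partial\Lambda$ and $m\in\NN^k$ with $r(a)=\desourceclass{y;m}$. Consider the finite-path class $\mu:=\desourceclass{y;(0,m)}\in\widetilde{\Lambda}$; using $\tilde{r}\big(\desourceclass{y;(m',n')}\big)=\desourceclass{y;m'}$ and $\tilde{s}\big(\desourceclass{y;(m',n')}\big)=\desourceclass{y;n'}$, we have $r(\mu)=\desourceclass{y;0}=\iota(r(y))\in\iota(\Lambda^0)$ and $s(\mu)=\desourceclass{y;m}=r(a)$, so the composition $\mu a$ is defined and is an element of $\widetilde{\Lambda}^\infty$ whose range lies in $\iota(\Lambda^0)$.

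Next, since $\mu a\in\iota(\Lambda^0)\widetilde{\Lambda}^\infty$ and $\kappa:\partial\Lambda\to\iota(\Lambda^0)\widetilde{\Lambda}^\infty$ is a bijection by Lemma~\ref{lemma_kappa_bijection}, there is a unique $x\in\partial\Lambda$ with $\kappa(x)=\mu a$. It remains to check that $\sigma^m(\kappa(x))=a$. Since $d(\mu)=m$ and $\kappa(x)=\mu a$, for any $p\le q$ in $\NN^k$ the factorisation property applied to $\kappa(x)$ gives $\kappa(x)(m+p,m+q)=a(p,q)$, which is exactly the statement $\sigma^m(\kappa(x))=a$.

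There is no substantial obstacle here: the only place one must be careful is confirming that the composition $\mu a$ is well defined (range/source matching in $\widetilde{\Lambda}$) and that the degree of $\mu$ is indeed $m$, so that the shift $\sigma^m$ strips off exactly $\mu$. Both follow directly from the definitions of $\tilde{r}$, $\tilde{s}$, $d$ on $\widetilde{P_\Lambda}$, and from Proposition~\ref{prop_finite_infinite_composition} applied in $\widetilde{\Lambda}$.
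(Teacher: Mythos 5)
Your proof is correct and follows essentially the same route as the paper's: prepend $\desourceclass{y;(0,m)}$ to $a$ to land in $\iota(\Lambda^0)\widetilde{\Lambda}^\infty$, invoke the bijectivity of $\kappa$ from Lemma \ref{lemma_kappa_bijection}, and strip the prefix off with $\sigma^m$ using $d\big(\desourceclass{y;(0,m)}\big)=m$. The extra care you take in verifying the range/source matching for the composition is a nice touch but does not change the argument.
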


\begin{lemma}\label{lemma_sigma_kappa}
Suppose $\Lambda$ is a row-finite $k$-graph, $x\in\partial\Lambda$ and $n\in\NN^k$ satisfies $n\le d(x)$. Then $\sigma^n\big(\kappa(x)\big)=\kappa\big(\sigma^n(x)\big)$.
\begin{proof}
Fix $m\in\NN^k$. It suffices to show that $\sigma^n\big(\kappa(x)\big)(0,m)=\kappa\big(\sigma^n(x)\big)(0,m)$. Now \[\sigma^n\big(\kappa(x)\big)(0,m)=\kappa(x)(n,n+m)=\desourceclass{x;(n,n+m)}\] and $\kappa\big(\sigma^n(x)\big)(0,m)=\desourceclass{\sigma^n(x);(0,m)}$, so we need to show that $\desourceclass{x;(n,n+m)}=\desourceclass{\sigma^n(x);(0,m)}$. For \eqref{P1}, consider
{\allowdisplaybreaks
\begin{align*}
\sigma^n(x)\Big(0\wedge d\big(\sigma^n(x)\big),m\wedge d\big(\sigma^n(x)\big)\Big) &=\sigma^n(x)\Big(0,m\wedge\big(d(x)-n\big)\Big)\\
&=x\Big(n,n+m\wedge\big(d(x)-n\big)\Big)\\
&=x\big(n\wedge d(x),(n+m)\wedge d(x)\big).
\end{align*}}
Axiom \eqref{P2} follows from $n\le d(x)$ and axiom \eqref{P3} is immediate, completing the proof.
\end{proof}
\end{lemma}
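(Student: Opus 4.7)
The plan is to reduce the equality of two $k$-graph morphisms $\Omega_k \to \widetilde{\Lambda}$ (namely infinite paths in $\widetilde{\Lambda}$) to the equality of their values on the initial segments $(0,m)$ for every $m\in\NN^k$. Once I have established this reduction, both sides of the desired equation are elements of $\widetilde{P_\Lambda} = P_\Lambda / \sim$, and checking their equality reduces to verifying axioms (P1), (P2) and (P3).

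First I would observe that any two elements of $\widetilde{\Lambda}^\infty$ are determined by their values on pairs of the form $(0,m)$, since for $p\le q$ in $\NN^k$ the factorisation property gives $a(p,q) = \big(a(0,q)\big)\big(p,q\big)$ (where the second $(p,q)$ is interpreted in the sub-$k$-graph induced by the morphism $a$). So it suffices to prove, for every $m\in\NN^k$, that
\[
\sigma^n\big(\kappa(x)\big)(0,m) = \kappa\big(\sigma^n(x)\big)(0,m).
\]
Next, unfolding definitions: the left-hand side equals $\kappa(x)(n,n+m) = \desourceclass{x;(n,n+m)}$, while the right-hand side equals $\desourceclass{\sigma^n(x);(0,m)}$. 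So the task collapses to checking $\big(x;(n,n+m)\big) \sim \big(\sigma^n(x);(0,m)\big)$ in $P_\Lambda$.

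The main computation (and the place where the hypothesis $n\le d(x)$ is essential) is verifying the three axioms. Recalling that $d\big(\sigma^n(x)\big) = d(x) - n$ (well-defined because $n\le d(x)$), axiom (P1) requires
\[
x\big(n\wedge d(x),(n+m)\wedge d(x)\big) = \sigma^n(x)\Big(0\wedge \big(d(x)-n\big),\, m\wedge \big(d(x)-n\big)\Big),
\]
which follows from $n\wedge d(x)=n$ combined with the identity $(n+m)\wedge d(x) = n + \big(m\wedge (d(x)-n)\big)$ and the fact that $\sigma^n(x)(a,b) = x(n+a,n+b)$ whenever $n+b\le d(x)$. Axiom (P2) reduces to $n - n\wedge d(x) = 0 - 0\wedge (d(x)-n)$, both sides of which are $0$ because $n\le d(x)$. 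Axiom (P3) is immediate since $(n+m)-n = m - 0$.

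The main obstacle will be precisely the bookkeeping with the $\wedge$ operations in (P1): one has to be careful that the two sides of the equality are comparing the same finite sub-path of $x$, and this is exactly where the hypothesis $n\le d(x)$ gets consumed to simplify $n\wedge d(x)$ to $n$. Once that simplification is made, the rest is straightforward arithmetic in $\NN^k$.
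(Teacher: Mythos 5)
Your proposal is correct and follows essentially the same route as the paper: reduce to equality of the segments $(0,m)$ for all $m\in\NN^k$, unfold the definitions of $\kappa$ and $\sigma^n$ into the classes $\desourceclass{x;(n,n+m)}$ and $\desourceclass{\sigma^n(x);(0,m)}$, and verify \eqref{P1}--\eqref{P3}, with the identity $(n+m)\wedge d(x)=n+\big(m\wedge(d(x)-n)\big)$ doing the work in \eqref{P1} exactly as in the paper's computation. The only (harmless) addition is your explicit justification that elements of $\widetilde\Lambda^\infty$ are determined by their initial segments, which the paper leaves implicit.
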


Suppose $\Lambda$ is a $k$-graph. For $x,y\in \partial\Lambda$ and $n\in\ZZ^k$, write $x\sim_n y$\index{$\sim_n$} if there exists $m\in\NN^k$ with $m\le d(x)\wedge \big(d(y)+n\big)$ such that $\sigma^m(x)=\sigma^{m-n}(y)$.
\begin{lemma}\label{lemma_shift_equivalence2}
Suppose $\Lambda$ is a $k$-graph. The relations $\sim_n$ form an equivalence relation on $\partial\Lambda$ in that $x\sim_0 x$, $x\sim_n y\implies y\sim_{-n} x$ and $x\sim_n y, y\sim_l z\implies x\sim_{n+l} z$ for any $x,y,z\in\partial\Lambda$ and $n,l\in\ZZ^k$.
\begin{proof}
The assertion that $x\sim_0 x$ for any $x\in\partial\Lambda$ is trivial. Suppose $x\sim_n y$ and $y\sim_l z$ for some $x,y,z\in \partial\Lambda$ and $n,l\in\NN^k$. Then there exists $p\in\NN^k$ with $p\le d(x)\wedge \big(d(y)+n\big)$ such that $\sigma^p(x)=\sigma^{p-n}(y)$. Then $p-n\le \big(d(x)-n\big)\wedge \big(d(y)+n-n\big)=d(y)\wedge\big(d(x)-n\big)$ and $\sigma^{p-n}(y)=\sigma^p(x)=\sigma^{(p-n)+n}(x)$, so $y\sim_{-n} x$.

To see that $x\sim_{n+l}z$, first observe that there exists $q\in\NN^k$ with $q\le d(y)\wedge \big(d(z)+l\big)$ such that $\sigma^q(y)=\sigma^{q-l}(z)$. Let $t=p\vee (q+n)$. Since $d(x)=d(y)+n$, $d(y)=d(z)+l$, $p\le d(x)$ and $q\le d(y)$, we have $t\le d(x)\wedge\big(d(z)+n+l\big)$. Now $p\le t\le d(x)$, so $\sigma^t(x)=\sigma^{t-n}(y)$. Similarly $q\le t-n\le d(y)$ gives us $\sigma^{t-n}(y)=\sigma^{t-n-l}(z)$. Thus $\sigma^t(x)=\sigma^{t-(n+l)}(z)$ and it follows that $x\sim_{n+l}z$.
\end{proof}
\end{lemma}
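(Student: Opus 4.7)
My plan is to verify each of the three properties directly from the definition of $\sim_n$, with reflexivity handled trivially, symmetry by a rewriting trick, and transitivity requiring a careful choice of witness.

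Reflexivity is immediate: taking $m=0$, we have $0 \le d(x) \wedge d(x)$ and $\sigma^0(x) = \sigma^0(x)$, so $x \sim_0 x$. For symmetry, suppose $x \sim_n y$ is witnessed by $m \in \NN^k$ with $m \le d(x) \wedge (d(y)+n)$ and $\sigma^m(x) = \sigma^{m-n}(y)$. I would set $m' := m-n$ and check that $m'$ witnesses $y \sim_{-n} x$: the equation $\sigma^{m'}(y) = \sigma^{m'-(-n)}(x)$ is just a re-reading of the original, and the inequality $m' \le d(y) \wedge (d(x)-n)$ follows by splitting the two halves of $m \le d(x) \wedge (d(y)+n)$ and subtracting $n$.

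For transitivity, suppose $p$ witnesses $x \sim_n y$ and $q$ witnesses $y \sim_l z$. The key preliminary observation (which I would record explicitly) is that whenever $x \sim_n y$, comparing the degrees of the two equal boundary paths $\sigma^m(x)$ and $\sigma^{m-n}(y)$ forces $d(x) = d(y) + n$ in $(\NN \cup \{\infty\})^k$; analogously $d(y) = d(z) + l$, so $d(x) = d(z) + n + l$. I would then propose the witness $t := p \vee (q+n) \in \NN^k$. The bound $t \le d(x) \wedge (d(z)+n+l)$ breaks into four inequalities: $p \le d(x)$ and $q + n \le d(y) + n = d(x)$ give $t \le d(x)$; while $p \le d(y) + n = d(z) + l + n$ and $q + n \le d(z) + l + n$ give $t \le d(z) + n + l$. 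To verify the shift equation, I would apply $\sigma^{t-p}$ to $\sigma^p(x) = \sigma^{p-n}(y)$, obtaining $\sigma^t(x) = \sigma^{t-n}(y)$, and then apply $\sigma^{(t-n)-q}$ to $\sigma^q(y) = \sigma^{q-l}(z)$ (valid because $t - n \ge q$ by construction) to get $\sigma^{t-n}(y) = \sigma^{t-n-l}(z)$; chaining these gives $\sigma^t(x) = \sigma^{t-(n+l)}(z)$.

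The main subtlety I expect to encounter is making sure all shifts and subtractions make sense in $\NN^k$ and $\ZZ^k$ in the presence of possibly infinite components of $d(x), d(y), d(z)$: specifically, one must use that $\sigma^a \circ \sigma^b = \sigma^{a+b}$ on boundary paths whenever both iterates are defined, which requires the inequalities established above. Once these bookkeeping constraints are laid out, the identities themselves are purely formal manipulations of the defining equation $\sigma^\cdot(\cdot) = \sigma^{\cdot}(\cdot)$.
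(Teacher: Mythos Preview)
Your proposal is correct and follows essentially the same route as the paper: the same witness $m'=m-n$ for symmetry and the same witness $t=p\vee(q+n)$ for transitivity, with the same verification of the inequalities and shift equations. If anything you are slightly more explicit than the paper, since you justify the degree identity $d(x)=d(y)+n$ (by comparing degrees of the equal shifted paths) before using it, whereas the paper simply asserts it.
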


The next lemma shows how shift equivalence passes from a row-finite $k$-graph to its desourcification via the map $\kappa$.
\begin{lemma}\label{lemma_desource_shift_equivalence_preserved}
Suppose $\Lambda$ is a row-finite $k$-graph, $x,y\in\partial\Lambda$ and $n\in \ZZ^k$. Then $\kappa(x)\sim_n\kappa(y)$ if and only if $x\sim_n y$.
\begin{proof}
Suppose $x\sim_n y$. Then there exists $m\in\NN^k$ with $m\le d(x)\wedge \big(d(y)+n\big)$ such that $\sigma^m(x)=\sigma^{m-n}(y)$ and so $\kappa\big(\sigma^m(x)\big)=\kappa\big(\sigma^{m-n}(y)\big)$. It follows by Lemma \ref{lemma_sigma_kappa} that $\sigma^m\big(\kappa(x)\big)=\sigma^{m-n}\big(\kappa(y)\big)$, so $\kappa(x)\sim_n\kappa(y)$.

We will now prove the converse. Suppose $\kappa(x)\sim_n\kappa(y)$ so that there exists $m\in\NN^k$ such that $\sigma^m\big(\kappa(x)\big)=\sigma^{m-n}\big(\kappa(y)\big)$. Then $\kappa(x)(m)=\kappa(y)(m-n)$ so \eqref{V2} tells us that $m-m\wedge d(x)=(m-n)-(m-n)\wedge d(y)$. Let $p=m-m\wedge d(x)$. We now claim that $\big(x;(m-p,m)\big)\thicksim \big(y;(m-n-p,m-n)\big)$. To show this, we need to show that \eqref{P1}, \eqref{P2} and \eqref{P3} are satisfied. To see \eqref{P1}, note that
{\allowdisplaybreaks\begin{align*}
x\big((m-p)\wedge d(x),m\wedge d(x)\big)&=x\Big(\big(m\wedge d(x)\big)\wedge d(x),m\wedge d(x)\Big)\\
&=x\big(m\wedge d(x),m\wedge d(x)\big)\\
&=y\big((m-n)\wedge d(y),(m-n)\wedge d(y)\big)\quad\text{(by \eqref{V1}})\\
&=y\Big(\big((m-n)\wedge d(y)\big)\wedge d(y), (m-n)\wedge d(y)\Big)\\
&=y\big((m-n-p)\wedge d(y), (m-n)\wedge d(y)\big).
\end{align*}}
To see \eqref{P2}, note that
\[
 (m-p)-(m-p)\wedge d(x)=(m-p)-\big(m\wedge d(x)\big)\wedge d(x)=m-p-m\wedge d(x)=0
\]
and similarly $(m-n-p)-(m-n-p)\wedge d(y)=0$. Axiom \eqref{P3} follows immediately.

We now know that $\big(x;(m-p,m)\big)\thicksim \big(y;(m-n-p,m-n)\big)$, so $\kappa(x)(m-p,m)=\kappa(y)(m-n-p,m-n)$. Since $\sigma^m\big(\kappa(x)\big)=\sigma^{m-n}\big(\kappa(y)\big)$ and $\kappa(x)(m-p,m)=\kappa(y)(m-n-p,m-n)$, we must have $\sigma^{m-p}\big(\kappa(x)\big)=\sigma^{m-n-p}\big(\kappa(y)\big)$. Since $m-p=m\wedge d(x)\le d(x)$ and $m-n-p=(m-n)\wedge d(y)\le d(y)$, we can apply Lemma \ref{lemma_sigma_kappa} to see that
\[
 \kappa\big(\sigma^{m-p}(x)\big)=\sigma^{m-p}\big(\kappa(x)\big)=\sigma^{m-n-p}\big(\kappa(y)\big)=\kappa\big(\sigma^{m-n-p}(y)\big).
\]
Since $\kappa$ is injective by Lemma \ref{lemma_kappa_bijection}, it follows that $\sigma^{m-p}(x)=\sigma^{m-n-p}(y)$, so $x\sim_n y$, as required.
\end{proof}
\end{lemma}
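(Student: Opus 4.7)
The plan is to handle the two directions asymmetrically, with the forward direction being essentially immediate and the converse requiring care because the witnessing shift for $\kappa(x) \sim_n \kappa(y)$ might exceed $d(x)$ or $d(y) + n$ componentwise.

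For the forward direction, I would assume $x \sim_n y$, so there exists $m \in \NN^k$ with $m \le d(x) \wedge (d(y)+n)$ and $\sigma^m(x) = \sigma^{m-n}(y)$. Applying $\kappa$ to both sides and invoking Lemma~\ref{lemma_sigma_kappa} (which is applicable because $m \le d(x)$ and $m-n \le d(y)$) yields $\sigma^m(\kappa(x)) = \sigma^{m-n}(\kappa(y))$, so $\kappa(x) \sim_n \kappa(y)$ by definition.

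For the converse, I would start from a witness $m \in \NN^k$ with $\sigma^m(\kappa(x)) = \sigma^{m-n}(\kappa(y))$ and use it to construct a smaller witness bounded appropriately by $d(x)$ and $d(y)+n$. Evaluating at the range, $\kappa(x)(m) = \kappa(y)(m-n)$ translates via \eqref{V1} and \eqref{V2} into the equalities $x(m \wedge d(x)) = y((m-n) \wedge d(y))$ and $p := m - m \wedge d(x) = (m-n) - (m-n) \wedge d(y)$. The natural candidate for the smaller witness is $m' := m - p = m \wedge d(x)$, which satisfies $m' \le d(x)$ and $m' - n = (m-n) \wedge d(y) \le d(y)$, so it has the correct size.

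The main obstacle is showing $\sigma^{m'}(\kappa(x)) = \sigma^{m'-n}(\kappa(y))$ from $\sigma^{m}(\kappa(x)) = \sigma^{m-n}(\kappa(y))$; simply deshifting by $p$ is not automatic. The trick is to verify the missing degree-$p$ segment directly using the definition of $\thicksim$: check that $\bigdesourceclass{x;(m-p,m)} = \bigdesourceclass{y;(m-n-p,m-n)}$ by validating \eqref{P1}, \eqref{P2}, and \eqref{P3}, which follows because $(m-p) \wedge d(x) = m \wedge d(x) = m - p$ makes \eqref{P1} a tautology from $x(m \wedge d(x)) = y((m-n) \wedge d(y))$, \eqref{P2} holds because $(m-p) - (m-p) \wedge d(x) = 0$, and \eqref{P3} is immediate. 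Combined with the hypothesis on degrees at least $m$, this gives the shifted equality at $m' = m-p$. Then Lemma~\ref{lemma_sigma_kappa} converts this into $\kappa(\sigma^{m'}(x)) = \kappa(\sigma^{m'-n}(y))$, and injectivity of $\kappa$ from Lemma~\ref{lemma_kappa_bijection} finishes the argument, producing the required witness for $x \sim_n y$.
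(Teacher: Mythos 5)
Your proposal is correct and follows essentially the same route as the paper's proof: the forward direction via Lemma \ref{lemma_sigma_kappa}, and the converse by extracting $p=m-m\wedge d(x)$ from \eqref{V2}, verifying the degree-$p$ segment equality through \eqref{P1}--\eqref{P3}, and finishing with Lemma \ref{lemma_sigma_kappa} and the injectivity of $\kappa$. No gaps.
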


\begin{definition}[{\cite[Definition~3.9]{rsy2003}}]\label{def_locally_convex}\index{locally convex}
A $k$-graph $\Lambda$ is {\em locally convex} if, for all $v\in\Lambda^0$, $i,j\in\braces{1,\ldots,k}$ with $i\ne j$, $\alpha\in v\Lambda^{e_i}$ and $\beta\in v\Lambda^{e_j}$, the sets $s(\alpha)\Lambda^{e_j}$ and $s(\beta)\Lambda^{e_i}$ are non-empty.
\end{definition}
In \cite[p.~159]{Webster2011} Webster points out that $\Lambda$ being locally convex equates to its skeleton not containing any subgraph resembling
\begin{center}
\begin{tikzpicture}[>=stealth,baseline=(current bounding box.center)] 
\clip (-0.9em,-0.8em) rectangle (5.3 em,4.2em);
\node (salpha) at (5em, 0em) {$\scriptstyle u$};
\node (sbeta) at (0em, 4em) {$\scriptstyle w$};
\node (v) at (0em,0em) {$\scriptstyle v$};
\draw[black,->] (salpha) to node[below] {$\scriptstyle \alpha$} (v);
\draw[black,dashed,->] (sbeta) to node[left] {$\scriptstyle \beta$} (v);
\end{tikzpicture}
\end{center}
where $w$ receives no solid edges and $u$ receives no dashed edges. The $2$-graph $\Omega_{2,(3,2)}$ from Example \ref{example_Omega_k_m} is locally convex whereas the $2$-graphs from Examples \ref{example_not_locally_convex_with_same_boundary_paths} and \ref{example_annoying_k-graph} are not locally convex.
\begin{remark}\label{remark_every_1-graph_locally_convex}
It follows immediately from Definition \ref{def_locally_convex} that every $1$-graph is locally convex.
\end{remark}
\begin{definition}[{\cite[Definition~2.8]{rsy2004}}]
\label{def_Lambda_leinfty}\notationindex{LAMBDALEINFTY@$\Lambda^{\le\infty}$}
For a $k$-graph $\Lambda$, define $\Lambda^{\le\infty}$ be the set of all $x\in W_\Lambda$ where there exists $m\in\NN^k$ with $m\le d(x)$ such that for every $n\in\NN^k$ with $m\le n\le d(x)$ and every $1\le i\le k$ with $n_i=d(x)_i$, the set $x(n)\Lambda^{e_i}$ is empty. 
\end{definition}
The set $\Lambda^{\le\infty}$ from Definition \ref{def_Lambda_leinfty} is a slight modification of a smaller set with the same denotation first defined by Raeburn, Sims and Yeend in \cite[Definition~3.14]{rsy2003}. This smaller set was defined to be the set of all $x\in W_\Lambda$ such that, for every $n\in\NN^k$ with $n\le d(x)$ and every $1\le i\le k$ with $n_i=d(x)_i$, the set $x(n)\Lambda^{e_i}$ is empty. These two definitions of $\Lambda^{\le\infty}$ coincide when $\Lambda$ is locally convex. Given that the main results from \cite{rsy2004} require the assumption that $\Lambda$ is locally convex, Raeburn, Sims and Yeend in \cite{rsy2004} used this new definition of $\Lambda^{\le\infty}$ as an alternative definition to that found in \cite{rsy2003}. It turns out that using the \cite{rsy2004} definition of $\Lambda^{\le\infty}$ strengthens some of the main theorems in this thesis. The two definitions of $\Lambda^{\le\infty}$ do not coincide in the $2$-graph $\Lambda$ from Example \ref{example_not_locally_convex_with_same_boundary_paths}.
\begin{example}
Consider the $2$-graph $\Omega_{2,(3,2)}$ from Example \ref{example_Omega_k_m}. Then
\[
\Omega_{2,(3,2)}^{\le\infty}=\partial\Omega_{2,(3,2)}=\braces{\alpha\in\Omega_{2,(3,2)}:s(\alpha)=(3,2)}.
\]
\begin{proof}
We saw that $\partial\Omega_{2,(3,2)}=\braces{\alpha\in\Omega_{2,(3,2)}:s(\alpha)=(3,2)}$ in Example \ref{example_boundary_paths_in_Omega_2_32}. Fix $x\in\Omega_{2,(3,2)}^{\le\infty}$ and note that $x\big(d(x)\big)\Omega_{2,(3,2)}^{e_i}=s(x)\Omega_{2,(3,2)}^{e_i}$ is empty for $i=1,2$. In other words, $s(x)$ does not receive any edges. Since the only vertex in $\Omega_{2,(3,2)}$ that does not receive any edges is $(3,2)$, it follows that $s(x)=(3,2)$.

Now suppose that $\alpha\in\Omega_{2,(3,2)}$ with $s(\alpha)=(3,2)$. Since $(3,2)$ does not receive any edges,
\[
(3,2)\Omega_{2,(3,2)}^{e_i}=s(\alpha)\Omega_{2,(3,2)}^{e_i}=\alpha\big(d(\alpha)\big)\Omega_{2,(3,2)}^{e_i}
\]
is empty for $i=1,2$. We can now take $m=d(\alpha)$ to see that $\alpha\in\Omega_{2,(3,2)}^{\le\infty}$.
\end{proof}
\end{example}

The following proposition by Webster shows that it is not a coincidence that $\partial\Omega_{2,(3,2)}=\Omega_{2,(3,2)}^{\le\infty}$. Indeed, $\partial\Lambda$ was chosen as a kind of generalisation of $\Lambda^{\le\infty}$ and in \cite{rsy2003} Raeburn, Sims and Yeend refer to elements of $\Lambda^{\le\infty}$ as `boundary paths', although here we reserve this term for elements of $\partial\Lambda$. 

\begin{prop}[{\cite[Proposition~2.12]{Webster2011}}]\label{prop_Lambda_leinfty_equals_partial_lambda}
If $\Lambda$ is a row-finite $k$-graph, then $\Lambda^{\le\infty}\subset\partial\Lambda$. If in addition $\Lambda$ is locally convex, then $\Lambda^{\le\infty}=\partial\Lambda$.
\end{prop}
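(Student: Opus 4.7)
The plan is to prove the two inclusions separately; only the second will require local convexity. For the forward inclusion $\Lambda^{\le\infty}\subset\partial\Lambda$, I would take $x\in\Lambda^{\le\infty}$ with witness $m^{*}\in\NN^k$ (so $m^{*}\le d(x)$ and the no-extension condition holds), fix $n\le d(x)$ and $D\in x(n)\Ff\Ee(\Lambda)$, set $p^{*}:=\vee_{\nu\in D}d(\nu)$, and consider the prefix $\mu:=x(n,q)$ of $x$ running up to
\[
q:=\big((n+p^{*})\vee m^{*}\big)\wedge d(x).
\]
Using $m^{*}\le d(x)$ one checks $n\le q\le d(x)$ and $q\ge m^{*}$, so $\mu$ is a legitimate morphism in $\Lambda$ of degree $q-n$. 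Exhaustiveness of $D$ then supplies some $\nu\in D$ and $(\alpha,\beta)\in\Lambda^{\min}(\mu,\nu)$.

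The crux of this direction is to show $d(\alpha)=0$, for then $\nu$ is a prefix of $\mu$ and $x(n,n+d(\nu))=\nu\in D$, establishing $x\in\partial\Lambda$. Suppose instead that $d(\alpha)_i>0$ for some coordinate $i$. Then $d(\nu)_i>(q-n)_i$, so $q_i<n_i+d(\nu)_i\le n_i+p^{*}_i$. The formula for $q_i$ then forces $q_i=d(x)_i$, since otherwise $q_i=\max(n_i+p^{*}_i,m^{*}_i)\ge n_i+p^{*}_i$. Combined with $q\ge m^{*}$, the defining property of $\Lambda^{\le\infty}$ gives $x(q)\Lambda^{e_i}=\emptyset$; but $\alpha\in s(\mu)\Lambda=x(q)\Lambda$ with $d(\alpha)_i>0$ forces its $e_i$-prefix to lie in $x(q)\Lambda^{e_i}$, a contradiction.

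For the reverse inclusion under local convexity, I would take $x\in\partial\Lambda$ and define $m\in\NN^k$ by $m_i:=d(x)_i$ whenever $d(x)_i<\infty$ and $m_i:=0$ otherwise, so $m\le d(x)$. Suppose this $m$ fails to witness $x\in\Lambda^{\le\infty}$: there exist $n$ with $m\le n\le d(x)$ and a coordinate $i$ with $n_i=d(x)_i$ and $v\Lambda^{e_i}\ne\emptyset$, where $v:=x(n)$. I then set $D:=v\Lambda^{e_i}$, which is finite by row-finiteness, and the key task is to show $D\in v\Ff\Ee(\Lambda)$. This exhaustiveness verification is the main obstacle of the whole proposition and is where local convexity is genuinely used: given $\mu\in v\Lambda$, if $d(\mu)_i\ge 1$ then the degree-$e_i$ prefix $\mu(0,e_i)$ is already in $D$ and supplies a minimal common extension; if $d(\mu)_i=0$, a short induction on the total degree of $\mu$ — where at each step local convexity propagates the availability of an $e_i$-edge past the next non-$e_i$ factor of $\mu$ — shows $s(\mu)\Lambda^{e_i}\ne\emptyset$, and any such edge gives the required minimal extension.

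With $D\in v\Ff\Ee(\Lambda)$ established, the boundary-path hypothesis produces some $m'\in\NN^k$ with $x(n,n+m')\in D=v\Lambda^{e_i}$. This forces $m'=e_i$ and $n+e_i\le d(x)$, but $(n+e_i)_i=d(x)_i+1$ contradicts $n_i=d(x)_i$. Hence the chosen $m$ witnesses $x\in\Lambda^{\le\infty}$, completing the proof.
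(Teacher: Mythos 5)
Your proof is correct. The thesis does not actually prove this proposition — it is quoted from Webster's paper without proof — and your argument (pushing the prefix of $x$ out past both $\vee_{\nu\in D}d(\nu)$ and the $\Lambda^{\le\infty}$-witness so that the minimal extension is forced to be trivial, and conversely using local convexity to verify that $x(n)\Lambda^{e_i}$ is a finite exhaustive set) is essentially the standard argument given there.
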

It is possible to have $\Lambda^{\le\infty}=\partial\Lambda$ without $\Lambda$ being locally convex:
\begin{example}\label{example_not_locally_convex_with_same_boundary_paths}
If $\Lambda$ is the $2$-graph with the following skeleton, then $\Lambda$ is not locally convex and $\partial\Lambda=\Lambda^{\le\infty}$.
\begin{center}
\begin{tikzpicture}[>=stealth,baseline=(current bounding box.center)] 
\clip (-0.9em,-0.8em) rectangle (5.3 em,4.2em);
\fill[black] (salpha) circle (0.15em);
\fill[black] (sbeta) circle (0.15em);
\fill[black] (v) circle (0.15em);
\node (salpha) at (5em, 0em) {};
\node (sbeta) at (0em, 4em) {};
\node (v) at (0em,0em) {};
\draw[black,->] (salpha) to (v);
\draw[black,dashed,->] (sbeta) to (v);
\end{tikzpicture}
\end{center}
\end{example}

The condition $\partial\Lambda=\Lambda^{\le\infty}$ turns out to be very useful even though the stronger (in the row-finite context) locally convex condition is often easier to check. The next example first appeared in Robertson's honours thesis \cite{RobertsonHonours} and is the standard example of a path that is in $\partial\Lambda$ but not in $\Lambda^{\le\infty}$.
\begin{example}\label{example_annoying_k-graph}
Let $\Lambda$ be the $2$-graph with the following skeleton.
\begin{center}
\begin{tikzpicture}[>=stealth,baseline=(current bounding box.center)] 
\def\cellwidth{5.5};
\foreach \x in {0,1,2,3} \node (x\x y1) at (\cellwidth*\x em,3.5em) {};
\foreach \x in {0,1,2,3} \fill[black] (x\x y1) circle (0.15em);
\foreach \x in {0,1,2,3} \node (x\x y0) at (\cellwidth*\x em,0em) {$\scriptstyle v_\x$};

\foreach \x / \z in {0/1,1/2,2/3,3/4} {
\node (a\x) at (\cellwidth*\x em+0.6*\cellwidth em,-3em) {};
\fill[black] (a\x) circle (0.15em);
\draw[black,<-] (x\x y0) to node[below left=-0.3em] {$\scriptstyle f_\z$} (a\x);
}

\foreach \y in {0,1} \node (x4y\y) at (\cellwidth*3 em+2.5em,3.5*\y em) {};
\foreach \y in {0,1} \node (x5y\y) at (\cellwidth*3 em+4.4em,3.5*\y em) {};

\foreach \x / \z in {0/1,1/2,2/3} \draw[black,<-] (x\x y0) to node[above] {$\scriptstyle x_\z$} (x\z y0);
\foreach \x / \z in {0/1,1/2,2/3} \draw[black,<-] (x\x y1) to (x\z y1);

\foreach \x in {0,1,2,3} \foreach \y / \z in {0/1} \draw[dashed,black,<-] (x\x y\y) to (x\x y\z);
\foreach \y in {0,1} \draw[black,<-] (x3y\y) to (x4y\y);
\foreach \y in {0,1} \draw[black,dotted, thick] (x4y\y) to (x5y\y);

\node (a4) at (3.7*\cellwidth em,-3em) {};
\node (a5) at (3.7*\cellwidth em + 1.9em,-3em) {};
\draw[black,dotted, thick] (a4) to (a5);
\end{tikzpicture}
\end{center}
The path $x=x_1x_2x_3\cdots$ is in $\partial\Lambda$ but not in $\Lambda^{\le\infty}$.
\begin{proof}
First suppose that the solid edges have degree $(1,0)$ and the dashed edges have degree $(0,1)$. Observe that $x\notin\Lambda^{\le\infty}$: for every $m\in\NN^k$ with $m\le d(x)=(\infty,0)$, we have $x(m)\Lambda^{e_2}=v_{m_1}\Lambda^{e_2}$, which is non-empty since $v_{m_1}$ receives an edge of degree $(0,1)$.

To see that $x\in\partial\Lambda$, fix $n\in\NN^k$ with $n\le d(x)$ and fix $D\in x(n)\Ff\Ee(\Lambda)$. Since $n\le d(x)$ and $d(x)=(\infty,0)$, we have $n_2=0$, so $D\in v_{n_1}\Ff\Ee(\Lambda)$. Suppose there does not exist $m\in\NN^k$ such that $x(n,n+m)\in D$. For each integer $p> n_1$, let $\alpha^{(p)}=x_{n_1}x_{n_1+1}\cdots x_{p-1}f_p$. Since $D$ is exhaustive, for each $\alpha^{(p)}$ there exists $\beta^{(p)}\in D$ such that $\Lambda^{\min}(\alpha^{(p)},\beta^{(p)})$ is non-empty. 
Now let $(\mu,\nu)\in\Lambda^{\min}(\alpha^{(p)},\beta^{(p)})$ for some $p$, so that $\alpha^{(p)}\mu=\beta^{(p)}\nu$. Since $s(\alpha^{(p)})=s(f_p)$ receives no edges, we have $\mu=s(\alpha^{(p)})$, and so $\alpha^{(p)}=\beta^{(p)}\nu$. In order for $\beta^{(p)}$ to not be of the form $x(n,n+m)$ for some $m$, we must also have $\nu=s(\beta^{(p)})$, so that $\alpha^{(p)}=\beta^{(p)}$. It follows that $\alpha^{(p)}$ is in $D$ for every $p$, so $D$ is not finite, contradicting $D\in\Ff\Ee(\Lambda)$. Thus our assumption is incorrect, so $x(n,n+m)\in D$ for some $m\in\NN^k$, and $x\in\partial\Lambda$.
\end{proof}
\end{example}
The desourcification of the $2$-graph $\Lambda$ from Example \ref{example_annoying_k-graph} will be described in Example \ref{example_annoying_k-graph_is_liminal}. In Example \ref{examples_cts-trace} it will be shown that $C^*(\Lambda)$ has continuous trace.

\needspace{4\baselineskip}
\begin{lemma}\label{lemma_property_of_old_boundary_path}
Suppose $\Lambda$ is a row-finite $k$-graph. If $x\in\Lambda^{\le\infty}$, there exists $n\in\NN^k$ with $n\le d(x)$ such that for any $m\in\NN^k$ with $m\ge n$, $x\big(m\wedge d(x)\big)\Lambda^{e_i}=\emptyset$ for every $i$ with $d(x)_i<\infty$. If $y\in x\big(m\wedge d(x)\big)\partial\Lambda$, then
\[
\kappa(x)\big(m\wedge d(x), m\big)=\kappa(y)\big(0,m-m\wedge d(x)\big).
\]
\begin{proof}
Since $x\in\Lambda^{\le\infty}$, there exists $n\in\NN^k$ with $n\le d(x)$ such that for any $l\in\NN^k$ and $1\le i\le k$ such that $n\le l\le d(x)$ and $l_i=d(x)_i$, the set $x(l)\Lambda^{e_i}$ is empty. Increase $n$ so that $n_i=d(x)_i$ for every $i$ with $d(x)_i<\infty$. Then for every $m\in\NN^k$ with $m\ge n$, the set $x\big(m\wedge d(x)\big)\Lambda^{e_i}$ is empty for every $i$ with $d(x)_i<\infty$.

Fix $m\in\NN^k$ such that $m\ge n$ and suppose $y\in x\big(m\wedge d(x)\big)\partial\Lambda$. Let $z=x\big(0,m\wedge d(x)\big)y$. In order to show that $\kappa(x)(0,m)=\kappa(z)(0,m)$, it suffices to show that $x\big(0,m\wedge d(x)\big)=z\big(0,m\wedge d(z)\big)$. By the definition of $z$, we know $z\big(0,m\wedge d(x)\big)=x\big(0,m\wedge d(x)\big)$, so it will suffice to show that $m\wedge d(z)=m\wedge d(x)$.

Suppose $m\wedge d(z)\ne m\wedge d(x)$. We know by the construction of $z$ that $d(z)\ge m\wedge d(x)$, so there exists $i$ such that
\begin{equation}\label{frequently_divertable_eqn1}
m\wedge d(z)\ge m\wedge d(x)+e_i.
\end{equation}
Since $\big(m\wedge d(x)\big)_i$ equals $m_i$ if $d(x)_i=\infty$ and equals $d(x)_i$ if $d(x)_i<\infty$, equation \eqref{frequently_divertable_eqn1} shows that we must have $d(x)_i<\infty$. Now
\[
 z\big(m\wedge d(x),m\wedge d(x)+e_i\big)\in x\big(m\wedge d(x)\big)\Lambda^{e_i},
\]
but no such path can exist since we know $m\ge n$ and $\big(m\wedge d(x)\big)_j=d(x)_j$ implies $x\big(m\wedge d(x)\big)\Lambda^{e_j}=\emptyset$.

This contradiction shows that $n\wedge d(z)=n\wedge d(x)$, so it follows from the construction of $z$ that $z\big(0,d(z)\big)=x\big(0,d(x)\big)$, and we can use this to see that $\kappa(z)(0,n)=\kappa(x)(0,n)$.
Now $y=\sigma^{m\wedge d(x)}(z)$, so 
\begin{align*}
\kappa(y)\big(0,m-m\wedge d(x)\big)&=\kappa\big(\sigma^{m\wedge d(x)}(z)\big)\big(0,m-m\wedge d(x)\big)\\
&=\sigma^{m\wedge d(x)}\big(\kappa(z)\big)\big(0,m-m\wedge d(x)\big)\quad\text{(by Lemma \ref{lemma_sigma_kappa})}\\
&=\kappa(z)\big(m\wedge d(x),m\big)=\kappa(x)\big(m\wedge d(x),m\big).\qedhere
\end{align*}
\end{proof}
\end{lemma}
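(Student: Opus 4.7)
The plan is to do this in two logically independent pieces: first exhibit the integer $n$, then verify the equivalence-class identity via the $\thicksim$ relation defining $\widetilde{P_\Lambda}$.

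For the first part, the hypothesis $x \in \Lambda^{\le\infty}$ gives, via Definition \ref{def_Lambda_leinfty}, some $n^{(0)} \le d(x)$ such that whenever $n^{(0)} \le l \le d(x)$ and $l_i = d(x)_i$, the set $x(l)\Lambda^{e_i}$ is empty. I would then boost $n^{(0)}$ coordinatewise to obtain an $n \le d(x)$ with $n_i = d(x)_i$ for every $i$ with $d(x)_i < \infty$ (leaving the infinite-degree coordinates untouched). For any $m \ge n$ and any index $i$ with $d(x)_i < \infty$, we then have $(m \wedge d(x))_i = d(x)_i$ by construction of $n$, so the defining property of $n^{(0)}$ forces $x(m\wedge d(x))\Lambda^{e_i} = \emptyset$.

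For the second part, fix $m \ge n$ and $y \in x(m\wedge d(x))\partial\Lambda$. The plan is to show directly that $\bigl(x;(m\wedge d(x), m)\bigr) \thicksim \bigl(y;(0, m - m\wedge d(x))\bigr)$ by checking \eqref{P1}, \eqref{P2}, \eqref{P3}. Axioms \eqref{P2} and \eqref{P3} are bookkeeping: the left side of \eqref{P3} is $m - (m\wedge d(x))$ which equals the right side, and \eqref{P2} is automatic since $(m\wedge d(x)) \wedge d(x) = m\wedge d(x)$ and $0 \wedge d(y) = 0$. The substantive axiom is \eqref{P1}: the left side collapses to the identity at the vertex $x(m\wedge d(x))$, and the right side is $y\bigl(0,(m - m\wedge d(x))\wedge d(y)\bigr)$, so I need $(m - m\wedge d(x)) \wedge d(y) = 0$, i.e.\ $d(y)_i = 0$ whenever $(m - m\wedge d(x))_i \ne 0$.

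The key observation, which is where the first part of the lemma pays off, is that $(m - m\wedge d(x))_i \ne 0$ forces $d(x)_i < \infty$ (because if $d(x)_i = \infty$ then $(m\wedge d(x))_i = m_i$). Hence for such $i$, the first part gives $r(y)\Lambda^{e_i} = x(m\wedge d(x))\Lambda^{e_i} = \emptyset$, and so $d(y)_i$ must be $0$ (otherwise $y(0,e_i)$ would be a morphism in $r(y)\Lambda^{e_i}$). I expect this last step --- translating the emptiness of $r(y)\Lambda^{e_i}$ into $d(y)_i = 0$ for every boundary path $y$ with that range --- to be the only place where a small care is needed, and it is the main obstacle in the sense that it is where the choice of $n$ is really used. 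Once this is in hand, \eqref{P1} follows and the equality $\kappa(x)(m\wedge d(x), m) = \kappa(y)(0, m - m\wedge d(x))$ drops out of the definition of $\kappa$.
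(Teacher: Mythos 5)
Your proof is correct. The first part is identical to the paper's argument. For the second part you take a genuinely more direct route: you verify the identity $\kappa(x)\big(m\wedge d(x),m\big)=\kappa(y)\big(0,m-m\wedge d(x)\big)$ by checking the three axioms \eqref{P1}--\eqref{P3} of the relation $\thicksim$ head-on, with the only substantive point being that $\big(m-m\wedge d(x)\big)\wedge d(y)=0$, which you correctly reduce to $d(y)_i=0$ whenever $d(x)_i<\infty$ and $m_i>d(x)_i$, using the emptiness of $x\big(m\wedge d(x)\big)\Lambda^{e_i}$ from the first part. The paper instead constructs the concatenated boundary path $z=x\big(0,m\wedge d(x)\big)y$, shows $m\wedge d(z)=m\wedge d(x)$ by a contradiction argument that uses exactly the same emptiness fact, and then finishes by writing $y=\sigma^{m\wedge d(x)}(z)$ and invoking Lemma \ref{lemma_sigma_kappa} on the commutation of $\sigma$ with $\kappa$. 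The mathematical core --- that $y$ cannot extend in a direction $i$ with $d(x)_i<\infty$ past the vertex $x\big(m\wedge d(x)\big)$ --- is the same in both; your version avoids the auxiliary path $z$ and the dependence on Lemma \ref{lemma_sigma_kappa} at the cost of unwinding the definition of $\thicksim$ explicitly, which is arguably the cleaner bookkeeping here.
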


\begin{definition}\label{def_frequently_divertable_boundary_paths}\index{frequently divertable!boundary paths in a $k$-graph}
Suppose $\Lambda$ is a $k$-graph. For $x,y\in \partial\Lambda$, we say $x$ is {\em frequently divertable} to $[y]$ if for every $n\in\NN^k$ with $n\le d(x)$ there is a path in $x(n)\partial\Lambda$ that is shift equivalent to $y$.
\end{definition}
The frequently divertable property is preserved by $\kappa$:
\needspace{4\baselineskip}
\begin{lemma}\label{lemma_frequently_divertable_preservation}
Suppose $\Lambda$ is a row-finite $k$-graph and $x,y\in\partial\Lambda$. Consider
\begin{enumerate}
\item\label{lemma_frequently_divertable_preservation1} $x$ is frequently divertable to $[y]$; and
\item\label{lemma_frequently_divertable_preservation2} $\kappa(x)$ is frequently divertable to $[\kappa(y)]$.
\end{enumerate}
Then \eqref{lemma_frequently_divertable_preservation2} implies \eqref{lemma_frequently_divertable_preservation1} and, if in addition $x\in\Lambda^{\le\infty}$, then \eqref{lemma_frequently_divertable_preservation1} implies \eqref{lemma_frequently_divertable_preservation2}.
\begin{proof}
\eqref{lemma_frequently_divertable_preservation2}$\implies$\eqref{lemma_frequently_divertable_preservation1}. Suppose \eqref{lemma_frequently_divertable_preservation2} and fix $n\le d(x)$. Then there exists $a\in \kappa(x)(n)\widetilde\Lambda^\infty$ such that $a$ is shift equivalent to $\kappa(y)$. Then $r(a)\in\iota(\Lambda^0)$ since $r(a)=\kappa(x)(n)$ and $n\le d(x)$. Since $\kappa$ is a bijection onto $\iota(\Lambda^0)\widetilde\Lambda^\infty$ (Lemma \ref{lemma_kappa_bijection}), there exists $z\in\partial\Lambda$ such that $a=\kappa(z)$. Now
$r\big(\kappa(z)\big)=r(a)=\kappa(x)(n)$ implies $\kappa(z)(0)=\kappa(x)(n)$ and since $\kappa$ is injective, $r(z)=z(0)=x(n)$. Thus $z\in x(n)\partial\Lambda$. Since $\kappa(z)\in [\kappa(y)]$, $\kappa(z)\sim\kappa(y)$ and so we know $z\sim y$ by Lemma \ref{lemma_desource_shift_equivalence_preserved}. Thus $z\in x(n)\partial\Lambda\cap[y]$ and \eqref{lemma_frequently_divertable_preservation1} follows since $n$ was chosen arbitrarily.

\eqref{lemma_frequently_divertable_preservation1}$\implies$\eqref{lemma_frequently_divertable_preservation2} assuming $x\in\Lambda^{\le\infty}$. Suppose \eqref{lemma_frequently_divertable_preservation1}. Since $x\in\Lambda^{\le\infty}$, we can let $n$ be the element of $\NN^k$ as in Lemma \ref{lemma_property_of_old_boundary_path}. Fix $m\in\NN^k$ so that $m\ge n$. Since  $x$ is frequently divertable to $[y]$, there exists $z\in x\big(m\wedge d(x)\big)\partial\Lambda\cap [y]$. Now
\[
 \kappa(z)\big(0,m-m\wedge d(x)\big)=\kappa(x)\big(m\wedge d(x),m\big),
\]
so $\sigma^{m-m\wedge d(x)}\big(\kappa(z)\big)\in \kappa(x)(m)\widetilde{\Lambda}^\infty$. Since $z\sim y$, $\sigma^{m-m\wedge d(x)}\big(\kappa(z)\big)$ is shift equivalent to $\kappa(y)$ by Lemma \ref{lemma_desource_shift_equivalence_preserved}. Condition \eqref{lemma_frequently_divertable_preservation2} follows since $m$ was chosen to be an arbitrary element of $\NN^k$ with $m\ge n$.
\end{proof}
\end{lemma}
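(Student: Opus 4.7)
The plan is to handle the two implications separately, treating the forward direction \eqref{lemma_frequently_divertable_preservation2}$\implies$\eqref{lemma_frequently_divertable_preservation1} as the easier one and reserving the extra hypothesis $x\in\Lambda^{\le\infty}$ for the more delicate reverse implication. Throughout I will exploit the bijection $\kappa:\partial\Lambda\to\iota(\Lambda^0)\widetilde\Lambda^\infty$ (Lemma~\ref{lemma_kappa_bijection}), the intertwining property $\sigma^n\kappa(\cdot)=\kappa\sigma^n(\cdot)$ (Lemma~\ref{lemma_sigma_kappa}), and the transportation of shift equivalence across $\kappa$ (Lemma~\ref{lemma_desource_shift_equivalence_preserved}).

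For \eqref{lemma_frequently_divertable_preservation2}$\implies$\eqref{lemma_frequently_divertable_preservation1}, I would fix $n\in\NN^k$ with $n\le d(x)$ and apply the frequent divertability of $\kappa(x)$ to produce $a\in\kappa(x)(n)\widetilde\Lambda^\infty$ with $a\sim\kappa(y)$. The key observation is that because $n\le d(x)$ we have $n\wedge d(x)=n$, so $\kappa(x)(n)=\desourceclass{x;n}=\iota\big(x(n)\big)\in\iota(\Lambda^0)$. Hence $a\in\iota(\Lambda^0)\widetilde\Lambda^\infty$, and by Lemma~\ref{lemma_kappa_bijection} there is a unique $z\in\partial\Lambda$ with $\kappa(z)=a$. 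Injectivity of $\kappa$ forces $r(z)=x(n)$, so $z\in x(n)\partial\Lambda$; applying Lemma~\ref{lemma_desource_shift_equivalence_preserved} to $\kappa(z)\sim\kappa(y)$ gives $z\sim y$, as required.

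The reverse direction \eqref{lemma_frequently_divertable_preservation1}$\implies$\eqref{lemma_frequently_divertable_preservation2} with $x\in\Lambda^{\le\infty}$ is the main obstacle, because for general $m\in\NN^k$ the quantity $\kappa(x)(m)$ need not be of the form $\iota(v)$ and nothing in the hypothesis directly produces a boundary path with range $\kappa(x)(m)$. I would use Lemma~\ref{lemma_property_of_old_boundary_path}, which (because $x\in\Lambda^{\le\infty}$) provides an $n\in\NN^k$ such that for all $m\ge n$ the truncation satisfies $\kappa(x)(m\wedge d(x),m)=\kappa(y')(0,m-m\wedge d(x))$ for every $y'\in x(m\wedge d(x))\partial\Lambda$. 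Fix such an $m\ge n$ and use frequent divertability of $x$ to choose $z\in x(m\wedge d(x))\partial\Lambda$ with $z\sim y$. Lemma~\ref{lemma_property_of_old_boundary_path} then identifies $\kappa(z)(0,m-m\wedge d(x))$ with $\kappa(x)(m\wedge d(x),m)$, so $\sigma^{m-m\wedge d(x)}\bigl(\kappa(z)\bigr)$ lies in $\kappa(x)(m)\widetilde\Lambda^\infty$; and Lemma~\ref{lemma_desource_shift_equivalence_preserved} together with Lemma~\ref{lemma_shift_equivalence2} gives its shift equivalence to $\kappa(y)$. Since the frequent divertability condition only needs to be verified cofinally (i.e.\ for a cofinal set of indices in $\NN^k$, equivalently for all $m\ge n$), this suffices.

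The pieces I expect to require the most care are (i) verifying that $a$ really lies in $\iota(\Lambda^0)\widetilde\Lambda^\infty$ in the forward direction, which hinges on the inequality $n\le d(x)$, and (ii) extracting the clean identity $\kappa(z)(0,m-m\wedge d(x))=\kappa(x)(m\wedge d(x),m)$ in the reverse direction, which is exactly the content of Lemma~\ref{lemma_property_of_old_boundary_path} and is the reason the hypothesis $x\in\Lambda^{\le\infty}$ (rather than merely $x\in\partial\Lambda$) is needed.
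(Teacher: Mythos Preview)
Your proposal is correct and follows essentially the same route as the paper: both directions are handled exactly as you describe, using Lemmas~\ref{lemma_kappa_bijection}, \ref{lemma_sigma_kappa}, \ref{lemma_desource_shift_equivalence_preserved} and, for the reverse implication, Lemma~\ref{lemma_property_of_old_boundary_path} together with the cofinality observation. Your identification of the two delicate points --- that $\kappa(x)(n)\in\iota(\Lambda^0)$ requires $n\le d(x)$, and that the identity $\kappa(z)\big(0,m-m\wedge d(x)\big)=\kappa(x)\big(m\wedge d(x),m\big)$ is precisely where $x\in\Lambda^{\le\infty}$ enters --- matches the paper's reasoning exactly.
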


\begin{remark}\label{remark_annoying_example_nearly_breaks_lemma}
Statement \eqref{lemma_frequently_divertable_preservation1} does not imply \eqref{lemma_frequently_divertable_preservation2} in general. To see this, consider Example \ref{example_annoying_k-graph}. Let $y$ be the path of degree $(\infty,1)$ with range $v_0$. Then $x$ and $y$ are both boundary paths and $x$ is frequently divertable to $[y]$. But by examining the desourcification presented in Figure \ref{figure_desourcification} it can be seen that $\kappa(x)$ is not frequently divertable to $[\kappa(y)]$ since there is no path in $\kappa(x)(0,1)\widetilde{\Lambda}^\infty$ which is shift equivalent to $\kappa(y)$.
\end{remark}

\section{Generalising results with the Farthing-Webster desourcification}\label{sec_desourcification}
Recall from the previous section that if $\widetilde\Lambda$ is the Farthing-Webster desourcification of a row-finite $k$-graph $\Lambda$, then $C^*(\widetilde\Lambda)$ is Morita equivalent to $C^*(\Lambda)$. Since the liminal, postliminal, Fell, bounded- and continuous- trace properties of $C^*$-algebras are preserved under Morita equivalence (Theorem \ref{thm_Morita_equivalence_results}), the theorems in Sections \ref{sec_liminal_postliminal} and \ref{sec_bded-trace_cts-trace_Fell}, which require the $k$-graph to have no sources, can be applied to $\widetilde\Lambda$ rather than $\Lambda$ in order to determine which of the the liminal, postliminal, Fell, bounded- and continuous-trace properties are satisfied by $C^*(\Lambda)$ (the latter three properties also requiring that the path groupoid associated to $\widetilde\Lambda$ is principal). Rather than having to consider the desourcification of each $k$-graph to apply these theorems, the theorems in this section provide conditions on $\Lambda$ that are necessary and sufficient for the conditions in the theorems in Sections \ref{sec_liminal_postliminal} and \ref{sec_bded-trace_cts-trace_Fell} to hold on $\widetilde\Lambda$, thus providing characterisations for row-finite $k$-graphs that may have sources.

\begin{theorem}\label{k-graph_liminal_thm2}
Suppose $\Lambda$ is a row-finite $k$-graph and consider the statements
\begin{enumerate}
 \item\label{k-graph_liminal_thm2_1} for every $x\in\partial\Lambda$, every path in $\partial\Lambda$ that is frequently divertable to $[x]$ is shift equivalent to $x$; and
\item\label{k-graph_liminal_thm2_2} $C^*(\Lambda)$ is liminal.
\end{enumerate}
Then \eqref{k-graph_liminal_thm2_1} implies \eqref{k-graph_liminal_thm2_2} and, if in addition $\partial\Lambda=\Lambda^{\le\infty}$, then \eqref{k-graph_liminal_thm2_2} implies \eqref{k-graph_liminal_thm2_1}.
\begin{proof}
Suppose \eqref{k-graph_liminal_thm2_1}. We begin by showing that $\widetilde{\Lambda}$ satisfies condition \eqref{k-graph_liminal_thm1_2} from Theorem \ref{k-graph_liminal_thm1}. Fix $a\in\widetilde{\Lambda}^\infty$ and suppose that $b\in\widetilde{\Lambda}^\infty$ is frequently divertable to $[a]$. By Lemma \ref{lemma_boundary_path_associated_to_infinite_path} there exists $x,y\in\partial\Lambda$ and $n,m\in\NN^k$ such that $a=\sigma^n\big(\kappa(x)\big)$ and $b=\sigma^m\big(\kappa(y)\big)$. Then $\kappa(y)$ is frequently divertable to $[\kappa(x)]$ and Lemma \ref{lemma_frequently_divertable_preservation} shows that $y$ is frequently divertable to $[x]$. Then $y$ is shift equivalent to $x$ by \eqref{k-graph_liminal_thm2_1}. Lemma \ref{lemma_desource_shift_equivalence_preserved} now shows us that $\kappa(y)$ is shift equivalent to $\kappa(x)$ and it follows that $b$ is shift equivalent to $a$. We have thus shown that $\widetilde{\Lambda}$ satisfies condition \eqref{k-graph_liminal_thm1_2} from Theorem \ref{k-graph_liminal_thm1}, so $C^*(\widetilde{\Lambda})$ is liminal. Then $C^*(\Lambda)$ is liminal by Morita equivalence (p. \pageref{describing_morita_equivalence_results}, \cite[Theorem~6.3]{Webster2011} and \cite[Proposition~2]{anHuef-Raeburn-Williams2007}).

Suppose $\partial\Lambda=\Lambda^{\le\infty}$ and that $C^*(\Lambda)$ is liminal. Fix $x\in\partial\Lambda$ and suppose $y\in\partial\Lambda$ is frequently divertable to $[x]$. Then $\kappa(y)$ is frequently divertable to $[\kappa(x)]$ by Lemma \ref{lemma_frequently_divertable_preservation}. We know that $\widetilde{\Lambda}$ is row-finite and has no sources so, since in addition $C^*(\widetilde{\Lambda})$ is liminal by Morita equivalence (p. \pageref{describing_morita_equivalence_results},  \cite[Theorem~6.3]{Webster2011} and \cite[Proposition~2]{anHuef-Raeburn-Williams2007}), we can apply Theorem \ref{k-graph_liminal_thm1} to see that $\kappa(x)$ is shift equivalent to $\kappa(y)$. Lemma \ref{lemma_desource_shift_equivalence_preserved} now shows that $x$ is shift equivalent to $y$, establishing \eqref{k-graph_liminal_thm2_1}.
\end{proof}
\end{theorem}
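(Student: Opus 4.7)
The plan is to reduce to Theorem \ref{k-graph_liminal_thm1} via the Farthing-Webster desourcification. Recall $\widetilde{\Lambda}$ is row-finite and has no sources, and $C^*(\widetilde{\Lambda})$ is Morita equivalent to $C^*(\Lambda)$, so by Theorem \ref{thm_Morita_equivalence_results} liminality transfers between them. Theorem \ref{k-graph_liminal_thm1} characterises liminality of $C^*(\widetilde{\Lambda})$ in terms of the analogue of condition \eqref{k-graph_liminal_thm2_1} applied to $\widetilde{\Lambda}^\infty$. It therefore suffices to translate between the frequently-divertable/shift-equivalence conditions on $\partial\Lambda$ and on $\widetilde{\Lambda}^\infty$ using the bijection $\kappa$ and the transfer lemmas from Section \ref{sec_boundary_paths_and_desourcification}.

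For \eqref{k-graph_liminal_thm2_1}$\Rightarrow$\eqref{k-graph_liminal_thm2_2}, I would verify condition \eqref{k-graph_liminal_thm1_2} of Theorem \ref{k-graph_liminal_thm1} on $\widetilde{\Lambda}$. Given $a,b\in\widetilde{\Lambda}^\infty$ with $b$ frequently divertable to $[a]$, Lemma \ref{lemma_boundary_path_associated_to_infinite_path} lets me write $a=\sigma^n(\kappa(x))$ and $b=\sigma^m(\kappa(y))$ for some $x,y\in\partial\Lambda$ and $n,m\in\NN^k$. Since shifting does not alter the equivalence class under $\sim$ and since prepending a finite prefix $\kappa(y)(0,m)$ to any diverter preserves shift equivalence to $[\kappa(x)]$, the frequent divertability of $b$ to $[a]=[\kappa(x)]$ upgrades routinely to frequent divertability of $\kappa(y)$ to $[\kappa(x)]$. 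The easy direction \eqref{lemma_frequently_divertable_preservation2}$\Rightarrow$\eqref{lemma_frequently_divertable_preservation1} of Lemma \ref{lemma_frequently_divertable_preservation} then yields that $y$ is frequently divertable to $[x]$, hypothesis \eqref{k-graph_liminal_thm2_1} gives $y\sim x$, and Lemma \ref{lemma_desource_shift_equivalence_preserved} pushes this back up to $\kappa(y)\sim\kappa(x)$, whence $b\sim a$.

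For \eqref{k-graph_liminal_thm2_2}$\Rightarrow$\eqref{k-graph_liminal_thm2_1} under the extra hypothesis $\partial\Lambda=\Lambda^{\le\infty}$, I would run the same dictionary in reverse. Morita equivalence makes $C^*(\widetilde{\Lambda})$ liminal, so Theorem \ref{k-graph_liminal_thm1} applies to $\widetilde{\Lambda}$. Given $x,y\in\partial\Lambda$ with $y$ frequently divertable to $[x]$, the hypothesis $\partial\Lambda=\Lambda^{\le\infty}$ lets me invoke the harder direction \eqref{lemma_frequently_divertable_preservation1}$\Rightarrow$\eqref{lemma_frequently_divertable_preservation2} of Lemma \ref{lemma_frequently_divertable_preservation} to conclude $\kappa(y)$ is frequently divertable to $[\kappa(x)]$. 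Theorem \ref{k-graph_liminal_thm1} applied to $\widetilde{\Lambda}$ then gives $\kappa(y)\sim\kappa(x)$, and Lemma \ref{lemma_desource_shift_equivalence_preserved} delivers $y\sim x$.

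The main obstacle — or rather the structural reason for the asymmetry in the theorem — is the one-sided nature of Lemma \ref{lemma_frequently_divertable_preservation}: lifting frequent divertability from $\partial\Lambda$ up to $\widetilde{\Lambda}^\infty$ requires control over the ``tails'' of boundary paths at coordinates where $d(x)_i<\infty$, which is precisely what $\Lambda^{\le\infty}$ membership provides (cf.\ Lemma \ref{lemma_property_of_old_boundary_path} and Remark \ref{remark_annoying_example_nearly_breaks_lemma}). Once that lemma is in hand, the only minor bookkeeping is checking that frequent divertability survives the shifts $\sigma^n,\sigma^m$ relating $a,b$ to $\kappa(x),\kappa(y)$, which is straightforward because $\widetilde{\Lambda}$ has no sources and hence every vertex of $\kappa(y)$ admits continuations that can be concatenated with any diverter produced further down the path.
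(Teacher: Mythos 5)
Your proposal is correct and follows essentially the same route as the paper's proof: both directions reduce to Theorem \ref{k-graph_liminal_thm1} applied to the desourcification $\widetilde{\Lambda}$, using Lemma \ref{lemma_boundary_path_associated_to_infinite_path}, the appropriate direction of Lemma \ref{lemma_frequently_divertable_preservation}, Lemma \ref{lemma_desource_shift_equivalence_preserved}, and Morita equivalence exactly as the paper does. Your extra remark justifying why frequent divertability of $b=\sigma^m(\kappa(y))$ to $[a]$ upgrades to frequent divertability of $\kappa(y)$ to $[\kappa(x)]$ is a small step the paper leaves implicit, and your identification of the one-sidedness of Lemma \ref{lemma_frequently_divertable_preservation} as the source of the asymmetry matches the paper's design.
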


\begin{example}
The $2$-graph $C^*$-algebra $C^*(\Omega_{2,(3,2)})$ is liminal: in Example \ref{example_boundary_paths_in_Omega_2_32}
we saw that $\partial\Omega_{2,(3,2)}=\braces{\alpha\in\Omega_{2,(3,2)}:s(\alpha)=(3,2)}$, so every path in $\partial\Omega_{2,(3,2)}$ is shift equivalent to every other path in $\partial\Omega_{2,(3,2)}$. It follows that $\Omega_{2,(3,2)}$ satisfies condition \eqref{k-graph_liminal_thm2_2} from Theorem \ref{k-graph_liminal_thm2} and so $C^*(\Omega_{2,(3,2)})$ is liminal.
\end{example}

\begin{example}\label{example_annoying_k-graph_is_liminal}
Suppose $\Lambda$ is the $2$-graph from Example \ref{example_annoying_k-graph}. Let $x$ be the path mentioned in Example \ref{example_annoying_k-graph} and, as in Remark \ref{remark_annoying_example_nearly_breaks_lemma}, let $y$ be the unique path in $v_0\partial\Lambda$ with degree $(\infty,1)$. Since $x$ is frequently divertable to $y$ with $x$ not shift equivalent to $y$, it follows that $\Lambda$ does not satisfy condition \eqref{k-graph_liminal_thm2_1} of Theorem \ref{k-graph_liminal_thm2}. The key point is that $\partial\Lambda\ne\Lambda^\infty$, so we cannot use Theorem \ref{k-graph_liminal_thm2} to deduce that $C^*(\Lambda)$ is not liminal. In fact $C^*(\Lambda)$ is liminal. To see this, a careful inspection of $\Lambda$ reveals that the skeleton of the desourcification $\widetilde\Lambda$ can be pictured by Figure \ref{figure_desourcification}.
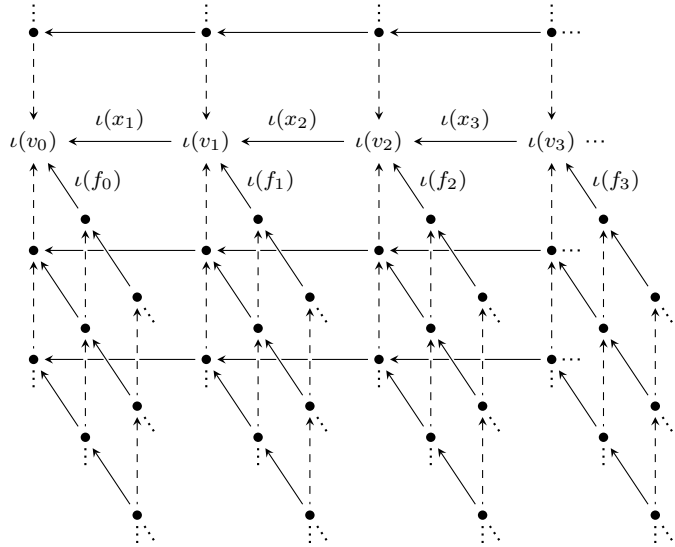
\begin{figure}
\begin{center}
\begin{tikzpicture}[>=stealth,baseline=(current bounding box.center)] 
\def\cellwidth{5.5};
\clip (-0.9em,-13.1em) rectangle (20.5em, 4.6em);
\fill [white] (-20em,-20em) rectangle (80em,80em);
\foreach \x in {0,1,2,3} \foreach \y in {-2,-1,1} {
\node (x\x y\y) at (\cellwidth*\x em,\y*3.5em) {};
\fill[black] (x\x y\y) circle (0.15em);
}
\foreach \x in {0,1,2,3} \node (x\x y0) at (\cellwidth*\x em,0em) {$\scriptstyle \iota(v_\x)$};
\foreach \x / \z in {0/1,1/2,2/3} \draw[black,<-] (x\x y0) to node[above] {$\scriptstyle \iota(x_\z)$} (x\z y0);
\foreach \x / \z in {0/1,1/2,2/3} \foreach \y in {1,-1,-2} \draw[black,<-] (x\x y\y) to (x\z y\y);
\foreach \x in {0,1,2,3} \foreach \y / \z in {0/1,0/-1,-1/-2} \draw[dashed,black,<-] (x\x y\y) to (x\x y\z);

\foreach \x in {0,1,2,3} \foreach \a / \y in {0/0,1/-1,2/-2} {
	\node(x\x a\a) at (\cellwidth*\x em+0.3*\cellwidth em,-2.5em-3.5*\a em) {};
	\fill[black] (x\x a\a) circle (0.15em);
	\draw [<-] (x\x y\y) to (x\x a\a);
}
\foreach \x in {0,1,2,3} \node [above right=-0.3em] at ($ (x\x y0)!.7!(x\x a0) $) {$\scriptstyle \iota(f_\x)$};
\foreach \x in {0,1,2,3} \foreach \b / \y in {0/0,1/-1,2/-2} {
	\node (x\x b\b) at ($ (x\x y\y)!2!(x\x a\b) $) {};
	\fill[black] (x\x b\b) circle (0.15em);
	\draw [white,ultra thick] (x\x a\b) to (x\x b\b);
	\draw [<-] (x\x a\b) to (x\x b\b);
}
\foreach \x in {0,1,2,3} \foreach \c / \y in {0/0,1/-1,2/-2} {
	\node (x\x c\c) at ($ (x\x y\y)!2.50!(x\x a\c) $) {};
	\draw [dotted, thick] (x\x b\c) to (x\x c\c);
}
\foreach \x in {0,1,2,3} \foreach \a / \d in {0/1,1/2} {
	\draw [white, ultra thick] (x\x a\a) to (x\x a\d);
	\draw [<-,dashed] (x\x a\a) to (x\x a\d);
	\draw [white, ultra thick] (x\x b\a) to (x\x b\d);
	\draw [<-,dashed] (x\x b\a) to (x\x b\d);
}
\foreach \x in {0,1,2,3} {
	\node (x\x aend) at ($ (x\x a0)!2.4!(x\x a1) $) {};
	\node (x\x bend) at ($ (x\x b0)!2.4!(x\x b1) $) {};
	\node (x\x ypend) at ($ (x\x y0)!1.4!(x\x y1) $) {};
	\node (x\x ynend) at ($ (x\x y0)!2.4!(x\x y-1) $) {};
	
	\draw [dotted, thick] (x\x a2) to (x\x aend);
	\draw [dotted, thick] (x\x b2) to (x\x bend);
	\draw [dotted, thick] (x\x y1) to (x\x ypend);
	\draw [dotted, thick] (x\x y-2) to (x\x ynend);
}
\foreach \y in {1,0,-1,-2} {
	\node (xendy\y) at ($ (x3y\y.east) + (1em,0em) $) {};
	\draw [dotted, thick] (x3y\y) to (xendy\y);
}
\end{tikzpicture}
\end{center}
\vspace{-1em}
\addtocounter{theorem}{1}
\caption{The desourcification of the $2$-graph from Example \ref{example_annoying_k-graph}.}
\label{figure_desourcification}
\end{figure}
We can see that $\widetilde\Lambda$ satisfies condition \eqref{k-graph_liminal_thm2_1} from Theorem \ref{k-graph_liminal_thm2} or condition \eqref{k-graph_liminal_thm1_2} from Theorem \ref{k-graph_liminal_thm1}, so $C^*(\widetilde\Lambda)$ is liminal. Then $C^*(\Lambda)$ is liminal by Morita equivalence (p. \pageref{describing_morita_equivalence_results},  \cite[Theorem~6.3]{Webster2011} and \cite[Proposition~2]{anHuef-Raeburn-Williams2007}). In fact, $C^*(\Lambda)$ has continuous trace, and is thus liminal, by Theorem \ref{thm_cts-trace}.\end{example}

\begin{theorem}\label{k-graph_postliminal_thm2}
Suppose $\Lambda$ is a row-finite $k$-graph and consider the statements
\begin{enumerate}
 \item\label{k-graph_postliminal_thm2_1} for every $x\in\partial\Lambda$ there exists $n\in\NN^k$ with $n\le d(x)$ such that every path in $x(n)\partial\Lambda$ that is frequently divertable to $[x]$ is shift equivalent to $x$; and
\item\label{k-graph_postliminal_thm2_2} $C^*(\Lambda)$ is postliminal.
\end{enumerate}
Then \eqref{k-graph_postliminal_thm2_1} implies \eqref{k-graph_postliminal_thm2_2} and, if in addition $\partial\Lambda=\Lambda^{\le\infty}$, then \eqref{k-graph_postliminal_thm2_2} implies \eqref{k-graph_postliminal_thm2_1}.

\begin{proof}
Suppose \eqref{k-graph_postliminal_thm2_1}. We will first show that condition \eqref{k-graph_postliminal_thm1_2} from Theorem \ref{k-graph_postliminal_thm1} holds for the desourcification $\widetilde{\Lambda}$ of $\Lambda$. Fix $a\in\widetilde{\Lambda}^\infty$. By Lemma \ref{lemma_boundary_path_associated_to_infinite_path} there exists $x\in\partial\Lambda$ and $m\in\NN^k$ such that $a=\sigma^m\big(\kappa(x)\big)$. By \eqref{k-graph_postliminal_thm2_1} there exists $n\in\NN^k$ such that every path in $x(n)\partial\Lambda$ that is frequently divertable to $[x]$ is shift equivalent to $x$.

Now suppose $b\in a(n)\widetilde{\Lambda}^\infty$ is frequently divertable to $[a]$. Note that $a(n)=\kappa(x)(m+n)$. Since $n\le d(x)$, $\kappa(x)(n)\in\kappa(\Lambda^0)$, and there exists $y\in x(n)\partial\Lambda$ such that $\kappa(y)=\kappa(x)(n,n+m)b$. Note that $\sigma^m\big(\kappa(y)\big)=b$. Since $b$ is frequently divertable to $[a]$, $\sigma^m\big(\kappa(y)\big)$ is frequently divertable to $[\kappa(x)]$ and it follows that $\kappa(y)$ is frequently divertable to $[\kappa(x)]$. Now $y$ is frequently divertable to $[x]$ by Lemma \ref{lemma_frequently_divertable_preservation} and, since in addition $y\in x(n)\partial\Lambda$, $y$ is shift equivalent to $x$. Lemma \ref{lemma_desource_shift_equivalence_preserved} now shows that $\kappa(y)$ is shift equivalent to $\kappa(x)$, so $b$ is shift equivalent to $a$, and condition \eqref{k-graph_postliminal_thm1_2} from Theorem \ref{k-graph_postliminal_thm1} holds for $\widetilde{\Lambda}$. We can now apply Theorem \ref{k-graph_postliminal_thm1} to see that $C^*(\widetilde{\Lambda})$ is postliminal and so $C^*(\Lambda)$ is postliminal by Morita equivalence (p. \pageref{describing_morita_equivalence_results},  \cite[Theorem~6.3]{Webster2011} and \cite[Proposition~2]{anHuef-Raeburn-Williams2007}).

Now suppose $\partial\Lambda=\Lambda^{\le\infty}$, suppose that \eqref{k-graph_postliminal_thm2_2} holds, and fix $x\in\partial\Lambda$. Let $n$ be the element of $\NN^k$ as in Lemma \ref{lemma_property_of_old_boundary_path}. We know $C^*(\widetilde{\Lambda})$ is postliminal by Morita equivalence (p. \pageref{describing_morita_equivalence_results},  \cite[Theorem~6.3]{Webster2011} and \cite[Proposition~2]{anHuef-Raeburn-Williams2007}), so we can apply Theorem \ref{k-graph_postliminal_thm1} to see that there exists $m\in\NN^k$ such that every path in $\kappa(x)(m)\widetilde{\Lambda}^\infty$ that is frequently divertable to $[\kappa(x)]$ is shift equivalent to $\kappa(x)$. We may increase $m$ to ensure that $m\ge n$ while preserving this defining property. Suppose $y\in x\big(m\wedge d(x)\big)\partial\Lambda$ is frequently divertable to $[x]$. Then $\kappa(y)$, and hence $\sigma^{m-m\wedge d(x)}\big(\kappa(y)\big)$, is frequently divertable to $[\kappa(x)]$ by Lemma \ref{lemma_frequently_divertable_preservation}. Since $\kappa(y)\in \kappa(x)\big(m\wedge d(x)\big)\widetilde{\Lambda}^\infty$, it follows from Lemma \ref{lemma_property_of_old_boundary_path} that
\[
 \sigma^{m-m\wedge d(x)}\big(\kappa(y)\big)\in\kappa(x)(m)\widetilde{\Lambda}^\infty,
\]
and so $\sigma^{m-m\wedge d(x)}\big(\kappa(y)\big)$ is shift equivalent to $\kappa(x)$ by our choice of $m$. Then $\kappa(y)$ is shift equivalent to $\kappa(x)$, so $y$ is shift equivalent to $x$ by Lemma \ref{lemma_desource_shift_equivalence_preserved}, establishing \eqref{k-graph_postliminal_thm2_1}.
\end{proof}
\end{theorem}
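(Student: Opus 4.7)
The plan is to mirror the strategy of Theorem~\ref{k-graph_liminal_thm2} while carefully tracking the existential witness $n$ through the Farthing-Webster desourcification. Since $C^*(\widetilde\Lambda)$ is Morita equivalent to $C^*(\Lambda)$ (Webster's \cite[Theorem~6.3]{Webster2011}) and postliminality is a Morita-invariant property (Theorem~\ref{thm_Morita_equivalence_results}), it will suffice to show that statement \eqref{k-graph_postliminal_thm2_1} on $\Lambda$ matches condition \eqref{k-graph_postliminal_thm1_2} of Theorem~\ref{k-graph_postliminal_thm1} applied to the row-finite, sourceless $k$-graph $\widetilde\Lambda$, and to push that equivalence through $\kappa$ using the preservation lemmas of Section~\ref{sec_boundary_paths_and_desourcification}.

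For the forward implication, I would fix $a\in\widetilde\Lambda^\infty$ and use Lemma~\ref{lemma_boundary_path_associated_to_infinite_path} to write $a=\sigma^m(\kappa(x))$ for some $x\in\partial\Lambda$ and $m\in\NN^k$. Let $n\le d(x)$ be the witness provided by \eqref{k-graph_postliminal_thm2_1}; I expect this same $n$ to serve as the witness for condition \eqref{k-graph_postliminal_thm1_2} at $a$. Given any $b\in a(n)\widetilde\Lambda^\infty$ frequently divertable to $[a]$, the key observation is that $n\le d(x)$ forces $a(n)=\kappa(x)(m+n)$ to lie in $\iota(\Lambda^0)$, so $\kappa(x)(n,n+m)\,b$ lies in $\iota(\Lambda^0)\widetilde\Lambda^\infty$; Lemma~\ref{lemma_kappa_bijection} then yields $y\in x(n)\partial\Lambda$ with $\kappa(y)=\kappa(x)(n,n+m)b$, giving $b=\sigma^m(\kappa(y))$. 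The frequent-divertibility of $b$ to $[a]$ descends to frequent-divertibility of $\kappa(y)$ to $[\kappa(x)]$, which by Lemma~\ref{lemma_frequently_divertable_preservation} gives $y$ frequently divertable to $[x]$. Hypothesis \eqref{k-graph_postliminal_thm2_1} then forces $y\sim x$, hence $\kappa(y)\sim\kappa(x)$ by Lemma~\ref{lemma_desource_shift_equivalence_preserved}, hence $b\sim a$. Theorem~\ref{k-graph_postliminal_thm1} now hands us postliminality of $C^*(\widetilde\Lambda)$, and Morita equivalence closes the argument.

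For the converse, assume $\partial\Lambda=\Lambda^{\le\infty}$ and $C^*(\Lambda)$ postliminal; then $C^*(\widetilde\Lambda)$ is postliminal and Theorem~\ref{k-graph_postliminal_thm1} supplies, for each $x\in\partial\Lambda$, some $m\in\NN^k$ governing $\kappa(x)$. The main obstacle is that $m$ need not be $\le d(x)$, so $\kappa(x)(m)$ may not correspond to any vertex of $\Lambda$; this is precisely where I will invoke $\partial\Lambda=\Lambda^{\le\infty}$ together with Lemma~\ref{lemma_property_of_old_boundary_path}. After enlarging $m$ so that $m\ge n$ for the $n$ supplied by that lemma, I will take $m\wedge d(x)$ as the witness for \eqref{k-graph_postliminal_thm2_1}. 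For any $y\in x(m\wedge d(x))\partial\Lambda$ frequently divertable to $[x]$, Lemma~\ref{lemma_frequently_divertable_preservation} (its \eqref{lemma_frequently_divertable_preservation1}~$\implies$~\eqref{lemma_frequently_divertable_preservation2} direction, which requires the $\Lambda^{\le\infty}$ hypothesis) gives frequent divertibility of $\kappa(y)$ to $[\kappa(x)]$; Lemma~\ref{lemma_property_of_old_boundary_path} identifies $\sigma^{m-m\wedge d(x)}(\kappa(y))$ as sitting in $\kappa(x)(m)\widetilde\Lambda^\infty$. The defining property of $m$ then gives $\kappa(y)\sim\kappa(x)$, and Lemma~\ref{lemma_desource_shift_equivalence_preserved} delivers $y\sim x$. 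The hardest bookkeeping is in this converse, where the witness $m$ produced on the desourcification side must be pulled back through the $\wedge d(x)$ truncation without losing the exhaustivity condition that makes $\kappa$ behave correctly.
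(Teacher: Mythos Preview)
Your proposal is correct and follows the paper's proof essentially line by line: both directions pass through the desourcification $\widetilde\Lambda$ via Theorem~\ref{k-graph_postliminal_thm1}, invoking Lemmas~\ref{lemma_boundary_path_associated_to_infinite_path}, \ref{lemma_kappa_bijection}, \ref{lemma_frequently_divertable_preservation}, \ref{lemma_desource_shift_equivalence_preserved} and \ref{lemma_property_of_old_boundary_path} exactly as you describe, with the witness $m\wedge d(x)$ in the converse. One small slip worth correcting: it is $\kappa(x)(n)$, not $a(n)=\kappa(x)(m+n)$, that is guaranteed to lie in $\iota(\Lambda^0)$ when $n\le d(x)$; your conclusion that $\kappa(x)(n,n+m)\,b\in\iota(\Lambda^0)\widetilde\Lambda^\infty$ is nonetheless correct, since the range of that path is $\kappa(x)(n)$.
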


We will now use the results from Section \ref{sec_bded-trace_cts-trace_Fell} to describe when row-finite $k$-graphs which may have sources have $C^*$-algebras that have bounded trace, are Fell, and have continous trace, respectively. The results in Section \ref{sec_bded-trace_cts-trace_Fell} require the path groupoid to be principal, but here we will instead require that the path groupoid associated to the desourcification is principal. The following lemma describes precisely when this is the case.

\begin{lemma}
Suppose $\Lambda$ is a row-finite $k$-graph. The path groupoid associated to the desourcification $\widetilde{\Lambda}$ of $\Lambda$ is principal if and only if $m=0$ whenever a path in $\partial\Lambda$ is shift equivalent to itself with lag $m$.

\begin{proof}
Suppose the path groupoid associated to $\widetilde{\Lambda}$ is principal and suppose $x\sim_m x$ for some $x\in\partial\Lambda$ and $m\in\ZZ^k$. Then $\kappa(x)\sim_m\kappa(x)$ by Lemma \ref{lemma_desource_shift_equivalence_preserved}, and hence $m=0$.

Now suppose that $m=0$ whenever a path in $\partial\Lambda$ is shift equivalent to itself with lag $m$. Suppose $a\in\widetilde{\Lambda}^\infty$ satisfies $a\sim_m a$ for some $m\in\ZZ^k$. By Lemma \ref{lemma_boundary_path_associated_to_infinite_path} there exists $l\in\NN^k$ and $x\in\partial\Lambda$ such that $a=\sigma^l\big(\kappa(x)\big)$. Then $\sigma^l\big(\kappa(x)\big)\sim_m \sigma^l\big(\kappa(x)\big)$, so $\kappa(x)\sim_m\kappa(x)$ and $x\sim_m x$ by Lemma \ref{lemma_desource_shift_equivalence_preserved}. Then $m=0$ by our assumption so path groupoid associated to $\widetilde{\Lambda}$ is principal by Lemma \ref{lemma_principal_iff_no_semi-periodic_paths}.
\end{proof}
\end{lemma}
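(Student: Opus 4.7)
The strategy is to transport the problem from $\widetilde\Lambda^\infty$ down to $\partial\Lambda$ via the map $\kappa$, using Lemma \ref{lemma_desource_shift_equivalence_preserved} to convert shift equivalences. Since $\widetilde\Lambda$ is a row-finite $k$-graph without sources, Lemma \ref{lemma_principal_iff_no_semi-periodic_paths} applies and reduces principality of $G_{\widetilde\Lambda}$ to the statement that no path in $\widetilde\Lambda^\infty$ is shift equivalent to itself with non-zero lag. So the whole lemma amounts to showing that ``self shift equivalence with non-zero lag exists in $\widetilde\Lambda^\infty$'' is equivalent to ``self shift equivalence with non-zero lag exists in $\partial\Lambda$''.

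For the forward direction, I would assume $G_{\widetilde\Lambda}$ is principal and take any $x\in\partial\Lambda$ with $x\sim_m x$. Applying Lemma \ref{lemma_desource_shift_equivalence_preserved} gives $\kappa(x)\sim_m\kappa(x)$; since $\kappa(x)\in\widetilde\Lambda^\infty$, Lemma \ref{lemma_principal_iff_no_semi-periodic_paths} forces $m=0$. This is immediate.

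For the reverse direction, I would start with $a\in\widetilde\Lambda^\infty$ satisfying $a\sim_m a$ and aim to pull this back to $\partial\Lambda$. The key tool is Lemma \ref{lemma_boundary_path_associated_to_infinite_path}, which supplies some $x\in\partial\Lambda$ and $l\in\NN^k$ with $a=\sigma^l(\kappa(x))$. Then $\sigma^l(\kappa(x))\sim_m \sigma^l(\kappa(x))$; by definition of shift equivalence there is $p\in\NN^k$ with $\sigma^{p+l}(\kappa(x))=\sigma^{p+l-m}(\kappa(x))$, which is exactly the statement $\kappa(x)\sim_m\kappa(x)$. Lemma \ref{lemma_desource_shift_equivalence_preserved} then gives $x\sim_m x$, and the hypothesis forces $m=0$. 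Having shown no path in $\widetilde\Lambda^\infty$ is shift equivalent to itself with non-zero lag, Lemma \ref{lemma_principal_iff_no_semi-periodic_paths} delivers principality of $G_{\widetilde\Lambda}$.

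There is no real obstacle here: both implications are short once Lemmas \ref{lemma_desource_shift_equivalence_preserved}, \ref{lemma_boundary_path_associated_to_infinite_path}, and \ref{lemma_principal_iff_no_semi-periodic_paths} are in hand. The only point needing mild care is checking that self shift equivalence of $a=\sigma^l(\kappa(x))$ passes through the shift by $\sigma^l$ to give self shift equivalence of $\kappa(x)$, which is just a matter of unwinding the definition of $\sim_m$ and increasing the auxiliary index.
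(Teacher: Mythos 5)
Your proposal is correct and follows the same route as the paper: both directions transport self shift equivalence between $\partial\Lambda$ and $\widetilde\Lambda^\infty$ via $\kappa$ using Lemma \ref{lemma_desource_shift_equivalence_preserved}, with Lemma \ref{lemma_boundary_path_associated_to_infinite_path} supplying the decomposition $a=\sigma^l(\kappa(x))$ and Lemma \ref{lemma_principal_iff_no_semi-periodic_paths} converting the no-nontrivial-lag condition into principality. Your explicit unwinding of why $\sigma^l(\kappa(x))\sim_m\sigma^l(\kappa(x))$ yields $\kappa(x)\sim_m\kappa(x)$ is a small detail the paper passes over silently, but otherwise the arguments coincide.
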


\needspace{4\baselineskip}
\begin{theorem}\label{thm_bounded_trace}
Suppose $\Lambda$ is a row-finite $k$-graph such that $n=0$ whenever a path in $\partial\Lambda$ is shift equivalent to itself with lag $n$. Then $C^*(\Lambda)$ has bounded trace if and only if for every $v\in\Lambda^0$ there exists $M\in\NN$ such that for every $x\in\partial\Lambda$ there are at most $M$ paths in $\partial\Lambda$ with range $v$ that are shift equivalent to $x$.

\begin{proof}
Suppose $C^*(\Lambda)$ has bounded trace and fix $v\in\Lambda^0$. Then $C^*(\widetilde{\Lambda})$ has bounded trace since this property is preserved by Morita equivalence (see \cite[Proposition~7]{anHuef-Raeburn-Williams2007}). Since in addition $\widetilde\Lambda$ has no sources, by Theorem \ref{thm_integrable_bounded-trace} there exists $M\in\NN^k$ such that for every $a\in\widetilde{\Lambda}^\infty$ there are at most $M$ paths in $\widetilde{\Lambda}^\infty$ with range $\iota(v)$ that are shift equivalent to $a$. Fix $x\in\partial\Lambda$ and let $S$ be the set of all paths in $\partial\Lambda$ that are shift equivalent to $x$ and have range $v$. Then  every path in $\kappa(S)$ is shift equivalent to $\kappa(x)$ by Lemma \ref{lemma_desource_shift_equivalence_preserved}. In addition, for every $y\in S$ we have $r(y)=v$, so $r\big(\kappa(y)\big)=\iota(v)$, and $\kappa(S)$ has at most $M$ elements by our earlier application of Theorem \ref{thm_integrable_bounded-trace}. Since $\kappa$ is injective by Lemma \ref{lemma_kappa_bijection}, we may conclude that $S$ has at most $M$ elements.

We now prove the converse. Suppose that for every $v\in\Lambda^0$ there exists $M_v\in\NN$ as in the statement of the theorem. Fix $u\in\widetilde{\Lambda}^0$. By the construction of $\widetilde\Lambda$, there exists $y\in\partial\Lambda$ and $n\in\NN^k$ such that $u=\kappa(y)(n)$. Fix $a\in\widetilde{\Lambda}^\infty$. By Lemma \ref{lemma_boundary_path_associated_to_infinite_path} we may assume without loss of generality that $a=\kappa(x)$ for some $x\in\partial\Lambda$. Let $S$ be the set of all paths in $\partial\Lambda$ with range $r(y)$ that are shift equivalent to $x$ and let $T$ be the set of all paths in $\widetilde{\Lambda}^\infty$ with range $u$ that are shift equivalent to $\kappa(x)$. Fix $b\in T$. Then $\kappa(y)(0,n)b\in\iota(\Lambda^0)\widetilde{\Lambda}^\infty$ so, since $\kappa$ is a bijection onto $\iota(\Lambda^0)\widetilde\Lambda^\infty$ (Lemma \ref{lemma_kappa_bijection}), there exists $z\in\partial\Lambda$ such that $\kappa(y)(0,n)b=\kappa(z)$. Furthermore since $b\in T$, we have $b\sim\kappa(x)$, so both $\kappa(y)(0,m)b$ and $\kappa(z)$ are shift equivalent to $\kappa(x)$. Now $r\big(\kappa(y)(0,m)b\big)=\iota\big(r(y)\big)$, so $\braces{\kappa(y)(0,m)b:b\in T}\subset \kappa(S)$. Since $T$ and $\braces{\kappa(y)(0,m)b:b\in T}$ have the same number of elements, and since $\kappa$ is injective (Lemma \ref{lemma_kappa_bijection}), we have
$\#T\le\#\kappa(S)=\# S\le M_{r(y)}$. We arbitrarily fixed $u\in\widetilde{\Lambda}^0$, so condition \eqref{k-thm_integrable2} of Theorem \ref{thm_integrable_bounded-trace} holds by taking $M=M_{r(y)}$. Since $\widetilde\Lambda$ has no sources, Theorem \ref{thm_integrable_bounded-trace} now implies that $C^*(\widetilde{\Lambda})$ has bounded trace, and it follows that $C^*(\Lambda)$ has bounded trace by Morita equivalence (p. \pageref{describing_morita_equivalence_results},  \cite[Theorem~6.3]{Webster2011} and \cite[Proposition~7]{anHuef-Raeburn-Williams2007}).
\end{proof}
\end{theorem}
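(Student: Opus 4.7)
The plan is to reduce the statement to Theorem \ref{thm_integrable_bounded-trace} applied to the Farthing--Webster desourcification $\widetilde\Lambda$, using that $C^*(\widetilde\Lambda)$ is Morita equivalent to $C^*(\Lambda)$ (so bounded trace transfers, by Theorem \ref{thm_Morita_equivalence_results}, or \cite[Proposition~7]{anHuef-Raeburn-Williams2007}) and that the hypothesis on shift-periodic boundary paths in $\Lambda$ is exactly the condition that $G_{\widetilde\Lambda}$ is principal (by the lemma immediately preceding the theorem). The main content is then translating the ``at most $M$ shift-equivalent paths'' condition back and forth across the embedding $\kappa:\partial\Lambda\to\iota(\Lambda^0)\widetilde\Lambda^\infty$, using Lemma \ref{lemma_kappa_bijection} (bijectivity), Lemma \ref{lemma_desource_shift_equivalence_preserved} (preservation of shift equivalence), and Lemma \ref{lemma_boundary_path_associated_to_infinite_path} (every $a\in\widetilde\Lambda^\infty$ is a shift of some $\kappa(x)$).

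For the forward direction, suppose $C^*(\Lambda)$ has bounded trace. Then so does $C^*(\widetilde\Lambda)$, and since $\widetilde\Lambda$ is row-finite without sources with principal path groupoid, condition \eqref{k-thm_integrable2} of Theorem \ref{thm_integrable_bounded-trace} supplies, for each $\iota(v)\in\widetilde\Lambda^0$, a uniform bound $M$ on the number of paths in $\iota(v)\widetilde\Lambda^\infty$ that are shift equivalent to any given $a\in\widetilde\Lambda^\infty$. Fix $v\in\Lambda^0$ and $x\in\partial\Lambda$, and let $S$ be the set of paths in $v\partial\Lambda$ shift equivalent to $x$. By Lemma \ref{lemma_desource_shift_equivalence_preserved} every element of $\kappa(S)$ is shift equivalent to $\kappa(x)$, and each lies in $\iota(v)\widetilde\Lambda^\infty$; by Lemma \ref{lemma_kappa_bijection} $\kappa$ is injective, so $\# S=\#\kappa(S)\le M$.

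For the reverse direction, assume the counting bound holds and let $M_v$ denote the bound associated to $v\in\Lambda^0$. Fix $u\in\widetilde\Lambda^0$; by the construction of $\widetilde\Lambda$ we may write $u=\kappa(y)(n)$ for some $y\in\partial\Lambda$, $n\in\NN^k$. Given $a\in\widetilde\Lambda^\infty$, by Lemma \ref{lemma_boundary_path_associated_to_infinite_path} we may write $a=\sigma^m(\kappa(x))$, so shift equivalence to $a$ is the same as shift equivalence to $\kappa(x)$. For each path $b\in u\widetilde\Lambda^\infty$ shift equivalent to $\kappa(x)$, the concatenation $\kappa(y)(0,n)b$ lies in $\iota(r(y))\widetilde\Lambda^\infty\subset\iota(\Lambda^0)\widetilde\Lambda^\infty$, so by Lemma \ref{lemma_kappa_bijection} there is a unique $z\in r(y)\partial\Lambda$ with $\kappa(z)=\kappa(y)(0,n)b$, and this $z$ is shift equivalent to $x$ by Lemma \ref{lemma_desource_shift_equivalence_preserved}. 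The assignment $b\mapsto z$ is injective (since $\kappa$ is), so the number of such $b$ is at most the cardinality of $\{z\in r(y)\partial\Lambda : z\sim x\}$, which is bounded by $M_{r(y)}$. Hence for each $u\in\widetilde\Lambda^0$ a uniform bound exists, condition \eqref{k-thm_integrable2} of Theorem \ref{thm_integrable_bounded-trace} is verified for $\widetilde\Lambda$, and $C^*(\widetilde\Lambda)$ has bounded trace; Morita equivalence finishes the proof.

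The hardest step will be the reverse direction's bookkeeping: the bound we are given is indexed by vertices of $\Lambda^0$, but we must verify the bound at every vertex $u$ of $\widetilde\Lambda^0$, most of which are not in $\iota(\Lambda^0)$. The trick is to precompose with the finite path $\kappa(y)(0,n)$ to translate paths with range $u$ into paths with range in $\iota(\Lambda^0)$, and then use $\kappa^{-1}$; one must verify that this precomposition map is well-defined and injective on the shift-equivalence class under consideration, which follows cleanly from $\kappa$ being a bijection onto $\iota(\Lambda^0)\widetilde\Lambda^\infty$.
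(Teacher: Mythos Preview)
Your proposal is correct and follows essentially the same route as the paper's proof: both directions reduce to Theorem \ref{thm_integrable_bounded-trace} applied to $\widetilde\Lambda$ via Morita equivalence, with the forward direction using injectivity of $\kappa$ and Lemma \ref{lemma_desource_shift_equivalence_preserved} to pull the bound back to $\partial\Lambda$, and the reverse direction using the precomposition $b\mapsto \kappa(y)(0,n)b$ followed by $\kappa^{-1}$ to push the bound from $r(y)\in\Lambda^0$ to an arbitrary $u\in\widetilde\Lambda^0$. Your explicit remark that the hypothesis on $\partial\Lambda$ guarantees $G_{\widetilde\Lambda}$ is principal (needed to invoke Theorem \ref{thm_integrable_bounded-trace}) is a useful clarification that the paper leaves implicit.
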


\begin{lemma}\label{lemma_cts-trace_Fell_desourcification}
Suppose $\Lambda$ is a row-finite $k$-graph and $W\subset\widetilde\Lambda^0$ is finite. For each $w\in W$, suppose $z^{(w)}\in\partial\Lambda$ and $n^{(w)}\in\NN^k$ satisfy $w=\kappa(z^{(w)})(n^{(w)})$. Let $V=\braces{r(z^{(w)}):w\in W}$. If there exists a finite $F\subset\Lambda$ such that for every $x,y\in V\partial\Lambda$ with $x\sim y$, the pair $(x,y)$ is a monolithic extension of a pair in $F\times F$, then there exists a finite $E\subset\widetilde\Lambda$ such that for every $(a,b)\in W\widetilde\Lambda^\infty$ with $a\sim b$, the pair $(a,b)$ is a monolithic extension of a pair in $E\times E$.
\begin{proof}
Suppose $F$ is a finite subset of $\Lambda$ as in the statement of the Lemma. Let $N=\vee_{w\in W}n^{(w)}$ and define
\[
E=\Big\{\big(\iota(\eta)\delta\big)\Big(n^{(w)},d\big(\iota(\eta)\delta\big)\Big):\eta\in F, w\in W, \delta\in s\big(\iota(\eta)\big)\widetilde\Lambda^N\Big\},
\]
noting that $E$ is finite since $F$ and $W$ are finite and $\widetilde\Lambda$ is row-finite. Fix $a,b\in W\widetilde\Lambda^\infty$ with $a\sim b$, let 
\[\phi=\kappa(z^{(r(a))})(0,n^{(r(a))})\quad\text{and}\quad\psi=\kappa(z^{(r(b))})(0,n^{(r(b))}).\]
Then $\phi a,\psi b\in\iota(V)\widetilde\Lambda^\infty$ so, since $\kappa$ is a bijection onto $\iota(\Lambda^0)\widetilde\Lambda^\infty$ (Lemma \ref{lemma_kappa_bijection}), there exist $x,y\in V\partial\Lambda$ such that $\kappa(x)=\phi a$ and $\kappa(y)=\psi b$. Since $a\sim b$, we have $x\sim y$ by Lemma \ref{lemma_desource_shift_equivalence_preserved}. By our assumption there exist $\eta,\zeta\in F$ such that $(x,y)$ is a monolithic extension of $(\eta,\zeta)$. Then $\big(\kappa(x),\kappa(y)\big)$ is a monolithic extension of $\big(\iota(\eta),\iota(\zeta)\big)$. Let $\delta=\kappa(x)\big(d(\eta),d(\eta)+N\big)$, so that $\big(\kappa(x),\kappa(y)\big)=(\phi a,\psi b)$ is a monolithic extension of $\big(\iota(\eta)\delta,\iota(\zeta)\delta\big)$ with $d\big(\iota(\eta)\delta\big)\ge n^{(r(a))}=d(\phi)$ and $d\big(\iota(\zeta)\delta\big)\ge n^{(r(b))}=d(\psi)$. Then $(a,b)$ is a monolithic extension of
\[
\bigg(\big(\iota(\eta)\delta\big)\Big(n^{(r(a))},d\big(\iota(\eta)\delta\big)\Big),\big(\iota(\zeta)\delta\big)\Big(n^{(r(b))},d\big(\iota(\zeta)\delta\big)\Big)\bigg)\in E\times E.\qedhere
\]
\end{proof}
\end{lemma}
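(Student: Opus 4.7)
The plan is to transfer the hypothesis about pairs in $V\partial\Lambda$ to pairs in $W\widetilde\Lambda^\infty$ by using the map $\kappa$ as a bridge: pull shift-equivalent pairs $(a,b)$ in $W\widetilde\Lambda^\infty$ back to shift-equivalent pairs in $V\partial\Lambda$, apply the hypothesis there, then push forward through $\iota$ while correcting for the offsets $n^{(w)}$. The candidate set $E$ is essentially $\iota(F)$, fattened by all possible extensions of length $N:=\vee_{w\in W} n^{(w)}$ so we can absorb the offsets, and then truncated by stripping off an initial segment of length $n^{(w)}$ so that the result has range in $W$.

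First I would verify $E$ is finite: since $F$ and $W$ are finite and $\widetilde\Lambda$ is row-finite, the set $s(\iota(\eta))\widetilde\Lambda^N$ is finite for each $\eta\in F$, and truncating any element $\iota(\eta)\delta$ produces only finitely many morphisms in $\widetilde\Lambda$. Next, fix $(a,b)\in W\widetilde\Lambda^\infty$ with $a\sim b$. Using the prefixes $\phi:=\kappa(z^{(r(a))})(0,n^{(r(a))})$ and $\psi:=\kappa(z^{(r(b))})(0,n^{(r(b))})$, the concatenations $\phi a$ and $\psi b$ lie in $\iota(V)\widetilde\Lambda^\infty$, and by Lemma \ref{lemma_kappa_bijection} there are unique $x,y\in V\partial\Lambda$ with $\kappa(x)=\phi a$ and $\kappa(y)=\psi b$. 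Lemma \ref{lemma_desource_shift_equivalence_preserved} passes the shift equivalence $a\sim b$ to $x\sim y$.

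Now apply the hypothesis to get $(\eta,\zeta)\in F\times F$ and $w\in s(\eta)\partial\Lambda$ with $(x,y)=(\eta w,\zeta w)$. Applying $\kappa$ and using functoriality of $\iota$, the pair $(\kappa(x),\kappa(y))=(\phi a,\psi b)$ is a monolithic extension of $(\iota(\eta),\iota(\zeta))$. To align degrees with the offsets $n^{(r(a))}$ and $n^{(r(b))}$, set $\delta:=\kappa(x)(d(\eta),d(\eta)+N)$; then $(\phi a,\psi b)$ is also a monolithic extension of $\bigl(\iota(\eta)\delta,\iota(\zeta)\delta\bigr)$, and since $d(\iota(\eta)\delta)\ge N\ge n^{(r(a))}=d(\phi)$ and similarly for the other side, the factorisation property lets us split off $\phi$ and $\psi$ as initial segments. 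Thus $(a,b)$ is a monolithic extension of
\[
\Bigl(\bigl(\iota(\eta)\delta\bigr)\bigl(n^{(r(a))},d(\iota(\eta)\delta)\bigr),\ \bigl(\iota(\zeta)\delta\bigr)\bigl(n^{(r(b))},d(\iota(\zeta)\delta)\bigr)\Bigr),
\]
which is a pair in $E\times E$ by construction.

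The main obstacle is the bookkeeping around offsets: one must check that the prefix $\phi$ extracted from $\kappa(x)$ really does coincide with the prescribed prefix $\kappa(z^{(r(a))})(0,n^{(r(a))})$ so that stripping it off recovers $a$, and this relies on the identifications $w=\kappa(z^{(w)})(n^{(w)})$ together with the factorisation property in $\widetilde\Lambda$. The choice $N=\vee_{w\in W}n^{(w)}$ is what guarantees $d(\iota(\eta)\delta)\ge n^{(w)}$ uniformly in $w$, so that the truncation defining $E$ always makes sense; once that is in place the remaining computations are routine applications of the factorisation property.
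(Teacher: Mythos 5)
Your proposal follows essentially the same route as the paper's proof: the same set $E$ built by extending $\iota(F)$ by length $N=\vee_{w\in W}n^{(w)}$ and truncating at $n^{(w)}$, the same pull-back of $(a,b)$ through $\kappa$ via the prefixes $\phi$ and $\psi$, and the same push-forward through $\iota$ with the correcting segment $\delta$. The argument is correct and no meaningful differences from the paper's treatment remain to note.
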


\begin{lemma}\label{lemma_transfer_ME_from_desourcification}
Let $\Lambda$ be a row-finite $k$-graph and suppose that $\big(\kappa(x),\kappa(y)\big)$ is a monolithic extension of $\big(\kappa(f)(0,m),\kappa(g)(0,n)\big)$ for some $x,y,f,g\in\partial\Lambda$ and $m,n\in\NN^k$. Then $(x,y)$ is a monolithic extension of 
$\big(f(0,m\wedge d(f)),g(0,n\wedge d(g))\big)$.
\begin{proof}
We have $\sigma^m\big(\kappa(x)\big)=\sigma^n\big(\kappa(y)\big)$, so $\kappa(x)(m)=\kappa(y)(n)$ and by \eqref{V1} and \eqref{V2} we can see that $x\big(m\wedge d(x)\big)=y\big(n\wedge d(y)\big)$ and $m-m\wedge d(x)=n-n\wedge d(y)$. This is enough to show that $\kappa(x)\big(m\wedge d(x),m\big)=\kappa(y)\big(n\wedge d(y),n\big)$ by considering \eqref{P1}, \eqref{P2} and \eqref{P3}. Thus $\sigma^{m\wedge d(x)}\big(\kappa(x)\big)=\sigma^{n\wedge d(y)}\big(\kappa(y)\big)$, and so $\kappa\big(\sigma^{m\wedge d(x)}(x)\big)=\kappa\big(\sigma^{n\wedge d(y)}(y)\big)$ by Lemma \ref{lemma_sigma_kappa}. For every $p\in\NN^k$ we now have $\kappa\big(\sigma^{m\wedge d(x)}(x)\big)(0,p)=\kappa\big(\sigma^{n\wedge d(y)}(y)\big)(0,p)$, so 
\[
\sigma^{m\wedge d(x)}(x)\Big(0,p\wedge d\big(\sigma^{m\wedge d(x)}(x)\big)\Big)=\sigma^{n\wedge d(y)}(y)\Big(0,p\wedge d\big(\sigma^{n\wedge d(y)}(y)\big)\Big),
\]
and it follows that $\sigma^{m\wedge d(x)}(x)=\sigma^{n\wedge d(y)}(y)$. Since $\kappa(x)(0,m)=\kappa(f)(0,m)$, we have $x\big(0,m\wedge d(x)\big)=f\big(0,m\wedge d(f)\big)$ by \eqref{P1} and similarly $y\big(0,n\wedge d(y)\big)=g\big(0,n\wedge d(g)\big)$. The result follows.
\end{proof}
\end{lemma}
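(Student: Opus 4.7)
The plan is to unfold the definition of monolithic extension on the desourcification side and then translate both the common tail and the initial segments back to $\partial\Lambda$ via properties of $\kappa$. By the definition of monolithic extension in $\widetilde\Lambda$, the hypothesis gives some $z\in s(\kappa(f)(0,m))\widetilde\Lambda^\infty$ with $\kappa(x)=\kappa(f)(0,m)\,z$ and $\kappa(y)=\kappa(g)(0,n)\,z$, and in particular $\sigma^m(\kappa(x))=z=\sigma^n(\kappa(y))$. Evaluating this equality at $0$ gives $\kappa(x)(m)=\kappa(y)(n)$, and axioms \eqref{V1} and \eqref{V2} for the equivalence relation defining $\widetilde{V_\Lambda}$ then yield $x(m\wedge d(x))=y(n\wedge d(y))$ and $m-m\wedge d(x)=n-n\wedge d(y)$.

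Next I would push the shift back through $\kappa$. First I want to promote the pointwise equality $\kappa(x)(m)=\kappa(y)(n)$ to the segment equality $\kappa(x)(m\wedge d(x),m)=\kappa(y)(n\wedge d(y),n)$; this comes straight out of \eqref{P1}--\eqref{P3} once one notes that $m-m\wedge d(x)=n-n\wedge d(y)$ and $(m\wedge d(x))\wedge d(x)=m\wedge d(x)$. Combining this with $\sigma^m(\kappa(x))=\sigma^n(\kappa(y))$ gives $\sigma^{m\wedge d(x)}(\kappa(x))=\sigma^{n\wedge d(y)}(\kappa(y))$, and then Lemma~\ref{lemma_sigma_kappa} rewrites this as $\kappa(\sigma^{m\wedge d(x)}(x))=\kappa(\sigma^{n\wedge d(y)}(y))$. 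Since $\kappa$ is injective by Lemma~\ref{lemma_kappa_bijection} (or by a direct degree-by-degree comparison using \eqref{P1}), I conclude that $\sigma^{m\wedge d(x)}(x)=\sigma^{n\wedge d(y)}(y)$ in $\partial\Lambda$.

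Finally, I need to identify the initial segments. The equalities $\kappa(x)(0,m)=\kappa(f)(0,m)$ and $\kappa(y)(0,n)=\kappa(g)(0,n)$ that are built into the hypothesis, together with \eqref{P1}, force $x(0,m\wedge d(x))=f(0,m\wedge d(f))$ and $y(0,n\wedge d(y))=g(0,n\wedge d(g))$. Setting $w:=\sigma^{m\wedge d(x)}(x)=\sigma^{n\wedge d(y)}(y)\in\partial\Lambda$, the preceding two identifications assemble into $x=f(0,m\wedge d(f))\,w$ and $y=g(0,n\wedge d(g))\,w$, exactly the monolithic extension claimed.

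The main obstacle is bookkeeping: the hypothesis lives in $\widetilde\Lambda$, where degrees are genuine elements of $\NN^k$ and the shift is free to move past $m$ or $n$, whereas in $\partial\Lambda$ the shift can only move as far as $d(x)$ or $d(y)$ allows. The whole proof is really the careful use of \eqref{V1}, \eqref{V2}, \eqref{P1}--\eqref{P3} to show that the ``excess'' parts $m-m\wedge d(x)$ and $n-n\wedge d(y)$ match, so that truncating back to $m\wedge d(x)$ and $n\wedge d(y)$ does not destroy the monolithic structure. Once that matching is in hand, Lemma~\ref{lemma_sigma_kappa} and injectivity of $\kappa$ finish the argument routinely.
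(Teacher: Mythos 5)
Your proposal is correct and follows essentially the same route as the paper's proof: extract $\sigma^m(\kappa(x))=\sigma^n(\kappa(y))$ from the monolithic extension, use \eqref{V1}--\eqref{V2} and \eqref{P1}--\eqref{P3} to match the excess $m-m\wedge d(x)=n-n\wedge d(y)$ and upgrade to $\sigma^{m\wedge d(x)}(\kappa(x))=\sigma^{n\wedge d(y)}(\kappa(y))$, then pull back through Lemma~\ref{lemma_sigma_kappa} and the injectivity of $\kappa$, and finally identify the initial segments via \eqref{P1}. The only cosmetic difference is that you invoke Lemma~\ref{lemma_kappa_bijection} for injectivity where the paper re-derives it by comparing segments degree by degree, which you also note as an alternative.
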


\begin{theorem}\label{thm_Fell}
Suppose $\Lambda$ is a row-finite $k$-graph such that $m=0$ whenever a path in $\partial\Lambda$ is shift equivalent to itself with lag $m$. Then $C^*(\Lambda)$ is Fell if for every $z\in\partial\Lambda$ there exist $p\in\NN^k$ with $p\le d(z)$ and a finite $F\subset\Lambda$ such that for any $x,y\in z(p)\partial\Lambda$ with $x\sim y$, the pair $(x,y)$ is a monolithic extension of a pair in $F\times F$. The converse is true if $\partial\Lambda=\Lambda^{\le\infty}$.
\begin{proof}
We will first show that $C^*(\widetilde\Lambda)$ is Fell by showing that $\widetilde\Lambda$ satisfies condition \eqref{thm_Cartan_Fell_2} of Theorem \ref{thm_Cartan_Fell}. Fix $a\in\widetilde\Lambda^\infty$. By Lemma \ref{lemma_boundary_path_associated_to_infinite_path} there exist $z\in\partial\Lambda$ and $m\in\NN^k$ such that $a=\sigma^m\big(\kappa(z)\big)$. Let $p\in\NN^k$ and $F\subset\Lambda$ be as in the hypothesis when applied to $z$. Let $W=\big\{\kappa(z)(m\vee p)\big\}$ and $V=\braces{z(p)}$. By noting that $W=\big\{\kappa\big(\sigma^p(z)\big)(m\vee p -p)\big\}$ by Lemma \ref{lemma_sigma_kappa} and that $V=\big\{r\big(\sigma^p(z)\big)\big\}$, we can apply Lemma \ref{lemma_cts-trace_Fell_desourcification} to see that there exists a finite $E\subset\widetilde\Lambda$ such that for any $b,c\in W\widetilde\Lambda^\infty$ with $b\sim c$, the pair $(b,c)$ is a monolithic extension of a pair in $E\times E$. Since Lemma \ref{lemma_sigma_kappa} gives us
\[
\kappa\big(\sigma^p(z)\big)(m\vee p-p)=\kappa(z)(m\vee p)=\sigma^m\big(\kappa(z)\big)(m\vee p-m)=a(m\vee p-m),
\]
we can see that $W=\braces{a(m\vee p-m)}$ and thus that $\widetilde\Lambda$ satisfies condition \eqref{thm_Cartan_Fell_2} of Theorem \ref{thm_Cartan_Fell}. As we also know that $\widetilde\Lambda$ has no sources, $C^*(\widetilde\Lambda)$ is Fell by Theorem \ref{thm_Cartan_Fell}. Then $C^*(\Lambda)$ is Fell by Morita equivalence (p. \pageref{describing_morita_equivalence_results},  \cite[Theorem~6.3]{Webster2011} and \cite[Corollary~14]{anHuef-Raeburn-Williams2007}).

We now prove the converse. Suppose $\partial\Lambda=\Lambda^{\le\infty}$, $C^*(\Lambda)$ is Fell and fix $z\in\partial\Lambda$. Since $\partial\Lambda=\Lambda^{\le\infty}$, by Lemma \ref{lemma_property_of_old_boundary_path} there exists $p\in\NN^k$ with $p\le d(z)$ such that for any $q\in\NN^k$ with $q\ge p$ and $x\in z\big(q\wedge d(x)\big)\partial\Lambda$,
\begin{equation}\label{Fell_eqn1}
\kappa(z)\big(q\wedge d(z),q\big)=\kappa(x)\big(0,q-q\wedge d(z)\big).
\end{equation}
We know $C^*(\widetilde{\Lambda})$ is Fell by Morita equivalence (p. \pageref{describing_morita_equivalence_results},  \cite[Theorem~6.3]{Webster2011} and \cite[Corollary~14]{anHuef-Raeburn-Williams2007}). Since in addition $\widetilde\Lambda$ has no sources, Theorem \ref{thm_Cartan_Fell} can be used to see that for the path $\sigma^{p}\big(\kappa(z)\big)\in\widetilde\Lambda^\infty$ there exists $l\in\NN^k$ and a finite $E\subset\widetilde\Lambda$ that satisfy condition \eqref{thm_Cartan_Fell_2} from Theorem \ref{thm_Cartan_Fell}. Let $m=p+l$. Then for every $a,b\in \kappa(z)(m)\widetilde\Lambda^\infty$ with $a\sim b$, the pair $(a,b)$ is a monolithic extension of a pair in $E\times E$.
Let $n=m\wedge d(z)$ and define $A=\braces{\kappa(z)(n,m)\eta:\eta\in E}$, noting that $A\subset\Lambda$. By the constuction of $\widetilde\Lambda$, for every $\gamma\in\iota(\Lambda^0)$ we can choose $t^{(\gamma)}\in\partial\Lambda$ such that $\gamma=\kappa(t^{(\gamma)})\big(0,d(\gamma)\big)$. Let
\[
F=\Big\{t^{(\phi)}\big(0,d(\phi)\wedge d(t^{(\phi)})\big):\phi\in A\Big\}.
\]

Fix $x,y\in z(n)\partial\Lambda$ such that $x\sim y$. Then $\kappa(x),\kappa(y)\in\kappa(z)(n)\widetilde\Lambda^\infty$ with $\kappa(x)\sim \kappa(y)$ by Lemma \ref{lemma_desource_shift_equivalence_preserved}, so $\sigma^{m-n}\big(\kappa(x)\big),\sigma^{m-n}\big(\kappa(y)\big)\in \kappa(z)(m)\widetilde\Lambda^\infty$ with $\sigma^{m-n}\big(\kappa(x)\big)\sim\sigma^{m-n}\big(\kappa(y)\big)$. By the earlier use of Theorem \ref{thm_Cartan_Fell}, there exists $\eta,\zeta\in E$ such that $\Big(\sigma^{m-n}\big(\kappa(x)\big),\sigma^{m-n}\big(\kappa(y)\big)\Big)$ is a monolithic extension of $(\eta,\zeta)$. Let $\gamma=\kappa(z)(n,m)\eta$ and $\delta=\kappa(z)(n,m)\zeta$, so that $\gamma,\delta\in A$. Since $n=m\wedge d(z)$, by \eqref{Fell_eqn1} we have 
\[\kappa(x)(0,m-n)=\kappa(y)(0,m-n)=\kappa(z)(n,m),\]
so $\big(\kappa(x),\kappa(y)\big)$ is a monolithic extension of $(\gamma,\delta)$. Recall that $\gamma=\kappa(z^{(\gamma)})\big(0,d(\gamma)\big)$ and $\delta=\kappa(z^{(\delta)})\big(0,d(\delta)\big)$. Lemma \ref{lemma_transfer_ME_from_desourcification} can now be applied to see that $(x,y)$ is a monolithic extension of
\[
\Big(z^{(\gamma)}\big(0,d(\gamma)\wedge d(z^{(\gamma)}),z^{(\delta)}\big(0,d(\delta)\wedge d(z^{(\delta)})\big)\Big)\in F\times F.\qedhere
\]
\end{proof}
\end{theorem}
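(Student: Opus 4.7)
The natural strategy is to reduce to Theorem~\ref{thm_Cartan_Fell} by passing to the Farthing--Webster desourcification $\widetilde\Lambda$, using that the path groupoid $G_{\widetilde\Lambda}$ is principal (by the standing hypothesis together with Lemma~\ref{lemma_desource_shift_equivalence_preserved}) and that the Fell property is preserved by Morita equivalence (Theorem~\ref{thm_Morita_equivalence_results}), with $C^*(\widetilde\Lambda)$ Morita equivalent to $C^*(\Lambda)$ by Webster's theorem. Thus it suffices to translate the hypothesis about $(x,y)\in z(p)\partial\Lambda\times z(p)\partial\Lambda$ into the corresponding condition~\eqref{thm_Cartan_Fell_2} of Theorem~\ref{thm_Cartan_Fell} for the $k$-graph $\widetilde\Lambda$ without sources, and conversely.

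For the forward direction, I would fix $a\in\widetilde\Lambda^\infty$ and use Lemma~\ref{lemma_boundary_path_associated_to_infinite_path} to write $a=\sigma^m\bigl(\kappa(z)\bigr)$ for some $z\in\partial\Lambda$ and $m\in\NN^k$. Applying the hypothesis to this $z$ gives $p\in\NN^k$ and a finite $F\subset\Lambda$ controlling monolithic extensions of shift-equivalent pairs in $z(p)\partial\Lambda\times z(p)\partial\Lambda$. Setting $q:=m\vee p$ and $V:=\{z(p)\}=\{r(\sigma^p(z))\}$, $W:=\{\kappa(z)(q)\}$, I would invoke Lemma~\ref{lemma_cts-trace_Fell_desourcification} to produce a finite $E\subset\widetilde\Lambda$ controlling monolithic extensions of shift-equivalent pairs in $W\widetilde\Lambda^\infty$. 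A short computation using Lemma~\ref{lemma_sigma_kappa} identifies $\kappa(z)(q)$ with $a(q-m)$, so that $E$ witnesses condition~\eqref{thm_Cartan_Fell_2} of Theorem~\ref{thm_Cartan_Fell} at the infinite path $a$. Theorem~\ref{thm_Cartan_Fell} then gives that $C^*(\widetilde\Lambda)$ is Fell, and Morita equivalence transfers this to $C^*(\Lambda)$.

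For the converse, assume $\partial\Lambda=\Lambda^{\le\infty}$ and $C^*(\Lambda)$ is Fell; then $C^*(\widetilde\Lambda)$ is Fell by Morita equivalence. Fix $z\in\partial\Lambda$. Using Lemma~\ref{lemma_property_of_old_boundary_path} (which requires $\partial\Lambda=\Lambda^{\le\infty}$), pick $p\in\NN^k$ with $p\le d(z)$ so that for all $q\ge p$ and all $x\in z(q\wedge d(z))\partial\Lambda$, one has $\kappa(z)(q\wedge d(z),q)=\kappa(x)(0,q-q\wedge d(z))$. Apply Theorem~\ref{thm_Cartan_Fell} to $\sigma^{p}\bigl(\kappa(z)\bigr)\in\widetilde\Lambda^\infty$ to obtain $l\in\NN^k$ and a finite $E\subset\widetilde\Lambda$ controlling monolithic extensions of shift-equivalent pairs in $\kappa(z)(p+l)\widetilde\Lambda^\infty$. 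Setting $m:=p+l$ and $n:=m\wedge d(z)$, build a candidate finite set $F\subset\Lambda$ by prepending $\kappa(z)(n,m)$ to each element of $E$ and then pulling back through $\kappa$ (choosing representatives in $\partial\Lambda$ for the resulting elements of $\iota(\Lambda^0)\widetilde\Lambda^\infty$). For $x,y\in z(n)\partial\Lambda$ with $x\sim y$, shifting $\kappa(x)$ and $\kappa(y)$ by $m-n$ lands inside $\kappa(z)(m)\widetilde\Lambda^\infty$ with shift-equivalent images (Lemma~\ref{lemma_desource_shift_equivalence_preserved}), producing a monolithic extension in $E\times E$. Finally, Lemma~\ref{lemma_transfer_ME_from_desourcification} pulls this monolithic extension statement back to $\Lambda$ and exhibits $(x,y)$ as a monolithic extension of a pair in $F\times F$.

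The main obstacle is the converse: keeping precise track of how degrees in $\widetilde\Lambda$ relate to degrees in $\Lambda$ when $d(z)$ has some finite components (so that $\kappa(z)$ eventually moves into the ``new'' tail that $\widetilde\Lambda$ attaches), which is exactly where the hypothesis $\partial\Lambda=\Lambda^{\le\infty}$ and the technical Lemma~\ref{lemma_property_of_old_boundary_path} become indispensable. The bookkeeping in the forward direction is comparatively routine once Lemma~\ref{lemma_cts-trace_Fell_desourcification} is in place, because it is designed precisely to package the transfer from $F$ to a suitable $E$.
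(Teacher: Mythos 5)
Your proposal is correct and follows essentially the same route as the paper's proof: both directions reduce to condition (2) of Theorem \ref{thm_Cartan_Fell} for $\widetilde\Lambda$ via Lemmas \ref{lemma_boundary_path_associated_to_infinite_path}, \ref{lemma_cts-trace_Fell_desourcification}, \ref{lemma_property_of_old_boundary_path} and \ref{lemma_transfer_ME_from_desourcification}, with the same choices of $m\vee p$, $m=p+l$ and $n=m\wedge d(z)$ and the same construction of $F$ by prepending $\kappa(z)(n,m)$ and pulling back through $\kappa$. No substantive differences to report.
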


\begin{theorem}\label{thm_cts-trace}
Suppose $\Lambda$ is a row-finite $k$-graph such that $m=0$ whenever a path in $\partial\Lambda$ is shift equivalent to itself with lag $m$. Then $C^*(\Lambda)$ has continuous trace if and only if for every finite subset $V$ of $\Lambda^0$ there exists a finite $F\subset\Lambda$ such that for every $x,y\in V\partial\Lambda$ with $x\sim y$, the pair $(x,y)$ is a monolithic extension of a pair in $F\times F$.
\begin{proof}
Suppose that for every finite $V\subset\Lambda^0$ there exists a finite $F\subset\Lambda$ as in the statement of the theorem. Let $W$ be a finite subset of $\widetilde{\Lambda}^0$. For every $w\in W$ there exist $y^{(w)}\in\partial\Lambda$ and $m^{(w)}\in\NN^k$ such that $w=\kappa(y^{(w)})(m^{(w)})$. Let $V=\braces{r(y^{(w)}):w\in W}$. By our assumption there exists a finite $F\subset\Lambda$ such that for every $x,y\in V\partial\Lambda$ with $x\sim y$, the pair $(x,y)$ is a monolithic extension of a pair in $F\times F$. Lemma \ref{lemma_cts-trace_Fell_desourcification} can now be applied to see that there exists a finite $E\subset\widetilde\Lambda$ such that for every $a,b\in W\widetilde\Lambda^\infty$ with $a\sim b$, the pair $(a,b)$ is a monolithic extension of a pair in $E\times E$. Then condition \eqref{thm_proper_cts-trace_2} of Theorem \ref{thm_proper_cts-trace} holds so, since in addition $\widetilde\Lambda$ has no sources, $C^*(\widetilde\Lambda)$ has continuous trace by Theorem \ref{thm_proper_cts-trace}. It follows that $C^*(\Lambda)$ has continuous trace by Morita equivalence (p. \pageref{describing_morita_equivalence_results},  \cite[Theorem~6.3]{Webster2011} and \cite[Proposition~7]{anHuef-Raeburn-Williams2007}).

Now suppose $C^*(\Lambda)$ has continuous trace and choose a finite $V\subset\Lambda^0$. Then $C^*(\widetilde{\Lambda})$ has continuous trace by Morita equivalence (p. \pageref{describing_morita_equivalence_results},  \cite[Theorem~6.3]{Webster2011} and \cite[Proposition~7]{anHuef-Raeburn-Williams2007}). We also know $\widetilde\Lambda$ has no sources, so by Theorem \ref{thm_proper_cts-trace} there exists a finite $E\subset\widetilde\Lambda$ such that for any $a,b\in \iota(V)\widetilde\Lambda^\infty$ with $a\sim b$, the pair $(a,b)$ is a monolithic extension of a pair in $E\times E$.
For every $\gamma\in \iota(\Lambda^0)\widetilde\Lambda$ we choose $z^{(\gamma)}\in\partial\Lambda$ such that $\gamma=\kappa(z^{(\gamma)})\big(0,d(\gamma)\big)$. Let 
\[
F=\big\{z^{(\mu)}\big(0,d(\mu)\wedge d(z^{(\mu)})\big):\mu\in E\big\}
\]

Fix $x,y\in V\partial\Lambda$ with $x\sim y$. Then $\kappa(x),\kappa(y)\in \iota(V)\widetilde\Lambda^\infty$ with $\kappa(x)\sim\kappa(y)$ by Lemma \ref{lemma_desource_shift_equivalence_preserved}, so $\big(\kappa(x),\kappa(y)\big)$ is a monolithic extension of some $(\mu,\nu)\in E\times E$. Since $\mu=\kappa(z^{(\mu)})\big(0,d(\mu)\big)$ and $\nu=\kappa(z^{(\nu)})\big(0,d(\nu)\big)$, we can apply Lemma \ref{lemma_transfer_ME_from_desourcification} to see that $(x,y)$ is a monolithic extension of
\[\Big(z^{(\mu)}\big(0,d(\mu)\wedge d(z^{(\mu)})\big),z^{(\nu)}\big(0,d(\nu)\wedge d(z^{(\nu)})\big)\Big)\in F\times F.\qedhere\]
\end{proof}
\end{theorem}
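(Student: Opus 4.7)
The plan is to prove Theorem \ref{thm_cts-trace} by transferring the continuous-trace question from $C^*(\Lambda)$ to $C^*(\widetilde\Lambda)$ using the Morita equivalence of $C^*(\Lambda)$ and $C^*(\widetilde\Lambda)$ (together with the preservation of continuous trace under Morita equivalence), and then applying Theorem \ref{thm_proper_cts-trace} to the sourceless row-finite $k$-graph $\widetilde\Lambda$. The two transfer lemmas already in hand, Lemmas \ref{lemma_cts-trace_Fell_desourcification} and \ref{lemma_transfer_ME_from_desourcification}, are precisely designed to carry monolithic extension data across $\kappa$ in each direction, so the argument should closely parallel the proofs of Theorems \ref{thm_bounded_trace} and \ref{thm_Fell}.

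For the ``if'' direction, suppose the monolithic-extension condition holds in $\Lambda$. To apply Theorem \ref{thm_proper_cts-trace} to $\widetilde\Lambda$, fix a finite $W\subset\widetilde\Lambda^0$. For each $w\in W$, choose $y^{(w)}\in\partial\Lambda$ and $m^{(w)}\in\NN^k$ with $w=\kappa(y^{(w)})(m^{(w)})$, and set $V=\{r(y^{(w)}):w\in W\}$, a finite subset of $\Lambda^0$. The hypothesis applied to $V$ yields a finite $F\subset\Lambda$ handling monolithic extensions for shift-equivalent pairs in $V\partial\Lambda$, and then Lemma \ref{lemma_cts-trace_Fell_desourcification} upgrades $F$ to a finite $E\subset\widetilde\Lambda$ handling shift-equivalent pairs in $W\widetilde\Lambda^\infty$. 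This is exactly condition \eqref{thm_proper_cts-trace_2} of Theorem \ref{thm_proper_cts-trace}, so $C^*(\widetilde\Lambda)$ has continuous trace; Morita equivalence then gives that $C^*(\Lambda)$ does too.

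For the ``only if'' direction, suppose $C^*(\Lambda)$ has continuous trace, so that $C^*(\widetilde\Lambda)$ also does by Morita equivalence. Given a finite $V\subset\Lambda^0$, apply Theorem \ref{thm_proper_cts-trace} to the finite set $\iota(V)\subset\widetilde\Lambda^0$ to obtain a finite $E\subset\widetilde\Lambda$ controlling shift-equivalent pairs in $\iota(V)\widetilde\Lambda^\infty$. For each $\gamma\in\iota(\Lambda^0)\widetilde\Lambda$ pick a representative $z^{(\gamma)}\in\partial\Lambda$ with $\gamma=\kappa(z^{(\gamma)})(0,d(\gamma))$, and define the finite set
\[
F=\big\{z^{(\mu)}\big(0,d(\mu)\wedge d(z^{(\mu)})\big):\mu\in E\big\}\subset\Lambda.
\]
Given $x,y\in V\partial\Lambda$ with $x\sim y$, Lemma \ref{lemma_desource_shift_equivalence_preserved} tells us $\kappa(x),\kappa(y)\in\iota(V)\widetilde\Lambda^\infty$ are shift equivalent, so $(\kappa(x),\kappa(y))$ is a monolithic extension of some $(\mu,\nu)\in E\times E$. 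Since $\mu=\kappa(z^{(\mu)})(0,d(\mu))$ and $\nu=\kappa(z^{(\nu)})(0,d(\nu))$, Lemma \ref{lemma_transfer_ME_from_desourcification} converts this into a monolithic extension of $(x,y)$ by the corresponding pair in $F\times F$, completing the argument.

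I do not expect a genuinely new obstacle here: both the strategy and the needed transfer lemmas have already been assembled for the bounded-trace and Fell analogues. The only point requiring mild care is that for the ``only if'' direction we do \emph{not} need the hypothesis $\partial\Lambda=\Lambda^{\le\infty}$ (unlike Theorems \ref{k-graph_liminal_thm2}, \ref{k-graph_postliminal_thm2}, and \ref{thm_Fell}), because the finiteness of $V$ makes $\iota(V)$ a legitimate finite subset of $\widetilde\Lambda^0$ to feed into Theorem \ref{thm_proper_cts-trace}—there is no auxiliary ``eventual emptiness'' of cylinder sets to manage. Thus the equivalence is clean in both directions and the proof is essentially a bookkeeping exercise with $\kappa$, $\iota$, and the two transfer lemmas.
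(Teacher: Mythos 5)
Your proposal is correct and follows essentially the same route as the paper: both directions reduce to Theorem \ref{thm_proper_cts-trace} applied to the sourceless desourcification $\widetilde\Lambda$, with Lemma \ref{lemma_cts-trace_Fell_desourcification} carrying the monolithic-extension data forward to $\widetilde\Lambda$ and Lemmas \ref{lemma_desource_shift_equivalence_preserved} and \ref{lemma_transfer_ME_from_desourcification} carrying it back, exactly as in the paper's proof. Your side remark that the converse needs no $\partial\Lambda=\Lambda^{\le\infty}$ hypothesis is also consistent with the paper, whose statement is an unconditional equivalence.
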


\begin{example}\label{examples_cts-trace}
The Cuntz-Krieger $C^*$-algebras generated by the $2$-graphs from Examples \ref{omega_2_infty_2}, \ref{example_not_locally_convex_with_same_boundary_paths} and \ref{example_annoying_k-graph} have continuous trace by Theorem \ref{thm_cts-trace}. To see this, in each of these cases the condition in the Theorem can be satisfied by taking $F=V$ for every finite subset $V$ of $\Lambda^0$.
\end{example}

\section{Special case: directed graphs}\label{section_directed_graphs}
In this section we will state the main results from Sections \ref{sec_liminal_postliminal}, \ref{sec_bded-trace_cts-trace_Fell} and \ref{sec_desourcification} for the special case of directed graphs. We will also state alternative conditions that characterise the row-finite directed graphs that are postliminal, liminal, Fell, and have continuous trace.

A characterisation of liminal directed graph $C^*$-algebras has previously been developed by Ephrem in \cite[Theorem~5.5]{Ephrem2004}. Similarly, characterisations of postliminal directed graph $C^*$-algebras have been developed by Ephrem in \cite[Theorem~7.3]{Ephrem2004} and by Deicke, Hong and Szyma{\'n}ski in \cite[Theorem~2.1]{dhs2003}. We reconcile our results with Ephrem's in Remark \ref{remark_Ephrems_liminal_condition_coincides} and Remark \ref{remark_Ephrems_postliminal_condition_coincides} below. Unlike the results presented here, both Ephrem's and Deicke, Hong and Szyma{\'n}ski's results may be applied to directed graphs that are not row-finite. This shortcoming will be largely overcome by showing that the conditions presented in this section are equivalent to Ephrem's.

In developing his characterisation of directed graphs with liminal $C^*$-algebras, Ephrem started by developing a characterisation \cite[Theorem~3.10]{Ephrem2004} of the row-finite directed graphs without sources and with liminal $C^*$-algebras before using the Drinen-Tomforde desingularisation to remove the row-finite and no sources hypothesis in \cite[Theorem~5.5]{Ephrem2004}. Our approach is similar through the use of the higher-rank graph Farthing-Webster desourcification in Section \ref{sec_desourcification}. The characterisations of directed graphs with postliminal $C^*$-algebras in Theorem \ref{thm_directed_graph_postliminal} and \cite[Theorem~7.3]{Ephrem2004} follow similar approaches.

Recall the definition of a directed graph and the associated path groupoid from Section \ref{sec_path_groupoid} and the overview of the Cuntz-Krieger $C^*$-algebras of row-finite directed graphs from Section \ref{section_algebras_of_directed_graphs}. For a directed graph $E$ we can consider $E^\infty=\Lambda_E^\infty$. Recall the definition of the boundary paths $\partial\Lambda$ of a $k$-graph $\Lambda$ from Section \ref{sec_boundary_paths_and_desourcification}. Since every $1$-graph is locally convex (Remark \ref{remark_every_1-graph_locally_convex}), after assuming that $E$ is row-finite we have $\partial\Lambda_E=\Lambda_E^{\le\infty}$ by Proposition \ref{prop_Lambda_leinfty_equals_partial_lambda}. By the definition of $\Lambda^{\le\infty}$ (Definition \ref{def_Lambda_leinfty}) we can consider $E^{\le\infty}=\Lambda_E^{\le\infty}$, so that $\partial\Lambda_E=E^{\le\infty}$. Recall that Lemmas \ref{path_groupoids_isomorphic} and \ref{lemma_KP_LambdaE} tell us that $G_E\cong G_{\Lambda_E}$ and $C^*(E)\cong C^*(\Lambda_E)$, respectively.

Suppose $\eta$ is a path in a directed graph with $r(\eta)=s(\eta)$. For each $p\in\PP$ define \notationindex{ETAPINFTY@$\eta^p,\eta^\infty$ where $\eta\in E^*, p\in\PP$}
\[
\eta^p:=\underbrace{\eta\eta\cdots\eta}_{p\mathrm{-times}}
\]
and define $\eta^\infty$ to be the infinite path $\eta\eta\eta\cdots$. For any path $x\in E^*\cup E^\infty$, define\notationindex{FPX@$\mathcal{FP}(x)$}
\[
\mathcal{FP}(x):=\{\gamma\in E^*: \gamma=x(m,n)\text{ for some }m,n\in\NN\},
\]
so that $\mathcal{FP}(x)$ can be thought of as the set of all finite paths that appear in $x$. We will frequently abuse notation by talking about paths as if they are sets; when we refer to elements of $x$, we really mean elements of $\mathcal{FP}(x)$. For example, by the statement ``there are no cycles in $x$'', we really mean ``there are no cycles in $\mathcal{FP}(x)$''.

\begin{definition}
For a directed graph $E$ and for $x,y\in E^{\le\infty}$, we say $x$ is {\em frequently divertable}\index{frequently divertable!path in a directed graph} to $[y]$ if for every $n\in\NN$ with $n\le |x|$ there is a path in $x(n)E^{\le\infty}$ that is shift equivalent to $y$.
\end{definition}
From the above observations that $E^\infty=\Lambda_E^\infty$ and $E^{\le\infty}=\partial\Lambda_E$, we can see that this notion of a frequently divertable path in $E^{\le\infty}$ is consistent with the notions of frequently divertable infinite paths and boundary paths in higher-rank graphs as introduced in Definitions \ref{def_frequently_divertable_infinite_paths} and \ref{def_frequently_divertable_boundary_paths}.

\begin{theorem}\label{thm_directed_graph_liminal}
Suppose $E$ is a row-finite directed graph. The following are equivalent:
\begin{enumerate}
\item\label{thm_directed_graph_liminal_1} for every $x\in E^{\le\infty}$, every path in $E^\infty$ that is frequently divertable to $[x]$ is shift equivalent to $x$;
\item\label{thm_directed_graph_liminal_2} for every $x\in E^{\le\infty}$ there are finitely many paths in $r(x)E^{\le\infty}$ that are shift equivalent to $x$; and
\item\label{thm_directed_graph_liminal_3} $C^*(E)$ is liminal.
\end{enumerate}
If in addition $E$ has no sources, the following are also equivalent to the preceding:
\begin{enumerate}\setcounter{enumi}{3}
\item\label{thm_directed_graph_liminal_4} every orbit in $G_E^{(0)}$ is closed; and
\item\label{thm_directed_graph_liminal_5} $C^*(G_E)$ is liminal.
\end{enumerate}
\end{theorem}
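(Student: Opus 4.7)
The strategy is to transfer to the associated $1$-graph $\Lambda_E$ of Example \ref{example_LambdaE} and invoke the higher-rank theorems of Section \ref{sec_liminal_postliminal}, combined with a direct combinatorial argument for the directed-graph-specific condition \eqref{thm_directed_graph_liminal_2}. The bridging facts I would collect are $G_E \cong G_{\Lambda_E}$ (Lemma \ref{path_groupoids_isomorphic}), $C^*(E) \cong C^*(\Lambda_E)$ (Lemma \ref{lemma_RSY_LambdaE}), and $\partial\Lambda_E = \Lambda_E^{\le\infty} = E^{\le\infty}$ by Proposition \ref{prop_Lambda_leinfty_equals_partial_lambda}, since every $1$-graph is locally convex (Remark \ref{remark_every_1-graph_locally_convex}); this further reduces to $E^\infty$ when $E$ has no sources. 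A brief preliminary shows that \eqref{thm_directed_graph_liminal_1} is equivalent to its $E^{\le\infty}$-variant: the only additional case, a finite $y$ ending at a source $s(y)$, is handled by taking $n=|y|$ in the frequent-divertability hypothesis, which forces $s(y)\in[x]$ and hence $s(x)=s(y)$ and $y\sim x$. With these identifications, \eqref{thm_directed_graph_liminal_1}$\iff$\eqref{thm_directed_graph_liminal_3} is immediate from Theorem \ref{k-graph_liminal_thm2} applied to $\Lambda_E$, and in the no-sources case \eqref{thm_directed_graph_liminal_1}$\iff$\eqref{thm_directed_graph_liminal_4}$\iff$\eqref{thm_directed_graph_liminal_5} follows from Theorem \ref{k-graph_liminal_thm1} applied to $\Lambda_E$.

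For \eqref{thm_directed_graph_liminal_2}$\implies$\eqref{thm_directed_graph_liminal_1}, a pigeonhole argument suffices. Given $y$ frequently divertable to $[x]$, I would pick witnesses $z_n \in y(n)E^{\le\infty}\cap[x]$ for each $n\le|y|$ and form the concatenations $w_n := y(0,n)z_n \in r(y)E^{\le\infty}\cap[x]$; applying \eqref{thm_directed_graph_liminal_2} to any element of this (nonempty) set makes it finite, so some fixed $w$ coincides with $w_n$ for infinitely many $n$ and therefore agrees with $y$ on arbitrarily long prefixes, forcing $w=y$ and $y\sim x$. The finite-source case comes for free at $n=|y|$, where $y(n)E^{\le\infty}=\{s(y)\}$ gives $w_{|y|}=y$ directly.

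For the converse \eqref{thm_directed_graph_liminal_1}$\implies$\eqref{thm_directed_graph_liminal_2}, I would argue by contrapositive. Assuming $W := r(x)E^{\le\infty}\cap[x]$ is infinite, row-finiteness makes the tree $T$ of finite prefixes of paths in $W$ infinite and finitely branching, so K\"{o}nig's lemma supplies an infinite branch $y\in E^\infty$ which is automatically frequently divertable to $[x]$. If $x$ is finite and ends at a source, the orbit $[x]$ consists only of finite paths, so $y\notin[x]$ immediately contradicts \eqref{thm_directed_graph_liminal_1}. In the case $x\in E^\infty$, \eqref{thm_directed_graph_liminal_1} forces $y\in[x]$; since $W\setminus\{y\}$ remains infinite while row-finiteness limits the number of $z\in W$ that first disagree with $y$ at any fixed position, the first-disagreement positions $m_z$ are unbounded, and I would extract a strictly increasing sequence $m_1<m_2<\cdots$ with detours $\delta_i$ returning to $y$ at positions $q_i < m_{i+1}$. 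Binary choices at each $m_i$ (stay on $y$ or take $\delta_i$) then parameterise $2^{\aleph_0}$ pairwise distinct infinite paths $y^{(c)}$ in $E^\infty$, each frequently divertable to $[x]$ by construction; \eqref{thm_directed_graph_liminal_1} would force all of these into the countable orbit $[x]$, which is impossible. The main obstacle is the bookkeeping of the detour positions — specifically verifying the feasibility of the non-overlap condition $m_{i+1}>q_i$ (possible from unboundedness of the $m_z$) and confirming that distinct choice sequences produce distinct paths, which follows because the $(m_i{+}1)$-st edge of $y^{(c)}$ is determined by $c_i$ and the two alternatives $y(m_i,m_i{+}1)$ and $\delta_i(0,1)$ are distinct by the definition of a first disagreement.
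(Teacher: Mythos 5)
Your overall architecture matches the paper's: the same bridging facts ($G_E\cong G_{\Lambda_E}$, $C^*(E)\cong C^*(\Lambda_E)$, $\partial\Lambda_E=\Lambda_E^{\le\infty}=E^{\le\infty}$) reduce \eqref{thm_directed_graph_liminal_1}$\iff$\eqref{thm_directed_graph_liminal_3} and the no-sources equivalences to Theorems \ref{k-graph_liminal_thm2} and \ref{k-graph_liminal_thm1}; your treatment of finite boundary paths via $n=|y|$ is exactly the paper's; and your pigeonhole proof of \eqref{thm_directed_graph_liminal_2}$\implies$\eqref{thm_directed_graph_liminal_1} is the paper's argument essentially verbatim. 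Where you genuinely diverge is \eqref{thm_directed_graph_liminal_1}$\implies$\eqref{thm_directed_graph_liminal_2}: the paper first disposes of cycles with entries (Lemma \ref{lemma_cycle_with_entry_implies_not_liminal}), shows its K\"onig-type branch $y$ contains no cycle, and then invokes Lemma \ref{lemma_FD_not_SE}, whose acyclicity hypothesis is precisely what lets it certify that a single explicitly built detour path is \emph{not} shift equivalent to $x$. Your cardinality argument ($2^{\aleph_0}$ detour paths versus the orbit $[x]$, which is countable because $E^*$ is) avoids the cycle analysis and the delicate non-equivalence verification entirely; that is a real simplification. The recoverability of the choice sequence $c$ from $y^{(c)}$ should be phrased as an induction, since the position at which the $i$-th decision is visible depends on $c_1,\dots,c_{i-1}$, but that is cosmetic.

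The one genuine gap is your justification that the first-disagreement positions are unbounded. Row-finiteness does \emph{not} limit the number of $z\in W$ that first disagree with $y$ at a fixed position; it limits the number of possible first divergent \emph{edges} there, and infinitely many elements of $W$ can share the same divergence point and differ only further along (attach a second spine to $r(x)$ by a single edge, with rungs returning to $x$ at unboundedly many places: every element of $W\setminus\{x\}$ then first disagrees with $x$ at position $0$). So the inference ``$W\setminus\{y\}$ infinite plus finitely many per position, hence positions unbounded'' rests on a false premise. The repair is the refinement the paper itself builds into its branch: run K\"onig's lemma choosing at each step a child with infinitely many descendants, so that every prefix $y(0,n)$ extends to infinitely many elements of $W$. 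Then for each $n$ the set $\{z\in W: z(0,n)=y(0,n)\}$ is infinite, hence contains some $z\ne y$ whose first disagreement with $y$ occurs at a position $\ge n$; unboundedness follows, and with it the supply of detours starting beyond any prescribed return position $q_i$. With that adjustment your proof goes through.
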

\begin{remark}\label{remark_Ephrems_liminal_condition_coincides}
Condition \eqref{thm_directed_graph_liminal_2} is very similar to Ephrem's condition from \cite[Theorem~5.5]{Ephrem2004}. It is a straightforward task to show that these conditions are equivalent. 
\end{remark}
The following two lemmas are used in the proof of Theorem \ref{thm_directed_graph_liminal} to establish the equivalence of \eqref{thm_directed_graph_liminal_1} and \eqref{thm_directed_graph_liminal_2}.
\begin{lemma}\label{lemma_FD_not_SE}
Suppose $E$ is a directed graph and that $x\in E^\infty$ is a path that does not contain a cycle and that has at least two elements in $x(n)E^\infty\cap [x]$ for each $n\in\NN$. Then for each $m\in\NN$ there is a path in $x(m)E^\infty$ that is frequently divertable to $[x]$ but not shift equivalent to $x$.
\begin{proof}
Fix $m\in\NN$ and let $p_0=0$. Choose $z^{(1)}\in x(m)E^\infty\cap[x]$ with $z^{(1)}\ne x$. Since $z^{(1)}\sim x$ there exist $p_1,q_1\in\NN$ with $\sigma^{p_1}(x)=\sigma^{q_1}(z^{(1)})$. Let $\gamma^{(1)}=z^{(1)}(0,q_1)$. Since $z^{(1)}\ne x$ we must have $\gamma^{(1)}\ne x(0,p_1)$. Now choose $z^{(2)}\in x(p_1)E^\infty\cap[x]$ with $z^{(2)}\ne\sigma^{p_1}(x)$. Since $z^{(2)}\sim\sigma^{p_1}(x)$ there exist $p_2,q_2\in\NN$ with $p_2>p_1$ such that $\sigma^{p_2}(x)=\sigma^{q_2}(z^{(2)})$. Let $\gamma^{(2)}=z^{(2)}(0,q_2)$. Since $z^{(2)}\ne\sigma^{p_2}(x)$ we must have $\gamma^{(2)}\ne x(p_1,p_2)$. By continuing in this manner we can see that there is a strictly increasing sequence $p_0,p_1,p_2,\ldots$ in $\NN$ and a sequence $\gamma^{(1)},\gamma^{(2)},\ldots$ in $E^*$ such that $r(\gamma^{(i)})=x(p_{i-1})$, $s(\gamma^{(i)})=x(p_i)$ and $\gamma^{(i)}\ne x(p_{i-1},p_i)$ for $i=1,2,\ldots$. 

Let $y$ be the path $\gamma^{(1)}\gamma^{(2)}\cdots$ in $x(m)E^\infty$. To show that $y\nsim x$, first fix $a,b\in\NN$. We need to show that $\sigma^a(x)\ne \sigma^b(y)$. Since $y=\gamma^{(1)}\gamma^{(2)}\cdots$, we may increase $a,b$ if necessary to ensure that $\sigma^b(y)=\gamma^{(j)}\gamma^{(j+1)}\cdots$ for some $j\in\PP$. If $x(a)\ne y(b)$ it follows immediately that $\sigma^a(x)\ne\sigma^b(y)$ so we may assume that $x(a)=y(b)$. Then $x(a)=y(b)=r(\gamma^{(j)})=x(p_{j-1})$, so $a=p_{j-1}$ since $x$ does not contain a cycle.

If $p_j-p_{j-1}=|\gamma^{(j)}|$, then 
\begin{align*}
\sigma^a(x)(0,p_j-&p_{j-1})=\sigma^{p_{j-1}}(x)(0,p_j-p_{j-1})=x(p_{j-1},p_j)\\
&\ne\gamma^{(j)}=\sigma^b(y)(0,|\gamma^{(j)}|)=\sigma^b(y)(0,p_j-p_{j-1}),
\end{align*}
so $\sigma^b(y)\ne\sigma^a(x)$. If $p_j-p_{j-1}\ne|\gamma^{(j)}|$ and $\sigma^b(y)=\sigma^a(x)$, we must have 
\[x(p_{j-1}+|\gamma^{(j)}|)=x(a+|\gamma^{(j)}|)=y(b+|\gamma^{(j)}|)=s(\gamma^{(j)})=x(p_j).\]
This is not possible when $p_j-p_{j-1}\ne|\gamma^{(j)}|$ since $x$ does not contain a cycle. We have thus  shown that $y$ is not shift equivalent to $x$. That $y$ is frequently divertable to $[x]$ follows immediately from $r(\gamma^{(i)})$ being in $x$ for each $i$.
\end{proof}
\end{lemma}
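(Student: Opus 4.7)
The plan is to construct $y$ recursively by gluing together segments $\gamma^{(i)}$, each of which is an initial portion of a path in $[x]$ that is chosen to differ from the corresponding block of $x$. The hypothesis that each $x(n)E^\infty\cap[x]$ has at least two elements ensures that at every vertex along $x$ we can find a ``diversion'' back into $[x]$, and the no-cycle hypothesis on $x$ will be what ultimately prevents $y$ itself from being shift equivalent to $x$.

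Fix $m\in\NN$. First I would set $p_0=m$ and build, inductively, a strictly increasing sequence $m=p_0<p_1<p_2<\cdots$ in $\NN$ together with finite paths $\gamma^{(i)}\in E^*$ satisfying $r(\gamma^{(i)})=x(p_{i-1})$, $s(\gamma^{(i)})=x(p_i)$, and $\gamma^{(i)}\ne x(p_{i-1},p_i)$. At stage $i$ I use the hypothesis to pick $z^{(i)}\in x(p_{i-1})E^\infty\cap[x]$ with $z^{(i)}\ne\sigma^{p_{i-1}}(x)$; the shift-equivalence $z^{(i)}\sim x$ then supplies $p_i>p_{i-1}$ and $q_i\in\NN$ with $\sigma^{p_i}(x)=\sigma^{q_i}(z^{(i)})$, and I set $\gamma^{(i)}:=z^{(i)}(0,q_i)$. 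The inequality $\gamma^{(i)}\ne x(p_{i-1},p_i)$ follows because otherwise $z^{(i)}=\gamma^{(i)}\sigma^{q_i}(z^{(i)})=x(p_{i-1},p_i)\sigma^{p_i}(x)=\sigma^{p_{i-1}}(x)$.

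Define $y:=\gamma^{(1)}\gamma^{(2)}\cdots\in x(m)E^\infty$. Frequent divertability of $y$ to $[x]$ is automatic: given $n\in\NN$, choose $i$ with $n\le|\gamma^{(1)}\cdots\gamma^{(i-1)}|$; then $y(n)$ lies on the path $\gamma^{(1)}\cdots\gamma^{(i-1)}w$ where $w:=\sigma^{p_{i-1}}(x)$, and $\sigma^n$ of this extended path lies in $y(n)E^\infty\cap[x]$ since $w\sim x$. The real work will be in showing $y\nsim x$.

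For the non-shift-equivalence, fix $a,b\in\NN$; I must show $\sigma^a(x)\ne\sigma^b(y)$. By enlarging $a,b$ along matching blocks I can reduce to the case where $\sigma^b(y)=\gamma^{(j)}\gamma^{(j+1)}\cdots$ for some $j$. Assuming (for contradiction) $\sigma^a(x)=\sigma^b(y)$ forces $x(a)=r(\gamma^{(j)})=x(p_{j-1})$, and the absence of cycles in $x$ then forces $a=p_{j-1}$. Two cases remain: if $|\gamma^{(j)}|=p_j-p_{j-1}$, then comparing the first $p_j-p_{j-1}$ edges of $\sigma^a(x)$ and $\sigma^b(y)$ directly contradicts $\gamma^{(j)}\ne x(p_{j-1},p_j)$; if $|\gamma^{(j)}|\ne p_j-p_{j-1}$, then the vertex at position $|\gamma^{(j)}|$ would force $x(p_{j-1}+|\gamma^{(j)}|)=x(p_j)$, again contradicting the no-cycles hypothesis. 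The main obstacle is precisely this book-keeping in the non-shift-equivalence argument: one must be careful to exploit ``no cycles in $x$'' at exactly the right point, since the alternating-block construction yields two superficially different ways $\sigma^a(x)$ and $\sigma^b(y)$ might disagree.
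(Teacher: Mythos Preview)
Your proposal is correct and follows essentially the same approach as the paper's proof: an inductive construction of detour segments $\gamma^{(i)}$ with $r(\gamma^{(i)})=x(p_{i-1})$, $s(\gamma^{(i)})=x(p_i)$, $\gamma^{(i)}\ne x(p_{i-1},p_i)$, followed by the same two-case analysis (matching versus non-matching lengths) exploiting the no-cycle hypothesis to rule out $\sigma^a(x)=\sigma^b(y)$. Your choice $p_0=m$ is in fact cleaner than the paper's $p_0=0$, which appears to be a minor slip there.
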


\begin{lemma}\label{lemma_cycle_with_entry_implies_not_liminal}
Let $E$ be a directed graph. Suppose that for every $x\in E^{\le\infty}$, every path in $E^\infty$ that is frequently divertable to $[x]$ is shift equivalent to $x$. Then no cycle in $E$ has an entry.
\begin{proof}
Suppose $\alpha$ is a cycle in $E$ with an entry $f\in E^1$. We may assume that $r(\alpha)=s(\alpha)=r(f)$. If there is a path $\gamma\in E^*$ with $r(\gamma)=s(f)$ and $s(\gamma)=r(f)$ then let $x=(\gamma f)^\infty$; otherwise let $x$ be any path in $s(f)E^{\le\infty}$. The path $\alpha^\infty$ in $E^\infty$ is then frequently divertable to $[x]$ but not shift equivalent to $x$.
\end{proof}
\end{lemma}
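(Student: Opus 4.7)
The plan is to argue by contrapositive: assume that $E$ contains a cycle $\alpha$ of length $\ell := |\alpha|$ with entry $f \in E^1$, and exhibit some $x \in E^{\le\infty}$ together with a path $y \in E^\infty$ such that $y$ is frequently divertable to $[x]$ yet $y$ is not shift equivalent to $x$, contradicting the hypothesis.

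After a cyclic relabelling of $\alpha$, I may suppose $r(\alpha) = s(\alpha) = r(f)$. The definition of a cycle (Section~\ref{sec_path_groupoid}) forces the vertices $r(\alpha_1), \ldots, r(\alpha_\ell)$ to be pairwise distinct, so $r(f) = r(\alpha_1)$ singles out a unique index; combined with the entry condition $f \neq \alpha_1$, this actually gives $f \neq \alpha_j$ for every $j$. Set $y := \alpha^\infty \in E^\infty$. To define $x$, I split into two cases:
\begin{enumerate}
\item[(1)] if there exists $\gamma \in E^*$ with $r(\gamma) = s(f)$ and $s(\gamma) = r(\alpha)$, put $x := (\gamma f)^\infty \in E^\infty$;
\item[(2)] otherwise, choose any $x \in s(f)\, E^{\le\infty}$, which is nonempty: if $s(f)$ is itself a source then the length-zero path at $s(f)$ belongs to $E^{\le\infty}$, and otherwise any maximal forward extension produces an element of $E^{\le\infty}$.
\end{enumerate}

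To see that $y$ is frequently divertable to $[x]$, fix $n \in \NN$; then $y(n) = r(\alpha_{i+1})$ for some $i \in \{0,\ldots,\ell-1\}$, and the path $\alpha_{i+1}\alpha_{i+2}\cdots\alpha_\ell$ has source $r(\alpha) = r(f)$. Concatenating with $f$ and then with the tail $f\gamma f\gamma\cdots = \sigma^{|\gamma|}(x)$ (in Case~1) or with $x$ (in Case~2) yields an element of $y(n)E^{\le\infty}$ whose tail is a shift of $x$; hence it is shift equivalent to $x$.

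The remaining task is $y \not\sim x$. In Case~1, each tail of $x = (\gamma f)^\infty$ contains the edge $f$ infinitely often, whereas every tail of $y = \alpha^\infty$ uses only edges from $\{\alpha_1,\ldots,\alpha_\ell\}$, none of which equals $f$; thus no shift of $x$ can agree with any shift of $y$. In Case~2 the hypothesis is that no finite path joins $s(f)$ to $r(\alpha)$; upgrading this is the main technical point. From any cycle vertex $s(\alpha_j)$ one can follow the cycle edges $\alpha_{j+1}\cdots\alpha_\ell$ to reach $r(\alpha)$, so the non-existence of a path from $s(f)$ to $r(\alpha)$ in fact forbids any path from $s(f)$ to any vertex of the cycle. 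Consequently every vertex visited by $x$ lies off the cycle, while every vertex of $y$ lies on it, so the tails of $x$ and $y$ cannot coincide and $y \not\sim x$. The principal obstacle is precisely this upgrade in Case~2; once it is made, the rest is bookkeeping about tails of eventually-periodic paths.
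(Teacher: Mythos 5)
Your proposal is correct and follows essentially the same route as the paper's (very terse) proof: the identical choice of $x=(\gamma f)^\infty$ when a return path $\gamma$ exists and $x\in s(f)E^{\le\infty}$ otherwise, with $\alpha^\infty$ frequently divertable to $[x]$ but not shift equivalent to it; you have simply supplied the verification the paper omits. The only blemish is a phrasing slip in the divertability step of Case~1, where concatenating with $f$ \emph{and then} with $\sigma^{|\gamma|}(x)=f\gamma f\cdots$ would repeat the edge $f$; one should append either $f$ followed by $x$ itself, or $\sigma^{|\gamma|}(x)$ directly.
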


\begin{proof}[Proof of Theorem \ref{thm_directed_graph_liminal}]
\eqref{thm_directed_graph_liminal_1}$\implies$\eqref{thm_directed_graph_liminal_2}. Suppose \eqref{thm_directed_graph_liminal_2} does not hold; we aim to show that \eqref{thm_directed_graph_liminal_1} does not hold. If $E$ contains a cycle with an entry then \eqref{thm_directed_graph_liminal_1} does not hold by Lemma \ref{lemma_cycle_with_entry_implies_not_liminal}; from here on we may assume that no cycle in $E$ has an entry. Since \eqref{thm_directed_graph_liminal_2} does not hold there is a path $x\in E^{\le\infty}$ such that $r(x)E^{\le\infty}\cap[x]$ is infinite. Since $E$ is row-finite, there are only finitely many edges with range $r(x)$ and so there exists $y_1\in r(x)E^1$ such that $s(y_1)E^{\le\infty}\cap [x]$ is infinite. There then exists $y_2\in s(y_1)E^1$ such that $s(y_2)E^{\le\infty}\cap[x]$ is infinite. By repeating this observation we can see that there is a path $y=y_1y_2y_3\cdots$ in $E^\infty$ such that $y(n)E^{\le\infty}\cap [x]$ is infinite for each $n\in\NN$. Note that $y$ is frequently divertable to $[x]$ so that if $y$ is not shift equivalent to $x$ then \eqref{thm_directed_graph_liminal_1} does not hold and we are done.

Suppose $y$ is shift equivalent to $x$. Then $y(n)E^{\le\infty}\cap [x]=y(n)E^{\le\infty}\cap [y]$ is infinite for all $n\in\NN$. We know $y\in E^\infty$, so $E^{\le\infty}\cap [y] = E^\infty\cap [y]$ and thus $y(n)E^{\le\infty}\cap [y] = y(n)E^\infty\cap [y]$ for each $n\in\NN$. If $y$ contains a cycle $\alpha$ then there exists $m\in\NN$ with $y(m,m+|\alpha|)=\alpha$ and since no cycle in $E$ has an entry, it follows that $y=y(0,m)\alpha^\infty$. Then the only path in $y(m)E^\infty$ is $\alpha^\infty$, so $y(m)E^\infty\cap[y]$ is finite. It follows from this contradiction that $y$ does not contain a cycle. By applying Lemma \ref{lemma_FD_not_SE} to $y$ we can see that there is a path $z$ in $y(0)E^\infty$ that is frequently divertable to $[y]$ but not shift equivalent to $[y]$. Since $x$ is shift equivalent to $y$, it follows that $z$ is frequently divertable to $[x]$ but not shift equivalent to $x$, so \eqref{thm_directed_graph_liminal_1} does not hold.

\eqref{thm_directed_graph_liminal_2}$\implies$\eqref{thm_directed_graph_liminal_1}. Suppose \eqref{thm_directed_graph_liminal_2}, fix $x\in E^{\le\infty}$ and suppose $y\in E^\infty$ is frequently divertable to $[x]$. Then for every $n\in\NN$ there exists $z^{(n)}\in y(n)E^{\le\infty}$ with $z^{(n)}\sim x$. By \eqref{thm_directed_graph_liminal_2} there are finitely many paths in $r(z^{(0)})E^{\le\infty}=y(0)E^{\le\infty}$ that are shift equivalent to $z^{(0)}$ (and thus shift equivalent to $x$). Then $\{y(0,n)z^{(n)}:n\in\NN\}$ is finite so there must exist a strictly increasing sequence $n_1,n_2,\ldots$ such that $y(0,n_1)z^{(n_1)}=y(0,n_2)z^{(n_2)}=\ldots$. But then each of these paths $y(0,n_i)z^{(n_i)}$ must equal $y$ and so $y$ is shift equivalent to $x$, establishing \eqref{thm_directed_graph_liminal_1}.

\eqref{thm_directed_graph_liminal_1}$\implies$\eqref{thm_directed_graph_liminal_3}. Suppose \eqref{thm_directed_graph_liminal_1} and recall that $E^{\le\infty}=\Lambda_E^{\le\infty}=\partial\Lambda_E$. Applying Theorem \ref{k-graph_liminal_thm2} to the $1$-graph $\Lambda_E$ shows that $C^*(E)$ is liminal if
\begin{statement}\label{condition_directed_graph_liminal_thm2}
for every $x\in E^{\le\infty}$, every path in $E^{\le\infty}$ that is frequently divertable to $[x]$ is shift equivalent to $x$.
\end{statement}
To show that \ref{condition_directed_graph_liminal_thm2} holds, first fix $x\in E^{\le\infty}$ and suppose that $y\in E^{\le\infty}$ is frequently divertable to $[x]$. If $y\in E^\infty$ then it follows immediately from \eqref{thm_directed_graph_liminal_1} that $y\sim x$. Suppose $y\in E^{\le\infty}\backslash E^\infty$. Since $y$ is frequently divertable to $[x]$, the only path in $y(|y|)E^{\le\infty}$, namely $y(|y|)$, must be shift equivalent to $x$. Then $y$ is shift equivalent to $x$, so \eqref{condition_directed_graph_liminal_thm2} holds and $C^*(E)$ is liminal by Theorem \ref{k-graph_liminal_thm2}. 

\eqref{thm_directed_graph_liminal_3}$\implies$\eqref{thm_directed_graph_liminal_1}. Suppose \eqref{thm_directed_graph_liminal_3}. Since $C^*(E)$ is liminal, Theorem \ref{k-graph_liminal_thm2} tell us that \eqref{condition_directed_graph_liminal_thm2} holds and \eqref{thm_directed_graph_liminal_1} follows immediately.

Now suppose that $E$ has no sources. Then \eqref{thm_directed_graph_liminal_3} and \eqref{thm_directed_graph_liminal_5} are equivalent since $C^*(E)$ is isomorphic to $C^*(G_E)$ by Proposition \ref{prop_algebras_isomorphic}. Since $G_E$ is isomorphic to $G_{\Lambda_E}$ by Lemma \ref{path_groupoids_isomorphic}, we can apply Theorem \ref{k-graph_liminal_thm1} to the $1$-graph $\Lambda_E$ to establish the equivalence of \eqref{thm_directed_graph_liminal_4} and \eqref{thm_directed_graph_liminal_5}.
\end{proof}
The following is an immediate corollary of Theorem \ref{thm_directed_graph_liminal} and Lemma \ref{lemma_cycle_with_entry_implies_not_liminal}.
\begin{cor}\label{cor_liminal_implies_no_entries}
Suppose $E$ is a row-finite directed graph. If $C^*(E)$ is liminal then no cycle in $E$ has an entry.
\end{cor}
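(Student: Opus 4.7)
The plan is to combine Theorem \ref{thm_directed_graph_liminal} and Lemma \ref{lemma_cycle_with_entry_implies_not_liminal} directly. Assume $E$ is row-finite and that $C^*(E)$ is liminal. Then statement \eqref{thm_directed_graph_liminal_3} of Theorem \ref{thm_directed_graph_liminal} holds, and the equivalence of \eqref{thm_directed_graph_liminal_1} and \eqref{thm_directed_graph_liminal_3} in that theorem gives us statement \eqref{thm_directed_graph_liminal_1}: for every $x \in E^{\le\infty}$, every path in $E^\infty$ that is frequently divertable to $[x]$ is shift equivalent to $x$.

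This is precisely the hypothesis of Lemma \ref{lemma_cycle_with_entry_implies_not_liminal}, so we may apply it to conclude that no cycle in $E$ has an entry. Equivalently, one can read this as the contrapositive: Lemma \ref{lemma_cycle_with_entry_implies_not_liminal} shows that the existence of a cycle with an entry contradicts condition \eqref{thm_directed_graph_liminal_1}, which by Theorem \ref{thm_directed_graph_liminal} contradicts $C^*(E)$ being liminal.

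There is no real obstacle here since both of the ingredients are already established in the excerpt; the corollary simply records the composition of the two implications $\text{liminal} \Rightarrow \eqref{thm_directed_graph_liminal_1} \Rightarrow \text{no cycle has an entry}$. The only thing worth noting is that the row-finite hypothesis is used precisely to invoke Theorem \ref{thm_directed_graph_liminal}; Lemma \ref{lemma_cycle_with_entry_implies_not_liminal} does not itself require row-finiteness.
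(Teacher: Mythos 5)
Your proof is correct and is exactly the paper's intended argument: the corollary is stated as an immediate consequence of Theorem \ref{thm_directed_graph_liminal} (liminality $\Rightarrow$ condition \eqref{thm_directed_graph_liminal_1}) combined with Lemma \ref{lemma_cycle_with_entry_implies_not_liminal}. Your remark about where the row-finiteness hypothesis is actually needed is accurate as well.
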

\begin{example}\label{example_directed_graph_algebras_not_isomorphic_proof}
Recall that in Example \ref{example_directed_graph_algebras_not_isomorphic} it was claimed that if $E$ is the following directed graph, then $C^*(G_E)$ is liminal while $C^*(E)$ is not.
\begin{center}
\begin{tikzpicture}[>=stealth,baseline=(current bounding box.center)]
\clip (-6.2em,-1.1em) rectangle (1.2em,1.2em);
\node (u) at (-3em, 0em) {};
\node (v) at (1em,0em) {};
\node [anchor=center] at (v) {$v$};
\fill[black] (u) circle (0.15em);
\draw [<-, black] (u) to node[above=-0.2em] {$f$} (v);
\draw [<-,black] (u) .. controls (-7em,3em) and (-7em,-3em) .. (u);
\end{tikzpicture}
\end{center}
To see this, first note that $C^*(E)$ is not liminal by Corollary \ref{cor_liminal_implies_no_entries}. Let $F$ be the directed graph obtained by removing $f$ and $v$ from $E$. Then $G_E=G_F$ and, since $F$ has no sources, $C^*(G_F)=C^*(G_E)$ is liminal by Theorem \ref{thm_directed_graph_liminal}.
\end{example}

\begin{theorem}\label{thm_directed_graph_postliminal}
Suppose $E$ is a row-finite directed graph. The following are equivalent:
\begin{enumerate}
\item\label{thm_directed_graph_postliminal_1} for every path $x\in E^{\infty}$ there exists $n\in\NN$ such that every path in $x(n) E^\infty$ that is frequently divertable to $[x]$ is shift equivalent to $x$;
\item\label{thm_directed_graph_postliminal_2} for every $x\in E^\infty$ there exists $n\in\NN$ such that the only path in $x(n)E^\infty$ that is shift equivalent to $x$ is $\sigma^n(x)$; and
\item\label{thm_directed_graph_postliminal_3} $C^*(E)$ is postliminal.
\end{enumerate}
If in addition $E$ has no sources, the following are also equivalent to the preceding:
\begin{enumerate}\setcounter{enumi}{3}
\item\label{thm_directed_graph_postliminal_4} every orbit in $G_E^{(0)}$ is locally closed; and
\item\label{thm_directed_graph_postliminal_5} $C^*(G_E)$ is postliminal.
\end{enumerate}
\end{theorem}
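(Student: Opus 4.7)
My plan is to pair the higher-rank-graph machinery with a direct combinatorial argument on the path space. I would prove (1) $\iff$ (3) by applying Theorem \ref{k-graph_postliminal_thm2} to the 1-graph $\Lambda_E$, (1) $\iff$ (2) via a contradiction on the path space, and (3) $\iff$ (4) $\iff$ (5) in the no-sources case via the isomorphisms $C^*(E) \cong C^*(G_E)$ (Proposition \ref{prop_algebras_isomorphic}) and $G_E \cong G_{\Lambda_E}$ (Lemma \ref{path_groupoids_isomorphic}) together with Theorem \ref{k-graph_postliminal_thm1}.

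For (1) $\iff$ (3), observe that $\Lambda_E$ is locally convex (Remark \ref{remark_every_1-graph_locally_convex}), so $\partial\Lambda_E = \Lambda_E^{\le\infty} = E^{\le\infty}$ by Proposition \ref{prop_Lambda_leinfty_equals_partial_lambda}. Theorem \ref{k-graph_postliminal_thm2} then characterises postliminality of $C^*(E) \cong C^*(\Lambda_E)$ as an $E^{\le\infty}$-version of (1). The reduction to (1) is routine: for finite $x \in E^{\le\infty}\setminus E^\infty$ take $n = |x|$, so $x(n)E^{\le\infty}=\{s(x)\}$ is trivially shift equivalent to $x$; for $x \in E^\infty$, no finite boundary path $y$ can be frequently divertable to $[x]$ since at position $|y|$ this would require the length-zero path $s(y)$ to be shift equivalent to an infinite path. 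Hence the hypothesis restricts to infinite $y \in x(n)E^\infty$, which is (1). The direction (2) $\implies$ (1) is then immediate: given $y \in x(n)E^\infty$ frequently divertable to $[x]$, for each $m$ pick $z_m \in y(m)E^\infty \cap [x]$; then $y(0,m)z_m \in x(n)E^\infty \cap [x] = \{\sigma^n(x)\}$ by (2), forcing $y(0,m) = x(n,n+m)$ for every $m$, hence $y = \sigma^n(x) \in [x]$.

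The main obstacle is (1) $\implies$ (2). My plan is a contradiction argument: assume (1) holds with witness $n$ but $|[x] \cap x(m)E^\infty| \ge 2$ for every $m \ge n$. Classify each extra $z \in [x] \cap x(m)E^\infty \setminus \{\sigma^m(x)\}$ as Case A when $z(0,1) = x(m,m+1)$ and Case B otherwise. Case A extras shift to extras in $[x] \cap x(m+1)E^\infty$ via $z \mapsto \sigma^1(z)$, and no extra can remain Case A at every subsequent level without collapsing to $\sigma^m(x)$; hence every extra is eventually Case B, which forces Case B at infinitely many levels $\ell_1 < \ell_2 < \cdots \ge n$. I would then build $y \in x(n)E^\infty$ inductively by following $x$ until a Case B level $\ell_{i_1}$, switching to a Case B extra $z^{(\ell_{i_1})}$ along its detour until it rejoins the orbit at some $x(q_1)$, continuing along $x$ until the next available Case B level $\ell_{i_2} \ge q_1$, and repeating. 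The resulting $y$ is frequently divertable to $[x]$ (every vertex visited lies on $x$ or on a finite detour terminating on $x$) but uses Case B edges at arbitrarily large positions, so no shifted tail of $y$ matches a tail of $x$; hence $y \notin [x]$, contradicting (1). The trickiest bookkeeping will be the scheduling constraint $\ell_{i_{k+1}} \ge q_k$ and verifying that Case B deviations cannot conspire with a shift-lag to reproduce $x$---immediate when $x$ is acyclic, and handled for cyclic $x$ by restricting to a tail of $x$ past the cyclic structure. Finally, the no-sources addendum is immediate: Proposition \ref{prop_algebras_isomorphic} gives (3) $\iff$ (5), and Theorem \ref{k-graph_postliminal_thm1} applied to $\Lambda_E$ via $G_E \cong G_{\Lambda_E}$ (Lemma \ref{path_groupoids_isomorphic}) gives (4) $\iff$ (5).
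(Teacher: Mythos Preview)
Your plan for \eqref{thm_directed_graph_postliminal_1}$\iff$\eqref{thm_directed_graph_postliminal_3}, for \eqref{thm_directed_graph_postliminal_2}$\implies$\eqref{thm_directed_graph_postliminal_1}, and for the no-sources addendum matches the paper's proof essentially verbatim, including the reduction of the $E^{\le\infty}$ condition in Theorem~\ref{k-graph_postliminal_thm2} to the $E^\infty$ condition \eqref{thm_directed_graph_postliminal_1}.

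The gap is in \eqref{thm_directed_graph_postliminal_1}$\implies$\eqref{thm_directed_graph_postliminal_2}, specifically your treatment of cyclic $x$. The phrase ``restricting to a tail of $x$ past the cyclic structure'' presumes the cycles in $x$ are confined to an initial segment, but this need not hold: $x$ can contain infinitely many (distinct) cycles, one after another, so no tail of $x$ is cycle-free. In that situation your Case~B edges, while differing from $x_{\ell+1}$, may still be edges that occur elsewhere in $x$, and your constructed $y$ could in principle be a shift of $x$; your argument that $y\not\sim x$ only goes through cleanly when each vertex appears at most once in $x$. The paper handles this with a genuine extra step rather than a tail restriction: from \eqref{thm_directed_graph_postliminal_1} it first deduces that no entry to a cycle is in a cycle (Lemma~\ref{lemma_entries_not_part_of_cycles}), then uses this (via Lemma~\ref{lemma_cycle_fun}) to dispose of the eventually-periodic case and, in the remaining case, to \emph{construct a new cycle-free path} $y$ with extras at every level by excising the closed segments of $x$ (Lemma~\ref{lemma_postliminal_removing_cycles_from_path}). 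Only then is the detour construction (the paper's Lemma~\ref{lemma_FD_not_SE}, which is your Case~A/B argument) applied to this cycle-free $y$. Your inductive build is the right endgame, but you need an analogue of Lemma~\ref{lemma_postliminal_removing_cycles_from_path} to feed it a cycle-free input; passing to a tail does not suffice.
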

Condition \eqref{thm_directed_graph_postliminal_2} is similar to the condition in Ephrem's \cite[Theorem~7.3]{Ephrem2004}. We will show that these conditions are equivalent in Remark \ref{remark_Ephrems_postliminal_condition_coincides}. The proof of Theorem \ref{thm_directed_graph_postliminal} uses the following three lemmas to establish the equivalence of \eqref{thm_directed_graph_postliminal_1} and \eqref{thm_directed_graph_postliminal_2}.
\begin{lemma}\label{lemma_entries_not_part_of_cycles}
Let $E$ be a directed graph. Suppose that for every $x\in E^\infty$ there exists $n\in\NN$ such that every path in $x(n)E^\infty$ that is frequently divertable to $[x]$ is shift equivalent to $x$. Then no entry to a cycle is in a cycle.
\begin{proof}
Suppose $f\in E^1$ is an entry to a cycle $\alpha$ and that $f$ is in a cycle $\beta$; we may assume that $\beta_1=f$. Let $x=\alpha^\infty$ and fix $n\in\NN$. Since $r(f)$ is a vertex in $\alpha$, there is a path $\eta\in E^*$ with $r(\eta)=x(n)$ and $s(\eta)=r(f)$. Let $y$ be the path $\eta\beta^\infty$ in $x(n)E^\infty$. Since the edge $f$ is in $y$ but not in $x$, the path $y$ cannot equal $\sigma^n(x)$ and the result follows.
\end{proof}
\end{lemma}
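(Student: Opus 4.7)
The plan is to prove the contrapositive: assume there is an entry $f$ to some cycle $\alpha$ such that $f$ also lies on a cycle $\beta$, and exhibit a single path $x\in E^\infty$ for which the hypothesis fails at \emph{every} $n$. The natural candidate is $x=\alpha^\infty$, because then the vertices traversed by $x$ are precisely the vertices of $\alpha$, and in particular $r(f)$ is among them. For each $n\in\NN$, I would then construct a path $y\in x(n)E^\infty$ that is frequently divertable to $[x]$ but is \emph{not} shift equivalent to $x$, contradicting the hypothesis applied to $x$.

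To build $y$, reindex $\beta$ cyclically so that $\beta_1=f$. Because $\alpha$ is a cycle and $r(f)=r(\alpha_i)$ for some $i$, there is a finite path $\eta$ in $\alpha$ from $x(n)$ around to $r(f)$; set $y=\eta\beta^\infty$, which automatically lies in $x(n)E^\infty$. For frequent divertability of $y$ to $[x]=[\alpha^\infty]$: every vertex $y(m)$ is either on $\eta$ or on some iterate of $\beta$; in either case one can reach $r(f)$ via a finite extension and then read off $\alpha^\infty$ from that vertex, producing a member of $y(m)E^\infty\cap[\alpha^\infty]$. Here one uses the elementary fact that any cyclic shift of $\alpha^\infty$ is shift equivalent to $\alpha^\infty$ itself.

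For the failure of shift equivalence $y\not\sim x$: the edge $f$ appears in $y$ infinitely often (once per traversal of $\beta$, since $\beta_1=f$), whereas $f$ does not appear anywhere in $x=\alpha^\infty$. Any relation $y\sim_k x$ would force $y_j=x_{j-k}$ for all sufficiently large $j$, so some occurrence of $f$ in $y$ would have to coincide with an edge $\alpha_l$ of $x$, contradicting that $f$ is an entry.

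The only delicate step is the last one: concluding that $f\neq\alpha_l$ for \emph{every} $l$, not merely the single $l=i$ furnished by the definition of entry. This uses the cycle hypothesis $s(\alpha_p)\neq s(\alpha_q)$ for $p\neq q$, which forces the vertices $r(\alpha_1),\ldots,r(\alpha_{|\alpha|})$ to be pairwise distinct; so if $f=\alpha_l$ then $r(f)=r(\alpha_l)$ would pin down $l=i$, making $f=\alpha_i$ and contradicting the definition of entry. Once this small combinatorial point is cleared up, the proof collapses to the construction of $y$ and the two one-line verifications above.
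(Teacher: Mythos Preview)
Your proof is correct and follows the same construction as the paper: take $x=\alpha^\infty$, and for each $n$ build $y=\eta\beta^\infty\in x(n)E^\infty$ with $\beta_1=f$. You are in fact more careful than the paper in two respects: you explicitly verify that $y$ is frequently divertable to $[x]$ (by routing back to $r(f)$ and reading a cyclic shift of $\alpha^\infty$), and you justify $y\nsim x$ rather than merely $y\neq\sigma^n(x)$, together with the combinatorial point that $f\neq\alpha_l$ for every $l$ via distinctness of the ranges $r(\alpha_1),\ldots,r(\alpha_{|\alpha|})$.
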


\begin{lemma}\label{lemma_cycle_fun}
Suppose $E$ is a directed graph such that no entry to a cycle is in a cycle. If $\eta\in E^*$ is of non-zero length and has $r(\eta)=s(\eta)$, then there is a cycle $\alpha$ such that $\eta=\alpha^p$ for some $p\in\PP$.
\begin{proof}
Let $\beta$ be a shortest path in $\eta$ with $r(\beta)=s(\beta)$. Suppose $s(\eta)$ is not in $\beta$. Then there exists $q\in\PP$ such that $\eta_q$ is an entry to the cycle $\beta$. But then $\eta(q,|\eta|)\eta(0,q-1)$ is a path in $E^*$ with range $s(\eta_q)$ and source $r(\eta_q)$, so $\eta_q$ must be in a cycle. This contradicts the hypothesis, so no such entry $\eta_q$ exists and $s(\eta)$ must be in $\beta$. Since $s(\eta)$ is in $\beta$ there exists $n\in\NN$ such that $\beta(n)=s(\eta)$. Let $\alpha$ be the cycle $\beta(n,|\beta|)\beta(0,n)$, noting that $r(\alpha)=s(\alpha)=s(\eta)=r(\eta)$.

Now choose $p\in\PP$ with $p\ge |\eta|/|\alpha|$. We now claim that $\alpha^p(0,|\eta|)=\eta$. Suppose not. Then there is a smallest $q\in\PP$ with $\alpha^p_q\ne\eta_q$. Then $\eta_q$ is an entry to the cycle $\alpha$, which we earlier showed is not possible by the hypothesis since $\eta_q$ is in a cycle. Thus $\alpha^p(0,|\eta|)=\eta$, establishing the claim. Since $s(\alpha)=s(\eta)=\alpha^p(|\eta|)$ and $\alpha(n)=s(\alpha)$ only when $n=|\alpha|$ or $n=0$, we can reduce $p$ if necessary to conclude that $\eta=\alpha^p$.
\end{proof}
\end{lemma}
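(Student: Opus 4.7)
The plan is to extract a shortest cycle $\beta$ appearing in $\eta$, promote it to a cycle $\alpha$ based at $s(\eta)$, and then use the hypothesis on entries to rule out any deviation of $\eta$ from repeating $\alpha$. First I would take $\beta$ to be a shortest sub-path of $\eta$ satisfying $r(\beta)=s(\beta)$; such a sub-path exists, since $\eta$ itself qualifies. Minimality forces $\beta$ to be a cycle: any repetition $\beta(i)=\beta(j)$ with $(i,j)\neq (0,|\beta|)$ would produce a strictly shorter sub-loop $\beta(i,j)$ inside $\eta$.

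Next I would prove that $s(\eta)$ is one of the vertices of $\beta$. Write $\beta=\eta(m,n)$ and suppose for contradiction that $s(\eta)\notin V_\beta:=\{\beta(0),\dots,\beta(|\beta|-1)\}$. Starting from position $m$, the edges $\eta_{m+1},\eta_{m+2},\dots$ either continue tracing $\beta$ forever (forcing $\eta(|\eta|)=s(\eta)\in V_\beta$, a contradiction) or at some smallest $q>n$ the edge $\eta_q$ differs from the edge of $\beta$ that $\beta$-tracing would dictate; since both edges have the same range, $\eta_q$ is an entry to $\beta$. The concatenation $\eta(q-1,|\eta|)\eta(0,q-1)$ is a loop at $\eta(q-1)$ containing $\eta_q$, and in particular provides a path in $E$ from $s(\eta_q)$ back to $r(\eta_q)$. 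Taking a shortest such path $\delta$ in $E$ and prepending $\eta_q$ yields a cycle $\eta_q\delta$ that contains $\eta_q$, with shortest-path minimality of $\delta$ ensuring there are no repeated vertices (and if $\eta_q$ is itself a self-loop, then $\eta_q$ alone is already a cycle). This contradicts the hypothesis. Hence $s(\eta)=\beta(n')$ for some $n'$, and I define $\alpha:=\beta(n',|\beta|)\beta(0,n')$, a cyclic rotation of $\beta$ that is itself a cycle with $r(\alpha)=s(\alpha)=s(\eta)=r(\eta)$.

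To finish, pick $p\in\PP$ with $p\geq |\eta|/|\alpha|$; I would claim $\alpha^p(0,|\eta|)=\eta$. Otherwise, let $q$ be the smallest index with $\alpha^p_q\neq\eta_q$. The preceding edges agree, so $r(\eta_q)=r(\alpha^p_q)$, making $\eta_q$ an entry to $\alpha$. Applying the same loop-to-cycle construction as above to $\eta(q-1,|\eta|)\eta(0,q-1)$ yields a cycle in $E$ containing $\eta_q$, contradicting the hypothesis. Therefore $\eta$ agrees with $\alpha^p$ on its first $|\eta|$ edges. Since $\alpha^p(|\eta|)=s(\eta)=\alpha(0)$ and, $\alpha$ being a cycle, $\alpha(j)=\alpha(0)$ forces $|\alpha|$ to divide $j$, we conclude $|\alpha|$ divides $|\eta|$ and $\eta=\alpha^{|\eta|/|\alpha|}$.

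The hardest step will be the loop-to-cycle extraction used twice above: given a loop in $E$ passing through a distinguished edge $f$, I must produce an honest cycle in $E$ containing $f$. This requires working in the ambient graph $E$ rather than within $\eta$, and relies on shortest-path minimality in $E$ to rule out vertex repetition in the resulting cycle.
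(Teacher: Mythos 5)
Your proposal is correct and follows essentially the same route as the paper: take a shortest sub-loop $\beta$ of $\eta$ (which is automatically a cycle), use the no-entry-in-a-cycle hypothesis to force $s(\eta)$ onto $\beta$, rotate $\beta$ to a cycle $\alpha$ based at $s(\eta)$, and then rule out any deviation of $\eta$ from $\alpha^p$ by producing a forbidden entry. You additionally spell out the loop-to-cycle extraction (a shortest path from $s(\eta_q)$ back to $r(\eta_q)$ prepended with $\eta_q$ gives a genuine cycle through $\eta_q$), a step the paper's proof uses implicitly when it asserts ``so $\eta_q$ must be in a cycle''.
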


\begin{lemma}\label{lemma_postliminal_removing_cycles_from_path}
Suppose $E$ is a directed graph such that no entry to a cycle is in a cycle. Suppose further that there is a path $x\in E^\infty$ such that there is more than one element in $x(n)E^\infty\cap [x]$ for each $n\in\NN$. Then there is a path $y\in E^\infty$ that does not contain a cycle and has more than one element in $y(n)E^\infty\cap [y]$ for each $n\in\NN$.
\begin{proof}
Suppose there exists $v\in E^0$ such that $v=x(n)$ for infinitely many $n\in\NN$. If $m$ is an integer with $x(m)=v$, then by Lemma \ref{lemma_cycle_fun} there is a cycle $\alpha$ in $E$ such that $x=x(0,m)\alpha^\infty$. Then $x(m)E^\infty\cap [x]$ can only have one element since otherwise there would need to be an entry to $\alpha$ that is on a path that is shift equivalent to $\alpha^\infty$, which is not possible by the hypothesis. It follows that, for every $v\in E^0$, there are only finitely many $n\in\NN$ with $x(n)=v$.

If there exists $n\in\NN$ such that $\sigma^n(x)$ contains no cycles then we can take $y=\sigma^n(x)$. For the remainder of this proof we may assume that no such $n$ exists. We will construct a path $y\in E^\infty$ as in the statement of this lemma by removing the cycles from $x$. We begin by defining two sequences $\{m_i\},\{n_i\}$ in $\NN$. Let $m_1$ be the smallest integer such that $x(m_1)=x(n)$ for some $n>m_1$. Then let $n_1$ be the largest integer such that $x(n_1)=x(m_1)$. Define $m_2,n_2$ similarly: let $m_2$ be the largest integer greater than $n_1$ such that $x(m_2)=x(n)$ for some $n>m_2$, and let $n_2$ be the greatest integer with $x(n_2)=x(m_2)$. In this manner we may continue defining $m_i$ and $n_i$ for $i=3,4,\ldots$ so that each $m_i$ is the smallest integer greater than $n_{i-1}$ with $x(m_i)=x(n)$ for some $n>m_i$, and each $n_i$ is the largest integer with $x(n_i)=x(m_i)$.

Let $y=x(n_1,m_2)x(n_2,m_3)x(n_3,m_4)\cdots$. We claim that $y$ contains no cycles. Fix $p\in\NN$. It suffices to show that $y(p)\ne y(q)$ for each $q>p$. By the definition of $y$ there must exist $l\in\NN$ and $i\in\PP$ such that $n_{i-1}\le l<m_i$ and $x(l)=y(p)$. We consider two cases: when $n_{i-1}<l<m_i$ and when $n_{i-1}=l$. First suppose $n_{i-1}<l<m_i$. Since $m_i$ is the smallest integer greater than $n_{i-1}$ with $x(m_i)=x(n)$ for some $n>m_i$, we must have $y(p)=x(l)\ne x(n)$ for every $n>l$, and so $y(p)\ne y(q)$ for all $q>p$. For the second case suppose $n_{i-1}=l$. Since $n_{i-1}$ is the largest integer with $x(n_{i-1})=x(m_{i-1})$, it follows that $x(n)\ne x(n_{i-1})$ for each $n>n_{i-1}=l$. Thus $y(p)\ne y(q)$ for each $q>p$, establishing the claim that $y$ contains no cycles.

Recall that $p$ was fixed arbitrarily in $\NN$. It remains to show that $y(p)E^\infty\cap[y]$ has more than one element. Choose $l\in\NN$ such that $x(l)=y(p)$ and choose distinct $z^{(1)},z^{(2)}\in x(l)E^\infty\cap [x]$. Since these paths are shift equivalent to $x$ there exist $a,b\in\NN$ with $\sigma^a(z^{(1)})=\sigma^b(z^{(2)})$ and we may increase $a,b$ if necessary to ensure that $z^{(1)}(a)=z^{(2)}(b)=x(m_j)$ for some $j\in\PP$. Then $z^{(1)}(0,a)\ne z^{(2)}(0,b)$ and $x(m_j)=y(q)$ for some $q\in\NN$, so the paths $z^{(1)}(0,a)\sigma^q(y)$ and $z^{(2)}(0,b)\sigma^q(y)$ are distinct elements of $y(p)E^\infty\cap [y]$, as required.
\end{proof}
\end{lemma}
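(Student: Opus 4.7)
The plan is to handle two easy reductions and then do the main construction. First I will show that no vertex is visited infinitely often by $x$: if some $v\in E^0$ satisfies $x(n)=v$ for infinitely many $n$, then picking any $m$ with $x(m)=v$ and applying Lemma~\ref{lemma_cycle_fun} to the finite loops of $x$ between visits to $v$, we learn that $x=x(0,m)\alpha^\infty$ for some cycle $\alpha$. Any element of $x(m)E^\infty\cap[x]$ other than $\alpha^\infty$ would then produce an entry to $\alpha$ lying on a path shift equivalent to $\alpha^\infty$, forcing (under the standing hypothesis) an entry to a cycle to lie on a cycle, contradicting our assumption. So $x(m)E^\infty\cap[x]=\{\alpha^\infty\}$, contradicting the hypothesis on $x$. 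The second reduction is trivial: if some $\sigma^n(x)$ contains no cycles, we simply take $y=\sigma^n(x)$ and the divertibility property for $y$ is inherited directly from that for $x$.

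From then on I assume every tail of $x$ still contains a cycle and construct the two sequences $\{m_i\},\{n_i\}$ exactly as in the statement: $m_i$ is the smallest integer beyond $n_{i-1}$ (with $n_0=-1$) such that $x(m_i)$ is re-visited later, and $n_i$ is the largest integer with $x(n_i)=x(m_i)$. Because no vertex appears infinitely often (Step~1), each $n_i$ is well defined, and the extremality of $m_i$ and $n_i$ are the two properties I will exploit throughout. The path $y:=x(n_1,m_2)x(n_2,m_3)x(n_3,m_4)\cdots$ is then defined; concatenation is legitimate because $x(m_j)=x(n_j)$ by construction.

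The cycle-freeness of $y$ is the first genuine technical step. Fix $p$ and locate the unique $l$ with $n_{i-1}\le l<m_i$ such that $y(p)=x(l)$. If $n_{i-1}<l<m_i$, the minimality of $m_i$ among indices beyond $n_{i-1}$ that get re-visited forces $x(l)\neq x(n)$ for every $n>l$, whence $y(p)\neq y(q)$ for all $q>p$. If instead $l=n_{i-1}$, the maximality of $n_{i-1}$ forces $x(n)\neq x(n_{i-1})$ for every $n>n_{i-1}$, giving the same conclusion. These two cases exhaust the possibilities, so $y$ contains no cycle.

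The last and hardest step is transferring the divertibility from $x$ to $y$. For each $p$, pick $l$ with $x(l)=y(p)$ and choose two distinct $z^{(1)},z^{(2)}\in x(l)E^\infty\cap[x]$. Because both are shift equivalent to $x$, I can find large $a,b\in\NN$ with $\sigma^a(z^{(1)})=\sigma^b(z^{(2)})$; by enlarging $a$ and $b$ I may arrange that $z^{(1)}(a)=z^{(2)}(b)=x(m_j)$ for some $j$, which is the point where the construction of $y$ re-enters its main stream. Since $x(m_j)=y(q)$ for the corresponding $q$, the two paths $z^{(1)}(0,a)\sigma^q(y)$ and $z^{(2)}(0,b)\sigma^q(y)$ are shift equivalent to $y$, have range $y(p)$, and are distinct (the prefixes $z^{(1)}(0,a)$ and $z^{(2)}(0,b)$ differ because $z^{(1)}\neq z^{(2)}$ and we can arrange disagreement strictly before the common tail). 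The main obstacle is exactly this last step: getting the enlargement of $a,b$ and the choice of $z^{(1)},z^{(2)}$ coordinated so that the two resulting concatenations are genuinely distinct, which requires using the absence of infinitely repeated vertices to guarantee that all the alignment indices $m_j$ are reachable and that disagreement of the prefixes survives the truncation.
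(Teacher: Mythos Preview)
Your proposal is correct and follows essentially the same approach as the paper's proof: the same two reductions (no vertex visited infinitely often; the trivial case where some tail is cycle-free), the same construction of the sequences $\{m_i\},\{n_i\}$ and of $y=x(n_1,m_2)x(n_2,m_3)\cdots$, the same two-case extremality argument for cycle-freeness, and the same prefix-plus-$\sigma^q(y)$ construction for the divertibility transfer. Your explicit convention $n_0=-1$ and your closing remarks about coordinating $a,b$ to keep the prefixes distinct are minor elaborations, not differences in method.
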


\begin{proof}[Proof of Theorem \ref{thm_directed_graph_postliminal}]
\eqref{thm_directed_graph_postliminal_1}$\implies$\eqref{thm_directed_graph_postliminal_2}. Suppose \eqref{thm_directed_graph_postliminal_2} does not hold. 
We may assume that no entry to a cycle is in a cycle as otherwise it follows immediately from Lemma \ref{lemma_entries_not_part_of_cycles} that \eqref{thm_directed_graph_postliminal_1} does not hold. Since \eqref{thm_directed_graph_postliminal_2} does not hold there is a path $x\in E^\infty$ such that $x(n)E^\infty\cap [x]$ has at least two elements for each $n\in\NN$. By Lemma \ref{lemma_postliminal_removing_cycles_from_path} there is a path $y\in E^\infty$ that does not contain cycles such that $y(m)E^\infty\cap[y]$ has at least two elements for each $m\in\NN$. That \eqref{thm_directed_graph_postliminal_1} does not hold now follows by applying Lemma \ref{lemma_FD_not_SE} to $y$.

\eqref{thm_directed_graph_postliminal_2}$\implies$\eqref{thm_directed_graph_postliminal_1}. Suppose \eqref{thm_directed_graph_postliminal_2} and fix $x\in E^\infty$. Then there exists $n\in\NN$ such that $\sigma^n(x)$ is the only path in $x(n)E^\infty\cap[x]$. Suppose $y\in x(n)E^\infty$ is frequently divertable to $[x]$. It follows that for every $m\in\NN$ there is a path $z^{(m)}\in y(m)E^\infty\cap [x]$. Then, for each $m\in\NN$, the path $y(0,m)z^{(m)}$ is shift equivalent to $x$, so $y(0,m)z^{(m)}$ is in $x(n)E^\infty\cap [x]$. By our choice of $n$ it follows that $y(0,m)z^{(m)}=\sigma^n(x)$ for each $m\in\NN$. Then $y=\sigma^n(x)$, so $y$ is shift equivalent to $x$, establishing \eqref{thm_directed_graph_postliminal_1}.

\eqref{thm_directed_graph_postliminal_1}$\implies$\eqref{thm_directed_graph_postliminal_3}. Suppose \eqref{thm_directed_graph_postliminal_1}. Recall from above that $E^{\le\infty}=\Lambda_E^{\le\infty}=\partial\Lambda_E$. Applying Theorem \ref{k-graph_postliminal_thm2} to the $1$-graph $\Lambda_E$ shows that $C^*(E)$ is postliminal if
\begin{statement}\label{condition_directed_graph_postliminal_thm2}
for every $x\in E^{\le\infty}$ there exists $n\in\NN$ with $n\le |x|$ such that every path in $x(n)E^{\le\infty}$ that is frequently divertable to $[x]$ is shift equivalent to $x$.
\end{statement}
To see that \eqref{condition_directed_graph_postliminal_thm2} holds, first fix $x\in E^{\le\infty}$. When $x$ is finite, the only path in $x(|x|)E^{\le\infty}=s(x)E^{\le\infty}$ is $s(x)$, which is frequently divertable to $[x]$ and shift equivalent to $x$. Suppose $x$ is infinite. By \eqref{thm_directed_graph_postliminal_1} there exists $n\in\NN$ such that every path in $x(n)E^\infty$ that is frequently divertable to $[x]$ is shift equivalent to $x$. If $y\in x(n)E^{\le\infty}\cap E^*$ is frequently divertable to $[x]$, then there must be a path in $s(y)E^\infty$ that is shift equivalent to $x$. This is not possible since $s(y)$ is a source and $x\in E^\infty$, so no path in $x(n)E^{\le\infty}\cap E^*$ is frequently divertable to $[x]$. It follows that condition \eqref{condition_directed_graph_postliminal_thm2} holds and so $C^*(E)$ is postliminal by applying Theorem \ref{k-graph_postliminal_thm2} to the $1$-graph $\Lambda_E$.

\eqref{thm_directed_graph_postliminal_3}$\implies$\eqref{thm_directed_graph_postliminal_1}. Suppose \eqref{thm_directed_graph_postliminal_3}. Since $C^*(E)$ is postliminal, Theorem \ref{k-graph_postliminal_thm2} tell us that \eqref{condition_directed_graph_postliminal_thm2} holds and \eqref{thm_directed_graph_postliminal_1} follows immediately.

Now suppose that $E$ has no sources. Then \eqref{thm_directed_graph_postliminal_3} and \eqref{thm_directed_graph_postliminal_5} are equivalent since $C^*(E)$ is isomorphic to $C^*(G_E)$ by Proposition \ref{prop_algebras_isomorphic}. Since $G_E$ is isomorphic to $G_{\Lambda_E}$ by Lemma \ref{path_groupoids_isomorphic}, we can apply Theorem \ref{k-graph_postliminal_thm1} to the $1$-graph $\Lambda_E$ to establish the equivalence of \eqref{thm_directed_graph_postliminal_4} and \eqref{thm_directed_graph_postliminal_5}.
\end{proof}
\begin{remark}\label{remark_Ephrems_postliminal_condition_coincides}
For a directed graph $E$, Ephrem in \cite[Theorem~7.3]{Ephrem2004} showed that $C^*(E)$ is postliminal if and only if
\begin{enumerate}\renewcommand{\theenumi}{\roman{enumi}}
\item\label{Ephrem_postliminal_1} no entry to a cycle is in a cycle, and
\item\label{Ephrem_postliminal_2} for any $x\in E^\infty$, there are finitely many vertices in $x$ that receive multiple edges that are in paths that are shift equivalent to $x$.
\end{enumerate}
To see that this is equivalent to condition \eqref{thm_directed_graph_postliminal_2} from Theorem \ref{thm_directed_graph_postliminal}, first suppose \eqref{Ephrem_postliminal_1} and \eqref{Ephrem_postliminal_2} hold and fix $x\in E^\infty$. By \eqref{Ephrem_postliminal_2} we can let $v_1,v_2,\ldots,v_p$ be the vertices in $x$ that receive multiple edges that are in paths that are shift equivalent to $x$. Suppose there exists an integer $q$ ($1\le q\le p$) such that $x(m)=v_q$ for infinitely many $m\in\NN$. Let $n$ be any integer such that $x(n)=v_q$. By \eqref{Ephrem_postliminal_1} and Lemma \ref{lemma_cycle_fun} there is a cycle $\alpha$ such that $x=x(0,n)\alpha^\infty$. Since no entry to a cycle is a in a cycle, the only path in $x(n)E^\infty\cap[x]$ is $\sigma^n(x)=\alpha^\infty$, as required.

Now suppose that for each $v_i$ there are finitely many $m\in\NN$ with $x(m)=v_i$. Then there exists $n\in\NN$ such that, for every $m\ge n$, $x(m)$ does not equal any $v_i$. Then the only path in $\sigma^n(x)E^\infty\cap[x]$ is $\sigma^n(x)$, as required.

To prove the converse, suppose $E$ satisfies condition \eqref{thm_directed_graph_postliminal_2} from Theorem \ref{thm_directed_graph_postliminal} and note that \eqref{Ephrem_postliminal_1} follows from Lemma \ref{lemma_entries_not_part_of_cycles} and the \eqref{thm_directed_graph_postliminal_2}$\implies$\eqref{thm_directed_graph_postliminal_1} component of Theorem \ref{thm_directed_graph_postliminal} (the proof of which does not require $E$ to be row-finite). Fix $x\in E^\infty$. Then there exists $n\in\NN$ such that the only path in $x(n)E^\infty\cap[x]$ is $\sigma^n(x)$. It follows that the only vertices in $x$ that may receive multiple edges that are in paths that are shift equivalent to $x$ are the vertices of the form $x(m)$ for $m<n$. Condition \eqref{Ephrem_postliminal_2} follows and we can conclude that Ephrem's conditions \eqref{Ephrem_postliminal_1} and \eqref{Ephrem_postliminal_2} are equivalent to condition \eqref{thm_directed_graph_postliminal_2} from Theorem \ref{thm_directed_graph_postliminal}.
\end{remark}
The following is an immediate corollary of Theorem \ref{thm_directed_graph_postliminal} and Lemma \ref{lemma_entries_not_part_of_cycles}.
\begin{cor}
Suppose that $E$ is a row-finite directed graph. If $C^*(E)$ is postliminal then no entry to a cycle in $E$ is in a cycle.
\end{cor}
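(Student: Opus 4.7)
The plan is to chain together the two results that are explicitly cited: Theorem \ref{thm_directed_graph_postliminal} and Lemma \ref{lemma_entries_not_part_of_cycles}. Assume $E$ is a row-finite directed graph with $C^*(E)$ postliminal. By the equivalence \eqref{thm_directed_graph_postliminal_3} $\Longleftrightarrow$ \eqref{thm_directed_graph_postliminal_1} in Theorem \ref{thm_directed_graph_postliminal}, this is equivalent to the path-theoretic condition that for every $x \in E^\infty$ there exists $n \in \NN$ such that every path in $x(n)E^\infty$ which is frequently divertable to $[x]$ is shift equivalent to $x$.

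This is precisely the hypothesis of Lemma \ref{lemma_entries_not_part_of_cycles}, whose conclusion is that no entry to a cycle in $E$ is itself in a cycle. So I would simply invoke Lemma \ref{lemma_entries_not_part_of_cycles} to finish.

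Since this is flagged as an immediate corollary, there is no real obstacle; the only thing to be careful about is that Theorem \ref{thm_directed_graph_postliminal} is stated for row-finite $E$ (which matches our hypothesis), while Lemma \ref{lemma_entries_not_part_of_cycles} has no row-finiteness assumption, so the chain of implications goes through verbatim. The proof therefore consists of one line invoking Theorem \ref{thm_directed_graph_postliminal} followed by one line invoking Lemma \ref{lemma_entries_not_part_of_cycles}.
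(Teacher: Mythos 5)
Your proof is correct and is exactly the paper's intended argument: the paper presents this as an immediate consequence of Theorem \ref{thm_directed_graph_postliminal} (postliminality of $C^*(E)$ is equivalent to condition \eqref{thm_directed_graph_postliminal_1} for row-finite $E$) combined with Lemma \ref{lemma_entries_not_part_of_cycles}, whose hypothesis is precisely that condition. Your observation about the differing row-finiteness assumptions is accurate and does not affect the chain of implications.
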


We will now move on to the directed graph case of the theorems from Section \ref{sec_bded-trace_cts-trace_Fell}.
\begin{remark}\label{remark_directed_graphs_and_cycles}
Recall that the theorems characterising the higher-rank graphs with $C^*$-algebras that have bounded trace, are Fell and have continuous trace (Theorems \ref{thm_bounded_trace}, \ref{thm_Fell} and \ref{thm_cts-trace}) only apply to higher-rank graphs with principal path groupoids. By Proposition \ref{prop_principal_iff_no_cycles} this means that these theorems can only be applied to directed graphs that do not contain cycles. Corollary \ref{cor_liminal_implies_no_entries} tells us that if a directed graph $E$ contains a cycle with an entry, then $C^*(E)$ is not liminal (and so does not have bounded or continuous trace and is not Fell). It follows that the only directed graphs where our theorems cannot be directly applied are those that have cycles, none of which have an entry. 

Clark and an Huef in \cite{Clark-anHuef2012} showed that if a directed graph $E$ satisfies a property concerning the stability subgroups of the path groupoid $G_E$, then $E$ can be modified to remove cycles while preserving the bounded trace and Fell properties of $C^*(E)$. In \cite[Example~7.1]{Clark-anHuef2012} Clark and an Huef describe a directed graph that contains cycles, none of which has an entry. They then demonstrate the removal of the cycles from that directed graph before using the next theorem, Theorem \ref{directed_graph_integrable_thm}, to deduce that the associated $C^*$-algebra has bounded trace.
\end{remark}

The following result is obtained by applying Theorem \ref{thm_integrable_bounded-trace} to the $1$-graph $\Lambda_E$.
\begin{theorem}\label{directed_graph_integrable_thm}
Suppose $E$ is a row-finite directed graph without sources. The following are equivalent:
\begin{enumerate}
 \item\label{directed_graph_integrable_thm_1} $G_E$ is integrable;
\item\label{directed_graph_integrable_thm_2} $E$ has no cycles, and for every $v\in E^0$ there exists $M\in\NN$ such that for any $x\in E^\infty$ there are at most $M$ paths in $E^\infty$ with range $v$ that are shift equivalent to $x$; and
 \item\label{directed_graph_integrable_thm_3} $E$ has no cycles, and $C^*(E)\cong C^*(G_E)$ has bounded trace.
\end{enumerate}
\end{theorem}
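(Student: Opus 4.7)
The plan is to deduce this directed-graph statement as a direct corollary of Theorem \ref{thm_integrable_bounded-trace} applied to the $1$-graph $\Lambda_E$ associated to $E$ as in Example \ref{example_LambdaE}. All the technology for this translation is in place in the paper: Lemma \ref{path_groupoids_isomorphic} identifies the path groupoid $G_E$ with $G_{\Lambda_E}$, while Lemma \ref{lemma_KP_LambdaE} identifies $C^*(E)$ with $C^*(\Lambda_E)$ (this is where the hypothesis that $E$ has no sources is used). Under the natural bijection between $E^\infty$ and $\Lambda_E^\infty$, the range map in $\Lambda_E$ corresponds to the range map in $E$, and the shift equivalence relation from Kumjian--Pask's higher-rank graph definition coincides with the original directed graph shift equivalence (up to the sign convention for the lag, see the comment on page \pageref{shift_equivalence_different}, which does not affect the equivalence classes $[x]$). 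Thus the quantifier ``paths in $E^\infty$ with range $v$ shift equivalent to $x$'' is identical to ``paths in $\Lambda_E^\infty$ with range $v$ shift equivalent to $x$''.

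First I would translate the principality condition. By Proposition \ref{prop_principal_iff_no_cycles} the path groupoid $G_E$ is principal if and only if $E$ contains no cycles; since $G_E \cong G_{\Lambda_E}$, this is equivalent to $G_{\Lambda_E}$ being principal. Hence the ``$G_{\Lambda_E}$ is principal'' clauses appearing in conditions (2) and (3) of Theorem \ref{thm_integrable_bounded-trace} become exactly ``$E$ has no cycles'' in the present theorem.

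With these identifications, Theorem \ref{thm_integrable_bounded-trace} applied to the row-finite $1$-graph $\Lambda_E$ (which has no sources since $E$ has no sources) yields the equivalence of (1), (2), and (3) verbatim: condition \eqref{k-thm_integrable1} of Theorem \ref{thm_integrable_bounded-trace} becomes \eqref{directed_graph_integrable_thm_1} here via $G_E \cong G_{\Lambda_E}$; condition \eqref{k-thm_integrable2} becomes \eqref{directed_graph_integrable_thm_2} via the principality translation and the identification of path spaces; and condition \eqref{k-thm_integrable3} becomes \eqref{directed_graph_integrable_thm_3} via additionally $C^*(E) \cong C^*(\Lambda_E) \cong C^*(G_{\Lambda_E}) \cong C^*(G_E)$. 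There is no genuine obstacle here — the main care required is only bookkeeping to verify that the $1$-graph shift equivalence on $\Lambda_E^\infty$ agrees (as an equivalence relation, ignoring lag sign) with the directed graph shift equivalence on $E^\infty$, so that the ``at most $M$ paths'' cardinality counts match exactly under the bijection $E^\infty \leftrightarrow \Lambda_E^\infty$.
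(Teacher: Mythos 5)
Your proposal is correct and is exactly the paper's route: the paper introduces Theorem \ref{directed_graph_integrable_thm} with the single sentence that it ``is obtained by applying Theorem \ref{thm_integrable_bounded-trace} to the $1$-graph $\Lambda_E$,'' relying on the same identifications $G_E\cong G_{\Lambda_E}$, $C^*(E)\cong C^*(\Lambda_E)$, and the translation of principality into the no-cycles condition. Your write-up just makes explicit the bookkeeping the paper leaves implicit.
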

The next result is obtained by applying Theorem \ref{thm_bounded_trace} to the $1$-graph $\Lambda_E$.
\begin{theorem}
Suppose $E$ is a row-finite directed graph without cycles. Then $C^*(E)$ has bounded trace if and only if for every $v\in E^0$ there exists $M\in\NN$ such that for any $x\in E^{\le\infty}$ there are at most $M$ paths in $E^{\le\infty}$ with range $v$ that are shift equivalent to $x$.
\end{theorem}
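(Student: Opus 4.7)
The plan is to derive this result as a direct corollary of Theorem~\ref{thm_bounded_trace} applied to the $1$-graph $\Lambda_E$. To set this up, I would first identify the relevant objects: by Lemma~\ref{lemma_RSY_LambdaE} we have $C^*(E)\cong C^*(\Lambda_E)$, and since every $1$-graph is locally convex (Remark~\ref{remark_every_1-graph_locally_convex}), Proposition~\ref{prop_Lambda_leinfty_equals_partial_lambda} gives $\partial\Lambda_E=\Lambda_E^{\le\infty}=E^{\le\infty}$. Under these identifications the condition on $v\in\Lambda_E^0=E^0$ in Theorem~\ref{thm_bounded_trace} becomes exactly the condition stated for $E$.

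The main step is verifying the hypothesis of Theorem~\ref{thm_bounded_trace}, namely that $n=0$ whenever a path in $\partial\Lambda_E$ is shift equivalent to itself with lag $n\in\ZZ$. I would prove this by contradiction along the lines of the argument in Lemma~\ref{lemma_integrable_implies_principal}: if $x\in\partial\Lambda_E=E^{\le\infty}$ satisfies $x\sim_n x$ for some $n\ne 0$, then (passing to $-n$ if needed) there exists $m\in\NN$ with $m\le |x|\wedge(|x|+n)$ such that $\sigma^m(x)=\sigma^{m-n}(x)$; iterating in the sense of Lemma~\ref{lemma_shift_equivalence2} produces a finite subpath $x(m-n,m)$ whose source and range coincide, i.e.\ a cycle in $E$. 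This contradicts the hypothesis that $E$ has no cycles, so the hypothesis of Theorem~\ref{thm_bounded_trace} is satisfied.

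With the hypothesis verified, Theorem~\ref{thm_bounded_trace} gives immediately that $C^*(\Lambda_E)$ has bounded trace if and only if the uniform bound condition on $\partial\Lambda_E=E^{\le\infty}$ holds, which is precisely the condition in the statement. Combining with $C^*(E)\cong C^*(\Lambda_E)$ completes the proof.

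I do not expect any substantive obstacle: the only step that needs care is the ``no nontrivial self shift equivalence'' claim, and this is really the boundary-path analogue of the observation that absence of cycles in a $1$-graph forces the path groupoid to be principal. Everything else is bookkeeping that translates between the $k$-graph framework of Theorem~\ref{thm_bounded_trace} and the directed-graph notation used in the statement.
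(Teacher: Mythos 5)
Your proposal is correct and follows exactly the route the paper intends: the paper states this theorem with no written proof beyond the remark that it ``is obtained by applying Theorem~\ref{thm_bounded_trace} to the $1$-graph $\Lambda_E$,'' and your argument supplies precisely the needed bookkeeping (the identifications $C^*(E)\cong C^*(\Lambda_E)$ and $\partial\Lambda_E=\Lambda_E^{\le\infty}=E^{\le\infty}$, plus the verification via the no-cycles hypothesis that no boundary path is shift equivalent to itself with non-zero lag). The only cosmetic quibble is that the closed subpath $x(m-n,m)$ you produce need not itself be a cycle in the paper's strict sense, but it contains one, which suffices.
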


A pair of edges $\braces{f,g}$ in a directed graph is {\em splitting}\index{splitting!pair of edges} if $s(f)=s(g)$ and $f\ne g$. Suppose $S\subset E^*\cup E^\infty$. We define $E|_S$ to be the subgraph of $E$ such that
\[
E|_S^1=\braces{e\in E:e=x_p\text{ for some }x\in S, p\in\PP}
\]
and $E_S^0=r(E_S^1)\cup s(E_S^1)$.

\begin{lemma}\label{lemma_finite_connections}
Let $E$ be a directed graph that contains no cycles and has finitely many splitting pairs of edges. For any vertices $v$ and $w$ in $E$ there are at most finitely many finite paths with source $w$ and range $v$.
\begin{proof}
We prove by contradiction. Let $A$ be an infinite set of finite paths with source $w$ and range $v$. Let $B$ be a finite subset of $A$. Then $E|_B$ has finitely many splitting pairs of edges, say, $M$. We claim that we can add only finitely many paths from $A$ to $B$ before increasing the number of splitting pairs in $E|_B$ to at least $M+1$.

Let $B^+$ be the set of all finite paths in $E|_B$ with source $w$ and range $v$. Since $E$ has no cycles, $B^+$ is finite and $B\subset B^+\subsetneq A$. Now suppose $\alpha\in A\backslash B^+$. Then there must be a greatest $p\in\PP$ such that $\alpha_p\notin E|_B^1$. Note that $s(\alpha_p)\in E|_B^0$ and that since $r(\alpha)=v$ and $E$ contains no cycles, we cannot have $s(\alpha_p)=v$. Then there exist $\beta\in B$ and $q\in\PP$ such that $s(\alpha_p)=s(\beta_q)$. It follows that $\{\alpha_p,\beta_q\}$ is a splitting pair in $E|_{B^+\cup\{\alpha\}}$ but not in $E|_B$, so $E|_{B^+\cup\{\alpha\}}$ must contain at least $M+1$ splitting pairs.

Thus given the finite set $B$ with $M$ splitting pairs of edges we can only add finitely many paths from $A$ to $B$ (forming $B^+$) before adding an additional path will increase the number of splitting pairs of edges in $E|_B$ to at least $M+1$. Then $A$ must contain infinitely many splitting pairs of edges, contradicting that $E$ has finitely many splitting pairs of edges.
\end{proof}
\end{lemma}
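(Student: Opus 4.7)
My plan is a proof by contradiction, exploiting the observation that two finite paths sharing source $w$ and range $v$ which diverge must do so at a splitting pair; having only finitely many splitting pairs in $E$ should then cap the number of such paths between any two fixed vertices.

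Suppose for contradiction that the set $A$ of finite paths from $w$ to $v$ is infinite. I would start with any finite $B\subset A$ and consider the subgraph $E|_B$ consisting of the edges appearing in the paths of $B$. Since $E$ (and hence $E|_B$) contains no cycles and $E|_B$ is finite, the set $B^+$ of all paths in $E|_B$ from $w$ to $v$ is finite; as $A$ is infinite, I may pick $\alpha\in A\setminus B^+$. I would then choose the greatest index $p$ with $\alpha_p\notin E|_B^1$, so that the tail $\alpha_{p+1},\ldots,\alpha_{|\alpha|}$ lies entirely in $E|_B^1$. Since $\alpha_{p+1}$ then appears in some path $\beta\in B$, I would trace back along $\beta$ to find $\beta_q\in E|_B^1$ with $s(\beta_q)=s(\alpha_p)=r(\alpha_{p+1})$; the hypothesis that $E$ contains no cycles prevents the degenerate possibility $s(\alpha_p)=v$, which is exactly what guarantees that such a $\beta_q$ exists rather than the shared vertex being a terminal range. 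Because $\alpha_p\notin E|_B^1$ while $\beta_q\in E|_B^1$, the pair $\braces{\alpha_p,\beta_q}$ is a splitting pair present in $E|_{B^+\cup\braces{\alpha}}$ but absent from $E|_B$.

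Iterating — starting from some finite $B_0\subset A$, adjoining a path from $A\setminus B_0^+$ to form $B_1$, then from $A\setminus B_1^+$ to form $B_2$, and so on — I obtain a chain of finite subgraphs of $E$ whose splitting-pair counts strictly increase. Since $A$ is infinite, the process never terminates, producing infinitely many distinct splitting pairs in $E$ and contradicting the hypothesis. The main obstacle is the verification at the heart of the construction: that the edge $\alpha_p$ genuinely produces a new splitting pair. This relies on the vertex-tracing argument showing that any vertex of a path $\beta$ from $w$ to $v$ other than $v$ itself occurs as $s(\beta_q)$ for some $q$, together with the elimination of the boundary case $s(\alpha_p)=v$ by the no-cycles hypothesis.
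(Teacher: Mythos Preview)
Your proposal is correct and follows essentially the same argument as the paper: both proceed by contradiction, pick $\alpha\in A\setminus B^+$, locate the greatest index $p$ with $\alpha_p\notin E|_B^1$, use the no-cycles hypothesis to rule out $s(\alpha_p)=v$, and exhibit a new splitting pair $\{\alpha_p,\beta_q\}$, then iterate. Your exposition is in fact slightly more explicit than the paper's about why $s(\alpha_p)\in E|_B^0$ and why a suitable $\beta_q$ exists; just note that your phrasing via $\alpha_{p+1}$ tacitly assumes $p<|\alpha|$, whereas the boundary case $p=|\alpha|$ (where $s(\alpha_p)=w$ and any $\beta_{|\beta|}$ works) should be mentioned separately.
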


\begin{lemma}\label{lemma_directed_graph_simplification}
Suppose $E$ is a row-finite directed graph without cycles and $V$ is a finite subset of $E^0$. Then the following are equivalent:
\begin{enumerate}
\item\label{lemma_directed_graph_simplification_1} the directed graph $E|_{VE^{\le\infty}}$ has finitely many splitting pairs of edges; and
\item\label{lemma_directed_graph_simplification_2} there exists a finite $F\subset E^*$ such that such that for any $x,y\in VE^{\le\infty}$ with $x\sim y$, the pair $(x,y)$ is a monolithic extension of a pair in $F\times F$.
\end{enumerate}
\begin{proof}
Suppose \eqref{lemma_directed_graph_simplification_1}. Let $A$ be the set of all edges that occur in a splitting pair of edges in $E|_{VE^{\le\infty}}$ and let $F=VE^*V\cup VE^*A$. Then $F$ is finite by Lemma \ref{lemma_finite_connections} since $A$ and $V$ are finite. Suppose $x,y\in VE^{\le\infty}$ satisfy $x\sim_n y$. Then there exists a smallest $m\in\NN$ such that $\sigma^m(x)=\sigma^{m-n}(y)$. If $m=0$ or $m-n=0$, then $r\big(\sigma^m(x)\big)$ and $r\big(\sigma^{m-n}(y)\big)$ are in $V$ so $(x,y)$ is a monolithic extension of a pair in $F\times F$. Suppose $m\ne 0$ and $m-n\ne 0$. Then $\braces{x_m,y_{m-n}}$ is a splitting pair of edges in $E|_{VE^{\le\infty}}$ so there exists $\alpha,\beta\in VE^*$ such that $(x,y)$ is a monolithic extension of $(\alpha x_m,\beta y_{m-n})\in F\times F$, establishing \eqref{lemma_directed_graph_simplification_2}.

Let $V$ be a finite subset of $E^0$. Suppose there exists a finite $F\subset E^*$ as in \eqref{lemma_directed_graph_simplification_2}. Let $A=\braces{\alpha_m\in E^1:\alpha\in F, m\le |\alpha|}$, noting that $A$ is finite since $F$ is. Suppose $\braces{f,g}$ is a splitting pair of edges in $E|_{VE^{\le\infty}}$. We claim that $f,g\in A$. To see this, first note that since $\braces{f,g}$ is a splitting pair in $E|_{VE^{\le\infty}}$, there exist $\alpha,\beta\in VE^*$ such that $s(\alpha)=r(f)$ and $s(\beta)=r(g)$. Suppose $x\in s(f)E^{\le\infty}$. Then $\alpha f x\sim\beta gx$ so there exists $(\eta,\zeta)\in F\times F$ such that $(\alpha fx,\beta gx)$ is a monolithic extension of $(\eta,\zeta)$. Let $y\in E^{\le\infty}$ be the path such that $(\eta y, \zeta y) = (\alpha fx, \beta gx)$. Then $\eta y$ is shift equivalent to $\zeta y$ with lags $|\beta|-|\alpha|$ and $|\zeta|-|\eta|$. It follows that there exists $l\in\NN$ such that $\sigma^l(\eta y)=\sigma^{l-|\beta|+|\alpha|}(\zeta y) = \sigma ^{l-|\zeta|+|\eta|}(\zeta y)$. Since $E$ has no cycles we must have $|\beta|-|\alpha|=|\zeta|-|\eta|$.

Now $f=(\alpha fx)_{|\alpha|+1}=(\eta y)_{|\alpha|+1}$ and $g=(\beta gx)_{|\beta|+1}=(\zeta y)_{|\beta|+1}$. If $|\alpha f|>|\eta|$, by noting that $|\alpha f|=|\alpha|+1$ and $|\alpha|-|\eta|=|\beta|-|\zeta|$, we have
\[
f=(\eta y)_{|\alpha|+1} = y_{|\alpha|-|\eta|+1} = y_{|\beta|-|\zeta|+1}=(\zeta y)_{|\beta|+1}=g,\]
which is not possible since $f\ne g$ as $\{f,g\}$ is a splitting pair. Thus $|\alpha f|\le |\eta|$, so $f=\eta_{|\alpha f|}\in A$ and $g=\zeta_{|\beta g|}\in A$. Since $A$ is finite and $\braces{f,g}$ is an arbitrary splitting pair of edges in $E|_{VE^{\le\infty}}$, \eqref{lemma_directed_graph_simplification_1} follows.
\end{proof}
\end{lemma}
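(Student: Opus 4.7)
The plan is to prove both directions using Lemma \ref{lemma_finite_connections} as the main tool: in a cycle-free directed graph with finitely many splitting pairs, there are only finitely many paths between any two vertices.

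For \eqref{lemma_directed_graph_simplification_1} $\Rightarrow$ \eqref{lemma_directed_graph_simplification_2}, I would let $A$ be the finite set of all edges occurring in a splitting pair of $E|_{VE^{\le\infty}}$ and set $F := VE^*V \cup VE^*A$ — the finite paths starting at a vertex in $V$ that either end at a vertex in $V$ or terminate with an edge in $A$. Since $E|_{VE^{\le\infty}}$ inherits cycle-freeness from $E$, Lemma \ref{lemma_finite_connections} applied there guarantees finiteness of the sets of paths between any pair of fixed endpoints; combined with the finiteness of $V$ and $A$, this makes $F$ finite. Given $x, y \in VE^{\le\infty}$ with $x \sim_n y$, I would take the \emph{smallest} $m \in \NN$ with $\sigma^m(x) = \sigma^{m-n}(y)$. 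If $m = 0$ or $m = n$ then one of $x, y$ is a direct suffix of the other, and the monolithic extension uses a vertex of $V$ on one side and a path in $VE^*V$ on the other. Otherwise the minimality of $m$ forces $x_m \ne y_{m-n}$, making $\{x_m, y_{m-n}\}$ a splitting pair in $E|_{VE^{\le\infty}}$; then $x(0,m), y(0, m-n) \in VE^*A \subseteq F$ together with the common tail $\sigma^m(x) = \sigma^{m-n}(y)$ provide the required monolithic extension.

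For \eqref{lemma_directed_graph_simplification_2} $\Rightarrow$ \eqref{lemma_directed_graph_simplification_1}, I would define $A := \{\alpha_m : \alpha \in F,\ 1 \le m \le |\alpha|\}$, which is finite since $F$ is, and prove that every edge appearing in a splitting pair of $E|_{VE^{\le\infty}}$ must belong to $A$; this immediately bounds the total number of splitting pairs. Given a splitting pair $\{f, g\}$, I would extend $f$ and $g$ backwards along paths $\alpha, \beta \in VE^*$ and forwards along a common tail $x \in s(f)E^{\le\infty}$ to produce boundary paths $\alpha f x, \beta g x \in VE^{\le\infty}$ that are shift-equivalent with lag $|\alpha| - |\beta|$. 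Applying \eqref{lemma_directed_graph_simplification_2} yields $(\eta, \zeta) \in F \times F$ and $y$ with $(\alpha f x, \beta g x) = (\eta y, \zeta y)$. The goal is then to show $|\alpha f| \le |\eta|$ and $|\beta g| \le |\zeta|$, from which $f = \eta_{|\alpha f|}$ and $g = \zeta_{|\beta g|}$ both lie in $A$.

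The main obstacle, and the point where the no-cycles hypothesis on $E$ is essential, is ruling out the case where $f$ or $g$ sits inside the common tail $y$ rather than inside $\eta$ or $\zeta$. Because $E$ has no cycles, the shift-equivalence lag between $\eta y$ and $\zeta y$ is unique, so $|\alpha| - |\eta| = |\beta| - |\zeta|$; if $|\alpha f| > |\eta|$ then this relation forces $|\beta g| > |\zeta|$ as well, and both $f$ and $g$ would be identified with the same indexed letter of $y$, contradicting $f \ne g$. Hence both $|\alpha f| \le |\eta|$ and $|\beta g| \le |\zeta|$ hold and the argument concludes.
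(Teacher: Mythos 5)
Your proposal is correct and follows essentially the same route as the paper: the same choice $F=VE^*V\cup VE^*A$ (finite via Lemma \ref{lemma_finite_connections}) with the minimal-$m$ splitting-pair argument for the forward direction, and the same backwards/forwards extension of a splitting pair combined with uniqueness of the lag (from cycle-freeness) to force $f$ and $g$ into the edge set of $F$ for the converse. No gaps.
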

We can now proceed to the improved versions of Theorems \ref{thm_Cartan_Fell}, \ref{thm_proper_cts-trace}, \ref{thm_Fell} and \ref{thm_cts-trace} for the directed graph case. The first of these follows immediately from Lemma \ref{lemma_directed_graph_simplification} and Theorem \ref{thm_Cartan_Fell}.
\begin{theorem}\label{thm_directed_graph_Cartan_Fell}
Suppose $E$ is a row-finite directed graph without sources. The following are equivalent:
\begin{enumerate}\renewcommand{\labelenumi}{(\arabic{enumi})}
\item\label{thm_directed_graph_Cartan_Fell_1} $G_E$ is Cartan;
\item $E$ has no cycles, and for every $z\in E^\infty$ there exists $n\in\NN$ such that $E|_{z(n)E^{\infty}}$ has finitely many splitting pairs of edges;
\item\label{thm_directed_graph_Cartan_Fell_2} $E$ has no cycles, and for every $z\in E^\infty$ there exist $p\in\NN$ and a finite $F\subset E^*$ such that for every $x,y\in z(p)E^\infty$ with $x\sim y$, the pair $(x,y)$ is a monolithic extension of a pair in $F\times F$;
\item\label{thm_directed_graph_Cartan_Fell_3} $E$ has no cycles, and for every $z\in E^\infty$ there exist $p\in\NN$ and a finite $F\subset E^*$ such that for every $(\alpha,\beta)\in E^*\ast_s E^*$ with $r(\alpha)=r(\beta)=z(p)$ and every $\gamma\in s(\alpha)E^*$ with $|\gamma|=\max\{|\eta|:\eta\in F\}$, the pair $(\alpha\gamma,\beta\gamma)$ is a monolithic extension of a pair in $F\times F$; and
\item\label{thm_directed_graph_Cartan_Fell_4} $E$ has no cycles, and $C^*(E)\cong C^*(G_E)$ is Fell.
\end{enumerate}
\end{theorem}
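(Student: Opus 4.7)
The plan is to deduce this theorem from Theorem~\ref{thm_Cartan_Fell} applied to the $1$-graph $\Lambda_E$, together with Lemma~\ref{lemma_directed_graph_simplification} which handles the passage between the splitting-pair formulation in (2) and the monolithic-extension formulation in (3). Since $E$ is row-finite with no sources, we have $E^{\le\infty} = E^\infty$ (there are no finite paths ending at sources), so conditions phrased in terms of $E^{\le\infty}$ and $E^\infty$ coincide. Lemmas~\ref{path_groupoids_isomorphic} and~\ref{lemma_KP_LambdaE} give $G_E \cong G_{\Lambda_E}$ and $C^*(E) \cong C^*(\Lambda_E)$, so the topological and algebraic properties in (1) and (5) transfer directly.

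First I would dispense with the equivalence of (1), (3), (4) and (5) by applying Theorem~\ref{thm_Cartan_Fell} to $\Lambda_E$. The key translation is: in the $1$-graph $\Lambda_E$, the degree map reduces to the length function, so $d(\gamma) = \vee_{\eta \in F}\, d(\eta)$ becomes $|\gamma| = \max\{|\eta| : \eta \in F\}$, and the ``$G_{\Lambda_E}$ is principal'' clause from Theorem~\ref{thm_Cartan_Fell} becomes ``$E$ has no cycles'' by Proposition~\ref{prop_principal_iff_no_cycles} (together with the fact that $\Lambda_E$ has a cycle exactly when $E$ does). Once these translations are made, conditions \eqref{thm_directed_graph_Cartan_Fell_2}, \eqref{thm_directed_graph_Cartan_Fell_3} and \eqref{thm_directed_graph_Cartan_Fell_4} of our theorem match conditions \eqref{thm_Cartan_Fell_1}, \eqref{thm_Cartan_Fell_2}, \eqref{thm_Cartan_Fell_3} and \eqref{thm_Cartan_Fell_4} of Theorem~\ref{thm_Cartan_Fell}.

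Next, to prove (2) $\iff$ (3) I would fix $z \in E^\infty$ and apply Lemma~\ref{lemma_directed_graph_simplification} with the singleton $V = \{z(n)\} \subset E^0$. The lemma states exactly that $E|_{VE^{\le\infty}}$ has finitely many splitting pairs of edges if and only if there is a finite $F \subset E^*$ such that every shift-equivalent pair in $VE^{\le\infty}$ is a monolithic extension of a pair in $F \times F$. Since $E$ has no sources so $E^{\le\infty} = E^\infty$, and since the integer $n$ in (2) and the integer $p$ in (3) are allowed to depend on $z$, this is a per-$z$ restatement of the equivalence we need. Combining with the previous paragraph completes the cycle of implications.

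I do not expect a substantive obstacle: all the heavy lifting has already been done in Theorem~\ref{thm_Cartan_Fell} and Lemma~\ref{lemma_directed_graph_simplification}. The only care required is the bookkeeping translation from the $k$-graph formalism (degrees in $\NN^k$, coordinatewise suprema, boundary paths, row-finite with no sources in the $k$-graph sense) down to the directed-graph formalism (lengths in $\NN$, maxima, infinite paths, and the hypothesis that $E$ is row-finite without sources), together with the explicit verification that $\partial\Lambda_E = \Lambda_E^\infty = E^\infty = E^{\le\infty}$ under our standing hypotheses.
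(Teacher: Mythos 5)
Your proposal is correct and follows exactly the route the paper takes: the paper states that this theorem ``follows immediately from Lemma~\ref{lemma_directed_graph_simplification} and Theorem~\ref{thm_Cartan_Fell},'' and your argument is a careful spelling-out of that deduction, including the correct translations ($E^{\le\infty}=E^\infty$ in the absence of sources, ``principal'' becoming ``no cycles'' via Remark~\ref{remark_no_cycles_principal}, and the application of Lemma~\ref{lemma_directed_graph_simplification} with the singleton $V=\{z(n)\}$). No gaps.
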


\needspace{4\baselineskip}
\begin{theorem}\label{thm_directed_graph_Fell}
Suppose $E$ is a row-finite directed graph without cycles. The following are equivalent:
\begin{enumerate}
\item\label{thm_directed_graph_Fell_1} $C^*(E)$ is Fell;
\item\label{thm_directed_graph_Fell_2} for every $z\in E^\infty$ there exists $n\in\NN$ such that $E|_{z(n)E^{\le\infty}}$ has finitely many splitting pairs of edges; and
\item\label{thm_directed_graph_Fell_3} for every $z\in E^\infty$ there exist $n\in\NN$ and a finite $F\subset E^*$ such that for any $x,y\in z(n)E^{\le\infty}$ with $x\sim y$, the pair $(x,y)$ is a monolithic extension of a pair in $F\times F$.
\end{enumerate}
\begin{proof}
First note that \eqref{thm_directed_graph_Fell_2} and \eqref{thm_directed_graph_Fell_3} are equivalent by Lemma \ref{lemma_directed_graph_simplification}. By applying Theorem \ref{thm_Fell} to the $1$-graph $\Lambda_E$, we can see that \eqref{thm_directed_graph_Fell_1} is equivalent to
\begin{statement}\label{statement_directed_graph_Fell}
for every $z\in E^{\le\infty}$ there exist $n\in\NN$ with $n\le |z|$ and a finite $F\subset E^*$ such that for any $x,y\in z(n)E^{\le\infty}$ with $x\sim y$, the pair $(x,y)$ is a monolithic extension of a pair in $F\times F$.
\end{statement}
If \eqref{statement_directed_graph_Fell} holds, then \eqref{thm_directed_graph_Fell_3} must hold. Conversely, suppose $z\in E^{\le\infty}$ is finite and let $F=\{s(z)\}$. The only path in $z(|z|)^{\le\infty}$ is $z(|z|)$ (which equals $s(z)$) so, if $x,y\in z(|z|)E^{\le\infty}$, then we must have $(x,y)=\big(s(z),s(z)\big)$, which is trivially a monolithic extension of $\big(s(z),s(z)\big)\in F\times F$. Thus \eqref{thm_directed_graph_Fell_3} implies \eqref{statement_directed_graph_Fell}, and so \eqref{thm_directed_graph_Fell_3} is equivalent to \eqref{statement_directed_graph_Fell}.
\end{proof}
\end{theorem}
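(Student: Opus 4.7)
The plan is to reduce the theorem to three bite-sized pieces: the easy equivalence between a splitting-pairs condition and a monolithic-extensions condition (both over $E^{\le\infty}$), the restriction from $E^{\le\infty}$ down to $E^\infty$, and the invocation of the higher-rank graph characterisation from Theorem \ref{thm_Fell}. I would organise this as follows.

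First I would observe that \eqref{thm_directed_graph_Fell_2} $\iff$ \eqref{thm_directed_graph_Fell_3} is immediate from Lemma \ref{lemma_directed_graph_simplification} applied with $V=\{z(n)\}$: the ``finite splitting pairs'' condition on $E|_{z(n)E^{\le\infty}}$ is exactly the ``finite $F$ controlling monolithic extensions'' condition, via that lemma.

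Second I would apply Theorem \ref{thm_Fell} to the $1$-graph $\Lambda_E$. Since every $1$-graph is locally convex (Remark \ref{remark_every_1-graph_locally_convex}), Proposition \ref{prop_Lambda_leinfty_equals_partial_lambda} gives $\partial\Lambda_E=\Lambda_E^{\le\infty}=E^{\le\infty}$, and since $E$ has no cycles, the associated path groupoid is principal (Remark \ref{remark_no_cycles_principal}), so the hypothesis about self shift-equivalence in Theorem \ref{thm_Fell} is met; moreover $C^*(\Lambda_E)\cong C^*(E)$ by Lemma \ref{lemma_RSY_LambdaE}. Thus Theorem \ref{thm_Fell} yields that \eqref{thm_directed_graph_Fell_1} is equivalent to the statement
\begin{statement}\label{plan_aux_stmt}
for every $z\in E^{\le\infty}$ there exist $n\in\NN$ with $n\le |z|$ and a finite $F\subset E^*$ such that for any $x,y\in z(n)E^{\le\infty}$ with $x\sim y$, the pair $(x,y)$ is a monolithic extension of a pair in $F\times F$.
\end{statement}

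The third step is the equivalence of \eqref{plan_aux_stmt} with \eqref{thm_directed_graph_Fell_3}. The direction \eqref{plan_aux_stmt} $\Rightarrow$ \eqref{thm_directed_graph_Fell_3} is a triviality, since the hypotheses in \eqref{plan_aux_stmt} are formally stronger (they quantify over a larger set of $z$). For the converse, given $z\in E^{\le\infty}$ one splits into two cases: if $z\in E^\infty$, condition \eqref{thm_directed_graph_Fell_3} directly supplies the required $n$ and $F$; if $z$ is finite, one takes $n=|z|$ and $F=\{s(z)\}$, noting that $z(|z|)E^{\le\infty}=\{s(z)\}$ so every pair $(x,y)\in z(|z|)E^{\le\infty}\times z(|z|)E^{\le\infty}$ with $x\sim y$ is the trivial monolithic extension of $\big(s(z),s(z)\big)\in F\times F$.

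I do not expect any real obstacle here: the only mild subtlety is checking that Theorem \ref{thm_Fell} really applies to $\Lambda_E$ (which uses $E$ having no cycles to rule out self shift-equivalent boundary paths) and that the finite-path case of \eqref{plan_aux_stmt} is handled by the degenerate choice $F=\{s(z)\}$. With these three steps assembled the chain \eqref{thm_directed_graph_Fell_1} $\iff$ \eqref{plan_aux_stmt} $\iff$ \eqref{thm_directed_graph_Fell_3} $\iff$ \eqref{thm_directed_graph_Fell_2} closes the proof.
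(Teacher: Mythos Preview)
Your proposal is correct and follows essentially the same route as the paper: both use Lemma~\ref{lemma_directed_graph_simplification} for \eqref{thm_directed_graph_Fell_2}$\iff$\eqref{thm_directed_graph_Fell_3}, invoke Theorem~\ref{thm_Fell} on $\Lambda_E$ to get the auxiliary statement over all of $E^{\le\infty}$, and then dispose of finite $z$ with $n=|z|$ and $F=\{s(z)\}$. If anything, you are more careful than the paper in spelling out why Theorem~\ref{thm_Fell} applies (local convexity giving $\partial\Lambda_E=E^{\le\infty}$, and the absence of cycles ruling out nontrivial self shift-equivalence).
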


The next theorem follows immediately from Lemma \ref{lemma_directed_graph_simplification} and Theorem \ref{thm_proper_cts-trace}.
\needspace{6\baselineskip}
\begin{theorem}\label{thm_directed_graph_proper_cts-trace}
Suppose $E$ is a row-finite directed graph without sources. The following are equivalent:
\begin{enumerate}\renewcommand{\labelenumi}{(\arabic{enumi})}
\item $G_E$ is proper;
\item $E$ has no cycles, and for every finite $V\subset E^0$ the directed graph $E|_{VE^\infty}$ has finitely many splitting pairs of edges;
\item $E$ has no cycles, and for every finite subset $V$ of $E^0$, there exists a finite $F\subset E^*$ such that, for every $x,y\in VE^\infty$ with $x\sim_n y$, the pair $(x,y)$ is a monolithic extension of a pair in $F\times F$;
\item $E$ has no cycles, and for every finite subset $V$ of $E^0$, there exists a finite $F\subset E^*$ such that, for every $(\alpha,\beta)\in E^*\ast_s E^*$ with $r(\alpha),r(\beta)\in V$, and every $\gamma\in s(\alpha)E^*$ with $|\gamma|=\max\{|\eta|:\eta\in F\}$, the pair $(\alpha\gamma,\beta\gamma)$ is a monolithic extension of a pair in $F\times F$; and
\item $E$ has no cycles, and $C^*(E)\cong C^*(G_E)$ has continuous trace.
\end{enumerate}
\end{theorem}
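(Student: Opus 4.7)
The plan is to piece this together from the higher-rank graph version of the proper/continuous-trace characterisation (Theorem \ref{thm_proper_cts-trace}) together with the combinatorial translation provided by Lemma \ref{lemma_directed_graph_simplification}. Since $E$ is a row-finite directed graph without sources, the associated $1$-graph $\Lambda_E$ (Example \ref{example_LambdaE}) is a row-finite $1$-graph without sources. Every $1$-graph is locally convex (Remark \ref{remark_every_1-graph_locally_convex}), so Proposition \ref{prop_Lambda_leinfty_equals_partial_lambda} gives $\partial\Lambda_E = \Lambda_E^{\le\infty}$; moreover, because $E$ has no sources, $E^{\le\infty}=E^\infty$ and $\partial\Lambda_E=E^\infty$. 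So applying Theorem \ref{thm_proper_cts-trace} to $\Lambda_E$ yields the equivalence, in $\Lambda_E$, of the four conditions: $G_{\Lambda_E}$ proper; $G_{\Lambda_E}$ principal plus a monolithic-extension condition on paths in $V\Lambda_E^\infty=VE^\infty$; $G_{\Lambda_E}$ principal plus the corresponding $(\alpha,\beta,\gamma)$ version; and $G_{\Lambda_E}$ principal plus $C^*(\Lambda_E)$ having continuous trace.

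Next I would translate back to $E$. Lemma \ref{path_groupoids_isomorphic} gives $G_E\cong G_{\Lambda_E}$, so ``$G_E$ is proper'' is the same as ``$G_{\Lambda_E}$ is proper'', and similarly for principality. By Proposition \ref{prop_principal_iff_no_cycles} (combined with Remark \ref{remark_no_cycles_principal}) the path groupoid $G_E$ is principal if and only if $E$ contains no cycles, so each ``$G_{\Lambda_E}$ is principal'' can be replaced by ``$E$ has no cycles''. Finally, Lemma \ref{lemma_RSY_LambdaE} gives $C^*(E)\cong C^*(\Lambda_E)$, and Proposition \ref{prop_algebras_isomorphic} gives $C^*(E)\cong C^*(G_E)$ (using that $E$ has no sources), which reconciles (5) with the Cuntz--Krieger algebra statement from Theorem \ref{thm_proper_cts-trace}. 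Putting all of this together shows that conditions (1), (3), (4) and (5) of the current theorem are equivalent.

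It remains to fit (2) into the chain of equivalences. Condition (2) is precisely the hypothesis that for every finite $V\subset E^0$, $E|_{VE^\infty}$ has finitely many splitting pairs of edges. When $E$ has no cycles and $V$ is finite, we have $VE^\infty=VE^{\le\infty}$, so Lemma \ref{lemma_directed_graph_simplification} applies (with the set $V$) and gives the equivalence of ``$E|_{VE^{\le\infty}}$ has finitely many splitting pairs of edges'' with the existence of a finite $F\subset E^*$ of the required monolithic-extension type. This is exactly the statement that (2) is equivalent to (3), once we know $E$ has no cycles. Thus (2) slots into the chain of equivalences.

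Since the heavy lifting is done by Theorem \ref{thm_proper_cts-trace} and Lemma \ref{lemma_directed_graph_simplification}, I don't anticipate genuine obstacles; the only care required is the bookkeeping to ensure each higher-rank hypothesis (no sources in $\Lambda_E$, local convexity, $\partial\Lambda_E=E^\infty$, principality equals ``no cycles'') is in place before invoking the $1$-graph results, so that the chain of equivalences (1)$\Leftrightarrow$(5) transfers cleanly and (2) can be inserted via Lemma \ref{lemma_directed_graph_simplification}.
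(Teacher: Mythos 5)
Your proposal is correct and follows exactly the route the paper takes: the paper states that this theorem ``follows immediately from Lemma \ref{lemma_directed_graph_simplification} and Theorem \ref{thm_proper_cts-trace}'', and your write-up simply supplies the bookkeeping (via Lemma \ref{path_groupoids_isomorphic}, Proposition \ref{prop_principal_iff_no_cycles}, Lemma \ref{lemma_RSY_LambdaE}, Proposition \ref{prop_algebras_isomorphic}, and the identification $E^{\le\infty}=E^\infty$ in the absence of sources) that the paper leaves implicit.
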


Our final theorem follows immediately from Lemma \ref{lemma_directed_graph_simplification} and Theorem \ref{thm_cts-trace}.

\needspace{6\baselineskip}
\begin{theorem}
Suppose $E$ is a row-finite directed graph without cycles. The following are equivalent:
\begin{enumerate}
\item $C^*(E)$ has continuous trace;
\item for every finite $V\subset E^0$ the directed graph $E|_{VE^{\le\infty}}$ has finitely many splitting pairs of edges; and
\item for every finite $V\subset E^0$ there exists a finite $F\subset E^*$ such that for any $x,y\in VE^{\le\infty}$ with $x\sim y$, the pair $(x,y)$ is a monolithic extension of a pair in $F\times F$.
\end{enumerate}
\end{theorem}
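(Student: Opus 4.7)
The plan is to deduce this theorem directly from Theorem \ref{thm_cts-trace} applied to the $1$-graph $\Lambda_E$, combined with Lemma \ref{lemma_directed_graph_simplification}. The hinge is the identification $\partial\Lambda_E = E^{\le\infty}$ (which holds because every $1$-graph is locally convex, by Remark \ref{remark_every_1-graph_locally_convex}, so Proposition \ref{prop_Lambda_leinfty_equals_partial_lambda} applies), together with the isomorphism $C^*(E) \cong C^*(\Lambda_E)$ from Lemma \ref{lemma_RSY_LambdaE}. Under this identification, the notion of shift equivalence on $\partial\Lambda_E$ from Lemma \ref{lemma_shift_equivalence2} restricts, for elements of $E^\infty$, to the usual shift equivalence for infinite paths in a directed graph, and more generally encompasses the natural notion for finite paths ending at sources.

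First I would verify that the standing hypothesis of Theorem \ref{thm_cts-trace} is met: namely, that $m=0$ whenever a path in $\partial\Lambda_E = E^{\le\infty}$ is shift equivalent to itself with lag $m$. If $x \in E^{\le\infty}$ satisfies $x \sim_m x$ with $m \ne 0$, then after replacing $m$ by $-m$ if necessary one obtains a nonnegative integer $p$ with $\sigma^p(x) = \sigma^{p-m}(x)$, and then a segment of the form $x(p-m, p)$ would be a cycle in $E$, contradicting the hypothesis that $E$ has no cycles. (The case of a finite $x$ is even easier since the endpoint is a source.) Hence the hypothesis of Theorem \ref{thm_cts-trace} holds for $\Lambda_E$.

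Next I would apply Theorem \ref{thm_cts-trace} to $\Lambda_E$: $C^*(\Lambda_E)$ has continuous trace if and only if for every finite subset $V$ of $\Lambda_E^0 = E^0$ there exists a finite $F \subset \Lambda_E = E^*$ such that for every $x, y \in V\partial\Lambda_E$ with $x \sim y$, the pair $(x,y)$ is a monolithic extension of a pair in $F \times F$. Translating via $C^*(E) \cong C^*(\Lambda_E)$ and $\partial\Lambda_E = E^{\le\infty}$, this is precisely the equivalence of statement (1) and statement (3) in the theorem. Finally, Lemma \ref{lemma_directed_graph_simplification} provides the equivalence of (2) and (3) directly (it was already proved in the form required).

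I do not expect a serious obstacle: the only nontrivial step is confirming that the two notions of shift equivalence (the one on $\partial\Lambda_E$ from Lemma \ref{lemma_shift_equivalence2} and the naive one on $E^{\le\infty}$ used in condition (3)) coincide, and that the monolithic-extension condition translates verbatim from the $k$-graph setting to the directed-graph setting. Both are routine unwindings of definitions, since the factorisation property in a $1$-graph reduces to the usual concatenation of edges.
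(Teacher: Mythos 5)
Your proposal is correct and follows exactly the route the paper takes: the paper derives this theorem immediately from Theorem \ref{thm_cts-trace} applied to $\Lambda_E$ (using $\partial\Lambda_E=E^{\le\infty}$ and $C^*(E)\cong C^*(\Lambda_E)$) together with Lemma \ref{lemma_directed_graph_simplification} for the equivalence of the second and third conditions. Your explicit check that the no-cycles hypothesis forces $m=0$ whenever a boundary path is shift equivalent to itself with lag $m$ is a detail the paper leaves implicit, but it is exactly what is needed and your argument for it is sound.
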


\backmatter
\linespread{1}

\printindex
\end{document}